\newtheorem{theo}{Theorem}
\newtheorem{pro}{Proposition}[section]
\newtheorem{lem}[pro]{Lemma}
\newtheorem{coro}[pro]{Corollary}
\newtheorem{remark}[pro]{Remark}
\newtheorem{defi}[pro]{Definition}
\newtheorem{conj}{Conjecture}
\newtheorem{example}{Example}
\def\Xint#1{\mathchoice
   {\XXint\displaystyle\textstyle{#1}}%
   {\XXint\textstyle\scriptstyle{#1}}%
   {\XXint\scriptstyle\scriptscriptstyle{#1}}%
   {\XXint\scriptscriptstyle\scriptscriptstyle{#1}}%
   \!\int}
\def\XXint#1#2#3{{\setbox0=\hbox{$#1{#2#3}{\int}$}
     \vcenter{\hbox{$#2#3$}}\kern-.5\wd0}}
\def\dashint{\Xint-}
\DeclareMathOperator{\supp}{Supp}
\def\lb{\langle}
\def\rb{\rangle}
\def\Lj{{\Gamma(j)}}
\def\Lji{{\Gamma_(J_i)}}
\def\tq{{\frac34}}
\def\LK{{\mathcal L_\ep}}
\def\LKk{{\mathcal L_{\ep,i}}}
\def\jnint{{j_\ep}}
\def\({\left(}
\def\){\right)}
\def\1{\mathbf{1}}
\def\admissible{{\mathcal{A}_{m_\lambda}}}
\def\ainfty{{\mathcal{A}_1}}
\def\ai{{A_{1,\ep}}}
\def\ao{{A_{0,\ep}}}
\def\a{\alpha}
\def\bg{{\bar g}}
\def\bh{{\underline h}}
\def\bix{{\mathbf x}}
\def\bj{{\bar \jmath}}
\def\bnu{{\bar\nu}}
\def\bo{\partial \Omega}\def\D{\displaystyle}
\def\boe{{\bar\omega_\ep'}}
\def\B{{\mathcal{B}}}
\def\Ce{c_{\ep, N}}
\def\cl{{m_\lambda}}
\def\com{{\lambda_\Omega}}
\def\co{m_\ep}
\def\curl{{\rm curl\,}}
\def\diam{\mathrm{diam}\ }
\def\dist{\text{dist}\ }
\def\div{\mathrm{div} \ }
\def\dl{\hal \lep}
\def\dt0{{{\frac{d}{dt}}_{|t=0}}}
\def\D{\displaystyle}
\def\ella{{L_m}}
\def\el{{\ell_\ep}}
\def\elp{{\ell_\ep'}}
\def\ep{\varepsilon}
\def\extb{{h_{\rm out}}}
\def\fep{{f_\ep}}
\def\fe{{F_\ep}}
\def\F{{\mathcal{F}}}
\def\gep{{g_\ep}}
\def\gfep{{\mathbf f_\ep}}
\def\gF{{\mathbf F}}
\def\gf{{\mathbf f}}
\def\gom{{G_\Omega}}
\def\hal{\frac{1}{2}}
\def\hciii{{H_{c_3}}}
\def\hcii{{H_{c_2}}}
\def\hci{{H_{c_1}}}
\def\HE{\widehat{E}}
\def\he{{h_{\rm ex}}}
\def\trl{\mathrm{triangular}}
\def\hint{{h_{\rm int}}}
\def\hl{{H_\Lambda}}
\def\hm{H_m}
\def\homb{{\underline h_0}}
\def\hom{{h_0}}
\def\ho{{h_{0,\ep}}}
\def\indic{\mathbf{1}}
\def\indic{\mathbf{1}}
\def\io{\int_{\Omega}}
\def\je{G_\ep}
\def\jie{{F_\ep}}
\def\lattices{{\mathcal{L}}}
\def\lep{{|\mathrm{log }\ \ep|}}
\def\lip{{\text{\rm Lip}}}
\def\llep{\log\ \lep}
\def\loc{{\text{\rm loc}}}
\def\Lp{{L^p_\loc(\mr^2,\mr^2)}}
\def\Lploc{{L^p_\loc}}
\def\L{\log \ \frac{1}{\ep \sqrt{\he}}}
\def\l|{\left|}
\def\mc{\mathbb{C}}
\def\me{m_{\ep, N}}
\def\moep{m_{0,\ep}}
\def\mn{\mathbb{N}}
\def\mn{\mathbb{N}}
\def\mr{\mathbb{R}}
\def\muo{{\mu_{0,\ep}}}
\def\mz{\mathbb{Z}}
\def\nab{\nabla}
\def\nm{N_0^-}
\def\np{\nab^{\perp}}
\def\ns{N_0^+}
\def\oma{{\omega_m}}
\def\omo{{\omega_{0,\ep}}}
\def\omQ{{\omega_Q}}
\def\om{\Omega}
\def\pae{{A_\ep'}}
\def\pa{{A'}}
\def\pep{{\varepsilon'}}
\def\pfep{{F_\ep'}}
\def\pge{{g_\ep'}}
\def\pje{{j_\ep'}}
\def\pje{{j_\ep'}}
\def\plep{{|\log\ep'|}}
\def\pmue{{\mu_\ep'}}
\def\pmue{{\mu_\ep'}}
\def\pOe{{\Omega_\ep'}}
\def\poe{{\omega_\ep'}}
\def\pue{{u_\ep'}}
\def\pu{{u'}}
\def\pxe{{x_\ep'}}
\def\pzep{{\zeta_\ep'}}
\def\p{\partial}
\def\radon{\mathcal{M}}
\def\ro{\rho}
\def\r|{\right|}
\def\sd{\bigtriangleup}
\def\sm{\setminus}
\def\som{{S_\Omega}}
\def\sql{\mathrm{square}}
\def\supp{\text{Supp}}
\def\tho{h_{\ep, N}}
\def\tej{{\tilde \jmath_\ep}}
\def\hej{{\hat \jmath_\ep}}
\def\tix{{\tilde x}}
\def\tls{{\mathbb{T}_{\Lambda^*}}}
\def\tl{{\mathbb{T}_{\Lambda}}}
\def\tmuo{\mu_{\ep, N}}
\def\toe{\omega_{\ep, N}}
\def\tom{{\tilde\omega_\ep}}
\def\oh{{\hat\omega_\ep}}
\def\T{{\mathbb{T}}}
\def\ul{{U_\Lambda}}
\def\URC{{\mathbf U_{R+C}}}
\def\URc{{\mathbf U_{R-C}}}
\def\UcR{{\mathbf U_{cR}}}
\def\URI{{\mathbf U_{R+1}}}
\def\UR{{\mathbf U_R}}
\def\veta{{\mathcal{V}_\eta}}
\def\vp{\varphi}
\def\VR{{\mathbf V_R}}
\def\VRc{{\mathbf V_{R-C}}}
\def\vu{{\vec u}}
\def\vv{{\vec v}}
\def\xom{{x_0}}
\def\zep{{\zeta_\ep}}
\def\tix{{\tilde x}}
\def\lue{{\tilde u_\ep}}
\def\lAe{{\tilde A_\ep}}
\def\lom{{\tilde\om}}
\def\lG{{\tilde \je}}
\def\leep{{\tilde \ep}}
\def\lhe{{\tilde h_{\rm ex}}}
\def\lL{\log \ \frac{1}{\leep \sqrt{\lhe}}}
\def\lho{{h_{\leep, \tilde N}}}
\def\lmuo{{\mu_{\leep, \tilde N}}}
\def\facteur{{\sigma}}
\def\oms{{\om_\facteur}}
\def\probas{{\EuScript P}}
 \numberwithin{equation}{section}
\title{From the Ginzburg-Landau Model to Vortex Lattice Problems}
\author{Etienne Sandier and Sylvia Serfaty}
\date{Second version, January 2012} 
\begin{document}
\maketitle

%\tableofcontents

\begin{abstract}

We introduce a 
 ``Coulombian renormalized energy" $W$ which  is a  logarithmic
 type of interaction between points in the plane, computed by a ``renormalization."  We prove various of its properties, such as the existence of minimizers,  and show in particular, using results from number theory,  that among lattice configurations the triangular lattice is the unique minimizer. Its  minimization in general remains open.

Our motivation is the study of  minimizers of the two-dimensional Ginzburg-Landau energy with applied magnetic field, between the first and second critical fields $\hci$ and $\hcii$. In that regime, minimizing configurations exhibit densely packed  triangular vortex lattices, called Abrikosov lattices. 
We derive, in some asymptotic regime, $W$ as a $\Gamma$-limit of the Ginzburg-Landau 
energy. More precisely we show that the vortices of minimizers   of Ginzburg-Landau,  blown-up at  a suitable scale, converge to minimizers of $W$, thus  providing a first rigorous hint at the Abrikosov lattice. This is a next order effect compared  to the mean-field type results we  previously established.

 The derivation of $W$  uses  energy methods: the framework of   $\Gamma$-convergence, and an abstract scheme  for obtaining lower bounds for ``2-scale energies" via the ergodic theorem that we introduce.
\end{abstract}

\noindent
{\bf keywords:} Ginzburg-Landau, vortices, Abrikosov lattice, triangular lattice,  renormalized energy, Gamma-convergence.\\
{\bf MSC classification:}    35B25, 82D55, 35Q99, 35J20, 52C17.

\section{Introduction}
In this paper, we are interested in deriving a ``Coulombian renormalized energy" from the Ginzburg-Landau model of superconductivity.
We will start by defining and presenting the  renormalized energy in Section \ref{sec14}, then state some results about it in Section \ref{period-intro}. In Section \ref{secergo}, we then  present an abstract method for lower bounds for two-scale energies using ergodic theory. In Sections \ref{sec1.1}--\ref{sec1.10} we turn to the Ginzburg-Landau model, and give our main results about it as well as ingredients for the proof.

\subsection{The Coulombian  renormalized energy $W$ }\label{sec14}
 The interaction energy $W$ that we wish to define just below  is a natural energy for the Coulombian interaction of charged particles in the plane screened by  a uniform background: it could  be called a ``screened Coulombian renormalized energy". It  can be seen  in our context as  the analogue for an
  infinite number of points in $\mr^2$   of the renormalized
  energy $W$ introduced  in Bethuel-Brezis-H\'elein  \cite{bbh} for a finite number of points
   in a bounded domain, or of the Kirchhof-Onsager function.  We believe that this energy is quite  ubiquitous in all problems that have an underlying Coulomb interaction:  it already  arises in  the  study of weighted Fekete sets and of the statistical mechanics of  Coulomb gases and  random matrices  \cite{ss6},
     as well as a limit in some parameter regime for the Ohta-Kawasaki model \cite{ms}.   In \cite{ss7} we introduce a one-dimensional analogue (a renormalized logarithmic interaction for points on the line) which we also connect to one-dimensional Fekete sets as well as  ``log gases" and random matrices.
     
         We will discuss more at the end of this subsection and in the next, but  let us first give the  precise definition.
       %Here $\lambda\in(1/2,+\infty]$ and $\cl = 1-1/2\lambda$. In particular if
        %$\lambda= +\infty$ then $\cl = 1$.

In all the paper,  $B_R$ denotes the ball centered at $0$ and of radius $R$, and $|\cdot|$ denotes the area of a set.

\begin{defi}\label{defA} Let $m $ be a positive number.
Let  $j$
be a vector field in $\mr^2$. We say $j$
belongs to the admissible class
$\mathcal{A}_m $ if
\begin{equation}\label{eqj}
\curl j = \nu
- m, \qquad \div j=0,\end{equation} where $\nu$ has the form
$$\nu= 2\pi \sum_{p \in\Lambda} \delta_{p}\quad \text{ for
some discrete set} \  \Lambda\subset\mr^2,$$ and
\begin{equation}\label{densbornee}
\frac{    \nu (B_R )  } {|B_R|}\quad \text{ is bounded by a constant independent of $R>1$}.
\end{equation}
 \end{defi}

For any family of sets $\{\UR\}_{R>0}$ in $\mr^2$ we  use the
notation $\chi_{\UR}$ for positive  cutoff functions satisfying, for some
constant $C$ independent of $R$,
\begin{equation}
\label{defchi} |\nab \chi_{\UR}|\le C, \quad \supp(\chi_{\UR})
\subset \UR, \quad \chi_{\UR}(x)=1 \ \text{if } d(x, \UR^c) \ge
1.\end{equation}
We will always implicitly assume that $\{\UR\}_{R>0}$ is an increasing family of bounded open sets, and we will use the following set of additional assumptions:
\begin{itemize}
\item
\begin{equation}\label{hypsets} \{\UR\} \ \text{is   a  Vitali family
 and} \quad
\lim_{R\to +\infty} \frac{|(\lambda + \UR)\sd \UR
|}{|\UR|} = 0. \end{equation}
for any $\lambda\in\mr^2$. Here, a  Vitali family (see \cite{nmriv}) means  that the intersection of the closures is $\{0\}$, that $R\mapsto|\UR|$ is left continuous, and that $|\UR - \UR|\le C|\UR|$
for some constant $C>0$ independent of $R$.
\item
There exists $\theta<2$ such that for any $R>0$
\begin{equation}\label{hypsetsbis} \UR+B(0,1)\subset \URC,\quad \URI\subset \UR+ B(0,C),\quad |\URI\sm\UR| = O(R^\theta).\end{equation}
\end{itemize}

\begin{defi} \label{def1.2}The Coulombian renormalized energy $W$ is defined,
for $j \in \mathcal{A}_m$,  by
\begin{equation} \label{Wroi} W(j)= \limsup_{R \to \infty}
\frac{W(j, \chi_{B_R})}{|B_R|} ,
\end{equation} where  for any function $\chi$ we denote
\begin{equation}\label{WR}W(j, \chi) = \lim_{\eta\to 0} \(
\hal\int_{\mr^2 \backslash \cup_{p\in\Lambda} B(p,\eta) }\chi
|j|^2 +  \pi \log \eta \sum_{p\in\Lambda} \chi (p) \).
\end{equation}
We similarly define  the renormalized energy relative to the family
$\{\UR\}_{R>0}$ by
\begin{equation}\label{WU} W_U(j)= \limsup_{R \to \infty}
\frac{W(j, \chi_{\UR})}{|\UR|} .
\end{equation}
\end{defi}

Let us make several remarks about the definition.

\begin{itemize}
\item[1.] We will see in Theorem \ref{thw}  that the value  of $W$ does not depend on $\{\chi_{B_R}\}_R$
 as long as  it satisfies \eqref{defchi}. The corresponding
 statement holds for $W_U$ under the assumptions \eqref{hypsets}--\eqref{hypsetsbis}.

\item[2.] Since in the neighborhood of $p\in\Lambda$ we have $\curl j = 2\pi\delta_p - 1$, $\div j = 0$, we have near $p$ the decomposition $j(x) = \nab^\perp \log|x-p| + f(x)$ where $f$ is smooth, and it easily follows that the limit \eqref{WR} exists.  It also follows that $j$ belongs to $L^p_\loc$ for any $p<2$.
\item[3.] From \eqref{eqj} we have  $j= -\np H$  for some $H$, and then  $$-\Delta H= 2\pi \sum_{p\in\Lambda} \delta_p -m.$$ Then the energy in (\ref{WR}) can be seen as the (renormalized) interaction energy between the ``charged particles" at $p \in \Lambda$ and between them and a constant background $-m $. We prefer to take $j = -\np H$ as the unknown, though, because it is related to the superconducting current $j_\ep$.

\item[4.] We will see in Theorem \ref{thw} that
 the  minimizers and the value of the minimum of $W_U$ are
 independent of $U$, provided \eqref{hypsets} and \eqref{hypsetsbis} hold.
  However there are examples of
admissible $j$'s (nonminimizers)  for which $W_U(j)$ depends on the family of shapes $\{ \UR\}_{R>0}$ which
is used.

\item[5.] Because the number of points is infinite,
 the interaction over large balls needs to be normalized
 by the volume and thus  $W$ does not feel compact perturbations of the configuration of points. Even though the interactions are long-range, this is not difficult to justify rigorously.

\item[6.] The  cut-off function $\chi_R$ cannot simply be replaced by the characteristic function of  $B_R$ because for every $p\in\Lambda$
$$\lim_{\substack{R\to|p|\\R<|p|}} W(j,\indic_{B_R}) = +\infty,\quad \lim_{\substack{R\to|p|\\R>|p|}} W(j,\indic_{B_R}) = -\infty.$$

\item[7.] It is easy to check that if $j$ belongs to $\mathcal{A}_m$ then
$j'=\frac{1}{\sqrt{m}} j(\cdot /\sqrt m)$ belongs to $\ainfty $ and
\begin{equation}\label{minalai1}
W(j)= m \(W(j') - \frac14 \log m\).\end{equation} 
%Consequently  if $j$
% is a minimizer of  $W$ over $\mathcal{A}_m$, then $j'$ minimizes  $W$ over $\ainfty$ and
%\begin{equation}\label{minalai}
%\min_{\mathcal{A}_m}  W = m \( \min_{\ainfty} W -\frac14\log m \),\end{equation}
 so we may reduce to  the study  of $W$ over $\ainfty$.
\end{itemize}

When  the set of points $\Lambda$ is  periodic with respect to some lattice $
\mz \vu+ \mz \vv$ then  it can be viewed as a set of $n$ points $a_1, \cdots , a_n$ over the torus
$\T_{(\vu,\vv)}= \mr^2/(\mz \vu +\mz\vv)$. There also exists a unique periodic (with same period) $j_{\{a_i\}}  $ with mean zero and satisfying \eqref{eqj} for some $m$
which from
    \eqref{eqj} and the periodicity of $j_{\{a_i\}}$ must be equal
    to  $2\pi n$ divided by  the surface of the periodicity cell.
 Moreover $j_{\{a_i\}}  $ minimizes  $W$ among $(\vu, \vv)$-periodic solutions of \eqref{eqj} (see Proposition~\ref{renper}).
The  computation of $W$ in this setting where both $\Lambda $ and $j$ are periodic is quite  simpler (the need for the limit $R\to \infty$ and the cutoff function disappear). By the scaling formula \eqref{minalai1}, we may reduce to working in $\mathcal{A}_n$ in a situation where the volume of the torus is $ 2\pi $. Then we will see in Section \ref{rem52} the following
\begin{lem}\label{lemperiointro}With the above notation, we have 
\begin{equation}\label{Wcasper}
W(j_{\{a_i\}}   )= \hal\sum_{i \neq j} G(a_i- a_j) +   n c_{(\vu, \vv)}\end{equation}
where $c_{(\vu, \vv)}$ is a constant depending only on  $(\vu, \vv)$ and $G$ is the Green function of the torus with respect to its volume form, i.e. the solution to
$$- \Delta G(x) = 2\pi \delta_0 - 1 \quad \text{in } \ \T_{(\vu, \vv)}.$$ Moreover, $j_{\{a_i\}}$ is the minimizer of $W(j)$ among all $\T_{(\vu,\vv)}$-periodic $j$'s satisfying \eqref{eqj}.
\end{lem}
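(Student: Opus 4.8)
The plan is to reduce $W(j_{\{a_i\}})$ to a quantity attached to a single period cell, evaluate that quantity by integrating by parts against the torus Green's function $G$, and finally settle the minimality among periodic solutions by a convexity argument. For Step~1, as in the remarks following Definition~\ref{def1.2}, write $j_{\{a_i\}}=-\np H$ with $H$ being $\T_{(\vu,\vv)}$-periodic and $-\Delta H=2\pi\sum_{i=1}^n\delta_{a_i}-n$ on $\T_{(\vu,\vv)}$ (here $m=n$ because $|\T_{(\vu,\vv)}|=2\pi$), so $|j_{\{a_i\}}|^2=|\nabla H|^2$. Fix $\eta_0>0$ small enough that the balls $B(a_i,\eta_0)$ are disjoint in $\T_{(\vu,\vv)}$, and for $p\in\Lambda$ set
\[
\phi(p):=\lim_{\eta\to0}\Big(\tfrac12\int_{B(p,\eta_0)\setminus B(p,\eta)}|j_{\{a_i\}}|^2+\pi\log\eta\Big),
\]
which is finite by the local decomposition $j_{\{a_i\}}=\np\log|x-p|+f$ ($f$ smooth near $p$) and, by periodicity, depends only on the image $a_i$ of $p$. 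The density $|j_{\{a_i\}}|^2\indic_{\{d(\cdot,\Lambda)\ge\eta_0\}}$ being periodic and bounded, peeling off the logarithmic singularities at scale $\eta_0$ and collecting the $O(R)$ period cells meeting $\partial B_R$ gives
\[
W(j_{\{a_i\}},\chi_{B_R})=\frac{|B_R|}{|\T_{(\vu,\vv)}|}\,\mathcal W_0+O(R),\qquad \mathcal W_0:=\lim_{\eta\to0}\Big(\tfrac12\int_{\T_{(\vu,\vv)}\setminus\cup_i B(a_i,\eta)}|\nabla H|^2+\pi n\log\eta\Big),
\]
with the $O(R)$ bounded independently of $R$; dividing by $|B_R|$ shows that the $\limsup$ in \eqref{Wroi} is a limit and $W(j_{\{a_i\}})=\mathcal W_0/(2\pi)$.

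\emph{Step 2 (the single-cell computation).} Since $-\Delta\big(\sum_i G(\cdot-a_i)\big)=2\pi\sum_i\delta_{a_i}-n$ on $\T_{(\vu,\vv)}$, we have $H=\sum_i G(\cdot-a_i)+c$ for some constant $c$. Writing $G(y)=-\log|y|+S(y)$ with $S$ smooth near $0$ and $R_i(x):=S(x-a_i)+\sum_{j\ne i}G(x-a_j)+c$, so $H=-\log|x-a_i|+R_i$ near $a_i$, integrate by parts on $\Omega_\eta:=\T_{(\vu,\vv)}\setminus\cup_i B(a_i,\eta)$, where $\Delta H=n$ and the outer normal on $\partial B(a_i,\eta)$ points toward $a_i$:
\[
\int_{\Omega_\eta}|\nabla H|^2=-n\int_{\Omega_\eta}H-\sum_i\int_{\partial B(a_i,\eta)}H\,\partial_r H.
\]
On $\partial B(a_i,\eta)$ one has $H=-\log\eta+R_i$ and $\partial_r H=-\tfrac{1}{\eta}+\partial_r R_i$; using $\frac{1}{2\pi\eta}\int_{\partial B(a_i,\eta)}R_i=R_i(a_i)+O(\eta^2)$ and $\int_{\partial B(a_i,\eta)}\partial_r R_i=\int_{B(a_i,\eta)}\Delta R_i=O(\eta^2)$, the $i$-th boundary integral equals $-2\pi\log\eta+2\pi R_i(a_i)+o(1)$, while $\sum_i R_i(a_i)=nS(0)+\sum_{i\ne j}G(a_i-a_j)+nc$. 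With the standard normalization $\int_{\T_{(\vu,\vv)}}G=0$ one has $\int_{\Omega_\eta}H\to\int_{\T_{(\vu,\vv)}}H=2\pi c$, so the $c$-terms cancel and
\[
\tfrac12\int_{\Omega_\eta}|\nabla H|^2+\pi n\log\eta=\pi\sum_{i\ne j}G(a_i-a_j)+\pi n\,S(0)+o(1).
\]
Letting $\eta\to0$ gives $\mathcal W_0=\pi\sum_{i\ne j}G(a_i-a_j)+\pi n\,S(0)$, and dividing by $2\pi$ yields \eqref{Wcasper} with $c_{(\vu,\vv)}=\tfrac12 S(0)=\tfrac12\lim_{y\to0}\big(G(y)+\log|y|\big)$.

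\emph{Step 3 (minimality).} If $j$ is $\T_{(\vu,\vv)}$-periodic and solves \eqref{eqj} for the same $\nu$, then $w:=j-j_{\{a_i\}}$ is periodic with $\curl w=\div w=0$; writing $w=\nabla\phi$ with $\phi$ harmonic on $\mr^2$ and $\nabla\phi$ bounded, Liouville's theorem forces $\phi$ affine, so $w\equiv v$ for a constant $v\in\mr^2$. Expanding $\tfrac12|j_{\{a_i\}}+v|^2=\tfrac12|j_{\{a_i\}}|^2+v\cdot j_{\{a_i\}}+\tfrac12|v|^2$ in \eqref{WR} (the last two terms being in $L^1_\loc$, so no removal of balls is needed for them), the cross term contributes $v\cdot\int\chi_{B_R}j_{\{a_i\}}=v\cdot\int(\np\chi_{B_R})\,H=O(R)$, since $H$ is periodic and integrable and $\np\chi_{B_R}$ is bounded and supported in an annulus of measure $O(R)$, while the last term contributes $\tfrac12|v|^2\int\chi_{B_R}=\tfrac12|v|^2|B_R|+O(R)$; dividing by $|B_R|$ and letting $R\to\infty$ gives $W(j_{\{a_i\}}+v)=W(j_{\{a_i\}})+\tfrac12|v|^2$, which is minimized precisely at $v=0$.

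The delicate point is Step~1: one must verify that the scale-$R$ average of the renormalized energy genuinely converges to the single-cell value $\mathcal W_0/|\T_{(\vu,\vv)}|$, uniformly in the cutoff $\chi_{B_R}$. Splitting each logarithmic singularity off at a fixed scale $\eta_0$ leaves a honestly periodic, bounded energy density and makes the averaging transparent; after that Step~2 is a routine Green's-function identity and Step~3 a one-line convexity estimate.
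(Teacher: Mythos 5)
Your argument follows the same route as the paper's: reduce $W$ to a single period-cell quantity (the paper's Proposition~\ref{renper}, stated for a lattice but, as the paper remarks, applying verbatim to a general $(\vu,\vv)$-periodic point set), compute that quantity by integrating by parts against the torus Green's function (the paper's \eqref{ipp}--\eqref{59bis} adapted to $H_{\{a_i\}}=\sum_i G(\cdot-a_i)$), and settle minimality by noting that any competing periodic $j$ differs from $j_{\{a_i\}}$ by a constant vector, whose contribution is the strictly convex term $\tfrac12|v|^2$. Your Step~2 cancellation of the additive constant $c$ and your identification $c_{(\vu,\vv)}=\tfrac12\lim_{y\to0}(G(y)+\log|y|)$ match the paper's $\tfrac12 R(0)$, and your Step~3 Liouville plus convexity argument is equivalent to the paper's direct expansion via \eqref{pro51.1} together with $\int_{\tl}\np H=0$. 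The one place where you are less explicit than the paper is the periodization bookkeeping in Step~1 (the paper's estimate \eqref{pro51-cas2} controls the boundary cells where $\chi_R$ is neither $0$ nor $1$), but what you sketch is the same argument, so the proofs coincide in substance.
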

\begin{remark}
The Green function of the torus admits an explicit Fourier series expansion, through this we can obtain a more explicit formula for the right-hand side of \eqref{Wcasper}:
\begin{equation}\label{formexpl}
W(j_{\{a_i\}} )= \hal \sum_{i \neq j} \sum_{p \in (\mz \vu + \mz \vv)^*\sm \{0\}} \frac{e^{2i\pi p \cdot (a_i- a_j)}  }{4\pi^2 |p|^2 } + \frac{n}{2}
 \lim_{x\to 0} \( \sum_{p \in (\mz \vu + \mz \vv)^*\backslash
\{0\} } \frac{ e^{2i\pi p \cdot x}}{ 4\pi^2 |p|^2}  + \log
|x|\) \end{equation} where $^*$ refers to the dual of a lattice.
\end{remark}

The function $\sum_{i \neq j} G(a_i - a_j)$ is the sum of pairwise Coulombian interactions between particles on a torus. It arises for example in number theory (Arakelov theory), see \cite{langara} p. 150, where a result attributed to  Elkies is  stated:  $\sum_{i\neq j} G(a_i-a_j) \ge - \frac{n}{4} \log n +O(n)$ (on any Riemann surface of genus $\ge 1$). Note that we can  retrieve this estimate  in the case of the torus by using the fact that  $\min_{\ainfty} W$ is finite and formula \eqref{minalai} with $m=n$.

So conversely, another way of looking at  our energy $W$ is that it provides a way of computing an analogue of $\sum_{i\neq j} G(a_i- a_j) $ in an infinite-size domain.

\subsection{Results and conjecture  on the renormalized energy}\label{period-intro}
The following theorem summarizes the basic results about the minimization of $W$. Note that by the scaling relation \eqref{minalai1} we may reduce to the case of $\ainfty$, and we have
\begin{equation}\label{minalai}
\min_{\mathcal{A}_m}  W = m \( \min_{\ainfty} W -\frac14\log m \).\end{equation}

\begin{theo}\label{thw} Let $W$ be as in Definition \ref{def1.2}.
\begin{enumerate}
\item  Let  $\{\UR\}_{R>0}$ be a family of sets satisfying \eqref{hypsets}--\eqref{hypsetsbis}, then for any $j\in \ainfty$,  the value of $W_U(j)$ is independent of the choice of $\chi_\UR $ in its definition as long as it satisfies \eqref{defchi}. \item $W$ is Borel measurable on $L^p_{loc} (\mr^2, \mr^2)$, $p<2$.
\item  $\min_{\mathcal{A}_1} W_U$   is achieved and finite, and it is independent of the choice of $U$, as long as $\{\UR\}$ satisfies \eqref{hypsets}--\eqref{hypsetsbis}.
  \item There exists a minimizing sequence $\{j_n\}_{n \in \mn}$ for $\min_{\ainfty} W$  consisting of vector-fields which are periodic (with respect to a square lattice of sidelength $\sqrt{2\pi n}$).
\end{enumerate}
\end{theo}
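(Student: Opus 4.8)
The plan is to realize the minimizing sequence by a cut-and-paste (``screening'') argument: take a near-minimizer of $W$ on $\ainfty$, restrict it to a large square whose side is \emph{exactly} $\sqrt{2\pi n}$, and modify it inside a thin boundary layer so that the result becomes genuinely periodic, at an energy cost negligible compared with the area of the square. By a diagonal argument it suffices to show that for every $\delta>0$ there are arbitrarily large integers $n$ and a field $j\in\ainfty$, periodic with respect to $\sqrt{2\pi n}\,\mz^2$, with $W(j)\le\min_{\ainfty}W+\delta$; the opposite inequality $W(j)\ge\min_{\ainfty}W$ is automatic since such a $j$ lies in $\ainfty$. Throughout, for a bounded set $A$ whose boundary avoids the vortex set $\Lambda$ of $j$, write $\mathcal E(j,A)=\lim_{\eta\to0}\big(\hal\int_{A\sm\cup_{p\in\Lambda}B(p,\eta)}|j|^2+\pi\log\eta\,\#(\Lambda\cap A)\big)$; this ``renormalized energy of $j$ over $A$'' is additive over disjoint such sets, and for a $Q$-periodic $j$ one has $W(j)=|Q|^{-1}\mathcal E(j,Q)$ (this is the computation underlying Lemma~\ref{lemperiointro}, and also follows from item~(1) by letting $\chi_{\UR}\uparrow\mathbf 1_Q$).

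\emph{Choice of box.} Fix $\delta>0$ and apply items~(1) and~(3) with the family of squares $Q_R=(-R/2,R/2)^2$, which satisfies \eqref{hypsets}--\eqref{hypsetsbis}; since $\min_{\ainfty}W_U=\min_{\ainfty}W$, choose $J\in\ainfty$ with $W_U(J)\le\min_{\ainfty}W+\tfrac\delta2$. As $W_U(J)=\limsup_R|Q_R|^{-1}W(J,\chi_{Q_R})$, for all large $R$ one has $|Q_R|^{-1}W(J,\chi_{Q_R})\le\min_{\ainfty}W+\delta$, and $W(J)<\infty$ forces $\int_{Q_R\sm\cup_pB(p,1/4)}|J|^2=O(R^2)$. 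A standard mean-value argument in $R$, combined with the discreteness of $\Lambda$ and a harmless translation of the box by a vector of norm $<1$, then yields, for arbitrarily large $n$, a (translated) square $Q$ of side $R=\sqrt{2\pi n}$ such that, writing $\mathcal C=Q\sm\overline{Q'}$ for a collar of width $w=\log R$ around a concentric inner square $Q'$, one has $\int_{\mathcal C}|J|^2=o(R^2)$, $|Q|^{-1}W(J,\chi_Q)\le\min_{\ainfty}W+\delta$, and $\partial Q,\partial Q'$ avoid $\Lambda$. Note that then $\big|\#(\Lambda\cap Q)-n\big|=\tfrac1{2\pi}\big|\int_{\partial Q}J\cdot\tau\big|=O(R)=o(R^2)$.

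\emph{Periodization and conclusion.} Keep $J$ on $Q'$. Identifying opposite sides of $Q$ turns $\mathcal C$ into a once-punctured flat torus with boundary $\partial Q'$; on it we perform a screening construction producing a divergence-free $\hat\jmath$ on $\mr^2/R\mz^2$ such that $\hat\jmath=J$ on $Q'$, $\curl\hat\jmath=2\pi\sum_{q\in\Lambda'}\delta_q-1$ for a finite $\Lambda'\supset(\Lambda\cap Q')$ of cardinality exactly $n$ (this absorbs the $O(R)$ defect in the vortex count, and since $2\pi n-R^2=0$ the flux balance required on the torus holds), and $\mathcal E(\hat\jmath,\mathcal C)=o(R^2)$. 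Then $\hat\jmath\in\ainfty$ is periodic with respect to the square lattice of sidelength $\sqrt{2\pi n}$, and by additivity of $\mathcal E$,
\[
W(\hat\jmath)=\frac{\mathcal E(J,Q')+\mathcal E(\hat\jmath,\mathcal C)}{|Q|}=\frac{W(J,\chi_Q)+o(R^2)}{|Q|}\le\min_{\ainfty}W+\delta+o(1),
\]
where the middle equality uses $W(J,\chi_Q)=\mathcal E(J,Q')+o(R^2)$, the discrepancy being supported in the collar. Letting $R\to\infty$ along this sequence, then $\delta\to0$, and extracting a diagonal sequence, we obtain square-periodic fields $j_n\in\ainfty$ of sidelengths $\sqrt{2\pi n}$ with $W(j_n)\to\min_{\ainfty}W$, which is the desired minimizing sequence.

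\emph{Main obstacle.} Everything rests on the screening step: rebuilding $J$ inside the collar so that it becomes genuinely $Q$-periodic while (i) keeping $\curl$ of the admissible form $2\pi\sum\delta-1$ with \emph{constant} background, (ii) correcting the vortex count, and (iii) spending only $o(R^2)$. A naive interpolation of the stream functions fails on both counts: it produces a smooth — hence inadmissible — error in the curl, and it forces an estimate of the stream function on a long, thin collar, whose Poincar\'e constant is of order $R$ rather than $O(1)$. The remedy is to carry out the modification cube by cube on a unit-cube tiling of $\mathcal C$, matching the tangential and normal traces across the shared faces (so that $\hat\jmath$ is globally divergence-free with the prescribed curl), so that every elliptic and Poincar\'e estimate is local with an absolute constant, and then to sum the $O(1)$ cost of each of the $O(Rw)=o(R^2)$ cubes. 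Proving this local screening lemma, and checking that the cube-by-cube matching closes up compatibly around the puncture, is the technical heart of the argument.
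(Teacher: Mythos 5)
You correctly reduce item~4 to a truncation-and-periodization (``screening'') argument, and you correctly identify the technical heart: modifying a near-minimizer inside a boundary collar at $o(R^2)$ cost while keeping $\curl$ of the admissible form $2\pi\sum\delta-1$ and producing a genuinely periodic field. But you leave exactly this step unproved, and the specific route you propose is more delicate than the one the paper takes. Gluing opposite sides of the square into a torus and matching $J$ to itself cube by cube around the collar creates precisely the compatibility-around-the-puncture obstruction you flag at the end, and it is not clear it can always be resolved; you also do not address how to keep $\hat\jmath$ divergence-free. The paper sidesteps all of this with a reflection trick. Proposition~\ref{pw2bis} constructs, on a single quadrant $K=[0,R]^2$, a current $j_R$ agreeing with $j$ deep inside, with $\curl j_R = 2\pi\sum_{p\in\Lambda_R}\delta_p - 1$ and --- crucially --- $j_R\cdot\tau=0$ on $\partial K$, all at a cost $o(R^2)$. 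In Corollary~\ref{periodisation} one subtracts $\nabla\zeta$ (with $\Delta\zeta=\div j_R$, $\zeta=0$ on $\partial K$) to make this divergence-free, writes $\tilde\jmath_R=-\np H_R$, and notes that $\partial_\nu H_R=0$ on $\partial K$. Reflecting $H_R$ across the coordinate axes to $[-R,R]^2$ then automatically gives a function taking equal values on opposite sides, whose Laplacian is still $1-2\pi\sum\delta_p$ with reflected vortices; one periodizes freely. The only boundary condition one needs is the Neumann one, far weaker than genuine periodicity, and no trace matching between opposite faces arises.

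Even producing $j_R$ with vanishing tangential trace and $o(R^2)$ energy in the collar is nontrivial; the paper does it (Lemmas~\ref{lemmebord}, \ref{lemrect1}, \ref{lemrect2}) via a mean-value argument for a good inner boundary $\partial K_t$, splitting the collar into $O(\sqrt R)$ rectangles of area chosen in $2\pi\mn$, tiling each by unit-size cells carrying one vortex apiece, and solving Neumann problems to absorb both the tangential data of $j$ and the flux deficit. Your cube-by-cube remedy is in this spirit, but as written it is a statement of a lemma rather than a proof. So: right overall strategy, genuine gap at the screening step, and a concrete recommendation to trade the torus-gluing for the $j\cdot\tau=0$-plus-reflection scheme, which removes the matching obstruction. (You also address only item~4 and invoke items~1--3, which the paper proves via Proposition~\ref{pw1}; that is a legitimate division of labor but should be acknowledged explicitly.)
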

 The question of identifying the minimum and minimizers of $W$ seems very difficult. In fact   it is natural to expect  that the
triangular lattice (of appropriate volume) minimizes  $W$ over any $\mathcal{A}_m$, we will come back to this below.
 We show here a weaker but nontrivial result: the triangular
 lattice is the unique minimizer among lattice configurations.

When the set of points $\Lambda$ itself  is a lattice, i.e. of the form $\mz\vu\oplus\mz\vv$,
  denoting by $j_\Lambda $ the $j$ which is as in Lemma \ref{lemperiointro}, that lemma   shows that $W(j_\Lambda) $ is equal to $c_{(\vu, \vv)}$ and  only depends on  the lattice $\Lambda$.
   We will denote it in this case by $W(\Lambda)$.
For the sake of generality, we state the result for any volume normalization:
\begin{theo}\label{th2}
Let $\lattices = \{\Lambda\mid \text{$\Lambda$ is a  lattice  and $j_\Lambda\in\mathcal{A}_m$}\}$. Then the  minimum of  $\Lambda\mapsto W(\Lambda)$ over $\lattices$  is achieved  uniquely, modulo  rotation, by the triangular lattice
$$\Lambda_m  = \sqrt{\frac{4\pi }{m\sqrt 3}}\((1,0)\mz\oplus\(\hal,\frac{\sqrt3}2\)\mz\).$$
\end{theo}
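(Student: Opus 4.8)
The plan is to reduce the statement to a classical extremal problem for modular forms via the Kronecker limit formula. By the scaling identity \eqref{minalai1} (equivalently \eqref{minalai}) it suffices to treat one fixed value of $m$, i.e. one fixed covolume; normalize so that the periodicity cell has area $2\pi$, as in Lemma~\ref{lemperiointro}. Modulo rotation and the fixed overall homothety, a lattice $\Lambda = \mz\vu\oplus\mz\vv$ is then determined by its \emph{shape}, i.e. by a point $\tau$ in the upper half-plane $\mathbb{H}$ modulo the action of $\mathrm{SL}_2(\mz)$, $\Lambda$ being homothetic to $\mz\oplus\mz\tau$. So $W$ descends to a function on $\mathbb{H}/\mathrm{SL}_2(\mz)$ and we must show it is uniquely minimized at the point corresponding to the hexagonal lattice, $\tau = e^{i\pi/3}$.

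A lattice is the case $n=1$ (one point per cell) of Lemma~\ref{lemperiointro}, so the off-diagonal sum in \eqref{Wcasper}--\eqref{formexpl} is empty and only the ``self-interaction'' constant survives:
$$W(\Lambda) = \hal\,\lim_{x\to 0}\Big( \sum_{p\in\Lambda^*\sm\{0\}} \frac{e^{2i\pi p\cdot x}}{4\pi^2|p|^2} + \log|x|\Big) = \hal\,\gamma_\Lambda,$$
where $\gamma_\Lambda$ is the Robin constant of the torus Green function $G$ at $0$ (one checks $G(x) = -\log|x| + \gamma_\Lambda + o(1)$ near $0$). I would then recognize $\gamma_\Lambda$, up to an explicit additive constant, as the constant term in the Laurent expansion at $s=1$ of the Epstein zeta function $\zeta_{\Lambda^*}(s)=\sum_{p\in\Lambda^*\sm\{0\}}|p|^{-2s}$ (or of the associated real-analytic Eisenstein series of $\Lambda^*$): both have a simple pole at $s=1$ with residue depending only on the covolume, so only the constant terms matter and these differ by a constant one computes once and for all. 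The first Kronecker limit formula then evaluates this constant term and yields, for constants $c_0$ depending only on the normalization,
$$W(\Lambda) = -\hal \log\!\big(\sqrt{\mathrm{Im}\,\tau}\;|\eta(\tau)|^2\big) + c_0,$$
with $\eta$ the Dedekind eta function; the right-hand side is $\mathrm{SL}_2(\mz)$-invariant as it must be, and since passing to the dual lattice acts by $\tau\mapsto -1/\tau$ it does not matter whether $\Lambda$ or $\Lambda^*$ is used.

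Consequently, minimizing $W$ over $\lattices$ is equivalent to \emph{maximizing} $\tau\mapsto \sqrt{\mathrm{Im}\,\tau}\,|\eta(\tau)|^2$ over $\mathbb{H}/\mathrm{SL}_2(\mz)$, equivalently maximizing $|\Delta(\tau)|(\mathrm{Im}\,\tau)^6$ since $\Delta=(2\pi)^{12}\eta^{24}$. That this maximum is attained uniquely at $\tau = e^{i\pi/3}$ is a classical result from number theory (Rankin, Cassels, Ennola, Diananda; see also the Chowla--Selberg formula), which I would invoke directly; one can also prove it by noting that $\partial_\tau\log\eta$ together with the reflection symmetries of the fundamental domain forces the extremum at the corner $e^{i\pi/3}$ and that it is a strict maximum there. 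The point $\tau=e^{i\pi/3}$ is exactly the shape of the triangular lattice, so unwinding the normalization and scaling back to $\mathcal{A}_m$ identifies the unique minimizer (modulo rotation) with $\Lambda_m$, as claimed.

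The main obstacle I expect is the bookkeeping in the middle step: rigorously matching the renormalized self-interaction constant of \eqref{formexpl} with the constant term of the Epstein zeta function, keeping track of every normalizing factor (the covolume factors, the precise relation between the $\log|x|$-regularization used here and the analytic continuation / residue at $s=1$, and the $\Lambda$ versus $\Lambda^*$ correspondence), so that the Kronecker limit formula can be applied cleanly. Once this reduction is in place, the extremal property of the hexagonal lattice is pure quotation, and uniqueness modulo rotation follows from uniqueness of the extremizing $\tau$.
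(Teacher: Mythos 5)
Your proposal is correct and is essentially the approach the paper takes: reduce by scaling to one covolume, express $W(\Lambda)$ in closed form via the Kronecker limit formula in terms of the Dedekind eta function (Lemmas \ref{eisen} and \ref{517}), and invoke the classical extremal result of Cassels--Rankin--Ennola--Diananda. The paper's main path differs only organizationally: it derives the eta formula via the \emph{second} Kronecker limit formula, relates $W$ to the Epstein zeta by a separate analytic argument (the longest part of the section, addressing precisely the ``bookkeeping'' you flag as the main obstacle), and then, rather than quoting the maximality of $\sqrt{\operatorname{Im}\tau}\,|\eta(\tau)|^2$ directly, passes to the limit $s\to 1$ using Montgomery's theta-function / Mellin-transform representation with dominated convergence --- a step your ``pure quotation'' somewhat glosses over, since the original Cassels--Rankin--Ennola--Diananda results are stated for $s>1$ rather than for the constant term at $s=1$.
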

%\footnote{j'ai change le facteur devant car il semblait faux}
The normalizing factor ensures that the periodicity cell has area
$2\pi/m$, or equivalently that $\Lambda_m\in\mathcal{A}_m $.
\begin{remark} The  value of $W$ for the triangular lattice  with $m=1$ estimated numerically from  formula \eqref{eis} or \eqref{Weta} is $\simeq -0.2011 $. For the square lattice it is  $\simeq - 0.1958$.
\end{remark}
Theorem \ref{th2} is proven by expressing $j_\Lambda$ using Fourier
 series. Then  minimizing $W$ over $\lattices$ is a
 limit case
of minimizing the Epstein  $\zeta$ function
$$\Lambda \mapsto \sum_{p \in \Lambda\backslash \{0\}}
\frac{1}{|p|^{2+x}} $$ over $\lattices$ when  $x \to 0$.
 This question was answered by  Cassels, Rankin, Ennola, Diananda, \cite{cassels,rankin,ennola,ennola2,diananda}. In a  later self-contained paper  \cite{montgomery},
Montgomery shows that  the $\zeta$-function  is actually the Mellin transform of another  classical function  from number theory: the Theta function
\begin{equation}\label{theta}
\theta_\Lambda (\a)= \sum_{p \in \Lambda} e^{-\pi \a|p|^2}.\end{equation}
He then deduces the minimality of the $\zeta$ function at the triangular lattice from the corresponding result for the $\theta$ function (we will give more details in Section \ref{latticecase}).

The main open question  is naturally to show  that the triangular lattice
is a minimizer  among all configurations:
\begin{conj}\label{conj}
 The lattice $\Lambda_m$ being defined as in Theorem \ref{th2},
  we have $W(\Lambda_m)= \min_{\mathcal{A}_m} W.$
  \end{conj}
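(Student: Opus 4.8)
Since the statement is the central open conjecture, what follows is a plan of attack, with the essential obstruction isolated at the end.

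\emph{Step 1: reduction to periodic configurations and the Fourier formula.} By \eqref{minalai1} and \eqref{minalai} it suffices to treat $m=1$, i.e. to prove $\min_{\ainfty}W=W(\Lambda_1)$; as $\Lambda_1\in\ainfty$ the inequality ``$\le$'' is free, so the whole content is the lower bound $W(j)\ge W(\Lambda_1)$ for every $j\in\ainfty$. By Theorem~\ref{thw}(4) the infimum of $W$ over $\ainfty$ is already approached along configurations that are periodic (indeed square-periodic), so $\inf_{\ainfty}W=\inf\{W(j):j\ \text{periodic}\}$ and it is enough to prove the bound for periodic $j$. For such a $j$, after rescaling so that the period torus $\T_{(\vu,\vv)}$ has area $2\pi$ and carries $n$ points $a_1,\dots,a_n$ (so $j=j_{\{a_i\}}\in\mathcal{A}_n$), Lemma~\ref{lemperiointro} and \eqref{formexpl} give, writing $L=\mz\vu\oplus\mz\vv$ and $\widehat\rho(p)=\sum_{i=1}^n e^{2i\pi p\cdot a_i}$,
\[
W(j_{\{a_i\}})=\frac12\sum_{p\in L^*\setminus\{0\}}\frac{|\widehat\rho(p)|^2-n}{4\pi^2|p|^2}\;+\;\frac n2\lim_{x\to0}\Big(\sum_{p\in L^*\setminus\{0\}}\frac{e^{2i\pi p\cdot x}}{4\pi^2|p|^2}+\log|x|\Big),
\]
and, by \eqref{minalai}, the desired inequality becomes $W(j_{\{a_i\}})\ge W(\Lambda_n)$, $\Lambda_n$ being the triangular lattice of $\mathcal{A}_n$.

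\emph{Step 2: the hoped-for reduction to Theorem~\ref{th2}.} The first ("interaction") sum in the formula above measures how far the point pattern is from a subgroup structure: by the equidistribution $\mathrm{mean}_{p\in L^*}|\widehat\rho(p)|^2=n$, the set $\{p\in L^*:\widehat\rho(p)=0\}$ has density at most $\tfrac{n-1}{n}$ in $L^*$, and this density is attained \emph{precisely} when $\{a_1,\dots,a_n\}$ is, up to translation, an order-$n$ subgroup $M/L$ of $\T_{(\vu,\vv)}$ — equivalently when $j_{\{a_i\}}=j_M$ for a lattice $M\supset L$ with $[M:L]=n$, necessarily in $\mathcal{A}_n$, so that $W(j_{\{a_i\}})=W(M)$. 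Hence, \emph{if} one could show that for every periodic configuration there is a choice of period lattice together with such an over-lattice coset that does not increase $W$, then minimizing $W$ over periodic $j$'s would reduce to minimizing $W$ over lattices, which is exactly Theorem~\ref{th2} (the triangular lattice, obtained via the Epstein $\zeta$-function and, following \cite{montgomery}, the $\theta$-function \eqref{theta}), and the conjecture would be proved.

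\emph{Step 3: the obstacle.} This program stalls because the two contributions in the Step~1 formula are not decoupled: the interaction sum and the renormalized lattice term both depend on the period lattice $L$, and the minimization over $L$ and over the $n$ interior points is genuinely joint — there is no a~priori reason that the optimal pattern in a generic period lattice is an over-lattice coset, nor that refining the period lattice can only lower $W$. Equivalently, using $\tfrac1{|p|^2}=4\pi^2\int_0^\infty e^{-4\pi^2|p|^2t}\,dt$ one may rewrite $W$ of a configuration as a Mellin/heat integral of a theta-type sum attached to the configuration, exactly as for lattices in \cite{montgomery}; the conjecture would follow from a \emph{pointwise-in-$t$} domination of that theta sum by the one of $\Lambda_1$. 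Such a domination is \emph{false} even among lattices — the hexagonal theta function is extremal only on a half-line of the parameter and anti-extremal on the complementary half-line, the two halves balancing out only after integration, which is what makes Theorem~\ref{th2} nontrivial — and at the level of arbitrary configurations the inequality one needs is precisely the conjectured \emph{universal optimality of the hexagonal lattice in dimension two}, itself open. The honest status is therefore: the conjecture follows from two-dimensional universal optimality; absent that, one can only secure partial results, namely the lattice case (Theorem~\ref{th2}), finiteness and attainment (Theorem~\ref{thw}), and — plausibly, via a Hessian computation on \eqref{formexpl} — local minimality of $\Lambda_m$ among periodic perturbations, while the global statement appears to require new input.
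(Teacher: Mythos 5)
You have not proved the statement, and indeed you could not have: this is Conjecture~\ref{conj}, which the paper explicitly leaves open, so there is no proof of it in the paper to compare against. Your honest framing is therefore appropriate, and your outline tracks the paper's own remarks: the scaling reduction \eqref{minalai1}--\eqref{minalai} to $m=1$, the reduction to periodic configurations via item 4 of Theorem~\ref{thw}, the Fourier expression of Lemma~\ref{lemperiointro} and \eqref{formexpl}, the lattice case settled by Theorem~\ref{th2} through the Epstein zeta and Theta functions, and the analogy with the crystallization/universal-optimality circle of ideas that the paper itself invokes via \cite{ck}. Your final assessment — that the global statement hinges on an optimality principle for the triangular lattice among \emph{arbitrary} configurations, which is open in dimension two — is consistent with the paper's discussion.

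Two inaccuracies in your supporting reasoning should be corrected. First, your Step 3 claim that pointwise-in-parameter domination ``is false even among lattices,'' with the hexagonal Theta function extremal on one half-line and anti-extremal on the other, is wrong: Montgomery's theorem \cite{montgomery} asserts precisely that, among lattices of fixed covolume, the triangular lattice minimizes $\theta_\Lambda(\alpha)$ in \eqref{theta} for \emph{every} $\alpha>0$ (the triangular lattice being self-dual up to rotation, Poisson summation creates no conflict between the two half-lines; the folding of the Mellin integral onto $[1,\infty)$ in the proof of Theorem~\ref{th2} is a convenience, not a sign of anti-extremality). What is genuinely open is the analogous pointwise comparison over all admissible configurations, i.e.\ two-dimensional universal optimality — so the obstruction you name is right, but its location is among general configurations, not among lattices. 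Second, your Step 2 rigidity claim (that the vanishing set of $\widehat\rho$ has density at most $(n-1)/n$, with equality only for subgroup cosets, and that this could be leveraged to pass from periodic configurations to lattices without increasing $W$) is heuristic: the averaging statement needs a Ces\`aro/density formulation, the weights $|p|^{-2}$ are far from uniform, and no monotonicity of $W$ under replacing a periodic pattern by an over-lattice coset is available — which is exactly why the joint minimization over the period lattice and the interior points does not decouple.
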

 Note that a minimizer of $W$
cannot be unique since compact perturbations  do not affect the
value of $W$, as seen in the fifth remark in Section \ref{sec14}. However, it could be that the triangular lattice is the only minimizer which is also
a local minimizer in the following sense: if $\Lambda'$ is any set
of points differing from $\Lambda$ by a finite number of points, and $j_{\Lambda'}$ a corresponding perturbation of $j_\Lambda$,
then
$$\lim_{R \to \infty} W(j_{\Lambda'} , \chi_{B_R})- W(j_{\Lambda},
\chi_{B_R})  \ge 0.$$(Note that here we do not normalize
by $|B_R|$.)

A first motivation for this conjecture comes from  the physics:   in the experiments on superconductors, triangular lattices of vortices, called   Abrikosov lattices, are observed, as predicted by the physicist Abrikosov  from Ginzburg-Landau theory.
 But we shall prove here, cf. Theorem \ref{th1},  that  vortices of minimizers of  the Ginzburg-Landau energy functional (or rather, their associated ``currents") converge in some asymptotic limit to minimizers of $W$,  so if one believes experiments show ground states, then it can be expected that the triangular lattice corresponds to  a minimizer of $W$.
Another motivation for this conjecture is that,   returning to  the expression \eqref{Wroi}--\eqref{WR},
  $W$ can be seen as a renormalized way of computing $\|2\pi \sum_p \delta_p - 1\|_{H^{-1}}$, thus minimizing $W$ over $\ainfty $  is heuristically  like trying to minimize
  $\|2\pi \sum_p \delta_p - 1\|_{H^{-1}}$ over points in the plane,
   or trying to allocate points in the plane in the most uniform
   manner. By analogy with packing problems and other
   crystallisation problems, it seems natural,
   although far out  of reach, that this
   could be accomplished by the triangular lattice. Positive answers are found in \cite{radin} for packing problems, \cite{theil} for some very short-range pairwise interaction potential,  and   one also finds the same conjecture and some supporting arguments in \cite{ck} Section 9,  for  a certain (but different) class of interaction potentials. But we note however  again that here the interaction
   between the points is logarithmic hence long range, in contrast with  these known results. 
Finally,  in dimension 1, the situation is much easier, since we can prove in \cite{ss7} that  the minimum of the  one-dimensional analogue of $W$ is indeed achieved by the perfect lattice $\mz$ (suitably rescaled).

We have seen in Theorem \ref{thw}  that $W$ has a minimizer which is a limit of periodic configurations with large period.  This  will be used crucially for the energy upper bound  on Ginzburg-Landau in Section \ref{bornesup}. It also connects the question of minimizing $W$ in all generality to the simpler one of minimizing it in the periodic setting.  By the formula \eqref{Wcasper} the problem in the periodic setting reduces to minimizing $\sum_{i \neq j}G(a_i- a_j)$ or \eqref{formexpl} over the torus. The points that achieve such minima (on all types of surfaces) are called Fekete points (or weighted Fekete sets) and are important in potential theory, random matrices, approximation, see \cite{safftotik}. Their average distribution  is well-known (see \cite{safftotik}), in our setting it is uniform, but their precise location is more delicate to study (in \cite{ss6} we make progress in that direction and connect them to $W$).
 As already mentioned in the last remark of the previous subsection, estimates on the minimum value of $W$ in this setting are also used in number \medskip  theory.

The energy  $W$ also bears some ressemblance with a
   nonlocal interaction energy related to diblock copolymers, sometimes called ``Ohta-Kawasaki model"
     and studied in particular in a recent paper of Alberti,
     Choksi and Otto \cite{aco},  see also \cite{muratov,gms1}
      (and previous references therein):
      there, one also has  a logarithmic interaction,
      but the Dirac masses are replaced by nonsingular charges,
      and  so no renormalization is needed.   More precisely
     the interaction energy is 
      $\|u-m\|_{H^{-1}}$ where $m$ is a fixed constant in $[-1,1]$
      and $u$ takes values in $\{-1,1\}$, whose $BV$ norm is also
       penalized.
  There, triangular lattice configurations are also observed, it can even be shown \cite{ms} that
  the energy $W$ can be derived as a limit in the regime where $m \to 1$ (in this regime, the problem becomes singular again). Also,
   the analogue result to  our  Theorem \ref{th2}
   is proven for that model  in \cite{chen-oshita}
    using also modular  functions.
      In \cite{aco}  it is proven  that for all $m \in [-1,1]$
       the energy for minimizers is uniformly distributed. Our  study of $W$ in Section \ref{secW} is similar in spirit. Note that  results of equidistribution of energy analogous to \cite{aco} could also most likely  be proven with our  method.

\subsection{Lower bounds for two-scale energies via the ergodic theorem}\label{secergo}

In this subsection we present an abstract framework for proving lower bounds on energies which contain two scales (one much smaller than the other). This framework will then be crucially used in this paper, both for proving the results of $W$ in Theorem \ref{thw} and for obtaining the lower bounds for Ginzburg-Landau in Theorem \ref{th1} and \ref{th1prime}. We believe it is of independent interest as well.
  
  The question is to
deduce from a  $\Gamma$-convergence  (in the sense of De Giorgi)  result at a certain scale a statement
at a larger scale. The framework 
can thus  be seen as a type of $\Gamma$-convergence result for 2-scale energies.  The lower bound is expressed  in terms of a probability  measure, which can be seen as a Young measure on  profiles (i.e. limits of the configuration functions viewed in the  small scale). Following the suggestion of  Varadhan, this
is achieved by using Wiener's multiparameter ergodic theorem, as
stated in  Becker \cite{becker}.     Alberti and M\"uller introduced in
\cite{am}  a different framework for a somewhat similar goal, with a similar notion of Young measure, that they called ``Young measures on micropatterns".
In contrast with Young measures, these measures (just like ours)  are not a probability measure on values taken by the functions, but rather  probability measures on the whole  limiting profile.
The spirit of both frameworks is the same, however Alberti and M\"uller's method did not use the ergodic theorem. It was also a bit more 
general since it dealt with problems that are not homogeneous at the
larger scale but admit slowly varying parameters (however we generalize to such dependence in forthcoming work), and it adressed
the $\Gamma$-limsup aspect as well. Our method is more
rudimentary, but maybe also more flexible. 
  In forthcoming work, we refine it  and include    a version with dependence on the ``slow  (larger scale) variable" in \cite{ss6}, as well as an application to random homogenization in \cite{bss}.

% We shall refine this method and extend it to the study
%of stochastic functionals (for the purpose of homogenization) in
%forthcoming work \cite{bss}.
\medskip

Let $X$ denote a Polish metric space (for reference see \cite{dudley}). When we speak of
measurable functions on $X$ we will always mean Borel-measurable. We
assume that there exists  an $n$-parameter  group of transformations
$\theta_\lambda $ acting continuously on $X$. More precisely we require that
\begin{itemize}
\item[-] For all $u\in X$ and $\lambda, \mu\in \mr^n$, $ \theta_\lambda (\theta_\mu u)=\theta_{\lambda+\mu} u$, $\theta_0 u=u$.\item[-] The map $(\lambda, u) \mapsto \theta_\lambda u $ is measurable  on $\mr^n \times X$.
\item[-] The map $(\lambda,u)\mapsto \theta_\lambda u$ is continuous  with respect to each variable (hence measurable with respect to both).
\end{itemize}
Typically we think of $X$ as a space of functions defined on $\mr^n$
and $\theta$ as the action of translations, i.e. $\theta_\lambda
u(x)=u(x+\lambda)$.

We also consider  a family  $\{\omega_\ep\}_\ep$ of domains of
$\mr^n$ such that  for any $R>0$ and letting $\omega_{\ep,R}=\{ x \in \omega_\ep\mid\dist(x, \p \omega_\ep)
> R\}$, we have
\begin{equation}\label{domaine} \text{$|\omega_\ep|\sim|\omega_{\ep,R}|$ as $\ep\to 0.$}\end{equation}
In particular the diameter of $\omega_\ep$ tends to $+\infty$ as
$\ep\to 0$.

Finally we let $\{f_\ep\}_\ep$ and $f$ be measurable nonnegative
functions on $X$, and assume that for any family $\{u_\ep\}_\ep$
such that
\begin{equation}\label{2.1bis}
\forall R>0, \quad \limsup_{\ep\to 0}\int_{B_R} f_\ep(\theta_\lambda u_\ep) \, d\lambda<+\infty\end{equation}
the following holds:
\begin{enumerate}
\item(Coercivity) $\{u_\ep\}_\ep$ admits a convergent subsequence.
\item ($\Gamma$-liminf)   If %$f_\ep$ $\Gamma$-converges to some function $f$ on $X$, in the sense that
$\{u_{\ep}\}_\ep$  is a convergent subsequence (not relabeled) and
$u$ is its limit,  then
\begin{equation}\label{2.1ter}
\liminf_{\ep\to 0} f_\ep(u_\ep) \ge f(u).\end{equation}
\end{enumerate}

The abstract result is 
\begin{theo}\label{gamma} Let $\{\theta_\lambda\}_\lambda$, $\{\omega_\ep\}_\ep$ and $\{f_\ep\}_\ep$, $f$ be as above. Let $\{\UR\}_{R>0}$ be a family satisfying  \eqref{hypsets}. Let
\begin{equation}\label{FEP}
F_\ep (u) = \dashint_{\omega_\ep} f_\ep( \theta_\lambda u) \,d\lambda.\end{equation}
Assume that $\{F_\ep(u_\ep)\}_\ep$ is bounded. Let  $P_\ep$ be the
image of the normalized Lebesgue measure on $\omega_\ep$  under
the map $\lambda \mapsto \theta_\lambda u_\ep$. Then $P_\ep$
converges along a subsequence to  a Borel probability measure $P$ on $X$ invariant under
the action  $\theta$ and  such that
\begin{equation}
\label{rg1} \liminf_{\ep \to 0} F_\ep(u_\ep) \ge \int f(u)\, dP(u).
\end{equation}
Moreover, for any family $\{\UR\}_{R>0}$ satisfying \eqref{hypsets}, we have
\begin{equation}
\label{rg2} \int f(u)\, dP(u)= \int f^*(u) \, dP(u), 
\end{equation} with $f^*$ given by 
\begin{equation}\label{fstarr}
f^*(u)  := \lim_{R\to
+\infty}\dashint_{\UR} f(\theta_\lambda
u)\,d\lambda.\end{equation}
In particular, the right-hand side of \eqref{rg2} is independent of the choice of  $\{\UR\}_{R>0}$.
\end{theo}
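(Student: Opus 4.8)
The plan is to (1)~extract the limit $P$ and verify its invariance, (2)~establish the lower bound \eqref{rg1}, and (3)~deduce \eqref{rg2} from Wiener's ergodic theorem. Write $\Phi_\ep(\lambda)=\theta_\lambda u_\ep$, so that $P_\ep$ is the image of $\frac1{|\omega_\ep|}\1_{\omega_\ep}\,d\lambda$ under $\Phi_\ep$ and $F_\ep(u_\ep)=\int_X f_\ep\,dP_\ep$.

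\textbf{Step 1 (extraction and invariance).} The basic estimate is that, by Fubini, the group law $\theta_\mu\theta_\lambda=\theta_{\mu+\lambda}$, and the inclusion $\lambda+B_k\subset\omega_\ep$ for $\lambda\in\omega_{\ep,k}$,
\[
\frac1{|\omega_\ep|}\int_{\omega_{\ep,k}}\left(\int_{B_k}f_\ep(\theta_{\mu+\lambda}u_\ep)\,d\mu\right)d\lambda\ \le\ |B_k|\,F_\ep(u_\ep).
\]
Since $\sup_\ep F_\ep(u_\ep)=:C<\infty$, Chebyshev's inequality shows that for any thresholds $(M_k)_k$ the set of $\lambda\in\omega_{\ep,k}$ where the inner integral exceeds $M_k$ has measure at most $C|B_k||\omega_\ep|/M_k$. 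Choosing $M_k$ growing fast in $k$ and using \eqref{domaine} (so that $|\omega_\ep\setminus\omega_{\ep,k}|=o(|\omega_\ep|)$ for each fixed $k$), we obtain for each $N$ a Borel ``good set'' $G_\ep^N\subset\omega_{\ep,N}$ on which $\int_{B_k}f_\ep(\theta_{\mu+\lambda}u_\ep)\,d\mu\le M_k$ for all $k\le N$, with $|\omega_\ep\setminus G_\ep^N|/|\omega_\ep|$ arbitrarily small once $N$ is large and $\ep$ small. For $\lambda\in G_\ep^N$ the translate $\theta_\lambda u_\ep$ satisfies \eqref{2.1bis} up to scale $N$, so any sequence $\theta_{\lambda_j}u_{\ep_j}$ with $\ep_j\to0$ and $\lambda_j\in G_{\ep_j}^{N_j}$, $N_j\to\infty$, satisfies \eqref{2.1bis} at every scale and hence, by coercivity, is precompact. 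A diagonal argument then shows that $\mathcal K:=\bigcap_N\mathcal K_N$ is compact, where $\mathcal K_N$ is the closed set of subsequential limits of $\theta_{\lambda_j}u_{\ep_j}$ with $\ep_j\to0$, $\lambda_j\in G_{\ep_j}^N$; and that any open $U\supset\mathcal K$ contains $\Phi_\ep(G_\ep^N)$ for some $N$ and all small $\ep$ (otherwise coercivity produces a sequence drawn from the $G^N$'s which converges into $\mathcal K\subset U$ yet stays outside $U$). Hence $P_\ep(X\setminus U)\le|\omega_\ep\setminus G_\ep^N|/|\omega_\ep|$ is small, $\{P_\ep\}$ is tight, and Prokhorov's theorem yields a subsequence with $P_\ep\rightharpoonup P$, a Borel probability measure. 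Finally $P$ is $\theta$-invariant: for $\phi\in C_b(X)$ the map $v\mapsto\phi(\theta_\mu v)$ lies in $C_b(X)$ by continuity of the action, and a change of variables gives $|\int\phi\circ\theta_\mu\,dP_\ep-\int\phi\,dP_\ep|\le\|\phi\|_\infty|(\mu+\omega_\ep)\sd\omega_\ep|/|\omega_\ep|\to0$ (since $\pm\mu+\omega_{\ep,R}\subset\omega_\ep$ for $R\ge|\mu|$, by \eqref{domaine}), whence $\int\phi\circ\theta_\mu\,dP=\int\phi\,dP$.

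\textbf{Step 2 (lower bound).} Let the thresholds now depend on a small parameter $\eta$, so the corresponding good sets satisfy $|\omega_\ep\setminus G_\ep^{N_\ep,\eta}|\le(\eta+o(1))|\omega_\ep|$ with $N_\ep\to\infty$, and let $R_\ep^\eta$ be the image of $\frac1{|\omega_\ep|}\1_{G_\ep^{N_\ep,\eta}}\,d\lambda$ under $\Phi_\ep$; then $R_\ep^\eta\le P_\ep$, $R_\ep^\eta(X)\ge1-\eta-o(1)$, and $\int f_\ep\,dR_\ep^\eta\le F_\ep(u_\ep)$. Along a further subsequence $R_\ep^\eta\rightharpoonup R^\eta$ with $R^\eta\le P$ and $R^\eta(X)\ge1-\eta$. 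I will use the following lower semicontinuity principle: if finite Borel measures $\nu_\ep\rightharpoonup\nu$ are concentrated on Borel sets $A_\ep$ and nonnegative Borel $g_\ep,g$ satisfy $\liminf_\ep g_\ep(x_\ep)\ge g(x)$ whenever $x_\ep\in A_\ep$ and $x_\ep\to x$, then $\liminf_\ep\int g_\ep\,d\nu_\ep\ge\int g\,d\nu$. (Replace $g$ by the l.s.c.\ relaxed limit $\underline g(x)=\inf\{\liminf_\ep g_\ep(x_\ep):x_\ep\in A_\ep,\ x_\ep\to x\}\ge g$, truncate at a height $T$, approximate $\underline g\wedge T$ from below by bounded continuous $\psi_m\nearrow\underline g\wedge T$, and note that on $A_\ep$ one cannot have $\min(g_\ep,T)<\psi_m-t$ along a convergent sequence, so by tightness those sets have $\nu_\ep$-mass $\to0$; conclude and let $m\to\infty$, $t\to0$, $T\to\infty$.) Apply it with $\nu_\ep=R_\ep^\eta$, $A_\ep=\Phi_\ep(G_\ep^{N_\ep,\eta})$, $g_\ep=\min(f_\ep,T)$, $g=\min(f,T)$: every point of $A_\ep$ is $\theta_{\lambda_\ep}u_\ep$ with $\lambda_\ep\in G_\ep^{N_\ep,\eta}$, so (because $N_\ep\to\infty$) a convergent such sequence obeys \eqref{2.1bis} at every scale, and the $\Gamma$-liminf hypothesis \eqref{2.1ter} gives $\liminf_\ep f_\ep(x_\ep)\ge f(x)$, hence $\liminf_\ep\min(f_\ep(x_\ep),T)\ge\min(f(x),T)$. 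Therefore
\[
\liminf_\ep F_\ep(u_\ep)\ \ge\ \liminf_\ep\int\min(f_\ep,T)\,dR_\ep^\eta\ \ge\ \int\min(f,T)\,dR^\eta\ \ge\ \int\min(f,T)\,dP-T\eta,
\]
and letting $\eta\to0$ then $T\to\infty$ (monotone convergence) proves \eqref{rg1}; in particular $\int f\,dP<\infty$, i.e.\ $f\in L^1(P)$.

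\textbf{Step 3 (ergodic identity).} Let $\{\UR\}$ be any family satisfying \eqref{hypsets}; in particular it is a Vitali family. Since $P$ is $\theta$-invariant and $f\in L^1(P)$, Tonelli and invariance give $\int_X(\int_{\UR}f(\theta_\lambda u)\,d\lambda)\,dP(u)=|\UR|\int f\,dP<\infty$, so for $P$-a.e.\ $u$ the map $\lambda\mapsto f(\theta_\lambda u)$ is locally integrable. Wiener's multiparameter pointwise ergodic theorem for measure-preserving $\mr^n$-actions averaged over Vitali families (in the form of \cite{becker}) then gives $\dashint_{\UR}f(\theta_\lambda u)\,d\lambda\to E_P(f\mid\mathcal I)(u)$ for $P$-a.e.\ $u$, where $\mathcal I$ is the $\sigma$-algebra of $\theta$-invariant Borel sets. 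Thus $f^*$ in \eqref{fstarr} is defined $P$-a.e.\ and equals $E_P(f\mid\mathcal I)$ $P$-a.e.; consequently $\int f^*\,dP=\int E_P(f\mid\mathcal I)\,dP=\int f\,dP$, which is \eqref{rg2}. Since $E_P(f\mid\mathcal I)$ is intrinsic to the dynamical system $(X,P,\theta)$, the right-hand side of \eqref{rg2} does not depend on the choice of $\{\UR\}$.

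\textbf{Main obstacle.} The delicate step is the tightness argument in Step~1: one must upgrade the single-scale bound on $F_\ep(u_\ep)$ to the all-scales bound \eqref{2.1bis} needed to apply coercivity, and carefully manage the order of limits in $\ep$ and in the ``depth'' $N$ of the good sets (note that $\omega_{\ep,N}$ is empty for fixed $\ep$ once $N$ is large). This is precisely what the Vitali-family and ``fattening'' conditions \eqref{hypsets}--\eqref{domaine} are meant to control. The lower semicontinuity principle of Step~2 is the other technical ingredient, but it is standard once the good-set truncation has reduced matters to a genuine $\Gamma$-liminf.
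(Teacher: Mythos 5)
Your proposal follows the paper's overall strategy (Fubini estimate, tightness, a lower-semicontinuity lemma, Wiener's ergodic theorem), but there is a genuine gap in Step~1, precisely at the point the paper supplies with its Lemma~2.1 (Lesigne). You establish that for any open $U\supset\mathcal K$ and any $\delta>0$, one has $P_\ep(X\setminus U)\le\delta$ for $\ep$ small. This does \emph{not} imply tightness: the closures of the open neighborhoods of a compact set need not be compact in a Polish space, and ``$P_\ep(U)\ge 1-\delta$ for all open $U\supset\mathcal K$, $\ep$ small'' is in general strictly weaker than ``$\sup_\ep P_\ep(X\setminus K)\le\delta$ for a single compact $K$''. (Think of $P_n=\tfrac12\delta_0+\tfrac12\delta_n$: the open--neighborhood condition holds with $\mathcal K=\{0\}$ and $\delta=1/2$ but the family is not tight.) In your setup one cannot even identify $\mathcal K$ as the candidate tight set, since $P_\ep(\mathcal K)$ may well be $0$; the mass lives on $\Phi_\ep(G_\ep^N)$, which merely accumulates on $\mathcal K$. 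The missing step is exactly what the paper's Lemma~2.1 supplies: by inner regularity of each Borel $P_\ep$ on a Polish space, replace $\Phi_\ep(G_\ep^{N_\ep})$ by a compact subset $K_\ep$ with $P_\ep(K_\ep)\ge 1-2\delta$, and then verify that $K:=\overline{\bigcup_\ep K_\ep}$ is compact, using that any sequence with $x_{\ep}\in K_\ep$ has a convergent subsequence (which you have already established via coercivity, the nestedness of the good sets, and $N_\ep\to\infty$). With that lemma (or its one-paragraph proof) the extraction of $P$ is correct.

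Two further remarks. First, in your good-set construction the depth index $N$ and the rate $\ep\to 0$ must be coupled carefully: for a fixed depth $N$, the boundary correction $\sum_{k\le N}|\omega_\ep\setminus\omega_{\ep,k}|/|\omega_\ep|$ is only small for $\ep$ small depending on $N$, so one must first extract a subsequence $\ep_n\to 0$ and then set $N=N_{\ep_n}=n$; as written, the order of quantifiers is ambiguous. Second, your Step~2 is actually a refinement of the paper's argument: the paper's Lemma~2.3 invokes a $\Gamma$-liminf for arbitrary convergent sequences, whereas the theorem's hypothesis only provides it for sequences satisfying \eqref{2.1bis}. By truncating $f_\ep$ and restricting the measure to the good-set image $A_\ep$ before applying lower semicontinuity, you correctly reduce to sequences to which \eqref{2.1ter} applies; this is a more faithful use of the stated hypotheses. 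Step~3 (Wiener/Becker, conditional expectation onto the invariant $\sigma$-algebra) matches the paper.
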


\begin{remark}\label{remF}  The result \eqref{rg2} is simply the ergodic theorem in multiparameter  form.    Part of the result is that the limit in \eqref{rg2} exists for $P$-almost every $u$.
\end{remark}
The probability measure $P$ is the ``Young measure" on limiting profiles  we were referring to before, indeed it encodes the limit of all translates of $u_\ep$. 
 Note  that $P_\ep\to P$ implies that  $P$ almost every $u$ is of the form  $\lim_{\ep\to 0}\theta_{\lambda_{\ep}} u_\ep$.  
 The limit defining $f^*$ can be viewed as a ``cell problem", using the terminology of homogenization, providing the limiting small scale functional.
 \medskip
 
To illustrate this result,  we shall give two examples, in order of increasing generality, both models for what we will use here. 
\begin{example}\label{ex1}
Consider $X=\mathcal{M}(\mr^n)$ the set of positive bounded measures on $\mr^n$, and $\theta_\lambda$ the action of translations of $\mr^n$. Let $\chi $ be a given nonnegative smooth function with support in the unit ball of $\mr^n$,  and define  $f_\ep(\mu)=f(\mu)=\int_{\mr^n} \chi \, d\mu .$
Then one can check that $F_\ep$, as defined in \eqref{FEP}, is  $$ F_\ep(\mu)=\frac{1}{|\omega_\ep|}\int_{\mr^n} \chi * \indic_{\omega_\ep}\, d\mu.$$ The result of the theorem (choosing balls for example) is the the assertion that
$$\liminf_{\ep \to 0}   F_\ep(\mu_\ep )\ge    \int f(\mu)\, dP(\mu)= \mathbb{E}^P \( \lim_{R\to +\infty} \int \chi *\indic_{B_R} \, d\mu\).$$  The probability $P$ gives a measure over all  limiting profiles of $\mu_\ep$ (depending on the centering point), and the result says that the quantity we are computing, here
 the average of $\mu_\ep$ over the large sets $\omega_\ep$, can be bounded below by an average, this time over $P$, of a similar quantity for the limiting profiles.
This implies in particular that there is a $\mu_0$ in the support of $P$, hence of the form $\lim_{\ep \to 0} \mu_\ep(\lambda_\ep+ \cdot) $ such that
$$\liminf_{\ep \to 0} \frac{1}{|\omega_\ep|}\int_{\mr^n} \chi * \indic_{\omega_\ep}\, d\mu_\ep \ge \lim_{R\to+ \infty} \int \chi*\indic_{B_R} \, d\mu_0.$$
In other words we can find a good centering sequence $\lambda_\ep$  such that the average of $\mu_\ep$ over the large sets $\omega_\ep$  can be bounded from below by the average over large balls of the limit after centering,  $\mu_0$.
The averages over $\omega_\ep$ or $B_R$,  and the average with respect to $P$ are not of the same nature (in standard ergodic settings, the first ones are usually time averages, while the latter is a space average).\end{example}
\begin{example}\label{ex2}
 Let us assume we want to bound from below an energy which is the average over large (as $\ep \to 0$) domains $\omega_\ep$ of some nonnegative energy density $e_\ep(u)$, defined on a space of functions $X$ (functions over $\mr^n$),  $\dashint_{\omega_\ep } e_\ep(u(x))\, dx$, and we know the $\Gamma$-liminf behavior of $e_\ep(u)$ on small (i.e. here, bounded) scales, say we know how to prove that $\liminf_{\ep \to 0}\int_{B_R}e_\ep(u) \, dx\ge \int_{B_R} e(u)\, dx$. We cannot always directly apply such a knowledge to obtain a lower bound on
the average over large domains, this may be due to a  loss of boundary information, or due to the difficulty to   reverse limits $R\to \infty$ and $\ep \to 0$. (These are two obstructions that we encounter specifically both for Ginzburg-Landau and $W$, as illustrated by the 6th remark in Section \ref{sec14})
What we can do is   let $\chi $ be a smooth cutoff function as above, and  define  the functions  $f_\ep$ by
$$f_\ep(u)=\int e_\ep(u(x))\, \chi(x)\, dx,$$
that is $f_\ep$ can be seen as  the small scale local functional. Since we know the $\Gamma$-liminf behavior of  the energy density $e_\ep$ on small scales, let us assume we can prove that \eqref{2.1ter} holds for some function(al) $f$, a function on $u$ (we may expect that $f$ will also be of the form $\int e(u(x))\, \chi(x)\, dx.$)

Defining $F_\ep$ as in \eqref{FEP} and using Fubini's theorem, we see that
$$F_\ep(u)=\dashint_{\omega_\ep} \int_{\mr^n} e_\ep(u(x+y))\chi(x)\, dx\, dy
 = \frac{1}{|\omega_\ep|}\int_{z\in \omega_\ep-x}\int_x e_\ep(u(z))\chi(x)\, dx\, dz.$$
Since $\chi$ is supported in $B_1$, $\int \chi=1$,  and $\omega_\ep$ satisfies \eqref{domaine}, we check that
$$F_\ep(u)\sim_{\ep \to 0}   \dashint_{\omega_\ep} e_\ep(u(y)\, dy,$$
hence $F_\ep$ is asymptotically equal to the average we wanted to bound from below. We may thus apply the theorem and it yields a lower bound for the desired quantity:
$$\liminf_{\ep \to 0} F_\ep(u_\ep) \ge \int f^*(u)\, dP(u)
$$ with \begin{equation}
\label{cellprob}
f^*(u)=\lim_{R\to \infty}\dashint_{B_R} f(u(x+\cdot) ) \, dx\end{equation}
 The ``cell-function" $f^*$ is simply an average over large balls of  the local $\Gamma$-liminf $f$.
If typically $f$ is of the form $\int e(u(x))\, \chi(x)\, dx$ then  we can compute by Fubini that
$$f^*(u)= \lim_{R\to \infty}\frac{1}{|B_R|} \int e(u(x))(\chi*\indic_{B_R})(x)\, dx.$$
\end{example}

\subsection{The Ginzburg-Landau model}
Our original motivation  in this article is to  analyze  the  behaviour of minimizers of the  Ginzburg-Landau energy, given by
\begin{equation}
\label{je} \je(u,A)= \hal \io |\nab_{A} u|^2 +
|\curl A -\he|^2+\frac{(1-|u|^2)^2}{2\ep^2}.\end{equation} This is a
celebrated model in physics, introduced by Ginzburg and Landau in
the 1950's as a model for superconductivity. Here $\om$ is  a two
dimensional bounded  and simply connected domain,  $u$ is a
complex-valued function, ``order parameter" in physics, describing
the local state of the superconductor, $A:\om \mapsto \mr^2$
 is the vector potential of the magnetic field $h = \curl A= \nab \times A$, and $\nab_A $ denotes the operator $\nabla - i A$. Finally the parameter $\he$ denotes the intensity of the applied magnetic field and $\ep$ is a constant corresponding to a characteristic lengthscale (of the material). It is the inverse of $\kappa$, the Ginzburg-Landau parameter in physics.
 We are interested in the $\ep \to 0$ asymptotics.
 The quantity $|u|^2$ measures the local density of superconducting electron pairs ($|u|\le 1$). The material is in the superconducting phase wherever $|u|\simeq 1$ and in the normal phase where $|u|\simeq 0$.
  We focus our attention on   the zeroes of $u$ with nonzero topological degree (recall that $u$ is complex-valued), also known as the {\it vortices} of $u$. Here the typical lengthscale of the set where $|u|$ is small, hence of the vortex ``cores", is $\ep$.

The Euler-Lagrange equations associated to this energy with natural boundary conditions are
the Ginzburg-Landau equations
\begin{equation}
\label{gleq}\left\{
\begin{array}{ll}
-( \nab_A)^2 u = \frac{u}{\ep^2}(1-|u|^2) & \text{in } \ \om\\ [2mm]
- \np h = (iu, \nab_A u) & \text{in} \ \om\\
h=\he & \text{on} \ \bo
\\
\nu \cdot\nab_{A} u=0 & \text{on} \ \bo.
\end{array}\right.\end{equation}
where $\np$ denotes the operator $(-\p_2, \p_1)$, $\nu$ the outer
unit normal to $\bo$ and $(\cdot,\cdot)$ the canonical scalar
product in $\mc$ obtained by identifying $\mc$ with $\mr^2$.

This model is also famous as the simplest ``Abelian gauge theory".
Indeed it admits the $\mathbb{U}(1)$ gauge-invariance : the energy
(\ref{je}), the equation (\ref{gleq}), and all the physical
quantities are invariant under the transformation
$$\left\{\begin{array}{l}
u \to u e^{i\Phi}
\\
A \to A + \nab \Phi.\end{array} \right.$$

For a more detailed mathematical presentation of the functional, one
may refer to \cite{livre} and the references therein.

\subsection{Physical behaviour: critical fields and vortex lattices}\label{sec1.1}
 Here and in all the paper $a \ll b$ means $\lim a/b=0$.

For  $\ep$  small,   the minimizers of \eqref{je} depend on  the intensity $\he$ of the applied field as follows.

\begin{itemize}
\item[-] If  $\he$ is below a critical value called the {\it first critical field} and denoted by $\hci$, the
     superconductor is in the so-called {\it Meissner state} characterized by the expulsion of the magnetic field and the fact that $|u|\simeq 1$ everywhere. Vortices are absent in this phase.
\item[-] If  $\he$ is above  $\hci$, which is equivalent to $\lambda_\Omega\lep$ as $\ep\to 0$, then energy minimizers  have one vortex, then two,... The number of vortices increases with $\he$, so as to become equal to leading order to $\he/2\pi$ for fields much larger than $\lep$ (see \cite{livre}).  In this case, according to the picture we owe to A. Abrikosov and dating back to the late 1950's, vortices  repell each other and  organize themselves in {\it triangular lattices} named {\it Abrikosov lattices}. The theoretical predictions of Abrikosov have received ample experimental confirmation and there are numerous and striking observations of the lattices.
\item[-] If  $\he$ increases beyond  a second critical value $\hcii = 1/\ep^2$, another phase transition occurs where superconductivity disappears from the material, except for a boundary layer. Even this boundary layer completely disappears  when $\he$ is above a third critical field $\hciii$.
\end{itemize}

Since the works of  Ginzburg, Landau and Abrikosov, this model has
been largely studied in the physics literature. We refer to the classic  monographs and textbooks by De Gennes \cite{dg}, Saint-James Sarma - Thomas \cite{sst}, Tinkham \cite{t}.

The above picture describes the phenomenology of the model for small
values of $\ep$ (or high values of $\kappa$) in a casual way, but by now many rigorous
mathematical results support this picture. Except for the third
critical field however, whose existence was proven in a strong sense
by Fournais and Helffer \cite{fh,fournaishelffer-livre}, mathematical
results  really prove  the existence of intervals where the transition
between the different types of behaviour occur, rather than critical
values, with estimates on these  intervals as $\ep \to 0$.

Our goal is the study and description of the vortices
in  the whole range  $\hci < \he \ll \hcii$,  where minimizers of the
Ginzburg-Landau energy have a large number of vortices, expected to
form Abrikosov lattices.

\subsection{Connection to earlier mathematical  works}\label{sec12}
There is an abundant mathematical literature related to the study of
\eqref{je} and to the  justification of the physics picture above. A
relatively extensive bibliography is given in \cite{livre}, Chap.
14.

The techniques most relevant to us for the description of vortices
originate in the pioneering  book of Bethuel-Brezis-H\'elein \cite{bbh} and have
been further expanded by several authors including  in particular
Jerrard, Soner \cite{jerr,js}, ourselves, etc...
 For a more detailed description of these techniques we refer to \cite{livre}.
 In that book we describe how these techniques allow to derive  the asymptotic values of the critical field $\hci$ as $\ep \to 0$ as well as the mean-field description of minimizers and their vortices in the regimes $\he \ll \hcii$.

In order to describe the vortices of minimizers, one introduces the
{\it vorticity} associated to a configuration $(u,A)$,  defined by
\begin{equation}\label{muua}
 \mu(u,A) = \curl j(u,A) + \curl A, \quad \text{where $
j(u,A):=(iu,\nab_A u)$}\end{equation}
is the superconducting current. Here $(a, b)$ denotes the Euclidean scalar product in $\mc$ identified with $\mr^2$, so $j(u, A)$
 may also be written as $\frac i2\(u \overline{\nab_A u}  -
\bar u \nab_A u\)$, or as  $\rho^2(\nab\vp-A)$ if $u= \rho e^{i\vp}$, at
least where $\rho\neq 0$.

This  vorticity is the appropriate quantity to consider in this
context, rather than the quantity $\curl (iu, \nab_A u)$ which could
come to mind first.  It may be seen as a gauge-invariant version of
$\curl (iu, \nab u)$ which is also (twice) the Jacobian determinant
of $u$. Indeed if $A = 0$, then $\mu(u,A) =
2\partial_x u\times\partial_y u$. One can prove (this is the
so-called Jacobian estimate, see \cite{js, livre}) that assuming a
suitable bound on $\je(u,A)$, the vorticity $ \mu(u,A)$ is well
approximated, in some weak sense, as $\ep\to 0$ by a measure of the form $2 \pi \sum_i d_i \delta_{a_i}$. As points of concentration of
the vorticity, the points $\{a_i\}_i$ are naturally called vortices
of $u$ and  $d_i$, which  is an  integer, is called the degree of
$a_i$. The vorticity $\mu(u,A)$ may either describe individual
vortices or, after normalization by the number of vortices,  their
density.

In  previous work (summarized in  \cite{livre}, Chap. 7) we showed
that
\begin{equation} \label{hc1} \hci \sim \com \lep,\end{equation} for a constant $\com >\hal$ depending only on $\om$ (and such that $\com \to \hal $ as $\om \to \mr^2$).
More precisely, letting   $\he =
\lambda\lep$, we established by a $\Gamma$-convergence approach that
minimizers $(u_\ep,A_\ep)$ of $\je$ satisfy
\begin{equation}\label{1.3b}
\text{$\frac{\curl A_\ep}{\he}$ converges to
$h_\lambda$},\quad \text{$\frac{\mu(u_\ep,A_\ep)}\he$ converges to
$\mu_\lambda$, as $\ep\to 0$},\end{equation} where $\mu_\lambda =
-\Delta h_\lambda +h_\lambda$, and $h_\lambda$ is the solution of
the following minimization problem
\begin{equation}
\label{obs} \min_{h-1\in H^1_0(\om)}\frac{1 }{2\lambda} \io |-\Delta
h+h|+ \hal\io |\nab h|^2 +|h-1|^2.
\end{equation}
This problem is in turn equivalent to an obstacle problem, and as a
consequence there exists a subdomain $\omega_\lambda$ such that
\begin{equation}\label{mumin}
\mu_\lambda = m_\lambda \indic_{\omega_\lambda},\quad\text{where }
m_\lambda = 1-\frac 1{2\lambda},\end{equation} and where $\indic_A$
denotes the characteristic function of $A$. In other words the
optimal limiting vortex density $\mu_\lambda$ is  uniform  and equal to $m_\lambda$ over a
subregion of $\om$, completely determined by $\lambda$ i.e. by the
applied field $\he$. The constant  $\com$  introduced in (\ref{hc1})
is characterized by the fact that $\omega_\lambda= \varnothing $ if
and only if $\lambda <\com$. Since $\com >\hal$,  if $\omega_\lambda
\neq \varnothing$ we have \begin{equation}\label{126bis}
1\ge m_\lambda\ge 1- \frac{1}{2 \com}
>0.\end{equation}
  Since the  number of vortices is proportional to $\he = \lambda \lep$, it tends to $+\infty$ as $\ep \to 0$. It is  also established in \cite{livre} that as $\ep\to 0$, the  minimal energy has the following expansion,
\begin{equation}
\label{minG} \min \je = \frac{1}{2}\he \L \io \mu_\lambda +
\frac{\he^2}{2} \io |\nab h_\lambda |^2 + |h_\lambda -1 |^2 + o\(
\he \L\).\end{equation} When the applied field is much larger than
$\lep$, but much less than $1/\ep^2$, \eqref{1.3b} and  \eqref{minG}
still hold, replacing  $\lambda$ by $+\infty$. In this case
$h_\lambda = 1$ and $\omega_\lambda = \om$.

There are  other cases where  the distribution of vortices for
minimizers of the Ginzburg-Landau functional is understood. First in
a periodic setting: Minimizing the Ginzburg-Landau energy among
configurations $(u,A)$ which are periodic (modulo gauge equivalence)
with respect to a certain lattice independent of $\ep$, one obtains
(see \cite{aydi,aydisandier}) as above a limiting vortex density
$\mu_\lambda= - \Delta h_\lambda + h_\lambda$ where $h_\lambda$
minimizes the energy in  (\ref{obs}) among periodic functions. The
minimizer in this case is clearly a constant, which is easily found
to be $\max\( 1- \frac{1}{2\lambda}, 0\)$.

Second, in the regime of applied fields $\he$  where the number of
vortices tends to $+\infty$, but is negligible compared to $\lep$,
(this corresponds to applied fields such that  $\log\lep\ll
\he-\com\lep\ll\lep$ as $\ep \to 0$),
 it is shown in \cite{livre} that for simply connected domains satisfying a certain generic property (see \eqref{assumption} below) --- including convex domains --- vortices concentrate around a single point (a finite number of points for general simply connected domains). Then, blowing up at the suitable scale  and normalizing the vorticity, one obtains in the limit a probability measure $\mu$ which describes the distribution of vortices around the point, and $\mu$ is the unique minimizer among probability measures in $\mr^2$ of
$$I(\mu) = -\pi \iint \log |x-y|\, d\mu(x) \, d\mu(y) +  \pi \int Q(x)\, d\mu(x).$$
Note that here $Q$ is a positive definite  quadratic form which depends on
the domain $\om$. It is the Hessian of a certain function at the
point of concentration of the vortices. Note  that the precise
regime of applied field modifies the number of vortices, i.e. the
normalizing factor of the vorticity,  and the scaling, but does not
influence the limit distribution $\mu$, which is a characteristic of
$Q$.

In our previous work, the treatment for the ``intermediate" regime $\log\lep\ll
\he-\com\lep\ll\lep$ and for the regime $\he = \lambda \lep$, $\lambda>\lambda_\om$ were different, the former one being more delicate. Here we provide (and this is part of the technical difficulties) a unified approach for both, and treat all regimes $\he \ll \hcii$  where the number of vortices is blowing up.

\subsection{Main result on Ginzburg-Landau}\label{secmain}

The mean field description above  tells us that the vortices tend to
be distributed   uniformly in $\omega_\lambda$ but  is insensitive
to the  pattern formed by vortices. This pattern is in fact, as we
shall see,  selected by the minimization of the next term in the
asymptotic expansion of the energy as $\ep\to 0$. The proof of this
is achieved in this paper by a splitting of the energy that
separates the leading order term found in (\ref{minG}) from a
remainder term, and then by studying the remainder term after blow
up at the scale of the expected intervortex distance, which from the
above considerations is of the order of  $1/\sqrt{\he}$; this remainder term is then shown to 
$\Gamma$-converge to  $W$ (as introduced in Section \ref{sec14}), hence allows to distinguish among vortex configurations. 

As before, we use the  current $j(u,A) = (iu,\nab_A u)$ (cf. \eqref{muua})  to describe the vortex locations, we study  through the abstract framework of Section \ref{secergo} the probability measure carried by all 
possible limiting profiles  of blow ups of $j$ at the scale $1/\sqrt{\he}$ centered at all possible blow-up points in $\omega_\lambda$, and show that this probability is concentrated on minimizers of $W$.

 Before stating the simplest form of
 our main result, let us explain the main
  assumptions we need to make.
 First, we make the simplifying assumption that the domain
  $\om$ is convex. This is only used at one point in the proof (the upper bound construction) and avoids the possibility that $\omega_\lambda$ may have cusps. We believe our results still   hold without this restriction.
 Then letting $h_0$ be  the solution to $- \Delta h_0 + h_0=0$ with $h_0=1 $ on $\bo$ --- which is consistent with the definition of $h_\lambda$ in \eqref{obs} --- it is known (see Caffarelli-Friedman \cite{cf}, the result in dimensions greater than two can be found in  \cite{kl}) that in the case where $\om$ is convex,  $h_0$ is strictly convex hence achieves its minimum at a unique point $x_0\in \om$. Moreover this minimum is nondegenerate:
 \begin{equation}\label{assumption}
\min_\om  h_0\  \text{is achieved at a unique point} \  x_0 \ \text{and } \
Q:=D^2 h_0(x_0) \ \text{is positive definite}.\end{equation}
For a family $(u_\ep, A_\ep)$, we denote
\begin{equation}\label{1.8b}
\tilde \jmath_{\ep, x}(\cdot):= \frac{1}{\sqrt{\he}} j(u_\ep,A_\ep) \( x +  \frac{\cdot} { \sqrt{
\he } }\)\end{equation} their blown-up current,
where in the right-hand side $j(u_\ep, A_\ep)$ is implicitly extended by $0$ outside the domain $\om$.

We have
\begin{theo}\label{th1} Assume that $\om $ is convex, so  that \eqref{assumption} is satisfied, and that
$$ \text{ $\he = \lambda \lep$   with   $\lambda > \com$ .}$$
Let $(u_\ep,A_\ep)$ be a minimizer of $\je$, and let $\tilde \jmath_{\ep, x}$ be as in \eqref{1.8b} for $x \in \omega_\lambda$.
Then, given $1<p<2$, there exists a probability measure $P$ on  $L^p
_{loc}(\mr^2,
\mr^2)$ such that the following hold:
\begin{enumerate}
\item  Up to extraction,  for any bounded continuous function $\Phi$ on $L^p_{loc}(\mr^2, \mr^2)$, we have
    \begin{equation}\label{ym}\lim_{\ep \to 0}\frac{1}{|\omega_\lambda|}
\int_{\omega_\lambda}\Phi(\tilde \jmath_{\ep, x} ) \, dx = \int \Phi(j) \, dP(j).\end{equation}
%\item $P$-almost every $j$ satisfies
%$$\curl j  = 2\pi \sum_{p\in \vtx} \delta_p -\cl, \qquad \div j=0,$$
%for some discrete subset  $ \vtx$ of $\mr^2$.\footnote{on pourrait remplacer par appartient a $\admissible$ ou meme supprimer car c'est sous-entendu par ce qui suit.}
 \item  $P$-almost every $j$ minimizes $W$ over $\admissible$ and
 \begin{equation}\label{truc} \je(u_\ep,A_\ep)= \je^{N_0} + \he |\omega_\lambda |( \min W + cst)  + o(\he)\quad \text{as} \ \ep \to 0,\end{equation}
where $\je^{N_0}\in \mr$ is  explicited in \eqref{120b} below.
\end{enumerate}
\end{theo}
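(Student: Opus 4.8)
The plan is to prove Theorem~\ref{th1} by a three-tier energy-splitting and blow-up argument, combining the mean-field description recalled in \eqref{minG} with the abstract ergodic lower bound of Theorem~\ref{gamma} and a matching upper bound built from the periodic near-minimizers of $W$ provided by Theorem~\ref{thw}(4). \textbf{Step 1 (energy splitting).} First I would write, for a minimizer $(u_\ep,A_\ep)$, a decomposition $\je(u_\ep,A_\ep)= \je^{N_0} + \he\, E_\ep(u_\ep,A_\ep) + o(\he)$, where $\je^{N_0}$ carries the leading-order mean-field energy appearing in \eqref{minG} (with $\mu_\lambda=m_\lambda\indic_{\omega_\lambda}$) and $E_\ep$ is a remainder that, after subtracting the contribution of the optimal density, measures the ``interaction cost'' of the actual vortex configuration. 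The algebraic identity one wants is that, localizing near $\omega_\lambda$ and using $h=\he h_\lambda + \text{(lower order)}$, the remainder $E_\ep$ is essentially $\frac1{\he}\int (\text{renormalized }|j|^2)$ computed at the vortex scale $1/\sqrt\he$; this is the place where the choice $j=-\np H$ in Definition~\ref{def1.2} pays off, since $j(u_\ep,A_\ep)$ is exactly the superconducting current and blows up correctly.

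\textbf{Step 2 (lower bound via the ergodic theorem).} With $\tilde\jmath_{\ep,x}$ as in \eqref{1.8b}, I would recognize $E_\ep$ as (asymptotically) an average $\dashint_{\omega_\lambda} f_\ep(\tilde\jmath_{\ep,x})\,dx$ of a small-scale localized energy $f_\ep(j)=\int e_\ep(j)\chi$, exactly in the shape of Example~\ref{ex2}, after rescaling $\omega_\lambda$ to the large domain $\omega_\ep$ of size $\sqrt\he$. The coercivity and $\Gamma$-liminf hypotheses \eqref{2.1bis}--\eqref{2.1ter} must be checked: coercivity follows from the global energy bound \eqref{minG} together with standard vortex-ball / Jacobian estimates giving $L^p_{\loc}$ compactness of the blown-up currents for $p<2$ (cf.\ remark~2 after Definition~\ref{def1.2}), and the $\Gamma$-liminf $\liminf \int_{B_R} e_\ep(\tilde\jmath_{\ep,x})\ge W(j,\chi_{B_R})$ is the ``lower bound on bounded scales'' one proves by the Bethuel--Brezis--H\'elein / Sandier--Serfaty lower-bound machinery (vortex balls, mass displacement) applied on fixed large balls. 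Theorem~\ref{gamma} then produces a $\theta$-invariant probability measure $P$ on $L^p_{\loc}(\mr^2,\mr^2)$ (this is the $P$ in the statement, and \eqref{ym} is precisely $P_\ep\to P$), with $\liminf_\ep E_\ep \ge \int f^*(j)\,dP(j)$ and, crucially, $f^*(j)=\lim_{R\to\infty}\dashint_{B_R} W(j,\chi_{\cdot+B_R}) = W(j)$ by \eqref{rg2}; hence $\liminf_\ep \he^{-1}(\je-\je^{N_0})\ge |\omega_\lambda|\int W\,dP \ge |\omega_\lambda|(\min_{\admissible} W + cst)$, using $P$-a.e.\ $j\in\admissible$ (the admissibility class with $m=m_\lambda$, which holds because the blown-up vorticity has the right density by \eqref{mumin} and \eqref{densbornee} follows from the energy bound).

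\textbf{Step 3 (matching upper bound).} For the reverse inequality I would construct an explicit test configuration $(u_\ep,A_\ep)$ whose vortices, near $x_0$ and throughout $\omega_\lambda$, are placed on (a finite patch of) the square-periodic near-minimizer $j_n$ of $W$ from Theorem~\ref{thw}(4), scaled so the density is $m_\lambda$, then glued to the mean-field profile $\he h_\lambda$ away from $\omega_\lambda$ and cut off near $\bo\omega_\lambda$; convexity of $\om$ is used exactly here to ensure $\omega_\lambda$ has no cusps and the patching region has negligible measure. Evaluating $\je$ on this test function and using Lemma~\ref{lemperiointro} / the periodic computation of $W$ gives $\je \le \je^{N_0} + \he|\omega_\lambda|(\min W + cst) + o(\he)$. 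Combining with Step~2, all inequalities are equalities, which forces $\int W\,dP = \min_\admissible W$ and hence, since $W\ge\min_\admissible W$ on $\admissible$, $P$-a.e.\ $j$ is a minimizer of $W$ over $\admissible$; \eqref{truc} follows simultaneously.

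\textbf{Main obstacle.} The hardest part, as usual in these problems and as flagged by the sixth remark after Definition~\ref{def1.2}, is the lower bound on bounded scales combined with the interchange of limits $R\to\infty$ and $\ep\to0$: proving $\liminf_\ep \int_{B_R} e_\ep(\tilde\jmath_{\ep,x})\ge W(j,\chi_{B_R})$ uniformly enough, with control of the $\eta\to0$ vortex-core renormalization and of boundary/flux errors on $\partial B_R$, so that it can be fed into Theorem~\ref{gamma}; this is precisely the difficulty the abstract ergodic framework of Section~\ref{secergo} is designed to bypass, by only requiring the fixed-scale $\Gamma$-liminf and letting \eqref{rg2} handle the passage to the large scale. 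A secondary technical point is the energy splitting of Step~1 in the delicate regime $\lambda$ close to $\com$ (where $m_\lambda$ is small and $\omega_\lambda$ thin), which the paper handles by the unified treatment mentioned at the end of Section~\ref{sec12}.
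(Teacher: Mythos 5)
Your strategy — algebraic energy splitting isolating the mean-field part, blow-up at scale $1/\sqrt\he$, abstract ergodic lower bound via Theorem~\ref{gamma}, and a matching upper bound built from periodic near-minimizers of $W$ — is exactly the paper's approach, and Step~3 is essentially right (including the role of convexity via the regularity and nondegeneracy of $\omega_\lambda$ and the obstacle-problem estimates).

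However, Step~2 as written has a genuine gap. You set up Theorem~\ref{gamma} with $f_\ep(j)=\int e_\ep(j)\chi$, as if the blown-up energy density were a functional of the current $j$ alone. It is not: $e_\ep$ depends on $(u,A)$, not just on $j(u,A)$, and more importantly the relevant density after splitting is $e_\ep'-\zeta_\ep'\mu(u',A')$ (see \eqref{fep-intro}), whose negative part $-\zeta_\ep'\mu$ blows up like $\lep\cdot\sum_i\delta_{a_i}$ as $\ep\to 0$. This means $f_\ep$ cannot be bounded below, and that boundedness is a standing hypothesis of Theorem~\ref{gamma} (the theorem assumes $f_\ep,f\ge 0$, and the coercivity argument in Step~1 of its proof uses it). You flag the difficulty and mention ``mass displacement,'' but you describe it as one ingredient in proving the fixed-scale $\Gamma$-liminf, whereas in the paper it is a structural prerequisite: without first replacing $e_\ep'-\zeta_\ep'\mu$ by a bounded-below measure $g_\ep'$ (Theorem~\ref{thcompagnon}), Theorem~\ref{gamma} cannot be applied at all. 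Concretely, the paper does not take $X=\Lp$ but $X=\Lp\times\radon_0$, carries the pair $(\pje,\pge)$, and defines $\gfep(j,g)=\int\chi\,dg$; the $\Gamma$-liminf then becomes the trivial statement that $g_n\rightharpoonup g$ implies $\int\chi\,dg_n\to\int\chi\,dg$, and the real analytical work migrates into (i) constructing $g_\ep'$ with a uniform lower bound and a small ``commutation error'' with test functions, (ii) proving coercivity by using the fact that $g_\ep'$ controls the number of vortices, and (iii) identifying $\gf^*(j,g)$ with $W_U(j)+\cl\gamma/2\pi$ through the estimate \eqref{limlim}. As your proposal stands, the coercivity check and the $\Gamma$-liminf check cannot be carried out on $\Lp$ alone, so the probability measure $P$ in \eqref{ym} would not be produced. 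The fix is exactly the pair-space formulation plus the mass-displacement bound, after which your Steps~2 and~3 combine to give both parts of Theorem~\ref{th1}.
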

We may informally describe $\je^{N_0}$ : It is the minimum value
 for an obstacle  problem similar to  \eqref{obs}. But whereas  \eqref{obs} is derived assuming the vortex energy is $\pi\lep$, to obtain $\je^{N_0}$ we must use instead the more precise value $\pi\plep$, where
$$\ep' = \ep\sqrt\he.$$
As $\ep \to 0$ we have $\lep\approx\plep$, but using $\plep$ induces a correction which  {\em is not} $o(\he)$, hence is  important to us.

As we have seen in Section \ref{period-intro}, $W$ allows to distinguish between configurations of points since it distinguishes between lattices, and it is expected to favor the triangular lattice. The result of Theorem \ref{th1} can be informally understood as follows: if one chooses a point at random in $\omega_\lambda$ and blows up at the scale $1/\sqrt{\he}$, then in the limit $\ep \to 0$, almost surely (with respect to the blow up center), one sees a minimizer of $W$.  
This derivation of this limiting energy $W$ is, to our knowledge, the first
rigorous  justification of the Abrikosov triangular lattice in this
regime: at least the triangular lattice is the best among
lattice configurations, and it is conjectured to be a global minimizer (see Section \ref{period-intro}).

In Theorem~\ref{th1prime} below, we will give a more precise and a full $\Gamma$-convergence  version of Theorem \ref{th1},  valid  for the other regimes of applied field where vortex lattices are
expected to arise. The latter
result encompasses the regimes of Theorems 1.3, 1.4 and 1.5 in
\cite{livre} and  allows to reprove \medskip these results.

Returning to the reference to the Theta function  \eqref{theta} in Section \ref{period-intro},  it is striking to observe that the problem of minimizing the Theta  function also arises in the context of Ginzburg-Landau, but in a very different regime: when $\he \sim \hcii$ or more precisely when $\he= \frac{b}{\ep^2}$ with $b \nearrow
1$. As seen in \cite{ss4,as} this is a regime which is essentially linear (contrarily to the one we study here) and the energy minimization can be reduced to the minimization of a function on a finite dimensional space (the ``lowest Landau level"- this is essentially the result of Abrikosov's original calculation).  This function can be viewed as the linear analogue of $W$ and reduces, in the case  where the points are on a lattice, to the $\theta$ function $\theta_\Lambda$ (and so again the optimal lattice is the triangular one).   In that sense the limiting  lattice energies for Ginzburg-Landau  in the regime $\hci \ll  \he \ll \hcii$ and in the regime $\he \sim \hcii$  can be viewed as Mellin transforms of each \medskip other.

We now go into more detail on the method of the proof of Theorem
\ref{th1}.  It  follows from a result of $\Gamma$-convergence, i.e. by showing a general
lower bound for the energy and a matching upper bound via an
explicit construction. Thus the minimality of $(u_\ep, A_\ep)$ and the Euler-Lagrange equation it solves is not used per se.
 The proof of the lower bound involves three
ingredients: an energy splitting, a blow-up, and the abstract method  of Theorem \ref{gamma}.
\subsection{The energy splitting}\label{sec1.8}

The first ingredient of the proof, detailed in Section
\ref{sec-splitting}, is a new algebraic   splitting of the energy,
which allows to isolate the constant  leading order part from the
next-order.

First we define a mean field $\ho$  similar to $\he h_\lambda$,
except that  when computing it  we  take more precisely into account
the cost of a vortex which is $\pi\plep$, where we recall $\ep'
=\ep\sqrt\he$. Accordingly, assuming $\he < 1/\ep^2$, we  let $\ho$
be the minimizer of
\begin{equation}
\label{obs1} \min_{h-\he\in H^1_0(\om)}\hal\plep \io |-\Delta h+h|+
\hal\io |\nab h|^2 +|h-\he|^2.
\end{equation}
Thus $\ho/\he$ solves  (\ref{obs}), with $\lambda$ replaced by
$\lambda_\ep = \frac{\he}{\plep}.$

This is equivalent (see \cite{bre,brs}) to saying that $\ho/\he$
minimizes the $H^1$ norm subject to the constraints $\ho/\he=1$ on
$\bo$ and
\begin{equation}\label{mep} \D \frac{\ho}{\he} \ge   \moep:= 1 - \frac\plep{2\he} \quad  \text{in} \ \om .\end{equation}
In other words $\ho$ is the solution to an obstacle problem with constant obstacle. Letting then
 $\muo= -\Delta \ho+\ho$, we have
\begin{equation}\label{omep}\muo =  \moep \he   \indic_{\omo},\quad\text{where}\quad\omo = \left\{x\mid \ho(x) = \he\moep\right\}.\end{equation}
We  define
\begin{equation}\label{defn0}
N_0= \frac{1}{2\pi} \io \muo.\end{equation}
The splitting function  could be taken to be  $\ho$ in most regimes of applied field. Then one should define $\je^{N_0}$ in Theorem \ref{th1}  as
\begin{equation}\label{120b}
\je^{N_0}: = \pi N_0 \plep + \hal \|\ho- \he \|_{H^1(\om)}^2,\end{equation}
where $\|\cdot\|_{H^1}$ denotes the Sobolev space  norm $\(\|\cdot \|_{L^2}^2+ \|\nab \cdot \|_{L^2}^2 \) ^\hal$.

However, when $\he - \lambda_\om\lep\ll\lep$ --- i.e. when the number of vortices is small compared to $\he$ though divergent as $\ep\to 0$ --- then \eqref{obs1}, which is a refinement of \eqref{obs}, must itself be refined to take into account the constraint that the vorticity is quantized. For the other regimes, the error made by ignoring this constraint is negligible in our analysis.

More precisely, given $N$ such that $0 \le N\le \frac{1}{2\pi} \he |\om|$, we consider  $\tho$
 the minimizer of
\begin{equation}\label{deftho}
\min_{\substack{h - \he \in H^1_0(\om)\\   \io |\- \Delta h+h|= 2\pi N}}
\hal \io |\nab h|^2 + |h-\he|^2,\end{equation}  and we define
 \begin{equation}\label{121b}
 \je^N: =\pi N \plep + \hal \|\tho- \he\|_{H^1(\om)}^2.\end{equation}
 Since $\he $ will be a given function of $\ep$, we denote the dependence of $\tho$ as of $\ep$ instead of $\he$. $\je^N$ is minimal at $N=N_0$ and we  will see that $h_{\ep, N_0}= \ho$.

The refinement with respect to \eqref{obs1} consists in taking $N$ to be an integer, when $N_0$ is not necessarily one. More precisely, $N$ will be taken to be either $\nm$, the largest integer $\le N_0$, or $\ns$, the smallest integer $\ge N_0$.  With that choice, we will sometimes call $\tho$ the ``splitting function".

The leading order term in the energy is not exactly $\je^{N_0}$ but  rather $\min_{N \in \{\nm, \ns\}} \je^N$.    We may immediately check however  that for $N\in \{\nm, \ns\}$, as $\ep \to 0$, 
$$\je^N \sim \je^{N_0} \sim \frac{1}{2}\he \plep \io \mu_\lambda +
\frac{\he^2}{2} \io |\nab h_\lambda |^2 + |h_\lambda -1 |^2 $$ when $\he \sim \lambda \lep$
in the notation of Section \ref{sec12}, hence it recovers the leading order term of the minimal energy \eqref{minG}.
The difference between $\min_{N \in \{\nm, \ns\}}
\je^N $ and $\je^{N_0}$  is $o(\he)$  hence  it is negliglible for the  precision of $o(N)$ we want to achieve as soon as  $\lambda> \lambda_\om$, but not always when $\lambda= \lambda_\om$.

$\tho$  is the solution to an obstacle problem
and we have $\tho \ge \me \he $ and $- \Delta \tho + \tho= \he \me \indic_{\toe}$ where $\toe = \{\tho = \me\he\}$ is called the coincidence set, for some constant
 $\me$ such that $\me \to m_\lambda$
as $\ep \to 0$.
 Note that $\toe$ will depend on the choice of $N= \nm $ or $\ns$
  but sometimes we will forget it and  simply  write $\omega_\ep$.
   We let  $\tmuo= - \Delta \tho + \tho= \he \me \indic_{\toe}$. It is a perturbation of $\mu_\lambda$ and  $\toe$ a perturbation of $\omega_\lambda$, defined in Section \ref{sec12}.
 We have the relation \begin{equation}\label{toen}
 |\toe|\me \he = \io \tmuo = 2\pi N.\end{equation}
%Note that in the case $\he = \lambda\lep$ with $\lambda>\com$, we
%have  $\co\approx m_\lambda$ as $\ep\to 0$, and that $\co$ is an
%increasing function of $\he$ on $[0,1/\ep^2]$. In particular if
%$\he\ge\com\lep$ then, since $\com>1/2$,
%\begin{equation}\label{half} \liminf_{\ep\to 0}\co \ge 1 - \frac1{2\com} >0.\end{equation}

Then we  (temporarily) introduce $\ao =  \np \tho $.
Letting  $(u,A)$ be a
configuration of finite energy, we will write
$$\ai= A- \ao= A- \np \tho ,$$
where $\ao=\np\tho$ is understood as the leading order term, and $\ai$ as a remainder term.
The energy-splitting is the observation of the following  identity (valid even if  $N$ is not an integer):
\begin{multline}
\label{dec1} \je(u,A)=\je^N   +\io (\tho-\he)\mu(u,\ai)
+\hal \io |\nab_{\ai}
u|^2 + |\curl \ai - \tmuo |^2 + \frac{(1-|u|^2)^2}{2\ep^2}\\ - \Ce \io \tmuo  - \hal\io
(1-|u|^2)|\nab\tho|^2,\end{multline}
where we recall \eqref{muua}, and  $\Ce$ is a constant explicited in \eqref{Cep}.  The last term in \eqref{dec1} can be shown to be negligible  if $\je (u,A)$ is not too large.   Thus the study
 of the energy near its minimum reduces to that of   the remainder
\begin{multline}
\label{g1-intro}\jie(u,\ai):= \hal \io |\nab_{\ai} u|^2 + |\curl \ai -\tmuo|^2 +
\frac{(1-|u|^2)^2}{2\ep^2} \\
+ \io (\tho-\he)\mu(u,\ai)- \Ce \io \tmuo .\end{multline}
It turns out that when we make the right specific choice $N= \nm $ or $N= \ns$ (depending on $(u_\ep, A_\ep)$), this expression simplifies and one has
\begin{equation}\label{fua1}
\jie(u, \ai) \ge \hal \io |\nab_{\ai} u|^2 + |\curl \ai -\tmuo|^2 +
\frac{(1-|u|^2)^2}{2\ep^2} - \io \zeta_\ep \mu(u,\ai)+o(1)\end{equation}
where $\zeta_\ep$ is a positive function, equal to its maximum $ \hal \plep$ on  $\toe = \supp(\tmuo)$. It is this remainder $F_\ep$ (with this choice of $N$) whose $\Gamma$-convergence we study.
We will see through the upper bound construction that
$$
\min \jie \le C \he|\toe|$$  (this is equivalent to $\min\jie \le C N$  or $\le C N_0$ from \eqref{toen}), and we will  work in that class
$\jie (u,\ai)\le C N_0 $, thus reducing to a relatively narrow class of ``almost minimizers" i.e. configurations whose  leading order  energy is the  minimal one $G_\ep^{N_0}$.   Note that once we
know that this remainder $\jie$ is of lower order, this shows that
the leading order component  in $A=\np \tho+ \ai$ is $\np\tho$ (in other
words $\ai \ll \ao$) so also at leading order $\mu(u_\ep, A_\ep)  \sim \tmuo\sim \mu_\lambda$
and this allows to recover in essence the results of \cite{livre}
Theorems 1.3, 1.4, 1.5 for all configurations in this almost minimizing class.

The next step is to make the  change of scales  $x'=\sqrt{\he}x$ in
order to study (\ref{g1-intro}).  Under  this rescaling, the
inter-vortex distance becomes of order 1 (recall that the average
vortex density is precisely $\me \he \indic_{\omega_\ep}$ with $\me \to m_\lambda$ and \eqref{mumin}). After
this change of scales the right-hand side of \eqref{fua1} becomes, in terms of $\pu(x') =
u(x)$ and $\pa(x')= A(x)/ \sqrt{\he}$, and $\zeta_\ep'(x')= \zeta(x)$,
\begin{equation}
\label{fep-intro} \pfep(\pu,\pa)= \hal \int_{\om_\ep'}
|\nab_\pa\pu|^2 + \he |\curl \pa- \co\indic_{\poe} |^2 +
\frac{(1-|\pu|^2)^2}{2\pep^2} - \int_{\om_\ep'}\zeta_\ep'(x')
\mu(\pu,\pa)+o(1)\end{equation} where $\poe $ is the rescaled domain
$\sqrt{\he} \omega_{\ep,N}$ and we recall $\ep'= \ep\sqrt{\he} $.

Combining all these elements, the conclusion of the splitting procedure, found in Section~\ref{sec-splitting}  is
\begin{pro}\label{pro1.5}
For any $(u,A)$, there exists $N \in \{ \ns, \nm\}$ such that
$$\je(u,A)\ge \je^N + \pfep(\pu, \pa) +o(1)$$where $\pfep$ is as in \eqref{fep-intro}, $\pu(x')=u(x'/\sqrt{\he})$, $\pa(x')=A(x'/\sqrt{\he})$
and $\zeta'$ is a positive function, equal to its maximum $\hal\plep$ on $\poe$.
\end{pro}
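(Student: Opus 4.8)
The plan is to derive the claimed inequality directly from the algebraic identity \eqref{dec1}, by making the right choice of $N\in\{\nm,\ns\}$, discarding harmless error terms, and then rescaling. I would proceed as follows.

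\emph{Step 1: Choice of $N$ and the structure of the obstacle problem.} Given $(u,A)$ of finite energy, I first recall that the map $N\mapsto \je^N$ defined in \eqref{121b} is convex and minimized at $N=N_0$ defined in \eqref{defn0}, with $h_{\ep,N_0}=\ho$. The choice between $\nm=\lfloor N_0\rfloor$ and $\ns=\lceil N_0\rceil$ will be dictated by the sign of $\io\mu(u,\ai)$ relative to $2\pi N_0$: I want to absorb the potentially dangerous term $\io(\tho-\he)\mu(u,\ai)$ in \eqref{dec1}, and since $\tho-\he$ is, by the obstacle problem characterization, equal to $\he(\me-1)$ on $\toe$ and larger (closer to $0$) outside, the variation of $\je^N$ in $N$ exactly compensates the variation of this term when one moves $N$ by one unit. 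Concretely I would show that for a suitable choice of $N\in\{\nm,\ns\}$ one has
$$\je^N+\io(\tho-\he)\mu(u,\ai)-\Ce\io\tmuo\ \ge\ -\io\zep\,\mu(u,\ai)+\je^N+o(1),$$
where $\zep:=\he-\tho+$ (the constant $\Ce$-correction), so that $\zep\ge 0$ and $\zep=\hal\plep$ on $\toe$; this uses the relation \eqref{toen} and the fact that $\Ce$ in \eqref{Cep} is chosen precisely to make $\zep$ vanish where $\tho$ meets the obstacle value's complement, i.e. on $\bo$. This is the step where the passage from \eqref{g1-intro} to \eqref{fua1} is justified, and it is the main obstacle: one must track the discrepancy $N_0-N\in(-1,1]$ through all the terms and check that all resulting errors are genuinely $o(1)$, which requires the a priori energy bound $\je(u,A)\le \min\je+o(\he)$ (equivalently $\jie\le CN_0$) to control $\io\mu(u,\ai)$ and $\io(1-|u|^2)|\nab\tho|^2$.

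\emph{Step 2: Discarding the negligible terms.} With $N$ fixed as above, I apply \eqref{dec1}, which is a pointwise-in-$(u,A)$ identity. The term $-\hal\io(1-|u|^2)|\nab\tho|^2$ is controlled by Cauchy--Schwarz together with the bound $\io(1-|u|^2)^2/\ep^2\le C\je(u,A)$ and the fact that $\|\nab\tho\|_{L^\infty}$ is bounded independently of $\ep$ (since $\tho/\he$ solves a fixed-type obstacle problem on $\om$); I would show this term is $o(\he)$, hence $o(1)$ at the relevant scale, or more precisely that it can be absorbed. Combining with Step 1, \eqref{dec1} yields
$$\je(u,A)\ \ge\ \je^N+\hal\io|\nab_{\ai}u|^2+|\curl\ai-\tmuo|^2+\frac{(1-|u|^2)^2}{2\ep^2}-\io\zep\,\mu(u,\ai)+o(1),$$
which is exactly \eqref{fua1} with a remainder $\jie$ bounded below by the right-hand side minus $\je^N$.

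\emph{Step 3: Rescaling.} Finally I perform the change of variables $x'=\sqrt{\he}\,x$, setting $\pu(x')=u(x'/\sqrt{\he})$, $\pa(x')=A(x'/\sqrt{\he})/\sqrt{\he}$, and $\zep'(x')=\zep(x'/\sqrt{\he})$, so that $\om$ becomes $\om_\ep'=\sqrt{\he}\,\om$, $\toe$ becomes $\poe=\sqrt{\he}\,\omega_{\ep,N}$, the measure $\tmuo=\he\me\indic_{\toe}$ becomes $\me\he\indic_{\poe}=\co\indic_{\poe}$ in the rescaled variable (with an overall Jacobian factor $1/\he$ from $dx=dx'/\he$), and $\ep$ becomes $\ep'=\ep\sqrt{\he}$. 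A routine check of how $|\nab_A u|^2$, $|\curl A|^2$, $(1-|u|^2)^2/\ep^2$ and the coupling term $\io\zep\mu(u,\ai)$ transform under this parabolic scaling — keeping careful track of the $\he$ prefactor on the $|\curl\pa-\co\indic_{\poe}|^2$ term as displayed in \eqref{fep-intro} — turns the right-hand side of Step 2 into $\je^N+\pfep(\pu,\pa)+o(1)$ with $\pfep$ exactly as in \eqref{fep-intro}, and $\zep'$ equal to its maximum $\hal\plep$ on $\poe$. This completes the proof.

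I expect Step 1 — the simultaneous optimization over $N\in\{\nm,\ns\}$ and the verification that the leftover from quantizing the vorticity is $o(1)$ — to be the genuinely delicate point; Steps 2 and 3 are bookkeeping, modulo the standard a priori estimates on finite-energy (almost-minimizing) configurations.
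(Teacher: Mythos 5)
Your overall strategy is the same as the paper's (split via the identity \eqref{dc}, pick $N\in\{\nm,\ns\}$ so that the extra term has a sign, discard $\io(1-|u|^2)|\nab\tho|^2$, rescale), and the algebra in Step 1, once $\je^N$ is cancelled on both sides, reduces correctly to showing $\Ce\bigl(\io\mu(u,\ai)-2\pi N\bigr)\ge o(1)$. There are, however, two genuine gaps.

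First, you base the choice of $N$ on the sign of $\io\mu(u,\ai)-2\pi N_0$, but $\io\mu(u,\ai)/(2\pi)$ is in general not an integer (it equals $\frac1{2\pi}\oint_{\bo}\bigl[(iu,\nab u)+(1-|u|^2)\ai\bigr]\cdot\tau$, which only quantizes when $|u|\equiv 1$ on $\bo$). If $\nm<\io\mu/(2\pi)<\ns$, then taking $N=\nm$ gives $\Ce\le 0$ with $\io\mu-2\pi\nm>0$, while $N=\ns$ gives $\Ce\ge 0$ with $\io\mu-2\pi\ns<0$: neither choice makes $\Ce(\io\mu-2\pi N)$ nonnegative. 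The paper avoids this by first replacing $\mu(\pu,\pa)$ with the discrete vortex measure $\nu_\ep'=2\pi\sum_i d_i\delta_{a_i}$ via the Jacobian estimate \eqref{estjac}, tested against $\zep'-\Ce$, and then choosing $N$ as in \eqref{choice} according to the integer $\nu_\ep'(\pOe)/(2\pi)$, which is always $\le\nm$ or $\ge\ns$. Without that replacement, Step 1 fails on the intermediate case.

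Second, in Step 2 you assert that $\|\nab\tho\|_{L^\infty(\om)}$ is bounded independently of $\ep$ because $\tho/\he$ solves "a fixed-type obstacle problem". This is false: $\tho/\he=H_{\me}$ with obstacle $\me=1-O(\plep/\he)$ depending on $\ep$, and \eqref{estsolob} gives $\|\nab H_{\me}\|_\infty\le C\sqrt{1-\me}$, so that $\|\nab\tho\|_{L^\infty}\le C\sqrt{\he\plep}$ as in \eqref{nabho}, which diverges. The paper's Lemma \ref{lemapriori} instead combines the $L^4$ bound \eqref{hoest}, $\io|\nab\tho|^4\le C\he^{3/2}\plep^{5/2}$, with Cauchy--Schwarz against $\|1-|u|^2\|_{L^2}\le C\ep\sqrt{\je(u,A)}$, under the a priori bound \eqref{bornG}, to get the $o(N)$ error. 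Your conclusion is salvageable with the correct estimate, but the stated justification is wrong. (Minor: in Step 3 you write $\me\he\indic_{\poe}=\co\indic_{\poe}$, i.e. $\co=\me\he$; in fact $\co=\me$, the prefactor $\he$ on $|\curl\pa-\co\indic_\poe|^2$ in \eqref{fep-intro} already absorbing the scaling factors $\curl\ai=\he\,\curl\pa$ and $dx=dx'/\he$.)
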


\subsection{Full version of the main result}\label{sec1.7}
We may now give  the more complete version of Theorem~\ref{th1} and the stronger statement of $\Gamma$-convergence  of $\frac{1}{N}( \je(u, A)- \je^N).$
  In all the paper, the weak convergence of probabilities will mean convergence against  bounded continuous  test-functions, see \cite{billingsley}. We will say that a  probability  measure is concentrated on a set if   that set has probability 1.

  We consider configurations $(u_\ep, A_\ep)$ and  assume that $|u_\ep|\le 1$ everywhere together with the second Ginzburg-Landau equation, i.e. we assume 
\begin{equation}\label{gl2ee}
|u|\le 1\  \text{in} \ \om,  \qquad   - \np \curl A= (iu, \nab_A u)\ \text{in} \ \om
\end{equation} is satisfied.
This is obviously true for minimizers and critical points of \eqref{je}. Moreover  given $(u,A)$,   replacing $u$ by $u/|u|$ wherever $|u|\ge 1$  and replacing  $A$ by the minimizer of $A \mapsto \je(u, A)$, with $u$ remaining fixed,   decreases the energy without displacing the vortices, and the modified $(u,A)$ verify \eqref{gl2ee}. Since the change can only decrease the energy, our results   allow to bound from below $\je(u_\ep,A_\ep)$  for the original arbitrary $(u,A)$.  Thus, assuming \eqref{gl2ee} is no loss of generality.  Note that the only consequence of \eqref{gl2ee} that we will really use is that $\div j(u, A)=0$.

Note that when $\he - \com \lep =O(\log \lep) $  we already know
from \cite{livre}, Chap. 12, that  the number of vortices remains
bounded as $\ep \to 0$ and we already  characterized their limiting
location. For  $\he \ge \frac{\beta}{\ep^2}$ the study of vortices
involves an analysis quite different from that used in the present
paper, see \cite{ss4,as,fournaishelffer-livre}.  So we focus on the remaining regimes,
namely
\begin{equation}\label{range}\log\lep\ll \he -\com \lep \quad\text{and}\quad  \he \ll 1/\ep^2,\end{equation} and   we have

\begin{theo}\label{th1prime}Assume  $\om$ is convex, hence satisfies \eqref{assumption}. Assume
 \eqref{range} and   $\frac{\he}{\lep} \to \lambda \in [\lambda_\om, +\infty]$ as $\ep \to 0$. Let any  $1<p<2$ be chosen.

 \begin{enumerate}
 \item Let  $(u_\ep, A_\ep)$ satisfy \eqref{gl2ee}, and
        $$G_\ep(u_\ep, A_\ep) \le
   \min_{N\in \{\nm, \ns\}}\je^N+ CN_0
         $$ for some $C$ independent of $\ep$.
 Then
 there is a choice of  $N\in \{\nm, \ns\}$ such that,
 letting $P_\ep$ be the probability measure on $\Lp$ defined as  the push-forward of the normalized uniform measure on $\toe$ by the map
 $x\mapsto\tilde \jmath_{\ep,x} $ (cf. \eqref{1.8b}),  as $\ep \to 0$
the measures $\{P_\ep\}_\ep$ converge   up to extraction
 to a probability measure $P$ on $\Lp$ which is invariant
 under the action of  translations, concentrated on $\admissible$  ($m_\lambda$ as in \eqref{mumin}) and
 \begin{equation}\label{borneinffinale2}
  \je(u_\ep, A_\ep) \ge  \min_{N\in \{\nm, \ns\}}\je^N + N
   \( \frac{2\pi}{m_\lambda}  \int  W_U(j) \, dP(j) + \gamma    +o(1) \) ,\end{equation} where $\gamma$ is defined in \eqref{defgamma}, and $W_U$ is computed according to \eqref{WU} relatively to any family of sets satisfying \eqref{hypsets}--\eqref{hypsetsbis}.

In the case  $\he \le \frac{1}{\ep^\beta}$ for some
  $\beta>0$ small enough,   then
  \begin{equation}
  \label{mumuo0}
  \|\mu(u_\ep, A_\ep)  - \tmuo\|_{W^{-1,p} (\om) }\le C_p\sqrt{ N}.
  \end{equation}

 \item
  For any probability $P $ on $\Lp$
which is invariant under the action of translations and concentrated on $\admissible$ and for every $N \in \{\nm, \ns\}$,  there exists $(u_\ep, A_\ep)$  such that, letting $P_\ep$ be the push-forward of the normalized Lebesgue measure on $\toe$ by the map $x \mapsto
\frac{1}{\sqrt{\he}}  j(u_\ep, A_\ep) \( x + \frac{\cdot}{\sqrt{\he}} \) $, we have  as $\ep \to 0$, $P_\ep \to P$  and
\begin{equation}\label{resth5k} \je(u_\ep, A_\ep) \le \je^N + N \(\frac{2\pi}{m_\lambda} \int W_K(j) \, dP(j)  +\gamma  +o(1)\),\end{equation}
where $W_K$ is the renormalized energy relative to the family of squares $\{K_R\}_R$, where $K_R = [-R/2,R/2]^2$, as defined in \eqref{WU}.
\item
If we assume  that $(u_\ep,A_\ep)$ minimizes $\je$ then it satisfies all  the assumptions of item 1),  $P$-almost every $j$ minimizes $W$ over $\admissible$, and there is equality in \eqref{borneinffinale2}.

\end{enumerate}
\end{theo}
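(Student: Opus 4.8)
\emph{Proof plan.} The plan is to establish the three items in turn, deducing item 3 from the lower bound of item 1, the upper bound of item 2, the minimality of $(u_\ep,A_\ep)$, and Theorem~\ref{thw}(3). For the lower bound of item 1, the starting point is Proposition~\ref{pro1.5}: choosing $N\in\{\nm,\ns\}$ adaptively and rescaling by $\sqrt\he$, it suffices to bound from below $\pfep(\pu,\pa)$ as in \eqref{fep-intro}. One first reduces to the current $j=j(\pu,\pa)$, since $|\pu|\le1$ gives $\hal|\nab_\pa\pu|^2\ge\hal|j|^2$ with asymptotic equality, and $|\pu|\to1$ away from the vortices by the potential term, so that $\pfep$ is, to the required precision, a functional of $j$. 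The large factor $\he$ in front of $|\curl\pa-\co\indic_{\poe}|^2$ forces $\curl\pa\approx\co\indic_{\poe}$, hence in the limit $\curl j=2\pi\sum_i\delta_{a_i}-m_\lambda$ and the limiting current lies in $\admissible$. The essential mechanism is that the divergent self-energy $\pi\plep$ of each vortex in $\hal\int|j|^2$ is exactly cancelled by $-\int\zeta_\ep'\,d\mu(\pu,\pa)$, whose density is $-\pi\plep$ per vortex on $\poe$ (where $\zeta_\ep'$ equals its maximum $\hal\plep$), leaving the finite renormalized interaction $W(j,\chi)$. Concretely one runs the abstract scheme of Theorem~\ref{gamma} with $X=\Lp$, $\theta$ the translations, $\omega_\ep=\poe$, and $f_\ep$ the \emph{localized} energy obtained by inserting a fixed unit-scale cutoff $\chi$ into the energy density of \eqref{fep-intro}, so that $F_\ep$ of \eqref{FEP} is asymptotically the (bounded) average energy density to be estimated. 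The two structural hypotheses are checked at unit scale: coercivity from the $L^p_\loc$ bound on $j$ coming from the energy bound and the Jacobian/vorticity estimate (\cite{js,livre}); and the $\Gamma$-$\liminf$ inequality \eqref{2.1ter}, with $f(j)=W(j,\chi)$ up to nonnegative or vanishing terms, which asserts that the unit-scale Ginzburg-Landau energy controls $W(j,\chi)$ from below and is proved by a vortex ball-construction lower bound (\cite{js,livre}) together with the completion of squares turning $-\int\zeta_\ep'\,d\mu$ and the magnetic term into the renormalized interaction. Theorem~\ref{gamma} then yields $\liminf F_\ep(j_\ep)\ge\int f^*(j)\,dP(j)$, and by \eqref{rg2} (the multiparameter ergodic theorem) $f^*(j)$ coincides, independently of the family of shapes, with $W_U(j)$ up to the additive constant $\gamma$ of \eqref{defgamma}; multiplying by $N\cdot\frac{2\pi}{m_\lambda}\sim|\poe|$ (cf.\ \eqref{toen}) gives \eqref{borneinffinale2}. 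Estimate \eqref{mumuo0} comes out as a byproduct in the subregime $\he\le\ep^{-\beta}$, since the energy bound forces $\ai=A_\ep-\np\tho$ to be small in the relevant norm, hence $\mu(u_\ep,A_\ep)$ close to $\tmuo$ in $W^{-1,p}(\om)$.

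For item 2, given an invariant probability $P$ concentrated on $\admissible$ and $N\in\{\nm,\ns\}$, the recovery sequence is built by tiling. Using Theorem~\ref{thw}(4) and density, one approximates the profiles in the support of $P$ by periodic admissible vector fields of large period; such a periodic $j$ descends to finitely many points on a torus, for which Lemma~\ref{lemperiointro} computes $W$ explicitly through the torus Green function, and a Ginzburg-Landau test configuration realizing it can be built on the torus (degree-one vortices at the prescribed points, phase prescribed by $j$, core size $\ep'=\ep\sqrt\he$). One tiles the blown-up coincidence set $\poe$ by such periodic cells, centered so that the push-forward $P_\ep$ of the uniform measure on $\poe$ converges to $P$; glues the cells to one another and to the splitting function $\tho$ across the boundary of $\toe$ at negligible cost; and scales back by $1/\sqrt\he$. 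The energy is then $\je^N$ plus $N(\frac{2\pi}{m_\lambda}\int W_K(j)\,dP(j)+\gamma+o(1))$, the squares $K_R$ arising naturally from the square tiling; consistency with item 1 holds because $\int W_U(j)\,dP(j)$ is independent of the family by \eqref{rg2}.

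For item 3, if $(u_\ep,A_\ep)$ minimizes $\je$, then testing against a single lattice configuration (item 2 with $P$ a Dirac mass at a lattice current) gives $\je(u_\ep,A_\ep)\le\min_{N\in\{\nm,\ns\}}\je^N+CN_0$, so the hypotheses of item 1 hold and produce $P$ satisfying \eqref{borneinffinale2}. Applying item 2 with $P=\delta_{\js}$ for $\js$ a common minimizer of $W_U$ over $\admissible$, minimality forces $\frac{2\pi}{m_\lambda}\int W_U(j)\,dP(j)+\gamma+o(1)\le\frac{2\pi}{m_\lambda}\min_{\admissible}W+\gamma+o(1)$, i.e.\ $\int W_U(j)\,dP(j)\le\min_{\admissible}W$. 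On the other hand $W_U(j)\ge\min_{\admissible}W$ for $P$-almost every $j$, since by the scaling \eqref{minalai} and Theorem~\ref{thw}(3) the minimum of $W_U$ over $\admissible$ is attained, equals $\min W$, and is independent of $U$; hence $W_U(j)=\min_{\admissible}W$ $P$-almost surely, so $P$-almost every $j$ minimizes $W$ and \eqref{borneinffinale2} is an equality.

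The step I expect to be the main obstacle is the $\Gamma$-$\liminf$ in item 1: extracting \emph{exactly} the Coulombian renormalized energy $W(j,\chi)$ --- with the sharp constant, not merely a lossy lower bound --- from the unit-scale Ginzburg-Landau energy while the magnetic term, the divergent potential $\zeta_\ep'$, and the background $\co\indic_{\poe}$ are simultaneously present, which requires a delicate ball-construction and a careful completion of squares. The companion difficulty is the boundary layer near the boundary of $\toe$, where $\zeta_\ep$ degenerates and the splitting is least favorable; controlling it is precisely the reason $N$ must be chosen adaptively as $\nm$ or $\ns$.
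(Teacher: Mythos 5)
Your plan for items 2 and 3 is consistent with the paper's proof and needs no correction. The genuine gap is in item 1, and it is precisely the obstruction the paper devotes the most technical effort to circumventing.

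You propose to run Theorem~\ref{gamma} with $X=\Lp$ and $f_\ep$ equal to the unit-scale localization of the energy density in \eqref{fep-intro}. But Theorem~\ref{gamma} requires the $f_\ep$ to be nonnegative (or at least uniformly bounded below), and this $f_\ep$ is \emph{not}: the per-vortex negative contribution $-\zeta_\ep'\,d\mu\approx-\hal\plep\,\delta_p$ is point-concentrated, while the compensating $\hal\int\chi|j|^2$ is smeared out over a region around $p$, so for a vortex near the edge of $\supp\chi$ the positive part spills outside $\supp\chi$ and $f_\ep\approx-C\plep\to-\infty$. The cancellation you invoke is real, but it is not local, and this is exactly what the paper flags as ``one of the major difficulties of our analysis.'' The paper's resolution is to replace the energy density by the mass-displaced measure $g_\ep$ of Theorem~\ref{thcompagnon} (the companion paper), which \emph{is} bounded below by a constant, controls the number of vortices (needed for coercivity), and is close to $f_\ep$ in a dual-Lipschitz sense; one then takes $X=\Lp\times\radon_0$ and $\gf(j,g)=\int\chi\,dg$, with the $\Gamma$-liminf $\gf(j,g)\ge W(j,\chi)+\frac{\gamma}{2\pi}m_\lambda$ coming from item~8 of Theorem~\ref{thcompagnon}, not from a direct ball-construction on the raw density. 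Without this step the abstract scheme simply does not apply, and your coercivity argument (``$L^p_\loc$ bound on $j$ coming from the energy bound'') does not get off the ground, because the localized $f_\ep$ does not provide the required one-sided bound.

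Two further points. First, your proof covers only the regime in which the blown-up energy bound $\je^0(u_\ep',A_\ep')\le (\ep')^{-\beta}$ holds (roughly $\he\le\ep^{-\beta}$); for $\lep^4\ll\he\ll1/\ep^2$ the paper needs a second averaging blow-up at an intermediate scale $\sigma$ (Proposition~\ref{pro5}), with a Fubini argument over balls $B_x^\sigma$ and a probability-of-probabilities layer, before the mass-displacement machinery applies. Your plan omits this case. Second, the adaptive choice $N\in\{\nm,\ns\}$ is not made to control the boundary layer of $\toe$: it is made so that the sign of $\Ce$ matches the sign of $\nu_\ep'(\om_\ep')-2\pi N$, rendering the term $\Ce\int(d\nu_\ep'-\co\indic_{\poe})$ nonnegative in \eqref{g1bis} (see \eqref{choice} and \eqref{signC}). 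The boundary-layer error is handled separately, via the cutoff $\xi=2\zeta_\ep'/\plep$ and Lemma~\ref{lemco}, after the $N$-choice has already eliminated the $\Ce$-term.
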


\begin{remark}
\begin{itemize}
\item[-] The constant $\gamma$ in \eqref{borneinffinale2} and \eqref{resth5k} was introduced in \cite{bbh} and  may be  defined by
\begin{equation}\label{defgamma}\gamma=\lim_{R \to \infty}\( \hal  \int_{B_R} |\nab u_0|^2
+ \frac{(1-|u_0|^2)^2}{2} -\pi \log R\) ,\end{equation} where $u_0 (r,\theta) =
f(r) e^{i \theta} $  is the unique (up to translation and rotation)
radially symmetric degree-one vortex (see \cite{bbh,miro}).
\item[-] There exists $\{(u_\ep,A_\ep)\}_\ep$ satisfying \eqref{resth5k} for any $N$ such that $1\ll N\le \frac{\he|\om|}{2\pi}$, see Theorem~\ref{th5} in Section~\ref{bornesup}.
\item[-]
As we already mentioned in Section \ref{sec1.8}, this theorem allows to retrieve the results of \cite{livre}, but it gives a stronger result: in \cite{livre} we establish the leading order behaviour of the vorticity for minimizers:  $\mu(u_\ep, A_\ep) \sim \he \mu_\lambda \sim \tmuo$, while here \eqref{mumuo0} gives (at least for small enough applied fields) for the whole class of ``almost minimizers"  the order of the fluctuations of the vorticity around the constant density $\tmuo$:
it is of order $\sqrt{N}$, hence  in particular the number of vortices is $N$ with an error of order $\sqrt{N}$.
\item[-] For energy minimizers it is possible to deduce from this result that  $\bar{N}$, now being defined as the total degree of the vortices, satisfies  $\je^{\bar{N}}=\min_{N \in \mn }\je^N +o(\bar{N})$. On the other hand $\min_{N \in \mn} \je^N$ is achieved at $\nm $ or $\ns$. From examining carefully the variations of $N \mapsto \je^N$ one should be able to deduce that $\bar{N}=\nm $ or $\ns$ with a smaller error than previously (at least for $\he \le \hci + O(\sqrt{\lep})$ we expect this error to be $0$), see Remark  \ref{remsec6} for more details. So we expect, for small enough fields, to be able  to estimate exactly the total degree of the vortices of a minimizer, and for larger fields, to estimate it with an error which is at least  better than $\sqrt{N}$.
\end{itemize}
\end{remark}

\subsection{Use of the ergodic theorem for Ginzburg-Landau} \label{sec1.10}As announced, the method consists in applying the framework of Section \ref{secergo} to the Ginzburg-Landau energy.

We sketch the method in the case $\he=\lambda\lep$, $\lambda_\om < \lambda <+\infty
$. The case of higher fields $\he \gg \lep$ will reduce to this one by scaling.

Let $(u_\ep,A_\ep)$ --- or $(\pue,\pae)$ in rescaled coordinates ---
denote a minimizer of  $\je$ and let $\pmue= \mu(\pue,\pae)$.
The splitting result of Section \ref{sec1.8} combined with the blow up procedure reduces us (cf. Proposition \ref{pro1.5})  to bounding from below  $\pfep(\pue, \pae)$.

Thus we are in the setting of Example \ref{ex2}  in Section \ref{secergo}, i.e. the case where we want to bound from below the average over large domains $\omega_\ep'$  of some energy density $e_\ep$. Here the energy density is of the form
$$e_\ep(u, A)= \hal |\nab_A u|^2 + \he|\curl A - m\indic_{\omega_\ep'}|^+\frac{(1-|u|^2)^2}{2\ep^2} - \int \zeta' \, d\mu(u,A).$$

From the
Jacobian estimate    $\pmue$ is well approximated by $\nu_\ep=2\pi \sum_i
d_i \delta_{a_i}$ (where $a_i$ denotes the vortex center and $d_i$
its degree) and we should have $a_i\in\poe$.  Using in addition that $\zeta' = \hal \plep$ there,  we may thus formally replace the energy-density above by
$$e_\ep(u, A)= \hal |\nab_A u|^2 + \he|\curl A - m\indic_{\omega_\ep'}|^2+\frac{(1-|u|^2)^2}{2\ep^2} - \pi  \sum  \lep \sum_i d_i \delta_{a_i}.$$

 To apply the framework of Section \ref{secergo} we need to check the coercivity and $\Gamma$-liminf properties of $f_\ep(u,A)=\int e_\ep (u, A)\, \chi $ for a cut-off function $\chi$ as in the example of Section \ref{secergo}.  
 But the framework also requires that  $f_\ep$ be nonnegative (or bounded below by a constant will do) but this is obviously not the case, 
 and one of
the major difficulties of our analysis consists in getting around
this problem. 
This part of the analysis was carried out in our companion paper \cite{compagnon}, where we introduced a method that consists in displacing the negative part of $e_\ep$ so as to  absorb  it into the positive part and to obtain a
density bounded below, called $g_\ep$. We show there that $e_\ep$ can be replaced by $g_\ep \ge - C$ with making only a small error (in average at the small scale).
Note that since we are dealing with cancelling
leading order terms,  we need very precise estimates of  the
free-energy $f_\ep$, in fact we need to make errors which are at most $o(1)$
per vortex (the total acceptable error is $o(N)$).  Hence in \cite{compagnon} we need refined estimates in the
``ball construction  methods"  which are devised  to obtain
general lower bounds for the energy of the vortices  even when their
number is unbounded.
We also show in \cite{compagnon}  the crucial fact that $g_\ep$ controls the number of vortices. 

Returning to the question of proving the coercivity of $f_\ep$, we may now heuristically replace $e_\ep$ by $g_\ep$, and 
the  coercivity requires proving \ref{2.1bis} which in our case becomes
 $$\forall R>0, \ \limsup_{\ep \to 0} \int g_\ep ( \chi * \indic_{B_R}) <\infty\Longrightarrow (u_\ep,A_\ep) \ \text{compact}.$$
This is  satisfied thanks to the fact  that $g_\ep$ controls the number of vortices, in other words we have roughly
$$\forall R>0, \ \limsup_{\ep \to 0} \int g_\ep  (\chi * \indic_{B_R} )<\infty\Longrightarrow  \sum_{a_i \in B_R}|d_i|\le C_R.$$
To prove the $\Gamma$-liminf relation on $f_\ep$, we may reduce to that setting, i.e. that where the number of vortices is bounded independently of $\ep$ on the compact support of $\chi$. In that setting, it is now standard to retrieve  very precise estimates for the Ginzburg-Landau energy, using an analysis of the type of \cite{bbh}.
This way, we obtain the precise $\Gamma$-liminf for $f_\ep$ and can show it can be expressed as a function of $j$ (limit of $j(u_\ep', A_\ep')$ and is equal (up to a constant) to $f(j)=W(j, \chi)$.
But then, using the definition \eqref{fstarr}, the ``cell-energy" is  
$$f^*(j)= \lim_{R\to +\infty}\dashint_{\UR} W(j(\lambda + \cdot), \chi)\, d\lambda.$$ 
One may immediately check, using Fubini, that this is equal to $\lim_{R\to +\infty }\frac{1}{|\UR|} W(j, \chi* \indic_{\UR})$, which is in turn clearly equal to $W_U(j)$ (cf. \eqref{WU} and  the first item in Theorem \ref{thw}).
Combining all these elements, the result of Theorem \ref{gamma} yields
$$\liminf_{\ep \to 0} \dashint_{\omega_\ep'} \hal |\nab_{A_\ep'} u_\ep'|^2 + \he|\curl A_\ep' - m\indic_{\omega_\ep'}|^+\frac{(1-|u_\ep'|^2)^2}{2\ep^2} - \int \zeta' \, d\mu(u_\ep',A_\ep')\ge \int W(j) \, dP(j),$$ which is essentially the desired lower bound. 

The rigorous proof of this is detailed in Section \ref{section:lb}.

\subsection{Plan of the paper} The paper contains  two parts. The first part is completely independent of the Ginzburg-Landau energy and can thus be read independently.
It starts in Section \ref{sec:abstract} with the proof of Theorem \ref{gamma}.
In Section \ref{latticecase},  we  prove Theorem \ref{th2} i.e. that $W$ is minimized among lattice configurations by the triangular lattice. In Section
\ref{secW}, we study $W$ more generally, and prove Theorem \ref{thw}.

The second part is about the application of the tools of the first part to the Ginzburg-Landau energy and the derivation of $W$ as its $\Gamma$-limit.
In Section \ref{sec-splitting}, we prove
the new energy-splitting formula,
in the same section we recall or prove some results on the splitting function that will be needed in the sequel and  we also derive simple a
priori bounds for energy-minimizers.
In Section \ref{section:lb}, we show how to apply the abstract framework  of Section \ref{secergo} in the specific case of the Ginzburg-Landau energy, and this way we obtain the main
energy lower bound, and prove Theorems~\ref{th1} and~\ref{th1prime}
assuming the upper bound, which is proven in Section~\ref{bornesup}.  This requires  the improved lower bounds for the energy of vortices borrowed from \cite{compagnon}.
 In Section
\ref{bornesup}, we prove the matching upper bound for the energy via an explicit
construction, using the periodic minimizing sequence  found in Theorem \ref{thw}.

In the Appendix, we prove some additional qualitative results on the solutions to the obstacle problem which can be of independent interest, in particular estimates when the size of the coincidence set is small, which are needed in the regime $\he - \hci \ll \lep$.

\vskip .5cm {\bf Acknowledgements:} We would like to thank warmly
S.R.S. Varadhan for his suggestion to use the ergodic theorem. Many
thanks also to our colleagues  A. Venkatesh,  C.S. Gunt\"urk, L.
 Caffarelli, A. Chambert-Loir,  N. Fournier, L. Mazet, M. Traizet
and E. Lesigne for providing information and/or help
on various steps. We also thank  the referee,    as well as  our students and post-docs H. Ben Moussa, D. Goldman and  A. Contreras, for their careful and critical reading.
 This work would also not have been possible without several invitations of E.S to New York University and S.S to Paris XII. We
thank these two institutions, and acknowledge the  financial support of the National Science Foundation, an EURYI award (SS), and the Institut Universitaire de France (ES).

\part{The renormalized energy}

\section{Proof of Theorem \ref{gamma}}\label{sec:abstract}
Before giving the proof of the theorem, we state a preliminary lemma.
\begin{lem}\label{lesigne} (E. Lesigne) Assume $P_n$ are Borel probability measures on a Polish metric  space $X$ and that for any $\delta>0$ there exists $\{K_n\}_n$ such that $P_n(K_n)\ge 1-\delta$ for every $n$ and such that if $\{x_n\}_n$ satisfies for every $n$ that $x_n\in K_n$, then any subsequence of $\{x_n\}_n$  admits a convergent subsequence (note that we do not assume $K_n$ to be compact).

Then $\{P_n\}_n$ admits a subsequence which converges tightly, i.e. converges weakly to a {\em probability} measure $P$.
\end{lem}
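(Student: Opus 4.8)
\textbf{Proof plan for Lemma \ref{lesigne}.}

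The plan is to establish tightness of the family $\{P_n\}_n$ in the sense of Prokhorov's theorem and then invoke Prokhorov to extract a weakly convergent subsequence with a probability measure limit. The hypothesis is a ``uniform precompactness on large-probability sets'' condition, which is almost but not quite the classical tightness condition (classical tightness would require the $K_n$ to be a single compact set, or at least contained in a fixed compact set); the point is to upgrade it. First I would fix $\delta>0$ and take the sets $\{K_n\}_n$ provided by the hypothesis, so that $P_n(K_n)\ge 1-\delta$ for all $n$. The key observation is that the hypothesis on $\{K_n\}_n$ says precisely that the set $\bigcup_n (K_n \times \{n\})$ is precompact in an appropriate sense; more usefully, I would argue that $K:=\overline{\bigcup_{n\ge 1} K_n}$ is \emph{not} in general compact, so one cannot simply take closures. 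Instead, the right move is a diagonal/covering argument: show that for every $\eta>0$ there is a \emph{finite} set of points whose $\eta$-neighborhoods cover all but a small-probability part of each $P_n$ simultaneously.

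Concretely, here is the main step. Suppose, for contradiction, that $\bigcup_n K_n$ fails to be totally bounded ``uniformly'', i.e. that there exist $\eta>0$ and, for each finite collection of balls of radius $\eta$, some index $n$ and some point $x_n\in K_n$ not covered by those balls. Iterating, one extracts a sequence $x_{n_k}\in K_{n_k}$ (with $n_k$ increasing, after checking one can take infinitely many distinct indices — here one may need to be slightly careful, but since each $K_n$ is a subset of a Polish space and the failure is of ``uniform'' total boundedness, one can always push the bad index to infinity) such that the $x_{n_k}$ are pairwise $\eta$-separated. Such a sequence has no convergent subsequence, contradicting the hypothesis on $\{K_n\}_n$. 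Therefore $\bigcup_n K_n$ is totally bounded; since $X$ is complete (Polish), its closure $\widehat K$ is compact. Then $P_n(\widehat K)\ge P_n(K_n)\ge 1-\delta$ for all $n$, and since $\delta>0$ was arbitrary we conclude that $\{P_n\}_n$ is tight in the classical sense. By Prokhorov's theorem (valid on Polish spaces, see \cite{billingsley,dudley}), a subsequence converges weakly to a Borel probability measure $P$, which is the assertion.

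I expect the main obstacle to be the contradiction argument showing uniform total boundedness — specifically, making rigorous the extraction of an $\eta$-separated sequence $\{x_{n_k}\}$ with the indices $n_k$ tending to infinity (so that the sequence genuinely violates the precompactness hypothesis, which quantifies over sequences indexed by \emph{all} $n$). One has to rule out the degenerate possibility that the ``bad'' points all live at small indices; this is handled by noting that if $\bigcup_{n\le M} K_n$ were already enough to create the obstruction for every $M$, one would contradict total boundedness of the finite union $\bigcup_{n\le M}\overline{K_n}$ (a finite union of subsets of a Polish space, hence totally bounded since each $\overline{K_n}$ is — wait, the $K_n$ need not have compact closure individually either). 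The cleanest route around this is: first apply the hypothesis with the \emph{constant} sequence $x_n \equiv x$ for a fixed $x\in K_1\cap\bigcap$... — actually the robust fix is to observe that any sequence $\{x_n\}$ with $x_n\in K_n$ is precompact, apply this to cleverly chosen sequences, and a standard subsequence-of-subsequences (diagonal) argument then yields total boundedness of $\bigcup_n K_n$ directly. Everything else — passing from total boundedness plus completeness to compactness of the closure, and the final appeal to Prokhorov — is routine.
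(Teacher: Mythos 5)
Your overall strategy---reduce to tightness, then invoke Prokhorov---is the same as the paper's, but the concrete step you propose has a genuine gap, and the worry you flag at the end is exactly the fatal one. The claim that $\bigcup_n K_n$ is totally bounded is simply false under the hypotheses. Take $X=\mr$, $K_1=\mr$, $K_n=\{0\}$ for $n\ge 2$, and $P_n=\delta_0$ for all $n$; then $P_n(K_n)=1$ and any sequence $x_n\in K_n$ is eventually $0$, so the precompactness hypothesis holds, yet $\bigcup_n K_n=\mr$ is not totally bounded. Your contradiction argument breaks down precisely because all the $\eta$-separated ``bad'' points can sit in a single, fixed, non-compact $K_{n_0}$, and the hypothesis---which constrains sequences $x_n\in K_n$ with $n$ running over \emph{all} indices---is blind to what happens inside one fixed set. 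No diagonal or subsequence trick salvages a false statement.

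The missing idea, which the paper supplies, is inner regularity: since $X$ is Polish, each Borel probability measure $P_n$ is tight, so you can choose a \emph{compact} $K_n'\subset K_n$ with $P_n(K_n')\ge 1-2\delta$. Now set $K=\overline{\bigcup_n K_n'}$. A sequence in $K$ either has a subsequence contained in a finite union $\bigcup_{n\le M}K_n'$, which is compact as a finite union of compacts, or it has a subsequence $y_{k_j}\in K_{n(k_j)}'$ with $n(k_j)$ strictly increasing; padding this out (choose arbitrary $x_m\in K_m$ for the remaining indices $m$, possible since $K_m\neq\varnothing$) gives a sequence to which the hypothesis applies, yielding a convergent sub-subsequence. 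Hence $K$ is compact, $P_n(K)\ge 1-2\delta$ for all $n$, and tightness follows. This replacement of $K_n$ by $K_n'$ is not cosmetic: it is what makes the ``finitely many indices'' case harmless, since $\overline{K_n}$ need not be compact but $K_n'$ is by construction.
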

\begin{proof}
From Prohorov's Theorem, it suffices to show that the sequence of
measures is tight. As a finite Borel measure on a Polish space, the measure  $P_n$ is regular \cite{cohn}, thus there is a compact
subset $K_n'\subset K_n$ such that $P_n(K_n')\ge 1 - 2\delta$. Then,
letting
$$K = \overline{\bigcup_{n}K_n'}, $$
we have $P_n(K)\ge 1-2\delta$ for every $n$ and the assumption made
on $\{K_n\}_n$ implies that $K$ is compact.  Indeed, a sequence in $K$ is either included in a finite union of the compact sets $K_n$ (then is compact) or has a subsequence which can be relabelled $(x_n)$ and satisfied $x_n'\in K_n'$ along a subsequence $n'$, hence compact by assumption. 
Therefore $\{P_n\}_n$ is
tight.
\end{proof}
We now start proving the theorem.  First we choose a sequence $\{\ep_n\}_n$ tending to $0$ such that
$$\lim_{n\to +\infty} F_{\ep_n}(u_{\ep_n}) = \liminf_{\ep \to 0} F_\ep(u_\ep).$$
In this proof, $\ep$ will always be assumed to belong to this
sequence, and $\lim_{\ep\to 0}$ will mean the limit along this
sequence.

Recall that  $P_\ep$ is the image of the normalized Lebesgue measure
restricted to $\omega_\ep$  under the map $\lambda \mapsto \theta_\lambda
u_\ep$. In particular for any positive measurable  function $\Phi$ on $X$
\begin{equation}
\label{2.1.2}\int \Phi(u) \, dP_\ep(u)  = \dashint_{\omega_\ep}
\Phi(\theta_\lambda u_\ep)\, d\lambda.
\end{equation}

\begin{proof}[Step 1: $\{P_{\ep}\}_\ep$ is tight]
Letting $\omega_{\ep,R}=\{ x \in \omega_\ep\mid\dist(x, \p \omega_\ep)
> R\}$ we have
\begin{equation}\label{eqet}
\begin{split}\int_{\omega_{\ep,R}}\int_{B_R} f_\ep(\theta_{\lambda+\mu} u_\ep)\,d\lambda\,d\mu
&= \iint_{\mr^2\times\mr^2} \indic_{\omega_{\ep,R}}(\lambda)\indic_{B_R}(\mu)f_\ep(\theta_{\lambda+\mu} u_\ep)\,d\lambda\,d\mu\\
& = \int_{\mr^2} \indic_{\omega_{\ep,R}}\ast \indic_{B_R}(\lambda)f_\ep(\theta_{\lambda} u_\ep)\,d\lambda\\
&\le |B_R| \int_{\omega_\ep} f_\ep(\theta_\lambda u_\ep)\,d\lambda\le C|B_R||\omega_\ep|.
\end{split}\end{equation}
Let us denote $Y_{\ep, R}$ the image of $\omega_{\ep , R} $ by $\lambda \mapsto  \theta_\lambda u_\ep$, and
$$X_{R,K}^\ep = \left\{u\in X\mid \dashint_{B_R} f_\ep(\theta_\lambda u)\,d\lambda > K\right\}.$$
The left-hand side in \eqref{eqet} is larger than  $K |B_R| |\omega_\ep| P_\ep\( X_{R, K}^\ep \cap Y_{\ep, R}\) $.
In addition $P_\ep( X_{R, K}^\ep) \ge   P_\ep \( X_{R, K}^\ep \cap Y_{\ep, R}\)- P_\ep (Y_{\ep, R}^c) \ge    P_\ep \( X_{R, K}^\ep \cap Y_{\ep, R}\)-
 \frac{|\omega_\ep\sm \omega_{\ep,R}|}{|\omega_\ep|}$, so we deduce from \eqref{eqet}  that
$$P_\ep(X_{R,K}^\ep)\le \frac CK + \frac{|\omega_\ep\sm \omega_{\ep,R}|}{|\omega_\ep|}.$$
From  \eqref{domaine} and for any $\delta >0$, there exists a subsequence $\{\ep_n\}_n$ such that $$\frac{|\omega_{\ep_n}\sm \omega_{\ep_n,R}|}{|\omega_{\ep_n}|} < \delta 2^{-n}$$
and then
$$P_{\ep_n}\(\cup_{k=1}^n X^{\ep_n}_{k,2^k/\delta}\)\le C\delta.$$
Now we have that the hypotheses of Lemma~\ref{lesigne} are satisfied. Indeed, letting  $K_n$ be the complement of  $\cup_{k=1}^n X^{\ep_n}_{k,2^k/\delta}$, we have $P_{\ep_n}(K_n)\ge 1 - C\delta$. Moreover,  if $u_n\in K_n$ for every $n$ then $\forall R>0$, $\forall n>R$ we have $u_n\notin X^{\ep_n}_{R,2^R/\delta}$, i.e.
$$\dashint_{B_R} f_{\ep_n}(\theta_\lambda u_n)\,d\lambda \le \frac{2^R}\delta.$$
 Then, from the coercivity assumption, a subsequence of $\{u_n\}$ converges.

Applying Lemma~\ref{lesigne} we can conclude that $\{P_{\ep_n}\}$ is tight and then that a subsequence converges weakly to a probability measure $P$.
\end{proof}
\begin{proof}[Step 2: $P$ is  $\theta$-invariant]

Let $\Phi$ be bounded  continuous   on $X$. Then from the
definition of  $P_\ep$,
$$
\int \Phi(u) \, dP(u) =  \lim_{\ep \to 0} \int \Phi(u) \, dP_\ep(u)=
\lim_{\ep \to 0 } \dashint_{\omega_\ep } \Phi(\theta_\lambda u_\ep)\,
d\lambda.$$ Moreover,
\begin{equation*}\int_{\omega_\ep }
\Phi(\theta_\lambda u_\ep)\, d\lambda -\int_{\omega_\ep }
\Phi(\theta_{\lambda+\lambda_0} u_\ep)\, d\lambda= \int_{\omega_\ep}
\Phi(\theta_\lambda u_\ep)\, d\lambda - \int_{ \omega_\ep + \lambda_0}
\Phi(\theta_\lambda u_\ep)\, d\lambda.
 \end{equation*}
Thus,
 \begin{equation*}
\left|\dashint_{\omega_\ep } \Phi(\theta_\lambda u_\ep)\, d\lambda
-\dashint_{\omega_\ep } \Phi(\theta_{\lambda+\lambda_0} u_\ep)\,
d\lambda\right| \le
 \frac{|\omega_\ep\sd(\omega_\ep + \lambda_0)|}{|\omega_\ep|}
\|\Phi\|_{L^\infty} , \end{equation*} where $  \sd$ denotes the symmetric difference between sets, and follows from \eqref{domaine} that $|\omega_\ep\sd(\omega_\ep + \lambda_0) |= o(|\omega_\ep|)$ as $\ep\to 0$.
We deduce that
$$\int \Phi(u)\, dP(u)=\lim_{\ep \to 0 }
\dashint_{\omega_\ep } \Phi(\theta_{\lambda+\lambda_0} u_\ep)\, d\lambda =
\int \Phi(\theta_{\lambda_0}u)\, dP(u),$$ hence $P$ is  invariant
under the action  $\theta$.
\end{proof}
We state the proof of  \eqref{rg1} as a lemma.

\begin{lem}\label{abstrait} Assume that $X$ is a Polish metric space, that $\{P_\ep\}_{\ep>0}$, $P$  are Borel probability measures on $X$ such that $P_\ep\to P$  as $\ep\to 0$, and that $\{f_\ep\}_{\ep>0}$ and $f$ are positive measurable functions on $X$ such that $\liminf_{\ep\to 0} f_\ep(x_\ep)\ge f(x)$ whenever $x_\ep\to x$.

Then,
\begin{equation}\label{eqlem} \liminf_{\ep\to 0} \int f_\ep\,dP_\ep\ge \int f\,dP.\end{equation}
\end{lem}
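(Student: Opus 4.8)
The statement is a standard lower semicontinuity property of integrals with respect to weakly converging probability measures, combined with the "liminf along converging sequences" hypothesis on the integrands. The plan is to reduce \eqref{eqlem} to the portmanteau theorem by replacing $f_\ep$ with a single lower semicontinuous function that lies below all $f_\ep$ in an asymptotic sense. First I would define, for each $\delta>0$, the "inf-over-a-neighborhood" functions
\begin{equation}\label{eqginf}
g_\delta(x) := \inf\Bigl\{\,\liminf_{\ep\to 0} f_\ep(x_\ep)\ :\ x_\ep\to x\,\Bigr\},
\end{equation}
or more simply $\tilde g_N(x) := \inf_{\ep<1/N,\ d(y,x)<1/N} f_\ep(y)$, which is nonnegative, nondecreasing in $N$, and (by construction) lower semicontinuous in $x$. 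The hypothesis that $\liminf_\ep f_\ep(x_\ep)\ge f(x)$ whenever $x_\ep\to x$ is precisely what guarantees $\sup_N \tilde g_N(x)\ge f(x)$ at every $x$; indeed if this failed at some $x$ one could extract a sequence $x_\ep\to x$ with $\liminf f_\ep(x_\ep)<f(x)$, contradicting the hypothesis. Also, by construction $f_\ep(x)\ge \tilde g_N(x)$ for every $\ep<1/N$ and every $x$.

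Next I would run the following chain of inequalities. Fix $N$. For $\ep<1/N$ we have $\int f_\ep\,dP_\ep\ge \int \tilde g_N\,dP_\ep$. Since $\tilde g_N$ is nonnegative and lower semicontinuous and $P_\ep\to P$ weakly, the portmanteau theorem (in the form: for $g$ nonnegative lower semicontinuous, $\liminf_\ep\int g\,dP_\ep\ge\int g\,dP$; this follows by writing $g=\sup_k g_k$ with $g_k$ bounded continuous and monotone, applying weak convergence to each $g_k$, and using monotone convergence) gives
\begin{equation}\label{eqchain}
\liminf_{\ep\to 0}\int f_\ep\,dP_\ep\ \ge\ \liminf_{\ep\to 0}\int \tilde g_N\,dP_\ep\ \ge\ \int \tilde g_N\,dP.
\end{equation}
Now let $N\to\infty$: since $\tilde g_N\uparrow \sup_N\tilde g_N\ge f$ pointwise, monotone convergence yields $\int \tilde g_N\,dP\to\int (\sup_N\tilde g_N)\,dP\ge\int f\,dP$, which combined with \eqref{eqchain} gives \eqref{eqlem}.

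The only genuinely delicate point is the approximation of a nonnegative lower semicontinuous function $g$ on the Polish space $X$ from below by an increasing sequence of bounded continuous functions; this is where metrizability and separability of $X$ enter, via the formula $g_k(x)=\min(k,\inf_{y}(g(y)+k\,d(x,y)))$, which is $k$-Lipschitz, bounded, nondecreasing in $k$, and increases to $g$ because $g$ is lower semicontinuous. Everything else is measure-theoretic bookkeeping: checking that $\tilde g_N$ is lower semicontinuous (an infimum of the fixed family $\{f_\ep(\cdot)\}$ composed with the continuous structure — actually $\tilde g_N$ as an infimum over an open set is automatically lower semicontinuous), and verifying the pointwise bound $\sup_N\tilde g_N\ge f$ from the hypothesis. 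I do not expect any obstruction from the fact that the $K_n$ in Lemma \ref{lesigne} need not be compact, since that lemma is not invoked here; Lemma \ref{abstrait} is purely about a weakly convergent sequence of probability measures and a family of integrands with a one-sided continuity property.
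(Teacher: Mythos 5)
Your plan has the right shape---dominate $f_\ep$ from below by lower semicontinuous functions increasing to a pointwise majorant of $f$, apply the portmanteau inequality to each, then let $N\to\infty$ by monotone convergence---but the function you propose does not have the semicontinuity you need. The function
$$\tilde g_N(x)=\inf_{\ep<1/N,\ d(y,x)<1/N}f_\ep(y)$$
is \emph{upper}, not lower, semicontinuous. Writing $h_N=\inf_{\ep<1/N}f_\ep$, for every $a$ the set $\{x\mid\tilde g_N(x)<a\}=\{x\mid\exists y,\ d(y,x)<1/N,\ h_N(y)<a\}$ is open, because a $y$ witnessing the condition for $x$ also witnesses it for every $x'$ close enough to $x$; openness of strict sublevel sets is precisely upper semicontinuity. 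For an explicit failure of lower semicontinuity take $X=\mr$, all $f_\ep$ equal to the l.s.c.\ function $h$ with $h(0)=1$, $h(y)=2$ for $y\neq 0$: then $\tilde g_N=1$ on $(-1/N,1/N)$ and $\tilde g_N=2$ elsewhere, so $\{\tilde g_N>3/2\}$ is closed and not open. The portmanteau step therefore breaks: for a nonnegative upper semicontinuous function weak convergence yields $\limsup_\ep\int\tilde g_N\,dP_\ep\le\int\tilde g_N\,dP$, the \emph{opposite} of the inequality you need, and indeed with $P_\ep=\delta_{1-\ep}\to\delta_1=P$ in the example above one has $\int\tilde g_1\,dP_\ep=1<2=\int\tilde g_1\,dP$ for all $\ep\in(0,1)$.

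The strategy can be repaired by substituting the lower semicontinuous envelope $\hat g_N(x)=\sup_{\delta>0}\inf_{d(x',x)<\delta}\tilde g_N(x')$. One still has $\hat g_N\le\tilde g_N\le f_\ep$ for $\ep<1/N$, while a short computation shows $\hat g_N(x)\ge\inf_{\ep<1/N,\ d(y,x)<2/N}f_\ep(y)\ge\tilde g_{\lfloor N/2\rfloor}(x)$, so $\sup_N\hat g_N\ge\sup_N\tilde g_N\ge f$ still follows from the hypothesis; the portmanteau and monotone convergence steps then go through with $\hat g_N$ in place of $\tilde g_N$. With that correction your route is sound and genuinely different from the paper's: the paper never constructs an auxiliary semicontinuous function, but instead reduces to the level-set inequality $\liminf_\ep P_\ep(\{f_\ep>\lambda-\delta\})\ge P(\{f>\lambda\})$, proven by covering $\{f>\lambda\}$ with open sets on which $f_\ep>\lambda-\delta$ eventually (the same topological mechanism your $\hat g_N$ encodes), and then integrates over $\lambda$ using the layer-cake formula and Fatou. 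The two proofs carry the same content; yours packages the semicontinuity once and for all in $\hat g_N$, while the paper's repeats it within the level-set argument.
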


This is \eqref{rg1} since $\int f_\ep\,dP_\ep  = \dashint_{\omega_\ep}  f_\ep(\theta_\lambda u) \, d\lambda.$

\begin{proof}[Proof of the Lemma] It suffices to show that for any
$\lambda,\delta>0$, we have
\begin{equation}\label{fu5}\liminf_{\ep \to 0} P_\ep(\{ f_\ep>\lambda-\delta\})\ge P(\{f > \lambda\}).\end{equation}
Indeed, using the standard
expression for the integral of a positive function $f$
$$\int f(x) \, d\mu(x)= \int_0^\infty \mu(\{ f>\lambda\}) \, d\lambda,$$
we find by applying it to  $f_\ep$ and $P_\ep$, and then to $f$ and
$P$, in view of (\ref{fu5}) that
\begin{eqnarray*}
\liminf_{\ep \to 0} \int f_\ep(u)\, dP_\ep(u)
&  = & \liminf_{\ep \to 0} \int_0^\infty P_\ep(\{f_\ep>\lambda\}) \, d\lambda \\
 & \ge &  \int_0^\infty \liminf_{\ep\to 0} P_\ep(\{f_\ep>\lambda\}) \, d\lambda \\
& \ge & \int_0^\infty P(\{f >\lambda+\delta \}) \, d\lambda \\
& \ge & \int_0^\infty P(\{ f>\lambda \}) \, d \lambda - \delta\\
& = & \int f(u) \, dP(u) - \delta,
\end{eqnarray*}
where we have used Fatou's lemma, and the fact that $P$ is a
probability measure. Since this is true for any $\delta>0$, we have \eqref{eqlem}.

We prove \eqref{fu5}. For any $\delta>0$ and $u\in X$  we claim that there exists
an open neighborhood $V_u$ of $u$ and $\eta>0$ such
that,\begin{equation} \label{gc1}\text{ $\forall \ep<\eta$, $\forall
v \in V_u$, $f_\ep(v)> F(u)-\delta$.}\end{equation}
Indeed, assume this were wrong, then there would exist $\delta>0$ and $u \in X$ together with  a sequence $\{\ep\}$ tending to $0$  and
a corresponding sequence  $\{u_\ep\}_\ep$  tending to $u$  such that $f_\ep(u_\ep) \le f(u)- \delta$, thus  contradicting
the $\Gamma$-liminf assumption.  Hence (\ref{gc1}) holds.

We denote by $\veta$ the set of $u$'s such that $f_\ep(v) > F(u)-\delta$ holds on  $V_u$ for every
$\ep <\eta$. Clearly, $\{\veta\}_\eta$ is decreasing and from the above $\cup_{\eta>0
} \veta=X$. Thus, if we  let  for some $\lambda>0$
$$E=\{ u \in X\mid f(u) > \lambda\},\quad E_\eta = E\cap \veta,$$
then $\{E_\eta\}_\eta$ is decreasing and $E = \cup_\eta E_\eta.$
Moreover, from the definition of $\veta$, we have  $f_\ep > \lambda - \delta$ on the  open set
$O_\eta = \cup_{u\in E_\eta} V_u$ for every $\ep<\eta$.

It follows that for any $\ep<\eta$,
\begin{equation}
\label{fu3}P_\ep(\{ f_\ep>\lambda-\delta\}) \ge P_\ep( O_\eta). \end{equation}
Then, since  $P_\ep\to P$ and  since $O_\eta$ is open and contains $E_\eta$, we have
$$\liminf_{\ep \to 0} P_\ep(O_{\eta}) \ge P(O_{\eta})\ge P(E_\eta).$$
It then follows from (\ref{fu3}) that
$$ \liminf_{\ep \to 0} P_\ep(\{ f_\ep>\lambda-\delta\})\ge
P(E_\eta).$$ Since $E_\eta\nearrow E$ as $\eta\searrow 0$  we deduce by monotone convergence that \eqref{fu5} holds.
\end{proof}

\begin{proof}[Step 4: Proof of \eqref{rg2}]

Since $P$ is invariant w.r.t. the action $\theta$, we may apply the
ergodic theorem as stated in \cite{becker}  to obtain that
$$ {\bf E}^P (f(u)) = {\bf E}^P (f^*(u)),$$
where $f^*$ is $\theta$-invariant and $P$-a.e. equal  to
\begin{equation}\label{fstar}
\lim_{R \to \infty} \dashint_{B_R} f(\theta_\lambda u)
\, d\lambda.\end{equation}It is also true (see \cite{becker})  that this limit
exists for $P$-a.e. $u$, and that balls may be replaced by any Vitali family satisfying   \eqref{hypsets}. This
proves \eqref{rg2}.

\end{proof}

\begin{remark}\label{rem2.1}\mbox{}
\begin{enumerate}\item
The lower bound of Theorem \ref{gamma} implies in particular that $\liminf_{\ep \to 0} F_\ep (u_\ep) \ge \min f^*$
 where $f^*$ is given by \eqref{fstar}.
If we assume in addition  that for some family there is equality i.e. that
$\limsup_{\ep \to 0} F_\ep(u_\ep) \le \min f^*$, then comparing with the lower bound obtained in Theorem \ref{gamma} we deduce that we have
$$ f^*(u)= \min f^* \quad \text{ for } P- a.e. u. $$
\item
We may apply the same reasoning as Theorem \ref{gamma}  in $B(x_\ep, R)$  instead of $\omega_\ep$, with $R$ large, to the functional $\dashint_{B(x_\ep, R)} f_\ep(\theta_\lambda u)\, d\lambda$,
and we will obtain
$$\liminf_{\ep \to 0} \dashint_{B(x_\ep, R)} f_\ep(\theta_\lambda u_\ep)\, d\lambda\ge \min f^* + o_R(1)$$ where $o_R(1)\to 0$ as $R\to \infty$ (the $o_R(1)$ is due to   near boundary errors).
If in addition we know that for some $x_\ep$,
\begin{equation}\label{bornesuplocale}  \limsup_{\ep \to 0}
\dashint_{B(x_\ep, R)} f_\ep(\theta_\lambda u)\, d\lambda \le \min f^*+o_R(1)\end{equation} we deduce
\begin{equation}\label{borneinflocale}\lim_{R\to \infty} \lim_{\ep \to 0} \dashint_{B(x_\ep, R)} f_\ep(\theta_\lambda u_\ep) \, d\lambda = \min f^*.\end{equation}But if we assume $\limsup_{\ep \to 0} F_\ep (u_\ep) \le \min f^*$, by a Fubini argument
\eqref{bornesuplocale} holds for most $x_\ep \in \omega_\ep$, so the local estimate \eqref{borneinflocale} too. This says that  when the upper and lower bounds match, the energy density is essentially uniformly distributed, at any scale $\gg 1$.
\end{enumerate}\end{remark}

\section{Minimization of $W$ in the  periodic case : optimality of the triangular lattice}
\label{latticecase}
\subsection{Calculation of $W$ in the periodic case}\label{rem52}
In this section we study the minimization of
the renormalized energy $W(j)$ among lattices   in the following sense: We assume that
$\Lambda = \mz
\vu +\mz\vv$, where the  vectors $(\vu,\vv)$
form a basis of $\mr^2$, that  $j$ is invariant w.r.t.
translations by the vectors $\vu$, $\vv$ and $$\curl j = 2\pi\sum_{p\in\Lambda}\delta_p - 1, \qquad \div j =0.$$
It is not difficult to check that there exists  such a current $j$
if and only if  $\det(\vu,\vv) = 2\pi$.
We will denote $\mathbb{T}_\Lambda$ the torus $\tl = \mr^2/\Lambda$.
 We have
\begin{pro}\label{renper}  Assume $\Lambda = \mz
\vu +\mz\vv$ and $j$ are as above.
Then, letting $\{\UR\}$ be any family satisfying \eqref{hypsets}, \eqref{hypsetsbis},  defining $W_U(j) $ as in \eqref{WU} we have
\begin{equation}
\label{pro51.1} W_U(j) =\lim_{\eta\to 0} \frac{1}{2\pi}
\(\hal\int_{\tl\sm B(0,\eta)} |j|^2  +\pi\log \eta\). \end{equation}
Moreover, letting $\hl$   be the unique solution (with mean zero)
on $\tl$ to
 \begin{equation}\label{dhl}
- \Delta \hl = 2\pi \delta _0 - 1,\end{equation}  and denoting
\begin{equation}\label{pro51.2}
j_\Lambda = - \np H_\Lambda\end{equation} we have that $j-j_\Lambda
$ is a constant and $$W_U(j)\ge W_U(j_\Lambda) $$ with equality if and
only if $j=j_\Lambda$.

\end{pro}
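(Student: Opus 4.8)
\textbf{Step 1: Reduction to a torus integral.}
The first goal is \eqref{pro51.1}. Because $j$ is $\Lambda$-periodic, one can compute $W(j,\chi_{\UR})$ by tiling $\UR$ with fundamental cells. Fix a small $\eta$. For the contribution $\hal\int_{\mr^2\setminus\cup_p B(p,\eta)}\chi_{\UR}|j|^2$, partition $\mr^2$ into translates of a fundamental parallelogram; on each translate entirely contained in $\supp(\chi_{\UR})$ where $\chi_{\UR}\equiv 1$, the integral contributes exactly $\hal\int_{\tl\setminus B(0,\eta)}|j|^2$, and the number of such cells is $|\UR|/(2\pi) + O(|\partial\UR|)$ by \eqref{hypsetsbis} (the boundary layer where $\chi_{\UR}\neq 1$ or is cut off is $O(R^\theta)$ with $\theta<2$, hence negligible after dividing by $|\UR|\sim cR^2$). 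Here we use that $|j|^2\in L^1_{\loc}$, which follows from remark~2 after Definition~\ref{def1.2}. Similarly $\sum_{p\in\Lambda}\chi_{\UR}(p) = |\UR|/(2\pi) + O(R^\theta)$. Dividing by $|\UR|$, letting $R\to\infty$ and then $\eta\to 0$, the limsup in \eqref{WU} is actually a limit and equals the right-hand side of \eqref{pro51.1}. (The interchange of $\lim_\eta$ with $\limsup_R$ is justified because the per-cell quantity $\hal\int_{\tl\setminus B(0,\eta)}|j|^2 + \pi\log\eta$ converges as $\eta\to 0$ uniformly in the cell, since near $p$ one has $j = \np\log|x-p| + f$ with $f$ smooth, $\Lambda$-periodic.) This simultaneously shows that the value is independent of the choice of cutoff $\chi_{\UR}$, as claimed in item~1 of Theorem~\ref{thw}.

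\textbf{Step 2: Structure of periodic solutions.}
Next I would classify the periodic $j$ solving $\curl j = 2\pi\sum_{p\in\Lambda}\delta_p - 1$, $\div j = 0$. The solvability condition $\det(\vu,\vv)=2\pi$ comes from integrating $\curl j$ over $\tl$ (total curl of a periodic field must vanish; $\int_{\tl}(2\pi\sum\delta_p - 1) = 2\pi - |\tl|$). Given two such solutions $j$ and $j_\Lambda$, their difference $k = j - j_\Lambda$ is a $\Lambda$-periodic field on $\tl$ with $\curl k = 0$ and $\div k = 0$; by Hodge theory on the torus such a $k$ is a harmonic $1$-form, i.e. a constant vector field. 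Since $j_\Lambda = -\np H_\Lambda$ is defined via \eqref{dhl}--\eqref{pro51.2}, this gives $j = j_\Lambda + \mathbf{c}$ for a constant $\mathbf{c}\in\mr^2$.

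\textbf{Step 3: The constant is suboptimal.}
Finally, I would plug $j = j_\Lambda + \mathbf{c}$ into the formula \eqref{pro51.1}. Since $\int_{\tl\setminus B(0,\eta)} j_\Lambda\cdot\mathbf{c} = \mathbf{c}\cdot\int_{\tl\setminus B(0,\eta)}\np H_\Lambda \cdot(-1)$ and $\int_{\tl}\np H_\Lambda = 0$ by periodicity (a perpendicular gradient of a periodic function integrates to zero), while the $B(0,\eta)$ correction is $O(\eta^2|\mathbf{c}|)\to 0$, the cross term vanishes in the limit $\eta\to 0$. Hence
\[
W_U(j) = W_U(j_\Lambda) + \frac{1}{4\pi}\lim_{\eta\to 0}\int_{\tl\setminus B(0,\eta)}|\mathbf{c}|^2 = W_U(j_\Lambda) + \frac{|\tl|}{4\pi}|\mathbf{c}|^2 = W_U(j_\Lambda) + \hal|\mathbf{c}|^2 \ge W_U(j_\Lambda),
\]
with equality iff $\mathbf{c} = 0$, i.e. $j = j_\Lambda$.

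\textbf{Main obstacle.} The routine but genuinely delicate point is Step~1: controlling the boundary-layer errors uniformly in $\eta$ so that the order of the limits $R\to\infty$, $\eta\to 0$ can be exchanged and the $\limsup$ in \eqref{WU} can be upgraded to a limit. This is where assumptions \eqref{hypsets}--\eqref{hypsetsbis} (the Vitali property and $|\URI\setminus\UR| = O(R^\theta)$ with $\theta<2$) are used in an essential way, and one must be careful that the $O(R^\theta)$ count of ``bad'' cells is multiplied only by per-cell quantities that stay bounded as $\eta\to 0$. Steps~2 and~3 are then short and essentially formal.
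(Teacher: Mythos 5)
Your overall strategy matches the paper's: tile by fundamental cells and count, classify periodic solutions via Hodge-type reasoning, then compute the quadratic dependence on the constant $\mathbf c$. Steps~2 and~3 are correct and essentially identical to the paper's. However, there are two issues with Step~1.

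First, a minor slip: you write that $|j|^2\in L^1_{\loc}$ follows from Remark~2 after Definition~\ref{def1.2}. That remark says $j\in L^p_{\loc}$ only for $p<2$; in fact $|j|^2$ blows up like $1/|x-p|^2$ near each $p\in\Lambda$ and is \emph{not} locally integrable --- that is precisely why the renormalization by $\pi\log\eta$ is needed. What is integrable after excising the balls $B(p,\eta)$ is finite, which is all you use, so this is a phrasing error rather than a fatal one, but it should be fixed.

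Second, and more substantively, you frame the core difficulty as an interchange of $\lim_{\eta\to 0}$ with $\limsup_{R\to\infty}$, and then parenthetically assert that the per-cell quantity converges uniformly in the cell. But the per-cell renormalized quantity on a cell with $\chi_{\UR}\equiv 1$ is a single constant (by periodicity), so uniformity in the cell is not the issue. The real issue, which you correctly flag as the ``main obstacle'' but do not prove, is the following: for the boundary-layer cells $K$ (those where $\chi_{\UR}$ is neither $\equiv 0$ nor $\equiv 1$), one must show that the quantity
$w_K(j,\chi_R) := \lim_{\eta\to 0}\bigl[\hal\int_{K\setminus B(c_K,\eta)}\chi_R|j|^2 + \pi\chi_R(c_K)\log\eta\bigr]$
is \emph{bounded uniformly over all $K$ and all $R$}. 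Once that is in hand, no interchange of limits is required at all: $W(j,\chi_R)$ is computed exactly as $\sum_K w_K$, the full cells each contribute the torus constant, the partial cells number $o(|\UR|)$ and each contributes $O(1)$, and one divides by $|\UR|$. The paper proves this uniform bound (its equation \eqref{pro51-cas2}) by writing $j = \mathrm{cst} - \np H_\Lambda$ with $H_\Lambda(x) = -\log|x| + U(x)$, $U\in C^2$, deducing $E(r) := \hal\int_{\tl\setminus B(0,r)}|j|^2 \le C + \pi\log(1/r)$, and then using $|\nab\chi_R|\le C$ together with an integration by parts of $\int_{B(0,r_0)\setminus B(0,\eta)}|j|^2(\chi_R - \chi_R(0)) \le C\int E'(t)\,t\,dt$ to control the mismatch between $\chi_R$ and its value at the cell center. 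Your proof gestures at this but does not carry it out; as written it is a genuine gap, even though you have correctly located where the work must happen.
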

We will denote $W_U(j_\Lambda)$ simply $W(\Lambda)$: it is the minimum
of $W_U(j) $ among $\Lambda$-periodic  configurations, and using
(\ref{pro51.1}) it is given by
$$ W(\Lambda)=\lim_{\eta\to 0} \frac{1}{2\pi}
\(\hal\int_{\tl\sm B(0,\eta)} |\nab \hl|^2  +\pi\log \eta\).$$
\begin{proof}
Since $\curl(j-j_\Lambda)=\div (j-j_\Lambda)=0$ and $j, j_\Lambda$ are periodic, we have that
$j-j_\Lambda$ is constant. We now prove (\ref{pro51.1}). Let us
denote by  $\cal K$ the set of cells $K$ of the form $\{ t\vu +
s\vv, t\in (l-\hal, l+\hal) , s\in (m-\hal, m+\hal)\}$, where $l,m \in \mz$.
For $K \in \cal K$, let  $c_K$  be the center of the cell.  The cells
of $\cal K$ tile  $\mr^2$, hence for every $R>0$, writing $\chi_R$ for $\chi_{\UR}$,
$$W(j, \chi_R)= \lim_{\eta\to 0}
\sum_{K \in \cal K} \hal \int_{K \backslash B(c_K, \eta)} |j|^2
\chi_R + \pi \chi_R (c_K) \log \eta.$$ There are only finitely many
$K$'s on which $\chi_R$ is not identically zero, thus  we may write
$$W(j, \chi_R)= \sum_{K \in \cal K} w_K(j, \chi_R)$$
where $$w_K(j, \chi_R)= \lim_{\eta\to 0} \hal \int_{K \backslash
B(c_K, \eta)} |j|^2 \chi_R + \pi \chi_R (c_K) \log \eta.$$ Then, if
$\chi_R \equiv 1 $ on $K$, we have (by periodicity of $j$)
\begin{equation}
\label{pro51-cas1} w_K(j, \chi_R)= \lim_{\eta\to 0} \int_{\tl
\backslash B(0, \eta)} |j|^2  + \pi  \log \eta.
\end{equation}
On the other hand, there exists $C>0$ such that,
 \begin{equation}\label{pro51-cas2}
\forall R>0,  \forall K \in {\cal K},   \quad |w_K(j, \chi_R)|\le
C.\end{equation} Indeed, $j= cst-\np H_\Lambda$ with $H_\Lambda(x)=
- \log |x|+ U(x)$ where $U$ is a $C^2$ function, so for any $r>0$,
we have
$$E(r):=\hal \int_{\tl\backslash B(0,r)}|j|^2  \le C+ \pi \log \frac{1}{r}.$$
From this we deduce first, by letting $r=\eta$,  that $$\left|\hal
\int_{\tl\backslash B(0, \eta)} |j|^2 \chi_R(0)+ \pi \chi_R(0)\log
\eta\right|\le C,$$ and second, that, since $|\nab \chi_R|\le C$,
for any $0<\eta<r_0$,
\begin{multline*}\left|\int_{B(0, r_0)\backslash B(0, \eta)} |j|^2 (\chi_R- \chi_R(0) ) \right|\le C \int_{B(0, r_0)\backslash B(0, \eta)} |j|^2 |x|=- C \int_\eta^{r_0} E'(t) t \, dt\\
= - C \( E(r_0) r_0-E(\eta) \eta+ \int_{\eta}^{r_0} E(r) \, dr\) \le
C.\end{multline*} Adding the two proves  (\ref{pro51-cas2}). To
conclude, we note that by definition of $\chi_R$ and \eqref{hypsetsbis}
\begin{eqnarray*} & \# \{ K \in {\cal K}| \chi_R \equiv 1\ \text{on} \ K\} \sim_{R \to \infty}\frac{|\UR|}{2\pi}\\
& \# \{ K \in {\cal K}| \chi_R \not\equiv 0 \ \text{and} \ \chi_R
\not\equiv  1 \ \text{on} \ K\}=o(|\UR|) \ \text{as} \ R\to
+\infty.\end{eqnarray*} Together with
(\ref{pro51-cas1})--(\ref{pro51-cas2}) this yields
$$W(j, \chi_R) = \frac{|\UR|}{2\pi}
\lim_{\eta\to 0} \int_{\tl \backslash B(0, \eta)} |j|^2  + \pi  \log
\eta+o(|\UR|).$$

Combining with (\ref{WU}), we deduce (\ref{pro51.1}).
 It remains to show that $W_U(j)\ge W_U(j_\Lambda)$ with equality iff $j=j_\Lambda$. Since $j=j_\Lambda + c$, using (\ref{pro51.1}),  we have
 \begin{equation*}
 4 \pi W_U(j)= \lim_{\eta \to 0}
 \int_{\tl \backslash B(0, \eta)} |j_\Lambda|^2 +|c|^2 + 2 c \cdot j_\Lambda   + \pi  \log \eta
 = 4\pi W_U(j_\Lambda) +  |\tl|c^2\end{equation*}
 since $\int_{\tl  }c \cdot j_\Lambda= - c \cdot \int_{\tl } \np H_\Lambda=0.$ The result follows.
 \end{proof}
We next express $W(\Lambda)$ as a series using the Fourier decomposition of $\hl$.
\begin{lem}
For all $\Lambda \in \lattices$ we have\begin{equation} \label{eis}
W(\Lambda)= \hal \lim_{x\to 0} \( \sum_{p \in \Lambda^*\backslash
\{0\} } \frac{ e^{2i\pi p \cdot x}}{ 4\pi^2 |p|^2}  + \log
|x|\),\end{equation} where $\Lambda^*$ denotes the lattice dual to
$\Lambda$.
\end{lem}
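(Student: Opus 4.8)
The plan is to compute $\hl$ explicitly as a Fourier series on $\tl$ and then to evaluate the limit defining $W(\Lambda)$ by an integration by parts. Since $(\vu,\vv)$ is a basis of $\mr^2$ with $\det(\vu,\vv)=2\pi$ (the normalization of this section), the torus $\tl=\mr^2/\Lambda$ has area $2\pi$ and $\{e^{2i\pi p\cdot x}\}_{p\in\Lambda^*}$ is a complete orthogonal family in $L^2(\tl)$. The Dirac mass has Fourier expansion $\delta_0=\frac{1}{2\pi}\sum_{p\in\Lambda^*}e^{2i\pi p\cdot x}$, so that $2\pi\delta_0-1=\sum_{p\in\Lambda^*\sm\{0\}}e^{2i\pi p\cdot x}$ in the sense of distributions. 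Solving \eqref{dhl} mode by mode, using $-\Delta e^{2i\pi p\cdot x}=4\pi^2|p|^2 e^{2i\pi p\cdot x}$, and imposing zero mean, yields
$$\hl(x)=\sum_{p\in\Lambda^*\sm\{0\}}\frac{e^{2i\pi p\cdot x}}{4\pi^2|p|^2},$$
understood as an element of $W^{1,q}(\tl)$ for $q<2$, i.e. as the unique distributional solution of \eqref{dhl} with mean zero.

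Next I would use the formula for $W(\Lambda)$ recorded just after Proposition~\ref{renper}, namely $W(\Lambda)=\lim_{\eta\to 0}\frac{1}{2\pi}\(\hal\int_{\tl\sm B(0,\eta)}|\nab\hl|^2+\pi\log\eta\)$, and integrate by parts on $\Omega_\eta:=\tl\sm B(0,\eta)$. The contributions from the outer (torus) boundary cancel by periodicity of $\hl\,\p_\nu\hl$; in $\Omega_\eta$ one has $\Delta\hl=1$, so the bulk term is $-\int_{\Omega_\eta}\hl$, which tends to $-\int_{\tl}\hl=0$ since $\hl$ has mean zero (the error $\int_{B(0,\eta)}\hl$ being $O(\eta^2|\log\eta|)$). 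There remains the integral of $\hl\,\p_\nu\hl$ over $\p B(0,\eta)$, which I would compute using the decomposition $\hl(x)=-\log|x|+U(x)$ with $U\in C^2$ near $0$ — already invoked in the proof of Proposition~\ref{renper}: on $\p B(0,\eta)$ one has $\hl=-\log\eta+U$ and $\p_r\hl=-\eta^{-1}+\p_rU$, and integrating over a circle of length $2\pi\eta$ and collecting the leading terms gives $\int_{\Omega_\eta}|\nab\hl|^2=-2\pi\log\eta+2\pi U(0)+o(1)$. Hence $\hal\int_{\Omega_\eta}|\nab\hl|^2+\pi\log\eta=\pi U(0)+o(1)$, and therefore $W(\Lambda)=\tfrac12 U(0)$.

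Finally, $U(0)=\lim_{x\to 0}\(\hl(x)+\log|x|\)$ by the very definition of $U$, and substituting the Fourier series for $\hl$ gives exactly \eqref{eis}; in particular the limit in \eqref{eis} exists because $U$ is continuous at $0$. The only genuine subtlety, and the point that requires care rather than routine calculation, is that the series for $\hl$ is not absolutely convergent — its partial sums over $|p|\le R$ grow like $\log R$, consistent with the logarithmic singularity at the origin — so the termwise solution of \eqref{dhl} and the integration by parts must be justified through the distributional characterization of $\hl$ together with the known regularity $\hl+\log|\cdot|\in C^2$ near $0$, rather than by manipulating the series pointwise. Once this is in place, the remaining steps are elementary.
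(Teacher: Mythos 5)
Your proof is correct and follows essentially the same approach as the paper's: integrate by parts on $\tl\sm B(0,\eta)$ using the PDE \eqref{dhl}, use the zero-mean condition to dispose of the bulk term, identify the boundary contribution via the local decomposition $\hl=-\log|\cdot|+U$ with $U$ regular at the origin to get $W(\Lambda)=\tfrac12\lim_{x\to 0}(\hl(x)+\log|x|)$, and then substitute the Fourier series $\hl=\sum_{p\in\Lambda^*\sm\{0\}}\frac{e^{2i\pi p\cdot x}}{4\pi^2|p|^2}$. The only differences are cosmetic: you derive the Fourier series first and carry out the boundary-term computation more explicitly, whereas the paper presents the integration by parts first and is terser about the passage to the limit; your remark on the merely conditional convergence of the series and the need to interpret the mode-by-mode solution distributionally is a point the paper passes over silently.
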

\begin{proof}
Integrating by parts and using (\ref{dhl}) we find
\begin{equation}\label{ipp}
\hal\int_{\tl\sm B(0,\eta)} |\nab \hl|^2  +\pi\log \eta= \hal  \(  \int_{\p (B(0,\eta ) )}\hl
\frac{\p \hl}{\p \nu} - \int_{\tl \backslash B(0, \eta)} \hl\)
+\pi\log\eta.\end{equation} But  $\int_\tl \hl=0$ and  $H_\Lambda(x) + \log|x|$ is a
$C^1$ function in a neighbourhood of $0$, thus passing to the limit
$\eta\to 0$ above we find
\begin{equation}\label{59bis}
W(\Lambda)=\hal \lim_{x\to 0} (\hl(x)+\log|x|).\end{equation}
Using the following normalisation of the Fourier transform
$$\hat f(y) = \int_{\mr^2} f(x)e^{-2i\pi x\cdot y}\,dx,$$
since $-\Delta \hl = 2\pi\sum_{p\in\Lambda}\delta_p - 1$  in $\mr^2$ and $\hl$
has zero mean we have
$$\hat \hl(y) = \frac{1}{4\pi^2|y|^2} \sum_{p\in\Lambda^*\sm\{0\}}\delta_p(y),$$
where $\Lambda^*$ is the dual lattice of $\Lambda$, i.e. the set of
vectors $q$ such that $p\cdot q\in\mz$ for every $p\in\Lambda$. By
Fourier inversion formula, we obtain the expression of $H_\Lambda $
in Fourier series:
\begin{equation}\label{hfour}
H_\Lambda(x)= \sum_{p \in \Lambda^*\backslash \{0\} } \frac{
e^{2i\pi p \cdot x}}{ 4\pi^2 |p|^2}  ,\end{equation} and the result follows from \eqref{59bis}.
\end{proof}

We may now prove Lemma \ref{lemperiointro}.
For the more general periodic  situation of Lemma \ref{lemperiointro}
 where  $\Lambda$ is assumed to
be merely $(\vu,\vv)$ periodic instead of being itself a   lattice, and $j$ is  also  $(\vu,\vv)$ periodic with  $\curl j = 2\pi
\sum_{p\in\Lambda} \delta_p - n$, we observe that Proposition \ref{renper} can be adapted with identical proofs.
In this case, denoting by $\T_{(\vu,\vv)}$ the
quotient $\mr^2/(\mz \vu +\mz\vv)$, the configuration $\Lambda$ may
be seen as a finite family of points $(a_1,\dots,a_n)$ in
$\T_{(\vu,\vv)}$, and the statements of Proposition \ref{renper} remain
true, replacing $\tl$ by $\T_{(\vu,\vv)}$, replacing $\tl\sm
B(0,\eta)$ by $\T_{(\vu,\vv)}\sm\cup_iB(a_i,\eta)$ and replacing $-
\Delta \hl = 2\pi \delta _0 - 1$ by $- \Delta H_{\{a_i\}}  = 2\pi \sum_{i=1}^n
\delta _{a_i} - n$, in particular $W_U(j_{\{a_i\}}  ) = \frac{1}{2\pi}  W(j_{\{a_i\}  } , \indic_{\T_{(\vu, \vv)} } )$.
Moreover, writing $j_{\{a_i\}}$  as $ - \np H_{\{a_i\} }$ and using the translation invariance of the equation, we have
$$H_{\{a_i\}} (x) = \sum_{i=1}^n G(x-a_i) $$ where $G$ is the solution to $- \Delta G(x) =2 \pi \delta_0  -1$ on the torus $\T_{(\vu ,\vv)}$.
We may also define $R(x)= G(x) + \log |x|$, which is known to be a continuous function. 
Next, integrating by parts exactly as in \eqref{ipp}--\eqref{59bis},  we easily find that
$$W(j_{\{a_i\}} )= \hal\( \sum_{i \neq j} G(a_i- a_j) + \sum_{i=1}^n R(0)\),$$
i.e. we deduce the result of Lemma \ref{lemperiointro}.

 Note that we may also compute $W$ in Fourier series just as above and we find \eqref{formexpl} i.e. 
 $$W(j_{\{a_i\}} )= \hal \sum_{i \neq j} \sum_{p \in (\mz \vu + \mz \vv)^*\sm \{0\}} \frac{e^{2i\pi p \cdot (a_i- a_j)}  }{4\pi^2 |p|^2 } + \frac{n}{2}
 \lim_{x\to 0} \( \sum_{p \in (\mz \vu + \mz \vv)^*\backslash
\{0\} } \frac{ e^{2i\pi p \cdot x}}{ 4\pi^2 |p|^2}  + \log
|x|\), $$
which is formula \eqref{formexpl}.

This may also be rewritten in the form
$$W(j_{\{a_i\}} )=\hal \sum_{i\neq j} H_{\mz \vu + \mz \vv} (a_i- a_j)+ n W(\mz \vu + \mz \vv)$$
where $H_{\mz \vu + \mz \vv}$ is as in \eqref{dhl} or \eqref{hfour} and  $W(\mz \vu + \mz \vv)$ as in \eqref{59bis}. This way $W$ is expressed as a sum of the form $\sum_{i \neq j} f(a_i - a_j)$.
Note that the series defining $H_{\mz \vu + \mz \vv}$ is an Eisenstein series and can thus be expressed in terms of the Dedekind eta function via the ``second Kronecker limit formula" as in the proof of Lemma \ref{eisen} below, see also \cite{bs} for more computations.
\subsection{Proof of  Theorem~\ref{th2}}

First, from \eqref{minalai} we may consider only the case $m=1$.
Thus the lattice $\Lambda$ is in $\lattices$ iff its fundamental cell has area $2\pi$. We return to the expression \eqref{eis} for $W(\Lambda)$, and, using standard functions and formulas from number theory,
 give a closed form for it in terms of  the Dedekind eta function.
One can also view the series expression of $W(\Lambda)$ as a regularization, or
renormalization, of the divergent series
$$ \sum_{ p \in \Lambda^*\backslash \{0\} }  \frac{1}{8\pi^2 |p|^2}.$$
We also show that modulo a constant independent of $\Lambda$, this particular regularization is equal to the regularization which uses the Zeta functions
$$\zeta_{\Lambda^*}(x) =  \sum_{p \in \Lambda^* \backslash \{0\}}
\frac{1}{8\pi^2 |p|^{2+x}},$$
for which the minimizers w.r.t.  $\Lambda $ are known.  Both results are the object of the following lemmas:
\begin{lem}\label{eisen}
We have \begin{equation}\label{Weta}
W(\Lambda)  = - \hal\log (\sqrt{2\pi b } |\eta(\tau)|^2)\end{equation} with $\eta $ the Dedekind eta function,
where the dual lattice $\Lambda^*$ to $\Lambda$ has, up to rotation, a fundamental cell given by the vectors $\frac{1}{\sqrt{2\pi b}} (1,0)$ and $\frac{1}{\sqrt{2\pi b}}(a, b)$, and $\tau $ denotes $a+ib$.\end{lem}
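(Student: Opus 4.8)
The natural starting point is formula \eqref{59bis}, which reduces the lemma to the evaluation of $\lim_{x\to 0}\bigl(\hl(x)+\log|x|\bigr)$, where $\hl$ has the Fourier expansion \eqref{hfour}. The plan is to compute this limit by a Mellin-transform / theta-function manipulation and then recognize the result, via the \emph{first Kronecker limit formula}, in terms of the Dedekind eta function of the dual lattice. Throughout one reads $\Lambda^*$ as $\frac1{\sqrt{2\pi b}}(\mz+\mz\tau)$ with $\tau=a+ib$, so that $\mathrm{covol}(\Lambda^*)=\frac1{2\pi}$ (since $\mathrm{covol}(\Lambda)=2\pi$) and $(\Lambda^*)^*=\Lambda$.

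First I would write, for $x\neq 0$ and using $\frac1{4\pi^2|p|^2}=\frac1{4\pi}\int_0^\infty e^{-\pi t|p|^2}\,dt$,
\[
\hl(x)=\frac1{4\pi}\int_0^\infty\Bigl(\sum_{p\in\Lambda^*\sm\{0\}}e^{2i\pi p\cdot x}e^{-\pi t|p|^2}\Bigr)\,dt,
\]
the interchange of sum and integral being legitimized by inserting a convergence factor $|p|^{-2(s-1)}$ with $\mathrm{Re}\,s>1$ and letting $s\to1$ only at the end. Poisson summation on $\Lambda^*$ then gives the theta functional equation $\sum_{p\in\Lambda^*}e^{2i\pi p\cdot x}e^{-\pi t|p|^2}=\frac{2\pi}{t}\sum_{q\in\Lambda}e^{-\pi|q-x|^2/t}$. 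Splitting $\int_0^\infty=\int_0^1+\int_1^\infty$: on $(1,\infty)$ the integrand is dominated by $\sum_{p\neq0}e^{-\pi t|p|^2}$, so that part is continuous in $x$ and at $x=0$ equals $\int_1^\infty(\theta_{\Lambda^*}(t)-1)\,dt$ in the notation of \eqref{theta}; on $(0,1)$ one uses the Poisson form and isolates the $q=0$ term $\frac{2\pi}{t}e^{-\pi|x|^2/t}$, whose integral over $(0,1)$ is $2\pi E_1(\pi|x|^2)=-4\pi\log|x|-2\pi\gamma-2\pi\log\pi+o(1)$ as $x\to0$ ($E_1$ the exponential integral, $\gamma$ Euler's constant), while the remaining $q\neq0$ terms together with the $-1$ from $\int_0^1(-1)\,dt$ are continuous at $x=0$ and, after the change of variable $u=1/t$, contribute $2\pi\int_1^\infty\frac1u(\theta_\Lambda(u)-1)\,du-1$. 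The $-4\pi\log|x|$, divided by $4\pi$, exactly cancels the $+\log|x|$ in $\hl(x)+\log|x|$, leaving
\[
2W(\Lambda)=\lim_{x\to0}\bigl(\hl(x)+\log|x|\bigr)=\frac1{4\pi}\Bigl(\int_1^\infty(\theta_{\Lambda^*}(t)-1)\,dt+2\pi\int_1^\infty\tfrac1u(\theta_\Lambda(u)-1)\,du-1\Bigr)-\frac{\gamma+\log\pi}{2}.
\]

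Next I would recognize the bracketed quantity. Running the classical Mellin computation for the Epstein zeta function $\mathcal Z_{\Lambda^*}(s)=\sum_{p\in\Lambda^*\sm\{0\}}|p|^{-2s}$, i.e.\ splitting the completed integral $\pi^{-s}\Gamma(s)\mathcal Z_{\Lambda^*}(s)=\int_0^\infty t^{s-1}(\theta_{\Lambda^*}(t)-1)\,dt$ at $t=1$ and using the functional equation, shows that this bracket is precisely the constant term of $\pi^{-s}\Gamma(s)\mathcal Z_{\Lambda^*}(s)$ in its Laurent expansion at $s=1$; combined with $\pi^s/\Gamma(s)=\pi+\pi(\gamma+\log\pi)(s-1)+O((s-1)^2)$ this rewrites $2W(\Lambda)=\frac{c_*}{4\pi^2}-(\gamma+\log\pi)$, where $c_*$ is the constant term of $\mathcal Z_{\Lambda^*}$ itself at $s=1$. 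Finally, scaling gives $\mathcal Z_{\Lambda^*}(s)=(2\pi b)^s b^{-s}E(\tau,s)$ with $E(\tau,s)=\sum_{(m,n)\neq0}\frac{b^s}{|m\tau+n|^{2s}}$ the real-analytic Eisenstein series, and the first Kronecker limit formula
\[
E(\tau,s)=\frac{\pi}{s-1}+2\pi\bigl(\gamma-\log2-\log(\sqrt b\,|\eta(\tau)|^2)\bigr)+O(s-1)
\]
yields $c_*$ in closed form. Substituting, the elementary constants $\gamma,\log2,\log\pi,\log b$ recombine to leave exactly $2W(\Lambda)=-\log\bigl(\sqrt{2\pi b}\,|\eta(\tau)|^2\bigr)$, which is \eqref{Weta}.

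The main obstacle is bookkeeping: one must track the additive constants $\gamma$, $\log2$, $\log\pi$ and $\log b$ exactly through the Mellin transform, the split at $t=1$, the exponential-integral asymptotics, and above all through the comparison between the $s\to1$ regularization of the Epstein zeta and the $x\to0$ regularization that defines $W$ — these two regularizations differ by an explicit constant, and pinning it down is where the calculation has to be done carefully (one can cross-check the final constant against the numerical value $W(\Lambda)\simeq-0.2011$ quoted in the remark after Theorem~\ref{th2}). A secondary technical point, the interchange of summation and integration, I would handle uniformly by working with the absolutely convergent series $\sum_p|p|^{-2s}e^{2i\pi p\cdot x}$ for $\mathrm{Re}\,s>1$ and passing to $s=1$ only at the very end; alternatively, one could instead expand the exact closed form for $\hl(x)$, $x\neq 0$, provided by the second Kronecker limit formula (with Jacobi's $\vartheta_1$) as $x\to0$, using $\vartheta_1'(0;\tau)=2\pi\eta(\tau)^3$, and avoid the theta split altogether.
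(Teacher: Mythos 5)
Your proof is correct but takes a genuinely different route from the paper. The paper works directly with the Fourier series \eqref{hfour} for $\hl$: it recognizes it as the Eisenstein series $E_{u,v}(\tau)$, invokes the \emph{second} Kronecker limit formula to obtain a closed form in terms of the Siegel--Klein function $f(z,\tau)$, and then sends $x\to 0$ using the definition of the Dedekind eta function, so that a single identity delivers both the $-\log|x|$ singularity and the $\eta$-factor at once. Your route instead converts the $x\to 0$ regularization defining $W(\Lambda)$ via \eqref{59bis} into the $s\to 1$ regularization of the Epstein zeta function $\mathcal{Z}_{\Lambda^*}(s)=\sum_{p\in\Lambda^*\sm\{0\}}|p|^{-2s}$ (Gaussian kernel, Poisson summation on $\Lambda^*$, split at $t=1$, exponential-integral asymptotics for the $q=0$ image term) and then applies only the \emph{first} Kronecker limit formula. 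I have checked the constant bookkeeping: the bracket you isolate is indeed the constant Laurent coefficient of $\pi^{-s}\Gamma(s)\mathcal{Z}_{\Lambda^*}(s)$ at $s=1$; expanding $\pi^s/\Gamma(s)$ gives $2W(\Lambda)=\frac{c_*}{4\pi^2}-(\gamma_0+\log\pi)$ with $c_*$ the constant term of $\mathcal{Z}_{\Lambda^*}$ itself; and with $\mathcal{Z}_{\Lambda^*}(s)=(2\pi)^s Z(\tau,s)$ and the first limit formula the constants $\gamma_0$, $\log 2$, $\log\pi$, $\log b$ cancel exactly to give $2W(\Lambda)=-\log(\sqrt{2\pi b}\,|\eta(\tau)|^2)$. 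What each approach buys: the paper's second-Kronecker argument is shorter and avoids any constant tracking, whereas yours is longer but uses only the more standard first limit formula and, as a byproduct, re-derives Lemma~\ref{517} with the constant $C$ made explicit, so Lemmas~\ref{eisen} and~\ref{517} come out of a single computation. Two small points to tidy up: the paper writes $\gamma_0$ for the Euler constant precisely to avoid a clash with the vortex constant $\gamma$ of \eqref{defgamma}, which you reuse; and the interchange of sum and integral that you justify by inserting $|p|^{-2(s-1)}$ does work, but can also be sidestepped by noting that the Poisson-split expression is absolutely convergent for each fixed $x\neq 0$ and has the same Fourier coefficients on $\tl$ as $\hl$, hence equals $\hl(x)$ pointwise away from $0$.
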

\begin{proof}
We may parametrize  $\Lambda^* $ as $\{ \frac{1}{\sqrt{2\pi b}} ( m \tau +n) , (m, n) \in \mz^2\}$. Then for $x=(x_1, x_2)$ we have
\begin{equation}\label{relserie}
\sum_{p \in \Lambda^*\backslash \{0\} } \frac{
e^{2i\pi p \cdot x}}{ 4\pi^2 |p|^2} = \frac{1}{2\pi } \sum_{(m,n) \in \mz^2 \backslash \{0\}} e^{\frac{2i\pi}{\sqrt{2\pi b} } ( m (ax_1 +bx_2) + n x_1)      }\frac{b}{|m\tau + n|^2}.\end{equation}
One can recognize that
this is an Eisenstein series i.e. of the form
$$E_{u,v}(\tau)= \sum_{(m,n) \in \mathbb{Z}^2\backslash \{0\}}
e^{2i\pi (mu+ nv)}\frac{b}{|m\tau+ n|^2}$$ with $\tau = a+ib$,  $u = \frac{1}{\sqrt{2\pi b} } (ax_1 +bx_2) $ and $v= \frac{1}{\sqrt{2\pi b}}x_1$.
 But the ``second Kronecker limit formula"
(see \cite{lang}) states that
$$E_{u,v} (\tau) = - 2\pi \log | f(u- v\tau , \tau) q^{v^2/2}|$$ where
$$f(z,\tau):= q^{1/12} (p^{1/2}- p^{-1/2}) \prod_{n \ge 1} (1-q^n
p) (1-q^n/p)$$ with $q=e^{2i\pi\tau} $, $p = e^{2i \pi z}$. Here $z= u - v\tau= \sqrt{\frac{b}{2\pi} }(x_2-ix_1).$  As $x\to 0$, we have $p \to 1$, and then
 the expression above can be expressed in terms of  the  Dedekind eta function
$$ \eta(\tau) := q^{1/24} \prod_{n\ge 1}
(1-q^n).$$
Inserting all the above in
\eqref{relserie} we obtain  that as $|x|\to 0$,
$$\sum_{p \in \Lambda^*\backslash \{0\} } \frac{
e^{2i\pi p \cdot x}}{ 4\pi^2 |p|^2}\sim  - \log (\sqrt{2\pi b}  |\eta(\tau)|^2 |x|)$$
and combining with \eqref{eis} we find the result \eqref{Weta}.\end{proof}
\begin{lem}
\label{517}
There exists $C\in\mr$ such that  for any  $\Lambda \in \lattices$ we have
$$ W(\Lambda)=C + \lim_{x \to 0}\( \zeta_{\Lambda^*}(x) - \int_{\mr^2} \frac{\pi}{1+4\pi^2
|y|^{2+x}}\, dy\).$$
\end{lem}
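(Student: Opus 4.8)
The plan is to regard both sides as renormalizations of the divergent series $\sum_{p\in\Lambda^*\setminus\{0\}}\frac1{8\pi^2|p|^2}$ and to show that the two renormalizations differ by a $\Lambda$-independent constant; concretely, I would compute the $x\to 0^+$ expansions of $\zeta_{\Lambda^*}(x)$ and of $I(x):=\int_{\mr^2}\frac{\pi}{1+4\pi^2|y|^{2+x}}\,dy$. Each has a simple pole $\frac1{2x}$ at $x=0$. For $I$ the finite part is elementary: polar coordinates and the substitution $w=(2\pi)^{2/(2+x)}|y|$ turn it into a Beta integral, $I(x)=2\pi^2(2\pi)^{-4/(2+x)}\frac{\pi}{(2+x)\sin(2\pi/(2+x))}$, whence $I(x)=\frac1{2x}+\hal\log 2\pi+o(1)$, with constant term not depending on $\Lambda$. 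So the matter reduces to showing that the finite part of $\zeta_{\Lambda^*}(x)$ equals $W(\Lambda)$ up to a $\Lambda$-independent constant.

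For this I would use the Mellin/heat representation $8\pi^2\zeta_{\Lambda^*}(x)=\frac{1}{\Gamma(1+x/2)}\int_0^\infty t^{x/2}\big(\Theta_{\Lambda^*}(t)-1\big)\,dt$ with $\Theta_{\Lambda^*}(t)=\sum_{p\in\Lambda^*}e^{-t|p|^2}$, split the integral at $t=1$, and on $(0,1)$ insert the Jacobi identity $\Theta_{\Lambda^*}(t)=\frac{2\pi^2}{t}\sum_{\lambda\in\Lambda}e^{-\pi^2|\lambda|^2/t}$, which follows from Poisson summation since $\mathrm{covol}(\Lambda)=2\pi$ and $(\Lambda^*)^*=\Lambda$. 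Using also $\frac1{\Gamma(1+x/2)}=1+\frac{\gamma x}{2}+O(x^2)$, the $\frac{2\pi^2}{t}$ term produces the pole $\frac1{2x}$ plus a universal constant, while the remainder $\frac{2\pi^2}{t}\sum_{\lambda\neq0}e^{-\pi^2|\lambda|^2/t}$ on $(0,1)$ together with $\int_1^\infty(\Theta_{\Lambda^*}-1)\,dt$ contributes a $\Lambda$-dependent constant $\Xi(\Lambda)$; thus $\zeta_{\Lambda^*}(x)=\frac1{2x}+\Xi(\Lambda)+c_1+o(1)$. On the other hand, by \eqref{eis}--\eqref{59bis}, $W(\Lambda)=\hal\lim_{z\to0}\big(H_\Lambda(z)+\log|z|\big)$ with $H_\Lambda$ the Green function of $\tl$; representing $H_\Lambda$ by the heat kernel of $\tl$ (whose value on the diagonal is again controlled by $\Theta_{\Lambda^*}$ via Poisson) and splitting at the same cutoff, the short-time contribution of the diagonal singularity is a multiple of the exponential integral $E_1(\pi^2|z|^2)=-\gamma-\log(\pi^2|z|^2)+O(|z|^2)$, which exactly cancels the $\log|z|$ and leaves a universal constant, and the lattice-dependent part is once more $\Xi(\Lambda)$, so $W(\Lambda)=\Xi(\Lambda)+c_2$. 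Eliminating $\Xi(\Lambda)$ gives $\zeta_{\Lambda^*}(x)=\frac1{2x}+W(\Lambda)+(c_1-c_2)+o(1)$. A shorter, less self-contained alternative: by Lemma~\ref{eisen} one has $W(\Lambda)=-\hal\log(\sqrt{2\pi b}\,|\eta(\tau)|^2)$ with $\tau$ the modulus of $\Lambda^*$; since $\sum_{p\in\Lambda^*\setminus\{0\}}|p|^{-2s}=(2\pi)^sE(\tau,s)$ for the Eisenstein series $E(\tau,s)=\sum_{(m,n)\neq(0,0)}b^s|m\tau+n|^{-2s}$, the first Kronecker limit formula $E(\tau,s)=\frac{\pi}{s-1}+2\pi(\gamma-\log 2-\log(\sqrt b\,|\eta(\tau)|^2))+O(s-1)$, evaluated at $s=1+x/2$, yields at once $\zeta_{\Lambda^*}(x)=\frac1{2x}+W(\Lambda)+\hal(\gamma+\log\pi)+o(1)$ (the $\log|\eta(\tau)|$ terms cancelling against those in $W(\Lambda)$).

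Combining with the expansion of $I$ gives $\lim_{x\to0^+}\big(\zeta_{\Lambda^*}(x)-I(x)\big)=W(\Lambda)+C$ with $C$ independent of $\Lambda$ (one finds $C=\hal(\log 2-\gamma)$, $\gamma$ the Euler--Mascheroni constant), which is the assertion. The main obstacle I foresee is not conceptual but bookkeeping: one must keep careful track of all the $\Lambda$-independent constants and verify that the arbitrary splitting point drops out of the final answer, and --- in the self-contained version --- establish the genuinely substantive claim that the lattice-dependent quantity $\Xi(\Lambda)$ coming from the Mellin transform of $\Theta_{\Lambda^*}$ is literally the same theta integral that appears in the Robin constant of the torus Green function, i.e.\ that these two renormalizations of $\sum_{p\in\Lambda^*\setminus\{0\}}\frac1{8\pi^2|p|^2}$ see the lattice only through that single integral. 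In the Kronecker-based version that coincidence is already packaged into the classical limit formula together with the closed form of $W(\Lambda)$ provided by Lemma~\ref{eisen}.
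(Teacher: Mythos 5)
Your proposal is correct. The shorter, Kronecker-based route you give is precisely the alternative the paper acknowledges in words immediately after the statement of Lemma~\ref{517}: it points to the closed form \eqref{Weta} plus the first Kronecker limit formula, says ``putting these pieces together correctly yields a proof,'' and then leaves it at that. You have actually carried those pieces out, and the bookkeeping checks: the Beta-integral evaluation of $I(x)=\int_{\mr^2}\frac{\pi}{1+4\pi^2|y|^{2+x}}\,dy$ gives $I(x)=\frac1{2x}+\hal\log 2\pi+O(x)$, the identification $\zeta_{\Lambda^*}(x)=\frac{(2\pi)^{1+x/2}}{8\pi^2}Z(\tau,1+x/2)$ together with the Kronecker limit formula and $W(\Lambda)=-\hal\log(\sqrt{2\pi b}\,|\eta(\tau)|^2)$ gives $\zeta_{\Lambda^*}(x)=\frac1{2x}+W(\Lambda)+\hal(\gamma_0+\log\pi)+O(x)$, and subtracting yields $W(\Lambda)=\lim_{x\to0}(\zeta_{\Lambda^*}(x)-I(x))+\hal(\log 2-\gamma_0)$, so indeed $C=\hal(\log 2-\gamma_0)$ with $\gamma_0$ the Euler constant.

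What the paper actually writes out, however, is a different route: the ``analytic proof.'' There, one writes $W(\Lambda)=\gamma_0+\hal\ul(0)$ with $\ul=\hl-G_0$ and $G_0$ the free-space Green function of $-\Delta+1$, represents $\ul(0)=\lim_n\int\hat\vp_n\,\hat\ul$ using a Gaussian approximate identity $\vp_n$, so that the lattice-minus-integral structure appears via $\hat H_\Lambda-\hat G_0$ with the smooth cutoff $\hat\vp_n=e^{-|\cdot|^2/2n^2}$, and then shows the Gaussian regularization and the $|p|^{-x}$ zeta regularization have the same limit by truncating at radius $N$, matching the truncated parts, and estimating the tails via Voronoi-cell comparisons. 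This approach is elementary and self-contained --- no Kronecker limit formula, no Dedekind eta function --- and makes the mechanism ``two regularizations differ by a universal constant'' completely transparent at the level of convolution kernels; the price is the somewhat delicate uniform tail estimates. Your Kronecker route is shorter and cleaner \emph{given} Lemma~\ref{eisen}, but is only as self-contained as that lemma and the classical limit formula. Your third, theta/heat-kernel sketch is a genuinely distinct middle ground: it is closer in spirit to Montgomery's Mellin picture already used in the paper for Theorem~\ref{th2}, and you rightly flag as the substantive step the identification of the $\Lambda$-dependent theta integral arising from the Mellin transform of $\zeta_{\Lambda^*}$ with the one arising from the Robin constant of the torus Green function --- that identification is exactly what the paper's Gaussian-mollification argument sidesteps and what the Kronecker formula packages.
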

Two proofs can be given for this :  one, which we give in the following subsection
below, uses standard analysis methods. The other uses the closed form \eqref{Weta} and the fact that
 the Dedekind eta function is in turn   related to the
 Epstein zeta function
 $$Z(\tau, s) = \sum_{(m,n) \in \mz^2 \backslash 0}
\frac{b^s}{|m\tau + n|^{2s}} =\frac{1}{(2\pi)^s} \sum_{p \in \frac{1}{\sqrt{2\pi b}}  (\mz + \tau
\mz)\backslash 0} \frac{1}{|p|^{2s}}$$ via the ``first Kronecker
limit formula"
$$Z(\tau, s) =
\frac{\pi}{s-1} + 2\pi (\gamma_0 - \log 2 - \log (\sqrt{b}
|\eta(\tau)|^2 ) ) + O(s-1)\quad \text{as}\  s \to 1, $$ where $\gamma_0$ is this
time the Euler constant. Putting  these pieces together correctly
yields a proof of Lemma \ref{517}.

Then, in order to minimize $W(\Lambda)$ over $\lattices$, i.e. over lattices under the
constraint  $|\tl| = 2\pi$, one is reduced to the question of
minimizing  $\zeta_{\Lambda^*}(x)$.  It is proven  in \cite{montgomery}  that
   $$\frac{2^{1+ \frac{x}{2}  } 8 \pi^2  \Gamma(1+ x/2 )}{(2\pi)^{1+ x/2}} \zeta_{\Lambda^*}(x) =   \frac{2}{x}- \frac{1}{ 1+ \frac{x}{2}} +
     \int_{1}^\infty
   (\theta_{\Lambda^*}(a)-1)(a^{-x/2}+a^{1+x/2}) \frac{da}{a}$$
   where  \begin{equation}\label{xill}
   \theta_\Lambda (a)= \sum_{p \in \Lambda}     e^{-\pi a |p|^2}
  \end{equation}
  is the Jacobi Theta function and  $\Gamma$ is the Gamma function. Moreover, from
  \cite{cassels,rankin, ennola, diananda, montgomery} and modulo rotation, the minimum of $\theta$ over $\lattices^*$ is uniquely achieved by  $(\Lambda_0)^*$, where $\Lambda_0 =  \alpha\(\mz (1,0) +\mz \(1/2,\sqrt3/2\)\)$ and the factor $\alpha$ is chosen such that $|\T_{\Lambda_0}| = 2\pi$. But, from the above formula,
  \begin{equation}
  \label{asd1} \frac{ 2^{1+ \frac{x}{2}  } 8 \pi^2\Gamma(1+ x/2 )}{(2\pi)^{1+ x/2}}
  (\zeta_{\Lambda^*}(x)- \zeta_{\Lambda_0^*} (x))=
   \int_{1}^\infty
   (\theta_{\Lambda^*}(a)-\theta_{\Lambda_0^*}(a) )(a^{-x/2}+a^{1+x/2})
   \frac{da}{a}.\end{equation}
In view of (\ref{xill}) the integrand in (\ref{asd1}) is dominated
by an integrable function independently of $x$. Hence, letting $x$
tend to $0$, Lebesgue's dominated convergence theorem yields
$$8 \pi \Gamma(1)
\lim_{x\to 0} \zeta_{\Lambda^*}(x)- \zeta_{\Lambda_0^*} (x) =
\int_1^\infty (\theta_{\Lambda^*}(a)-\theta_{\Lambda_0^*}(a) )
(1+ a) \, \frac{ da}{a}.
$$
Hence, using Lemma \ref{517}, we have
$$8 \pi \Gamma(1) (
W(\Lambda ) -  W(\Lambda_0))
 = \int_1^\infty
(\theta_{\Lambda^*}(a)-\theta_{\Lambda_0^*}(a) ) ( 1 + a ) \,  \frac{da}{a}.$$
We deduce that
 $ W(\Lambda ) \ge   W(\Lambda_0)$ for any $\Lambda$, with
equality if and only if $\theta_{\Lambda^*} (a)=
\theta_{\Lambda_0^*}(a)$ for almost every $a$, i.e. if,  modulo rotations,
 $\Lambda=\Lambda_0$. This proves Theorem~\ref{th2}.

\subsection*{Analytic proof of Lemma \ref{517}}
Denoting by $G_0$ the unique solution of $-\Delta G_0+G_0= 2\pi
\delta_0$ in $\mr^2$, we may write
$$\hl(x)+\log|x| =\ul +\(G_0(x) +\log|x|\),$$
where $\ul = \hl - G_0$ and $G_0(x) +\log|x|$ are $C^1$ near the
origin. Taking limits, we obtain from \eqref{59bis}
$$W(\Lambda)=\gamma_0 + \hal \ul(0),$$ where
$\gamma_0 = \hal \lim_{x\to 0}\( G_0(x) +\log|x|\)$.
 Denote by $\vp(x) =(2\pi)^{-1} e^{-|x|^2/2}$ the Gaussian distribution in $\mr^2$ and for any $n\in\mn$ let $\vp_n (x) = n^2\vp(nx)$, so that $\{\vp_n\}_n$ is an approximate identity.   We have   $\hat\vp_n(y) = e^{-|y|^2/2n^2}$. Then, since $\ul$ is continuous at $0$ and bounded in $\mr^2$, we have
$$\ul(0) = \lim_{n\to+\infty} w(n,\Lambda),\text{ where } w(n,\Lambda) = \int_{\mr^2}\vp_n(x) \ul(x)dx = \int_{\mr^2}\hat\vp_n(y) \hat\ul(y)\, dy.$$
Also, it  is standard that $\hat G_0 = 2\pi/(1+4\pi^2 |y|^2)$. Then, since $\hat\ul  = \hat{H_\Lambda} - \hat{G_0}$, we get
$$w(n,\Lambda) = \sum_{p\in\Lambda^*\sm\{0\}}\frac{e^{-|p|^2/2n^2}}{4\pi^2|p|^2} - 2\pi\int_{\mr^2}\frac{e^{-|y|^2/2n^2}}{1+4\pi^2|y|^2} \,dy.$$
 Note that $|\tls| = 1/|\tl|^{-1} = 1/2\pi$, which accounts for the factor $2\pi$ in front of the integral.

The second step is to show that
\begin{equation}\label{serie2}\lim_{n\to+\infty} w(n,\Lambda) = \lim_{x\to 0} v(x,\Lambda), \text{ where } v(x,\Lambda) =  \sum_{p\in\Lambda^*\sm\{0\}}\frac1{4\pi^2|p|^{2+x}} - \int_{\mr^2}\frac{2\pi}{1+4\pi^2|y|^{2+x}}\,dy.\end{equation}
First we truncate  $w(n,\Lambda)$ and $v(x,\Lambda)$, letting for each
$N\in\mn$
$$w^N(n,\Lambda) =  \sum_{\substack{p\in\Lambda^*\sm\{0\}\\|p|<N}}\frac{e^{-|p|^2/2n^2}}{4\pi^2|p|^2} - 2\pi\int_{B(0,N)}\frac{e^{-|y|^2/2n^2}}{1+4\pi^2|y|^2}\,dy, \quad \tilde w^N(n,\Lambda) = w(n,\Lambda) - w^N(n,\Lambda),$$
and defining similarly $v^N(x,\Lambda)$, $\tilde v^N(x,\Lambda)$. It
is clear  that for any $N$
\begin{equation}\label{proches} \lim_{n\to+\infty}w^N(n,\Lambda)  =  \lim_{x\to 0} v^N(x,\Lambda) = \frac1{4\pi^2}\sum_{\substack{p\in\Lambda^*\sm\{0\}\\|p|<N}}\frac1{|p|^2} - 2\pi\int_{B(0,N)}\frac1{1+4\pi^2|y|^2}\,dy.\end{equation}
Then we claim that
\begin{equation}\label{lointain} \lim_{N\to +\infty}\lim_{n\to+\infty} \tilde w^N(n,\Lambda) = \lim_{N\to+\infty}\lim_{x\to 0}\tilde v^N(x,\Lambda)  = 0,\end{equation}
 which together with \eqref{proches}  clearly implies \eqref{serie2}.  To prove \eqref{lointain}, for any $p\in \Lambda^*$ we let  $K_p$ be the Voronoi cell centered at $p$, i.e. the set of points in $\mr^2$ which are closer to $p$ than to any other point of $\Lambda^*$. Then
\begin{equation}\label{twnN}\tilde w^N(n,\Lambda) = \sum_{\substack{p\in\Lambda^*\sm\{0\}\\|p|\ge N}} \dashint_{K_p}\frac{e^{-|p|^2/2n^2}}{4\pi^2|p|^2} - \frac{e^{-|y|^2/2n^2}}{1+4\pi^2|y|^2}\,dy  +\delta(N,n,\Lambda),\end{equation}
where
$$\delta(N,n,\Lambda) = 2\pi \(\sum_{\substack{p\notin B(0,N)}}\int_{K_p\cap B(0,N)}\frac{e^{-|y|^2/2n^2}}{1+4\pi^2|y|^2}\,dy - \sum_{\substack{p\in B(0,N)}}\int_{K_p\sm B(0,N)}\frac{e^{-|y|^2/2n^2}}{1+4\pi^2|y|^2}\,dy\).$$
We can bound $|\delta|$ by the integral of  $(1+4\pi^2|y|^2)^{-1}$
over those cells which intersect both $B(0,N)$ and its complement,
the union of which is included in $B(0,N+C_\Lambda)\sm
B(0,N-C_\Lambda)$. Thus we have for every $\Lambda$
\begin{equation}\label{deltanN}\lim_{N\to +\infty}\lim_{n\to+\infty}\delta(N,n,\Lambda) = 0.\end{equation}
Then we consider the sum in \eqref{twnN}. If $|p|>n^2$ it is
straightforward to  bound the contribution of $K_p$, which we denote
$M_p$, by $C_\Lambda e^{-|p|/2} /|p|^2$, and from there to deduce
\begin{equation}\label{mp1}\lim_{N\to +\infty}\lim_{n\to+\infty}\sum_{\substack{p\in\Lambda^*\sm\{0\}\\|p|\ge n^2}} |M_p| = 0.\end{equation}
If $|p|\le n^2$, we use the fact that the gradient of $y\to
e^{-|y|^2/2n^2}$ is bounded by $C/n$ on $\mr^2$, and that
$e^{-|p|^2/2n^2}\le 1$ to write
$$|M_p|\le \left|\dashint_{K_p}\frac 1{4\pi^2|p|^2} - \frac1{1+4\pi^2|y|^2}\,dy\right| + \frac Cn \dashint_{K_p} \frac{dy}{1+4\pi^2|y|^2}.$$
The sum w.r.t $p\in\Lambda^*$ of the first term on the right-hand side is
convergent, therefore the sum w.r.t $p\in\Lambda^*$, $n^2\ge |p|\ge
N$ tends to $0$ as $N\to+\infty$, uniformly in $n$. For the second
term, the corresponding sum may be compared with the integral
$$\frac Cn 2\pi \int_{N\le |y|\le n^2} \frac{dy}{1+4\pi^2|y|^2},$$
which is $O(\log n/n)$ as $n\to +\infty$. Therefore
$$\lim_{N\to +\infty}\lim_{n\to+\infty}\sum_{\substack{p\in\Lambda^*\sm\{0\}\\n^2\ge |p|\ge N}} |M_p| = 0.$$
Together with \eqref{mp1} and \eqref{deltanN}, this proves that $\lim_{N\to +\infty}\lim_{n\to+\infty} \tilde w^N(n,\Lambda)=0$. The proof that $\lim_{N\to +\infty}\lim_{x\to 0} \tilde v^N(x,\Lambda)=0$ is similar, we omit it. Then \eqref{lointain} holds, which proves \eqref{serie2} and the lemma.

\section{The renormalized energy in the general case: proof of Theorem \ref{thw}}\label{secW}
 In this section we prove Theorem \ref{thw}. 
We recall that we can  reduce by scaling
 to studying  the case
of   $\mathcal{A}_m= \ainfty$ i.e.  where (see Definition \ref{defA})
 \begin{equation}\label{eqji}
 \curl j = \nu - 1\qquad \div j=0,\end{equation}
where $\nu = 2\pi \sum_{p \in \Lambda} \delta_p$ is such that   $\{\frac{\nu(B_R)}{|B_R|}\}_R$ is bounded.
 For convenience, once the class $\ainfty$ has been chosen,  if $p<2$ we may extend the definition of $W_U$ to all $j\in \Lp$ by letting
$$ W_U(j) = +\infty  \quad\text{if $j \notin\ainfty$}.$$  We now assume $W$ has been  likewise extended for a certain $1<p<2$.  In what follows we will show that the minimizers and the minimum of $W_U$ do not
depend on $U$. However $W_U$ itself does.
When a statement is independent of $U$, we will sometimes  write $W$ instead of $W_U$.

One of the difficulties about $W$ is that it is not lower semi-continuous. Here is a hint as to why: consider  a set of points $\Lambda_n $  which is equal to the square  lattice (of density $1$) in the ball $B_n$ and to the triangular lattice (of density $1$)  outside $B_n$.   Let $j_n$ be corresponding vector fields.  Then, since $W$ is insensitive to compact perturbations, we have for every $n$, in informal notation,  $W(j_n)= W(\trl)$. On the other hand, as $n \to \infty$, $j_n\to j_{\sql}$ by construction. So if $W$ was lower semi-continuous, we should have
$W(\sql) \le W(\trl)$, which is false by Theorem \ref{th2}.

However, we will show that $W$ is lower semi-continuous ``up to translations".

We split the results into several propositions:
\begin{pro}\label{pw1} Let $U$ refer to  any  family $\{\UR\}$ satisfying \eqref{hypsets}, \eqref{hypsetsbis}. Let $1<p<2$. We have:
\begin{itemize}
\item[-] The value of $W_U(j)$ is independent of the choice of cutoff functions $\{\chi_\UR\}_R$ satisfying \eqref{defchi}.
\item[-] $W_U : \Lp\to \mr\cup\{+\infty\}$ is a Borel function.
\item[-] The sublevel sets  $\{j, W_U(j)\le \alpha\} $ are ``compact in $\Lp$ up to translation", more precisely: for every $j_n $  such that $\limsup_{n \to \infty} W(j_n) <+\infty$, after extraction of a subsequence, there exists a sequence  $\lambda_n \in \mr^2$ and $j\in \Lp$  such that $ j_n(\lambda_n + \cdot ) \to j$  in $\Lp$ and
\begin{equation}\label{wuj}
W_U(j)\le \liminf_{n \to \infty}  W_U(j_n).\end{equation}
 In particular
$\inf_{\Lp} W_U = inf_\ainfty W_U$ is achieved and is finite.
\item[-] The minimizers and the value of the minimum of $W_U$ are independent of $U$.
\end{itemize}
\end{pro}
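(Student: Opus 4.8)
\emph{Setup and the cell formula.} The plan is to reduce every assertion to the abstract machinery of Theorem~\ref{gamma} applied to one fixed local functional. Fix a smooth cutoff $\chi$ with $0\le\chi\le1$, $\supp\chi\subset B_2$, $\chi\equiv1$ on $B_1$, and set $f(j):=W(j,\chi)$; this is well defined thanks to the local decomposition $j=\np\log|x-p|+(\text{smooth})$ near each vortex $p$ noted after \eqref{WR}. Since $\chi\mapsto W(j,\chi)$ is linear, Fubini gives $\dashint_{\UR}f(\theta_\lambda j)\,d\lambda=|\UR|^{-1}W(j,\chi*\1_{\UR})$, and $\chi*\1_{\UR}$ is, after replacing $R$ by $R\pm C$ using \eqref{hypsetsbis}, an admissible cutoff of $\UR$ in the sense of \eqref{defchi}. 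I would prove the first bullet (independence of the cutoff) by the standard boundary-layer estimate: for two cutoffs $\chi_\UR,\chi'_\UR$ as in \eqref{defchi} the difference $W(j,\chi_\UR)-W(j,\chi'_\UR)$ is carried by the width-$1$ neighbourhood of $\p\UR$ and is controlled --- using again the local decomposition and the elementary bound $\int_{\p B(p,s)}|j|^2\ge(2\pi-\pi s^2)^2/(2\pi s)$ near each vortex --- by the renormalized energy over $\URC\setminus\URc$, which is $o(|\UR|)$ whenever $W_U(j)<+\infty$ for one choice (and otherwise both are $+\infty$). This yields the working identity $W_U(j)=\limsup_{R\to\infty}\dashint_{\UR}f(\theta_\lambda j)\,d\lambda$. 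The second bullet (Borel measurability) then follows formally: $j\mapsto W(j,\chi)$ is a pointwise limit ($\eta\to0$) of Borel functions --- the only point being that the atoms of $\curl j+1$ depend measurably on $j$ --- and $W_U$ is a $\limsup$ over $R$ of such functions.

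\emph{Compactness up to translation and bullet 3.} Here I would invoke Theorem~\ref{gamma}. A known ball-construction lower bound gives $W(j,\chi)\ge-C\int\chi\,d\nu-C\int\chi$ for every $j\in\ainfty$, so the shift $\hat f_k(j):=W(j,\chi_k)+C\int\chi_k\,d\nu+C\int\chi_k$ is nonnegative for the cutoff $\chi_k$ of $B_{2^k}$, with $\dashint_{\UR}\hat f_k(\theta_\lambda j)\,d\lambda\to W_U(j)+c_k$ (the density-one average of $\int\chi_k\,d\nu$ equals $\int\chi_k$ since $\curl j=\nu-1$, by a boundary-layer estimate). Given $j_n$ with $\limsup_n W_U(j_n)<+\infty$, a diagonal Chebyshev argument over $n$ and over $k\le n$ produces $\lambda_n$ so that $\tilde j_n:=j_n(\lambda_n+\cdot)$ has renormalized energy uniformly bounded on every fixed ball; standard arguments then give a subsequence along which the vortices of $\tilde j_n$ converge (uniformly finitely many per ball) and $\tilde j_n$ is bounded in $L^2_{\loc}$ away from them, hence $\tilde j_n\to j$ in $\Lp$ with $j\in\ainfty$ and vortices converging. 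Along such a convergence $f=W(\cdot,\chi)$ is lower semicontinuous (weak $L^2$ lower semicontinuity away from the vortices, plus uniform control of the renormalization and convergence of the finitely many terms $\pi\log\eta\sum\chi(p)$), so by Fatou, for each $R$, $\dashint_{\UR}f(\theta_\lambda j)\,d\lambda\le\liminf_n\dashint_{\UR}f(\theta_\lambda\tilde j_n)\,d\lambda\le\liminf_n W_U(\tilde j_n)=\liminf_n W_U(j_n)$, the last equality being translation invariance of $W_U$ (again \eqref{hypsets} and a boundary-layer estimate); taking $\limsup_R$ on the left gives \eqref{wuj}. For finiteness, $\inf W_U\le W(\Lambda)<\infty$ for any lattice (Proposition~\ref{renper}), while $\inf W_U>-\infty$ because a sequence with $W_U(j_n)\to-\infty$ would, by the compactness just proved, produce $j\in\ainfty$ with $W_U(j)=-\infty$, contradicting $W_U(j)\ge-C_j$ (the ball-construction bound together with \eqref{densbornee}); the $\inf$ is attained by applying the compactness to a minimizing sequence, and equals $\inf_{\Lp}W_U$ by the convention $W_U\equiv+\infty$ off $\ainfty$.

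\emph{Independence of $U$ (bullet 4).} Let $j_U$ minimize $W_U$, $m_U:=W_U(j_U)$. Apply Theorem~\ref{gamma} with $u_\ep\equiv j_U$, $f_\ep=f=\hat f$ (the nonnegative shift above), $\omega_\ep=U_{R_\ep}$ along a sequence realizing the $\limsup$ in $W_U(j_U)$: since the cell functional $\hat f^*$ of \eqref{fstarr} equals $W_U+c_0$, one gets a translation-invariant $P$, concentrated on $\ainfty$ (the curl relation passes to the limit), with $m_U+c_0=\liminf_\ep F_\ep(j_U)\ge\int\hat f^*_U\,dP\ge\min\hat f^*_U=m_U+c_0$, so $\hat f^*_U=m_U+c_0$ $P$-a.e.; by \eqref{rg2}, $\int\hat f^*_V\,dP=\int\hat f^*_U\,dP=m_U+c_0$, whereas $\hat f^*_V\ge m_V+c_0$ $P$-a.e., whence $m_U\ge m_V$, and symmetrically $m_U=m_V=:m$. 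The same computation forces $\hat f^*_V=m+c_0$ $P$-a.e., i.e. $P$-a.e. profile minimizes $W_V$ as well as $W_U$; in particular the two problems share minimizers and have equal value. To upgrade this to equality of the full sets of minimizers I would use that a minimizer's energy is equidistributed (Remark~\ref{rem2.1}, via the matching upper bound $\limsup_R\dashint_{\UR}f(\theta_\lambda j_U)\le m$, available because $j_U$ minimizes) and that an equidistributed local energy has the same large-scale average over any two families satisfying \eqref{hypsets}--\eqref{hypsetsbis}, so $W_V(j_U)=m$ for every $W_U$-minimizer $j_U$, and symmetrically.

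\emph{Main obstacle.} The conceptual core is short once the abstract framework is set up --- it is precisely \eqref{rg1} together with the $U$-independence built into \eqref{rg2}. The bulk of the real work, and the reason the hypotheses \eqref{hypsets}--\eqref{hypsetsbis} are imposed, lies in the ``routine'' lemmas used above: the boundary-layer estimates showing that changing the cutoff, translating, or passing from $\UR$ to $\URC$ alters $W(j,\chi_\UR)$ only by $o(|\UR|)$, and the ball-construction lower bound $W(j,\chi)\ge-C\int\chi\,d\nu-C\int\chi$ that makes $f$ usable in Theorem~\ref{gamma} and controls the number of vortices in the compactness step. The single most delicate point is the last step of bullet 4 --- transferring the $U$-independence of the cell limit from ``$P$-almost every profile'' to the minimizer $j_U$ itself --- which is exactly what forces the equidistribution refinement of Remark~\ref{rem2.1} rather than a bare appeal to \eqref{rg2}.
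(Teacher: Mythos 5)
The proposal contains a genuine gap in the compactness step, and it is not a minor one: it skips the mass displacement technique that the paper builds around Proposition~\ref{wspread}, and that step is the engine of the whole argument.

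You assert that ``a known ball-construction lower bound gives $W(j,\chi)\ge -C\int\chi\,d\nu - C\int\chi$ for every $j\in\ainfty$,'' and you make this the basis of the nonnegative functional $\hat f_k$ fed into Theorem~\ref{gamma}, and also the basis for $\inf W_U>-\infty$. That inequality is false with a universal $C$: the raw Jerrard--Sandier ball construction (Proposition~\ref{boulesren} in the paper) produces the factor $\log(r/(n\eta))$ where $n=\#\Lambda$ is the local number of vortices, so after cancelling the $\pi\log\eta\sum\chi(p)$ the lower bound one actually obtains is of the form $W(j,\chi)\ge -C\bigl(n\log n + n\bigr)\|\chi\|_\infty -Cn\|\nab\chi\|_\infty$ (this is precisely what appears in Lemma~\ref{wlp}). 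The extra $\log n$ is exactly the obstruction, and it is not a routine boundary-layer estimate: removing it is what Proposition~\ref{wspread} achieves by spreading the renormalized density onto a measure $g$ bounded below by a \emph{universal} constant, at the price of an error of size $n(\log n+1)\|\nab\chi\|_\infty$ localized where $\nab\chi\neq0$. Without this, $\hat f_k$ is not bounded below uniformly in $j$ (or in $n$ along the minimizing sequence), so neither the coercivity hypothesis nor the tightness step of Theorem~\ref{gamma} is available, and the compactness-up-to-translation argument does not get off the ground. There is a related chicken-and-egg problem: you need a uniform local bound on the number of vortices to control the constants, but \eqref{bgnalpha} --- which gives exactly that control, in terms of $g$ --- is itself a product of the mass displacement.

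Structurally, the paper's proof works in the enlarged Polish space $X=L^p_{\loc}(\mr^2,\mr^2)\times\radon_0$, carrying the spread-out density $g$ as part of the state, and the ergodic/$\Gamma$-convergence machinery of Theorem~\ref{gamma} is applied to the pair $(j,g)$ via the local functional $\gf(j,g)=\int\chi\,dg$. Your proposal works only with $j$ and a local functional $W(j,\chi)$ (shifted by a $j$-dependent quantity), which is why you cannot make the abstract hypotheses hold. The parts of your argument that are essentially right are the Fubini identity reducing $W_U$ to a large-scale average of $W(j,\chi)$ (the paper does this implicitly through $\chi_\UR=\chi*\indic_\UR$), the local lower semicontinuity of $W(\cdot,\chi)$ for configurations with locally bounded vortex number (this is Lemma~\ref{wsci}), and the overall logic of bullet~4 (apply Theorem~\ref{gamma} to a constant sequence, then transfer between $U$ and $V$ via the ergodic theorem); but each of these currently rests on the nonexistent uniform lower bound, so they need to be re-anchored on the measure $g$ from Proposition~\ref{wspread} and Corollary~\ref{wgcoro}. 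Finally, your diagnosis in the ``Main obstacle'' paragraph misattributes difficulty: it is precisely the ``routine'' ball-construction bound that is not routine, while the last step of bullet~4 is handled in the paper by a fairly direct argument with the restricted measures $P_\VR,P_\VR'$ rather than by the equidistribution refinement of Remark~\ref{rem2.1} that you invoke.
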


We will also prove a result of  uniform approximation by  construction, which implies in particular   that the minimum of $W$ may
 be approximated by suitable periodic configurations.
 This construction will be crucial in particular in constructing test functions.
In what follows $K_R$ denotes the square $[-R, R]^2$, and $W_K$ denotes the renormalized energy relative to $\{K_R\}_R$. In the following results, squares could easily be replaced by arbitrary parallelograms.
\begin{pro}\label{pw2bis} Let $G\subset \mathcal{A}_1 $  be such that there exist $C>0$ such that for any $j \in G$,  
we have \begin{equation}\label{hypunif} 
\forall R>1, \quad \frac{\nu(K_R)}{|K_R|} <C
\end{equation} for the associated $\nu$'s and such that,   uniformly with respect to $j \in G$, 
\begin{equation}
\label{cvwunif} \lim_{R\to + \infty} \frac{W(j, \chi_{K_R})}{|K_R|} = W_K(j) \le C.\end{equation}
 Then for any $\ep>0$, there exists $R_0>0$ such that if $R>R_0$ and $|K_R|\in 2\pi \mn$, for any $j \in G$ there exists $j_R$ such that 
\begin{equation}\label{eqjr}
\left\{ \begin{array}{ll}
\curl j_R =
 2\pi\sum_{p\in\Lambda_R}\delta_p - 1\  & \text{ in} \  K_R,\\
  j_R \cdot \tau = 0 & \text{on} \ \p K_R,
 \end{array}\right.\end{equation}
   where
 $\Lambda_R$ is a discrete subset of the interior of $K_R$, and
 \begin{itemize}
 \item[-]
 if $ x\in K_{R - 2R^{3/4} } $ then $j_R(x)=j(x)$, 
\item[-] $\D \frac{W(j_R,\indic_{K_R})}{|K_R|}
\le  W_K(j)+\ep.$
\end{itemize}
\end{pro}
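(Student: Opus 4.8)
The idea is to keep $j$ untouched on the inner square $K_{R-2R^{3/4}}$, to repair its behaviour in the boundary collar $K_R\setminus K_{R-2R^{3/4}}$ so that it becomes compatible with the square $\partial K_R$ (purely normal there, hence extendable periodically), and to show that this repair costs only an amount negligible compared with $|K_R|$ in renormalized energy.

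\emph{Step 1 (uniformization).} Given $\ep>0$, use the uniform convergence hypothesis \eqref{cvwunif} to fix $R_0$ so that for $R\ge R_0$, every square $K_t$ with $t\ge R/2$, and every $j\in G$ one has $|W(j,\chi_{K_t})/|K_t|-W_K(j)|\le \ep/100$; combining this with the affineness of $\chi\mapsto W(j,\chi)$ and the local lower bound $W(j,\chi)\ge -C\|\chi\|_{L^1}$ for the renormalized energy (which holds by the ball--construction methods and is essentially $W\ge\min W$ localized), one also gets, for a fixed small $\eta_0$, that $\tfrac12\int_{K_{R-R^{3/4}}\setminus\bigcup_pB(p,\eta_0)}|j|^2\le C|K_R|$, i.e. away from the vortex cores $j$ has the expected $L^2$ size, and $\nu(K_R)\le C|K_R|$.

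\emph{Step 2 (choice of a good cutting square).} Writing $j=-\np H$, with $H(x)=-\sum_{p\in\Lambda}\log|x-p|+(\text{locally }C^\infty)$ near the collar, I subtract the singular parts of the vortices lying within distance $2$ of the band $\mathcal B=K_{R-\frac32R^{3/4}}\setminus K_{R-2R^{3/4}}$, obtaining a field $j^\sharp\in L^2(\mathcal B)$. The key point is that $\chi_{K_t}-\chi_{K_s}\ge 0$ for $s\le t$, and that $\int_{a}^{a+R^{3/4}}(\chi_{K_{a'+1}}-\chi_{K_{a'}})\,da'$ is a nonnegative weight bounded by $C$ pointwise and supported in $\mathcal B$; integrating the identity $\int_a^{a+R^{3/4}}W(j,\chi_{K_{a'+1}}-\chi_{K_{a'}})\,da'=W(j,\text{that weight})$ and bounding the right-hand side by $C|K_R|$ (Step 1 plus the local lower bound), a mean-value argument yields a radius $T$ in the band such that the renormalized energy of $j$ on the unit annulus $K_{T+1}\setminus K_T$ is negligible compared with $|K_R|$, and $\partial K_T$ carries no vortex. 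Subtracting the $O(1)$ local singular parts and using the local lower bound on the energy of the vortices in that thin annulus shows $\int_{K_{T+1}\setminus K_T}|j^\sharp|^2$ is negligible compared with $|K_R|$; a further mean value over $s\in[T,T+1]$ and avoidance of vortices with a small positive margin then gives a radius (still called $T$) with $\oint_{\partial K_T}|j|^2\,d\mathcal H^1$ negligible compared with $|K_R|$. Via Stokes applied to $\curl j=\nu-1$ (legitimate since $\partial K_T$ carries no vortex), the flux $\oint_{\partial K_T}j\cdot\tau=2\pi\,\#(\Lambda\cap K_T)-|K_T|$ is then negligible compared with $|K_R|$; in particular $N_T:=\#(\Lambda\cap K_T)=|K_T|/2\pi+o(|K_R|)$.

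\emph{Step 3 (gluing).} On $K_T$ set $j_R=j$ (so the required coincidence on $K_{R-2R^{3/4}}\subset K_T$ is automatic and the vortices kept there are $\Lambda\cap K_T$). In the annulus $A_T:=K_R\setminus K_T$, of width of order $R^{3/4}$ and area $|A_T|=o(|K_R|)$, I must produce $j_R=-\np H_R$ with $\div j_R=0$, $j_R\cdot\tau=0$ on $\partial K_R$, $j_R$ matching $j$ along $\partial K_T$, and $\curl j_R=2\pi\sum_{p\in\Lambda_R\cap A_T}\delta_p-1$ with exactly $M:=|K_R|/2\pi-N_T$ points in $A_T$; by Step 2, $M=|A_T|/2\pi+o(|K_R|)$, so $0<M\le C|A_T|$ and the $M$ points can be placed on an essentially unit sub-grid of $A_T$. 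I solve the Poisson problem $-\Delta H_R=2\pi\sum_{p\in\Lambda_R\cap A_T}\delta_p-1$ on $A_T$ with Neumann datum $\partial_nH_R=0$ on $\partial K_R$ and a Dirichlet datum on $\partial K_T$ chosen to give the $C^1$ match with $j$ there (arranged after a preliminary interpolation in a thin sub-collar $K_T\setminus K_{R-2R^{3/4}}$, whose cost is negligible compared with $|K_R|$ because the relevant part of $j$ has controlled $L^2$ norm and the sub-collar has width of order $R^{3/4}$); the compatibility condition $\int_{A_T}(-\Delta H_R)=-\oint_{\partial A_T}\partial_nH_R$ is exactly $2\pi M=|A_T|+\mathrm{flux}$, which holds by construction. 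Splitting $H_R$ into the periodic background potential of the grid plus a correction and using elliptic estimates on $A_T$ bounds $\tfrac12\int_{A_T\setminus\bigcup_pB(p,\eta)}|\np H_R|^2+\pi M\log\eta$ by $C|A_T|$, which is negligible compared with $|K_R|$.

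\emph{Conclusion.} Since $\chi\mapsto W(j_R,\chi)$ is affine and $\indic_{K_R}=\indic_{K_T}+\indic_{A_T}$ with $j_R=j$ on $K_T$,
$$W(j_R,\indic_{K_R})=W(j,\indic_{K_T})+W(j_R,\indic_{A_T})\le \bigl(W_K(j)+\tfrac\ep2\bigr)|K_T|+o(|K_R|),$$
where the first bound uses Step 1 together with the affine comparison of $\indic_{K_T}$ with the smooth cutoff $\chi_{K_T}$ (again affineness of $W$ and the $L^2$ control near $\partial K_T$ from Step 2). Dividing by $|K_R|$ and enlarging $R_0$ gives $W(j_R,\indic_{K_R})/|K_R|\le W_K(j)+\ep$, uniformly over $j\in G$.

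\emph{Main obstacle.} The hard part is Step 2--3: because the renormalized energy is not bounded pointwise and carries the singular $\pi\log\eta$ terms, one cannot directly average $t\mapsto W(j,\indic_{K_t})$; one must separate the singular parts of the vortices before doing any $L^2$ or trace estimate, and then extract a cutting square across which both the energy flux and the vortex-count discrepancy are negligible compared with $|K_R|$ even though the vortex density near $\partial K_R$ is of order one (so no vortex-free annulus of positive width exists). Correspondingly one must build the replacement current in an $R^{3/4}$-wide annulus with renormalized energy of order $|A_T|=o(|K_R|)$, which is why the width $R^{3/4}$ — large enough for the averaging gain $1/R^{3/4}$, small enough that $|A_T|\ll|K_R|$ — is exactly the right scale.
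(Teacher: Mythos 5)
Your high-level strategy matches the paper's: keep $j$ on an inner square chosen by an averaging argument, then replace it in a collar of width $\sim R^{3/4}$ by a tame current with the right number of vortices and $j_R\cdot\tau=0$ on $\partial K_R$. However there is a genuine gap in Step~2, and a serious imprecision in Step~3 that the paper's proof is built precisely to avoid.

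\textbf{The $L^2$ boundary estimate in Step~2 fails.} The vortex singularities $\np\log|\cdot-p|$ lie in $L^p_\loc$ only for $p<2$. With a unit density of vortices in the band $\mathcal B$, a vortex at distance $\delta$ from the line $\partial K_s$ contributes $\sim 1/\delta$ to $\oint_{\partial K_s}|j|^2$, and $\int_0^1\delta^{-1}\,d\delta$ diverges: the average over $s$ of $\oint_{\partial K_s}|j|^2$ is $+\infty$, so no mean-value argument can produce the claimed $L^2$ trace bound. Subtracting singular parts does not fix this: the bound $\int_{\mathcal B}|j^\sharp|^2\lesssim|\mathcal B|$ does not follow from $W(j,\chi_{\mathcal B})\lesssim|\mathcal B|$, because $W$ includes the \emph{signed} pairwise interactions of the singular parts that you have discarded, plus the cross term $\int\chi\,j^\sharp\cdot j^{\mathrm{sing}}$ which has no sign and whose control requires $\|j^\sharp\|_{L^2}$ itself. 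This circularity is exactly what Proposition~\ref{wspread} (mass displacement into a measure $g\ge-C$) and the Struwe-type Lemma~\ref{wlp} are designed to break; the conclusion of Lemma~\ref{lemmebord} is accordingly only $\int_{\partial K_t}|j|^p\le CR^{2-\gamma}$ for $p<2$, never $p=2$.

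\textbf{The gluing in Step~3 is underdetermined once Step~2 is corrected.} With only $L^p$ ($p<2$) boundary data for the current $j\cdot\tau$ on $\partial K_T$, one must track how the exponent propagates through the Neumann problem; the paper does this in Lemma~\ref{lemrect1}, obtaining $\nab u\in L^q$ only for $q\le 2p$, and one then needs to choose $p$ close to $2$ so that some $q>2$ is available to estimate the cross term $\int j_1\cdot j_2$ by H\"older. Moreover you solve one Poisson problem on the whole collar $A_T$, a domain of aspect ratio $R^{1/4}\to\infty$, for which elliptic constants degenerate; and the quantization $M\in2\pi\mathbb N$ is imposed only globally. The paper avoids both issues by dividing the collar into $O(\sqrt R)$ rectangles $\mathcal R_i$ of bounded aspect ratio and area $\sim R$, each perturbed so that $m_i|\mathcal R_i|\in 2\pi\mathbb N$ holds \emph{per rectangle} (see \eqref{qtvol}), then further tiled by unit cells each carrying exactly one vortex (Lemma~\ref{lemrect2}), with the $L^p$ trace $g_i$ treated on each $\mathcal R_i$ separately (Lemma~\ref{lemrect1}). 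Without this decomposition the estimates $\int_{A_T\setminus\cup B(p,\eta)}|\np H_R|^2+\pi M\log\eta\le C|A_T|$ are not justified.
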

\begin{remark} An inspection of the proof allows to see that the construction can alternatively  be made in any rectangle which is a small perturbation of the square $K_R$, i.e whose sidelengths are in  $[2R, 2R(1+\eta)]$, where $\eta$ depends on $\ep$.\end{remark}

Note that this result is close  to establishing  that $\ainfty$ is ``uniformly $W$-approximable" in the sense of \cite{am}, Definition 4.14.

The following corollaries are proved at the end of this section.
\begin{coro}
 \label{periodisation} Given any $j$ such that $W_K(j) <\infty$,
there exists a sequence $\{j_R\}_{R^2 \in 2 \pi \mn}$ in $\ainfty$ such that
each $j_R$ is $K_R$ periodic (i.e.
 $j_R(x+ 2Rk e_1+ 2R l e_2)= j_R(x) $ for $k, l \in \mz$ where $(e_1, e_2)$ is the
 canonical basis of $\mr^2$)  and
\begin{equation}\label{bsupper}\limsup_{R\to \infty} W_K(j_R) = \limsup_{R\to \infty} \frac{W(j_R,\indic_{K_R})}{|K_R|}\le  W_K(j).\end{equation}
In particular there exists a
minimizing sequence
for $\min_{\ainfty} W$  consisting of periodic vector-fields.
\end{coro}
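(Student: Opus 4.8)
The plan is to produce the sequence $\{j_R\}$ directly from Proposition~\ref{pw2bis} applied to the single-element family $G=\{j\}$, and then to turn each $j_R$ into a periodic field by an even reflection of its stream function. First I would verify that $G=\{j\}$ meets the hypotheses of Proposition~\ref{pw2bis}: since $j\in\ainfty$, the density $\nu(K_R)/|K_R|$ is bounded (squares and balls being comparable), so \eqref{hypunif} holds; and after passing to a sub-sequence of radii $R\to\infty$ with $|K_R|\in 2\pi\mn$ along which $W(j,\chi_{K_R})/|K_R|$ converges --- necessarily to a limit $\le W_K(j)<\infty$ --- the (trivial, one-element) uniform convergence \eqref{cvwunif} holds. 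Proposition~\ref{pw2bis} then yields, for each $\ep>0$ and each large admissible $R$, a field $j_R$ as in \eqref{eqjr} with $\Lambda_R\subset\mathrm{int}\,K_R$ and $\tfrac{1}{|K_R|}W(j_R,\indic_{K_R})\le W_K(j)+\ep$.

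Next I would periodize. Writing $j_R=-\np H_R$ with $-\Delta H_R=2\pi\sum_{p\in\Lambda_R}\delta_p-1$ in $K_R$, the condition $j_R\cdot\tau=0$ on $\partial K_R$ is equivalent to $H_R$ satisfying homogeneous Neumann conditions there. Extending $H_R$ to $\mr^2$ by successive even reflections across the sides of $K_R$, the Neumann condition makes the extension $C^1$ across each reflection line, so the extended potential $\tilde H_R$ is periodic with respect to the lattice $4R\,\mz^2$ and still solves $-\Delta\tilde H_R=2\pi\sum_{q\in\tilde\Lambda_R}\delta_q-1$, the reflected points $\tilde\Lambda_R$ lying in the interiors of the copies since $\Lambda_R\subset\mathrm{int}\,K_R$. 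Hence $\tilde\jmath_R:=-\np\tilde H_R\in\ainfty$ (the normalization $2\pi\,\#\tilde\Lambda_R=|\mathrm{cell}|$ follows from $2\pi\,\#\Lambda_R=|K_R|$) and it is $K_{2R}$-periodic. The one genuinely delicate point is exactly this step: a naive periodization of $j_R$ itself would in general produce a spurious singular divergence along the glued sides, since $j_R\cdot\tau=0$ pins down only the tangential, not the normal, trace of $j_R$; reflecting the stream function is what repairs this.

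Finally, relabelling $R\mapsto R'=2R$ --- so that $(R')^2=|K_R|\in 2\pi\mn$ --- and setting $j_{R'}:=\tilde\jmath_R$, I would use the periodic-case formula of Proposition~\ref{renper} (valid for the family of squares) to get $W_K(j_{R'})=\tfrac{1}{|K_{R'}|}W(j_{R'},\indic_{K_{R'}})$. Taking as a fundamental cell of $4R\,\mz^2$ the square $K_R$ together with its three reflections, the invariance of $|\cdot|^2$ and of the renormalized vortex sum under reflection gives $W(j_{R'},\indic_{K_{R'}})=4\,W(j_R,\indic_{K_R})$, whence $W_K(j_{R'})=\tfrac{1}{|K_R|}W(j_R,\indic_{K_R})\le W_K(j)+\ep$. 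Letting $\ep=\ep(R')\to0$ as $R'\to\infty$ (with $R'/2$ past the threshold of Proposition~\ref{pw2bis}) produces a $K_{R'}$-periodic sequence in $\ainfty$ with $W_K(j_{R'})=\tfrac{1}{|K_{R'}|}W(j_{R'},\indic_{K_{R'}})\le W_K(j)+o(1)$, which is \eqref{bsupper}. For the last assertion I would run this construction with $j$ a minimizer of $W_K$ over $\ainfty$ (which exists and is finite by Proposition~\ref{pw1}): the resulting periodic fields satisfy $\min_\ainfty W_K\le W_K(j_R)\le\min_\ainfty W_K+o(1)$, so $W_K(j_R)\to\min_\ainfty W_K$; since $W=W_K$ on periodic configurations and $\min_\ainfty W=\min_\ainfty W_K$ (both by Propositions~\ref{renper} and~\ref{pw1}), $\{j_R\}$ is a minimizing sequence for $\min_\ainfty W$ consisting of periodic vector fields.
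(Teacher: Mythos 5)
Your strategy matches the paper's: apply Proposition~\ref{pw2bis} to obtain a modified current on a square with vanishing tangential trace, pass to a stream function, extend by even reflections across the sides (where the Neumann condition makes the extension $C^1$), and periodize. Your bookkeeping of radii and of the factor $4$ in the energy is correct, and the final reduction of the minimizing-sequence assertion to Propositions~\ref{pw1} and~\ref{renper} is exactly the paper's. The one place where you diverge from the paper is the step ``Writing $j_R=-\np H_R$,'' and that is where there is a genuine gap: the $j_R$ produced by Proposition~\ref{pw2bis} is \emph{not} divergence-free, so a priori it has no stream function. The proposition only prescribes $\curl j_R$ inside $K_R$ and $j_R\cdot\tau$ on $\partial K_R$; in its proof, $j_R$ is built by gluing pieces $j$, $j_1$, $j_2$, $j_A$ on $K_t$, the $\mathcal R_{ik}$'s and $A$, and only the \emph{tangential} traces are matched at the internal interfaces, so $\div j_R$ is in general a nonzero singular measure concentrated on $\partial K_t$ and the edges of the tiling rectangles. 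You correctly identify the danger of a spurious divergence ``along the glued sides'' of the periodization, but the same danger is already present \emph{inside} $K_R$, and reflecting a (nonexistent) $H_R$ cannot fix it.

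The repair is the one the paper performs: set $\tilde\jmath_R = j_R - \nabla\zeta$ with $\Delta\zeta = \div j_R$ in $K_R$ and $\zeta = 0$ on $\partial K_R$. This keeps $\curl \tilde\jmath_R = \curl j_R$ and $\tilde\jmath_R\cdot\tau = j_R\cdot\tau = 0$ on $\partial K_R$ (since $\zeta$ vanishes there), produces $\div\tilde\jmath_R = 0$, and hence yields $\tilde\jmath_R = -\np H_R$ with $\partial_\nu H_R = 0$ on $\partial K_R$. Moreover this correction can only help: the cross term $\int \tilde\jmath_R\cdot\nabla\zeta$ vanishes after integration by parts (using $\div\tilde\jmath_R = 0$, $\zeta=0$ on $\partial K_R$, and the fact that $\tilde\jmath_R\cdot\nu = O(1)$ near the singular points), so $W(\tilde\jmath_R,\indic_{K_R}) = W(j_R,\indic_{K_R}) - \tfrac12\int_{K_R}|\nabla\zeta|^2 \le W(j_R,\indic_{K_R})$. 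With $j_R$ replaced by $\tilde\jmath_R$ throughout, your reflection, periodization, and energy accounting $W(j_{R'},\indic_{K_{R'}}) = 4\,W(\tilde\jmath_R,\indic_{K_R})$ all go through.
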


\begin{coro}
\label{mesyoung} Let $p<2$ and
let $P$ be a probability measure  on $L^p_{loc}(\mr^2, \mr^2)$ which is invariant under the action of  translations
 and concentrated in  $ \ainfty$. Then there exists a sequence $R \to + \infty$ and  a sequence $\{j_R\}_{R}$ of vector fields  defined over $K_R$ such that
\begin{itemize}
\item[-]   There exists a finite subset $\Lambda_R$ of the interior of $K_R$ such that
\begin{equation}\label{c4i}\left\{\begin{array}{ll}
\curl j_R =
 2\pi\sum_{p\in\Lambda_R}\delta_p - 1 & \text{ in }\  K_R\\
 j_R \cdot \tau=0 & \text{on} \ \p K_R.\end{array}\right.
\end{equation}

\item[-] Letting $P_R$ be the probability measure  on $L^p_{loc}(\mr^2, \mr^2) $ which is defined as the image of the normalized Lebesgue measure on $K_R$ by
$x\mapsto j_R(x + \cdot )$ where $j_R$ is extended periodically to the whole $\mr^2$, we have
$P_R \to P$ weakly  as $R \to \infty$.
\item[-] $\D\limsup_{R\to\infty} \frac{W(j_R,\indic_{K_R}    )}{|K_R|}
\le\int  W_K(j) \, dP(j)$.
\end{itemize}
\end{coro}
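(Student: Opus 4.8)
\subsection*{Proof of Corollary~\ref{mesyoung}}

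The plan is to build $j_R$ on the large square $K_R$ by superposing many ``building blocks'' drawn from a suitable i.i.d.\ sample, each block being a periodization produced by Proposition~\ref{pw2bis}; the superposition device is forced on us because when $P$ is not ergodic no single orbit $\{\theta_\lambda j\}_\lambda$ can realize $P$. First I would dispose of the trivial case $\int W_K\,dP=+\infty$ and assume this integral is finite; since $W_K\ge \min_{\ainfty}W_K>-\infty$ by Theorem~\ref{thw}, this makes $W_K\in L^1(P)$. Using the identity $\frac{W(j,\chi_{K_R})}{|K_R|}=\dashint_{K_R}W(\theta_\lambda j,\chi)\,d\lambda+o(1)$ pointed out in Section~\ref{sec1.10}, the multiparameter ergodic theorem of \cite{becker} (already used for Theorem~\ref{gamma}) gives that for $P$-a.e.\ $j$ the limit $W_K(j)=\lim_R\frac{W(j,\chi_{K_R})}{|K_R|}$ exists, and that $Q_R^j$, the push-forward of the normalized Lebesgue measure on $K_R$ by $\lambda\mapsto\theta_\lambda j$, converges weakly as $R\to\infty$ to a probability measure $P^j$ with $\int\Phi\,dP^j=\mathbb{E}^P[\Phi\mid\mathcal I](j)$ for the invariant $\sigma$-algebra $\mathcal I$; integrating, $\int P^j\,dP(j)=P$.

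Next, given $\delta>0$, I would invoke Egorov's theorem together with the $L^1$ bound to find $G=G_\delta\subset\ainfty$ with $P(G)>1-\delta$, increasing in $\delta$, on which: (i) $\sup_{R>1}\nu(K_R)/|K_R|\le C_\delta$; (ii) $W_K\le C_\delta$ and $\frac{W(\cdot,\chi_{K_R})}{|K_R|}\to W_K$ uniformly; (iii) $d(Q_R^j,P^j)\to 0$ uniformly, $d$ being a metric for weak convergence on the Polish space $\Lp$. By (i)--(ii), $G$ satisfies hypotheses \eqref{hypunif}--\eqref{cvwunif} of Proposition~\ref{pw2bis}: for every $\ep>0$ there is $\rho_0$ so that each $j\in G$ admits, on any square $K_\rho$ with $\rho>\rho_0$ and $|K_\rho|\in 2\pi\mn$, a field $j_\rho$ which equals $j$ on $K_{\rho-2\rho^{3/4}}$, has $j_\rho\cdot\tau=0$ on $\partial K_\rho$, and satisfies $\frac{W(j_\rho,\indic_{K_\rho})}{|K_\rho|}\le W_K(j)+\ep$.

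Now fix $\ep,\delta$ and an admissible block size $\rho$ exceeding both $\rho_0(\ep,\delta)$ and the threshold in (iii), and draw an i.i.d.\ sequence $j^{(1)},j^{(2)},\dots$ from $P(\cdot\mid G)$; for almost every realization the strong law of large numbers holds simultaneously for the countably many bounded observables $W_K$, $j\mapsto\int\Phi_m\,dP^j$, and $\Phi_m$ (where $\{\Phi_m\}$ is weak-convergence determining), so fix such a realization. For $R$ a large multiple of $\rho$ with $|K_R|\in 2\pi\mn$, tile $K_R$ into $N=(R/\rho)^2$ translates of $K_\rho$ and, on the $i$-th tile, place the corresponding translate of $j^{(i)}_\rho$, obtaining $j_R$. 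Since each $j^{(i)}_\rho$ has vanishing tangential component on its boundary, the pieces glue with no line singularity in the curl, so $\curl j_R=2\pi\sum_{p\in\Lambda_R}\delta_p-1$ with $\Lambda_R$ a finite subset of the interior, and $j_R\cdot\tau=0$ on $\partial K_R$, giving \eqref{c4i}; by additivity of $W(\cdot,\indic_{\cdot})$ over the tiling, $\frac{W(j_R,\indic_{K_R})}{|K_R|}\le\ep+\frac1N\sum_iW_K(j^{(i)})$. For the empirical measure $P_R$ (push-forward of normalized Lebesgue on $K_R$ by $x\mapsto j_R(x+\cdot)$, periodically extended): outside a subset of relative measure $O(\rho^{-1/4})+O(\rho/R)$ — the union of the collars $K_\rho\setminus K_{\rho-2\rho^{3/4}}$ of the tiles, the tiles meeting $\partial K_R$, and neighborhoods of the tile boundaries — the field $j_R(x+\cdot)$ coincides, on any fixed ball, with a translate of some $j^{(i)}$, so $d\big(P_R,\frac1N\sum_iQ_\rho^{j^{(i)}}\big)\to0$ as $R\to\infty$. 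By (iii) and convexity of $d$, $d\big(\frac1N\sum_iQ_\rho^{j^{(i)}},\frac1N\sum_iP^{j^{(i)}}\big)$ is bounded by the uniform modulus from (iii) at scale $\rho$, while $\frac1N\sum_iP^{j^{(i)}}\to\int_GP^j\,\frac{dP}{P(G)}$ weakly and $\frac1N\sum_iW_K(j^{(i)})\to\int_GW_K\,\frac{dP}{P(G)}$ by the law of large numbers; and since $G_\delta\uparrow$, these integrals tend to $P$ and to $\int W_K\,dP$ as $\delta\to0$. A diagonal extraction over $\ep_k,\delta_k\to0$, choosing $\rho_k$ then $R_k\uparrow\infty$ so that the above approximations hold to accuracy $\ep_k$, produces the required sequence.

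The main obstacle is twofold: first, reconciling all three approximations — the energy average, the empirical measure, and the passage $P(\cdot\mid G_\delta)\to P$ — along one diagonal sequence while keeping the block size $\rho$ and the number of blocks $N=(R/\rho)^2$ growing at compatible rates; and second, the geometric estimate showing that the collars $K_\rho\setminus K_{\rho-2\rho^{3/4}}$ and the tile-boundary neighborhoods occupy only an $o(1)$ fraction of $K_R$, so that the glued field genuinely has the local statistics of $\frac1N\sum_iQ_\rho^{j^{(i)}}$ up to $o(1)$.
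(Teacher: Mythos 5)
Your proposal is sound in outline and reaches the same construction as the paper — a large square tiled by periodized blocks obtained from Proposition~\ref{pw2bis} — but it selects the blocks differently. The paper's Lemma~\ref{GH} is deterministic: it takes a compact $G_\ep$ of high $P$-mass, covers it by finitely many small-diameter pieces $H_\ep^i$, picks one representative $J_i$ per piece with nearly minimal $W_K$, and then fills the tiles with $n_i \approx q_\ep^2 P(H_\ep^i)$ copies of $J_{i,R_\ep}$, a quantized approximation of $P$. You replace this by i.i.d.\ sampling from $P(\cdot\mid G_\delta)$ and the strong law of large numbers, which handles the weights automatically and avoids the rational-approximation bookkeeping. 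Both approaches are driven by the same key tool (Proposition~\ref{pw2bis}) and the same reduction via the ergodic decomposition $\int P^j\,dP(j)=P$, so the ``route'' is the same; what differs is that the paper's selection is explicit (and gives, per piece, the extra margin $W_K(J_i)\le\inf_{H_\ep^i}W_K+\ep$ used in \eqref{wjk}), while yours is shorter to state but carries some measure-theoretic overhead (measurability of $j\mapsto d(Q_R^j,P^j)$ for Egorov, simultaneous SLLN for a countable family, a fresh almost-sure realization for each $(\ep_k,\delta_k)$).

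One point is genuinely elided and should be made explicit. When you pass from ``$j_R(x+\cdot)$ coincides with $\theta_\lambda j^{(i)}$ on a large ball'' to ``$d_{\B}$-proximity of the push-forward measures,'' you are converting a small $d_p$-distance into a small distance between Dirac masses in the space of measures. That conversion (the paper's Lemma~\ref{etadir}) requires one of the two fields to lie in a \emph{compact} set, because the test functions $\Phi_m$ are merely continuous; but the translate $\theta_\lambda j^{(i)}$ need not lie in $G_\delta$ even when $j^{(i)}$ does. The paper handles this by (a) taking $G_\ep$ compact from the start, and (b) introducing the exceptional sets $\Gamma_j=\{\lambda\in K_R:\theta_\lambda j\notin G_\ep\}$, shown to have small relative measure for $j\in H_\ep$ via Fubini and translation invariance (definition \eqref{defhep}); the estimate \eqref{claimtronque} then excludes $\Gamma_j$ before invoking Lemma~\ref{etadir}. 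Your Egorov set $G_\delta$ is not automatically compact, and you do not control the fraction of bad translates. This is fixable — either intersect $G_\delta$ with a compact set of nearly full $P$-measure and run the Fubini argument to control $|\Gamma_j|$, or simply build the metric $d_{\B}$ from a countable weak-convergence-determining family of \emph{bounded Lipschitz} $\Phi_m$, which bypasses compactness altogether — but as written it is a gap in the step $d(P_R,\tfrac1N\sum_iQ_\rho^{j^{(i)}})\to 0$.
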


\subsection{A mass displacement result}
In this subsection, we show that even though the integrand in the definition of $W$ is not bounded below, we can somehow reduce to that case by a ``mass displacement" method, which is
an adaptation of that of \cite{compagnon}. The situation is  much simpler here due to  the fact that  all degrees are $+1$. It  follows the same steps however, consisting in ball construction combined with mass displacement.

We begin with a ball construction argument \`a la  Sandier and
Jerrard \cite{sandier,jerr}. For the sake of generality, we prove the result for an average density $m$ which may depend on $x$.
\begin{pro}\label{boulesren} Assume $\curl j = \nu -m$ in the sense of distributions in some open set $U$,  where $m\in L^\infty(U)$ and $\nu = 2\pi \sum_{p\in\Lambda} \delta_p$ for some finite subset $\Lambda$ of $U$, and that $j\in L^2_\loc(U\sm\Lambda)$. Let $n = \#\Lambda$ and $\eta_0>0$ be the minimal distance between points of $\Lambda$. Then there exists for any $r\in(0,1]$ a family of disjoint closed balls $\B_r$ of total radius $r$ covering $\Lambda$  such that:
\begin{itemize}
\item[-] If $\Lambda = \{p\}$ then $\B_r = \{B(p,r)\}$.
\item[-] If $r/n\le \eta_0$, then $\B_r = \{B(p,\frac{r}{n})\}_{p\in\Lambda}$.
\item[-]  The set $\cup_{B\in\B_r}B$ is increasing   as a function of $r$. Moreover, for any $\eta \le r/n$,  and  every $B \in \B_r$ such that $B \subset U$ we have, letting $d_B= \#(\Lambda\cap B)$, 
$$\hal \int_{B\sm\cup_{p\in\Lambda}B(p,\eta)} |j|^2\ge \pi d_B\(\log\frac r{n\eta} - \|m\|_\infty\frac{r^2}{2}\).$$
\item[-] If $B\in\B_r$  and  $\chi$ is a positive function with support in  $U$, then
$$ \int_{B\sm\cup_{p\in\Lambda}B(p,\eta)} \chi |j|^2 - 2\pi\(\log\frac r{n\eta} - \|m\|_\infty\frac{r^2}{2}\)\sum_{p\in B\cap\Lambda} \chi(p)\ge - 2 r \nu(B) \|\nab\chi\|_\infty.$$
\end{itemize}
\end{pro}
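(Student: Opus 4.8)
The plan is to run the standard Sandier--Jerrard ball-growth construction and then track the consequences of the bound $\curl j = \nu - m$ rather than $\curl j = \nu$. First I would set up the initial collection of balls: if $\Lambda = \{p\}$ there is nothing to do, so assume $n = \#\Lambda \ge 2$ and start from the balls $\{B(p, \frac{r_0}{n})\}_{p \in \Lambda}$ for $r_0$ small enough that these are disjoint (possible since $r_0/n \le \eta_0$ for $r_0$ small). Then one alternates the two usual moves of a ball construction: a \emph{growth} phase, in which all balls are dilated by a common factor while keeping the sum of radii increasing linearly in the ``time'' parameter, and a \emph{merging} phase, in which two balls that have come to touch are replaced by a single ball containing both, chosen so that the total radius is not decreased (this is where disjointness is preserved and the total radius is exactly the parameter $r$). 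The monotonicity of $\cup_{B \in \B_r} B$ in $r$ is built into the construction. The only subtlety relative to the classical statement is to carry the lower bound through both phases.

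For the energy lower bound on a single final ball $B \in \B_r$ with $B \subset U$, I would use the lower bound on annuli. On any annulus $B(x, t_2) \sm B(x, t_1)$ contained in $U$ and avoiding all points of $\Lambda$ except possibly those inside $B(x,t_1)$, one has $\int_{\partial B(x,t)} j \cdot \tau = \nu(B(x,t)) - \int_{B(x,t)} m$; when the annulus lies inside the current ball this circulation is $2\pi d - \int m$ where $d = d_B$ counts the enclosed points. By Cauchy--Schwarz on each circle $\partial B(x,t)$,
\[
\hal \int_{\text{annulus}} |j|^2 \ge \hal \int_{t_1}^{t_2} \frac{1}{2\pi t}\Big(2\pi d - \int_{B(x,t)} m\Big)^2 \, dt \ge \pi d^2 \int_{t_1}^{t_2} \frac{dt}{t} - d \|m\|_\infty \int_{t_1}^{t_2} \pi t\, dt,
\]
using $|\int_{B(x,t)} m| \le \|m\|_\infty \pi t^2$ and discarding the nonnegative square of that term. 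Summing this over the annuli swept out during the growth phases (the merging phases contribute nothing, as is standard), and noting $d^2 \ge d$, the logarithmic contributions telescope to $\pi d_B \log \frac{r}{n\eta}$ (the factor $n$ coming from the initial radius $r_0/n$ with $r_0$ eventually taken to be the target total radius, and $\eta \le r/n$ ensuring the small balls $B(p,\eta)$ sit inside the initial balls) while the error terms sum to at most $\pi d_B \|m\|_\infty \frac{r^2}{2}$ since the radii of the swept annuli are bounded by $r$. This gives the third bullet.

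For the weighted version with a cutoff $\chi$, I would insert $\chi$ into the annulus estimate and compare $\chi$ on a circle $\partial B(x,t)$ with its value at the enclosed points. The point is that $\chi$ varies by at most $\|\nab\chi\|_\infty \cdot 2t \le 2r\|\nab\chi\|_\infty$ across a ball of radius $\le r$; writing $\chi|j|^2 = \chi(p_*)|j|^2 + (\chi - \chi(p_*))|j|^2$ on each annulus and estimating the second piece by $2r\|\nab\chi\|_\infty |j|^2$ — whose integral over $B$ is controlled via the same circulation computation by essentially $2\pi \nu(B)\|\nab\chi\|_\infty$ after integrating the annulus bound — one arrives at
\[
\int_{B \sm \cup_p B(p,\eta)} \chi|j|^2 \ge 2\pi\Big(\log\frac{r}{n\eta} - \|m\|_\infty \frac{r^2}{2}\Big)\sum_{p \in B \cap \Lambda}\chi(p) - 2r\nu(B)\|\nab\chi\|_\infty,
\]
which is the last bullet. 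The main obstacle I anticipate is bookkeeping rather than conceptual: making sure the $-\int_{B(x,t)} m$ term is handled uniformly across merging steps (where the ``center'' of the relevant ball changes) and verifying that the accumulated $O(r^2)$ errors genuinely sum to the clean constant $\|m\|_\infty r^2/2$ rather than something larger; this is where I would need to be careful that the radii of \emph{all} annuli swept in the construction stay bounded by the \emph{final} total radius $r \le 1$, which is exactly what the total-radius normalization of the ball construction guarantees.
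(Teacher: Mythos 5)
Your treatment of the first three bullets is essentially the paper's: a Jerrard--Sandier ball growth started from balls of radius $r/n$, the circle estimate $\int_{\partial B(x,t)}j\cdot\tau=\nu(B(x,t))-\int_{B(x,t)}m\ge 2\pi d_B-\|m\|_\infty\pi t^2$ combined with Cauchy--Schwarz, and integration over the growth with merges costing nothing. That part is sound and matches the paper.

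The fourth bullet as you sketch it, however, has a genuine gap. You split $\chi|j|^2=\chi(p_*)|j|^2+(\chi-\chi(p_*))|j|^2$, bound the error by $2r\|\nabla\chi\|_\infty|j|^2$, and assert that the integral of this over $B$ ``is controlled via the same circulation computation by essentially $2\pi\nu(B)\|\nabla\chi\|_\infty$''. This cannot work: the circulation computation provides a \emph{lower} bound for $\int_B|j|^2$, not an upper bound; indeed $\int_{B\setminus\cup_p B(p,\eta)}|j|^2$ is of order $\log(1/\eta)$ --- that is exactly what the third bullet proves. Multiplied by the fixed constant $2r\|\nabla\chi\|_\infty$, this error term diverges as $\eta\to0$, whereas the statement requires the uniform error $-2r\nu(B)\|\nabla\chi\|_\infty$. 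No upper bound on $\int_B|j|^2$ is available from the hypotheses, and since $\operatorname{div}j$ is not constrained here, $j$ need not equal $\nabla^\perp\log|\cdot-p_*|$ plus a bounded field near $p_*$, so the hoped-for cancellation between the vanishing of $\chi-\chi(p_*)$ at $p_*$ and the singularity of $|j|^2$ is not automatic.

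Two ways to repair this. Staying within your annulus-by-annulus framework, you must retain the \emph{radial} factor: on the circle of radius $\rho$ in a construction ball $B'$, use $\chi\ge\max\bigl(0,\frac{1}{d_{B'}}\sum_{p\in B'\cap\Lambda}\chi(p)-2\rho\|\nabla\chi\|_\infty\bigr)$ (the truncation is harmless since $\chi\ge0$); pulled out of the circle integral, the $\rho$ here cancels the $1/\rho$ in the circulation bound, and integrating over the exponential growth gives $\int_0^s n\eta e^t\,dt=r-n\eta\le r$, which is the source of the $r$ in the error. What the paper actually does is different and slicker: it invokes the layer-cake formula $\int\chi|j|^2=\int_0^\infty\bigl(\int_{\{\chi>t\}}|j|^2\bigr)dt$, then for each level $t$ and each $a\in\Lambda\cap B$ selects the largest construction ball $B_a^t\subset\{\chi>t\}$ containing $a$, shows these are pairwise disjoint, uses the Lipschitz bound $\chi(a)-t\le2s(a,t)\|\nabla\chi\|_\infty$ to control the parameter $s(a,t)$ from below, applies the unweighted third bullet on each $B_a^t$, and integrates in $t$; the error $-2r\nu(B)\|\nabla\chi\|_\infty$ then drops out cleanly.
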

\begin{proof}[Proof of the  first three items] We let $M = \|m\|_\infty$.
The first items are obtained by a standard ball construction argument \`a la Jerrard/Sandier. Since $\curl j = \nu -m$ we have for any circle $C = \partial B$ of radius $r(B)$ not intersecting $\Lambda$, and letting $d_B =  \#(\Lambda\cap B)$,
$$\int_{C} j\cdot\tau \ge  \nu(B) - M\pi {r_B}^2 = 2\pi d_B  - M\pi {r_B}^2.$$
Using the Cauchy-Schwarz inequality  and the fact that $d_B$ is
 a nonnegative integer, we deduce  that
\begin{equation}\label{lecercle}
\int_{C}|j|^2 \ge \frac{2\pi}{r_B} (d_B)^2 - 2\pi M d_{B} r_B
\ge 2\pi d_{B}\( \frac{1}{r_B} - M r_B \).\end{equation}

Define $\F(x,r) = \int_{B(x,r)} |j|^2$. The above yields
$$\frac{\partial \F}{\partial r} \ge   2\pi d_B\(\frac1{r_B} - M r_B\).$$

%\footnote{il faut revoir la preuve qui suit car j'ai essaye de clarifier en utilisant la redaction de Serfaty-Tice mais suis pas sure d'y etre arrivee}

In order to define $\mathcal{B}_r$, we first fix a reference family of balls produced via a ball-growth.  Set $\eta_1 = \min\{\eta_0/3,1/(n+1)\}$ and let $\mathcal{B}_0  = \{ \bar{B}(p,\eta_1)\}_{p \in \Lambda}$.  According to the definition of $\eta_0$, we have that $\mathcal{B}_0$ is a finite, disjoint collection of closed balls of total radius $n \eta_1 < 1$. We then apply Theorem 4.2 of \cite{livre} to $\mathcal{B}_0$  to produce a family of collections $\{ \mathcal{B}(s) \}_{s \in [0,\log\frac{1}{n\eta_1}]}$, 
such that the total radius of the balls in $\mathcal{B}(s)$ is $r=n\eta_1e^s$. 
Then, given $n\eta_1<r<1$ we may choose $s = \log\frac r{n\eta_1}$ and write $\B_r$ for $\B(s)$. We then extend this reference family ``backward'' to radii smaller than $n \eta_1$ by letting $\B_r = \{\bar{B}(p,r/n)\}_{p\in \Lambda}$ for any $0 < r \le n \eta_1$.  

Since the balls in these collections never become tangent when $r<n\eta_1$, for any $\eta>0$ we may trivially view $\{\B(s)\}_s$, where $\B(s) = \B_r$, with $r = {n\eta}e^s$ as having been generated by a ball-growth from $\B(0) = \{\bar{B}(p,\eta)\}_{p\in \Lambda}$, i.e. satisfying all the results of Theorem 4.2 in \cite{livre}. Then each $\B_r$  has total radius $r$ and covers $\Lambda$ and from Proposition~4.1 in \cite{livre} for every $B\in\B_r = \B(s)$, with $r = {n\eta}e^s$, 
$$\int_{B\sm\cup_{p\in\Lambda}B(p,\eta)}|j|^2 \ge \int_0^s \sum_{\substack{B'\in \B(t)\\B'\subset B}}  2\pi d_{B'} (1 - M{r_{B'}}^2) \,dt.$$ 
Since the sum of the $d_{B'}$'s is $\#(B\cap\Lambda)$ and since the sum of the $r_{B'}$'s is less than $n\eta e^t$, we find
$$\int_{B\sm\cup_{p\in\Lambda}B(p,\eta)}|j|^2 \ge 2\pi d_B\int_0^s (1 - M( n\eta e^t)^2) \,dt = 2\pi d_B \(\log\frac r{n\eta} - M\frac{r^2-n^2{\eta}^2}2\).$$
Since  this is true for any $0<\eta<r/n$, the first three items are satisfied.
\end{proof}

\begin{proof}[Proof of the last item]

Let $B_\eta = B\sm\cup_{p\in\Lambda}B(p,\eta)$. Then by the ``layer-cake" theorem
\begin{equation}\label{machin}\int_{B_\eta} \chi |j|^2  = \int_0^{+\infty} \(\int_{B_\eta\cap\{\chi>t\}}|j|^2\)\,dt.\end{equation}

Now  if $a\in\Lambda\cap B$,  then for
any $s\in(0,r]$  there exists a closed  ball $B_{a,s}\in\B_s$ containing $a$. For $t>0$ we call
$$ s(a,t) = \sup \{ s \in (0, 1], B_{a,s}\subset \{\chi>t\}\}$$
if this set is nonempty, and
let $s(a,t) = 0$ otherwise, i.e. if $\chi(a)\le t$. Then we let $B_a^t = B_{a,s(a,t)}.$ Note that $a$ is not necessarily the center of $B_a^t$, note also that $s(a,t)$  bounds from above the radius of $B_a^t$, but is not necessarily equal to it.

As noted above   $s(a,t) = 0$ iff $\chi(a) \le t$ while if  $s(a,t)\in(0,r)$ then $B_a^t\not\subset \{\chi>t\}$, otherwise there would exist $s'>s(a,t)$ such that  $B_{a,s'}\subset \{\chi>t\}$, contradicting the definition of $s(a,t)$. Thus, choosing $y$ in $B_a^t\sm \{\chi>t\}$, we have
\begin{equation}\label{boundr} \chi(a) - t\le \chi(a) - \chi(y)\le 2 s(a,t)\|\nab\chi\|_\infty.\end{equation}
Also, for any $t\ge 0$ the collection $\{B_a^t\}_a$, where $a\in\Lambda$ and the $a$'s for
which $s(a,t) = 0$ have been excluded, is disjoint. Indeed if  $a,b\in \Lambda$ and $s(a,t)\ge s(b,t)$ then, since
$\B_{s(a,t)}$ is disjoint, the balls $B_{a,s(a,t)}$ and
$B_{b,s(a,t)}$ are either equal or disjoint. If they are disjoint we
note that $s(a,t)\ge s(b,t)$ implies that $B_{b, s(b,t)}\subset
B_{b,s(a,t)}$ and therefore $B_b^t = B_{b,s(b,t)}$ and $B_a^t =
B_{a,s(a,t)}$ are disjoint. If they are equal, then
$B_{b,s(a,t)}\subset E_t\cap B$ and therefore $s(b,t)\ge s(a,t)$,
which implies $s(b,t) = s(a,t)$ and then $B_b^t = B_a^t$.

Now assume that  $B'\in \{B_a^t\}_a$ and let $s$ be the common value of $s(a,t)$ for $a$'s in $B'\cap\Lambda$ and $n = \#\Lambda$.  Then the previous item of the proposition yields for any  $\eta<\min(\eta_0,r/n)$  (but the inequality is trivially true if $\eta>r/n$),
$$\int_{B'\sm\cup_{p\in\Lambda}B(p,\eta)} |j|^2\ge \nu(B')\(\log\frac s{n\eta} - M\frac{s^2}{2}\)_+.$$
We may rewrite the above as
$$\int_{B'\sm\cup_{p\in\Lambda}B(p,\eta)} |j|^2\ge 2\pi\sum_{a\in B'\cap\Lambda}\(\log\frac {s(a,t)}{n\eta} - M\frac{s(a,t)^2}{2} \)_+$$
and summing over $B'\in \{B_a^t\}_a$ we deduce, noting that the $a$'s for which $s(a,t)=0$ do not contribute to the sum,
$$\int_{B_\eta\cap\{\chi>t\}} |j|^2\ge 2\pi\sum_{a\in B\cap\Lambda}\(\log\frac {s(a,t)}{n\eta} - M\frac{s(a,t)^2}{2} \)_+.$$
Integrating the above  in view of \eqref{machin} yields, using \eqref{boundr} and the fact that $s(a,t)\le r$,
\begin{multline*}\int_{B_\eta} \chi |j|^2\ge 2\pi\sum_{a\in B\cap\Lambda}\int_0^{\chi(a)} \(\log\frac {s(a,t)}{n\eta} - M\frac{s(a,t)^2}{2} \)_+\,dt \ge \\
2\pi\sum_{a\in B\cap\Lambda}\int_0^{\chi(a)} \(\log\frac r{n\eta} +\log \(\frac{\chi(a) - t}{2r\|\nab\chi\|_\infty}\wedge 1\) - M\frac{r^2}{2} \)\,dt.
\end{multline*}
The right-hand side is greater than
$$2\pi\sum_{a\in B\cap\Lambda}\(\chi(a)\(\log\frac r{n\eta} -M\frac{r^2}{2}\) - 2r \|\nab\chi\|_\infty\),$$
which proves the result.
\end{proof}

We deduce  a control of $j$ by $W$, which we point out is substantially 
improved in \cite{st} into a control in a critical Lorentz space.
 \begin{lem}\label{wlp} Let $\chi$ be a positive function  compactly supported in  an open set $U$  and assume that $\curl j = \nu -m$ in $$
\widehat U := \{x\mid d(x,U)<1\}$$ where $\nu = 2\pi \sum_{p\in\Lambda} \delta_p$ for some finite subset $\Lambda$ of $\widehat U$.  Then, there exists $C>0$ universal and for any $p\in[1,2)$, $C_p>0$ depending only on $p$, such that 
$$ \int_{U} \chi^{p/2} |j|^p \le C (|U|+C_p)^{1-p/2}\(W(j,\chi)+ n(\log n+ \|m\|_\infty) \|\chi\|_{\infty}+ n\|\nab\chi\|_\infty\)^{p/2}.$$
where $n = \nu(\widehat U)/2\pi = \#\Lambda$.
\end{lem}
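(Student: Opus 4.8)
The plan is to integrate separately over the part of $U$ that stays away from $\Lambda$, where $j\in L^2$ and one Hölder inequality converts an $L^p$ norm into an $L^2$ norm times $|U|^{1-p/2}$, and over a neighbourhood of $\Lambda$, which I peel off in dyadic shells and on which the same Hölder trick is applied shell by shell; the shell areas decay geometrically, and summing the resulting series — which converges precisely because $p<2$ — is what produces the $p$-dependent constant $C_p$, of order $(2-p)^{-2}$. Throughout I abbreviate $M=\|m\|_\infty$, $n=\#\Lambda$, and I write $A:=W(j,\chi)+n(\log n+M)\|\chi\|_\infty+n\|\nab\chi\|_\infty$ for the quantity appearing in the second factor. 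Proposition~\ref{boulesren}, applied with the cutoff $\chi$ and total radius $\tfrac12$ and summed over its balls, already gives $W(j,\chi)\ge -Cn(\log n+M)\|\chi\|_\infty-Cn\|\nab\chi\|_\infty$ for a universal $C$; so after enlarging the universal constants multiplying the two ``junk'' terms I may assume $A\ge n\|\chi\|_\infty\ge 0$, which is all I will need.

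First I would apply Proposition~\ref{boulesren} inside $\widehat U$ (with average density $m$) at each radius $2^{-k}$, $k\ge 1$, obtaining disjoint families $\B^{(k)}$ of total radius $2^{-k}$ covering $\Lambda$, with $\bigcup\B^{(k+1)}\subset\bigcup\B^{(k)}$ and $|\bigcup\B^{(k)}|\le\pi 4^{-k}$. Since $\supp\chi$ lies at distance at least $1$ from $\p\widehat U$ and these radii are $\le\tfrac12$, every ball carrying $\chi$-mass — and every sub-ball used inside the ball construction — stays in $\widehat U$, so the construction and its energy lower bounds apply to all the balls that matter (the rest carry no $\chi$); this is the only point where $\widehat U$ rather than $U$ intervenes. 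I then set $\Omega_0:=U\sm\bigcup\B^{(1)}$ and $\mathcal A_k:=\bigcup\B^{(k)}\sm\bigcup\B^{(k+1)}$, so that $U=\Omega_0\cup\bigcup_{k\ge 1}\mathcal A_k$ up to a null set and $\int_U\chi^{p/2}|j|^p=\int_{\Omega_0}\chi^{p/2}|j|^p+\sum_{k\ge1}\int_{\mathcal A_k}\chi^{p/2}|j|^p$.

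On $\Omega_0$ there is no singularity of $j$, so Hölder's inequality with exponents $\tfrac2p$ and $\tfrac2{2-p}$ gives $\int_{\Omega_0}\chi^{p/2}|j|^p=\int_{\Omega_0}(\chi|j|^2)^{p/2}\le\bigl(\int_{\Omega_0}\chi|j|^2\bigr)^{p/2}|U|^{1-p/2}$. To control $\int_{\Omega_0}\chi|j|^2$ I would split the limit defining $W(j,\chi)$ as $\int_{\mr^2\sm\cup_p B(p,\eta)}\chi|j|^2=\int_{\Omega_0}\chi|j|^2+\int_{\cup\B^{(1)}\sm\cup_p B(p,\eta)}\chi|j|^2$ (valid for $\eta$ small, since the bottom-scale balls of a family of total radius $s$ are the $B(p,s/n)$) and apply the last item of Proposition~\ref{boulesren}, with $r=\tfrac12$, to each ball of $\B^{(1)}$; summing and letting $\eta\to0$ yields $\hal\int_{\Omega_0}\chi|j|^2\le W(j,\chi)+Cn(\log n+M)\|\chi\|_\infty+Cn\|\nab\chi\|_\infty\le CA$, hence $\int_{\Omega_0}\chi^{p/2}|j|^p\le CA^{p/2}|U|^{1-p/2}$. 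The same computation with $\B^{(k)}$ in place of $\B^{(1)}$ gives $\hal\int_{\mr^2\sm\cup\B^{(k)}}\chi|j|^2\le W(j,\chi)+\pi(k\log 2+\log n)\sum_p\chi(p)+Cn(M\|\chi\|_\infty+\|\nab\chi\|_\infty)\le C(k+1)A$, and since $\mathcal A_k\subset\mr^2\sm\cup\B^{(k+1)}$ this bounds $\int_{\mathcal A_k}\chi|j|^2$ by $C(k+2)A$. Hölder on each shell with $|\mathcal A_k|\le\pi4^{-k}$ then gives $\int_{\mathcal A_k}\chi^{p/2}|j|^p\le C(k+2)^{p/2}A^{p/2}4^{-k(1-p/2)}$, and summing over $k\ge1$ — the series $\sum_{k\ge1}(k+2)^{p/2}4^{-k(1-p/2)}=:c_p$ converges, with $c_p\le C(2-p)^{-2}$ — gives $\sum_{k\ge1}\int_{\mathcal A_k}\chi^{p/2}|j|^p\le Cc_pA^{p/2}=:C_p^{1-p/2}A^{p/2}$ with $C_p:=(Cc_p)^{1/(1-p/2)}$ depending only on $p$. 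Adding the two contributions and using $a^{1-p/2}+b^{1-p/2}\le 2(a+b)^{1-p/2}$,
$$\int_U\chi^{p/2}|j|^p\le CA^{p/2}|U|^{1-p/2}+C_p^{1-p/2}A^{p/2}\le C\,A^{p/2}\bigl(|U|+C_p\bigr)^{1-p/2},$$
which is the asserted inequality.

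The only genuine obstacle is the behaviour near $\Lambda$: there $\chi|j|^2$ is never integrable, so $U$ cannot be handled by a single Hölder inequality, and the remedy is exactly the dyadic peeling above, where the energy in the $k$-th shell grows only linearly in $k$ while its area decays like $4^{-k}$, the two balancing precisely because $p<2$ — which is what forces $C_p\sim(2-p)^{-2}$. The remaining points are routine: the $\log\eta$ bookkeeping relating shell energies to $W(j,\chi)$ is taken care of by the monotonicity built into Proposition~\ref{boulesren}, the $\log n$ in the junk terms comes from the $1/n$ in the bottom-scale balls, and the verification that the relevant balls lie in $\widehat U$ uses only $d(\supp\chi,\p\widehat U)\ge 1$ and radii $\le\tfrac12$.
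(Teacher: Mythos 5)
Your proof is correct and follows essentially the same route as the paper's (which attributes the argument to Struwe): peel a dyadic sequence of neighbourhoods of $\Lambda$ obtained from the ball construction of Proposition~\ref{boulesren}, apply H\"older's inequality on each shell, and use the last item of Proposition~\ref{boulesren} to bound the shell energies in terms of $W(j,\chi)$ plus the junk terms, summing the geometric-in-area series to produce $C_p$. The only cosmetic differences are that you record the sharper area bound $|\mathcal A_k|\le\pi 4^{-k}$ where the paper contents itself with $C2^{-k}$, and that you make explicit the preliminary lower bound on $W(j,\chi)$ needed to ensure the second factor in the conclusion is nonnegative, a point the paper leaves implicit.
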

\begin{proof} We use an argument of M. Struwe \cite{struwe}.
 Let $M = \|m\|_\infty$. We construct as above balls $\B_r$ in $\widehat U$ and define for $k\ge 1$ the set $U_k = \B_{2^{-k}}\setminus \B_{2^{-(k+1)}}$ and $U_0 = \widehat U\sm \B_{1/2}$. Then since $\chi$ is supported in $U$, we have by H\"older's inequality,
\begin{equation}\label{uk}\int_{U} \chi^{p/2} |j|^p = \int_{\widehat U} \chi^{p/2} |j|^p = \sum_{k=0}^\infty \int_{U_k} \chi^{p/2} |j|^p \le \sum_{k=0}^\infty |U_k|^{1-p/2}\(\int_{U_k} \chi |j|^2\)^{p/2}.\end{equation}
For any given $k\in\mn$ and if $\eta$ is small enough we have   $U_k\subset U_\eta\sm (\B_{2^{-(k+1)}})_\eta,$ where we have written $A_\eta = A\sm\cup_{p\in\Lambda}B(p,\eta)$. For any ball $B\in\B_{2^{-(k+1)}}$, if $B$ intersects the support of $\chi$ then $B\subset \widehat U$ and thus the previous proposition yields
$$\int_{B_\eta} \chi |j|^2 \ge 2\pi\(\log\frac{2^{-(k+1)}}{n\eta} - CM\)\sum_{p\in B\cap\Lambda} \chi(p) - C \nu(B) \|\nab\chi\|_\infty.$$
Summing over balls in $\B_{2^{-(k+1)}}$ and subtracting from $\int_{U_\eta}\chi|j|^2$ we find
$$ \int_{U_k}  \chi |j|^2\le \int_{U_\eta}\chi |j|^2 -  2\pi\(\log\frac{2^{-(k+1)}}{n\eta} - CM\)\sum_{p\in U\cap\Lambda} \chi(p) + C \nu(U) \|\nab\chi\|_\infty.$$
Taking the limit $\eta\to 0$ and plugging into \eqref{uk} we find, recalling that $n =  \#\Lambda$,
$$\int_{U} \chi^{p/2} |j|^p \le  \sum_{k=0}^\infty |U_k|^{1-p/2}\(W(j,\chi) +Cn  \(M+k + \log n\)\|\chi\|_{\infty} + C n \|\nab\chi\|_\infty\)^{p/2}.$$
We have $|U_0|\le |U|$ while  for $k\ge 1$ we have $|U_k|\le C2^{-k}$. It follows easily that for some $C_p,C>0$,
$$ \int_{U} \chi^{p/2} |j|^p \le C (|U|+C_p)^{1-p/2}\(2W(j,\chi)+ n(\log n+M) \|\chi\|_{\infty}+ n\|\nab\chi\|_\infty\)^{p/2}.$$
\end{proof}

\begin{lem}\label{wsci} Assume that $m\in L^\infty(B_R)$ and  $\{(j_n,\nu_n)\}_n$ is a sequence in $\Lp\times\radon$ such that $\nu_n$ restricted to $B_R$  is of the form $2\pi\sum_i\delta_{a_i}$ for every $n$, with $\div j_n=0$ and $\curl j_n=\nu_n-m$,  and which converges in the distributional sense to $(j,\nu)$ on $B_R$.

Then   $\div j = 0$ and $\curl j = \nu-m$ on $B_R$, where $\nu$ is a locally finite sum of the form $2\pi \sum_{p\in\Lambda}d_p \delta_p$, where $\Lambda$ is a discrete subset of $B_R$ and $d_p\in\mn^*$. Moreover, if  $\chi\in C^\infty_c(B_R)$ is  positive and $\sup_n W(j_n,\chi) <+\infty$, then $d_p=1$ for every $p$ such that $\chi(p)\neq 0$. In addition, for any smooth function $\xi$ compactly supported in $\{\chi>0\}$ we have
$$\liminf_{n\to +\infty} W(j_n,\xi)= W(j,\xi).$$
\end{lem}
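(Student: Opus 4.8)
The plan is to prove the three assertions in turn, the substantive point being that multiplicities cannot exceed one where $\chi>0$.

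\emph{The limiting equation.} First I would pass to the limit in the distributional identities $\div j_n = 0$ and $\curl j_n = \nu_n - m$: since $j_n\to j$ in $\Lp$ and $\nu_n\to\nu$ as distributions, this gives $\div j = 0$ and $\curl j = \nu - m$ on $B_R$, and $\nu\ge 0$ is a nonnegative Radon measure as a distributional limit of the $\nu_n$. To obtain that $\nu$ is atomic with multiplicities in $\mn^*$ one needs a local bound on $\#\Lambda_n$; I would get it from $\sup_n W(j_n,\chi)<+\infty$ together with the ``mass displacement'' machinery of this section (as in \cite{compagnon}), which bounds below the renormalized local energy by $-C + c_0\,\#\{a^n_i\in\{\chi\ge c\}\}$, hence the number of points of $\Lambda_n$ in any compact subset of $\{\chi>0\}$ is bounded. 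Exhausting and extracting, the atoms of $\nu_n$ converge, and $\nu = 2\pi\sum_{p\in\Lambda}d_p\delta_p$ with $\Lambda$ discrete and $d_p\in\mn^*$ equal to the number of points of $\Lambda_n$ collapsing to $p$; near each $p$ the equation forces $j = d_p\np\log|x-p| + (\text{smooth})$, confirming $j\in\Lp$.

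\emph{Multiplicities are one where $\chi>0$.} This is the heart of the matter, and I would argue by contradiction. If $\chi(p)>0$ and $d_p\ge 2$, then $d_p$ points of $\Lambda_n$ collapse onto $p$ with mutual distances $\delta_n\to 0$, and the plan is to show this collision makes $W(j_n,\chi)$ blow up. Working in a fixed small ball $B(p,\rho)\subset\{\chi>0\}$, I would run the ball construction of Proposition~\ref{boulesren} for $j_n$, split the growth at radius $r\sim\delta_n$ into the phase with $d_p$ disjoint balls of degree $1$ and the phase with one merged ball of degree $d_p$ — so that the quadratic factor $d_B^2$ in \eqref{lecercle} produces, in the merged phase, an extra interaction term — and, keeping careful track of the $\pi\log\eta$ renormalization and using $\chi\ge c$ near $p$, obtain $W(j_n,\chi)\ge \pi c'\,d_p(d_p-1)\log(1/\delta_n) - C$ for some $c'>0$, provided the contribution to $W(j_n,\chi)$ from outside $B(p,\rho)$ is also bounded below; that last point again uses that the number of vortices of $j_n$ is bounded, so this remainder is $\ge -C$ uniformly. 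Since $\delta_n\to 0$ this contradicts $\sup_n W(j_n,\chi)<+\infty$, hence $d_p=1$. The main obstacle is precisely here: the integrand of $W$ is not bounded below, so isolating the divergent interaction term and controlling ``everything else'' uniformly forces one to use the quantitative ball construction with the $\sum d^2$ gain together with the a priori bound on the vortex count.

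\emph{Convergence of $W(\cdot,\xi)$.} Let $\xi\in C^\infty_c(\{\chi>0\})$, so $\mathrm{supp}\,\xi\subset\{\chi\ge c_0\}$ for some $c_0>0$. By the previous step, for $n$ large the points of $\Lambda_n$ near $\mathrm{supp}\,\xi$ are simple and in bijection with those of $\Lambda$ there, converging to them. Since $\curl(j_n-j) = \nu_n-\nu = 0$ and $\div(j_n-j)=0$ on compact subsets of $\{\chi>0\}$ disjoint from $\Lambda$ once $n$ is large, the fields $j_n-j$ are harmonic there and tend to $0$ distributionally (from $j_n\to j$ in $\Lp$), hence $j_n\to j$ in $C^\infty_{\mathrm{loc}}$ away from $\Lambda$. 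Fixing $\rho>0$ small, on $\mathrm{supp}\,\xi\setminus\bigcup_p B(a^n_p,\rho)$ the energy converges by uniform convergence, and near each collapsing point I would use $j_n = \np\log|x-a^n_p| + f^n_p$ with $f^n_p$ smooth and $f^n_p\to f_p$ locally uniformly to show that $\lim_{\eta\to0}\bigl(\hal\int_{B(a^n_p,\rho)\setminus B(a^n_p,\eta)}\xi|j_n|^2 + \pi\log\eta\,\xi(a^n_p)\bigr)$ converges, as $n\to\infty$, to the same quantity for $j$ (the divergent $\pi\log(1/\eta)$ parts cancel, the remainders being continuous functions of $a^n_p$ and $f^n_p$). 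Summing the pieces, the auxiliary radius $\rho$ cancels between $j_n$ and $j$, which yields $W(j_n,\xi)\to W(j,\xi)$, and in particular $\liminf_{n\to\infty}W(j_n,\xi)=W(j,\xi)$.
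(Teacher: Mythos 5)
Your proposal follows the same path as the paper's own proof (elliptic regularity away from $\Lambda$, ball construction with the quadratic gain $d_B^2$ to exclude collisions, then a local expansion $j_n=\np\log|\cdot-a_p^n|+f_p^n$ with $f_p^n$ uniformly controlled to justify exchanging $\eta\to0$ with $n\to\infty$). Two remarks.

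\emph{On the structure of $\nu$.} You write that obtaining $\nu=2\pi\sum_p d_p\delta_p$ with $\Lambda$ discrete ``needs a local bound on $\#\Lambda_n$'' which you propose to extract from $\sup_n W(j_n,\chi)<+\infty$ via the mass-displacement machinery. This is both unnecessary and insufficient. Unnecessary because the local bound is automatic from distributional convergence of nonnegative measures: for any compact $K\Subset B_R$ take $\varphi\in C^\infty_c(B_R)$ with $\varphi\ge\indic_K$, $\varphi\ge 0$; then $\nu_n(K)\le\langle\nu_n,\varphi\rangle\to\langle\nu,\varphi\rangle$, so $\sup_n\nu_n(K)<\infty$. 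Since each $\nu_n$ is a sum of $2\pi$-Diracs, extraction gives the stated form for $\nu$ with $d_p\in\mn^*$. Insufficient because the lemma asserts this structure on all of $B_R$, whereas an argument resting on $\sup_n W(j_n,\chi)<+\infty$ would at best control the vortex count on compact subsets of $\{\chi>0\}$.

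\emph{On the multiplicity bound.} Your plan — fix $\rho$, let the $d_p$ points collide at scale $\delta_n\to0$, split the ball growth at $r\sim\delta_n$ so that the merged phase carries the $d_p^2$ factor and produces a cost $\sim\pi d_p(d_p-1)\log(1/\delta_n)$ — does give the contradiction for any $d_p\ge 2$, but only once you have separately bounded below the contribution of $W(j_n,\chi)$ away from $B(p,\rho)$. As you note, this needs an additional input (Proposition~\ref{wspread} or Lemma~\ref{wlp}), because that remainder contains its own log-divergent negative parts. The paper avoids this extra lemma by never splitting: it writes one global inequality over $U=\supp\chi$, with the ball and annulus estimates around every singularity together with the nonnegative bulk term $\hal\int_{U_\rho}\chi|j_n|^2$ all kept on the same side, passes to $n\to\infty$ (so $\chi(p_n^k)\to\chi(p)$ and $\int_{U_\rho}\chi|j_n|^2\to\int_{U_\rho}\chi|j|^2$) and then lets $\rho\to0$ to produce the divergence. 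Both routes are sound; yours costs an extra invocation of the lower-bound machinery which the paper's single-inequality bookkeeping makes unnecessary.
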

\begin{proof}
This is essentially a consequence of the type of  analysis of \cite{bbh}.

First, since $\nu_n$ is positive for each $n$, the distributional convergence of $\{\nu_n\}_n$ is in fact a weak convergence of the  measures, and $\{\nu_n\}_n$ is locally bounded on $B_R$, and we deduce that $\nu$ is of the form $2\pi \sum_{p\in\Lambda}d_p \delta_p$ with $d_p\in\mn^*$. That $\div j = 0$ and $\curl j = \nu-m$ on $B_R$ follows by passing to the limit in the corresponding equations satisfied by $j_n$, $\nu_n$.

Now assume $\sup_n W(j_n,\chi) <+\infty$, let $U$ be an open set compactly included in $B_R$ and containing the support of $\chi$,  and choose $\eta>0$. Let $U_\eta = U\sm\cup_{p\in\Lambda} B(p,\eta)$. If $n$ is large enough depending on $\eta$ we have $\Lambda_n\cap U_\eta = \varnothing$ hence $\div (j-j_n) = \curl(j-j_n) = 0$ in $U_\eta$. Elliptic regularity then implies  that $j_n\to j$ in $C^k(U_{2\eta})$  for any $k\in\mn$ and thus we have convergence in $C^k_\loc(B_R\sm\Lambda)$. In particular we have for any $\rho>0$
$$\lim_{n\to\infty} \hal\int_{U_\rho}\chi |j_n|^2 =  \hal\int_{U_\rho}\chi |j|^2.$$

We choose $\rho$ small enough so that the balls $B(p,\rho)$ for $p\in\Lambda\cap U$ are disjoint. Then for each $p\in\Lambda\cap U$ and $n$ large enough, there are exactly $d_p\ge 1$ points $p_n^1,\dots,p_n^{d_p}$ in $\Lambda_n\cap B(p,\rho)$, and these points converge to $p$. We apply Proposition~\ref{boulesren} to $(j_n,\nu_n)$  in $B(p,\rho)$, with $r = \rho^2$. We deduce the existence  of a family of balls $\B_n$  of total radius $\rho^2$ containing the points $p_n^k$ and such that for $\eta>0$ small enough
$$ \int_{\B_n\sm\cup_{k}B(p_n^k,\eta)} \chi |j_n|^2 - 2\pi\(\log\frac{ \rho^2}{d_p\eta} - CM\)\sum_{k=1}^{d_p} \chi(p_n^k)\ge - C d_p \rho^2 \|\nab\chi\|_\infty.$$
 Moreover, since the $p_n^k$'s converge to $p$ as $n\to +\infty$, if $n$ is chosen large enough then they are at distance less than $\rho^2$ from $p$ hence $\B_n\subset B(p,2\rho^2)$. Using \eqref{lecercle} to bound from below $\int_{\partial B(p,t)}|j_n|^2$ for $2\rho^2<t<\rho$, we find
$$\hal \int_{B(p,\rho)\sm\B_n}\chi |j_n|^2 \ge \hal\( \min_{B(p,\rho)} \chi \)\int_{B(p,\rho)\sm\B_n}|j_n|^2\ge \pi \(\min_{B(p,\rho)} \chi \) {d_p}^2 \log \frac\rho{2\rho^2} - C d_p\rho.$$
Also, since on $B(p,\rho)$ we have $\min_{B(p,\rho)}\chi \ge \chi - 2\rho\|\nab\chi\|_\infty$, we may write
$$ \(\min_{B(p,\rho)} \chi \) {d_p}^2 \ge \sum_k \chi(p_n^k) + \chi(p) \({d_p}^2 - d_p\)  - C\rho{d_p}^2\|\nab\chi\|_\infty.$$
Putting together the above lower bounds, replacing $\chi(p_n^k)$ by $\chi(p) - C \ro \|\nab \chi\|_{\infty}$,   and summing with respect to $k$ and $p$, we deduce
\begin{multline*}\hal\int_{U\sm\cup_{p\in\Lambda_n}B(p,\eta)}\chi |j_n|^2\ge
\hal \int_{U_\ro} \chi |j_n|^2 +
  \pi \log\frac{\rho^2}{\eta}\sum_{p\in\Lambda_n}\chi(p) +
\pi \log \frac\rho{2\rho^2}\sum_{p\in\Lambda_n}\chi(p) \\
+
\pi \log \frac\rho{2\rho^2}\sum_{p\in\Lambda}({d_p}^2-d_p)\chi(p)
- C\sum_{p\in\Lambda} d_p (\log d_p +1) C(\chi)
- C \rho \log \frac1{2\rho}\sum_{p\in\Lambda}{d_p}^2 C(\chi),
\end{multline*}
where $C(\chi)$ denotes a constant depending only on the function $\chi$ and possibly on $M$. Adding $ \pi\log\eta \sum_p\chi(p)$ on both sides and passing to the limit $\eta\to 0$ we find
$$ W(j_n,\chi)\ge \hal\int_{U_\rho}\chi |j_n|^2 + \pi \log\frac{\rho}{2}\sum_{p\in\Lambda_n}\chi(p) +
\pi \log \frac1{2\rho}\sum_{p\in\Lambda}({d_p}^2-d_p)\chi(p) - R, $$
where the error term $R$ is bounded independently of $\rho\in(0,1]$, $n$. This inequality is true for $n$ large enough, but the left-hand side was assumed to be bounded above independently of $n$, hence the right-hand side is bounded above independently of $\rho$, $n$, which can only be true if $({d_p}^2-d_p-1)\chi(p) \le 0$ for every $p\in\Lambda$, i.e. (since $d_p$ is a nonzero integer)  if $d_p=1$  for every $p\in\Lambda$ such that $\chi(p)\neq 0$.

We  now prove  that $\liminf_{n\to +\infty} W(j_n,\xi) =  W(j,\xi)$ if $\supp(\xi)\subset\{\chi>0\}.$ For this we note that since $j_n\to j$ in $C^k_\loc(B_R\sm\Lambda)$, then for any $\rho>0$ we have
$$I(j_n,\rho):= \hal\int_{U_\rho} \xi|j_n|^2 + \pi\sum_{p\in\Lambda_n} \xi(p)\log\rho \xrightarrow{n\to +\infty} I(j,\rho).$$
Then, the convergence of $W(j_n,\xi)$ to $W(j,\xi)$ will follow if we may reverse the limits $n\to\infty$ and $\rho\to 0$, which is the case provided for instance that $I(j_n,\rho)$ tends to $W(j_n,\xi)$ as $\rho\to 0$ {\em uniformly with respect to} $n$.

We prove this fact. Denote  by $\{p_1,\dots,p_k\}$ the set $\Lambda\cap\supp(\xi)$. We know already that the points are distinct hence there exist a neighborhood $\mathcal{V}$ of  $\supp(\xi)$ and  $k$ sequences $\{p^n_i\}_n$, $1\le i \le k$ such that $\Lambda_n \cap \mathcal{V} = \{p^n_i\}_i$ and such that $p^n_i\to p_i$ as $n\to +\infty$. There exists $N_0$ such that if $n>N_0$ then $|p^n_i - p_i|<\rho_0$ when $i\neq j$, where $\rho_0 = \frac14 min_{i\neq j} |p_i-p_j|$.

Now choose $\rho_1<\rho_2<\rho_0$ and $n>N_0$. We have  $j_n = -\np H_n$, where $-\Delta H_n = 2\pi\delta_{p_i^n}-m$  in $B(p_i^n, 2\rho_0)$ with  $\|m\|_\infty<M$. Then $H_n = \log|\cdot - p_i^n| + H_{i,n}$, where $H_{i,n}$ is bounded by a constant independent of $n$ in $C^1(B(p_i^n,\rho))$. It follows straightforwardly   by writing $|j_n|^2 = |\nab\log + \nab H_{i,n}|^2$, expanding and estimating each term, that
$$\left|\hal\int_{B(p_i^n,\rho_2)\sm B(p_i^n,\rho_1)}\xi |j_n|^2 - \pi\xi(p_i^n)\log\frac{\rho_2}{\rho_1}\right|\le C\rho_2\|\nab\xi\|_\infty+C\sqrt{\rho_2}\|\xi\|_\infty, $$
where $C$ is independent of $n\le N_0$. The left-hand side is nothing but $|I(j_n,\rho_2) - I(j_n,\rho_1)|$ thus we have proved the uniform convergence w.r.t. $n$ of $I(j_n,\rho)$ as $\rho\to 0$, and then it follows  that $W(j_n,\xi)\to W(j,\xi)$.
\end{proof}

The energy density defining $W$, $|j|^2 + \log \eta \sum_p \delta_p$ has no sign, which makes it impossible to  apply directly Theorem \ref{gamma} to it. We now show that it can be modified into a density bounded below by a constant, using again the mass displacement method introduced in \cite{compagnon} (but in a simpler setting), that is by absorbing the negative part into the positive part while making a controlled error.

The next proposition  summarizes the properties of the modified density $g$.

\begin{pro}\label{wspread} Assume $U\subset\mr^2$ is open
 and $(j,\nu)$ are such that $\nu = 2\pi \sum_{p\in\Lambda}
  \delta_p$ for some finite subset $\Lambda$ of $\widehat U$
   and  $\curl j = \nu -m$, $\div j = 0$ in $\widehat U$,
    where $m\in L^\infty(\widehat U)$. Then there exists a measure  $g$ supported  on $\widehat U$ and such that
\begin{itemize}
\item[-] $g\ge -C(\|m\|_\infty^2+1)$ on $\widehat U$, where $C$ is a universal constant.
\item[-]  For any function $\chi$ compactly supported in $U$ we have
\begin{equation}\label{wg}\left|W(j,\chi) - \int \chi\,dg\right|
\le C n (\log n + \|m\|_\infty)\|\nab\chi\|_\infty,\end{equation}
where  $n=\#\{p\in\Lambda\mid B(p,C)\cap\supp(\nabla\chi)\neq\varnothing\}$ and $C$ is universal.
\item[-] For any  $E\subset U$,
\begin{equation}\label{bgnalpha}\#(\Lambda\cap E) \le C\(1+\|m\|_\infty^2|\widehat E| + g(\widehat E)\),\end{equation} where $C$ is universal.

\end{itemize}
\end{pro}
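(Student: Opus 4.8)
The plan is to construct the measure $g$ by the mass-displacement procedure: start from the (signed) density naturally associated with $W(j,\chi)$, namely $\hal|j|^2$ plus the negative Dirac-type contributions $\pi\log\eta\sum_p\delta_p$ from the cores, and push the negative part outward into annular regions where $|j|^2$ is large, using the ball-construction estimates of Proposition~\ref{boulesren}. Concretely, first I would fix the minimal-distance parameter $\eta_0$ and the integer $n=\#\Lambda$, and invoke Proposition~\ref{boulesren} with $r$ of order $1$ (say $r=1$, or $r$ a small universal constant depending on $\|m\|_\infty$ so that $\log\frac r{n\eta}-\|m\|_\infty\frac{r^2}2$ has the right sign) to obtain the family of disjoint balls $\B_r$ covering $\Lambda$. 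On each ball $B\in\B_r$ with $B\subset\widehat U$, the last item of Proposition~\ref{boulesren} gives
$$\int_{B\sm\cup_p B(p,\eta)}\chi|j|^2-2\pi\Bigl(\log\tfrac r{n\eta}-\|m\|_\infty\tfrac{r^2}2\Bigr)\sum_{p\in B\cap\Lambda}\chi(p)\ge -2r\,\nu(B)\|\nab\chi\|_\infty,$$
which is exactly the statement that the negative core contributions $\pi\log\eta\,\chi(p)$ can be absorbed into $\hal\int\chi|j|^2$ restricted to the ball, at the cost of an error controlled by $\|\nab\chi\|_\infty$ times the local number of points.

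This suggests the definition: outside $\cup_{B\in\B_r}B$, let $g$ be the absolutely continuous measure $\hal|j|^2\,dx$ (which is $\ge 0$ there, hence trivially bounded below); and on each ball $B\in\B_r$, replace $\hal\int_B|j|^2 + \pi\log\eta\sum_{p\in B\cap\Lambda}1$ (in the limit $\eta\to0$, which is finite by Remark~2 of Section~\ref{sec14}) by a single lumped mass placed, say, uniformly on $B$ or at its center, of total value equal to that finite limit. By the displacement inequality above applied with $\chi\equiv1$ near $B$, this lumped mass is $\ge 2\pi d_B(\log\frac r n-C\|m\|_\infty)\ge -C(\|m\|_\infty+\log n)d_B$; spreading it over $B$, whose radius is at least of order $r/n$ when the ball is small but can be larger, one checks using $|B|\ge\pi(r_B)^2$ and the ball-construction lower bound on $\int_{B}|j|^2$ that the density is bounded below by a \emph{universal} constant times $(\|m\|_\infty^2+1)$ — this is where the quadratic dependence on $\|m\|_\infty$ enters, via the term $\|m\|_\infty r^2$ and the choice $r\sim 1/(\|m\|_\infty+1)$. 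I would then verify \eqref{wg} by comparing $\int\chi\,dg$ with $W(j,\chi)$ ball by ball: on each ball the difference between the lumped mass tested against $\chi$ and the true contribution tested against $\chi$ is, by the last item of Proposition~\ref{boulesren} (and the same estimate with $\chi$ replaced by $\chi-\chi(p)$, as in the proof of Lemma~\ref{wsci}), bounded by $C\,r\,\nu(B)\|\nab\chi\|_\infty$; summing over the $O(n)$ relevant balls gives \eqref{wg}, with $n$ counting points within distance $C$ of $\supp(\nab\chi)$ since only those balls contribute to the error. Finally, \eqref{bgnalpha} follows because each point $p\in\Lambda\cap E$ carries, in the lumped mass of its ball, a contribution $\ge -C(\|m\|_\infty^2+1)|B|$ but the ball also contains a definite positive amount $\hal\int_B|j|^2\ge \pi(\log\frac rn-C\|m\|_\infty)$ of genuine energy after the displacement; rearranging, $\#(\Lambda\cap E)\le C(1+\|m\|_\infty^2|\widehat E|+g(\widehat E))$, the $|\widehat E|$ appearing because we must enlarge $E$ to $\widehat E$ to capture all balls meeting $E$, and the $\|m\|_\infty^2$ term compensating the lower bound on $g$.

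The main obstacle, I expect, is bookkeeping the geometry of the balls near $\p U$ and making the constants genuinely \emph{universal} (independent of $U$, $\Lambda$, and — for the first and third items — of $\|m\|_\infty$ beyond the explicit polynomial dependence). In particular one must be careful that balls in $\B_r$ which stick out of $\widehat U$ are not used in the displacement (the per-ball estimate of Proposition~\ref{boulesren} requires $B\subset U$ resp.\ $B\subset\widehat U$), so $g$ should be defined as $\hal|j|^2$ near $\p\widehat U$ and the support of $g$ taken to be $\widehat U$ rather than $U$; this is why the statement allows $g$ to be supported on $\widehat U$ and why $n$ in \eqref{wg} is defined with the $B(p,C)$ fattening. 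A secondary technical point is the choice of $r$: a single universal $r$ works for the qualitative lower bound, but to get the sharp error $C n(\log n+\|m\|_\infty)\|\nab\chi\|_\infty$ in \eqref{wg} one wants $r$ bounded below by a universal constant, while the lower bound $g\ge -C(\|m\|_\infty^2+1)$ forces $r\lesssim 1/(\|m\|_\infty+1)$ when $\|m\|_\infty$ is large — so one should track the two regimes ($\|m\|_\infty\lesssim1$ and $\|m\|_\infty$ large) separately, or simply absorb $\|m\|_\infty$-dependent constants into the $C$'s in \eqref{wg} (which is allowed, since $\|m\|_\infty$ appears there). Everything else is a routine repetition of the arguments in \cite{compagnon} and in the proof of Lemma~\ref{wsci} above, simplified by the fact that here all degrees equal $+1$.
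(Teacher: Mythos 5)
The skeleton of your approach is right — use Proposition~\ref{boulesren} to absorb the singular log contributions near the cores, and get the approximation estimate by testing the per-ball displacement against $\chi-\chi(p)$. But the step where you spread the lumped negative mass of each ball $B\in\B_r$ \emph{over $B$ itself} and claim a universal lower bound for the density is the crux, and it does not go through. The last item of Proposition~\ref{boulesren} only guarantees that the net (energy $-$ log) mass on $B$ is $\ge 2\pi d_B\bigl(\log\frac r{n}-C\|m\|_\infty\bigr)$, i.e.\ of order $-d_B(\log n+\|m\|_\infty)$. A ball $B$ in the family $\B_r$ containing $d_B$ points can have radius as small as $\sim d_B r/n$, hence area $\sim (d_B r/n)^2$, so the resulting density is as negative as $\sim -n^2(\log n+\|m\|_\infty)/(d_B r^2)$ — unbounded below as $n\to\infty$, no matter how one tunes $r$ relative to $\|m\|_\infty$. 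The $\log n$ deficit per vortex is the real enemy here, and you have no mechanism to pay for it locally.

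The paper's proof uses two ideas you did not supply. First, the negative mass is spread not over the small balls $B\in\B_\rho$ but over sets $D_\alpha$ of \emph{fixed} diameter $\sim 1$: the plane is tiled by balls $U_\alpha$ of radius $1/4$, the ball construction is run with a small universal total radius $\rho$ inside each $U_\alpha$, and the negative log contribution of all vortices in $U_\alpha$ is displaced into $D_\alpha = U_\alpha\cup C_\alpha$ where $C_\alpha$ is an annulus of radii in $(1/4,1/2)$. Second — and this is the essential missing estimate — on that annulus one has the quadratic lower bound $\int_{C_\alpha}|j|^2\ge c\,n_\alpha^2 - C\|m\|_\infty^2$ (obtained by integrating the circle inequality \eqref{lecercle}, where $d_B\ge n_\alpha$ for circles enclosing all of $U_\alpha$). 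This $n_\alpha^2$ gain dominates the $n_\alpha(\log n_\alpha + \|m\|_\infty)$ deficit, and it is precisely here that the quadratic $\|m\|_\infty^2$ in the lower bound for $g$ and in \eqref{bgnalpha} originates — not from the choice $r\sim 1/(\|m\|_\infty+1)$ as you suggest. Once one has $f_\alpha(D_\alpha)\ge c n_\alpha^2 - C(\|m\|_\infty^2+1)$, Lemma~3.2 of \cite{compagnon} produces a bounded-below $g_\alpha$ on $D_\alpha$ with controlled $\|\nab\chi\|_\infty$-errors, and assembling $g=\sum_B g_B + \sum_\alpha g_\alpha + (f'-\sum_\alpha f_\alpha)$ gives all three items. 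Without the annulus estimate and the fixed-scale partition $\{U_\alpha\}$, your construction cannot produce a density bounded below independently of $n$, so the first item of the proposition fails.
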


\begin{proof} The proof follows the method of
\cite{compagnon}. Throughout $M = \|m\|_\infty$.  We cover $\mr^2$ by the balls
of radius $1/4$ whose centers are in $\frac{\mz}{4}\times\frac{ \mz}{4}$.
We call this
cover
 $\{U_\alpha\}_\alpha$ and $ \{x_\alpha\}_\alpha$ the centers.
  In each $U_\alpha\cap\widehat U$ and for any $r\in(0,1/4)$
  we construct disjoint
   balls $\B_r^\alpha$ using Proposition~\ref{boulesren}.
   Then choosing a small enough  $\rho\in(0,1/4)$ to be
    specified below,  we may extract  from $\cup_\alpha
    \B_\ro^\alpha$ a disjoint family which still covers $\Lambda$
    as follows: Denoting by $\mathcal{C}$ a connected component of
    $\cup_\alpha  \B_\rho^\alpha$, we claim that there exists
     $\alpha_0$ such that $\mathcal{C}\subset U_{\alpha_0}$. Indeed if
     $x\in \mathcal{C}$ and letting $\ell$ be a Lebesgue number of the
      covering  $\{U_\alpha\}_\alpha$ (in our case $\ell\le 1/4$),
      there exists $\alpha_0$ such that $B(x,\ell)\subset
      U_{\alpha_0}$. If $\mathcal{C}$ intersected the complement of
       $U_{\alpha_0}$, there would exist a chain of balls
       connecting $x$ to  $(U_{\alpha_0})^c$, each of which
       would intersect $U_{\alpha_0}$. Each of the balls in
       the chain would belong to some $ \B_\rho^\beta$ with
       $\beta$ such that $\dist(U_\beta, U_{\alpha_0})\le 2\rho<
       1/2$. Thus, calling $k$ the maximum number of $\beta$'s such
       that $\dist(U_\beta, U_{\alpha_0})< 1/2$, the length of the chain
        is at most $2 k \rho$ and thus
         $\ell \le  2 k \rho$. If we choose $\rho < \ell/2k$,
          this is impossible and the claim is proven. Let us
          then choose  $\rho = \ell/4k$.
           By the above, each $\mathcal{C}$ is included in some $U_\alpha$.

Then, to obtain a disjoint cover of $\Lambda$ from $\cup_\alpha
\B_\rho^\alpha$, we let $\mathcal{C}$ run over all the  connected components
of  $\cup_\alpha  \B_\rho^\alpha$ and for a given $\mathcal{C}$ such that $\mathcal{C}\subset U_{\alpha_0}$,  we remove from $\mathcal{C}$ the balls which do not
belong to $\B_\rho^{\alpha_0}$. We will still denote
$\B_\rho^\alpha$ the family with deleted balls, and let $\B_\rho =
\cup_\alpha\B^\alpha_\rho$. Then $\B_\rho$ covers $\Lambda$
and is   disjoint.

 We then proceed to the mass displacement.
 Note that by construction every ball in $\B_\rho^\alpha$ is included in $U_\alpha$.

From the last  item of Proposition~\ref{boulesren} applied to a ball $B\in\B_\rho^\alpha$, if $\eta$ is small enough then,  letting $B_\eta = B\sm\cup_{p\in\Lambda}B(p,\eta)$, for any function $\chi$ vanishing outside $B\cap\widehat U$ we have
$$\int_{B_\eta} \chi |j|^2 - 2\pi\(\log\frac \rho{n_\alpha\eta} - CM\)\sum_{p\in B\cap\Lambda} \chi(p)\ge - C \nu(B) \|\nab\chi\|_{L^\infty(B)},$$
where $n_\alpha = \nu(U_\alpha)$ and $M = \|m\|_\infty$. Then applying Lemma~3.1 of  \cite{compagnon} to
$$ f_{B,\eta} =\hal \(  |j|^2 \indic_{B_\eta}- \(\log\frac \rho{n_\alpha\eta} - CM\) 2\pi\sum_{p\in B\cap\Lambda} \delta_p\) $$
we deduce the existence of a positive  measure $g_{B,\eta}$ such that $ \|f_{B,\eta} - g_{B,\eta}\| \le C\nu(B)$, where the norm is that of the dual of the space $\lip$ of Lipschitz functions in $B$ which vanish outside $B\cap\widehat U$.  Now we let $\eta\to 0$. Since  $g_{B,\eta}\ge 0$, it subsequentially converges to  a positive measure $g_B$ and  for any $\chi\in\lip$,
\begin{equation}\label{cwg}
\left|\int \chi \,dg_B - W_B(j,\chi)\right| \le C\nu(B) \|\nab\chi\|_{L^\infty(B)},\quad\text{where}\quad W_B(j,\chi) = \lim_{\eta\to 0}\int \chi \,df_{B,\eta}.\end{equation}

Next we note that, letting   $W'(j,\chi) = W(j,\chi)-\sum_{B\in\B_\rho} W_B(j,\chi)$, we have
$$W'(j,\chi)= \int\chi  \,df',\quad\text{where}\quad f' =\hal |j|^2\indic_{U\sm\B_\rho} +  \pi \sum_{p\in\Lambda}\(\log\frac \rho{n_{\alpha_p}} - CM\)\delta_p,$$
where, denoting  $\Lambda_\alpha$ the set of $p\in\Lambda$  belonging to a ball of $\B_\rho^\alpha$, we define $\alpha_p$ as the index $\alpha$ such that $p$ belongs to $\Lambda_\alpha$.

We define a set $C_\alpha$ as follows: recall that $\rho$
 was assumed equal to $\ell/4k $, where $\ell\le 1/4$ and $k$
 bounds the number of $\beta$'s such that  $\dist(U_\beta,
  U_\alpha)< 1/2$ for any given $\alpha$. Therefore the total
   radius of the balls in $\B_\rho$ which are at distance less
    than $\hal$ from $U_\alpha$ is at most $k\rho < 1/16$. In particular, letting $T_\alpha$ denote the set of $t\in (\frac14,\hal)$ such that the circle of center $x_\alpha$ (where we recall $x_\alpha$ is the center of $U_\alpha$)  and radius $t$ does not intersect $\B_\rho^\alpha$, we have $|T_\alpha|\ge 3/16$.
    We let $C_\alpha = \{x\mid |x-x_\alpha|\in T_\alpha\}$ and $D_\alpha= U_\alpha\cup C_\alpha $.
   If $U_\alpha\cap U\neq\varnothing$ then $d(x_\alpha,U)\le 1/4$ hence $B(x_\alpha,\hal)\subset\widehat U$. In particular,  we have $C_\alpha\subset D_\alpha\subset \widehat U$.   Then  there exists universal constants $c>0$ and $C$ such that
\begin{equation}\label{lacouronne}\int_{C_\alpha}|j|^2 \ge c {n_\alpha}^2 - CM^2.\end{equation}
To see this, apply the lower bound \eqref{lecercle} on the circle $S_t = \{|x-x_\alpha|=t\}$, i.e. with $r_B = t$ and $d_B = \#(\Lambda\cap B(x_\alpha,t))$. Using $d_B\ge n_\alpha$ and $t\in(\frac18, \frac12)$ we deduce that $\int_{S_t}|j|^2\ge \pi (n_\alpha)^2 - \pi M n_\alpha$ and integrating with respect to $t\in T_\alpha$ yields \eqref{lacouronne}.

The overlap number of the sets $\{C_\alpha\}_\alpha$,
defined as the maximum number of sets to which a given $x$
belongs is bounded by the overltap number of $ \{B(x_\alpha,3/4)\}_\alpha$,
 call it $k'$. Then,  letting
$$f_\alpha =   \frac{1}{2k'}|j|^2\indic_{C_\alpha} +
 \pi \sum_{p\in\Lambda_\alpha}\(\log\frac \rho{n_{\alpha}} - CM\) \delta_p,$$
we have \begin{equation}\label{denfp}
f' - \sum_\alpha f_\alpha\ge |j|^2\indic_{U\sm\B_\rho} - \frac{1}{2k'}|j|^2\indic_{C_\alpha}\ge \hal |j|^2\indic_{U\sm\B_\rho}
\ge 0 \end{equation}
and, from \eqref{lacouronne},
\begin{equation}\label{caf}  f_\alpha(D_\alpha) = \frac{1}{2k'}
\int_{C_\alpha} |j|^2 + \pi n_\alpha\(\log\frac \rho{n_{\alpha}} - CM\)\ge c {n_\alpha}^2 - C(M^2+1),\end{equation} where the constants may have changed.
We then apply  \cite{compagnon} Lemma~3.2 over $D_\alpha $ to $f_\alpha +\frac{C(M^2+1)}{|D_\alpha|}$,  where $C$ is the constant in the right-hand
side of (\ref{caf}). We deduce {\em for any $\alpha$ such that } $U_\alpha\cap U\neq\varnothing$ the existence of  a measure  $g_\alpha$ supported in $D_\alpha$ such that $g_\alpha\ge -\frac{C(M^2+1)}{|D_\alpha|}$ and such that, for every Lipschitz function $\chi$
\begin{equation}\label{611b} \int \chi \,d(f_\alpha -
 g_\alpha) \le C \|\nab\chi\|_{L^\infty(D_\alpha)}
 (f_\alpha)_-(D_\alpha) \le C  n_\alpha (\log n_\alpha +C(M+1))\|\nab\chi\|_{L^\infty(D_\alpha)}.\end{equation}
In particular, taking $\chi = 1$,
\begin{equation}\label{galphan} g_\alpha(D_\alpha) =  f_\alpha(D_\alpha) \ge  c {n_\alpha}^2 - C(M^2+1).\end{equation}
Now we let
$$g = \sum_{B\in\B_\rho} g_B + \sum_{\substack{\alpha\ \text{s.t.}\\ U_\alpha\cap U\neq\varnothing}}g_\alpha + \(f' - \sum_\alpha f_\alpha\).$$
The term $g_B$ is positive for every $B$,    $f' - \sum_\alpha
 f_\alpha$ is bounded below by \eqref{denfp},
and
 $\sum_\alpha g_\alpha$  is
 bounded below by $-k' C(M^2+1)$ since $g_\alpha\ge -C(M^2+1)$. Thus
 $g$ is bounded below by  $ \hal  |j|^2\indic_{U\sm\B_\rho}    -C(M^2+1) \ge - C(M^2+1)$, and we have proved the first item.
In addition, if $\chi$ has support in $U$ then
$$\sum_\alpha \int\chi\,df_\alpha = \sum_{\substack{\alpha\ \text{s.t.}\\ U_\alpha\cap U\neq\varnothing}} \int \chi\,df_\alpha$$
hence
\begin{multline*}
\int \chi \,dg  =  \int \chi \,d\( \sum_B g_B + \sum_{\substack{\alpha\ \text{s.t.}\\ U_\alpha\cap U\neq\varnothing}}(g_\a-f_\a) + f'\)= \\ \sum_B \int \chi \,dg_B+ \sum_{\substack{\alpha\ \text{s.t.}\\ U_\alpha\cap U\neq\varnothing}}\int \chi \,d(g_\a-f_\a)+ W'(j,\chi)
\end{multline*}
hence in view of the definition of $W'$, \eqref{cwg} and \eqref{611b},
\begin{equation}\label{gchi}\begin{split}\int\chi \,dg - W(j,\chi) &= \sum_{B\in\B_\rho} \(\int\chi \,dg_B - W_B(j,\chi)\) +  \sum_{\substack{\alpha\ \text{s.t.}\\ U_\alpha\cap U\neq\varnothing}} \int\chi \,d(g_\alpha - f_\alpha)\\
&\le C \(\sum_{B\in\B_\rho}\nu(B)\|\nabla\chi\|_{L^\infty(B)} + \sum_\alpha n_\alpha (\log n_\alpha + C(M+1))\|\nabla\chi\|_{L^\infty(D_\alpha)}\).\end{split}\end{equation}
Then if we denote by $A$ the set of $\alpha$'s such that $\|\nabla\chi\|_{L^\infty(D_\alpha)}\neq 0$, and $k$ is the overlap number of the $U_\alpha$'s,
$$2\pi \sum_{\alpha\in A} n_\alpha \le k
 \nu\(\cup_{\alpha\in A} U_\alpha\),$$
and  $x\in U_\alpha$ with $\alpha\in A$ implies that
$\nabla\chi(y)\neq 0$ for some $y\in D_\alpha$ hence
 $B(x,5/4)\cap\supp\nabla\chi\neq\varnothing$. It follows that
 $\sum_{\alpha\in A} n_\alpha \le k n $, where $n$ is defined after \eqref{wg}. Similarly,
the sum of $\nu(B)$ for $B$'s in $\B_\rho$ such that $\|\nabla\chi\|_{L^\infty(B)}\neq 0$ is less then $2\pi n$. Thus \eqref{gchi} may be rewritten as \eqref{wg}.

Finally, summing  \eqref{galphan} for $\alpha$'s such that $U_\alpha\cap E\neq\varnothing$   and recalling that $g - \sum_\a g_\a \ge 0$, we easily deduce  \eqref{bgnalpha}.
\end{proof}

\begin{remark}
We have in  fact proved the following stronger property on $g$:
  There exists $\ro >0$ and  a family $\mathcal{B}_\ro $ of disjoint closed balls covering $\Lambda$, such that the sum of the radii of the balls in $\mathcal{B}_\ro $ intersected with any ball of radius $1$ is bounded by $ C\ro<\hal$, and such that on $\hat{U}$
    $$g\ge -C(\|m\|_\infty^2+1)+ \hal |j|^2 \indic_{U\backslash \mathcal{B}_\ro}  .$$ \end{remark}

\subsection{Application: proof of Proposition \ref{pw1}}
We start with
\begin{lem}\label{wnu} Assume $j \in\ainfty$.
 Then, for any family $\{\UR\}_R$ satisfying \eqref{hypsetsbis} and $R>C$, for any $p>1$,
$$\max\left\{\left|\nu(\UR) - |\UR|\right|,\ \nu(\widehat\UR) -
\nu(\UR)\right\} \le C \(R^\theta + R^{\theta(1 - \frac1p)}\)
\( \|j\|_{L^p(\URC)} +1\),$$
where $\theta < 2$ is the exponent in \eqref{hypsetsbis}, and $C$ only depends on the constants in \eqref{hypsetsbis}.
\end{lem}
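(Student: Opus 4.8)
The whole estimate rests on a single identity. Since $j\in\ainfty$ we have $\curl j=\nu-1$ and $\div j=0$, so for any Lipschitz function $\chi$ with compact support one has, in the sense of distributions,
$$\int_{\mr^2}\chi\,d\nu-\int_{\mr^2}\chi=\langle\curl j,\chi\rangle=-\int_{\mr^2}\np\chi\cdot j.$$
Because $j\in L^p_\loc(\mr^2,\mr^2)$ for every $p<2$ (cf.\ remark 2 in Section~\ref{sec14}), the right-hand side is well defined despite the singularities of $j$ at the points of $\Lambda$; I would use this identity throughout in place of Stokes' theorem.

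\textbf{Step 1: the bound for $|\nu(\UR)-|\UR||$.} Using \eqref{hypsetsbis} (which gives $\UR+B(0,1)\subset\URC$), I would pick cutoffs $\chi_+$ with $\indic_{\UR}\le\chi_+\le\indic_{\URC}$ and $\chi_-$ with $\indic_{\URc}\le\chi_-\le\indic_{\UR}$, both with $|\np\chi_\pm|\le C$; here the hypothesis $R>C$ guarantees $\URc$ makes sense. Iterating $|\URI\sm\UR|=O(R^\theta)$ a bounded number of times yields $|\URC\sm\URc|=O(R^\theta)$, and $\np\chi_\pm$ is supported in that thin region, so by H\"older
$$\left|\int_{\mr^2}\np\chi_\pm\cdot j\right|\le C\int_{\URC\sm\URc}|j|\le C\,|\URC\sm\URc|^{1-\frac1p}\,\|j\|_{L^p(\URC)}\le CR^{\theta(1-\frac1p)}\|j\|_{L^p(\URC)}.$$
Plugging $\chi_+$ into the identity and using $\int\chi_+\,d\nu\ge\nu(\UR)$ together with $|\UR|\le\int\chi_+\le|\URC|=|\UR|+O(R^\theta)$ gives the upper bound $\nu(\UR)-|\UR|\le C(R^\theta+R^{\theta(1-\frac1p)}\|j\|_{L^p(\URC)})$; plugging $\chi_-$ and using $\int\chi_-\,d\nu\le\nu(\UR)$ and $|\URc|\le\int\chi_-\le|\UR|$ gives the matching lower bound. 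Since $R^\theta\le R^\theta(\|j\|_{L^p(\URC)}+1)$, this is the first half of the claim.

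\textbf{Step 2: the bound for $\nu(\widehat\UR)-\nu(\UR)$.} Here $\widehat\UR\subset\UR+B(0,1)\subset\URC$ and $\widehat\UR\supset\UR$, so the quantity is nonnegative. I would choose $\phi$ with $\indic_{\widehat\UR}\le\phi\le\indic_{\URC}$ and $|\np\phi|\le C$ (enlarging the generic constant $C$ if necessary so that $\widehat\UR+B(0,1)\subset\URC$), keep $\chi_-$ as above, and subtract the two instances of the identity:
$$0\le\nu(\widehat\UR)-\nu(\UR)\le\int_{\mr^2}(\phi-\chi_-)\,d\nu=\int_{\mr^2}(\phi-\chi_-)-\int_{\mr^2}\np(\phi-\chi_-)\cdot j.$$
Since $\np(\phi-\chi_-)$ is again supported in $\URC\sm\URc$, of area $O(R^\theta)$, the same area estimate and H\"older inequality as in Step 1 finish the proof.

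\textbf{Main obstacle.} There is no serious difficulty: the points requiring care are (i) reading the curl identity distributionally, since $j$ is only $L^p_\loc$ with $p<2$; (ii) converting the elementary $L^1$ bound on the thin transition region $\URC\sm\URc$, whose area is controlled by iterating $|\URI\sm\UR|=O(R^\theta)$, into the stated $R^{\theta(1-1/p)}$ form via H\"older; and (iii) the harmless bookkeeping of which dilate $\URC$ of $\UR$ is needed, made painless by the freedom to enlarge the generic constant $C$. If anything is the ``hard part'' here, it is just keeping these constants straight.
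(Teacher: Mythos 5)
Your proof is correct, and it takes a genuinely different (and arguably cleaner) technical route than the paper's. The paper works directly with the sublevel sets $\UR^t$ of the signed distance to $\p\UR$: it integrates the curl equation over $\UR^t$ to get $\nu(\UR^t)-|\UR^t|=\int_{\p\UR^t}j\cdot\tau$, then invokes the coarea formula and a mean-value argument to find a good $t\in(0,1)$ (and $s\in(-1,0)$) where both $|\p\UR^t|$ and $\|j\|_{L^p(\p\UR^t)}$ are controlled, and finally exploits the monotonicity of $t\mapsto\nu(\UR^t)$ to pass from $\UR^t$, $\UR^s$ back to $\UR$ itself. You instead replace the sharp indicator by Lipschitz cutoffs sandwiched between $\indic_{\URc}$, $\indic_{\UR}$ and $\indic_{\URC}$, pair the distributional identity $\langle\curl j,\chi\rangle=-\int j\cdot\np\chi$ against them, and handle the gradient term by H\"older on the transition annulus $\URC\sm\URc$, whose area is $O(R^\theta)$ by iterating the third condition in \eqref{hypsetsbis}. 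The core mechanism is identical — the Stokes/curl identity plus H\"older on a thin region of area $O(R^\theta)$ — but the implementations differ. Your version has the advantage of sidestepping any regularity questions about $\p\UR^t$ and the need for a mean-value selection; it is also stylistically consistent with how the paper itself argues in the adjacent Corollary~\ref{wgcoro} and in Lemma~\ref{lemmebord} (cutoffs with $\nu$ or $g_\ep$). The paper's version avoids distributional pairing entirely and stays with classical line integrals on well-chosen level sets. Both deliver the stated bound; I see no gap in yours.
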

\begin{proof} Let $d_R$ denote the signed distance to
 $\partial\UR$, i.e. $d_R(x) = - d(x,\UR^c)$ if $x\in\UR$
 and $d_R(x) = d(x,\UR)$ otherwise. Let
 $\UR^t = \{x\mid d_R(x)\le t.\}$.

Integrating \eqref{eqji} over $\UR^t$  and using H\"older's inequality we find
\begin{equation}\label{dnp} \left|\nu(\UR^t) - |\UR^t|\right| = \left|\int_{\partial \UR^t} j\cdot\tau\right|\le |\partial \UR^t|^{1-\frac1p} \|j\|_{L^p(\partial\UR^t)}.\end{equation}
Then, using the coarea formula,
$$\int_0^1 |\partial \UR^t|\,dt = |\UR^1\sm\UR|,\qquad \int_0^1 \|j\|_{L^p(\partial\UR^t)}^p\,dt = \|j\|_{L^p(\UR^1\sm \UR)}^p$$
and from \eqref{hypsetsbis} there exists $C>0$ such that $\UR^1\subset\URC$. Using a mean-value argument and \eqref{hypsetsbis} again there exists $t\in (0,1)$ such that
$$|\partial \UR^t|\le CR^\theta,\quad \|j\|_{L^p(\partial\UR^t)}^p\le 2\|j\|_{L^p(\UR^1)}^p\le 2\|j\|_{L^p(\URC)}^p,$$
and therefore
\begin{equation}\label{dnm} \left|\nu(\UR^t) - |\UR^t|\right| \le C R^{\theta(1 - \frac1p)} \|j\|_{L^p(\URC)}.\end{equation}
A similar mean value argument  yields the existence of $s\in(-1,0)$ such that
$$ \left|\nu(\UR^s) - |\UR^s|\right| \le C R^{\theta(1 - \frac1p)} \|j\|_{L^p(\UR)}.$$
Since $t\mapsto \nu(\UR^t)$ is increasing we deduce, since $\URc\subset \UR^s$,
$$ \nu(\UR^s) - |\UR^s| \le \nu(\UR) - |\UR| + |\UR\sm\URc|  \le \nu(\UR^t) - |\UR^t| + |\URC\sm\URc|,$$
which in view of \eqref{dnp}-\eqref{dnm} and \eqref{hypsetsbis}
proves the bound for $\left|\nu(\UR) - |\UR|\right|$. The bound for $\nu(\widehat\UR) - \nu(\UR)$ follows easily since $\UR\subset\widehat\UR\subset\URC$.
\end{proof}

\begin{coro}\label{wgcoro} Assume that $j\in\ainfty$,  that
 $\{\UR\}_R$, $\{\VR\}_R$ satisfy  
 \eqref{hypsetsbis} and that $c>0$ is such that $\UcR\subset \VR$
 for any $R\ge 1$ (it is easy to show  from \eqref{hypsetsbis} that such a $c$ exists).
  We assume that  either $W_U(j)<+\infty$, or $W_V(j)<+\infty$.

Then, denoting   $g_\UR$ the result of applying Proposition~\ref{wspread} to $(j,\nu)$ in $\UR$, we have
\begin{equation}\label{is}
 \lim_{R\to +\infty} \frac1{R^2}\(\int_\UcR\chi_\UcR \,dg_\VR -
  W(j,\chi_\UcR)\) = 0.\end{equation}  
  In particular if $\{\VR\}_R= \{\UR\}_R  $, we may take $c=1$
to obtain
\begin{equation}\label{iis}
 \lim_{R\to +\infty} \frac1{R^2}\(\int_\UR\chi_\UR \,dg_\UR -
 W(j,\chi_\UR)\) = 0.\end{equation}
As a consequence, $W_U$ does not depend on the particular choice of $\chi_\UR$ satisfying \eqref{defchi}.
\end{coro}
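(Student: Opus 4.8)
The plan is to use the mass displacement Proposition \ref{wspread} to reduce the statement about the non-local, sign-changing quantity $W(j,\chi_\UR)$ to an estimate about the bounded-below measure $g$, where errors are controlled by gradients of cutoff functions and by point counts. First I would fix the family $\{\VR\}_R$ and apply Proposition \ref{wspread} to $(j,\nu)$ in $\VR$, producing a measure $g_\VR$ supported on $\widehat\VR$ with $g_\VR \ge -C(\|m\|_\infty^2+1) = -C$ (here $m=1$). For the cutoff $\chi_\UcR$, whose support lies in $\UcR \subset \VR$ by hypothesis, estimate \eqref{wg} gives
\begin{equation*}
\left| W(j,\chi_\UcR) - \int \chi_\UcR\,dg_\VR\right| \le C\, n_R\, (\log n_R + C)\, \|\nab\chi_\UcR\|_\infty,
\end{equation*}
where $n_R = \#\{p\in\Lambda \mid B(p,C)\cap \supp(\nab\chi_\UcR)\neq\varnothing\}$. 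Since $\|\nab\chi_\UcR\|_\infty \le C$ by \eqref{defchi} and $\supp(\nab\chi_\UcR)$ lies within distance $1$ of $\partial\UcR$, the points counted by $n_R$ all lie in a fixed neighborhood of $\partial\UcR$, hence in $\UR^{C}\setminus\UR^{-C}$ (boundary layer notation as in the proof of Lemma \ref{wnu}). By Lemma \ref{wnu} and \eqref{hypsetsbis} — noting that $\nu$ of such a boundary layer is controlled by $|\UR^{C}\setminus\UR^{-C}| + (\text{flux terms}) = O(R^\theta) + O(R^{\theta(1-1/p)})(\|j\|_{L^p(\URC)}+1)$, and that $\|j\|_{L^p(\URC)} = o(R^{2/p})$ by Lemma \ref{wlp} applied with $W_U(j)$ or $W_V(j)$ finite — we get $n_R = o(R^2)$. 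Consequently $n_R(\log n_R + C) = o(R^2)$ as well, which after dividing by $R^2$ yields \eqref{is}, and \eqref{iis} is the special case $\{\VR\}=\{\UR\}$, $c=1$.

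The last assertion, independence of $W_U$ on the choice of $\chi_\UR$, then follows: given two admissible families $\{\chi_\UR\}$ and $\{\chi_\UR'\}$ satisfying \eqref{defchi}, \eqref{iis} gives
\begin{equation*}
\lim_{R\to+\infty}\frac{1}{R^2}\,W(j,\chi_\UR) = \lim_{R\to+\infty}\frac{1}{R^2}\int_\UR \chi_\UR\,dg_\UR, \qquad \lim_{R\to+\infty}\frac{1}{R^2}\,W(j,\chi_\UR') = \lim_{R\to+\infty}\frac{1}{R^2}\int_\UR \chi_\UR'\,dg_\UR,
\end{equation*}
with the \emph{same} measure $g_\UR$. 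Since $g_\UR \ge -C$ and $\chi_\UR - \chi_\UR'$ is supported within distance $1$ of $\partial\UR$ with $|\chi_\UR-\chi_\UR'|\le 1$ there, the difference $\int_\UR(\chi_\UR-\chi_\UR')\,dg_\UR$ is bounded in absolute value by $g_\UR(\UR^{C}\setminus\UR^{-C}) + C|\UR^{C}\setminus\UR^{-C}|$; using \eqref{bgnalpha} (or directly the point-count bound above together with $g_\UR \ge -C$) this is $o(R^2)$. Dividing by $|\UR| \sim |B_R|$-comparable growth (recall $R\mapsto|\UR|$ and $R^\theta$ with $\theta<2$), we conclude $W_U(j)$ is the same for both, so $\frac{1}{|\UR|}$ normalization in \eqref{WU} gives a well-defined value.

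The main obstacle I expect is the bookkeeping in controlling the point count $n_R$ in the boundary layer: one must combine Lemma \ref{wnu} (which bounds $\nu$ of boundary strips in terms of $\|j\|_{L^p}$ on slightly larger strips) with Lemma \ref{wlp} (which bounds $\|j\|_{L^p}$ in terms of $W$ and point counts), and close the resulting estimate so that everything is genuinely $o(R^2)$ rather than $O(R^2)$. The finiteness of $W_U(j)$ or $W_V(j)$ enters precisely here to make $\|j\|_{L^p(\URC)}$ grow slower than $R^{2/p}$, and one has to be careful that the exponent $\theta(1-1/p) < 2$ together with this sublinear-in-volume growth indeed gives a little-$o$. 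The rest — applying \eqref{wg}, \eqref{bgnalpha}, and the $\ge -C$ lower bound — is routine once this counting step is in place.
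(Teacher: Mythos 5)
Your proof of \eqref{is} follows essentially the same route as the paper's: apply Proposition~\ref{wspread} in $\VR$, invoke \eqref{wg} with $\chi_\UcR$ (supported in $\UcR\subset\VR$) to get an error $Cn_R(\log n_R + C)\|\nab\chi_\UcR\|_\infty$, and show $n_R = o(R^2)$ by combining Lemma~\ref{wlp} (to get $\|j\|_{L^p(\URC)} = O(R^{2/p}\log^{1/2}R)$ from the finiteness of $W_U$ or $W_V$) with the boundary-flux bound of Lemma~\ref{wnu} and the assumption $\theta<2$ in \eqref{hypsetsbis}. That part matches the paper step for step.

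For the last assertion (independence of $W_U$ on the choice of $\chi_\UR$), your phrasing via a difference $\int(\chi_\UR - \chi_\UR')\,dg_\UR$ is a slight variant of the paper's squeeze $g_\UR(\URc) - C|\UR\sm\URc|\le \int_\UR\chi_\UR \,dg_\UR\le g_\UR(\UR) + C|\UR\sm\URc|$; both rest on the same ingredients ($g_\UR\ge -C$ and $|\UR\sm\URc| = O(R^\theta)$, $\theta<2$). One caution on your specific justification: you cite \eqref{bgnalpha} to get $g_\UR$ of the boundary strip $o(R^2)$, but \eqref{bgnalpha} goes the wrong way --- it bounds a \emph{point count} from above by $g(\widehat E)$, which is a \emph{lower} bound on $g$, not the upper bound you need. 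The paper instead sidesteps bounding the strip mass of $g_\UR$ directly by using only the lower bound $g_\UR\ge -C$ in a two-sided squeeze whose endpoints are $\chi$-free. Aside from that misattributed citation, the structure, key lemmas, and level of detail in your proposal match the paper's proof.
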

\begin{remark}\label{aprescoro}
Let $G\subset\ainfty$ satisfy the hypothesis of Proposition~\ref{pw2bis}  and let $\{\UR\}_R=\{K_R\}_R$. Then  the convergence in \eqref{iis} is uniform with respect to $j\in G$. %rate of convergence in \eqref{iis} only depends on the constants in \eqref{hypunif}--\eqref{cvwunif}.
 \end{remark}
\begin{proof} Since $W_U(j)<+\infty$ and $j \in \ainfty$, both
 $W(j,\chi_\UR)$ and  $\nu(\UR)$ are $O(R^2)$ as $R \to \infty$. Applying  Lemma~\ref{wlp} in $\URC$ we find, choosing some $p\in(1,2)$, that $\int_{\UR}|j|^p = O(R^2\log R)$. Then, using Lemma~\ref{wnu}, we have  $|\nu(\widehat\UR) - \nu(\UR)|=O(R^{\theta_p})$ for some exponent $ \theta_p\in(0,2)$, and thus the same holds for $|\nu(\UR) - \nu(\URc)|$. Inserting into \eqref{wg} yields
\begin{equation}\label{1s}
 \left|\frac{W(j,\chi_{\UcR})}{|\UcR|} - \dashint_{\UcR}\chi_{\UcR}
  \,dg_\VR\right|  \xrightarrow{R\to +\infty} 0,\end{equation}
  hence \eqref{is}, and \eqref{iis} easily follows.

In view of \eqref{iis}, proving that the definition of $W_U$ is independent of  the choice of $\{\chi_{\UR}\}_R$ satisfying \eqref{defchi} now reduces to proving the same statement for 
$$\limsup_{R\to +\infty} \dashint_\UR \chi_\UR \,dg_\UR.$$
But, since $g_\UR\ge -C$ and since $\URc\subset \{\chi_\UR = 1\}$,
$$ g_\UR(\URc) - C|\UR\sm\URc|\le \int_\UR\chi_\UR \,dg_\UR\le
g_\UR(\UR) + C|\UR\sm\URc|.$$
Dividing by $|\UR|$ and in view of \eqref{hypsetsbis} we obtain that
$$\limsup_{R\to +\infty} \dashint_\UR \chi_\UR \,dg_\UR =
 \limsup_{R\to +\infty} \frac{g_\UR(\UR)}{|\UR|},$$
which is clearly independent of $\chi_\UR$ and finite thanks to \eqref{iis}.

The proof of Remark \ref{aprescoro} follows from the fact that under the hypothesis of Proposition~\ref{pw2bis} we clearly have bounds $\nu(K_R)<CR^2$ and $W(j,\chi_{K_R})<CR^2$ which are uniform with respect to $j\in G$ and thus that the convergence in \eqref{1s} is uniform with respect to $j\in G$ as well, when $\{\UR\}_R=\{K_R\}_R$.
%from inspecting the above proof and checking that the constants involved only depend on those in \eqref{hypunif}--\eqref{cvwunif}.
\end{proof}

We may now give the proof of Proposition~\ref{pw1}, in several steps.

\begin{proof}[$W_U$ is measurable] First we show that $\ainfty$ (recall Definition \ref{defA})  is a Borel subset of  $X:=\Lp$. For $R$, $\ep>0$ we let  $A_{R,\ep}$ be the set of $j \in \Lp$ such that first  $\curl j = \nu-1$ and $\div j=0$,  where the restriction of $\nu$ to $B_{2R}$ is of the form $2\pi\sum_i \delta_{a_i}$ with $a_i\in B_R$ and $|a_i - a_j|\ge \ep$ (in particular the sum is finite), and second $\|j\|_{L^p(B_{2R})}\le 1/\ep$. We also let
$$ A_{R,\ep,C} = \{j \in A_{R,\ep}\mid \nu(B_R)\le CR^2 \}.$$
Clearly both $A_{R,\ep}$, and $A_{R,\ep,C}$ are closed. Noting that
$$\ainfty = \(\cup_{C>1}\cap_{R>1 }\cup_{\ep>0} A_{R,\ep,C}\), $$
we find that $\ainfty$ is a Borel subset of $X$.

For $j\in\ainfty$ we have $W(j) = \limsup_R W(j,\chi_\UR)/|\UR|$, hence proving  that $W$ is Borel reduces to proving  that
$$W_\chi:j\mapsto \begin{cases} W(j,\chi)  & \text{if $j\in\ainfty$}\\ +\infty & \text{otherwise}\end{cases}$$ is Borel for any smooth, positive $\chi$ with compact support.

This follows from Lemma~\ref{wsci}. Choosing $R>0$ such that $\supp(\chi)\subset B_R$ the lemma implies that $W_\chi$ restricted to the closed set $A_{R,\ep}$ is continuous, therefore $\{j \in A_{R,\ep}\mid W_\chi\le t\}$ is closed for any $t$, and
$$\{j\in\ainfty\mid W_\chi(j)\le t\} =  \ainfty \cap \(\cap_{R>0}\cup_{\ep>0} \{j\in A_{R,\ep}\mid W_\chi\le t\}\)$$
is Borel.
\end{proof}

\begin{proof}[$\inf_\ainfty W_U$ is finite.]  The results of
Section \ref{latticecase} for example show that $\inf W_U<+\infty$.
 The fact that  $\inf W_U >-\infty$  is a direct consequence of
  Corollary~\ref{wgcoro} and the fact that for any $R>0$ we
  have $g_{\UR}\ge -C$, where $g_{\UR}$ is the result of applying
  Proposition \ref{wspread}
  on $\UR$.
\end{proof}

\begin{proof}[Sub-level sets are compact up to translation, $\min W_U$ is achieved.] Consider a family $\{\VR\}_{R}$ satisfying \eqref{hypsets}--\eqref{hypsetsbis} and 
consider a sequence $\{\bj_n\}_n$ such that $W_V( \bj_n) \le \alpha  $. In particular $\bj_n \in \ainfty$.
 Let $\bnu_n$ be $\curl \bj_n +1$.
 Then since $\bnu_n(B_R)=O(R^2)$ and from the definition of $W_V$ as a $\limsup$, for any fixed arbitrary $C>0$ and any $\theta>1$ there exists a sequence   $R_n\to +\infty$   such that
$$\liminf_{n\to +\infty} \(W_V(\bj_n)
 - \frac{W(\bj_n,\chi_{V_{R_n}})}{|V_{R_n}|}\)\ge 0,\quad \bnu_n\(V_{R_n+C}\sm V_{R_n-C}\) \le {R_n}^\theta.$$
Indeed, given $n$, the second relation is satisfied by arbitrarily large $R$'s, using a mean value argument.

Now, letting $V_n = V_{R_n}$,  we apply Proposition~\ref{wspread} to $(\bj_n,\bnu_n)$ with  $U = V_n$ and deduce the existence of a measure $\bg_n\ge -C$ satisfying the properties  described there.  The choice of $\{R_n\}_n$ ensures that
\begin{equation}\label{wgn}\liminf_{n\to +\infty} W_V(\bj_n)\ge \liminf_{n\to+\infty} \frac{W(\bj_n,\chi_{V_n})}{|V_n|} = \liminf_{n\to+\infty} \dashint_{V_n} \chi_{V_n}\,d\bg_n, \end{equation}
using \eqref{wg}.

Now we apply the abstract scheme described in Theorem \ref{gamma}. Let $X = \Lp\times\radon_0$, where $\radon_0$ is the space of Radon measures on $\mr^2$ bounded below by twice the fixed constant given by item 1 of  Proposition \ref{wspread} (this means that we are considering measures $\mu$ such that $\mu + C$ is a positive Radon measure), and the topology is that of  convergence in $\Lp$ and weak convergence on $\radon_0$.  $X$ is a Polish space, and on it  we have the natural continuous action $\theta_\lambda (j,g) = (j(\lambda+\cdot),g(\lambda+\cdot))$. We may check the hypotheses of Section \ref{sec:abstract} are satisfied.
Then we
choose a smooth positive $\chi$ with compact support in $B(0,1)$  and integral $1$, we let $V_n' = V_{R_n-C}$ where $C$ is chosen (according to \eqref{hypsetsbis}) such that 
\begin{equation}\label{nmb}
\chi_{V_n}\ge \chi * \indic_{V_n'}   \ \text{everywhere}\quad \text{and} \ \ \chi_{V_n} = \chi * \indic_{V_n'}=1  \ \text{in } \ V_{R_n - 2C}\end{equation}
 and define
$$ \gf_n(j,g) = \begin{cases} \int \chi(y)\,dg(y) & \text{if there exists $\lambda\in V_n'$ such that $(j,g) = \theta_\lambda(\bj_n,\bg_n)$,}\\
+\infty & \text{otherwise.}\end{cases}$$
We also let
$$ \gF_n(j,g) = \dashint_{V_n'} \gf_n\(\theta_\lambda(j,g)\)\,d\lambda =  \begin{cases} \frac1{|V_n'|}\int \chi*\indic_{V_n'}\,d\bg_n & \text{if  $(j,g) = (\bj_n,\bg_n)$,}\\
+\infty & \text{otherwise.}\end{cases}$$
Since $\bg_n \ge -C$, the property \eqref{nmb}  implies that
\begin{equation}\label{wgf} \int \chi_{V_n} \,d\bg_n \ge |V_n'|\gF_n(\bj_n,\bg_n)  - O(R_n).\end{equation}

Then we check the coercivity and $\Gamma$-$\liminf$ properties of $\{\gf_n\}_n$ as in \eqref{2.1bis}--\eqref{2.1ter}.  The latter is the trivial observation  that if $(j_n,g_n) \to (j,g)$ then for any subsequence (not relabeled) such that $\{\gf_n(j_n,g_n)\}_n$ is bounded we have
$$\lim_{n\to\infty} \gf_n(j_n,g_n) = \lim_{n\to +\infty}\int \chi \,dg_n= \gf(j,g),\quad{where}\quad \gf(j,g) = \int\chi \,dg.$$
To prove coercivity, assume  as in \eqref{2.1bis} that for every $R$
\begin{equation}\label{wcoercif}\limsup_{n\to+\infty} \int_{B_R} \gf_n(\theta_\lambda (j_n,g_n))\,d\lambda < +\infty.\end{equation}
Then for every $R$, if $n$ is large enough the integrand is finite a.e.  hence there exists $\lambda_n\in V_n'$ such that $(j_n,g_n) = \theta_{\lambda_n} (\bj_n,\bg_n)$ and $\lambda +\lambda_n\in V_n'$ for almost every $\lambda\in B_R$, i.e. $\lambda_n+ B_R\subset V_n'$. Then \eqref{wcoercif} reads
$$\limsup_{n\to+\infty}\int_{B_R} \int \chi(x- \lambda_n-\lambda)\,d
\bg_n(x)\,d\lambda = \int \chi*\indic_{\lambda_n+B_R}\,d\bg_n < +\infty,$$
which is equivalent to saying that $\{g_n = \bg_n
(\lambda_n+\cdot)\}_n$ is bounded in $L^1(B_R)$ for every $R$. This implies that a subsequence converges in $\radon_0$.

Then, in view of \eqref{bgnalpha} this proves that $\{\bnu_n(\lambda_n+\cdot)\}_n$ is locally bounded. Inserting this information into \eqref{wg} we find that  $\{W(\bj_n(\lambda_n+\cdot),\chi_R)\}_n$ is bounded and then using Lemma~\ref{wlp} we deduce that $\{\bj_n(\lambda_n+\cdot)\}_n$ is bounded in $L^p(B_R)$ for any $R$.  Thus going to a further subsequence $\{j_n = \bj_n(\lambda_n+\cdot)\}_n$ converges to $j$  locally weakly in $L^p$.  Moreover $\div j_n=0$ and by the above $\curl j_n$ is locally bounded in the sense of measure, hence weakly compact in $W^{-1, p}_{loc}$. By elliptic regularity it follows that the convergence of $j_n$ to $j$ is strong in $\Lp$.
This proves coercivity.

We may now apply Theorem~\ref{gamma}. Letting $P_n$ be the image of the normalized Lebesgue measure on $B_n'$ by the map $\lambda\mapsto \theta_\lambda(\bj_n,\bg_n)$, there is a subsequence such that $P_n\to P$, where $P$ is a probability measure on $X$ and
$$\liminf_{n\to +\infty} \gF_n(\bj_n,\bg_n) \ge \int \gf^*_U(j,g)\,dP(j,g), \quad \text{where}\quad \gf^*_U(j,g) = \lim_{R\to
+\infty}\dashint_{\UR}\int \chi(x-\lambda) \,dg(x)\,d\lambda,$$
for any  family of open sets $\{\UR\}_{R>0}$  satisfying  (\ref{hypsets}).

We claim that if $(j,g)\in\supp (P)$ and $\gf^*_U(j,g)\le +\infty$, then
\begin{equation} \label{claim}\text{$j\in\ainfty$ and $\gf^*_U(j,g)= W_U(j)$.}\end{equation}
Assuming this, and choosing $(j,g)\in\supp (P)$ such that $\gf^*_U(j,g)\le \liminf_n \gF_n(\bj_n,\bg_n)$ we obtain, using \eqref{wgn} and \eqref{wgf},
\begin{equation}\label{wliminf} \liminf_n W_V (\bj_n)\ge \liminf_n \gF_n(\bj_n,\bg_n)\ge W_U(j).\end{equation}
Choosing $V=U$ shows that  $\inf_\ainfty W_U$ is achieved.    Taking for  $\bj_n$  a minimizing sequence for $W_V$, it follows from \eqref{wliminf} that
$\min_{\ainfty} W_V \ge \min_{\ainfty} W_U$, hence the value of  $\min_{\ainfty} W_U$ is independent of $U$.

We prove the claim \eqref{claim}. Since $\gf^*_U(j,g)\le +\infty$, we have $g(\UR)<C|\UR|\le CR^2$ and thus $\forall R>1$ there exists $N_R\in\mn$ such that $n\ge N_R$ implies $g_n(\UR)\le CR^2$. Using \eqref{bgnalpha} this in turn implies that if $n\ge N_R$ then $\nu_n(\UR)\le CR^2$ and then, passing to the limit $n\to\infty$, that $\nu(\UR)\le C R^2$, so that in particular $j\in\ainfty$.

Moreover, still if $n>N_R$,  from $g_n(\UR)$, $\nu_n(\UR)\le CR^2$ and using \eqref{wg} we deduce that $W(j_n,\chi_\UR)\le CR^2\log R$ and then, as in the proof of Corollary~\ref{wgcoro}, that
$$\left|W(j_n,\chi_\UR) - \int \chi_\UR\,dg_n\right| \le o(R^2),$$
for some $\theta<2$. Passing to the liminf  $n\to \infty$ we obtain in view of Lemma~\ref{wsci} the same relation for $j$, $g$. Dividing by $|\UR|$ --- which is bounded below by $cR^2$ for some $c>0$ --- and letting $R\to +\infty$ we find that $W_U(j) = \gf^*_U(j,g)$, which finishes the proof of \eqref{claim}.

\end{proof}

\begin{proof}[Independence w.r.t. the shape] We have just seen that if $U$ and $V$ refer to two families of sets,  the infimum of $W_V$ and $W_U$ are both achieved and  are equal.
There remains to show that minimizers are also the same.

Consider $j_V$ a minimizer of $W_V$.
By Corollary \ref{wgcoro}
 we have for any $\{\UR\}_R$ satisfying 
  \eqref{hypsetsbis},
 \begin{equation}\label{619b}
 \lim_{R \to \infty}\frac{ W(j_V,\chi_\UR)}{|\UR|}
 - \dashint_{\UR} \chi_{\UR}\,dg_{\UR}=0,\end{equation}
 where $g_{\UR}$ is the result of applying Proposition
\ref{wspread} in $\UR$.  We deduce that
\begin{multline}\label{623} W_U(j_V) =
\limsup_{R\to \infty} \dashint_{\UR}  \chi_{\UR}\,dg_{\UR}=
\limsup_{R\to \infty} \int \gf(j,g)
\, dP_{\UR} (j,g)\ge \\ \liminf_{R\to \infty}\int \gf (j,g) \, dP_{\UR}(j,g)
\ge \int \gf(j,g) \, dP_U(j,g),\end{multline}
where $\gf (j,g) = \int\chi\,dg$, $\chi_\UR = \indic_\URc * \chi$,
 and where  $P_\UR$ is the image of the normalized Lebesgue measure
  on $\URc$ by $\lambda\mapsto \theta_\lambda(j_V,g_\UR)$. Moreover we have chosen a subsequence $\{R\}$ such that $\{P_\UR\}_R$ converges to a probability measure $P_U$.

Since $P_U$ is $\theta$-invariant and  using the ergodic theorem and \eqref{claim} we get
$$\int\gf(j,g)\,dP_U(j,g) = \int \gf^*_U (j,g) \, dP_U(j,g)=
\int W_U(j) \,dP_U(j,g)\ge \min_\ainfty W, $$
where $W$ can be defined using any family of sets satisfying \eqref{hypsets}, \eqref{hypsetsbis}, {\em not necessarily the family} $U$. Together with \eqref{623} we get
\begin{equation}\label{624} W_U(j_V) = \limsup_{R\to \infty} \int \gf(j,g)\, dP_{\UR} (j,g)\ge \int \gf(j,g) \, dP_U(j,g) = \int W_U(j) \,dP_U(j,g)\end{equation}
and this is bounded below by $\min_\ainfty W$. From the minimality of $j_V$ and since $\min W_U = \min W_V$, applying \eqref{624} to $\{\UR\}_R=\{\VR\}_R$ implies that there is equality everywhere and therefore
$$ \text{(i)}\ \limsup_R\int\gf\,dP_\VR = \int\gf\,dP_V,\quad \text{(ii) $P_V$-almost every $j$ minimizes $W$.}$$
Since $\gf$ is continuous and bounded below  on the support of $P_\VR$ independently of $R$,
$$ \limsup_R\int\gf\,dP_\VR = \int\gf\,dP_V \iff \sup_R \int_{\{\gf>M\}} \gf\,dP_\VR \xrightarrow{M\to +\infty} 0.$$
Now choose $c>0$ such that $\UcR\subset \VRc$ for every
 $R\ge 1$ and let $P_\VR'$be the image of the normalized Lebesgue
  measure on $\UcR$ by $\lambda\mapsto\theta_\lambda(j_V,g_\VR)$. Then
\begin{equation}\label{dPVR} P_\VR\ge \frac{|\UcR|}{|\VRc|} P_\VR'\ge
\delta P_\VR',\end{equation}
for some $\delta>0$ independent of $R$ (this follows from \eqref{hypsetsbis}). It follows that
$$\sup_R \int_{\{\gf>M\}} \gf\,dP_\VR' \xrightarrow{M\to +\infty} 0$$
and then that, choosing a subsequence $\{R\}$ such that $P_\VR'\to P_V'$, that $\limsup_R \int\gf\,dP_\VR' = \int\gf \,dP_V'$ (cf. \cite{billingsley,dudley}).

Now we claim that the support of $P_V'$ is included in the support of $P_V$. Indeed if $\vp\ge 0$ is continuous with compact support in $(\supp\  P_V)^c$, then $\int\vp\,dP_\VR\to\int\vp\,dP_V = 0$ hence from
\eqref{dPVR}
$$ \delta \int\vp\,dP_\VR'\xrightarrow{R\to\infty} 0$$ and thus $\int\vp\,dP_V' = \lim_R\int\vp\,dP_\VR' = 0.$

It follows that $P_V'$-almost every $j$ minimizes $W_U$ and that
\begin{equation}\label{minPV} \int\gf \,dP_\VR'\xrightarrow{R\to\infty} \min W_U.\end{equation}
But $\int \gf \,dP_\VR' = \dashint_\UcR \chi_\UcR \,dg_\VR$ by definition
of $\gf$ and $P_\VR'$
and using Corollary~\ref{wgcoro}, we have  that $\dashint_\UcR
\chi_\UcR\,d(g_\VR - g_{\mathbf U_{cR}})$ tends to $0$ as
$R\to +\infty$. Therefore $\int \gf\,d(P_\VR' - P_{\mathbf U_{cR}})$ tends to $0$ as well, which together with \eqref{minPV} and \eqref{624} yields
$$W_U(j_V) = \limsup_{R\to +\infty} \int\gf
\,dP_{\mathbf U_{cR}} = \min_\ainfty W_U.$$
It follows that $j_V$ minimizes $W_U$. \end{proof}

\subsection{Proof of Proposition~\ref{pw2bis} and Corollaries \ref{periodisation}, \ref{mesyoung}}
%As we mentioned in the introduction, this limiting problem bears some similarity with the one of diblock copolymers. We follow here the same steps as
%in  \cite{aco}, though their implementation is different.

The first lemma (whose proof is postponed to the end of the subsection)
serves to extract a good boundary. We denote by $W_K$ the renormalized energy relative to the family $\{K_R =[-R,R]^2\}_R$.
\begin{lem} \label{lemmebord}
Let $G$ satisfy the assumptions of Proposition \ref{pw2bis}. Then for any $\gamma \in (0,1)$, any $R$ large enough depending on $\gamma$, and any $p\in[1,2)$ there exists, for any  $j\in G$, some $t \in [R- 2R^\gamma, R- R^\gamma]$ 
such that
\begin{equation}\label{majojp}\int_{\p K_{t} } |j|^p \le C_p  R^{2-\gamma}\end{equation}
 \begin{equation}\label{minowt}\lim_{R\to \infty}
\frac{ W(j, \indic_{K_{t}}
)}{|K_{t}|} = W_K(j),\qquad 
 |\nu (K_{t})-|K_{t}||\le C R^{2-\gamma},
\end{equation}
where $C$, $C_p$ do not depend on $j\in G$, and where the convergence in \eqref{minowt} is  uniform with respect to $j \in G$. \end{lem}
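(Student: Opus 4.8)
Looking at Lemma~\ref{lemmebord}, I need to extract a good ``square boundary'' at some radius $t$ in a dyadic-type annulus where three things hold simultaneously: an $L^p$ bound on $j$ restricted to $\partial K_t$, a good convergence of the renormalized energy ratio, and a good control on the number of points. The natural approach is a mean-value / averaging argument over $t$ in the interval $[R-2R^\gamma, R-R^\gamma]$, whose length is $R^\gamma$.

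\textbf{Plan of proof.} First I would establish \eqref{majojp} by the coarea formula. By Lemma~\ref{wlp} applied with $\chi$ a cutoff for $K_{R}$ (or directly over a slightly larger square), together with the uniform bounds $\nu(K_R)<CR^2$ and $W(j,\chi_{K_R})<CR^2$ from the hypotheses \eqref{hypunif}--\eqref{cvwunif}, one gets $\int_{K_R}|j|^p \le C_p R^2\log R$ uniformly in $j\in G$. Since $\int_{R-2R^\gamma}^{R-R^\gamma}\(\int_{\partial K_t}|j|^p\)dt \le \int_{K_R}|j|^p \le C_p R^2\log R$, and the interval has length $R^\gamma$, for a set of $t$'s of measure at least $\frac34 R^\gamma$ we have $\int_{\partial K_t}|j|^p \le C_p R^{2-\gamma}\log R$; absorbing the $\log R$ into a slightly worse but still admissible exponent (or noting $\gamma$ can be taken a touch smaller) gives \eqref{majojp}. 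The bound $|\nu(K_t)-|K_t||\le CR^{2-\gamma}$ in \eqref{minowt} follows from the analogue of Lemma~\ref{wnu}: integrating $\curl j = \nu - 1$ over $K_t$ gives $|\nu(K_t)-|K_t|| = |\int_{\partial K_t} j\cdot\tau| \le |\partial K_t|^{1-1/p}\(\int_{\partial K_t}|j|^p\)^{1/p} \le C R^{1-1/p}\cdot(C_p R^{2-\gamma})^{1/p}$, which is $O(R^{2-\gamma})$ for the relevant range of $p$ and $\gamma$ (again, possibly after mildly shrinking $\gamma$); this holds on the same large-measure set of $t$'s.

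\textbf{The main obstacle: the convergence in \eqref{minowt}.} The delicate point is showing that for some $t$ in the good set one also has $W(j,\indic_{K_t})/|K_t| \to W_K(j)$ uniformly in $j\in G$, rather than just for the exact radii $K_R$ along which convergence is assumed in \eqref{cvwunif}. Here I would use Corollary~\ref{wgcoro} and Remark~\ref{aprescoro}: replacing $W(j,\chi_{K_t})$ by $\int_{K_t}\chi_{K_t}\,dg_{K_R}$ (the mass-displacement measure from Proposition~\ref{wspread}) with error $o(R^2)$ uniformly in $j\in G$ by \eqref{wg}, using that $g_{K_R}\ge -C$, one reduces the question to the behavior of $g_{K_R}(K_t)/|K_t|$ as a function of $t$. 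Since $g_{K_R}$ is bounded below, the map $t\mapsto g_{K_R}(K_t) + C|K_t|$ is nondecreasing, so $g_{K_R}(K_t)/|K_t|$ cannot deviate much from its value at $t=R$ except on a small set of $t$'s; more precisely, if $g_{K_R}(K_R) \sim |K_R| W_K(j) + o(R^2)$, then for $t$ in the annulus $g_{K_R}(K_t) = g_{K_R}(K_R) - g_{K_R}(K_R\setminus K_t) \le g_{K_R}(K_R) + C|K_R\setminus K_t| = |K_R|W_K(j) + O(R^{1+\gamma}) + o(R^2)$, and a matching lower bound holds for most $t$ by monotonicity of $t\mapsto g_{K_R}(K_t)+C|K_t|$ together with the mean-value estimate $\int_{R-2R^\gamma}^{R-R^\gamma}\(g_{K_R}(K_{t'})+C|K_{t'}| - (g_{K_R}(K_{R-2R^\gamma})+C|K_{R-2R^\gamma}|)\)\,dt' \le R^\gamma\cdot O(R^{1+\gamma})$. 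Dividing by $|K_t| \sim R^2$ gives the uniform convergence in \eqref{minowt} for a set of $t$ of measure $\ge\frac34 R^\gamma$ in the annulus.

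\textbf{Conclusion.} Each of the three conditions \eqref{majojp} and the two parts of \eqref{minowt} holds for $t$ in a subset of $[R-2R^\gamma, R-R^\gamma]$ of measure at least $\frac34 R^\gamma$ (for $R$ large depending on $\gamma$), so their intersection is nonempty; pick any $t$ in it. The uniformity in $j\in G$ is inherited at each step from Lemma~\ref{wlp}, \eqref{wg}, \eqref{cvwunif} and Remark~\ref{aprescoro}. One technical caveat I would watch: the $\log R$ factors from Lemma~\ref{wlp} and the exponent $1-1/p$ in the boundary trace estimate mean that the constant $\gamma$ appearing in the final statement may need to be replaced, in the course of the argument, by any $\gamma' < \gamma$ (or equivalently the stated $C_p R^{2-\gamma}$ absorbs a logarithm); since the lemma is only used downstream for some fixed small $\gamma$, this is harmless, but I would state it cleanly by choosing $\gamma$ in the hypothesis slightly larger than needed.
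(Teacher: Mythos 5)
Your plan correctly identifies the mean-value strategy, but there are two genuine gaps that make it fall short of the statement as written.

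For \eqref{majojp}, applying Lemma~\ref{wlp} to the whole of $K_R$ gives $\int_{K_R}|j|^p\le C_pR^2(\log R)^{p/2}$, and dividing the integral by the interval length $R^\gamma$ leaves $C_pR^{2-\gamma}(\log R)^{p/2}$, with no slack to absorb the logarithm. Your proposed fix (trading $\gamma$ for a different $\gamma'$) changes the interval: for large $R$ the intervals $[R-2R^{\gamma'},R-R^{\gamma'}]$ and $[R-2R^\gamma,R-R^\gamma]$ become disjoint, so a $t$ found in the modified interval does not satisfy the lemma's conclusion. The paper avoids this by \emph{localizing}: it first pigeonholes an integer $k\in[R-2R^\gamma,R-R^\gamma]$ with $\nu(K_{k+2}\setminus K_{k-2})\le CR^{2-\gamma}$ and $\int\xi_k\,dg_R\le CR^{2-\gamma}$, derives $W(j,\xi_k)\le CR^{2-\gamma}\log R$, and then applies Lemma~\ref{wlp} on the thin annulus $\mathcal C_k=K_{k+1}\setminus K_{k-1}$ (of area $\sim R$, not $R^2$), giving $\int_{\mathcal C_k}\xi_k^{p/2}|j|^p\le C R^{1-p/2}\bigl(R^{2-\gamma}\log R\bigr)^{p/2}$; since the resulting exponent $1+(1-\gamma)p/2$ is \emph{strictly} less than $2-\gamma$ when $p<2$, the logarithm is harmlessly absorbed. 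A mean value over $t\in[k-1,k]$ then selects the good circle.

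The more serious gap is in \eqref{minowt}: you bound $g_{K_R}(K_t)/|K_t|$, but the lemma is about $W(j,\indic_{K_t})/|K_t|$, and these must be carefully connected. As item~6 of Section~\ref{sec14} warns, $W(j,\indic_{K_t})$ blows up to $\pm\infty$ as $\partial K_t$ approaches a point of $\Lambda$, and \eqref{wg} only controls $W(j,\chi)-\int\chi\,dg$ for \emph{Lipschitz} cutoffs $\chi$, not characteristic functions. The paper's essential device, which your proof lacks, is a duality argument: for $\chi(x)=\rho(\|x\|_\infty)$ one has the co-area identity $\int\chi\,d\mu=-\int\rho'(t)\,\mu(K_t)\,dt$ for any measure $\mu$, and applying \eqref{wg} to all such $\chi$ with $\|\rho'\|_\infty\le 1$ yields the $L^1$-in-$t$ estimate $\int_{R-2}^{R-1}\bigl|W(j,\indic_{K_t})-g_R(K_t)\bigr|\,dt\le Cn(\log n+1)$. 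Only with this does the mean value over $t$ legitimately guarantee that $W(j,\indic_{K_t})$ is close to $g_R(K_t)$ for the selected $t$; monotonicity of $g_{K_R}$ and a Fubini estimate on $g_{K_R}(K_t)$ alone do not rule out a bad choice of $t$.
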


The next steps consist in modifying $j$  in $K_R \backslash K_t$ so that $j\cdot\tau = 0$ on  $\p K_R$, and so that the ensuing modification of $W(j,\indic_{K_R})$ is negligible compared to $R^2$ 
as $R\to +\infty$. This relies on the following two lemmas.

\begin{lem}\label{lemrect1}
Let $\mathcal{R}$ be a rectangle with sidelengths  in $[\frac{L}{2}, \frac{3L}{2}]$. Let $p\in (1,2)$.
Let $g\in L^p(\p \mathcal{R})$ be a function
which is $0$ except on one side of the rectangle  $\mathcal{R}$. Let $m$ be a constant such that  $(m-1)|\mathcal{R}|= - \int_{\p \mathcal{R}}g$. Then
 the mean zero solution  to
\begin{equation}\label{target1}
\left\{\begin{array}{ll}
-\Delta u= m  - 1& \text{in } \ \mathcal{R}\\ [2mm]
\frac{\p u}{\p \nu }= g & \text{on} \ \p \mathcal{R}
\end{array}\right.
\end{equation}
satisfies  for every $q\in [1,2p]$
\begin{equation}\label{tar1e}
\int_{\mathcal{R}} |\nab u|^q \le
C_{p,q}   L^{2-\frac{q}{p}}\|g\|_{L^p(\p \mathcal{R})}^q
. \end{equation}
\end{lem}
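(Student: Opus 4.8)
\emph{Proof plan.} The statement is scale invariant in the appropriate way, so the first move is to reduce to a fixed size. Writing $L$ for a common comparability scale of the two sidelengths, set $v(y)=u(Ly)$, defined on the normalized rectangle $\mathcal{R}'=L^{-1}\mathcal{R}$, whose sidelengths lie in $[\tfrac12,\tfrac32]$. Then $v$ solves $-\Delta v=L^2(m-1)$ in $\mathcal{R}'$ with $\partial v/\partial\nu=\tilde g$ on $\partial\mathcal{R}'$, where $\tilde g(y)=Lg(Ly)$ is again supported on a single side; a change of variables gives $\|\tilde g\|_{L^p(\partial\mathcal{R}')}=L^{1-1/p}\|g\|_{L^p(\partial\mathcal{R})}$, while H\"older's inequality together with $(m-1)|\mathcal{R}|=-\int_{\partial\mathcal{R}}g$ and $|\mathcal{R}|\ge L^2/4$ yields $L^2|m-1|\le C\,L^{1-1/p}\|g\|_{L^p(\partial\mathcal{R})}$. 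Since $\int_{\mathcal{R}}|\nabla u|^q=L^{2-q}\|\nabla v\|_{L^q(\mathcal{R}')}^q$, the lemma follows from the \emph{uniform} bound $\|\nabla v\|_{L^q(\mathcal{R}')}\le C_{p,q}\big(\|\tilde g\|_{L^p(\partial\mathcal{R}')}+L^2|m-1|\big)$ over the compact family of normalized rectangles, which produces the announced exponent $2-q/p$.

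On $\mathcal{R}'$ I would then split $v=v_0+v_1$: let $v_0$ be the mean-zero solution of $-\Delta v_0=L^2(m-1)$ with the \emph{constant} Neumann datum $\partial v_0/\partial\nu\equiv c:=L^2(m-1)|\mathcal{R}'|/|\partial\mathcal{R}'|$ (compatible by construction), and let $v_1=v-v_0$, which is harmonic with Neumann datum $h:=\tilde g-c$ of mean zero on $\partial\mathcal{R}'$. Standard elliptic regularity with constant data on the compact family of rectangles gives $v_0\in C^1(\overline{\mathcal{R}'})$ and $\|\nabla v_0\|_{L^\infty}\le C\,L^2|m-1|$; and since $|c|\le C\,L^2|m-1|\le C\|\tilde g\|_{L^p(\partial\mathcal{R}')}$, everything reduces to proving $\|\nabla v_1\|_{L^q(\mathcal{R}')}\le C_{p,q}\|h\|_{L^p(\partial\mathcal{R}')}$ for $1<p<2$, $1\le q\le 2p$.

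For the harmonic piece $v_1$ I would argue by a partition of unity on $\overline{\mathcal{R}'}$. Away from the four corners, interior estimates and flat-boundary estimates reduce matters (the edges being already flat) to the half-plane: a localized piece of $\nabla v_1$ is then a finite sum of Poisson and conjugate-Poisson extensions of a cut-off of $h$, i.e. of convolutions $K_t\ast h$ with $\|K_t\|_{L^r(\mr)}\le C_r\,t^{1/r-1}$. Young's inequality gives $\|K_t\ast h\|_{L^q(\mr)}\le C\,t^{-(1/p-1/q)}\|h\|_{L^p(\mr)}$, and integrating in $t\in(0,1)$ yields $\int_0^1\|K_t\ast h\|_{L^q(\mr)}^q\,dt\le C\|h\|_{L^p(\mr)}^q$ for $q<2p$; the endpoint $q=2p$ is recovered from the sharp embedding $L^p(\mr)\hookrightarrow\dot B^{-1/q}_{q,q}(\mr)$, valid precisely when $q\le 2p$, which controls exactly that $t$-integral, or equivalently from $\nabla v_1\in W^{1/p,p}(\mathcal{R}')\hookrightarrow L^{2p}(\mathcal{R}')$ in dimension two. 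Near each corner the opening angle is $\pi/2$, so the homogeneous Neumann solutions there are spanned by $r^{2k}\cos(2k\theta)$, $k\ge 0$, whose gradients scale like $r^{2k-1}$ with exponent $\ge 1$; hence no singularity is produced and $\nabla v_1$ is as regular near the corners as $h$ allows, contributing a term bounded by $C\|h\|_{L^p(\partial\mathcal{R}')}$. Summing the local contributions gives the claimed uniform bound.

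The main obstacle is precisely this elliptic estimate for $v_1$ with the \emph{sharp} exponent range: one must simultaneously handle the four right-angle corners and reach the endpoint $q=2p$, which is the borderline case of the relevant Sobolev/Besov embedding. A fully self-contained alternative, avoiding any appeal to elliptic theory on non-smooth domains, is to expand $u$ in a cosine series in the coordinate running along the side that carries $g$ (which automatically encodes the vanishing Neumann condition on the two perpendicular sides), reducing the problem to an explicit family of one-dimensional ODEs with exponentially decaying solutions; the $L^q$ bound then follows by summing, at the cost of somewhat more bookkeeping when $q\neq 2$.
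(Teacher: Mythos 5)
Your argument is correct, but your treatment of the harmonic piece takes a genuinely different route from the paper's. You both reduce to a fixed-size rectangle (the paper does it last, by scaling; you do it first) and you both split off a piece handling the constant forcing: the paper uses the explicit quadratic $u_1=\frac{1-m}{2}(x_2-\ell_2)^2$ with Neumann datum $\bar g$ supported only on the side carrying $g$, while you use a constant datum $c$ on the whole boundary — both work, though the paper's choice makes $u_1$ fully explicit and sidesteps any corner regularity discussion for that piece. For the harmonic piece the paper passes to the \emph{conjugate} harmonic function $u_2^*$, whose Dirichlet trace $\vp$ satisfies $\partial_\tau\vp = g-\bar g\in L^p$, then invokes the one-dimensional boundary embedding $W^{1,p}(\partial\mathcal{R})\hookrightarrow W^{1-\frac1{2p},2p}(\partial\mathcal{R})$ followed by elliptic regularity for the Dirichlet problem to land $\nabla u_2^*$ (hence $\nabla u_2$) in $L^{2p}$. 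You instead work directly on the Neumann problem, localize by a partition of unity to flat-boundary and corner pieces, control the flat pieces via Young's inequality for the Poisson kernel plus the Besov embedding $L^p\hookrightarrow\dot B^{-1/q}_{q,q}$ at the endpoint $q=2p$, and dispose of the right-angle corners by noting that the singular exponents there are $2k$, $k\ge 0$. These routes are dual to each other in spirit (the paper integrates the datum up by one order on the boundary and then traces in; you differentiate once less and propagate inward), and both tacitly rely on the benignness of $\pi/2$-corners for the relevant regularity. Your approach is somewhat heavier in bookkeeping but does not need the conjugate-function trick, and it cleanly separates the $q<2p$ and $q=2p$ cases. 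Your concluding Fourier-series alternative is also viable and arguably the most elementary; the paper did not pursue it.

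One minor inaccuracy to flag: your formula $\|K_t\ast h\|_{L^q}\le C\,t^{-(1/p-1/q)}\|h\|_{L^p}$ from Young's inequality gives $\int_0^1\|K_t\ast h\|_{L^q}^q\,dt<\infty$ only when $q(1/p-1/q)<1$, i.e. $q<2p/(2-p)$, which exceeds $2p$ only when $p>1$ but is in fact a strictly weaker threshold than $2p$ when $p<2$ — wait, $2p/(2-p)>2p\iff 1/(2-p)>1\iff p>1$, so for $p\in(1,2)$ Young's inequality alone already covers the full range $q\le 2p$ strictly below $q=2p/(2-p)$. So the Besov argument is only truly needed to reach the endpoint of the inequality at the exact value where your exponent equals 1, which is $q=2p/(2-p)$, not $q=2p$. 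In other words your stated range is correct but your account of where Young's inequality breaks down is slightly off; this does not affect the result.
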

\begin{proof}
We write  the solution $u$ of \eqref{target1} as $u=u_1+ u_2$ where
\begin{equation}\label{targetu1}
\left\{\begin{array}{ll}
-\Delta u_1= m  - 1& \text{in } \ \mathcal{R}\\ [2mm]
\frac{\p u_1}{\p \nu }= \bar{g} & \text{on} \ \p \mathcal{R}
\end{array}\right.
\end{equation}
where $\bar{g}$  is equal to  the average of $g$ on the side
where $g$ is supported  and is  $0$ on the other sides; and
$$\left\{\begin{array}{ll}
-\Delta u_2= 0 & \text{in } \ \mathcal{R}\\ [2mm]
\frac{\p u_2}{\p \nu }= g- \bar{g} & \text{on} \ \p \mathcal{R}.
\end{array}\right.
$$
Assume that $\mathcal{R}= [0, \ell_1]\times [0,\ell_2]$, with $\ell_i\in[\frac{L}{2}, \frac{3L}{2}]$, and that
$\bar{g}$ is supported on the side $x_2=0$. Then,  up
 a constant, the  solution of \eqref{targetu1} is
$u_1(x_1,x_2)= \frac{m-1}{2}(x_2-\ell_2)^2$. Therefore
$$\int_{\mathcal{R}} |\nab u_1|^q= (m-1)^q \ell_1 \int_0^{\ell_2}
(x_2-\ell_2)^q \, dx_2
= (m-1)^q \ell_1 \frac{\ell_2^{q+1}}{q+1}\le C(m-1)^qL^{2+q}.$$
Then,  $m-1=-\frac{1}{\ell_1\ell_2}
\int_{\p \mathcal{R}} g$ and using H\"older's inequality
  $|m-1|\le CL^{-2}\|g\|_{L^p(\partial\mathcal R)} L^{1-\frac1p}$.  Inserting  above  we are led to
\begin{equation}
\label{nrju1}
\int_{\mathcal{R}} |\nab u_1|^q\le C_{p,q} L^{2-\frac{q}{p}}
\|g\|_{L^p(\p \mathcal{R})}^{q}.\end{equation}
For $u_2$, note that the conjugate harmonic function $u_2^*$ has trace $\vp$ which satisfies $\partial_\tau\vp = g-\bar g$, hence $\|\nab\vp\|_{L^p(\partial\mathcal R)}\le \|g-\bar g\|_{L^{p}(\partial\mathcal R)}$.
 Then from the Sobolev imbedding $W^{1,p}(\partial \mathcal R)\hookrightarrow W^{1-\frac1{2p}, 2p}(\partial \mathcal R)$, which is the trace space of $W^{1,2p}(\mathcal R)$, and elliptic regularity it follows that
$$\|\nab u_2\|_{L^{2p}(\mathcal R)}= \|\nab u_2^*\|_{L^{2p}(\mathcal R)}\le C_{\mathcal R} \|g-\bar g\|_{L^{p}(\partial\mathcal R)},$$
and it is easy to check that the constant $C_{\mathcal R}$ may be chosen to depend only on $p,L$ and not on $\ell_i$, as long as $\ell_i\in [\frac{L}{2}, \frac{3L}{2}]$. Scaling arguments then show that for some $C_{p,q}$ independent of $L$ we have
\begin{equation}\label{nrju2}
\int_{\mathcal{R}}| \nab u_2|^q\le C_{p,q} L^{2-\frac{q}{p}} \|g\|_{L^p(\p \mathcal{R})}^q.
\end{equation}
Combining \eqref{nrju1} and \eqref{nrju2}, we obtain the result \eqref{tar1e}.
\end{proof}

\begin{lem}\label{lemrect2}
Let $\mathcal{R}$ be a rectangle of barycenter $0$, and sidelengths $\in \sqrt{2\pi}[\hal, \frac{3}{2}]$,
and let $m$ be a constant such that $m|\mathcal{R}|=2\pi$. Then the solution to

$$\left\{\begin{array}{ll}
-\Delta f= 2\pi \delta_0
  - m & \text{in } \ \mathcal{R}\\ [2mm]
\frac{\p f}{\p \nu }= 0  & \text{on} \ \p \mathcal{R}
\end{array}\right.
$$
satisfies
\begin{equation}\label{nrjf2}
\lim_{\eta \to 0}\left|\int_{\mathcal{R}\backslash B(0, \eta)}|\nab f|^2
+2 \pi \log \eta\right|\le C
\end{equation} where $C$ is universal,
and for every $1\le q<2$
\begin{equation}\label{nrjfp}
\int_{\mathcal{R}} |\nab f|^q \le C_q,\end{equation}
where $C_q$ depends only on $q$.
\end{lem}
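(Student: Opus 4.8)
The plan is to isolate the logarithmic singularity by writing $f = -\log|\cdot| + S$ and to control $S$ by elliptic regularity, with all constants made uniform over the (essentially compact) family of admissible rectangles; formula \eqref{nrjf2} then follows from an explicit integration by parts.

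First I would note that the Neumann problem for $f$ is solvable exactly because $m|\mathcal{R}| = 2\pi$, which is the compatibility condition $\int_{\mathcal{R}}(2\pi\delta_0 - m) = \int_{\p\mathcal{R}} 0$, and that $m = 2\pi/|\mathcal{R}|$ lies between two universal constants since the sidelengths of $\mathcal{R}$ belong to $\sqrt{2\pi}[\hal,\frac32]$. Fixing $f$ by $\int_{\mathcal{R}} f = 0$ and setting $S := f + \log|\cdot|$, the function $S$ solves $-\Delta S = -m$ in $\mathcal{R}$ with $\p_\nu S = \p_\nu\log|x| = x\cdot\nu/|x|^2$ on $\p\mathcal{R}$. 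Since $0$ is the barycenter, $\mathrm{dist}(0,\p\mathcal{R}) \ge \sqrt{2\pi}/4$ and $\mathcal{R}\subset B(0,R_0)$ for a universal $R_0$, so the Neumann datum is bounded in $L^\infty(\p\mathcal{R})$ by a universal constant. By elliptic regularity for the Neumann problem on a rectangle --- made uniform by the affine change of variables mapping $\mathcal{R}$ onto the fixed square $[-\hal,\hal]^2$, under which $-\Delta$ becomes a constant-coefficient uniformly elliptic operator whose ellipticity constants depend only on the (bounded) ratio of sidelengths --- one obtains $\|S\|_{C^0(\overline{\mathcal{R}})} \le C$ and $\|\nab S\|_{L^q(\mathcal{R})} \le C_q$ for every $q<\infty$, with $C$ universal and $C_q$ depending only on $q$; to control the average one uses $\int_{\mathcal{R}} S = \int_{\mathcal{R}}\log|x|$, which is bounded.

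Granting this, \eqref{nrjfp} is immediate: $\nab f = -\nab\log|\cdot| + \nab S$, and $\int_{\mathcal{R}}|\nab\log|x||^q = \int_{\mathcal{R}}|x|^{-q} \le \int_{B(0,R_0)}|x|^{-q} = 2\pi R_0^{2-q}/(2-q) \le C_q$ whenever $q<2$, so adding the $L^q$ bound on $\nab S$ gives the claim. For \eqref{nrjf2}, integrate $|\nab f|^2$ by parts on $\mathcal{R}\setminus B(0,\eta)$, using $\Delta f = m$ there and $\p_\nu f = 0$ on $\p\mathcal{R}$:
\[
\int_{\mathcal{R}\setminus B(0,\eta)}|\nab f|^2 = -m\int_{\mathcal{R}\setminus B(0,\eta)} f + \int_{\p B(0,\eta)} f\,\p_\nu f,
\]
with $\nu$ pointing toward $0$ on $\p B(0,\eta)$. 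On that circle $f = -\log\eta + S$ and $\p_\nu f = \eta^{-1} - \p_r S$ with $S,\p_r S$ bounded, so expanding gives $\int_{\p B(0,\eta)} f\,\p_\nu f = -2\pi\log\eta + 2\pi S(0) + O(\eta|\log\eta|)$, while $\int_{\mathcal{R}\setminus B(0,\eta)} f \to \int_{\mathcal{R}} f = 0$. Hence
\[
\lim_{\eta\to 0}\Big(\int_{\mathcal{R}\setminus B(0,\eta)}|\nab f|^2 + 2\pi\log\eta\Big) = 2\pi S(0),
\]
and $|2\pi S(0)| \le 2\pi\|S\|_{C^0(\overline{\mathcal{R}})} \le C$. (The limit does not depend on the normalization of $f$: replacing $f$ by $f+c$ changes $-m\int f$ by $-2\pi c$ and $2\pi S(0)$ by $2\pi c$.)

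The step needing the most care is the uniformity of the constants across the family of rectangles, which the affine rescaling to a fixed reference square handles cleanly (the only parameters left are the sidelength ratio and the overall scale, both confined to a compact set, so the elliptic estimates do not degenerate); the corners of $\mathcal{R}$ are harmless since for a right-angle Neumann corner $\nab S$ has no singularity --- or at worst lies in every $L^q$, $q<\infty$ --- which is all that is used. The remaining point of attention is the bookkeeping in the boundary term on $\p B(0,\eta)$: orienting $\nu$ correctly and checking that the error terms are $o(1)$, for which $\eta|\log\eta|\to 0$ together with the smoothness of $S$ near the interior point $0$ suffices.
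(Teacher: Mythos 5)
Your proof is correct and follows essentially the same route as the paper's, which simply cites the standard decomposition $f = -\log|\cdot| + S$ with $S$ of class $C^1$; you have filled in the elliptic-regularity and integration-by-parts details (including the uniformity of constants over the admissible family of rectangles, and the even reflection at right-angle Neumann corners), and the computation $\lim_{\eta\to 0}\bigl(\int_{\mathcal{R}\setminus B(0,\eta)}|\nabla f|^2 + 2\pi\log\eta\bigr) = 2\pi S(0)$ is accurate.
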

\begin{proof}
This is a standard computation, of the type of \cite{bbh}, Chap. 1,
observing that $f= -  \log |x|+S(x)$  where $S$ is a $C^1$ function.
\end{proof}

\subsubsection{Proof of Proposition~\ref{pw2bis}}  Let  $j\in G$. Apply
Lemma~\ref{lemmebord} with $p\in(\frac32, 2)$ and $\gamma=\frac34$.
For any $R$ large enough it provides us with a square $K_t$, where $t$ depends on $j\in G$ but satisfies 
\begin{equation}\label{trange}R^{\frac34}\le R-t\le 2R^{\frac34}.\end{equation}

We wish to  extend $j$ in $K_R \backslash K_t$, keeping $j$ and $\Lambda$ unchanged in $K_R \backslash K_t$,  to obtain a $j_R$ satisfying $j_R\cdot \tau =0$ on $\p K_R$ and $\curl j_R = 2\pi \sum_{p\in \Lambda_R} \delta_p-1$ --- while the extension's contribution  to the renormalized energy remains negligible compared to $R^2$, uniformly with respect to $j\in G$.  

Below, the notations $o(\cdot)$, $\sim$ and $O(\cdot)$ are understood with respect to $R\to +\infty$, and {\em uniform with respect to } $j\in G$. % the estimates in the construction below will depend  only on the estimates provided by Lemma \ref{lemmebord}, which are uniform in $j \in G$, and by Lemmas \ref{lemrect1} and \ref{lemrect2}, which are independent of $j$, hence will all be uniform with respect to $j \in G$.

   We divide each side
    of $K_t$ into $[\sqrt R]$ intervals, so that there are a total
     of $4[\sqrt R]$ intervals,  which we label $\{I_i\}_i$, of
     length $\frac {2t}{[\sqrt R]}$, which is equivalent to $2\sqrt R$ as $R\to +\infty$, uniformly with respect to $t$ satisfying \eqref{trange}.
     For each $i$ we consider the square $K_i\subset K_R\sm K_t$
      with one side equal to $I_i$. By perturbing the length of
      the other side of an amount which is $O(1/\sqrt R)$  as $R \to \infty$
      we may  obtain
       a rectangle $\mathcal{R}_i$ whose aspect ratio tends to $1$. 
        and such that
\begin{equation}\label{qtvol}
|\mathcal{R}_i|- \int_{I_i} j\cdot \tau \in 2\pi \mn.\end{equation}
We let $g_i $  denote  the restriction
of $ j \cdot \tau$ to $I_i$ (and extend it by $0$ on the rest of $\p \mathcal{R}_i$),
and let
$m_i= 1- \frac{\int_{\p \mathcal{R}_i} g_i}{|\mathcal{R}_i|}$.
Using H\"older's inequality,  we have
$$|m_i-1|\le \frac{C}{|\mathcal{R}_i|} \(
\int_{I_i } |j\cdot \tau|^p\)^{\frac1p} R^{\hal(1-\frac{1}{p})} \le
C R^{\hal(-1- \frac{1}{p})}\( \int_{\p K_t} |j|^p \)^{\frac{1}{p}}.
$$
In view of \eqref{majojp}, we deduce
$$|m_i - 1| = O(R^{-\hal- \frac1{2p} +\frac2p - \frac\gamma p})= o(1),$$
since $\gamma = \frac34$ and $p>\frac32$.

By \eqref{qtvol} and  by choice of $m_i$ we also  have $m_i |\mathcal{R}_i|\in 2\pi \mn$.
 We may thus tile   $\mathcal{R}_i$   by an  integer number of
 rectangles $\mathcal{R}_{ik}$, whose sidelengths are in $\sqrt{2\pi} [\hal, \frac{3}{2}]$
and such that for each $i,k$, we have
$m_i |\mathcal{R}_{ik}|= 2\pi$. Since $m_i\sim 1 $, the number
of rectangles inside each $\mathcal{R}_i$ is equivalent to $|\mathcal{R}_i|/2\pi\sim R/2\pi$ as $R\to +\infty$.

On each of these rectangles, we may apply Lemma \ref{lemrect2}, which yields a  function $
f_{ik}$ satisfying \eqref{nrjf2}--\eqref{nrjfp}.
We then define the vector field $j_1$ in $\cup_i \mathcal{R}_i$ by
 $j_1= - \nab f_{ik}$ in each $\mathcal{R}_{ik}$. We can check that
 $j_1$ satisfies
 \begin{equation}\label{equaj1glob}
\left\{\begin{array}{ll}
\curl j_1= 2\pi \sum_{p\in \Gamma} \delta_p - \sum_i m_i\indic_{\mathcal{R}_i}
 & \text{in } \ \cup_i \mathcal{R}_i\\ [2mm]
j_1 \cdot \tau= 0  & \text{on} \ \p (\cup_i \mathcal{R}_i)
\end{array}\right.
\end{equation}
where $\Gamma$
is the union over $i,k$, of the centers of the rectangles $\mathcal{R}_{ik}$.
Indeed  since $\frac{\p f_{ij}}{\p \nu}=0$ no curl is created at the interfaces between the $\mathcal{R}_{ik}$'s, and no curl is created either at the interfaces between the $\mathcal{R}_i$'s.

Moreover,
by \eqref{nrjf2}--\eqref{nrjfp},  since the number of $\mathcal{R}_{ik}$
for each $i$ is of order $|\mathcal{R}_i|$, and since the
 number of $\mathcal{R}_i$'s is $O(\sqrt R)$,
$j_1$ satisfies
\begin{equation}\label{nrjj12}
\lim_{\eta\to 0} \left|\int_{\cup_i \mathcal{R}_i \backslash
\cup B(p, \eta)}|j_1|^2  + \pi \# \Gamma   \log \eta \right| \le
C R^{\frac32},\end{equation} and for $q<2$
\begin{equation}\label{nrjj1p}
\int_{\cup_i \mathcal{R}_i} |j_1|^q \le C_q R^{\frac32}.\end{equation}

Since $(m_i-1) |\mathcal{R}_i|=- \int_{\p \mathcal{R}_i} g_i$,
we may also apply Lemma \ref{lemrect1} in each  $\mathcal{R}_i$ with $g=g_i$ for boundary data.
  It yields
a function $u_i$ satisfying \eqref{tar1e}. We then define the vector field
$j_2$ as $- \np u_i$ in each $\mathcal{R}_i$.
It satisfies\begin{equation}\label{equaj2glob}
\left\{\begin{array}{ll}
\curl j_2=  \sum_i m_i\indic_{\mathcal{R}_i}-1
 & \text{in } \  \cup_i \mathcal{R}_i\\ [2mm]
j_2 \cdot \tau= g  & \text{on} \ \p \cup_i \mathcal{R}_i
\end{array}\right.
\end{equation}
where $g= j \cdot \tau $ on $\p K_t$ and $0$ on the rest of $\p (\cup_i \mathcal{R}_i)$.
Indeed,  $g_i$ is only supported on the $I_i$ i.e. on  the sides of the $\mathcal{R}_i$
which are in $\p K_t$, so $j_2 \cdot \tau =0$ on all  the boundaries of the $\mathcal{R}_i$ which intersect, therefore again
no curl is created there.
For every $1\le q\le 2p $ we have, since $|I_i|\sim \sqrt R$,
\begin{equation*}
\int_{\mathcal{R}_i}|j_2|^q\le C_{p,q} R^{1-\frac{q}{2p}} \|g_i\|_{L^p(\p \mathcal{R}_i)}^q.
\end{equation*}
Adding  these relations, we obtain
$$
\int_{\cup_i
\mathcal{R}_i}|j_2|^q\le C_{p,q} R^{1-\frac{q}{2p}}\sum_i \( \int_{\p \mathcal{R}_i}
|g_i|^p\)^{\frac{q}{p}}.$$
But when $q/p>1$ we have $\sum_i x_i^{q/p} \le (\sum_i \max(1,x_i))^{q/p}$ and number of $\mathcal{R}_i$'s is $O(\sqrt R)$ hence,
$$
\int_{\cup_i
\mathcal{R}_i}|j_2|^q\le C_{p,q} R^{1-\frac{q}{2p}} \(
\int_{\p K_t } |g|^p + \sqrt R\)^{\frac{q}{p}}.$$
Using \eqref{majojp}, we deduce, for all $1\le q\le 2p$
\begin{equation}\label{ivs}
\int_{\cup_i
\mathcal{R}_i}|j_2|^q\le C_{p,q} R^{1-\frac{q}{2p}+
\frac{q}{p}(2-\frac34)}
\le C_{p,q}
 R^{1+\frac{3q}{4p}} .\end{equation}
 From now on we choose $p \in (\frac{3}{2}, 2)$ and we have for every $q<\frac{4p}{3}$,
\begin{equation}\label{nrjj2glob2}
\int_{\cup_i
\mathcal{R}_i}|j_2|^q\le C_q
R^\sigma, \quad \text{for some $\sigma <2$}.\end{equation}

We can now define $j_R$ more precisely. In $\cup_i \mathcal{R}_i$ we let
$j_R=j_1+ j_2$ and $\nu_R =2\pi\sum_{p \in \Gamma}  \delta_p$.
 By summing \eqref{equaj1glob} and  \eqref{equaj2glob} we have
$$\left\{\begin{array}{ll}
\curl j_R=  2\pi \sum_{p \in \Gamma} \delta_p -1
 & \text{in } \  \cup_i \mathcal{R}_i\\ [2mm]
j_R \cdot \tau= g  & \text{on} \ \p (\cup_i \mathcal{R}_i),
\end{array}\right.$$
where $g = j\cdot \tau$ on $\p (\cup_i \mathcal{R}_i)\cap \p K_t$ and $g=0$ elsewhere. Also,
\begin{equation}\label{nrjjrl}
 \int_{\cup_i \mathcal{R}_i \backslash \cup  B(p, \eta)} |j_R|^2
= \int_{\cup_i \mathcal{R}_i \backslash \cup B(p, \eta)} |j_1|^2 + |j_2|^2 + 2 j_1 \cdot j_2.
\end{equation}
We have
$$\left|\int_{\cup_i \mathcal{R}_i \backslash \cup B(p, \eta)}
j_1 \cdot j_2\right|\le \( \int_{\cup_i \mathcal{R}_i }
|j_1|^{q'}\)^{1/{q'}} \( \int_{\cup_i \mathcal{R}_i }|j_2|^q\)^{1/q}$$
where  $q>2$ and $\frac{1}{q'}= 1-\frac1q.$
Using \eqref{nrjj1p}  and \eqref{nrjj2glob2} where we can choose $q>2$ since $p>\frac{3}{2}$, we find
$$\left|\int_{\cup_i \mathcal{R}_i \backslash \cup B(p, \eta)}
j_1 \cdot j_2\right|\le C  R^{\frac{3}{2q'}+ \frac{\sigma}{q} }=o(R^2),$$
since $\sigma<2$ and $\frac{1}{q'}+\frac1q =1.$ Inserting into
 \eqref{nrjjrl} and combining with \eqref{nrjj12}
 and \eqref{ivs}  we obtain
$$\lim_{\eta\to 0}
\left|\int_{\cup_i \mathcal{R}_i \backslash \cup  B(p, \eta)} |j_R|^2
 + \pi \# \Gamma
\log\eta\right| = O(R^{\frac32} )+o(R^2)= o(R^2).$$
There remains to define $j_R$ in $A:= K_R \sm( K_t
 \cup_i \mathcal{R}_i) $. First we note that $|A|\in 2\pi\mn$.
  Indeed, from $\curl j = \nu -1$ and \eqref{qtvol}, we have 
$$2\pi\#(\Lambda\cap K_t) - |K_t|=  \int_{\p K_t} j\cdot \tau= \sum_i| \mathcal{R}_i| \ (\mathrm{mod} \,  2\pi) ,$$
thus $|A| = |K_R| - \sum_i| \mathcal{R}_i| - |K_t|$  is in $2\pi\mn$ if $|K_R|\in 2\pi\mn$.

The set $A$ can be described as follows: It is the union of four squares of sidelength $R-t$ positioned at the corners of $K_R$, and a union $\cup_i \mathcal{R}_i'$, where $\mathcal{R}_i'$ is a rectangle having a side of length $|I_i|$ in common with $\mathcal{R}_i$, and such that their union is isometric to $I_i\times[0,R-t]$. Since both dimensions of these rectangles as well as those of the four squares tend to $+\infty$ as $R\to +\infty$, and since $|A|\in 2\pi\mn$, it is possible to tile $A$ by rectangles of area $2\pi$ and aspect ratio close to $1$. Applying Lemma~\ref{lemrect2} in each of them yields a current $j_A$ which satisfies  $\curl j_A = 2\pi\sum_{p\in\Gamma'} \delta_p - 1$ in $A$  and $j_A\cdot\tau = 0$ on $\p A$ --- where $\Gamma'$ is the set of centers of the rectangles tiling $A$.

The cardinal of $\Gamma'$ is $|A|/2\pi$, which is $O(R^{1+\frac34})$, and therefore from \eqref{nrjf2} we deduce
\begin{equation}\label{nrjkt}
\lim_{\eta\to 0} \left|\int_{K_R \backslash
( K_t   \cup_i \mathcal{R}_i) } |j_A|^2
+ \pi \#\Gamma'\log \eta\right|=O(R^{\frac74}),\end{equation}
and from \eqref{nrjfp}, for all $1\le q <2$,
\begin{equation}\label{jaq}
\int_{K_R \backslash
( K_t   \cup_i \mathcal{R}_i) } |j_A|^q\le C_q R^{\frac{7}{4}}.\end{equation}

Letting $j_R = j_A$ in $A$ and
$$\Lambda_R = (\Lambda\cap K_t) \cup\Gamma\cup\Gamma',\quad \nu_R =2\pi  \sum_{p\in\Lambda_R}\delta_p$$
we have $j=j_R$ in $K_t$, $\nu_R= \nu$ in $K_t$, $\curl j_R = \nu_R - 1$ in $K_R$, $j_R\cdot \tau = 0$ on $\p K_R$, and combining \eqref{nrjkt}, \eqref{nrjjrl} and \eqref{minowt} we get
\begin{equation}\label{nrjj}
\frac{W(j_R, \indic_{K_R}) }{|K_R|}= W_K(j) + o(1) \quad \text{as} \ R \to +\infty,\end{equation} where the $o(1)$ is uniform with respect to $j \in G$. This completes the proof of Proposition \ref{pw2bis}.
We also note that from \eqref{nrjj1p}, \eqref{nrjj2glob2} and \eqref{jaq},  for every $1\le q<2$, we have
\begin{equation}\label{jllllp}
\int_{K_R\backslash K_t} |j_R|^q \le R^\sigma\quad \text{for some} \  \sigma <2.\end{equation}

\subsubsection{Proof of Lemma \ref{lemmebord}}
Let $G$ satisfy the assumptions of Proposition~\ref{pw2bis},  and $R>2$ with $|K_R|\in 2\pi \mn$, $0 < \gamma<1$ be given. Assume $j\in G$. In this proofs the constants and limits as $R\to +\infty$ are understood to be uniform with respect to $j\in G$.\medskip

{\it Step 1:}
Denote by $g_R$ the result of applying Proposition~\ref{wspread} in
$K_R$ to $(j,\nu)$. We apply \eqref{wg} to functions of the form
$\chi(x) = \rho(\|x\|_\infty)$, i.e. whose level sets are squares,
with the additional assumption that $\rho'(t) = 0$ outside
$[R-2,R-1]$ and $\rho=0$ on $[R-1,+\infty]$. Since for any Radon measure
$\mu$ on $K_R$ we have
$$ \int \chi\,d\mu =  -\int_0^{R-1} \rho'(t)\mu(K_t)\,dt,$$
we deduce that
\begin{equation}\label{rm1}   
\int_{R-2}^{R-1} \(W(j,\indic_{K_t}) - g_{R}(K_t)\) \rho'(t)\,dt  = -  W(j,\chi) + \int \chi \,dg_{R}\le C n (\log n + 1) \|\rho'\|_\infty,\end{equation}
where the last inequality is \eqref{wg}, and $n=\# \{ p \in \Lambda,
B(p, C) \cap \supp \  \nab \chi\neq \varnothing\}$, so that
$2\pi n \le \nu(K_{R+C}) - \nu(K_{R-C})$.  Here $C$ denotes a universal  constant, hence independant of $j\in G$.  
We deduce by  duality that for some universal $C>0$ we have 
\begin{equation}\label{rm2}\int_{R-2}^{R-1}\left|W(j,\indic_{K_t}) -  g_{R}(K_t)\right| \,dt\le C n (\log n + 1).\end{equation}

On the other hand, Lemma \ref{wlp} yields that for any  $p\in [1,2)$ and $R>0$, 
$$\|j\|_{L^p(K_R)} \le CR^{2/p} \log^{1/2} R,$$ where $C$ depends only on $p$ and on the constants in \eqref{hypunif} and \eqref{cvwunif}. Arguing as in the proof of Lemma~\ref{wnu} this implies that 
\begin{equation}\label{rm3}
n \le\frac{1}{ 2\pi} |\nu(K_{R+C})-\nu(K_R)|\le CR^{\theta(1-\frac1p)} \|j\|_{L^p(K_{R+2C})}\le C R^{\theta(1-\frac1p)+\frac2p}\log^{\frac12} R\le CR^\beta\end{equation}
for some $\beta<2$, where we have chosen $p<2$ close enough to $2$ and used $\theta<2$ in \eqref{hypsetsbis}. It then follows from \eqref{rm2}, \eqref{rm3} that 
\begin{equation}\label{carre1}\frac{1}{R^2}
\int_{R-2}^{R-1}\left|W(j,\indic_{K_t}) -  g_{R}(K_t)\right| \,dt \le C R^{\beta-2}\log R.\end{equation} 

Now denote by $\{\chi_R\}_R$ a family of functions satisfying \eqref{defchi} relative to the family $\{K_R\}_R$. We also assume $\chi_R\le 1$. Since $g_{R}\ge -C $ and since $\chi_{R} = 1$ on $K_{R-1}$   we have for any $t\in[R-2,R-1]$ the inequalities
\begin{equation*}
\int \chi_{R-2} \, dg_{R} - C |K_{R-1}\sm K_{R-3}|
\le
g_{R}(K_t)\le \int \chi_{R} \,dg_R + C|K_R\sm K_{R-2}| \le \int \chi_{R} \,dg_R + CR. \end{equation*}
and thus
\begin{equation}\label{carre2}
\int \chi_{R-2} \, dg_{R} - C R
\le
g_{R}(K_t)\le \int \chi_{R} \,dg_R + CR. \end{equation}

{\it Step 2:} For any integer $k\ge 1$ let $\xi_k = \chi_{k+1} - \chi_k$, and let $\xi_0 = \chi_1$. Then $\xi_k\ge 0$, since $\chi_{k+1} = 1$ on $K_k$ and since $\chi_k\le 1$ and is supported in $K_k$. Moreover $\xi_k$ is supported in  $\mathcal{C}_k := K_{k+1}\sm K_{k-1}$. Since \eqref{hypunif} holds,   the number of integers $k$ in $[ R-2R^\gamma+1, R- R^\gamma]$
such that  $\nu(K_{k+2}\sm K_{k-2})\le C R^{2-\gamma} $ is greater than $R^\gamma/2$ if $C$ is chosen large enough. Similarly,
$$\sum_{k=[R-2R^\gamma]}^{[R- R^\gamma]} \int \xi_k\,d g_R = \int (\chi_{[R-R^\gamma]+1} - \chi_{[R-2R^\gamma]})\,dg_R\le CR^2,$$ where we use Corollary~\ref{wgcoro},  Remark \ref{aprescoro}  and \eqref{cvwunif}. Since $g_R\ge -C$ we have $\int\xi_k\,dg_R\ge - CR$ and therefore the number of integer $k$'s between  $[R-2R^\gamma]+1$ and $[R- R^\gamma]$ such that $\int \xi_k\,dg_R\le CR^{2-\gamma}$ is larger than $R^\gamma/2$ if $C$ and $R$ are chosen large enough, and thus such a  $k$ satisfying
$$\nu(K_{k+2}\sm K_{k-2})\le C R^{2-\gamma},\quad \int\xi_k\,dg_R\le C R^{2-\gamma},$$ for some  $C$ uniform with respect to  $j \in G$.

Applying Proposition~\ref{wspread} in $\mathcal{C}_k$ to $\xi_k$, we have
$$\left|W(j,\xi_k) - \int \xi_k\,dg_R\right|\le C R^{2-\gamma} \log R$$
hence $W(j,\xi_k)\le C R^{2-\gamma} \log R$ and applying Lemma~\ref{wlp} we find that for $p<2$,
$$  \int_{C_k} |\xi_k|^{\frac p2} |j|^p   \le C R^{1-\frac p2} \(R^{2-\gamma} \log R\)^{\frac p2}\le C R^{2-\gamma}.$$
But $\xi_k(x) = 1$ if $\|x\|_\infty =k$ and thus from the gradient bound $\xi_k\ge 1/2$ if $k-1/C\le \|x\|_\infty \le k+1/C$. Therefore
$$\int_{K_{k-\frac1C}\sm K_{k+\frac1C}} |j|^p \le C  R^{2-\gamma}.$$
By a mean value argument on this integral as well as on  \eqref{carre1} (applied with $R= k+1$), we deduce  the existence $t \in [k-1, k]$ --- hence $t\in [R-2R^\gamma,R-R^\gamma]$ ---  such that, one the one hand
$$\int_{\partial K_{t}}|j|^p  \le C  R^{2-\gamma},$$ proving \eqref{majojp} since  $C$ is uniform with respect to $j \in G$ --- and on the other hand  
\begin{equation}\label{parta}  g_R(K_t) - W(j,\indic_{K_{t}})\le CR^{\beta}\log R.\end{equation} 
Now,  from \eqref{carre2} applied to $R = k+1$ and using \eqref{wg} in Proposition~\ref{wspread} together with \eqref{rm3} we obtain  
$$W(j,\chi_{R-2}) - C R^\beta\log R\le g_R(K_t) \le W(j,\chi_R) + CR^\beta\log R, $$
Which together with \eqref{parta} and in view of \eqref{cvwunif} yields \eqref{minowt}. Finally,
$$\left|\nu(K_t) - |K_t|\right| = \left|\int_{\p K_t}j\cdot\tau\right| \le \|j\|_{L^p(\p K_t)} |\p K_t|^{1-\frac1p}\le C R^{2-\gamma}, $$
using \eqref{majojp}. Lemma~\ref{lemmebord} is proved.

%\end{proof}

\subsubsection{Proof of Corollary \ref{periodisation}}
 Let $j\in \ainfty$ be such that $W_K(j) <+\infty$. Let $R$ be such that $R^2 \in 8\pi \mn$, and
 $j_R$ be obtained by applying  Proposition \ref{pw2bis}
  over $K = [0,R]\times [0,R]$. We let $\tilde\jmath_R=j_R-\nab\zeta$, where $\Delta\zeta = \div j_R$ on $K$ and $\zeta = 0$ on $\p K$. Then $\tilde \jmath_R = -\np H_R$  in $K$ since $\div \tilde\jmath_R = 0$ there,
and we have $\p_\nu H_R = 0$ on $\p K$ since $\tilde \jmath_R\cdot \tau = j_R\cdot \tau = 0$ there.
 Thus defining $H_R$ on $K_R = [-R,R]\times[-R,R]$ by letting $H_R(\pm x,\pm y) = H_R(x,y)$ we have $-\Delta H_R = \sum_{p\in \Lambda_R} \delta_p -1$, where $\Lambda_R$ is obtained from the restriction of $\curl j+1$ to $K$ by reflections across the coordinate axis. Moreover $H_R(-R,y) = H_R(R,y)$ and $H_R(x,-R) = H_R(x,R)$ so that we may periodize  $H_R$ to have it defined on $\mr^2$.  Then   $j_R:=-\np H_R$
  belongs to $\ainfty$ and since everything is periodic $W$ can be computed through the results of Section  \ref{rem52}:
$$W_K(j_R)=
 \frac{W(j_R,\indic_{K_R})}{|K_R|} \le  W_K(j)  +o(1) \quad
  \text{as}\
 R \to \infty.$$
 The last assertion of the Corollary follows by taking $j$ to minimize $W_K$ over $\ainfty$ (a minimizer exists by Proposition \ref{pw1}), and remembering that the minimum
does not depend on the choice of shapes used.

\subsubsection{Proof of Corollary \ref{mesyoung}}
The proof of Corollary~\ref{mesyoung} consists in constructing a sequence from a Young measure on micropatterns, to use the terminology of \cite{am}, while retaining an energy control. We thus assume $P$ is a probability measure  on $\Lp$ which is invariant under the action of  translations  and concentrated in  $ \ainfty$. \medskip

First we choose distances which metrize the topologies of $\Lp$ and $\B (\Lploc)$, the set of finite Borel measures on $\Lp$. For $j_1,j_2\in\Lp$  we let
$$d_p(j_1,j_2) = \sum_{k=1}^\infty 2^{-k} \frac{\|j_1 - j_2\|_{L^p(B(0,k)) }}{1+\|j_1 - j_2\|_{L^p(B(0,k))}}. $$
On $\B(\Lploc)$ we define a distance by choosing a sequence of bounded continuous functions $\{\vp_k\}_k$ which is dense in $C_b(\Lploc)$ and we let,  for any $\mu_1,\mu_2\in\B(\Lploc)$, 
$$ d_{\B}(\mu_1,\mu_2) = \sum_{k=1}^\infty 2^{-k} \frac{|\lb \vp_k,\mu_1-\mu_2\rb|}{1+|\lb \vp_k,\mu_1-\mu_2\rb|},$$
where we have used the notation $\lb\vp,\mu\rb = \int\vp\,d\mu$.

We have the following general  facts. 

\begin{lem}\label{etavar} For any $\ep>0$ there exists $\eta_0>0$ such that if $P,Q\in\B(\Lploc)$ and $\|P-Q\|<\eta_0$, then $d(P,Q)<\ep$. Here $\|P-Q\|$ denotes the total variation of the signed measure $P-Q$, i.e. the supremum of $\lb\vp,P-Q\rb$ over measurable functions $\vp$ such that $|\vp|\le 1$. 
\end{lem}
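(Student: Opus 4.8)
The plan is to reduce everything to the elementary inequality $\tfrac{t}{1+t}\le t$ for $t\ge 0$, combined with the boundedness of the test functions $\vp_k$. First I would set $M_k:=\|\vp_k\|_{L^\infty(\Lploc)}$, which is finite since $\vp_k\in C_b(\Lploc)$, and record the basic bound: for any signed Borel measure $\mu$ on $\Lp$,
$$|\lb\vp_k,\mu\rb|\le M_k\,\|\mu\|,$$
which follows immediately from the definition of $\|\mu\|$ as the supremum of $\lb\vp,\mu\rb$ over measurable $\vp$ with $|\vp|\le 1$ — apply it to $\vp=\pm\vp_k/M_k$ when $M_k>0$, the inequality being trivial when $M_k=0$.

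Next, fix $\ep>0$. Since $\sum_k 2^{-k}<\infty$, I would choose $N$ so large that $\sum_{k>N}2^{-k}<\ep/2$, and set $C_N:=\sum_{k=1}^N 2^{-k}M_k<\infty$. Then, using $\tfrac{t}{1+t}\le t$ termwise together with the bound above, for any $P,Q\in\B(\Lploc)$ one gets
$$d(P,Q)=\sum_{k=1}^\infty 2^{-k}\,\frac{|\lb\vp_k,P-Q\rb|}{1+|\lb\vp_k,P-Q\rb|}\le\sum_{k=1}^N 2^{-k}|\lb\vp_k,P-Q\rb|+\sum_{k>N}2^{-k}\le C_N\,\|P-Q\|+\frac\ep2 .$$

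Finally I would take $\eta_0:=\dfrac{\ep}{2(C_N+1)}>0$: if $\|P-Q\|<\eta_0$, then $d(P,Q)<C_N\eta_0+\ep/2<\ep/2+\ep/2=\ep$, which is the assertion. There is no genuine obstacle here; the only point requiring care is the order of the quantifiers — $N$ must be fixed first (depending only on $\ep$ and the fixed dense sequence $\{\vp_k\}$, through their sup-norms), and only then is $\eta_0$ chosen — so that the tail of the defining series is absorbed by the geometric weights $2^{-k}$ while the finite head is controlled by the total-variation norm.
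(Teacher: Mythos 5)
Your proof is correct, and it is essentially the only sensible argument: split the defining series for $d(P,Q)$ into a finite head, controlled by the total-variation norm via $|\lb\vp_k,P-Q\rb|\le\|\vp_k\|_\infty\|P-Q\|$ and $\tfrac{t}{1+t}\le t$, and a geometric tail, controlled by $\tfrac{t}{1+t}\le 1$. The paper states this lemma (and the companion Lemma~\ref{etadir}) as "general facts" without proof, so there is nothing to compare against; your order of quantifiers ($N$ first, then $\eta_0$) is exactly the point that needed care, and you have it right.
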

In particular, if $P = \sum_{i=1}^\infty \alpha_i\delta_{x_i}$ and  $Q = \sum_{i=1}^\infty \beta_i\delta_{x_i}$ with $\sum_i|\alpha_i - \beta_i| <\eta_0$, then  $d_{\B}(P,Q)<\ep$.

\begin{lem}\label{etadir} Let $K\subset \Lp$ be compact. For any $\ep>0$ there exists $\eta_1>0$ such that if $x\in K, y\in \Lp$ and $d_p(x,y)<\eta_1$ then $d_{\B}(\delta_x, \delta_y)<\ep$. 
\end{lem}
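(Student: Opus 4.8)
The statement asserts a uniform-in-$x$ modulus of continuity for the map $x\mapsto\delta_x$ from $\Lp$ to $\B(\Lploc)$, but only over a compact set $K$. The plan is to argue by contradiction and use compactness of $K$ together with the weak continuity of the Dirac embedding.

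First I would unwind the definition of $d_{\B}$. Fix $\ep>0$ and choose $N$ so large that $\sum_{k>N}2^{-k}<\ep/2$; then for any $\mu_1,\mu_2$ with $|\mu_i|\le 1$ (which is the case for Diracs) one has $d_{\B}(\delta_x,\delta_y)\le \ep/2 + \sum_{k=1}^{N}2^{-k}|\vp_k(x)-\vp_k(y)|$. So it suffices to make each of the finitely many quantities $|\vp_k(x)-\vp_k(y)|$, $1\le k\le N$, small, uniformly for $x\in K$. Each $\vp_k\in C_b(\Lploc)$ is continuous on $\Lp$, hence uniformly continuous on the compact set $K':=\{y\in\Lp\mid d_p(y,K)\le 1\}$ (closure of the $1$-neighbourhood of $K$, which is compact since $\Lp$ is a metric space and $K$ is compact — more precisely one works inside the compact set $K$ itself and uses that a continuous function on a compact set is uniformly continuous on a slightly larger compact neighbourhood, or simply invokes uniform continuity of $\vp_k$ restricted to the compact $K$ combined with continuity at each point). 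For each such $k$ there is $\eta_1^{(k)}>0$ so that $x\in K$, $d_p(x,y)<\eta_1^{(k)}$ imply $|\vp_k(x)-\vp_k(y)|<\ep/(2N)$. Setting $\eta_1=\min_{1\le k\le N}\eta_1^{(k)}>0$ gives the claim.

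The only subtlety — and the step I would be most careful about — is that uniform continuity of $\vp_k$ is needed over a set containing points $y$ that need not lie in $K$, only close to $K$. This is handled because $d_p(x,y)<\eta_1$ with $x\in K$ forces $y$ to lie in the $\eta_1$-neighbourhood of $K$, which for $\eta_1\le 1$ is contained in a fixed bounded (indeed totally bounded, since $K$ is compact and $\Lp$ is complete) set on which each continuous $\vp_k$ is uniformly continuous; alternatively, one invokes the contrapositive directly: if the conclusion failed there would be $\ep>0$, a fixed $k\le N$, and sequences $x_n\in K$, $y_n\in\Lp$ with $d_p(x_n,y_n)\to 0$ but $|\vp_k(x_n)-\vp_k(y_n)|\ge\ep'$; passing to a subsequence with $x_n\to x\in K$ (compactness of $K$), we also get $y_n\to x$, and continuity of $\vp_k$ at $x$ gives $\vp_k(x_n)\to\vp_k(x)$ and $\vp_k(y_n)\to\vp_k(x)$, a contradiction. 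I would present the contradiction version as it is cleanest and makes the role of compactness of $K$ transparent. This closes the proof.
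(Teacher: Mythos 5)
Your contradiction argument is correct, and since the paper states Lemma \ref{etadir} as a ``general fact'' without proof, there is nothing to compare it against. The reduction to a finite set of $\vp_k$'s by truncating the series defining $d_{\B}$, followed by a sequential-compactness argument in $K$ and continuity of each $\vp_k$, is exactly what one would write.

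One parenthetical remark you make is, however, false and you should delete it: the closed $1$-neighbourhood $K'=\{y\in\Lp\mid d_p(y,K)\le 1\}$ of a compact set in an infinite-dimensional metric space is in general neither compact nor totally bounded (e.g. take $K=\{0\}$ in $\ell^2$: $K'$ is the closed unit ball, which is not totally bounded). So the ``uniform continuity on $K'$'' route does not go through as written. This does not damage the proof, because the version you actually endorse --- assume failure, extract sequences $x_n\in K$, $y_n\in\Lp$ with $d_p(x_n,y_n)\to 0$ and $|\vp_k(x_n)-\vp_k(y_n)|\ge\ep'$, pass to $x_n\to x\in K$, deduce $y_n\to x$, contradict continuity of $\vp_k$ at $x$ --- only ever uses compactness of $K$ itself and pointwise continuity of each $\vp_k$, and is complete and correct. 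Just strike the claim about $K'$ and present the contradiction version as the proof.
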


\begin{lem}\label{etaconv} Let $0<\ep<1$.  If  $\mu$ is a probability measure on a set $A$ and $f,g:A\to \Lp$ are measurable and such that $d_{\B}(\delta_{f(x)}, \delta_{g(x)})<\ep$ for every $x\in A$, then 
$$d_{\B}(f^\#\mu,g^\#\mu) <C \ep(\lep+1).$$
\end{lem}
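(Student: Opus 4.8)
The plan is to work directly with the explicit series defining $d_{\B}$, exploiting the geometric weights $2^{-k}$ to split the sum at a threshold of logarithmic size in $\ep$. Write $\{\vp_k\}_k$ for the fixed dense sequence in $C_b(\Lploc)$ entering the definition of $d_{\B}$, and set $a_k(x) = |\vp_k(f(x)) - \vp_k(g(x))|$ for $x\in A$. Then $\vp_k\circ f$ and $\vp_k\circ g$ are bounded measurable, hence $\mu$-integrable, and the hypothesis $d_{\B}(\delta_{f(x)},\delta_{g(x)}) < \ep$ reads
$$\sum_{k\ge 1} 2^{-k}\,\frac{a_k(x)}{1+a_k(x)} < \ep \qquad \text{for every } x\in A.$$
Since $t\mapsto t/(1+t)$ is nondecreasing and $\bigl|\lb\vp_k, f^\#\mu - g^\#\mu\rb\bigr| = \bigl|\int_A(\vp_k\circ f - \vp_k\circ g)\,d\mu\bigr| \le \int_A a_k\,d\mu$, one has
$$d_{\B}(f^\#\mu, g^\#\mu) \le \sum_{k\ge 1} 2^{-k}\,\frac{\int_A a_k\,d\mu}{1+\int_A a_k\,d\mu}.$$

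The first step is a pointwise estimate on the low-frequency terms. For any $k$ such that $2^k\ep\le 1/2$, the displayed inequality gives $a_k(x)/(1+a_k(x)) < 2^k\ep \le 1/2$ for every $x\in A$, and inverting $t\mapsto t/(1+t)$ yields $a_k(x) \le 2\cdot 2^k\ep$ uniformly in $x$; hence $\int_A a_k\,d\mu \le 2^{k+1}\ep$ because $\mu$ is a probability measure. Let $K_0$ be the largest integer with $2^{K_0}\ep \le 1/2$, so that $K_0 \le C(\lep+1)$.

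Then I would split the series at $K_0$. For $1\le k\le K_0$, using $\int_A a_k\,d\mu \le 2^{k+1}\ep$ together with $t/(1+t)\le t$ gives $2^{-k}\,\int_A a_k\,d\mu/(1+\int_A a_k\,d\mu) \le 2\ep$, so the block $\sum_{k=1}^{K_0}$ contributes at most $2\ep K_0 \le C\ep(\lep+1)$. For $k > K_0$, bounding $t/(1+t)\le 1$ gives $\sum_{k>K_0} 2^{-k} = 2^{-K_0} \le 4\ep$. Adding the two contributions yields $d_{\B}(f^\#\mu, g^\#\mu) \le C\ep(\lep+1)$, the claimed bound. When $\ep$ is not small, $K_0$ may be empty or negative, and one instead invokes the trivial bound $d_{\B}\le 1$, absorbed into the constant $C$.

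I do not expect a genuine obstacle here. The one subtlety worth flagging is that one cannot replace $\min\bigl(\int_A a_k\,d\mu,\,1\bigr)$ by $\int_A\min(a_k,1)\,d\mu$ — a function concentrated on a set of small $\mu$-measure destroys such an interchange — which is precisely why the argument passes through the \emph{uniform pointwise} bound $a_k\le 2^{k+1}\ep$, valid only for the indices $k\le K_0$, and why the logarithmic loss $\lep$ is genuinely present: it counts the number of terms in the first block.
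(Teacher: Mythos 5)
Your proof is correct and follows essentially the same route as the paper: both pass from the pointwise bound $a_k(x)/(1+a_k(x))\le 2^k\ep$ to the bound $\min(2^k\ep,1)$ on the $k$-th term of the series for $d_{\B}(f^\#\mu,g^\#\mu)$, then split the sum at $k\approx\log_2(1/\ep)$ to collect $C\ep(\lep+1)$. You spell out the intermediate uniform estimate $a_k\le 2^{k+1}\ep$ slightly more explicitly than the paper's terse $\sum_k 2^{-k}\min(2^k\ep,1)$, but the decomposition is the same.
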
 
\begin{proof} Take any bounded continuous function $\vp_k$ defining the distance on  $\B(\Lploc)$. Then if $d_{\B}(\delta_{f(x)}, \delta_{g(x)})<\ep$ for any $x\in \Lp$ we have in particular  
$$\frac{|\vp_k(f(x)) - \vp_k(g(x))|}{1+ |\vp_k(f(x)) - \vp_k(g(x))|}\le 2^k \ep.$$
It follows that  
$$ d_{\B}(f^\#\mu,g^\#\mu) \le \sum_k 2^{-k} \min(\ep 2^k, 1)\le \ep \({[\log_2\ep]+1}\) + \sum_{k={[\log_2\ep]+1}}^\infty 2^{-k} \le C\ep(\lep+1). $$
\end{proof} \medskip

\noindent{\it Selection of a good subset of  $\Lploc$.} We must restrict to a compact subset of $\Lp$ in a suitable way. This is not surprising when constructing an approximation: note that the set of micropatterns in \cite{am} is assumed to be compact, and we need to reduce to this case. This is the aim of the following Lemma. 

\begin{lem}\label{GH}
Given $P$ as above and $\ep, R>0$ there exist subsets $H_\ep \subset G_\ep $ in  $\Lp$ with $G_\ep$ compact and  such that:
\begin{itemize}
\item[i)] $\eta_0$ being given by Lemma~\ref{etavar} we have 
 \begin{equation}\label{pgep} P({G_\ep}^c) <\min({\eta_0}^2,\eta_0 \ep),\quad  P(H_\ep^c) < \min(\eta_0,\ep).\end{equation}
\item[ii)] For every $j\in H_\ep$ there is a subset  $\Lj\subset K_R$ such that 
\begin{equation}\label{gamj}\text{$|\Lj|< C {R}^2 \eta_0$ and 
 $ \lambda\notin \Lj\implies \theta_\lambda j\in G_\ep.$}\end{equation}
\item[iii)]
\begin{equation}\label{wunif}\text{$W_K(j)$ and $\frac{\nu(K_t)}{t^2}$ are bounded uniformly with respect to $j\in G_\ep$ and  $t>1$,}\end{equation}
 where $\curl j = \nu-1$, and the convergence in the definition of $W_K(j)$  is uniform.
\item[iv)] We have 
 \begin{equation}\label{ppseconde}  d_{\B}(P,P'') < C \ep(\lep+1),\quad\text{where} \quad 
P'' = \int_{H_\ep}\frac1{|K_{R}|}  \int_{K_{R}\sm\Lj} \delta_{\theta_\lambda j}\,d\lambda\,d P(j).\end{equation}
\end{itemize}
Moreover, there exists a partition  $H_\ep=\cup_{i=1}^{N_\ep} H_\ep^i$  such that  $\diam (H_\ep^i) <\eta_3$, where $\eta_3$ is such that 
\begin{equation}\label{eta5} 
j \in H_\ep, \ d_p(j,j') <\eta_3 ,\ \lambda \in K_{R}\backslash \Lj
    \implies
d_{\B} (\delta_{\theta_\lambda j}, \delta_{\theta_\lambda j'}) <\ep;\end{equation}  
 and   for all $1\le i\le N_\ep$ there exists  $J_i \in H_\ep^i$ such that 
 \begin{equation}\label{wjk} W_K(J_i) < \inf_{H_\ep^i} W_K +\ep.\end{equation}
\end{lem}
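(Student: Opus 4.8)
The plan is to construct, for each fixed $R$ and $\ep>0$, the sets $G_\ep$, $H_\ep$ and the partition $\{H_\ep^i\}$ by a sequence of elementary measure-theoretic reductions, each exploiting the inner regularity of the probability measure $P$ on the Polish space $\Lp$ and the invariance of $P$ under translations. The main idea is: first extract a compact $G_\ep$ carrying almost all the mass, with the uniform $W_K$-control; then use ergodic/Fubini-type arguments to control, for $P$-almost every $j$, the Lebesgue measure of the ``bad'' set of translates $\lambda$ for which $\theta_\lambda j\notin G_\ep$; then apply compactness to get the finite partition with the equicontinuity property \eqref{eta5}; and finally combine the small errors using Lemmas~\ref{etavar}, \ref{etadir}, \ref{etaconv}.

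Here is the order I would carry it out. \textbf{Step 1 (compact set with uniform energy control).} Since $P$ is concentrated on $\ainfty$, the sets $\{j\mid W_K(j)\le k,\ \nu(K_t)\le kt^2\ \forall t>1\}$ increase to a set of full measure, and (by the proof that $W_U$ is measurable, via Lemma~\ref{wsci}) on each such set $W_K$ is the uniform limit of its truncations; by Egorov's theorem applied to the sequence $W(j,\chi_{K_t})/|K_t|\to W_K(j)$, and inner regularity of $P$, we can find a compact $G_\ep$ on which this convergence is uniform and on which \eqref{wunif} holds, with $P(G_\ep^c)<\min(\eta_0^2,\eta_0\ep)$; this is exactly the hypothesis of Proposition~\ref{pw2bis} and of Remark~\ref{aprescoro}. \textbf{Step 2 (controlling bad translates).} Define $\Lj=\{\lambda\in K_R\mid \theta_\lambda j\notin G_\ep\}$. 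By Fubini and translation invariance of $P$,
\begin{equation*}
\int_{\Lp}\frac{|\Lj|}{|K_R|}\,dP(j)=\frac1{|K_R|}\int_{K_R}P(\{j\mid \theta_\lambda j\notin G_\ep\})\,d\lambda = P(G_\ep^c)<\eta_0^2,
\end{equation*}
so by Chebyshev the set $H_\ep:=\{j\in G_\ep\mid |\Lj|<C R^2\eta_0\}$ (for a suitable universal $C$) has $P(H_\ep^c)<\min(\eta_0,\ep)$, giving \eqref{pgep}, \eqref{gamj}, \eqref{wunif}. \textbf{Step 3 (the measure $P''$ is close to $P$).} By translation invariance, $P=\int_{\Lp}\frac1{|K_R|}\int_{K_R}\delta_{\theta_\lambda j}\,d\lambda\,dP(j)$; replacing $\Lp$ by $H_\ep$ and $K_R$ by $K_R\setminus\Lj$ removes a set of $P$-mass $\le P(H_\ep^c)+\sup_j |\Lj|/|K_R|<\ep$ in total variation, so $\|P-P''\|<C\ep$, whence \eqref{ppseconde} follows from Lemma~\ref{etavar} (and a moment's care to fold the $\lep$ factor in, as in Lemma~\ref{etaconv}, if one wants the stated form). \textbf{Step 4 (finite partition with equicontinuity and near-optimal representatives).} The map $(j,\lambda)\mapsto \theta_\lambda j$ is continuous on the compact set $G_\ep\times \overline{K_R}$, hence uniformly continuous, so there is $\eta_3>0$ for which \eqref{eta5} holds (using Lemma~\ref{etadir} with the compact set $\{\theta_\lambda j\mid j\in G_\ep,\lambda\in\overline{K_R}\}$). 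Cover the compact $H_\ep$ by finitely many balls of radius $<\eta_3$, disjointify them into a Borel partition $H_\ep=\cup_{i=1}^{N_\ep}H_\ep^i$, and in each piece pick $J_i$ with $W_K(J_i)<\inf_{H_\ep^i}W_K+\ep$, which exists by definition of infimum. This gives \eqref{wjk}.

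The step I expect to be the main obstacle is \textbf{Step 1}: one must be careful that $W_K$ is only Borel (not continuous) and not lower semicontinuous, so ``compact on which $W_K$ is bounded'' is not automatically enough — what is really needed for the later construction (Proposition~\ref{pw2bis}) is the \emph{uniform} convergence $W(j,\chi_{K_t})/|K_t|\to W_K(j)$ over $G_\ep$, together with the uniform density bound. Securing this requires combining Egorov's theorem (applied to the sequence of functions $j\mapsto W(j,\chi_{K_t})/|K_t|$, which are Borel and converge pointwise $P$-a.e. to $W_K$), the inner regularity of $P$ to pass to a compact subset, and the a priori bound from Lemma~\ref{wlp} to make sure the density bound \eqref{hypunif} survives the restriction. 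Once the uniform statement \eqref{wunif} is in hand, Steps 2–4 are routine: Fubini plus Chebyshev for the bad-translate control, total-variation estimates plus Lemmas~\ref{etavar}–\ref{etaconv} for the closeness of $P''$ to $P$, and compactness (uniform continuity of the action) for the finite partition.
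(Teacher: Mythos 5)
Your proposal follows essentially the same route as the paper: inner regularity plus Egoroff to get the compact $G_\ep$ with uniform $W_K$-convergence (precisely so that Proposition~\ref{pw2bis} will later apply, via Remark~\ref{aprescoro}); Fubini and Chebyshev to build $H_\ep$ and control $|\Lj|$; and finite covering of the (relatively) compact $H_\ep$ to get the partition and the near-optimal representatives $J_i$. Two aspects of your write-up are in fact slightly cleaner than the paper's. In Step~3 you use the \emph{exact} identity
$P=\int_{\Lp}\dashint_{K_R}\delta_{\theta_\lambda j}\,d\lambda\,dP(j)$,
valid by translation invariance and Fubini, rather than routing through the intermediate measures $P_\lambda$ and Lemma~\ref{etaconv} as the paper does; this gives $P-P''$ directly as a positive measure controlled in total variation by the removed mass, and the factor $\lep+1$ in \eqref{ppseconde} becomes unnecessary (it appears in the paper as an artifact of applying Lemma~\ref{etaconv}). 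In Step~4 you apply Lemma~\ref{etadir} to the larger compact set $K'=\{\theta_\lambda j:\ j\in G_\ep,\ \lambda\in\overline{K_R}\}$ (compact because $(j,\lambda)\mapsto\theta_\lambda j$ is jointly continuous on $\Lp\times\mr^2$, which does hold for translation on $L^p_{\rm loc}$), which yields \eqref{eta5} for \emph{all} $\lambda\in K_R$, strictly stronger than needed; the paper instead uses $G_\ep$ itself as the compact set, which forces the restriction $\lambda\notin\Lj$.

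Two small slips, neither fatal. First, $H_\ep$ is not compact, only relatively compact (a subset of $G_\ep$ cut out by the condition on $|\Lj|$, whose semicontinuity is unclear); your finite-ball covering should be of $\overline{H_\ep}\subset G_\ep$, then intersected with $H_\ep$. Second, in Step~3 the total-variation bound on $P-P''$ is not ``$<\ep$'' but rather $<P(H_\ep^c)+\sup_{j\in H_\ep}|\Lj|/|K_R|<\min(\eta_0,\ep)+C\eta_0$, i.e.\ of order $\eta_0$, not $\ep$; the correct phrasing is that this quantity is a constant multiple of $\eta_0$, and after a harmless rescaling of $\eta_0$ in the definitions of $G_\ep$, $H_\ep$ (which the paper also leaves implicit), Lemma~\ref{etavar} then gives $d_{\B}(P,P'')<\ep$, from which \eqref{ppseconde} follows. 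Your parenthetical about ``folding in the $\lep$ factor via Lemma~\ref{etaconv}'' is actually unnecessary given your cleaner Step~3, and slightly obscures that the argument only needs Lemma~\ref{etavar}.
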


\begin{proof}\medskip\hbox{}

\noindent{\it Choice of $G_\ep$.}  Since $\Lp$ is Polish we can always find a compact set  $G_\ep$ satisfying \eqref{pgep} and  $P({G_\ep}^c)<\eta_0$. Then from Lemma~\ref{etavar},  $P\llcorner G_\ep$ (the restriction  of $P$ to $G_\ep$)  satisfies $d_{\B}(P,P\llcorner G_\ep)<\ep.$

From the translation-invariance of $P$, we have for any $\lambda$ that  $P(\theta_\lambda G_\ep) >1-\eta_0$ and therefore that $d_{\B}(P, P\llcorner \theta_\lambda G_\ep)< \ep$. It follows that for any $\lambda\in\mr^2$ we have $\|P-P_\lambda\|<\eta_0$ and then $d_{\B}(P,P_\lambda)<\ep$, where  
$$P_\lambda = \int_{\theta_\lambda G_\ep} \delta_{j}\,dP(j) = \int_{G_\ep} \delta_{\theta_\lambda  j}\,dP(j).$$
Then using Lemma~\ref{etaconv} we deduce that if $A\subset \mr^2$ is any measurable set of positive measure, then  
\begin{equation}\label{pprime} d_{\B}(P,P') < C \ep(\lep+1),\quad\text{where}  \quad P' = \int_{G_\ep}\dashint_A \delta_{\theta_\lambda j}\,d\lambda\,dP(j).\end{equation}

Moreover, since $P$ is invariant, choosing $\chi$ to be a smooth positive function with integral $1$ supported in $B(0,1)$, the ergodic theorem  (as in \cite{becker}) ensures that  for $P$-almost every $j$ the limit 
$$\lim_{t\to +\infty} \frac1{|K_t|} \int_{K_t} W(j(\lambda+\cdot),\chi(\lambda+\cdot))\,d\lambda$$
exists. Then   $\indic_{K_t} * \chi$ is  a family of functions  which satisfies \eqref{defchi} with respect to the family of squares $\{K_t\}_t$, and from the definition of the renormalized energy relative to $\{K_t\}_t$  we may rewrite the limit above as 
\begin{equation}\label{cvw} W_K(j) = \lim_{t\to +\infty} \frac1{|K_t|} W(j,\indic_{K_t} * \chi). \end{equation}
By Egoroff's theorem we may choose the compact set $G_\ep$ above to be such that, in addition to \eqref{pprime}, the convergence in \eqref{cvw}  is uniform on $G_\ep$. In fact, since  $W_K(j)<+\infty$ and $\limsup_t\nu(K_t)/t^2<+\infty$ for $P$-a.e. $j$,  where $\curl j = \nu-1$, we may choose  $G_\ep$ such that \eqref{wunif} holds.

The difficulty we have to face next is that $\theta_\lambda j$ need not belong to $G_\ep$ if $j$ does. \medskip

\noindent{\it Choice of $H_\ep$.}  For $j\in G_\ep$,   let $\Lj$ be the set of $\lambda$'s in $ K_{R}$ such that $\theta_\lambda j\not\in G_\ep$. Since, from \eqref{pgep} and the translation-invariance of $P$, for any $\lambda\in\mr^2$ we have $P(\theta_\lambda (G_\ep)^c)<{\eta_0}^2$, it follows from Fubini's theorem that 
$$\int_{G_\ep} |\Lj|\,dP(j) = \int_{K_R} P((\theta_\lambda G_\ep)^c)\,d\lambda < 4 R^2\min({\eta_0}^2,\eta_0 \ep).$$
Therefore, letting 
\begin{equation}\label{defhep} H_\ep = \{j\in G_\ep: |\Lj| <  4R^2  \eta_0\}, \end{equation}  we have that 
\eqref{pgep} holds. Combining \eqref{pgep} and \eqref{defhep} with Lemma~\ref{etavar}, we deduce from \eqref{pprime} that  \eqref{ppseconde} holds. 

Then we use the fact that $G_\ep$ is compact and  Lemma~\ref{etadir} to find that  there exists $\eta_4>0$ such that  
\begin{equation}\label{eta4} \forall j \in G_\ep,\forall j'\in\Lp,\quad d_p(j,j')<\eta_4\  \implies\  d_{\B}\(\delta_{ j},\delta_{  j'}\)<\ep. \end{equation}   Moreover, from the continuity of $(\lambda,j)\mapsto \theta_{\lambda}  j$, there exists $\eta_3>0$ such that 
$$\forall j\in G_\ep, \forall \lambda\in K_R,\quad d_p(j,j')<\eta_3\ \implies\ d_p\(\theta_\lambda j,\theta_\lambda j'\)<\eta_4. $$
Now if $j \in H_\ep$ and  $\lambda \in K_R$ this implies that if $d_p(j,j')<\eta_3$ then $ d_p\(\theta_\lambda j,\theta_\lambda j'\)<\eta_4$. But if $\lambda\notin \backslash \Lj $ we have $\theta_\lambda j \in G_\ep$ hence applying  \eqref{eta4} to $\theta_\lambda j$, $\theta_\lambda j'$,  we get \eqref{eta5}.\medskip

\noindent{\it Choice of $J_1,\dots,J_{N_\ep}$.}  
Now we cover the relatively compact $H_\ep$ by a finite number of balls $B_1,\dots, B_{N_\ep}$ of radius $\eta_3/2$ and  derive from it a partition of $H_\ep$ by sets with diameter less than $\eta_3$  by letting $H^1_\ep = B_1\cap H_\ep$  and 
$$H^{i+1}_\ep = B_{i+1}\cap H_\ep\sm \(B_1\cup\dots\cup B_i\).$$
We then have 
\begin{equation}\label{partition} H_\ep = \bigcup_{i=1}^{N_\ep} H^i_\ep,\quad \diam(H^i_\ep)<\eta_3(R),\end{equation}
where the union is disjoint.  Then we may  choose $J_i\in H_\ep^i$ such that  \eqref{wjk} holds.
\end{proof} \medskip

\noindent{\it Completion of  the construction.}
First we apply Proposition~\ref{pw2bis} with $G = G_\ep$. The proposition yields $R_0>1$ such that for any  $j\in G_\ep$ and any $R>R_0$ such that $|K_R|\in 2\pi\mn$ there exists $j_R$ defined in $K_R$ such that \eqref{eqjr} is satisfied and  such that,  if $ x\in K_{R - 2R^{3/4} } $,  then $j_R(x)=j(x)$. Moreover, 
\begin{equation}\label{restronque} \frac{W(j_R,\indic_{K_R})}{|K_R|} \le  W_K(j)+\ep.\end{equation}

We choose  $R_\ep>R_0$ such that $|K_{R_\ep}|\in 2\pi\mn$ and large enough so that   
\begin{equation}\label{aspratio} K_{R_\ep (1-\eta_0)}\subset \{x: d(x,{K_{R_\ep}}^c)> {R_\ep}^{\tq}\}.\end{equation}
where $\eta_0$ is the constant in Lemma~\ref{etavar}.  

If $\lambda\in K_{R_\ep (1-\eta_0)}$ and  since $j(x)=j_{R_\ep}(x)$ if $d(x,{K_{R_\ep}}^c)> {R_\ep}^{\tq}$, we deduce from \eqref{aspratio} that  $\theta_\lambda j_{R_\ep} = \theta_\lambda j$ in $B(0, {R_\ep}^{\tq})$ as soon as $\eta_0 R_\ep> 2{R_\ep}^\tq$,  so that  from the definition of $d_p$,  taking  $R_\ep$ larger if necessary,  
\begin{equation}\label{proxitronque}  \forall j'\in\Lp,\forall\lambda\in K_{R_\ep(1-\eta_0)},\quad\text{$j' = j_{R_\ep}$ on $K_{R_\ep}$} \implies d_p(\theta_\lambda j, \theta_\lambda j') <\eta_1,\end{equation}
where $\eta_1$ comes from Lemma~\ref{etadir} applied on $G_\ep$, i.e.  is such that 
\begin{equation}\label{eta1}
\text{$j\in G_\ep,j'\in\Lp$ and $d_p(j,j')<\eta_1$} \implies d_{\B}(\delta_{j},\delta_{j'})<\ep.\end{equation}
Having chosen $R_\ep$, we get from Lemma~\ref{GH} a set $H_\ep$ and a partition $H_\ep=\cup_{i=1}^{N_\ep} H_\ep^i$ and in each $H_\ep^i$ a current $J_i$ satisfying \eqref{wjk}. We also choose an arbitrary $J_0\in\ainfty$ such that $W_K(J_0)<+\infty$.

Second we choose an integer $q_\ep$ large enough so that 
\begin{equation}\label{qep} \frac {N_\ep}{{q_\ep}^2} < \eta_0,\qquad \frac{N_\ep}{{q_\ep}^2}\times \max_{0\le i\le N_\ep} W_K(J_i)< \ep.\end{equation}

Now  if  $j\in H_\ep^i$  then $d_p(j,J_i)<\eta_3$ and   we deduce  from \eqref{eta5} that  for every $\lambda\in K_{R_\ep}\sm\Lj$ we have  
\begin{equation}\label{crucial} d_{\B}\(\delta_{\theta_\lambda j}, \delta_{\theta_\lambda J_i}\)<\ep.\end{equation}
Using \eqref{crucial} together with  Lemmas~\ref{etadir}, \ref{etavar},  and the bound  \eqref{gamj}, we deduce from \eqref{ppseconde} that $d_{\B}(P,P''')< C\ep(\lep +1)$, where 
\begin{equation}\label{p3} P''' = \sum_{1\le i\le N_\ep} p_{i}\dashint_{K_{R_\ep}} \delta_{\theta_\lambda J_i}\,d\lambda,\quad\text{where}\quad 
 p_{i} = P\(H_\ep^i\).\end{equation}
We now replace $p_{i}$ in the definition \eqref{p3}  by 
\begin{equation}\label{nkk} \frac{n_i}{{q_\ep}^2} ,\quad\text{where}\quad n_{i} = \left[{q_\ep}^2 p_{i}\right].\end{equation}
Then $\sum_{i=1}^{N_\ep} n_{i}\le {q_\ep}^2$ and 
  \begin{equation}\label{diffpn}
\sum_{1\le i \le N_\ep} 
\left|\frac{n_i}{{q_\ep}^2}  - p_{i}\right|<  \frac{N_\ep}{{q_\ep}^2}<\eta_0.\end{equation}
Then Lemma~\ref{etavar} implies that $d_{\B}(P,P^{(4)})< C\ep(\lep +1)$ where 
\begin{equation}\label{p3bis} P^{(4)} = 
\sum_{1\le i\le N_\ep} \frac{n_{i}}{{q_\ep}^2}\dashint_{K_{R_\ep}} \delta_{\theta_\lambda J_i}\,d\lambda.\end{equation}

Now we let $K_\ep = [-q_\ep R_\ep, q_\ep R_\ep]^2$, and $n_{0} := {q_\ep}^2 - \sum_{i=1}^{N_\ep} n_{i}$, so that $\sum_{i=0}^{N_\ep} n_{i} = {q_\ep}^2$. Then we divide $K_\ep$ in a collection $\LK$ of ${q_\ep}^2$ identical subrectangles which are translates of $K_{R_\ep}$, and for each $0\le i\le N_\ep$  we choose $n_i$ subrectangles in an arbitrary way and  call the collection of these subrectangles $\LKk$, so that $\{\LKk\}_{0\le i\le N_\ep}$ is a partition of $\LK$. 

Let us call  $J_{i,R_\ep}$ the currents obtained from $J_i$ using Proposition~\ref{pw2bis}. They satisfy \eqref{restronque} and \eqref{proxitronque}. We claim that,  as a  consequence of the latter, we have for any $L\in\LKk$ that 
\begin{equation}\label{claimtronque}\text{$j' = J_{i,R_\ep}$ on $K_{R_\ep}$} \implies d_{\B}\(\dashint_{K_{R_\ep}} \delta_{\theta_\lambda J_i}\,d\lambda,\dashint_{K_{R_\ep}} \delta_{\theta_\lambda  j'}\,d\lambda\) < C\ep(\lep +1).\end{equation}
This goes as  follows: (i)  Using \eqref{gamj} and Lemma~\ref{etavar},  integrating on $K_{(1-\eta_0)R_\ep}\sm\Lji$  instead of $K_{R_\ep}$ induces an error of $C\ep$. (ii) From \eqref{proxitronque}, and \eqref{eta1} applied to $\theta_\lambda J_i$ by $\theta_\lambda j'$ we have $d_{\B}(\delta_{\theta_\lambda J_i}, \delta_{\theta_\lambda j'}) < \ep$ and thus in view of Lemma~\ref{etaconv} we may replace $\theta_\lambda J_i$ by $\theta_\lambda j'$ in the integral with  an error of $C\ep\lep$ at most. (iii) Using \eqref{gamj} and Lemma~\ref{etavar} again, we may integrate back on $K_{R_\ep}$ rather than on $K_{(1-\eta_0)R_\ep}\sm\Lji$, with an additional error of $C\ep$. this proves \eqref{claimtronque}. 

Then combining  \eqref{claimtronque} with \eqref{p3bis} and $d_{\B}(P,P^{(4)})< C\ep(\lep +1)$, using Lemma~\ref{etaconv} we find $d_{\B}(P,P^{(5)})< C\ep(\lep +1)$, where 
\begin{equation}\label{PQ} P^{(5)} = \sum_{1\le i\le N_\ep}\frac{n_i}{{q_\ep}^2}\dashint_{K_{R_\ep}} \delta_{\theta_\lambda \tilde J_{i,R_\ep}}\,d\lambda,\end{equation}
and $\tilde J_{i,R_\ep}$ is an arbitrary field in $\Lp$ such that $\tilde J_{i,R_\ep} = J_{i,R_\ep}$ on $K_{R_\ep}$, the constant $C$ being independent of the choice of $\tilde J_{i,R_\ep}$. 

We chose above an arbitrary $J_0$ in $\mathcal A_1$ such that $W_K(J_0)<+\infty$. Let the sum in \eqref{PQ} range over $0\le i\le N_\ep$ instead of $1\le i\le N_\ep$, this defines a measure $P^{(6)}$ such that, by \eqref{qep},
\begin{equation}\label{ppn0}\|P^{(5)} - P^{(6)}\|\le \frac{n_0}{{q_\ep}^2}\le \eta_0,\end{equation}
where we have used \eqref{diffpn} and the fact that $1-\sum_i p_i = P({H_\ep}^c) < \eta_0$, from \eqref{p3}, \eqref{pgep}. Hence using Lemma~\ref{etavar} we have $d_{\B}(P^{(5)},P^{(6)})<\ep$ and then $d_{\B}(P,P^{(6)})< C\ep(\lep +1)$. 
 
We now define the vector field $\jnint:\mr^2\to\mr^2$  by letting  $\jnint(x) = J_{i,R_\ep} (x-x_{L})$ on every $L\in \LKk$, $0\le i\le N_\ep$ --- where $x_L$ is the center of $L$ and thus $L = x_L+ K_{R_\ep}$ ---  and by requiring $\jnint$ to be $K_\ep$-periodic. For every $L\in\LKk$ we have $\jnint(x_L+\cdot) = J_{i,R_\ep}$ on $K_{R_\ep}$, therefore we may choose $\tilde J_{i,R_\ep} = \jnint(x_L+\cdot)$ in \eqref{PQ} and then
$$\int_{K_\ep} \delta_{\theta_\lambda \jnint}\,d\lambda = \sum_{\substack{0\le i\le N_\ep\\ L\in \LKk}} \int_L \delta_{\theta_\lambda \jnint}\,d\lambda = \sum_{\substack{0\le i\le N_\ep\\ L\in \LKk}} \int_{K_{R_\ep}} \delta_{\theta_\lambda \jnint(x_L+\cdot)}\,d\lambda = \sum_{0\le i\le N_\ep} n_i \int_{K_{R_\ep}} \delta_{\theta_\lambda \tilde J_{i,R_\ep}}\,d\lambda.$$
Therefore we may summarize the discussion concerning $P^{(5)}$, $P^{(6)}$ by writing  
\begin{equation}\label{resume} d_{\B}(P,P^{(6)})< C\ep(\lep +1), \quad P^{(6)} = \dashint_{K_\ep} \delta_{\theta_\lambda \jnint}\,d\lambda.\end{equation}

Note that since $J_{i,R_\ep} = 0$ outside $K_{R_\ep}$, and $J_{i,R_\ep}\cdot\tau = 0$ on $\p K_{R_\ep}$ we have, in $K_\ep$,  
\begin{equation}\label{jnint} \text{$\D\jnint=\sum_{\substack{1\le i\le N_\ep\\ L\in\LKk}} J_{i,R_\ep} (\cdot-x_{ L}),\quad \curl \jnint = 2\pi \sum_{p\in\Lambda_\ep}\delta_p - 1,\quad \jnint\cdot\tau = 0$ on $\p K_\ep$.}\end{equation}
where $\Lambda_\ep$ is a finite subset of the interior of $K_\ep$. This completes the construction of $\jnint$.\medskip

\noindent{\it Estimate of the energy.} We have
$$W(\jnint,\indic_{K_\ep})= \sum_{\substack{0\le i\le N_\ep\\ L\in\LKk}} W(J_{i,R_\ep} (\cdot-x_{L}),\indic_L)= \sum_{\substack{0\le i\le N_\ep\\ L\in\LKk}} W(J_{i,R_\ep},\indic_{K_{R_\ep}}).$$
From \eqref{restronque} applied to the $J_i$'s we deduce that 
\begin{equation}\label{wkk} W(\jnint,\indic_{K_\ep}) \le |K_{R_\ep}|  \sum_{i=0}^{N_\ep} n_i\( W_K(J_i) + C \ep\).\end{equation}
Now from \eqref{diffpn} and the fact that $1-\sum_i p_i = P({H_\ep}^c) < \ep$ we deduce that $n_0 \le N_\ep + {q_\ep}^2 \ep$, and then from \eqref{qep} that $n_0 \le C\ep {q_\ep}^2$, where we have included $W_K(J_0)$ in the constant. This and \eqref{wkk}, together with the estimates \eqref{diffpn}, \eqref{qep} and the definition of $p_i$ in \eqref{p3},  implies that 
$$W(\jnint,\indic_{K_\ep}) \le {q_\ep}^2|K_{R_\ep}|\( \sum_{i=1}^{N_\ep} P(H_\ep^i) W_K(J_i) +C \ep\),$$
and then from \eqref{wjk},  and since $|K_\ep| = {q_\ep}^2|K_{R_\ep}|$, that 
\begin{equation}\label{ejnint}W(\jnint,\indic_{K_\ep}) \le |K_{\ep}|\(\int_{H_\ep}W_K(j)\,dP(j) + C \ep\)\le |K_{\ep}|\(\int W_K(j)\,dP(j) + C \ep\),\end{equation}
using \eqref{pgep} and the fact from Proposition~\ref{pw1} that $W_K$ is bounded below. 

Choosing a sequence $\{\ep\}\to 0$ we thus obtain a sequence $\{R\}$ tending to $+\infty$, where $R = q_\ep R_\ep$ and a sequence of currents $\{j_R\}$, where $j_R = j_\ep$ such that \eqref{c4i} is satisfied --- this is \eqref{jnint} ---  and $\D\limsup_{R\to\infty} \frac{W(j_R,\indic_{K_R})}{|K_R|}
\le\int  W_K(j) \, dP(j)$ --- this is \eqref{ejnint}. Moreover from \eqref{resume} we have $P_R\to P$, using the notations of Corollary~\ref{mesyoung}. Thus Corollary~\ref{mesyoung} is proved.

\part{From Ginzburg-Landau to the renormalized energy}

\section{The energy-splitting formula and the blow up}\label{sec-splitting}
In this section, we return to the Ginzburg-Landau energy and  prove an algebraic splitting formula on it  already discussed in Section \ref{sec1.8}, as well as results on the splitting function. We recall that if $\lim_{\ep\to 0} \he/\lep>\lambda_\om$, then $\ho$ may be used as the splitting function. Only when the limit is equal to $\lambda_\om$ does one need to use $\tho$ with $N\neq N_0$ instead.

We recall $\ho$ is the minimizer of \eqref{obs1} and $\tho$ is given by \eqref{deftho}.
 We also introduce the notation of the appendix: for $m \in (- \infty, 1]$,
 $\hm$ denotes the minimizer of
 \begin{equation}\label{mop}
 \min_{h- 1\in H^1_0(\om)} (1-m)\io |- \Delta H + H| + \hal \io |\nab H|^2
 + |H-1|^2.\end{equation} $\hm$ is the solution of an obstacle problem and its properties
 are studied in the appendix. In particular $\hm \ge m$ and $-\Delta \hm + \hm = m \indic_{\omega_m}$, where $\omega_m = \{\hm = m\}$ is the so-called coincidence set. We have
\begin{lem}\label{lemrelated}
For any $ 0 \le N \le\frac{|\om|\he}{2\pi}$ there exists a unique $ m
 \in [1- \frac{1}{2\lambda_\om} , 1] $ (and conversely) such that
$\tho = \he \hm$, and $m $ and $N$ are related by
$2\pi N= \he m |\omega_m|$. Moreover $m$ and $|\omega_m|$ are continuous  increasing functions of $N$.
\end{lem}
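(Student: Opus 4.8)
The plan is to reduce \eqref{deftho} to a constrained minimization that can be identified, through a convex penalization argument, with the obstacle problem \eqref{mop} at the appropriate level $m$, and then to track the correspondence between the constraint value $N$ and the parameter $m$.

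First I would rescale: writing $h=\he H$ turns \eqref{deftho} into the problem of minimizing $E(H):=\hal\io|\nab H|^2+|H-1|^2$ over the class $\C_t:=\{H\mid H-1\in H^1_0(\om),\ \io|-\Delta H+H|=t\}$, with $t=2\pi N/\he\in[0,|\om|]$, so that $\tho=\he\widehat H$ where $\widehat H$ is a minimizer. The same substitution turns \eqref{mop} into the minimization of $H\mapsto E(H)+(1-m)\io|-\Delta H+H|$ over $\{H-1\in H^1_0(\om)\}$, whose solution is, by \cite{bre,brs} (or by the obstacle reformulation recalled in the Appendix), precisely $\hm$; moreover $\hm\ge m$ and $-\Delta\hm+\hm=m\,\indic_{\omega_m}\ge0$ with $\omega_m=\{\hm=m\}$, so in particular $\io|-\Delta\hm+\hm|=m|\omega_m|=:t(m)$ and $\hm\in\C_{t(m)}$. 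The key observation is that on $\C_t$ the penalization term $(1-m)\io|-\Delta H+H|$ is the constant $(1-m)t$, so when $t=t(m)$ the field $\hm$, being the minimizer of $H\mapsto E(H)+(1-m)\io|-\Delta H+H|$ among all admissible $H$, a fortiori minimizes $E$ over $\C_t$; and since $1-m\ge0$ the functional $H\mapsto E(H)+(1-m)\io|-\Delta H+H|$ is strictly convex (strict convexity of $E$ together with convexity of $H\mapsto\io|-\Delta H+H|$), so its minimizer is unique, which forces $\widehat H=\hm$. Hence $\tho=\he\hm$ as soon as $t(m)=2\pi N/\he$, and then $2\pi N=\he\,m|\omega_m|$.

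What remains is to show that $m\mapsto t(m)=m|\omega_m|$ is a continuous, strictly increasing bijection of $[m^*,1]$ onto $[0,|\om|]$, where $m^*:=1-\tfrac1{2\com}$. From the characterization of $\com$ (namely $\omega_\lambda=\varnothing$ iff $\lambda<\com$) one reads off $\min_\om\hom=m^*$, hence $h_{m^*}=\hom$ with $|\omega_{m^*}|=0$ (the set $\{\hom=\min\hom\}$ has zero measure), so $t(m^*)=0$; at the other end $h_1\equiv1$, $\omega_1=\om$, so $t(1)=|\om|$. Invoking from the Appendix that $m\mapsto\hm$ and $m\mapsto|\omega_m|$ are continuous and nondecreasing and that $|\omega_m|>0$ for $m\in(m^*,1]$, and using $m^*>\hal>0$, strict monotonicity of $t(\cdot)$ follows immediately; its inverse $N\mapsto m(N)=t^{-1}(2\pi N/\he)$ is then continuous and increasing, and $|\omega_{m(N)}|$ is continuous and increasing as a composition with the nondecreasing map $m\mapsto|\omega_m|$. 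Uniqueness of $m$ given $N$ holds because $m\mapsto\hm$ is injective on $[m^*,1]$ (indeed $\min_\om\hm=m$ there), so $\he h_{m_1}=\he h_{m_2}$ forces $m_1=m_2$; conversely $2\pi N=\he\,m|\omega_m|$ determines $N$ from $m$. The main work beyond the cited facts is the convex-penalization identification of the previous paragraph and the bijectivity of $t(\cdot)$; the obstacle-problem regularity and monotonicity statements I would simply quote from \cite{bre,brs} and from the Appendix.
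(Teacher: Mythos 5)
Your proposal is correct and reaches the same conclusion by what is essentially the same penalization idea, but you unpack the key step more explicitly than the paper does: rather than invoking ``the theory of Lagrange multipliers'' as a black box, you argue directly that since the penalty $(1-m)\io|{-}\Delta H+H|$ is constant on the constrained class $\C_{t(m)}$ and $\hm$ minimizes the penalized functional globally, $\hm$ also minimizes $E$ over $\C_{t(m)}$; combined with strict convexity of the penalized functional (which also supplies the uniqueness the paper attributes somewhat loosely to ``convexity'' of the constrained problem --- note the constraint set $\C_t$ is not convex, so uniqueness for the constrained problem is not immediate and your route through the penalized problem is actually the cleaner way to get it), this forces $\widehat H=\hm$, provided one also observes --- a step you compress --- that $\widehat H\in\C_{t(m)}$ attains the same value of $E$ as $\hm$, hence the same penalized value, hence is a global minimizer of the penalized functional. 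For the bijectivity of $m\mapsto m|\omega_m|$ you end up re-deriving what Proposition~\ref{obstacleprops} already asserts and can simply be cited, as the paper does; in doing so you claim ``$m^*>\hal$'', which is not guaranteed (the paper only gives $\com>\hal$, hence $m^*=1-\frac1{2\com}>0$), but this slip is harmless since strict monotonicity of $m\mapsto m|\omega_m|$ only needs $m>0$ together with $|\omega_m|$ nondecreasing and positive for $m>m^*$.
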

\begin{proof} The minimization problem  \eqref{deftho} has a unique minimizer $\tho$ by
convexity.
On the other hand, by the theory of Lagrange multipliers,
 $\tho $ is the minimizer of
$$\min_{ h- \he \in H^1_0(\om) } \hal \io |\nab h|^2 +
|h-\he |^2+\lambda \io |- \Delta h+h|
$$ for some number $\lambda$ characterized by the fact
that the unique minimizer satisfies
  \linebreak$\io |- \Delta h+h|= 2\pi N$. But this minimizer
   is precisely $\he H_m$ with $m$ such that $2\pi N = \he
    m|\omega_m|$ hence $\tho = \he H_m$. From
    Proposition~\ref{obstacleprops}, $m\mapsto|\omega_m|$ is  continuous increasing and one to one  from $I = [1-\frac1{2\lambda_\om}, 1]$ to $[0,|\om|]$, hence if $2\pi N$ is between $0$ and $\he|\om|$, then there exists a unique $m \in I$ such that $\tho = \he H_m$, and $m$ is a continuous increasing function of $2\pi N/\he$ --- hence of $N$ --- characterized by $m|\omega_m| = 2\pi N\he$, and obviously $|\omega_m|$ is too.
\end{proof}
We will denote by $\me$ the $m$ corresponding to $\tho$, and note that
\begin{equation}\label{rangec} 0<1- \frac{1}{2\lambda_\om} \le \me\le 1.\end{equation}
It follows from Lemma~\ref{lemrelated} that   $\tho$ is also  the solution to an obstacle problem, hence $\tho\in C^{1,1}$ (see \cite{frehse}) and satisfies $\he\me\le\tho\le \he$ and
$$ \tmuo:= - \Delta \tho + \tho= \me \he \indic_{\toe}, \quad\text{where}\quad \toe = \{ \tho= \he \me  \}.$$
Note that $\tmuo(\om) = 2\pi N$.

 We will also let
\begin{equation}\label{Cep}
\Ce := \he(\me - m_{0,\ep}) =  \he \me - \he + \hal \plep  .\end{equation}
It is immediate from \eqref{range}, \eqref{rangec} that
$$
\Ce =  O(\he).$$
The minimizer $\ho$ of \eqref{obs1} is equal to
 $h_{\ep,N_0}$,  where $N_0$ is given by \eqref{defn0}.
Moreover, we recall (see  \eqref{mep}) that  $\ho= \he - \hal \plep$ on its coincidence set,
hence $\he m_{\ep,N_0} = \he - \hal \plep$ and $c_{\ep,N_0}=0$.
%\begin{equation}\label{muo}\muo= \he- \hal\L :=\co\he\ \text{on its support $\omega_\ep$}.\end{equation}  so we have in that case $\Ce=0$.
%So when $N=N_0$ then $\ce = \he - \hal \L$.

On the other hand $H_m$ is increasing with respect to $m$, see Proposition~\ref{obstacleprops}, hence
 if $m_1\le m_2$ then $H_{m_1}\le H_{m_2}$ (see Proposition \ref{obstacleprops} in the appendix) so
\begin{equation}\label{signC}
\Ce\ge 0 \quad \text{if } N\ge N_0\qquad \Ce\le 0 \quad \text{if} \ N\le N_0.\end{equation}
We will be most interested in the cases where $N$ is one of the two integers
closest to $N_0$.

\subsection{Energy-splitting}
Let $h_\mu$, a ``splitting function",  be any function such that
$\mu:=  -\Delta h_\mu + h_\mu \in L^2 (\om)$  and $h_\mu= \he$ on $\bo$. Let
\begin{equation}\label{deca} A_1= A- \np h_\mu.\end{equation}
Then,
\begin{lem} \label{lemmasplit}For any $(u,A)$ and $h_\mu$ as above  we have
\begin{multline}\label{dec}
\je(u,A) = \hal\|h_\mu - \he\|^2_{H^1(\om)} + \hal \io \(|u|^2-1\)|\nab h_\mu|^2 +\\
\\ + \hal  \io |\nab_{A_1 } u|^2+(\curl A_1 - \mu)^2 + \frac{(1-|u|^2)^2}{2\ep^2} +
 \io (h_\mu- \he) \(\mu(u,A_1)-\mu\).
\end{multline}
\end{lem}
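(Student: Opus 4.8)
This is an algebraic identity, so my plan is to expand everything and match terms. The key is to understand $\mu(u,A_1)$ and how it relates to $\mu(u,A)$.

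\textbf{Setup and strategy.} Write $A = A_1 + \np h_\mu$, so $\curl A = \curl A_1 + \curl\np h_\mu = \curl A_1 - \Delta h_\mu = \curl A_1 + \mu - h_\mu$ (using $-\Delta h_\mu = \mu - h_\mu$). The plan is to expand the three ingredients of $\je(u,A)$ — the kinetic term $\hal\io|\nab_A u|^2$, the magnetic term $\hal\io|\curl A - \he|^2$, and the potential term (which is unchanged) — in terms of $A_1$ and $h_\mu$, then collect. First, for the kinetic term I would use the gauge-covariant expansion $|\nab_A u|^2 = |\nab_{A_1}u - i\np h_\mu\, u|^2 = |\nab_{A_1}u|^2 - 2\np h_\mu\cdot(iu,\nab_{A_1}u) + |u|^2|\nab h_\mu|^2$, recognizing $(iu,\nab_{A_1}u) = j(u,A_1)$. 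Second, for the magnetic term, substitute $\curl A - \he = (\curl A_1 - \mu) + (\mu + h_\mu - \he) - h_\mu$... wait, let me recompute: $\curl A - \he = \curl A_1 - \Delta h_\mu - \he = \curl A_1 + (\mu - h_\mu) - \he = (\curl A_1 - \mu) + 2\mu - h_\mu - \he$. Hmm, the clean substitution is $\curl A = \curl A_1 - \Delta h_\mu$ and $\he = h_\mu$ on $\bo$; since $h_\mu - \he \in H^1_0$ I should write $\curl A - \he = (\curl A_1 - \mu) + (\mu - \Delta h_\mu - h_\mu) + (h_\mu - \he) = (\curl A_1 - \mu) + (h_\mu - \he)$ using $-\Delta h_\mu + h_\mu = \mu$ once more... no: $\mu - \Delta h_\mu - h_\mu$ — I need $-\Delta h_\mu = \mu - h_\mu$, so $-\Delta h_\mu - h_\mu + \mu = \mu - h_\mu - h_\mu + \mu$? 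I'll be careful in the write-up; the upshot is $\curl A - \he = (\curl A_1 - \mu) + (h_\mu - \he)$ is what makes it work, and I'll verify it cleanly from $-\Delta(h_\mu - \he) = \mu - h_\mu$ i.e. $-\Delta h_\mu = \mu - h_\mu$ (since $\he$ is... no, $\he$ is a constant so $\Delta\he = 0$), giving $\curl A_1 - \Delta h_\mu - \he = (\curl A_1 - \mu) + (\mu - \Delta h_\mu - \he) = (\curl A_1 - \mu) + (h_\mu - \he)$. Yes. Then $|\curl A - \he|^2 = |\curl A_1 - \mu|^2 + 2(\curl A_1 - \mu)(h_\mu - \he) + |h_\mu - \he|^2$.

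\textbf{Collecting terms.} After expanding, I would collect: (i) the terms $\hal\io|\nab_{A_1}u|^2 + |\curl A_1 - \mu|^2 + \frac{(1-|u|^2)^2}{2\ep^2}$ appear directly; (ii) the term $\hal\io|h_\mu - \he|^2 + \hal\io|\nab h_\mu|^2$ — combining $\hal\io|h_\mu-\he|^2$ from the magnetic expansion with $\hal\io|\nab h_\mu|^2$ coming from $\hal\io|u|^2|\nab h_\mu|^2$ after splitting $|u|^2 = 1 + (|u|^2 - 1)$ — assembles into $\hal\|h_\mu - \he\|_{H^1(\om)}^2$ plus the leftover $\hal\io(|u|^2 - 1)|\nab h_\mu|^2$; (iii) the cross terms. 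The cross terms are $-\io \np h_\mu \cdot j(u,A_1) + \io(\curl A_1 - \mu)(h_\mu - \he)$. The core of the proof is to show these combine into $\io(h_\mu - \he)(\mu(u,A_1) - \mu)$. Recalling $\mu(u,A_1) = \curl j(u,A_1) + \curl A_1$, I would integrate by parts: $-\io \np h_\mu \cdot j(u,A_1) = -\io \nab^\perp h_\mu \cdot j(u,A_1)$; since $h_\mu - \he \in H^1_0(\om)$, an integration by parts (using $\curl = \nab^\perp\cdot$ duality, i.e. $\io \nab^\perp\phi\cdot v = \io \phi\,\curl v$ for $\phi\in H^1_0$) gives $-\io\nab^\perp(h_\mu - \he)\cdot j(u,A_1) = -\io (h_\mu - \he)\curl j(u,A_1)$, and $\nab^\perp h_\mu = \nab^\perp(h_\mu - \he)$ since $\he$ is constant. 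Then $-\io(h_\mu - \he)\curl j(u,A_1) + \io(h_\mu - \he)\curl A_1 = \io(h_\mu - \he)(\curl A_1 - \curl j(u,A_1))$ — hmm, sign: I want $\curl j(u,A_1) + \curl A_1 = \mu(u,A_1)$, so I need to recheck the sign in the covariant expansion. The term is $-2\np h_\mu\cdot j$, and $\hal$ of that is $-\np h_\mu\cdot j$; integrating by parts flips to $-(h_\mu - \he)\curl j$; adding $(h_\mu-\he)\curl A_1$ from the magnetic cross-term gives $(h_\mu - \he)(\curl A_1 - \curl j(u,A_1))$, which is NOT $\mu(u,A_1)$. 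So I must double-check the orientation of $\np = (-\p_2,\p_1)$ versus $\curl$: with $\curl v = \p_1 v_2 - \p_2 v_1$ and $\np\phi = (-\p_2\phi, \p_1\phi)$, we get $\curl(\np\phi) = \p_1(\p_1\phi) - \p_2(-\p_2\phi) = \Delta\phi$. And the integration-by-parts identity is $\io \np\phi\cdot v = \io(-\p_2\phi\, v_1 + \p_1\phi\, v_2) = \io \phi(\p_2 v_1 - \p_1 v_2) = -\io\phi\,\curl v$ for $\phi\in H^1_0$. So $-\hal\io 2\np h_\mu\cdot j = -\io\np h_\mu\cdot j = +\io(h_\mu - \he)\curl j$. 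Good — then adding $(h_\mu-\he)\curl A_1$ yields exactly $\io(h_\mu-\he)(\curl j(u,A_1) + \curl A_1) = \io(h_\mu-\he)\mu(u,A_1)$, and the $-\mu$ comes from the other magnetic cross term $\io(\curl A_1 - \mu)(h_\mu - \he)$ contributing the $-\io(h_\mu - \he)\mu$ piece while $\io(h_\mu - \he)\curl A_1$ is the piece I just used. So I must be careful not to double-count $\curl A_1(h_\mu - \he)$: the magnetic cross term gives $\io(\curl A_1 - \mu)(h_\mu-\he) = \io\curl A_1(h_\mu - \he) - \io\mu(h_\mu-\he)$, and the kinetic cross term gives $\io\curl j(u,A_1)(h_\mu - \he)$; summing: $\io(\curl A_1 + \curl j(u,A_1) - \mu)(h_\mu-\he) = \io(\mu(u,A_1) - \mu)(h_\mu - \he)$, exactly the claimed last term.

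\textbf{Main obstacle.} The only genuinely delicate point is the integration by parts $\io\np h_\mu\cdot j(u,A_1) = -\io(h_\mu - \he)\curl j(u,A_1)$: this requires justifying that $j(u,A_1) = (iu,\nab_{A_1}u) \in L^2$ (or at least that the pairing makes sense) and that $h_\mu - \he \in H^1_0(\om)$ so there is no boundary term — both of which hold under the finite-energy assumption ($\nab_{A_1}u\in L^2$ and $|u|\le 1$ after the standard truncation, so $j(u,A_1)\in L^2$) and the hypothesis $h_\mu = \he$ on $\bo$ with $\mu\in L^2$ (giving $h_\mu\in H^2$). I would also note $\curl j(u,A_1)$ should be understood in the distributional/measure sense, so strictly the identity is the definition of the distributional pairing; no regularity beyond finite energy is needed. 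Everything else is bookkeeping. I would write the proof as: substitute $A = A_1 + \np h_\mu$, expand the two quadratic terms, split $|u|^2 = 1 + (|u|^2-1)$ in the $|\nab h_\mu|^2$ term, integrate by parts in the single cross term involving $j(u,A_1)$, and collect — arriving at \eqref{dec}.

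\begin{proof}[Proof of Lemma \ref{lemmasplit}]
Write $A = A_1 + \np h_\mu$. Since $\curl(\np h_\mu) = \Delta h_\mu$ and $-\Delta h_\mu + h_\mu = \mu$, and since $\he$ is constant,
$$\curl A - \he = \curl A_1 + \Delta h_\mu - \he = (\curl A_1 - \mu) + (h_\mu - \he),$$
so that
$$\hal|\curl A - \he|^2 = \hal|\curl A_1 - \mu|^2 + (\curl A_1 - \mu)(h_\mu - \he) + \hal|h_\mu - \he|^2.$$
For the covariant gradient, using $\nab_A u = \nab_{A_1} u - i(\np h_\mu) u$ and expanding,
$$\hal|\nab_A u|^2 = \hal|\nab_{A_1} u|^2 - (\np h_\mu)\cdot (iu,\nab_{A_1} u) + \hal|u|^2|\nab h_\mu|^2,$$
where $(iu,\nab_{A_1}u) = j(u,A_1)$. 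Writing $|u|^2 = 1 + (|u|^2-1)$ in the last term and integrating over $\om$, we find
\begin{multline*}
\je(u,A) = \hal\io|\nab_{A_1}u|^2 + |\curl A_1 - \mu|^2 + \frac{(1-|u|^2)^2}{2\ep^2} + \hal\io|h_\mu - \he|^2 + \hal\io|\nab h_\mu|^2 \\ + \hal\io(|u|^2-1)|\nab h_\mu|^2 + \io(\curl A_1 - \mu)(h_\mu - \he) - \io (\np h_\mu)\cdot j(u,A_1).
\end{multline*}
The first two integrals on the second line combine to $\hal\|h_\mu - \he\|^2_{H^1(\om)}$. For the last integral, since $h_\mu - \he \in H^1_0(\om)$ and $\np(h_\mu - \he) = \np h_\mu$, integration by parts (in the distributional sense for $\curl j(u,A_1)$) gives
$$-\io (\np h_\mu)\cdot j(u,A_1) = \io (h_\mu - \he)\,\curl j(u,A_1).$$
Adding this to $\io(\curl A_1 - \mu)(h_\mu - \he)$ and recalling $\mu(u,A_1) = \curl j(u,A_1) + \curl A_1$, the cross terms sum to
$$\io (h_\mu - \he)\big(\curl j(u,A_1) + \curl A_1 - \mu\big) = \io (h_\mu - \he)\big(\mu(u,A_1) - \mu\big).$$
Collecting the terms yields \eqref{dec}.
\end{proof}
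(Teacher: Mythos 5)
Your proof is correct and follows essentially the same route as the paper: substitute $A = A_1 + \np h_\mu$, expand the kinetic and magnetic terms, split $|u|^2 = 1 + (|u|^2 - 1)$, integrate by parts the cross term $\np h_\mu\cdot j(u,A_1)$ using $h_\mu - \he \in H^1_0(\om)$, and recombine via $\mu(u,A_1) = \curl j(u,A_1) + \curl A_1$. The only difference is that you spell out the sign conventions for $\np$ and $\curl$ and the regularity needed for the integration by parts, which the paper leaves implicit.
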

\begin{proof}
From \eqref{deca}, we have
 $$|\nab_A u|^2 = |\nab_{A_1} u|^2 - 2 \np h_\mu \cdot j(u,A_1) + |u|^2 |\nab h_\mu|^2,$$
 where we have used the notation $j(u,A) = (iu,\nab_A u).$ Also, since $\curl A= \curl A_1 + \Delta h_\mu= \curl A_1 - \mu +h_\mu $, we may write
 $$(\curl A -\he)^2 = (h_\mu -\he)^2 + 2(\curl A_1 - \mu)   (h_\mu-\he) + (\curl A_1 - \mu)^2.$$
 Replacing in \eqref{je}  and integrating by parts the term $\np h_\mu \cdot j(u,A_1)$, using the fact that $h_\mu = \he$ on $\bo$, we find
\begin{multline*}\je(u,A) = \hal\|h_\mu - \he\|^2_{H^1(\om)} + \hal \io \(|u|^2-1\)|\nab h_\mu|^2 +\\
\\ + \hal  \io |\nab_{A_1 } u|^2+(\curl A_1 - \mu)^2 + \frac{(1-|u|^2)^2}{2\ep^2} +
 \io (h_\mu- \he) \(\curl j(u,A_1)+ \curl A_1 - \mu  \).\end{multline*}
 This yields \eqref{dec}, using  the fact that $\mu(u,A_1)= \curl j(u,A_1) +\curl A_1$.
 \end{proof}
Using the particular choice  of splitting function $h_\mu = \tho$ in Lemma \ref{lemmasplit}, we obtain the following result:
\begin{pro}\label{pro1} Let $0 \le N \le \frac{\he|\om|}{2\pi}$ and let  $\tho$ be the corresponding  minimizer of \eqref{deftho}, then
for any $(u,A)$, denoting $\ai = A- \np \tho$, and using the notation \eqref{121b} we have
\begin{equation}
\label{dc} \je(u,A)= \je^N + \jie(u,\ai)
- \hal\io (1-|u|^2)|\nab\tho|^2 ,\end{equation} where
\begin{multline}
\label{g1}\jie(u,A)= \hal \io |\nab_{A} u|^2 + (\curl A-\tmuo)^2 +
\frac{(1-|u|^2)^2}{2\ep^2}\\
 + \io (\tho-\he -\Ce )\mu(u,A)+ \Ce \io (\mu(u,A )- \tmuo).\end{multline}
\end{pro}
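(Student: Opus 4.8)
The statement is a purely algebraic identity, so the strategy is to apply Lemma~\ref{lemmasplit} with the specific choice of splitting function $h_\mu = \tho$, and then to rewrite the resulting expression by adding and subtracting the constant $\Ce$ appropriately in the two ``linear'' terms that involve $\mu(u,\ai)$ and $\tmuo$. Concretely, with $\mu = \tmuo = -\Delta\tho+\tho$ and $h_\mu - \he = \tho-\he$, Lemma~\ref{lemmasplit} gives directly
\begin{multline*}
\je(u,A) = \hal\|\tho - \he\|^2_{H^1(\om)} + \hal \io \(|u|^2-1\)|\nab \tho|^2 \\
 + \hal  \io |\nab_{\ai} u|^2+(\curl \ai - \tmuo)^2 + \frac{(1-|u|^2)^2}{2\ep^2} +
 \io (\tho- \he) \(\mu(u,\ai)-\tmuo\).
\end{multline*}

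Next I would reorganize the final integral. Writing $\tho-\he = (\tho - \he - \Ce) + \Ce$, one splits
\[
\io (\tho-\he)\(\mu(u,\ai) - \tmuo\) = \io (\tho - \he - \Ce)\(\mu(u,\ai)-\tmuo\) + \Ce\io\(\mu(u,\ai)-\tmuo\).
\]
The term $\io (\tho-\he-\Ce)\,\mu(u,\ai)$ is exactly the first linear term appearing in \eqref{g1}; what remains is $-\io(\tho-\he-\Ce)\tmuo + \Ce\io(\mu(u,\ai)-\tmuo)$. Since $\tmuo = \me\he\indic_{\toe}$ is supported on $\toe$ where $\tho = \me\he$, on that set $\tho - \he - \Ce = \me\he - \he - \Ce = -\hal\plep$ by the definition \eqref{Cep} of $\Ce$. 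Hence $-\io(\tho-\he-\Ce)\tmuo = \hal\plep\,\tmuo(\om) = \hal\plep\cdot 2\pi N = \pi N\plep$. Combining this constant with $\hal\|\tho-\he\|_{H^1}^2$ reproduces exactly $\je^N$ as defined in \eqref{121b}, while the $\hal\io(|u|^2-1)|\nab\tho|^2$ term, being negative of $\hal\io(1-|u|^2)|\nab\tho|^2$, is precisely the last term displayed in \eqref{dc}. What is left inside the braces is then, by inspection, exactly $\jie(u,\ai)$ as written in \eqref{g1}: the quadratic Ginzburg--Landau--type density, plus $\io(\tho-\he-\Ce)\mu(u,\ai)$, plus $\Ce\io(\mu(u,\ai)-\tmuo)$.

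There is no serious obstacle here — the proof is a bookkeeping exercise — but the one point that requires care is the identity $\tmuo(\om) = 2\pi N$ (equivalently \eqref{toen}: $|\toe|\me\he = 2\pi N$), which comes from Lemma~\ref{lemrelated} and the characterization $2\pi N = \he\me|\omega_m|$, and the fact that $\tho = \me\he$ throughout $\supp(\tmuo)$, which is part of the obstacle-problem description recalled just before \eqref{Cep}. With those two facts in hand the constant $\pi N\plep$ emerges cleanly from the supported term, and $\je^N$ assembles correctly. I would therefore organize the proof as: (i) quote Lemma~\ref{lemmasplit} with $h_\mu=\tho$; (ii) split off $\Ce$ in the linear term; (iii) evaluate the $\tho-\he-\Ce$ factor on $\toe$ using \eqref{Cep} and conclude via \eqref{toen} that the supported contribution is $\pi N\plep$; (iv) recognize $\je^N$ and the remainder $\jie(u,\ai)$ to obtain \eqref{dc}--\eqref{g1}.
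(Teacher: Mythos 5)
Your proof is correct and takes essentially the same route as the paper: apply Lemma~\ref{lemmasplit} with $h_\mu=\tho$, then use the facts that $\tho-\he-\Ce=-\hal\plep$ on $\supp\tmuo$ (from \eqref{Cep}) and $\io\tmuo=2\pi N$ (from \eqref{toen}) to extract the constant $\pi N\plep$ and assemble $\je^N$. The paper's one-line proof phrases this as replacing $(\tho-\he)$ by its constant value $-\hal\plep+\Ce$ on the coincidence set, but the algebra is the same.
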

\begin{proof}
In \eqref{dec}, we replace $\io (\tho - \he) \( \mu(u,\ai)- \tmuo\)$
by using the fact that by definition of $\tho$, on the support of $\tmuo$, $\tho - \he =
\he \me  - \he= - \hal\plep +\Ce$ and $\io \tmuo= 2\pi N$.  \end{proof}
Note that the functional $F_\ep$ depends on $N$ but for simplicity we will not denote that dependence.

\subsection{Dependence on $N$}
We have the following
\begin{lem} \label{lemvarg}
 For $N \in [0, \frac{|\om|\he} {2\pi}]$ we have
\begin{equation}\label{variationsg}
\frac{d\je^N}{dN}= 2\pi \he ( m-m_{0, \ep} ) \end{equation}
 where $m$ is the one-to-one function of $N$ given by  Lemma \ref{lemrelated}.
Moreover, $\je^N$ is minimized uniquely at $N_0$   and minimized among integers at $\nm $ or $\ns$ (or both) where $\nm$ is the largest integer below $N_0$ and $\ns$ the smallest integer above.
\end{lem}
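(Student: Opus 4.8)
The statement has two parts: the derivative formula \eqref{variationsg}, and the consequences for minimization. The plan is to compute $d\je^N/dN$ directly from the definition \eqref{121b}, namely $\je^N = \pi N \plep + \hal\|\tho - \he\|_{H^1(\om)}^2$, using the envelope-type structure of the minimization problem \eqref{deftho} that defines $\tho$. Write $E(N) = \hal\|\tho - \he\|_{H^1(\om)}^2$ for the minimal value of the constrained problem \eqref{deftho}. By the Lagrange multiplier characterization recalled in the proof of Lemma~\ref{lemrelated}, $\tho$ minimizes $\hal\io|\nab h|^2+|h-\he|^2 + \lambda\io|-\Delta h+h|$ over $h-\he\in H^1_0(\om)$, where the multiplier $\lambda = \lambda(N)$ is exactly the quantity making the constraint $\io|-\Delta h+h| = 2\pi N$ hold. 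A standard envelope argument (differentiating the value function of a constrained convex problem with respect to the constraint level) gives $E'(N) = -2\pi\lambda(N)$: the variation of $\tho$ contributes nothing to first order because $\tho$ is a critical point, so only the explicit dependence through the constraint survives.

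\textbf{Identifying the multiplier.} The key step is then to identify $\lambda(N)$ in terms of $m$. From the proof of Lemma~\ref{lemrelated}, the minimizer of the penalized problem with parameter $\lambda$ is $\he H_{m}$ where $H_m$ solves the obstacle problem \eqref{mop} with $(1-m) = \lambda/\he$, i.e. the penalization coefficient in \eqref{mop}, which is $(1-m)$ there, must equal $\lambda/\he$. Hence $\lambda(N) = \he(1-m)$ where $m = \me$ is the function of $N$ from Lemma~\ref{lemrelated}. Therefore $E'(N) = -2\pi\he(1-m)$. Differentiating \eqref{121b} gives
\begin{equation*}
\frac{d\je^N}{dN} = \pi\plep + E'(N) = \pi\plep - 2\pi\he(1-m) = 2\pi\he\Bigl(m - 1 + \frac{\plep}{2\he}\Bigr) = 2\pi\he(m - m_{0,\ep}),
\end{equation*}
using the definition $m_{0,\ep} = 1 - \plep/(2\he)$ from \eqref{mep}. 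This is \eqref{variationsg}. I should double-check the sign conventions and the precise relation between the penalization coefficient in \eqref{mop} and that in \eqref{deftho}'s Lagrangian; this bookkeeping, rather than any deep difficulty, is the part most prone to error, so I would verify it against the already-established facts that $\Ce = \he(\me - m_{0,\ep})$ and $c_{\ep,N_0} = 0$, and that $h_{\ep,N_0} = \ho$.

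\textbf{Consequences for minimization.} Once \eqref{variationsg} is established, the rest follows from the monotonicity statements in Lemma~\ref{lemrelated}: $m = \me$ is a continuous increasing function of $N$ on $[0, \he|\om|/2\pi]$. Hence $d\je^N/dN = 2\pi\he(\me - m_{0,\ep})$ is also continuous and increasing in $N$, so $\je^N$ is a strictly convex function of $N$. Its derivative vanishes exactly when $\me = m_{0,\ep}$; since $h_{\ep,N_0} = \ho$ is the minimizer of \eqref{obs1} and satisfies $\he m_{\ep,N_0} = \he - \hal\plep = \he m_{0,\ep}$ on its coincidence set (as recalled just before \eqref{signC}), we have $m_{\ep,N_0} = m_{0,\ep}$, i.e. the unique zero of the derivative is at $N = N_0$. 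Strict convexity then gives that $N_0$ is the unique minimizer of $\je^N$ over the full interval. Finally, a strictly convex function on an interval, restricted to the integers, attains its minimum at one (or both) of the two integers nearest the real minimizer, i.e. at $\nm$ or $\ns$; this is an elementary consequence of the derivative being negative for $N < N_0$ and positive for $N > N_0$. The main obstacle in the whole argument is none of these monotonicity deductions but the envelope computation of $E'(N)$: one must justify differentiating the value of the constrained problem, which is cleanest via the convex-duality/Lagrangian reformulation already invoked in Lemma~\ref{lemrelated} rather than by attempting to differentiate $\tho$ itself.
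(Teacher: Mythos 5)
Your proposal is correct and is essentially the paper's argument: the paper derives $d\je^N/dN$ by the same exploitation of the minimality of $H_m$ in \eqref{mop}, comparing the value at $m$ tested against $H_m$ and $H_{m'}$ and letting $m'\to m$, which is precisely the envelope-theorem computation you invoke in more abstract language (your identification $\lambda(N)=\he(1-m)$ is the same multiplier that appears implicitly in the paper's inequality \eqref{a1a22}, after the rescaling $h=\he H$). The only stylistic difference is that the paper's two-sided comparison produces the one-sided difference-quotient bounds explicitly (hence the parenthetical ``in the sense of $BV$ derivatives''), whereas you assume the envelope theorem applies outright; this is harmless here since Lemma~\ref{lemrelated} gives continuity of $m(N)$, but it is worth noting that the paper's spelled-out comparison is what justifies that step.
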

\begin{proof}
From \eqref{121b} and Lemma \ref{lemrelated} we have \begin{equation}\label{jenform}
\je^N= \pi N \plep+ \hal \he^2 \|H_m - 1\|_{H^1(\om)}^2  \end{equation} where $m$ and $N$ are related by $2\pi N = \he m |\omega_m|$.
On the other hand, since  $H_m$  minimizes \eqref{mop}  we have
\begin{equation}
\label{a1a22}
(1-m) \frac{2\pi N }{\he}  + \hal \| H_m- 1\|_{H^1(\om)}^2 \le (1-m) \frac{2\pi N'}{\he}+ \hal \|H_{m'} - 1
\|_{H^1(\om)}^2\end{equation} where $2\pi N'= \he m' |\omega_{m'}|$. Reversing the roles of $m$ and $m'$ and taking $m'=m+ \delta$ with $\delta\to 0$, we obtain
$$\frac{d}{dm} \|H_m-1\|_{H^1(\om)}^2 = \frac{ 4\pi}{\he} (m-1) \frac{dN}{dm} $$ (in the sense of $BV$ derivatives).
Inserting into \eqref{jenform} we deduce
$$\frac{d\je^N}{dN}= \pi \plep +  2\pi \he (m-1)= 2\pi \he \( m - 1 + \frac{\plep}{2\he}\)$$ hence  the result \eqref{variationsg} in view of \eqref{mep}.  But $m_{0, \ep}$ is the $m$ that corresponds to $N_0$. It immediately follows with the monotonicity of $N \mapsto m $  that $\je^N$ is  decreasing in $[0, N_0]$, increasing in  $[N_0, \frac{|\om|\he}{2\pi}]$, hence minimized at $N_0$, and minimized among integers at $\nm $ or $\ns$.
\end{proof}

{\bf For the rest of Section~\ref{sec-splitting} and Section~\ref{section:lb}, $N \in \{\nm, \ns\}$.}  Since $2\pi N_0= \io \muo \le |\om| \( \he - \hal \plep\)$, see \eqref{omep}, it is clear that
$N_0^-\le N_0^+<    \frac{|\om|\he}{2\pi} $ so Lemma \ref{lemrelated} applies and the corresponding $\tho$ are well-defined.
We also record the following
\begin{lem}If  $N_0\gg 1 $ and  $N \in \{\nm, \ns\}$ then
\begin{equation}\label{limitem}\lim_{\ep\to 0}  \frac{\Ce}{\plep} =0
 \qquad
\lim_{\ep \to 0} \me= m_\lambda, \end{equation}
where $\lambda= \lim_{\ep \to 0} \frac{\he}{\lep}$ and $m_\lambda= 1- \frac{1}{2\lambda} $.
\end{lem}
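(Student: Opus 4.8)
The statement asserts two limits, both in the regime $N_0 \gg 1$ and $N \in \{\nm, \ns\}$. The key geometric input is Lemma~\ref{lemrelated}, which says that $\tho = \he H_{\me}$ with $2\pi N = \he \me |\omega_{\me}|$, and that $m \mapsto |\omega_m|$ is a continuous increasing bijection from $[1 - \frac1{2\lambda_\om}, 1]$ to $[0, |\om|]$. The plan is to first control $\me - m_{0,\ep}$ (which controls $\Ce$ via \eqref{Cep}), and then deduce the convergence of $\me$ from the convergence $\lambda_\ep = \he/\plep \to \lambda$ together with the continuity properties of the obstacle problem.

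\emph{First step: smallness of $\Ce/\plep$.} From \eqref{Cep} we have $\Ce/\plep = (\me - m_{0,\ep})\he/\plep = (\me - m_{0,\ep})\lambda_\ep$, where $m_{0,\ep} = 1 - \frac{\plep}{2\he}$ corresponds to $N_0$. Since $N$ and $N_0$ differ by at most $1$ (as $N \in \{\nm, \ns\}$), and since $N \mapsto m$ is the increasing bijection of Lemma~\ref{lemrelated}, I would estimate $|\me - m_{0,\ep}|$ by bounding $\left|\frac{dm}{dN}\right|$, or more robustly by the quantitative continuity estimates of the obstacle problem proved in the Appendix (Proposition~\ref{obstacleprops} and the estimates when the coincidence set is small). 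The relation $2\pi N = \he m |\omega_m|$ gives $2\pi(N - N_0) = \he(\me |\omega_{\me}| - m_{0,\ep}|\omega_{m_{0,\ep}}|)$; since the left side is $O(1)$ and $\he \gg 1$, the quantity $\me|\omega_{\me}| - m_{0,\ep}|\omega_{m_{0,\ep}}| = O(1/\he)$. Because $m \mapsto m|\omega_m|$ is increasing and, on the relevant range, has a derivative bounded below (this is where the Appendix estimates on the obstacle problem — in particular near-degenerate cases when $|\omega_m|$ is small, i.e.\ $\lambda$ near $\lambda_\om$ — are needed), one gets $|\me - m_{0,\ep}| = O(1/\he)$ times a factor depending on how small $|\omega_{\me}|$ is. Even in the worst case $|\omega_m| \to 0$, the hypothesis $N_0 \gg 1$ forces $\he|\omega_{m_{0,\ep}}| \gg 1$, which should keep the derivative lower bound from collapsing too fast; multiplying by $\lambda_\ep$ and using $\lambda_\ep \lesssim \he/\lep$ one checks $\Ce/\plep \to 0$.

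\emph{Second step: convergence of $\me$.} Here I would argue that $\lambda_\ep := \he/\plep \to \lambda$ as $\ep \to 0$ (since $\plep \sim \lep$), hence $m_{0,\ep} = 1 - \frac1{2\lambda_\ep} \to 1 - \frac1{2\lambda} = m_\lambda$ (interpreting $1 - \frac1{2\infty} = 1$ when $\lambda = +\infty$). Combined with $|\me - m_{0,\ep}| \to 0$ from the first step, this gives $\me \to m_\lambda$ directly. Alternatively, if one wants to avoid the quantitative estimate, one can use that $\tho/\he$ is the minimizer of the obstacle problem \eqref{obs} with $\lambda$ replaced by the ``constrained'' multiplier and pass to the limit in the variational problem, using $\Gamma$-convergence of \eqref{obs1}-type functionals to \eqref{obs} and uniqueness of the limiting minimizer $h_\lambda$; then $\omega_\ep \to \omega_\lambda$ and $\me \to m_\lambda$ by \eqref{mumin}.

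\emph{Main obstacle.} The delicate point is the regime $\lambda = \lambda_\om$ (equivalently $m_\lambda = 1 - \frac1{2\lambda_\om}$, the threshold where $\omega_\lambda$ degenerates to a point or a small set), because there the map $m \mapsto m|\omega_m|$ has vanishing derivative at the endpoint and the continuity in Lemma~\ref{lemrelated} is not uniform. This is precisely why the Appendix proves refined estimates on the obstacle problem ``when the size of the coincidence set is small''; the proof here must invoke those estimates to get the quantitative bound $|\me - m_{0,\ep}| \to 0$ and $\Ce/\plep \to 0$ even in that borderline case. For $\lambda > \lambda_\om$ everything is soft and follows from the bounded-below derivative of $m \mapsto m|\omega_m|$ on a compact subinterval of $(1-\frac1{2\lambda_\om},1]$. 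I expect the cleanest writeup to separate these two cases, handling $\lambda > \lambda_\om$ by the elementary argument and citing the Appendix for $\lambda = \lambda_\om$.
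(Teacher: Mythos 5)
Your plan has the right shape — compare $\me$ to $m_{0,\ep}$ through the relation $2\pi N = \he\, m\,|\omega_m|$ of Lemma~\ref{lemrelated}, then combine with $m_{0,\ep}\to m_\lambda$ — and your ``alternative'' via $\Gamma$-convergence would also work. But you overcomplicate the key estimate and, as a result, worry about a degeneracy that is not actually there. The paper's proof obtains $|\me - m_{0,\ep}| = O(1/N_0)$ by a one-line monotonicity trick, uniform in all regimes including $\lambda=\lambda_\om$, with no need for derivative bounds on $m\mapsto m|\omega_m|$ and no appeal to the Appendix. Concretely, for $N=\ns$ one has $\me\ge m_{0,\ep}$ hence $|\omega_{\ep,N}|\ge|\omega_{0,\ep}|$, so
\[
0\le \me-m_{0,\ep} = \frac{2\pi N}{\he|\omega_{\ep,N}|}-\frac{2\pi N_0}{\he|\omega_{0,\ep}|}\le \frac{2\pi(N-N_0)}{\he|\omega_{0,\ep}|}\le \frac{2\pi}{\he|\omega_{0,\ep}|}=\frac{m_{0,\ep}}{N_0}\le\frac1{N_0},
\]
and symmetrically for $N=\nm$. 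The ``worst case'' you fear — $|\omega_m|$ small near $\lambda_\om$ — is already compensated by the factor $\he$ in the denominator precisely because $N_0 = \he m_{0,\ep}|\omega_{0,\ep}|/2\pi\gg 1$, which you correctly intuit but do not turn into a clean inequality; the display above \emph{is} that inequality. So the separation of cases in your ``main obstacle'' paragraph is unnecessary, and the Appendix estimates (which serve a different purpose in the paper, namely the blow-up geometry near $\lambda_\om$) are not needed here.

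For the conclusion $\Ce/\plep\to 0$ you only write ``one checks''; this needs a two-case split, which the paper does. From $\Ce=\he(\me-m_{0,\ep})$ and the bound above, $\Ce/\plep\le \lambda_\ep/N_0$ with $\lambda_\ep=\he/\plep$. If $\he\le O(\lep)$ then $\lambda_\ep$ is bounded and $N_0\gg1$ gives the result; if $\he\gg\lep$ then $N_0\sim\he|\om|/2\pi$, so $\lambda_\ep/N_0\sim 2\pi/(|\om|\plep)\to 0$. In short: your route is essentially the paper's, but you should replace the vague derivative-lower-bound argument by the direct monotonicity computation, drop the appeal to the Appendix, and spell out the $\he\le O(\lep)$ versus $\he\gg\lep$ dichotomy.
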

\begin{proof} 
First we recall that $m_{0,\ep} = 1 - \frac\plep{2\he}$, hence by definition of $\lambda$, we have 
$\lim_{\ep \to 0 } m_{0, \ep}=m_\lambda$.

Assume that $N=\ns$. We use the fact, seen in Lemma \ref{lemrelated}, that $m$ and $|\omega_m|$ are increasing functions of $N$. Thus, since $ N_0\le N\le N_0+1$, we have $|\omega_{\ep, N}|\ge |\omega_{0, \ep}|$ and, using  
$$0\le m_{\ep, N}-m_{0,\ep}=\frac{2\pi \ns}{\he |\omega_{\ep, N}|}-\frac{2\pi N_0}{\he |\omega_{0,\ep}|}\le \frac{2\pi\ns}{\he |\omega_{0,\ep}|}- \frac{2\pi N_0}{\le |\omega_{0,\ep}|}\le \frac{2\pi}{\he |\omega_{0, \ep}|} =\frac{m_{0,\ep}}{N_0}\le \frac{1}{N_0},$$ because we always have $m\le 1$. 
Since we are in the regime $N_0\gg 1$, we find that $m_{\ep, N} $ and $m_{0,\ep}$  have the same limit, that is $m_\lambda$.
The case $N=\nm$ is treated analogously, and we deduce in both cases 
\begin{equation}\label{nmpm}
|m_{\ep, N}-m_{0,\ep}|\le O(\frac{1}{N_0}).\end{equation}

Assume then that  $\he\le O( \lep)$, from 
 \eqref{Cep} and \eqref{nmpm}, we deduce  $\Ce = o(\he)$, which proves \eqref{limitem}.
If $\he\gg \lep$, then  $m_{0, \ep} \sim 1 $  and $ 2\pi N_0\sim \he |\om| $ hence combining with \eqref{Cep} and  \eqref{nmpm}, we find
$\Ce=O(1)$ which also implies the result in this case.
\end{proof}

\subsection{Blow-up procedure}
We  write for simplicity $\omega_\ep$ instead $\toe$ and  $m_\ep$ instead of $\me$, when the precise value of $N \in \{\nm, \ns\}$ does not need to appear explicitly.

Assume \eqref{range} holds.
%Then $\co$, as defined in \eqref{mep}
%satisfies $\co\le 1$ and \eqref{half}, hence is bounded away from
%zero.
Let
$$\el = \frac 1{\sqrt{\he}},$$
and, assuming $0 \in \omega_\ep \subset \om$,  write $x = \el x'$.
Under this change of coordinates the domain $\om$ becomes $\pOe$ and
the subdomain $\omega_\ep$ becomes $\poe$, both becoming infinitely
large as $\ep\to 0$ --- this is obvious for $\pOe$ and for $\poe$, it is proven to be a consequence of \eqref{range} in Proposition~\ref{lemdomaine} below. We call $\pfep$ the expression of $\jie(u,\ai)$
(see \eqref{g1}) in terms of the rescaled unknowns $u'(x')= u(x)$
and $A'(x') = \el \ai(x)$. It is given by
\begin{multline}
\label{Fep2}\pfep(u',A') =\hal \int_{\om'_\ep} |\nab_{A'}
u'|^2 + \frac 1{{\el}^2} |\curl A' -  \co \indic_{\omega'_\ep}|^2
+\frac{(1-|u'|^2)^2}{2(\ep')^2} \\ - \int_{\om'_\ep}\zep'\mu(u',A') +
\Ce \int_{\om'_\ep} (\mu(u',A' )- \co \indic_{\omega_\ep'} ),
\end{multline}
where $\ep'$, $\zep'$ are given by
\begin{equation} \label{scaled}
\ep' = \frac\ep{\el},\quad   \zep'(x') = \he - \tho(x) +\Ce.
\end{equation}
%This way
%\begin{equation}\label{zbord}
%\|\zep'\|_{L^\infty(\p \om_\ep')} = o(\L).\end{equation}
We also define the blown-up current in $\om_\ep'$
\begin{equation}\label{jep}
\pje= \curl (iu', \nab_{A'} u')\end{equation} and the blow-up measure $\mu_\ep' =\mu(u_\ep', A_\ep') $ and extend them by $0$
outside $\om_\ep'$. Note that $\pje(x')= \ell_\ep j_{1,\ep}(\ell_\ep x') $ if $j_{1,\ep}$ denotes the current  $ (iu_\ep, \nab_{\ai} u_\ep)$ in the original variables.

The function $\tho$ is in  $C^{1,1}(\om)$ and  equal to $\he $ on $\bo $. It   attains
its minimum $\he\me $ on $\toe $. From  \eqref{Cep}, \eqref{scaled}
$$\Ce = \min_\om(\tho - \he)+\hal\plep,\quad \zep' (x')= \hal\plep  - \tho(x)+ \min_\om\tho,$$
thus $\zep'\in C^{1,1}(\pOe)$, $\zep' = \Ce$ on $\partial\pOe$ and $\zep'$ attains its maximum on $\poe$. Moreover
\begin{equation}\label{zepprime}\max_{\pOe}  \zep' = \hal\plep,\quad \|\nab\zep'\|_\infty\le C\sqrt{\plep} ,\end{equation}
this last assertion following from \eqref{nabho} in Proposition \ref{lemdomaine} below.

\subsection{Additional results on the splitting function}
In this subsection, we adapt some results from the appendix that we will need below.
In the appendix we introduce an ellipse $E_Q$
of measure $1$ and a function $U_Q\ge 0$ defined in $\mr^2$
such that \begin{equation*} \Delta U_Q = \frac{\Delta
Q}{2}\indic_{\mr^2\sm E_Q},\quad \{U_Q=0\} = E_Q,\end{equation*}  where $Q$ is the quadratic form $ D^2h_0(x_0)$ as introduced in   \eqref{assumption}.

The following proposition can be applied to $N\in\{N_0, \nm, \ns\}.$ In either case we denote by $\omega_\ep$ the associated coincidence set, and by $\poe$ its blow-up.

\begin{pro}\label{lemdomaine}Assume \eqref{range} holds.
Let $\tho$ be as above with $|N- N_0|\le 1$.
\begin{enumerate}
\item
We have $N_0\gg 1$, and if $\he - \hci \ll \lep$
then $N_0 \ll \he$ and
\begin{equation}\label{rangeN}\he - \hci   \sim \lambda_\om N_0
 \log \frac{\he}{N_0}.\end{equation}
\item  For every $x\in\om$
\begin{equation}\label{volome} d(x,\omega_\ep)\le C\sqrt\frac\plep\he,\quad
 |\om \backslash \omega_\ep|\le C \sqrt{\frac{\plep}{\he}}.\end{equation}
Moreover \begin{equation}
 \label{nabho}
 \|\nab \tho\|_{L^\infty(\om)} \le  C \sqrt{\he \plep}. \end{equation}
\item If $K$ is any compact subset of $(\lambda_\om,+ \infty)$,
\begin{equation}\label{item3}\lim_{\delta\to 0} \frac{|\{x\mid d(x,\p\omega_\ep)
 < \delta\}|}{|\omega_\ep|} = 0,\quad \lim_{\delta\to 0}
  \frac{|\{\he\me< \tho   <\he\me+
  \delta\he \}|}{|\omega_\ep|} = 0 \end{equation}
uniformly with respect to  $\ep$ such that $ \frac{\he}{\lep}\in K$.
\item If $\he/\lep\to\lambda_\om$ then there exists  $\{L_\ep\}_\ep$ such that for any $\delta,M>0$ and if $\ep$ is small enough
\begin{equation}
\label{enst2}
\begin{split}
\left\{\frac{\tho - \min_\om\tho}{\he {L_\ep}^2}\ge M\right\}&\subset x_0 + L_\ep\{U_Q\ge M-\delta\},\\
\left\{\frac{\tho - \min_\om\tho}{\he {L_\ep}^2}\le M\right\}&\subset x_0 + L_\ep\{U_Q\le M+\delta\},\\
 \{ d(x, \omQ^c) >\delta L_\ep\} &\subset \omega_\ep\subset \{d(x,\omQ)<\delta L_\ep\},
\end{split}
\end{equation}
where  $\omQ=x_0+ L_\ep E_Q$ and $x_0$ is defined in \eqref{assumption}. Moreover there is a constant $C_\om$ depending only on $\om$ such that
\begin{equation}\label{rellep}
L_\ep^2 |\log L_\ep|\sim  C_\om \frac{\he- \lambda_\om \lep}{\he}.
  \end{equation}
In particular
$\frac{\llep}{\lep}\ll L_\ep^2|\log L_\ep|$ and $\el\ll L_\ep$, and from \eqref{enst2}
$$ L_\ep^2 |\omQ|\sim |\omega_\ep|,\quad \{x\mid d(x,\partial\omega_\ep)\le \delta \el\}\ll |\omega_\ep|.$$

\item For any $R>0$,
\begin{equation}\label{poebr}\left|\{x\mid d(x,(\omega_\ep')^c)>R\}\right|\sim
| \omega_\ep'|\end{equation}
as $\ep\to 0$, i.e. $\omega_\ep'$  satisfies
\eqref{domaine}.

\item
We have
\begin{equation}
\label{enn0} \je^N= \je^{N_0}+
O\(\frac{\he}{N_0}\)
  \le  C \he \plep.\end{equation}
 If $\lep^4\ll \he \ll \frac{1}{\ep^2}$,
 \begin{equation}\label{lq}
\je^N= \hal \he |\om|\plep +o(\he).\end{equation}
\end{enumerate}
\end{pro}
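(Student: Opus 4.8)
\textbf{Proof plan for Proposition~\ref{lemdomaine}.}

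The statement is a bundle of qualitative facts about the obstacle-problem solution $\tho=\he H_{m_\ep}$ (with $|N-N_0|\le 1$), and the natural strategy is to transfer everything to the $\ep$-independent obstacle problem for $H_m$ (defined in \eqref{mop}), whose fine properties are established in the Appendix via $H_m$, $\omega_m$, the ellipse $E_Q$ and the profile $U_Q$. Throughout I will use the scaling $\tho=\he H_{m_\ep}$ and the relation $2\pi N=\he m_\ep|\omega_{m_\ep}|$ from Lemma~\ref{lemrelated}, together with the fact that $m\mapsto m$, $|\omega_m|$ are increasing (Lemma~\ref{lemrelated}), so that $|m_\ep-m_{0,\ep}|=O(1/N_0)$ from \eqref{nmpm} and hence all the estimates for $N\in\{\nm,\ns\}$ follow from those for $N_0$ up to negligible corrections. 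The value $m_{0,\ep}=1-\plep/(2\he)$ is explicit, so as $\ep\to0$ one has $m_{0,\ep}\to m_\lambda$ when $\he\sim\lambda\lep$, and $1-m_{0,\ep}=\plep/(2\he)\to0$ when $\he\gg\lep$.

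The plan, item by item. For (1): $N_0\gg1$ and, in the regime $\he-\hci\ll\lep$, the asymptotics \eqref{rangeN} should come from plugging $m=m_{0,\ep}$ into the Appendix's expansion of $|\omega_m|$ (resp.\ of the energy) as $m\to1^-$: when $1-m$ is small the coincidence set is a small ellipse around $x_0$ whose area behaves like a power of $(1-m)/|\log(1-m)|$ governed by $Q=D^2h_0(x_0)$; writing $N_0=\frac{\he}{2\pi}m_{0,\ep}|\omega_{m_{0,\ep}}|$ and using $1-m_{0,\ep}=\plep/(2\he)$ and $\hci\sim\lambda_\om\lep$ yields \eqref{rangeN}, and in particular $N_0\ll\he$. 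For (2): the bounds $d(x,\omega_\ep)\le C\sqrt{\plep/\he}$, $|\om\setminus\omega_\ep|\le C\sqrt{\plep/\he}$ and $\|\nab\tho\|_\infty\le C\sqrt{\he\plep}$ are, after the scaling $\tho=\he H_{m_\ep}$, estimates on $H_m$ as $1-m=O(\plep/\he)\to0$ and follow from Appendix estimates on the coincidence set and on $\|\nab(H_m-1)\|_\infty$ (the obstacle $H_m-m$ is $C^{1,1}$, and on the noncoincidence region one has the linear PDE $-\Delta H_m+H_m=0$ with $H_m-m$ vanishing to second order on $\partial\omega_m$). For (3): when $\he/\lep$ stays in a compact $K\subset(\lambda_\om,+\infty)$, $m_\ep$ stays in a compact subset of $(1-\frac1{2\lambda_\om},1)$; the two "thin-strip" limits are uniform nondegeneracy statements for the coincidence set $\{H_m=m\}$ — no cusps (here convexity of $\om$, hence of $h_0$, is used, as flagged after \eqref{assumption}) and nondegeneracy of the free boundary — which I would cite from the Appendix's analysis of $H_m$. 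For (4): this is the delicate regime $\he/\lep\to\lambda_\om$, where $1-m_\ep\to\frac1{2\lambda_\om}$ but $|\omega_{m_\ep}|\to0$; here one rescales around $x_0$ at the scale $L_\ep$ (defined through the Appendix) and uses the blow-up convergence of $(H_m-\min H_m)/((1-m)\,\mathrm{stuff})$ to the model profile $U_Q$ solving $\Delta U_Q=\frac{\Delta Q}{2}\indic_{\mr^2\setminus E_Q}$, $\{U_Q=0\}=E_Q$; the inclusions \eqref{enst2} are exactly the statement that this convergence is locally uniform, and \eqref{rellep} is the matching of the energy/area expansion, both taken from the Appendix. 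The consequences $\llep/\lep\ll L_\ep^2|\log L_\ep|$, $\el\ll L_\ep$, $L_\ep^2|\omQ|\sim|\omega_\ep|$ and the thin-strip bound are then arithmetic. For (5): \eqref{poebr} is \eqref{item3} or \eqref{enst2} transported through the dilation $x\mapsto x/\el$, noting $d(x,(\omega_\ep')^c)>R\iff d(\el x,\omega_\ep^c)>\el R$ with $\el R\ll\mathrm{diam}(\omega_\ep)$, which is precisely \eqref{domaine}. For (6): \eqref{enn0} follows by integrating \eqref{variationsg} from $N_0$ to $N$, $|N-N_0|\le1$, using $|m_\ep-m_{0,\ep}|=O(1/N_0)$ from \eqref{nmpm} to get $\je^N-\je^{N_0}=O(\he/N_0)$, and then $\je^{N_0}\le C\he\plep$ from $\je^{N_0}=\pi N_0\plep+\hal\|\ho-\he\|_{H^1}^2$ together with $N_0=O(\he|\om|)$ and $\|\ho-\he\|_{H^1}^2\le\he^2\|h_\lambda-1\|_{H^1}^2=O(\he\plep\cdot\he)$... more precisely, in the regime $\he\gg\lep$ one has $\ho/\he\to1$ and the $H^1$ term is $o(\he\plep)$-controlled, giving \eqref{lq} when $\lep^4\ll\he\ll1/\ep^2$ via $h_\lambda=1$, $\omega_\lambda=\om$, $\me\to1$.

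The main obstacle is item (4): establishing \eqref{enst2} and \eqref{rellep} rigorously requires the blow-up analysis of the obstacle problem near $x_0$ when the coincidence set degenerates to a point, i.e.\ proving that $\omega_m$, suitably rescaled, converges to the ellipse $E_Q$ and that the rescaled $H_m$ converges to $U_Q$ with the sharp two-sided inclusions — this is the technical heart and is exactly what the Appendix is devoted to; everything else is bookkeeping with the scaling $\tho=\he H_{m_\ep}$ and the monotonicity estimate \eqref{nmpm}. A secondary nuisance is keeping all error terms uniform in the stated parameter ranges (e.g.\ the uniformity in $K$ for item (3), and the interplay of the three small/large quantities $\lep$, $\he$, $\ep$ in items (1) and (6)).
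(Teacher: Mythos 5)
Your overall strategy — transfer everything to the $\ep$-independent obstacle problem $H_m$ via $\tho=\he H_{m_\ep}$, use Lemma~\ref{lemrelated} and the bound $|m_\ep-m_{0,\ep}|=O(1/N_0)$, and cite the fine appendix estimates (Proposition~\ref{obstacleprops}) — is exactly the paper's approach, and items (2)–(5) follow the intended route in broad strokes.

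However there is a genuine error in your plan for item (1). You write that in the regime $\he-\hci\ll\lep$ one has ``$m\to 1^-$'' and that ``when $1-m$ is small the coincidence set is a small ellipse around $x_0$ whose area behaves like a power of $(1-m)/|\log(1-m)|$.'' This is backwards. Since $m_{0,\ep}=1-\frac{\plep}{2\he}$ and $\he\sim\lambda_\om\lep$ in this regime, one has $m_{0,\ep}\to\homb=1-\frac{1}{2\lambda_\om}$, \emph{not} $m\to1$. The coincidence set $\omega_m$ shrinks to the point $x_0$ precisely as $m\searrow\homb$, and its area is governed by $(m-\homb)/|\log(m-\homb)|$ (see \eqref{valella}, \eqref{la}), not by $(1-m)/|\log(1-m)|$. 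The paper expresses this as $L_\ep^2|\log L_\ep|\sim 2\pi(m_{0,\ep}-\homb)/\homb$ and plugs in $m_{0,\ep}-\homb=\frac{1}{2\lambda_\om}-\frac{\plep}{2\he}$, whence $N_0=\frac{\he}{2\pi}m_{0,\ep}|\omega_{m_{0,\ep}}|$ gives \eqref{rangeN}. Substituting your wrong expansion would give entirely different (and false) asymptotics. You also do not address the first assertion of item (1), namely $N_0\gg1$ \emph{for all} $\he$ in the range \eqref{range}; in the paper this comes from the general identity linking $N_0$, $L_\ep$ and $m_{0,\ep}-\homb$, together with a contradiction argument: if $N_0$ were bounded the left-hand side of that identity would be $O(\llep)$ while the right-hand side is $\gg\llep$.

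There is also a (smaller) gap in item (6). You propose to bound $\hal\|\ho-\he\|_{H^1}^2$ by the pointwise comparison $\he^2\|h_\lambda-1\|_{H^1}^2$, but when $\lambda>\lambda_\om$ is fixed this is of order $\he^2$, not $\he\plep$, so it does not yield $\je^{N_0}\le C\he\plep$. The correct route, used in the paper, is to exploit the minimality of $H_m$ in \eqref{mop} directly: testing with the constant function $1$ gives $(1-m)m|\omega_m|+\hal\|H_m-1\|_{H^1}^2\le(1-m)|\om|$, and multiplying by $\he^2$ and recalling $\plep=2\he(1-m_{0,\ep})$ and $N\le\he|\om|/(2\pi)$ yields $\je^N\le\he|\om|\plep$. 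The minimality structure is essential here; a naive pointwise comparison with $h_\lambda$ loses a factor of $\he/\plep$.
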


\begin{proof}
{\it Proof of item 1:} We apply Proposition \ref{obstacleprops} to  $m_{0,\ep}= 1- \frac{\plep}{2\he}$.
We denote by $L_\ep$ the $L_m$   corresponding to $m_{0,\ep}$
given by Proposition \ref{obstacleprops}. In particular we have $|\omega_{0,\ep}|\sim L_\ep^2$.
In that proposition $\homb$ denotes  $\min h_0$ and is equal to
$1 - \frac{1}{2\lambda_\om}$.
 Moreover $\he - \lambda_\om \lep \gg
\llep$ is equivalent to $ \frac{1}{2(1-m_{0,\ep})}-\frac{1}{2(1-\homb)}\gg
\frac{\llep}{\lep}$ which in turn is equivalent to  $m_{0,\ep} - \homb \gg
\frac{\llep}{\lep}$. From \eqref{la}
\begin{equation}\label{dfg2}
L_\ep^2 |\log L_\ep|\sim 2\pi
(m_{0,\ep} -\homb)/\homb    = \frac{2\pi}{\homb}
 \( \frac{1}{2\lambda_\om}
- \frac{\plep}{2\he}\). \end{equation}
Then, since $|\omega_{0,\ep}|\sim L_\ep^2$ and $N_0= \he m_{0,\ep} |\omega_{0,\ep}|$
we have $L_\ep^2 |\log L_\ep|\sim   \hal |\omega_{0,\ep}||\log |\omega_{0,\ep}|| \sim  \hal \frac{N_0}{m_{0,\ep} \he}
|\log \frac{N_0}{m_{0,\ep}\he} |$ and inserting into \eqref{dfg2} we find
\begin{equation}\label{dfg3}
\hal \frac{N_0}{m_{0,\ep} \he}
|\log \frac{N_0}{m_{0,\ep}\he} | \sim \frac{2\pi}{\homb}
 \( \frac{1}{2\lambda_\om}
- \frac{\plep}{2\he}\) = \frac\pi{\lambda_\om\homb\he}\(\he - \lambda_\om\lep +\hal\log\he\).\end{equation}
It follows, since $m_{0,\ep} \ge \homb>0$,
 that if \eqref{range} holds, we must have $N_0\gg 1$, for otherwise the left-hand side is $O(\llep)$ while the right-hand side is $\gg \llep$.
 Moreover, if $\he - \hci \ll \lep$ then $m_{0,\ep} \sim  1- \frac{1}{2\lambda_\om}= \homb$ and
 inserting into \eqref{dfg3} and rearranging terms, we obtain $N_0 \ll\he $  and \eqref{rangeN}.

{\it  Proof of  item 2.} From \eqref{mep}, $m_{0,\ep} = 1 - \frac\plep{2\he}$ hence using Proposition~\ref{obstacleprops} we have
$\|\nab\ho\|_\infty = \he \|\nab H_{m_{0,\ep}}\|_\infty \le C\sqrt{\he\plep}$. If $|N-N_0|\le 1$ then from \eqref{limitem} we have $|m_{0,\ep} - \me|\ll \plep/\he$ and therefore $\me\le 1- \frac\plep\he$ for $\ep$ small enough. Proposition~\ref{obstacleprops} applied to $\me$ yields the same bound for $\nab\tho$, and \eqref{volome} follows similarly from Proposition~\ref{obstacleprops} using $\me\le 1 - \frac\plep\he$.

{\it Proof of  item 3.} First note that if $\he/\lep$ is bounded above  as $\ep\to 0$ then $\lep\sim\plep$ hence $\he/\plep$ is bounded as well. Moreover, since $\plep\le \lep$, if $\he/\lep\ge \alpha>\lambda_\om$ then $\he/\plep\ge \alpha$ hence if $\he/\lep$ belongs to a compact subset $K$ of $(\lambda_\om,+\infty)$, then $\he/\plep$ belong to a (different) compact subset $K'$ of $(\lambda_\om,+\infty)$. But, from \eqref{mep}, if this is the case then $m_{0,\ep}$ belongs to a compact subset $\tilde K$ of $\(\homb, 1 \)$. Then from \eqref{limitem} and if $\ep$ is small enough the same is true of $\me$, and the result follows from Proposition~\ref{obstacleprops} applied to $\me$.

{\it Proof of  items 4,5.} This is again Proposition~\ref{obstacleprops}. Indeed $\he/\lep\to \lambda_\om$ implies that $m_{0,\ep}\to\homb = 1 - \frac1{2\lambda_\om}$ hence $\lim_{\ep\to 0} \me = \homb$ and \eqref{la} holds, and for any $\delta, M>0$, \eqref{item4} also if $\ep$ is small enough. It is easy to check using \eqref{limitem}, \eqref{mep} and \eqref{lomhom} that \eqref{la} implies \eqref{rellep}, and \eqref{enst2} is \eqref{item4} since $\tho - \min_\om\tho = \he(H_{\me} - \me)$.

To prove \eqref{poebr} we distinguish the case $\me\to\homb$ from the case where $\me$ is bounded away from $\homb$. In the latter and using Proposition~\ref{obstacleprops}, $|\toe|$ is bounded away from $0$ and
$$\frac{\left|\{x\mid d(x,(\omega_\ep')^c)>R\}\right|}{|\poe|} = \frac{\left|\omega_R\right|}{|\omega_\ep|},\quad\text{where $\omega_R = \{x\mid d(x,{\omega_\ep}^c)>R\el\}$}.$$
Since $R\el\to 0$ we have $\left|\omega_R\right| - |\omega_\ep|\to 0$, proving \eqref{poebr} in this case.

If $\me\to \homb$ then $m_{0,\ep}\to\homb$, i.e. $\he/\plep\to\lambda_\om$, or equivalently $\he/\lep\to\lambda_\om$. Then $L_\ep\gg\el$ and  therefore, using \eqref{enst2},
$$\{d(x,\omQ^c)>\delta L_\ep\} \subset  \omega_R \subset \{d(x,\omQ)<\delta L_\ep\}$$
holds for any $R,\delta>0$ if $\ep$ is small enough. It follows, since $|\omQ| = L_\ep^2$, that $$\left|\omega_R\right|\sim |\omQ|$$ as $\ep\to 0$ for any $R>0$, which implies in particular \eqref{poebr}.

{\it Proof of item 6.} Combining \eqref{variationsg} with \eqref{nmpm} it follows that
$$|\je^N- \je^{N_0}|= \left|\int_{N_0}^N 2\pi \he (m - m_{0, \ep}) \right|\le \frac{2\pi \he }{N_0}$$ where we have used the fact  that $ N \mapsto m$ is increasing and $|N-N_0|\le 1$.

The upper bound part of \eqref{lq} follows from \eqref{enn0} by noting that
$$\plep\int \muo +  \|\ho - \he\|^2_{H^1} \le |\om|\plep,$$
which follows from the minimality of $H_{m_{0,\ep}}$ in \eqref{mop}, using $1$ as a test function. Finally, for the lower bound part of \eqref{lq} we note that from \eqref{volome}
\begin{equation*}\begin{split}
\plep\int \muo +  \|\ho - \he\|^2_{H^1} & \ge \plep |\omega_{0,\ep}| m_{0,\ep}\\
& \ge \plep \(|\om| - C\sqrt\frac\plep\he\)\(1 - \frac\plep{2\he}\)\\
& \ge \plep|\om| + o(\he),
\end{split}\end{equation*}
if $\he\gg\lep^4$.
\end{proof}

\begin{remark}In \eqref{rangeN},
we have recovered  the formula (9.88) from \cite{livre}.
\end{remark}

\subsection{A priori bounds}
\begin{lem}\label{lemapriori} If $(u_\ep, A_\ep)$ minimizes $\je$ then
\begin{equation}
\label{bornG} \je(u_\ep,A_\ep) \le C\he\plep .
\end{equation}
Moreover, for any $(u_\ep, A_\ep)$ satisfying (\ref{bornG}) we have, if $N \in \{\nm, \ns\}$,
\begin{equation}\label{sanserreur}
 \je(u_\ep,A_\ep)= \je^N + \jie(u_\ep,\ai) + o(N),\end{equation}
where $F_\ep$ is defined in  \eqref{g1}.
%and  if $|u|\le 1$, \begin{equation}\label{mua1a}
%\|\mu(u_\ep, A_\ep)- \mu(u_\ep, \ai)\|_{H^{-1}(\om)}\le o(\sqrt{\he |\toe|})=o(N^\hal)
%\end{equation}
%\begin{equation}
%\label{borneenergie}\int e_\ep(u_\ep', \ai')\le C \he \L\end{equation}
\end{lem}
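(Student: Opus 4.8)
\textbf{Plan for the proof of Lemma \ref{lemapriori}.}

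The first statement, the energy bound \eqref{bornG}, follows from comparing the minimal energy with the energy of a well-chosen test configuration. The most economical choice is the ``Meissner-like'' configuration or, better, one coming from the upper bound construction: take $u\equiv 1$ except in small vortex balls and $A = \np\tho$ (so that $\curl A = h_\he$-type field). More precisely, I would invoke the upper bound construction referred to later in the paper (Theorem \ref{th5} / Section \ref{bornesup}) — or, to keep the argument self-contained at this stage, just use $(u,A)$ with $u = 1$, $A$ the minimizer of $A\mapsto \je(1,A)$, which gives $\je(u_\ep,A_\ep)\le \je(1,A) = \hal\|h-\he\|_{H^1}^2 + \tfrac{\he^2}{2}\io|\nab h_\lambda|^2+\dots$; but this is only $O(\he^2)$ in the relevant regime and does not beat $\he\plep$ when $\he\sim\lambda\lep$. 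So the right comparison is with a configuration carrying roughly $N_0$ vortices distributed uniformly in $\omega_{0,\ep}$: then $\je \le \je^{N_0} + O(\text{vortex interaction}) \le C\he\plep$ by \eqref{enn0}. I would state that this is exactly what the upper bound of Section \ref{bornesup} provides (applied with $N=N_0$ or any $N\ll\he|\om|/2\pi$), hence \eqref{bornG} holds, with the caveat that logically this uses a construction proven later; alternatively one notes $\min\je\le \min_{N}\je^N + o(N) \le C\he\plep$ directly once the matching upper bound is available.

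For the second statement \eqref{sanserreur}, I would start from the exact splitting identity \eqref{dc} of Proposition \ref{pro1}:
$$\je(u_\ep,A_\ep) = \je^N + \jie(u_\ep,\ai) - \hal\io(1-|u_\ep|^2)|\nab\tho|^2,$$
valid for any $(u_\ep,A_\ep)$ and any $N$ (in particular $N\in\{\nm,\ns\}$). The whole content is therefore to show that the last term is $o(N)$. Using the pointwise bound $|1-|u_\ep|^2|\le 1+|u_\ep|^2\le 2$ (which follows from \eqref{gl2ee}; for a general minimizer one has $|u_\ep|\le 1$ by the maximum principle, so $|1-|u_\ep|^2|\le 1$) together with the Cauchy--Schwarz-type estimate $\io(1-|u_\ep|^2)|\nab\tho|^2 \le \|\nab\tho\|_\infty^2\io(1-|u_\ep|^2)$, and then controlling $\io(1-|u_\ep|^2)$ via $\io(1-|u_\ep|^2)\le \big(\io (1-|u_\ep|^2)^2\big)^{1/2}|\om|^{1/2}\le (2\ep^2\je(u_\ep,A_\ep))^{1/2}|\om|^{1/2}$. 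Plugging in \eqref{bornG} (so $\je\le C\he\plep$) and the gradient bound \eqref{nabho}, $\|\nab\tho\|_\infty \le C\sqrt{\he\plep}$, one gets
$$\hal\io(1-|u_\ep|^2)|\nab\tho|^2 \le C\,\he\plep\cdot \ep\,(\he\plep)^{1/2} = C\,\ep\,\he^{3/2}(\plep)^{3/2}.$$
Recalling $\ep' = \ep\sqrt\he$ and $\plep = |\log\ep'|$, this is $C\,\ep'\,\he\,(\plep)^{3/2}$, and since $\ep'\to 0$ (indeed $\ep'(\plep)^{3/2}\to 0$) this is $o(\he)$; as $N\ge N_0^-\gg 1$ and $N\sim N_0 \gtrsim \he\plep/\lep$ when $\he\sim\lambda\lep$ — more precisely $N\le C\he\plep$ always and $N\to\infty$ — one checks $o(\he) = o(N)$ in every regime of \eqref{range}. (In the high-field regime $\he\gg\lep$ one has $N\sim \he|\om|/2\pi$, so $o(\he)=o(N)$ is immediate; in the regime $\he-\hci\ll\lep$ one has $N\ll\he$ so one must be slightly more careful, but the estimate $\ep'\he(\plep)^{3/2}=o(N)$ still holds because $N\gg1$ and $\ep'(\plep)^{3/2}\ll 1/\plep^{k}$ for any $k$.)

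The main obstacle I anticipate is the bookkeeping in the last parenthetical: verifying that the error $\hal\io(1-|u_\ep|^2)|\nab\tho|^2$ is $o(N)$ \emph{uniformly across all the regimes} covered by \eqref{range}, in particular in the delicate case $\he/\lep\to\lambda_\om$ where $N = N_0 \ll \he$ and one cannot simply bound $N$ from below by a fixed multiple of $\he\plep$. There one must use the sharper relation \eqref{rangeN} (or \eqref{rellep}) relating $N_0$ to $\he-\hci$ to see that the $o(\he)$ error is still negligible compared to $N_0$. A cleaner route, which I would actually prefer, is to avoid the crude $\io(1-|u_\ep|^2)\le (2\ep^2\je)^{1/2}|\om|^{1/2}$ bound and instead localize: split $\om = \omega_\ep\cup(\om\setminus\omega_\ep)$, use $|\om\setminus\omega_\ep|\le C\sqrt{\plep/\he}$ from \eqref{volome} on the complement, and on $\omega_\ep$ use that the potential term $\tfrac{1}{2\ep^2}\io(1-|u_\ep|^2)^2$ is itself part of (a small piece of) $\jie$, so one can absorb a fraction of it. This turns the estimate into a genuine $o(1)$-per-vortex statement rather than a global one, matching the precision $o(N)$ demanded. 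I would present the simple global estimate first and remark that the localized version gives the uniformity in the borderline regime.
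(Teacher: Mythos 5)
The first half of your proposal (\eqref{bornG} from the upper bound construction of Section~\ref{bornesup}, via Corollary~\ref{coroapb}) is exactly what the paper does, so that part is fine.

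For \eqref{sanserreur} there is a genuine quantitative gap. Your main estimate
$\io(1-|u_\ep|^2)|\nab\tho|^2 \le \|\nab\tho\|_\infty^2\,\io(1-|u_\ep|^2)\le C\ep\,\he^{3/2}\plep^{3/2}$
is too lossy in the high-field regime: to be $o(N)$ one needs it to be $o(\he)$ when $\he\gg\lep$, i.e.\ $\ep\,\he^{1/2}\plep^{3/2}\to 0$, but this \emph{fails} as $\he$ approaches $1/\ep^2$ (there $\ep\,\he^{1/2}$ stays bounded away from zero). Your worry about the borderline regime $\he/\lep\to\lambda_\om$ is actually misplaced — there both bounds are $o(1)$ and hence $o(N)$ since $N\gg1$; the problem is the opposite end of the range \eqref{range}.

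The fix is the one you half-identify but do not carry through: $\nab\tho$ vanishes identically on $\omega_\ep$, so the integrand lives entirely on $\om\setminus\omega_\ep$, a set of measure $O(\sqrt{\plep/\he})$ by \eqref{volome}. Rather than splitting and absorbing into the potential term (there is nothing to absorb on $\omega_\ep$, since $\nab\tho=0$ there), use this small support together with $\|\nab\tho\|_\infty\le C\sqrt{\he\plep}$ to get
$$\io|\nab\tho|^4 \le \|\nab\tho\|_\infty^4\,|\om\setminus\omega_\ep| \le C\,\he^{3/2}\plep^{5/2},$$
and then Cauchy--Schwarz in the form $\io(1-|u_\ep|^2)|\nab\tho|^2\le \|1-|u_\ep|^2\|_{L^2}\,\|\nab\tho\|_{L^4}^2$ with $\io(1-|u_\ep|^2)^2\le 2\ep^2\je(u_\ep,A_\ep)\le C\ep^2\he\plep$. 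This yields the sharper bound $C\ep\,\he^{5/4}\plep^{7/4}$, which is $o(1)$ when $\he\le C\lep$ and $o(\he)$ throughout $\he\ll 1/\ep^2$ (since then $\ep\,\he^{1/4}\ll\ep^{1/2}\to 0$), hence $o(N)$ in every regime. This is the paper's argument; without the $L^4$ improvement the conclusion does not hold on the full range of applied fields.
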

\begin{proof}
The a priori bound \eqref{bornG} is a consequence of the upper bound in Theorem~\ref{th5}, see Corollary
 \ref{coroapb}.
For the other relation  we observe that, since $\nab \tho$ is supported   in $\Omega \setminus \omega_\ep$,
it follows from (\ref{nabho})--(\ref{volome})  that
\begin{equation}
\label{hoest}
 \io |\nab \tho|^4 \le C  \he^{3/2}\plep^{5/2}.\end{equation}
We then  claim that \begin{equation}\label{cl1} \io (1-|u|^2) |\nab
\tho|^2 =  o(\he |\toe|) .\end{equation} Indeed we have $\io (1-|u|^2)^2
\le \ep^2 \je(u,A) \le C \ep^2 \he \plep  .$ Applying  the Cauchy-Schwarz inequality,
it follows  from (\ref{hoest}) that
$$\io (1-|u|^2) |\nab \tho|^2 \le  C \ep \sqrt{\he \plep} \|\nab \tho\|_{L^4}^2
\le C \ep \he^{5/4} \plep^{7/4}.$$  If $\he \le C \lep$ this is
$o(1)$ hence $o(N)$. If $ \lep\ll \he \ll 1/\ep^2 $ then
$\he|\toe|\sim \he |\om|$ and this is also $o(\he|\toe|)$.  (\ref{cl1}) is
proven, since we recall that $2\pi N= \me \he |\toe|$ with  \eqref{rangec}.
Then \eqref{sanserreur}   follows from \eqref{dc}.

%For the last relation, we notice that in view of the definition of $\mu$, we have  for any $(u,A)$,
%$\mu(u, A)- \mu(u, A_1) = \curl \((|u|^2 -1) \np \tho\)$. Taking a test-function $\xi \in H^1_0(\om)$ we thus have
%\begin{multline*}
%\left|\io (\mu(u, A)- \mu(u, A_1)) \xi \right|\le \io ||u|^2-1|\nab \tho||\nab \xi|\\
%\le \( \io (1-|u|^2)^2 |\nab \tho|^2\)^\hal\|\nab \xi\|_{L^2(\om)}
%.\end{multline*}
%If $|u|\le 1$ we have $(1-|u|^2)^2\le (1-|u^2)$ and thus from \eqref{cl1} we deduce
%\begin{equation}\label{cl12}
%\%( \io (1-|u|^2)^2 |\nab \tho|^2\)^\hal \le o(\sqrt{\he|\toe|}),\end{equation} and the result follows.
%(Note that it is not really necessary to assume that $|u|\le 1$, this can be replaced by a control on $\nab \tho$ in $L^\infty$ provided by \eqref{nabho}, at the expense of a weaker control).

%(\ref{bornenergie}) est en fait plus difficile a prouver sans rentrer dans les boules etc.
\end{proof}

We also note some consequences of the second  Ginzburg-Landau equation \eqref{gl2ee}.
\begin{lem} \label{lemdiv} Assume that  $(u_\ep, A_\ep)$ satisfies  \eqref{gl2ee}. Then, letting $\ai = A_\ep - \np\tho$ and $j_{1,\ep} = j(u_\ep,\ai)$,  we have
$$ \|\div j_{1,\ep} \|_{H^{-1}(\om)} =  o(N) $$
\begin{equation}
\label{ar2}  \|  \curl j_{1, \ep} - \mu(u_\ep, A_\ep) + \curl \ai\|_{H^{-1}(\om)} = o(N).\end{equation}

%\\
%\label{ar4} & \|\curl j_{1, \ep} \|_{W^{-1, p}(\om)} \le  \|j_{1, \ep}\|_{L^p(\om)}+ o(\sqrt{\he|\toe|})\ \quad p \le 2\\
%\label{ar5} &
%\|\mu(u_\ep, A_\ep) - \mu(u_\ep, \ai)\|_{H^{-1}(\om)} \le o( \sqrt{\he |\toe|})
%and
%$$\div j_\ep'\to 0 \   \text{in}  \ H^{-1}_{\loc}(\mr^2)$$

\end{lem}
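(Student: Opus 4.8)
\textbf{Proof plan for Lemma~\ref{lemdiv}.}
The plan is to use the second Ginzburg-Landau equation \eqref{gl2ee}, namely $-\np\curl A_\ep = (iu_\ep,\nab_{A_\ep}u_\ep)$, together with the definition of the vorticity $\mu(u_\ep,A_\ep) = \curl j(u_\ep,A_\ep) + \curl A_\ep$, to express the quantities we want to bound in terms of $\tho$ and then to control them using the a priori estimates on $\tho$ from Proposition~\ref{lemdomaine}. First I would handle $\div j_{1,\ep}$. Writing $j_{1,\ep} = j(u_\ep,\ai) = (iu_\ep,\nab_{\ai}u_\ep)$ and $j(u_\ep,A_\ep) = (iu_\ep,\nab_{A_\ep}u_\ep)$, the two currents differ by $|u_\ep|^2$ times the difference of potentials: $j_{1,\ep} = j(u_\ep,A_\ep) + |u_\ep|^2\np\tho$, since $\ai = A_\ep - \np\tho$ and $\nab_{\ai}u_\ep = \nab_{A_\ep}u_\ep + i\np\tho\, u_\ep$. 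Now $\div j(u_\ep,A_\ep) = 0$ is exactly the consequence of \eqref{gl2ee} that the paper says we will use (it says ``the only consequence of \eqref{gl2ee} that we will really use is that $\div j(u,A)=0$''), because $\div\np\curl A_\ep = 0$ identically. Hence $\div j_{1,\ep} = \div(|u_\ep|^2\np\tho) = \nab(|u_\ep|^2)\cdot\np\tho$, the term $|u_\ep|^2\div\np\tho = 0$ vanishing.

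The second step is to estimate $\|\nab(|u_\ep|^2)\cdot\np\tho\|_{H^{-1}(\om)}$. I would integrate against a test function $\varphi\in H^1_0(\om)$ and integrate by parts to move the gradient off $|u_\ep|^2$, obtaining $\int_\om |u_\ep|^2\,\np\tho\cdot\nab\varphi + \int_\om |u_\ep|^2(\div\np\tho)\varphi$, the last term again being zero. Using $|u_\ep|\le 1$ this is bounded by $\|\nab\tho\|_{L^2(\om)}\|\nab\varphi\|_{L^2(\om)}$, so $\|\div j_{1,\ep}\|_{H^{-1}(\om)}\le \|\nab\tho\|_{L^2(\om)}$. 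From \eqref{nabho} we have $\|\nab\tho\|_{L^\infty(\om)}\le C\sqrt{\he\plep}$ and from \eqref{volome} the support $\om\sm\omega_\ep$ of $\nab\tho$ has measure $\le C\sqrt{\plep/\he}$, so $\|\nab\tho\|_{L^2(\om)}^2 \le C\he\plep\cdot\sqrt{\plep/\he} = C\sqrt{\he}\,\plep^{3/2}$, giving $\|\nab\tho\|_{L^2(\om)}\le C\he^{1/4}\plep^{3/4}$. I then need to check this is $o(N)$. When $\he\le O(\lep)$, $N\simeq N_0\gg 1$ by \eqref{rangeN} (Proposition~\ref{lemdomaine}(1)), and $\he^{1/4}\plep^{3/4} = o(\lep)$ while $N\gg 1$; more care is needed here — one should compare $\he^{1/4}\plep^{3/4}$ with $N_0$ using \eqref{rangeN}, which gives $N_0 \sim (\he-\hci)/(\lambda_\om\log(\he/N_0))$, and since $\he - \hci \gg \llep$ we get $N_0\gg 1$, and $\he^{1/4}\plep^{3/4}$ is at most of order $\he$ which is much smaller relative to $N$... actually the cleanest route is: $2\pi N = m_\ep\he|\omega_\ep|$ with $m_\ep$ bounded below by \eqref{rangec} and $|\omega_\ep|\ge c$ when $\he/\lep\to\lambda>\lambda_\om$, so $N\sim c\he$ and $\he^{1/4}\plep^{3/4}\ll\he$; when $\he/\lep\to\lambda_\om$ one uses $|\omega_\ep|\gg\sqrt{\plep/\he}$ (from \eqref{rellep}, $|\omega_\ep|\sim L_\ep^2$ and $L_\ep^2|\log L_\ep|\sim C_\om(\he-\lambda_\om\lep)/\he$, which dwarfs $\sqrt{\plep/\he}\cdot|\log|$) so again $N = m_\ep\he|\omega_\ep|/2\pi \gg \he\sqrt{\plep/\he} = \sqrt{\he\plep} \ge \he^{1/4}\plep^{3/4}$ since $\he\ge\lambda_\om\lep\gg\plep$.

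For the third step, \eqref{ar2}, I would directly expand $\mu(u_\ep,A_\ep) = \curl j(u_\ep,A_\ep) + \curl A_\ep$. Since $\curl\ai = \curl A_\ep - \Delta\tho$ and, as above, $j(u_\ep,A_\ep) = j_{1,\ep} - |u_\ep|^2\np\tho$, we get $\curl j_{1,\ep} = \curl j(u_\ep,A_\ep) + \curl(|u_\ep|^2\np\tho)$, hence
\begin{equation*}
\curl j_{1,\ep} - \mu(u_\ep,A_\ep) + \curl\ai = \curl(|u_\ep|^2\np\tho) - \curl A_\ep + \curl A_\ep - \Delta\tho = \curl(|u_\ep|^2\np\tho) - \Delta\tho.
\end{equation*}
Now $\curl\np\tho = -\Delta\tho$ (since $\np = (-\p_2,\p_1)$), so $\curl(|u_\ep|^2\np\tho) = |u_\ep|^2\curl\np\tho + \nab(|u_\ep|^2)\cdot\np^\perp(\np\tho)$... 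I would instead write $\curl(|u_\ep|^2\np\tho) - \Delta\tho = \curl((|u_\ep|^2-1)\np\tho)$, using $\curl\np\tho = -\Delta\tho$. Then I bound $\|\curl((|u_\ep|^2-1)\np\tho)\|_{H^{-1}(\om)}$ by pairing with $\varphi\in H^1_0$, integrating by parts once to get $\int_\om (|u_\ep|^2-1)\np\tho\cdot\np^\perp\nab\varphi$... actually $\curl$ pairs via $\int (1-|u_\ep|^2)\np\tho\cdot\np\varphi$ after one integration by parts, bounded by $\||1-|u_\ep|^2|\,|\nab\tho|\|_{L^2}\|\nab\varphi\|_{L^2}$. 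By Cauchy-Schwarz and the energy bound $\int(1-|u_\ep|^2)^2\le\ep^2\je(u_\ep,A_\ep)\le C\ep^2\he\plep$ (Lemma~\ref{lemapriori}), together with $\|\nab\tho\|_{L^\infty}\le C\sqrt{\he\plep}$, this is $\le C\ep(\he\plep)^{3/4}\cdot(\he\plep)^{1/4}$... I would reuse the bound \eqref{hoest}, $\int_\om|\nab\tho|^4\le C\he^{3/2}\plep^{5/2}$, so $\||1-|u_\ep|^2|\,|\nab\tho|\|_{L^2}^2 \le \|1-|u_\ep|^2\|_{L^2}\|\nab\tho\|_{L^4}^2\cdot(\text{H\"older})$, giving $\le C\ep\sqrt{\he\plep}\cdot\he^{3/4}\plep^{5/4} = C\ep\he^{5/4}\plep^{7/4}$, which is precisely the quantity shown to be $o(N)$ in the proof of \eqref{cl1} in Lemma~\ref{lemapriori}. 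So this estimate is already essentially done there and I would simply cite it.

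\textbf{Main obstacle.} The algebra is routine; the one point requiring genuine care is verifying that the error terms are $o(N)$ uniformly across \emph{all} the regimes in \eqref{range}, in particular the delicate boundary regime $\he/\lep\to\lambda_\om$ where $N = N_0 = o(\he)$ can be much smaller than $\he$. There one must use the sharp asymptotics \eqref{rangeN}, \eqref{rellep} from Proposition~\ref{lemdomaine} to see that $N$ still dominates $\he^{1/4}\plep^{3/4}$ and $\ep\he^{5/4}\plep^{7/4}$; fortunately the second of these is exactly the bound $o(N)$ already established inside Lemma~\ref{lemapriori}, so only the first (the $\div j_{1,\ep}$ bound, whose proof of smallness is new) needs to be checked by hand, and it follows from $N\gg\sqrt{\he\plep}\ge\he^{1/4}\plep^{3/4}$ together with the lower bounds on $|\omega_\ep|$ noted above.
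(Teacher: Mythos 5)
Your derivation of $\div j_{1,\ep}=\nab(|u_\ep|^2)\cdot\np\tho$ is correct, but the subsequent $H^{-1}$ estimate drops the crucial cancellation. After pairing with $\varphi\in H^1_0(\om)$ and integrating by parts to get $-\int_\om|u_\ep|^2\,\np\tho\cdot\nab\varphi$, you bound this directly using $|u_\ep|\le 1$ and arrive at $\|\div j_{1,\ep}\|_{H^{-1}}\le\|\nab\tho\|_{L^2}$. This is too crude: since $\int_\om\np\tho\cdot\nab\varphi=\int_\om\div(\varphi\np\tho)=0$ for $\varphi\in H^1_0(\om)$, one has $-\int_\om|u_\ep|^2\np\tho\cdot\nab\varphi=\int_\om(1-|u_\ep|^2)\np\tho\cdot\nab\varphi$, so the correct bound is $\|\div j_{1,\ep}\|_{H^{-1}}\le\|(1-|u_\ep|^2)\nab\tho\|_{L^2}$, which is $o(\sqrt N)$ by \eqref{cl1}. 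Your weaker bound $C\he^{1/4}\plep^{3/4}$ genuinely fails to be $o(N)$: your claim that $\he^{1/4}\plep^{3/4}\ll\he$ is false when $\he=\lambda\lep$ with fixed $\lambda>\lambda_\om$, because there $\plep=\lep-\hal\log\he\sim\lep\sim\he/\lambda$, so $\he^{1/4}\plep^{3/4}\sim\lambda^{-3/4}\he$, which is of the same order as $N\sim c\he$, not smaller. The paper avoids this by writing $j_{1,\ep}=j(u_\ep,A_\ep)+|u_\ep|^2\np\tho$ and using $\div j(u_\ep,A_\ep)=0$ and $\div\np\tho=0$ to obtain $\div j_{1,\ep}=-\div\big((1-|u_\ep|^2)\np\tho\big)$ directly, keeping the $(1-|u_\ep|^2)$ factor from the outset.

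Your treatment of \eqref{ar2} does keep the $(1-|u_\ep|^2)$ factor and arrives at the same expression $\curl\big((|u_\ep|^2-1)\np\tho\big)$ as the paper, with the same estimate via \eqref{hoest}. One small sign remark: with $\np=(-\p_2,\p_1)$ and $\curl(f_1,f_2)=\p_1 f_2-\p_2 f_1$, one has $\curl\np\tho=+\Delta\tho$, not $-\Delta\tho$ as you wrote; your final formula is nonetheless correct because you implicitly use the correct identity in the last line. So the second estimate is fine, but the first one needs the same cancellation you used in the second.
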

\begin{proof}
By definition of $j_{1,\ep}$ and $\ai$ we have $j_{1,\ep}= (iu_\ep, \nab_{A_\ep} u_\ep) - |u_\ep|^2 \np \tho$. It follows that
$\div j_{1,\ep}= \div (iu_\ep, \nab_{A_\ep}u_\ep)+ \div ((1- |u_\ep|^2) \np \tho)$.  If \eqref{gl2ee} is satisfied then $\div (iu_\ep, \nab_{A_\ep} u_\ep)=0$ and $\div j_{1,\ep}= \div ((1- |u_\ep|^2) \np \tho)$.  On the other hand, combining \eqref{cl1} with $|u_\ep|\le 1$ we find
 \begin{equation}\label{obn}
 \io (1-|u_\ep|^2)^2|\nab \tho|^2 = o(N).\end{equation}
It follows that
$$ \|\div j_{1,\ep}\|_{H^{-1}(\om)} = o(\sqrt{N}).$$
By direct calculation we have $\mu(u_\ep, \ai) - \mu(u_\ep,A_\ep)= \curl ((|u_\ep|^2-1) \np \tho)$ and since $\mu(u, \ai) = \curl j_{1,\ep }+ \curl \ai$,  \eqref{ar2} follows again from \eqref{obn}.

%For \eqref{ar4} we observe that  from the above
% we may write
%$j_{1, \ep}=(1-|u_\ep|^2) \np \tho + \np h_1$ for some $h_1$. Moreover, by the boundary conditions  $(iu_\ep, \nab_{A_\ep} u_\ep) \cdot \nu=0$ on $\bo$ and $\tho = \he $ on $\bo$ we have $\frac{\p h_1}{\p \tau}= 0$ and we may assume $h_1= 0 $ on $\bo$.
%But $\|\nab h_1\|_{L^p(\om)} \le \|j_{1, \ep}\|_{L^p(\om)} + \|(1-|u_\ep|^2)\nab \tho\|_{L^p(\om)}\le   \|j_{1, \ep}\|_{L^p(\om)} + o(\sqrt{\he |\toe|})$,
%so  using the fact that  $h_1=0$ on $\bo$ we deduce
%$\|\Delta h_1\|_{W^{-1, p}(\om)} \le \|j_{1, \ep}\|_{L^p(\om)} + o(\sqrt{\he |\toe|})$. Since $\Delta h_1= \curl j_{1, \ep} - \curl ((1-|u_\ep|^2) \np\tho)$, we deduce \eqref{ar4} from the previous relations.

\end{proof}

\section{Proof of Theorem \ref{th1}, lower
bound}\label{section:lb} In this section we state the key result
from \cite{compagnon} which we need in order  to apply the
framework of Section~\ref{sec:abstract} to the minimization of the Ginzburg-Landau functional as explained in Section \ref{sec1.8}. In
paragraphs 2 (resp. paragraph 3) we use it to derive the lower bound
part of Theorems~\ref{th1},~\ref{th1prime} in the case of moderate
(resp. high) applied fields. In paragraph 4 we prove
Theorems~\ref{th1} and \ref{th1prime} assuming the upper bound of
Theorem~\ref{th5}, proven in Section~\ref{bornesup}.

\subsection{Mass displacement}
 There are two
problems which arise when trying to apply  the abstract scheme
described in Section \ref{sec:abstract} to the Ginzburg-Landau energy. The
first, and less problematic one, is that  $\jie$ is not translation
invariant. Indeed $\indic_{\omega_\ep'}$ and $\zep'$ are constant only
in the subdomain $\omega_\ep'$. Therefore we need to reduce to
integrating on $\omega_\ep'$ rather than $\om_\ep'$.

The second, more delicate problem, is that the integrand in $\jie$
is not positive, and not even expected to be bounded below uniformly
as $\ep\to 0$: Each vortex creates in $\jie$ two terms which get
infinitely large as $\ep\to 0$ and which balance each other. To
capture the difference in the limit we need to absorb the negative
part in the positive part  to obtain an essentially positive
integrand (this will by the way solve in essence the first issue).
Note that the cancellation is not a pointwise cancellation of the
different terms: While the negative contribution is very
concentrated near each vortex, the positive one is more spread out.

The method, introduced in \cite{compagnon} is the same  that  we used  in Proposition \ref{wspread} in a simpler setting, so the reader can refer to that section for an idea of it.

Let us denote  the free energy functional
\begin{equation}\label{je0}
\je^0(u,A) = \int_{\om_\ep} e_\ep,\quad e_\ep = \hal|\nab_A u|^2 + \hal (\curl A)^2 +\frac1{4\ep^2}\(1-|u|^2\)^2,\end{equation}
where $\om_\ep$ is a smooth domain depending on $\ep $ and large as $\ep \to 0$.

We now state  the result of \cite{compagnon} for the sake of completeness. It contains (in a slightly different form) Theorems 1 and 2, as well as Corollary 1.1 of \cite{compagnon}.

First,  $f_+ $ and $f_- $ will denote the positive and negative
parts of a function or measure, both being positive functions or
measures. If $f$ and $g$ are two measures, we will write $f \le g$
in the sense of measures to mean that $g-f$ is a positive measure.

For any set $E$ in the plane, $\widehat{E}$ will denote the
1-tubular neighborhood
 of $E$ in $\om_\ep$ i.e.
$$\widehat{E}= \{x\in \om_\ep, \dist(x, E) \le 1\}.$$
This way
$$\widehat{\p \om_\ep} = \{x \in \om_\ep, \dist(x, \p \om_\ep) \le 1\}.$$
For any function $v$ on $\om_\ep$ we denote (notice the absolute value)
$$\widehat{v}(x)= \sup_{y \in B(x,1)\cap \om_\ep}| v(y)|.$$
 Note that here the choice of the number $1$ is
 arbitrary.

\begin{theo}[\cite{compagnon}]\label{thcompagnon} Let $\{\om_\ep\}_{\ep>0}$ be a family
of bounded open sets in $\mr^2$. Assume that    $\{(u_\ep, A_\ep)\}_\ep$, where $(u_\ep,A_\ep)$ is defined over $\om_\ep$,  satisfies  for some  $0<\beta<1$ small enough
\begin{equation}\label{bornenrj}
\je^0(u_\ep,A_\ep)  \le \ep^{-\beta}.\end{equation}

Then for any  small enough $\ep$, there exists a measure $g_\ep$ defined over
$\om_\ep$ and a measure $\nu_\ep$ depending only on $u_\ep$  of the form $2\pi \sum_i d_i \delta_{a_i}$ for some points $a_i \in \om_\ep$ and some integers
$d_i$ such that, $C$ denoting a generic constant independent of $\ep$:
\begin{enumerate}
\item We have
\begin{equation}\label{estjac}
\|\mu(u_\ep, A_\ep) - \nu_\ep\|_{(C^{0,1}_0(\om_\ep))^*} \le C \sqrt\ep G_\ep(u_\ep, A_\ep),\end{equation}
\item The following inequality holds
$$-C \le   g_\ep \le e_\ep + \dl (\nu_\ep)_-.$$
\item For any measurable set $E\subset \om_\ep$,
\begin{equation}\label{gmoins0}
(g_\ep)_-(E)  \le  C\frac{e_\ep(\HE)}{\lep},\quad (g_\ep)_+(E)\le  C e_\ep(\HE).\end{equation}
\item Letting
$$f_\ep= e_\ep - \dl \nu_\ep,$$
for every Lipschitz function $\xi$ vanishing on
$\p \om_\ep$ and every $0<\eta\le 1$ we have
\begin{equation}\label{erreurrrr}
\int_{\om_\ep} \xi \,d(f_\ep - g_\ep) \le  C\int_{\om_\ep} \widehat{\nab \xi}\, \left[ d|\nu_\ep| +    (\beta+\eta) \,d(g_\ep)_+  +  \frac{|\log\eta|^2}{\eta}\,dx \right] +
C  \int_{\widehat{ \p \om_\ep }} \widehat{\xi}   e_\ep.
\end{equation}

\item  For any measurable set $E\subset \om_\ep$  and
every $0<\eta\le 1$  we have
\begin{equation}\label{controlenu0}
|\nu_\ep|(E)\le C\(\eta (g_\ep)_+(\HE) + \frac1\eta  |\HE| + \frac{e_\ep(\HE\cap\widehat{ \p \om_\ep })}{\lep} \),\quad |\nu_\ep|(E)\le C  \frac{e_\ep(\HE)}\lep .
\end{equation}
%\begin{equation}\label{controlenugood0}
%|\nu_\good|(E)\le \frac{C}{\lep} \int_{\HE} e_\ep
%\end{equation}
\item Assuming  $|u_\ep|\le 1$ in $\om_\ep$,  then  for every ball $B_R$ of radius $R$ such that $B_{R+C}\subset\om_\ep$ and every $p<2$,
$$ \int_{B_R}  |j_\ep|^p\le C_p \(  (g_\ep)_+(B_{R+C}) + R^2  \) .$$
\item Assume  $|u_\ep|\le 1$,   that
 $\dist(0 , \p \om_\ep ) \to + \infty$ as $\ep\to 0$ and that for any $R>0$
\begin{equation}\label{Hypcoer}
\limsup_{\ep\to 0} g_\ep(\UR) <+\infty,\end{equation}
where $\UR$ is any  family satisfying (\ref{hypsets})--\eqref{hypsetsbis}.

Then, up to extraction of  a
subsequence and for any $p<2$,  the  vorticities $\{\mu(u_\ep, A_\ep) \}_\ep$ converge in $W^{-1,p}_{loc}(\mr^2)$  to a
measure $\nu$ of the form $2\pi\sum_{p\in\Lambda}\delta_p$, where
$\Lambda$ is a discrete subset of $\mr^2$,
  the  currents $\{j(u_\ep, A_\ep) \}_\ep$
converge weakly  in $\Lp$  to $j$, and the induced fields $\{\curl A_\ep\}_\ep$ converge weakly in $L^2_{\loc}(\mr^2)$ to $h$ which are such that
$$\curl j = \nu - h,    \quad \text{in} \ \mr^2.$$

\item  If we replace the assumption \eqref{hypcoer} by  the stronger assumption
$$\limsup_{\ep \to 0} g_\ep(\UR) <C R^2,$$
where $C$ is independent of $R$, then the limit $j$ of the currents satisfies, for any $p<2$,
\begin{equation} \label{Lpunif} \limsup_{R\to +\infty} \dashint_{\UR}|j|^p\,dx<+\infty.\end{equation}
Moreover for every family $\chi_{\UR}$ satisfying (\ref{defchi})  we have
\begin{equation}\label{Limlim}
 \liminf_{\ep\to 0} \int_{\mr^2}  \frac{\chi_{\UR} }{|\UR|} \,d\gep  \ge \(
\frac{W(j,\chi_{\UR})}{|\UR|} +  \hal \dashint_{ \UR} h^2  +\frac{ \gamma
}{2\pi} \dashint_{\UR} h \) + o_R(1) ,
\end{equation}
where $\gamma$ is the  constant  in \eqref{defgamma}  and $o_R(1)$ is a function tending to $0$ as $R\to +\infty$.
\end{enumerate}

\end{theo}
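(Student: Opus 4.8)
\textbf{Plan of proof of Theorem~\ref{thcompagnon}.} This theorem is the central technical result of the companion paper \cite{compagnon}, so the ``proof'' here amounts to explaining the architecture of the construction and pointing to where each item comes from. The plan is the following. First I would invoke the ball construction method \`a la Jerrard--Sandier, in the refined form needed to control an unbounded number of vortices: starting from the zero set of $u_\ep$, grow a family of disjoint balls whose total radius reaches a fixed small value, and on these balls obtain a lower bound for $e_\ep$ of the form $\pi\sum_i d_i(\lep - C)$, together with the Jacobian estimate \eqref{estjac} identifying $\mu(u_\ep,A_\ep)$ with $\nu_\ep = 2\pi\sum_i d_i\delta_{a_i}$ up to a small error in the dual Lipschitz norm. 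This is the same mechanism as in Proposition~\ref{boulesren} above, but carried out at the level of the full Ginzburg-Landau energy density rather than just $|j|^2$, and with the extra care that the constants in the lower bound be $o(1)$ \emph{per vortex} (total error $o(N)$), which is what forces the use of the improved ball-construction estimates of \cite{compagnon}.

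Second, I would carry out the mass displacement itself, exactly as in the proof of Proposition~\ref{wspread}: cover $\om_\ep$ by a fixed-scale grid $\{U_\alpha\}$, run a ball construction in each cell, extract a globally disjoint subfamily $\mathcal B_\rho$, and then apply the abstract mass-transport lemmas (Lemmas~3.1 and~3.2 of \cite{compagnon}) to each ball and each cell to spread the concentrated negative part $-\dl(\nu_\ep)_-$ into the positive part of $e_\ep$. This produces the measure $g_\ep$ with $g_\ep\ge -C$ (item 2), with $(g_\ep)_\pm$ controlled by the local energy on the $1$-neighborhood $\widehat E$ (item 3, from the annulus lower bound \eqref{lecercle}-type estimate), and with the error estimate \eqref{erreurrrr} quantifying how far $g_\ep$ is from $f_\ep = e_\ep - \dl\nu_\ep$ against a test function $\xi$ — the $\widehat{\nabla\xi}$ structure and the three error terms (mass of $\nu_\ep$, the $(\beta+\eta)$ times positive energy, and the $\frac{|\log\eta|^2}{\eta}$ Lebesgue term) come directly from the three sources of imprecision in the construction: vortex localization, the ball-growth logarithm, and the choice of inner cutoff radius $\eta$. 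Item 5, the two-sided control of $|\nu_\ep|$ by $g_\ep$ and by $|\widehat E|$, follows by combining item 3 with the lower bound $\int_{C_\alpha}|j|^2\gtrsim n_\alpha^2$ on annuli exactly as in \eqref{lacouronne}; this is the crucial fact that $g_\ep$ controls the number of vortices.

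Third, for items 6, 7, 8 I would pass to the limit: the $L^p$ bound on $j_\ep$ on balls (item 6) is Struwe's argument used in Lemma~\ref{wlp}, dyadically summing the annular energy bounds and using $\sum_k |U_k|^{1-p/2}<\infty$. Under the coercivity hypothesis \eqref{Hypcoer}, item 5 bounds $|\nu_\ep|$ locally, item 6 bounds $j_\ep$ locally in $L^p$, and then $\div j_\ep = 0$ plus weak compactness of $\curl j_\ep$ in $W^{-1,p}_{\loc}$ gives strong $\Lploc$ convergence of $j_\ep$ to some $j$ by elliptic regularity, with $\nu_\ep\to\nu$ weakly and $\curl A_\ep\to h$ weakly in $L^2_{\loc}$, yielding $\curl j = \nu - h$ (item 7). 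For item 8, the lower bound \eqref{Limlim} on $g_\ep$, I would first replace $g_\ep$ by $f_\ep = e_\ep - \dl\nu_\ep$ using \eqref{erreurrrr} with a cutoff $\chi_{\UR}$ (the error being $o_R(1)$ after division by $|\UR|$, since the $\widehat{\nabla\chi_{\UR}}$ is supported in a boundary layer of relative volume $\to 0$ by \eqref{hypsetsbis}), then use the lower-semicontinuity-type analysis near the limiting vortices — of the Bethuel--Brezis--H\'elein type, as in Lemma~\ref{wsci} — to show $\liminf_\ep \int \chi_{\UR}\,df_\ep \ge W(j,\chi_{\UR}) + \hal\int\chi_{\UR}h^2 + \frac{\gamma}{2\pi}\int\chi_{\UR}h$, where the $\frac{\gamma}{2\pi}h$ term records the $\pi\lep$ self-interaction of each vortex renormalized against the $\pi\log$ in the definition \eqref{WR} of $W$, with $\gamma$ from \eqref{defgamma}, and the $\hal h^2$ is the field energy $\hal(\curl A_\ep)^2$.

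\textbf{Main obstacle.} The hardest point is the error estimate \eqref{erreurrrr} with constants that are genuinely $o(1)$ per vortex: one needs the ball construction and the mass transport to be quantitatively compatible so that, after summing over all $\sim N$ vortices and all grid cells, the total slack is $o(N)$ rather than $O(N)$ — a naive ball construction loses a constant per vortex and would swamp the next-order term $W$ we are trying to extract. This is precisely why \cite{compagnon} had to develop refined ball-construction lower bounds, and it is the one place where the argument genuinely goes beyond the classical techniques. The passage to the lower bound \eqref{Limlim}, i.e. matching the renormalized constant $\gamma$, is delicate but is by now a standard BBH-type computation once the number of vortices is known to be locally bounded, which item 5 guarantees.
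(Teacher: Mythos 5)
The present paper does not prove Theorem~\ref{thcompagnon}: it is quoted, with explicit attribution, from the companion paper \cite{compagnon}, as signalled by the theorem header ``\cite{compagnon}'' and by the sentence immediately preceding it (``We now state the result of \cite{compagnon} for the sake of completeness. It contains (in a slightly different form) Theorems 1 and 2, as well as Corollary 1.1 of \cite{compagnon}.''). There is therefore no proof of this statement in the paper against which to compare your sketch, and you were right to treat the ``proof'' as an architectural overview pointing to the companion paper. For what it is worth, your description of the architecture is consistent with what the present paper does say about the companion paper's method and with the simplified analogue carried out here for $W$: the ball-construction lower bound corresponds to Proposition~\ref{boulesren}, the mass-displacement step producing $g_\ep\ge -C$ with the error estimate~\eqref{erreurrrr} corresponds to Proposition~\ref{wspread} (which explicitly invokes Lemmas~3.1 and~3.2 of \cite{compagnon}), the $L^p$ control of the current by dyadic summation is the Struwe argument of Lemma~\ref{wlp}, and the statement that $g_\ep$ controls the vortex number (item~5) is the ``crucial fact'' the paper highlights in Section~1.10. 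You have also correctly identified the genuine difficulty, namely achieving $o(1)$ error per vortex rather than $O(1)$, which is precisely the refinement of the ball construction for which the paper defers to \cite{compagnon}. Since the proof does not appear in this document, there is nothing further to check here.
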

The main point is that $g_\ep$ is a modification of $f_\ep = e_\ep - \hal \lep\nu_\ep$ with a small error (measured by \eqref{erreurrrr})  which, contrarily to $f_\ep$, is bounded below  according to item 2.

Now, using the notation of Section~\ref{sec-splitting},
we blow up   at the scale $\el = \sqrt{\he}$, letting $x' =
\el x$, $\pep = \ep/\el$, and
\begin{equation}\label{uprime}
\pue(x') = u_\ep(x),  \quad \pae(x') =
\el(A_\ep(x)-\np \tho(x)).\end{equation}
Then we deduce from Theorem~\ref{thcompagnon} applied in $\{\pOe\}_\ep$ to $\{(\pue,\pae)\}_\ep$

\begin{pro}\label{etalement} Assume that as $\ep \to 0$
$$\frac{\he}{\lep} \to  \lambda\in [\lambda_\om, + \infty], \quad \log\lep\ll \he -\com \lep \quad\text{and}\quad
\he\le\frac{1}{\ep^{\beta}},$$ where $\beta$ is small enough,
 and that $\je(u_\ep,A_\ep) \le C\he\plep$.  We also
assume  that \eqref{gl2ee} holds. 
Then there exists $N\in \{ \nm, \ns\}$ such that there exist measures $\{\pge\}_\ep$ defined on $\om_\ep'$ satisfying the following four properties, for any family
$\{\UR\}_R$ of sets satisfying \eqref{hypsets}, \eqref{hypsetsbis} and any family of
functions $\{\chi_{\UR}\}_R$ satisfying \eqref{defchi}.
\begin{enumerate}
\item $\pge$ is bounded below by a --- not necessarily positive --- constant $C$ independent of $\ep$.
\item Defining $\pfep$ as in \eqref{Fep2}  we have, writing $\poe$ for $\toe'$ and letting $\tom' = \{x\mid d(x,{\poe}^c)\ge2\},$
\begin{equation}\label{intom} \liminf_{\ep \to 0} \frac{\pfep(\pue,\pae) - \pge(\tom')}{ |\poe|} \ge 0    .\end{equation}
and for any  $1\le p<2$,
\begin{equation}
\label{jF}
\int_{\pOe} |j_\ep'|^p \le C_p \( \pfep(\pue, \pae) + |\poe|\),\  \frac1{\el^2} \int_\pOe |\curl\pae - m_\ep\indic_\poe|^2\le C\(\pfep(\pue,\pae)+|\poe|\),\end{equation}
where $j_\ep' :=(i\pue, \nab_{\pae}\pue) $.

\item  If  $\{\pxe\}_\ep$ satisfies $\dist(\pxe,(\poe)^c)\to +\infty$ and
\begin{equation}\label{hypcoer}\text{$\forall R>0$,  $\D\limsup_{\ep\to 0} \pge(\pxe+\UR)<+\infty,$}\end{equation}
then, up to extraction of  a subsequence, the translated
vorticities $\{\pmue(\pxe+\cdot)\}_\ep$ --- where $\pmue := \mu(\pue,\pae)$ --- converge in $W^{-1,p}_{loc}(\mr^2) $ to a measure $\nu$
of the form $2\pi\sum_{p\in\Lambda}\delta_p$, where $\Lambda$ is a
discrete subset of $\mr^2$, and  the translated currents
$\{\pje(\pxe+\cdot)\}_\ep$ converge  in $\Lp$ for any $p<2$ to $j$
such that $\div j = 0$ and $\curl j = \nu - \cl$.

\item If, in addition, we assume that there exists $C>0$ such that for any $R>0$
\begin{equation}\label{hypcoerplus}\limsup_{\ep\to 0} \frac{\pge(\pxe+\UR)}{|\UR|}<C,\end{equation}
then $j \in \admissible$ ($\admissible $ is defined in Definition~\ref{defA}) and
%\begin{equation} \label{jlim}\div j = 0,\quad \curl j= \nu - 1.\end{equation}
\begin{equation} \label{lpunif} \limsup_{R\to +\infty} \dashint_{\UR}|j|^p\,dx<+\infty,\end{equation}
and
\begin{equation}\label{limlim}\liminf_{R\to\infty}\liminf_{\ep\to 0} \frac1{|\UR|}\int \chi_{\UR}(x-\pxe)\,d\pge(x) \ge \limsup_{R\to\infty}\frac{W(j,\chi_{\UR})}{|\UR|} +\frac{ \gamma }{2\pi}\cl.
\end{equation} \end{enumerate}\end{pro}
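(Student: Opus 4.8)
The goal is to deduce Proposition~\ref{etalement} from Theorem~\ref{thcompagnon} applied to the blown-up configuration $\{(\pue,\pae)\}_\ep$ over the rescaled domain $\{\pOe\}_\ep$, combined with the energy-splitting machinery of Proposition~\ref{pro1} and Lemma~\ref{lemapriori}. The first step is to reduce the Ginzburg-Landau energy to the remainder functional $\jie$: since $\je(u_\ep,A_\ep)\le C\he\plep$, Lemma~\ref{lemapriori} gives $\je(u_\ep,A_\ep)=\je^N+\jie(u_\ep,\ai)+o(N)$ for $N\in\{\nm,\ns\}$, and since also $\je^N\le C\he\plep$ by \eqref{enn0}, we get $\jie(u_\ep,\ai)\le C\he\plep$, hence after blow-up $\pfep(\pue,\pae)\le C\he\plep$. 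From the explicit form \eqref{Fep2} of $\pfep$ and the fact that $0\le\zep'\le\hal\plep$ with $\Ce=O(\he)$ (and $\Ce/\plep\to 0$ by \eqref{limitem}), one extracts a bound on the plain free energy $\je^0(\pue,\pae)$ of the form $\je^0\le C\he\plep\le\ep^{-\beta}$ for $\beta$ small; this is exactly the hypothesis \eqref{bornenrj} needed to invoke Theorem~\ref{thcompagnon}. The point here is that the negative term $-\int\zep'\,d\mu(\pue,\pae)$ is controlled using the vortex-ball lower bounds (this is precisely what Theorem~\ref{thcompagnon} is built to handle: $\je^0$ controls $\nu_\ep$, and $\hal\plep\int d|\nu_\ep|$ is what one subtracts), so that a bound on $\pfep$ plus a bound on $\int\nu_\ep$ does yield a bound on $\je^0$.

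\textbf{Construction of $\pge$ and proof of items (1)--(2).} I would set $\pge:=g_\ep$, the measure produced by Theorem~\ref{thcompagnon} applied to $(\pue,\pae)$ on $\pOe$, possibly adjusted by the bounded terms $\Ce\int(\mu-\cl\indic_\poe)$ and the term $\int(\zep'-\hal\plep)\,d\mu$ that distinguish $\pfep$ from $\je^0-\hal\plep\nu_\ep$. Item~(1) is then item~2 of Theorem~\ref{thcompagnon} (lower bound $-C\le g_\ep$). For item~(2): the key is to compare $\pfep(\pue,\pae)$ with $\pge(\tom')$, where $\tom'$ is the inner $2$-neighborhood of $\poe$. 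One writes $\pfep = \je^0(\pue,\pae) - \hal\plep\int d\nu_\ep + (\text{error terms})$, where the error terms come from (a) replacing $\mu(\pue,\pae)$ by $\nu_\ep$ (controlled by \eqref{estjac}), (b) replacing $\zep'$ by its maximum $\hal\plep$ — this error is supported essentially where $\zep'<\hal\plep$, i.e. outside $\poe$, together with the region near $\p\poe$ where $\tho$ is close to but not equal to $\he\me$, whose relative measure is $o(1)$ by item~3 of Proposition~\ref{lemdomaine} — and (c) the $\Ce$-terms, which are $O(\he)\cdot\|\mu-\cl\indic_\poe\|$ and negligible after dividing by $|\poe|$ since $\Ce/\plep\to 0$. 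Then $\je^0-\hal\plep\nu_\ep = f_\ep$ in the notation of Theorem~\ref{thcompagnon}, and applying \eqref{erreurrrr} with a cutoff $\xi$ equal to $1$ on $\tom'$ and vanishing on $\p\pOe$ (whose gradient is supported in a set of relative measure $o(1)$, using \eqref{poebr} and item~3 of Proposition~\ref{lemdomaine}) gives $\int\xi\,d f_\ep \ge \int\xi\,d g_\ep - o(|\poe|) \ge \pge(\tom') - o(|\poe|)$, the last step using $g_\ep\ge -C$ to drop the part of $\int\xi\,dg_\ep$ outside $\tom'$. This yields \eqref{intom}. The estimates \eqref{jF} are items~6 and 2 of Theorem~\ref{thcompagnon} rewritten after blow-up (noting $\curl\pae - m_\ep\indic_\poe = \el(\curl A_\ep - \tmuo)(x)$), once one has absorbed the negative part of $\pge$ and the error terms into $\pfep + |\poe|$.

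\textbf{Items (3)--(4): compactness and the lower bound for $W$.} Item~(3) is a translate of items~7 of Theorem~\ref{thcompagnon}: under \eqref{hypcoer}, $\{g_\ep(\pxe+\UR)\}_\ep$ is bounded, so the vorticities converge in $W^{-1,p}_{loc}$ to $2\pi\sum\delta_p$ and the currents $\pje(\pxe+\cdot)$ converge in $\Lp$; the identity $\curl j=\nu-\cl$ in the limit uses that $\curl\pae\to 0$ locally (since $\frac1{\el^2}\int|\curl\pae-m_\ep\indic_\poe|^2$ is bounded by \eqref{jF} and $\el\to\infty$, so $\curl\pae\to m_\infty$ weakly in $L^2_{loc}$, with $m_\infty=\lim m_\ep=m_\lambda=\cl$), together with $\div j=0$ from Lemma~\ref{lemdiv} (consequence of the second Ginzburg-Landau equation \eqref{gl2ee}). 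Item~(4): under the stronger \eqref{hypcoerplus}, item~8 of Theorem~\ref{thcompagnon} gives $\limsup_R\dashint_\UR|j|^p<\infty$, hence $j\in\ainfty$; combined with $\curl j = \nu-\cl$ this gives $j\in\admissible$. The inequality \eqref{limlim} is obtained by translating \eqref{Limlim}: applied to $(\pue(\pxe+\cdot),\pae(\pxe+\cdot))$, that estimate reads $\liminf_\ep\frac1{|\UR|}\int\chi_\UR(x-\pxe)\,d g_\ep(x)\ge \frac{W(j,\chi_\UR)}{|\UR|}+\hal\dashint_\UR h^2 + \frac{\gamma}{2\pi}\dashint_\UR h + o_R(1)$; here $h=\lim\curl\pae(\pxe+\cdot)$, and from the bound \eqref{jF} combined with $\el\to\infty$ we have $h\equiv \cl$ a.e. (the rescaled field energy forces $\curl\pae$ to its mean-field value $m_\infty = \cl$ in the limit), so $\hal\dashint_\UR h^2 = \hal\cl^2|\cdot|/|\cdot| \to 0$ relative to the normalization — more precisely the $h^2$ term is simply $\hal\cl^2$ which is a constant we carry, while $\frac{\gamma}{2\pi}\dashint_\UR h\to\frac{\gamma}{2\pi}\cl$ — and taking $\limsup_R$ gives \eqref{limlim}. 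One must be slightly careful that the difference $\pge - g_\ep$ coming from the $\Ce$- and $\zep'$-corrections is $o(1)$ per unit volume on $\pxe+\UR$, which follows from the same estimates as in item~(2) since $\pxe+\UR\subset\poe$ eventually (as $\dist(\pxe,\poe^c)\to\infty$) where $\zep'\equiv\hal\plep$.

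\textbf{Main obstacle.} The delicate point is item~(2), specifically controlling the replacement of $\zep'$ by its maximum and controlling the boundary error \eqref{erreurrrr} uniformly — i.e. showing that all the ``error'' regions (near $\p\poe$, near $\p\pOe$, and where $\hal\plep - \zep' $ is small but positive) have measure $o(|\poe|)$. This is where Proposition~\ref{lemdomaine} (items 3, 4, 5, and \eqref{poebr}) is essential, and it is genuinely regime-dependent: when $\he/\lep\to\lambda>\lambda_\om$ one uses item~3, but when $\he/\lep\to\lambda_\om$ one must instead use the ellipse description \eqref{enst2}--\eqref{rellep} and the fact that $\el\ll L_\ep$. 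A second subtlety is the bootstrap: \eqref{erreurrrr} contains $\int\widehat{\nab\xi}\,d(g_\ep)_+$ on the right-hand side, so to make this an error one needs an a priori bound on $(g_\ep)_+$ near $\p\poe$, which comes from \eqref{gmoins0} together with the fact that the free energy $e_\ep$ near $\p\poe$ is not too concentrated — again a consequence of the splitting, since $\zep'$ is small there so $\pfep$ already controls $\je^0$ with no cancellation in that region.
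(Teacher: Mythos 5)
Your plan — blow up, invoke Theorem~\ref{thcompagnon}, then translate items 7--8 of that theorem into items (3)--(4) — matches the paper's, and the bookkeeping around item (3) and the argument that $\curl\pae\to\cl$ are essentially correct. But there are two genuine gaps, plus one weaker but real problem in item~(2).

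\textbf{The $\Ce$-term: a missing sign argument, not an error term.} You try to discard $\Ce\int(\mu(\pue,\pae)-\co\indic_\poe)$ by a size estimate ``$O(\he)\cdot\|\cdot\|$, negligible since $\Ce/\plep\to 0$.'' This does not work. Testing against $\zep'-\Ce$ turns the offending term into $2\pi\Ce(\frac1{2\pi}\nu_\ep'(\pOe)-N)$, and $\nu_\ep'(\pOe)$ is only controlled by $C\he$, so $\Ce\cdot\he = o(\plep)\cdot\he$, which is not $o(N)$ in general (e.g.\ when $\lambda<\infty$ one has $N\asymp\he$ and $o(\plep)\cdot\he\gg N$). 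The paper's proof never bounds this quantity at all: the choice of $N\in\{\nm,\ns\}$ is made \emph{adaptively}, as in \eqref{choice}, depending on whether $\frac1{2\pi}\nu_\ep'(\pOe)$ lies above $\ns$ or below $\nm$, and then \eqref{signC} forces $\Ce$ and $\int(d\nu_\ep'-\co\indic_\poe)$ to have the same sign, so the whole term is nonnegative and simply dropped in the lower bound. Your write-up never addresses how $N$ is selected; this is not cosmetic — without the sign argument the estimate \eqref{intom} would fail.

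\textbf{The measure $\pge$ must include the magnetic-energy correction.} You set $\pge=g_\ep$ (possibly ``adjusted,'' unspecified). The paper sets $\pge=(\pge)_0+(\pge)_1$ with $(\pge)_1 = \ell_\ep^{-2}(\curl\pae-\co\indic_\poe)^2-\hal(\curl\pae)^2$. This is not optional: \eqref{Limlim} produces $W+\hal\dashint_\UR h^2+\frac\gamma{2\pi}\dashint_\UR h$ with $h=\cl$ in the limit, so $(\pge)_0$ alone would leave a spurious $+\hal\cl^2$ in \eqref{limlim}; it is $(\pge)_1$, whose $-\hal(\curl\pae)^2$ contributes $-\hal\cl^2$ in the limit, that cancels it. Your sentence ``the $h^2$ term is simply $\hal\cl^2$ which is a constant we carry'' is exactly the symptom of this: \eqref{limlim} as stated has no such constant, so you must explain where it went, and with $\pge=g_\ep$ it does not go anywhere.

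\textbf{The cutoff in item (2).} You propose applying \eqref{erreurrrr} with a sharp cutoff $\xi$ that is $1$ on $\tom'$ and vanishes near $\p\pOe$, arguing that $\nab\xi$ lives in a small set. But \eqref{erreurrrr} charges $\int\widehat{\nab\xi}\,d(g_\ep)_+$ and $\int\widehat{\nab\xi}\,d|\nu_\ep'|$, and a sharp cutoff transitioning over an $O(1)$ width has $\|\nab\xi\|_\infty=O(1)$; you would then need $(g_\ep)_+$ and $|\nu_\ep'|$ to be small \emph{in measure} in that transition layer, which you do not have a priori. The paper avoids this by taking the slowly varying $\xi=2\zep'/\plep$: by \eqref{zepprime}, $\|\nab\xi\|_\infty\le C/\sqrt{\plep}=o(1)$, so the gradient terms are automatically $o(1)$ per unit mass. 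The multi-step balancing act in the paper (the $I_1$, $I_2$ terms, Lemma~\ref{lemco}, the split over $E_2$, $E_2^c$) is precisely what is required to absorb the remaining errors into the positive pieces $\int\chi(1-\xi)e_\ep'$ and $(g_\ep')_+$; your sketch does not have a substitute for this machinery.

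Items (1), (3), and most of (4) are fine in outline; the core issues are the missing sign choice of $N$, the omission of $(\pge)_1$, and the cutoff in item (2).
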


\begin{proof}[Proof of Proposition~\ref{etalement}]

We apply Theorem~\ref{thcompagnon} to
$\{(\pue,\pae)\}$ on $\om_\ep'$. First, we  check that \eqref{bornenrj} holds. Since $\pae(x') = \el(A_\ep-\np\tho)(x)$,
\begin{multline*}G_{\ep'}^0(\pue,\pae) = \hal \int_{\om} |\nab_{A_\ep - \np \tho}  u_\ep|^2+\ell_\ep^2 (\curl A_\ep - \Delta \tho)^2 + \frac{(1-|u_\ep|^2)^2}{2\ep^2}\\
\le  2\je(u_\ep, A_\ep)+ \io |\nab \tho|^2 + \frac{1}{\he}|\he+\tmuo- \tho|^2 ,
\end{multline*}
using the fact that $-\Delta\tho = \tmuo - \tho$. If  $N\in \{\nm, \ns\}$, then  \eqref{enn0} implies that $\|\he - \tho\|_{H^1}^2\le C\he\plep$, while
$$\int_\om\frac{\tmuo^2}{\he} = \int_\om \he {m_\ep}^2\le C\he.$$
Therefore, if $\he\le \ep^{-\beta}$ and $\je(u_\ep,A_\ep)\le C\he\plep$, then $G_{\ep'}^0(\pue,\pae)\le C\ep^{-2\beta}$ if $\ep$ is small enough. We conclude by noting that if $\he\le \ep^{-\hal}$ then $\ep'\le \ep^{3/4}$ hence if $\beta$ is small enough, Theorem~\ref{thcompagnon} applies.

It gives us a spread out density $(\pge)_0$ of $e_\ep'$
 and a measure $\nu_\ep' = 2\pi \sum_i d_i \delta_{a_i}$
 depending only on $u_\ep'$, hence on $u_\ep$, but {\em not} on our choice of $N$.
Noting that $\frac{1}{2\pi} \nu_\ep'(\om_\ep') $ is an integer, we let
\begin{equation}\label{choice}
\begin{cases} N=\ns & \text{if}\  \frac{1}{2\pi}  \nu_\ep'(\om_\ep')  \ge \ns\\
N=\nm & \text{if} \ \frac{1}{2\pi} \nu_\ep'(\om_\ep')  \le \nm\end{cases}
\end{equation}
and this is the choice of $N$ we make from now on.

%In blown-up coordinates,  \eqref{estjac} becomes $\|\mu(\pue,\pae) - \nu_\ep'\|_{(C^{0,1}(\pOe))^*}$. 
Testing $\mu(\pue,\pae) - \nu_\ep'$ against   $\zep'- \Ce$ which is in $C^{0,1}_0(\pOe)$, we find, in view of \eqref{Fep2}
\begin{multline}\label{g1bis}
\pfep(\pue,\pae) =\hal \int_{\pOe} |\nab_{\pae}
\pue|^2 + \frac 1{{\el}^2} |\curl \pae -  \co \indic_{\omega'_\ep}|^2
+\frac{(1-|\pue|^2)^2}{2(\ep')^2} \\ - \int_{\pOe}\zep'\,d\nu_\ep'  +
\Ce \int_{\pOe} (d\nu_\ep' - \co\indic_{\poe} )+o(1).
\end{multline}
Note that $\co |\poe| = \tmuo(\toe) = 2\pi N$. But, from the choice \eqref{choice}, if $\frac{1}{2\pi}\nu_\ep'(\pOe) \ge \ns$ then we have $N = \ns \ge N_0$ and thus $\Ce\ge 0$ by \eqref{signC}, hence
$$\Ce \int_{\pOe} (d\nu_\ep' - \co \indic_{\omega_\ep'} ) \ge 2\pi \Ce (\ns - N) =0.$$
If on the other hand $\frac{1}{2\pi}\nu_\ep'(\om_\ep')  \le \nm $ then by \eqref{signC} we have  $\Ce\le 0$ hence
$$\Ce \int_{\om'_\ep} (d\nu_\ep' - \co \indic_{\omega_\ep'} )\ge 2\pi \Ce (\nm-N)=0.$$
So in both cases, the last term in \eqref{g1bis} is nonnegative and we are led to
\begin{equation}\label{g1bb}
 \pfep(\pue,\pae) \ge\hal \int_{\om'_\ep} |\nab_{\pae}
\pue|^2 + \frac 1{{\el}^2} |\curl \pae -  \co \indic_{\omega'_\ep}|^2
+\frac{(1-|\pue|^2)^2}{2(\ep')^2} - \int_{\om'_\ep}\zep'\,d\nu_\ep'+o(1) .
\end{equation}
\footnote{At this point we could also choose $N$ to be $\frac{1}{2\pi} \nu_\ep'(\pOe)$ i.e. the total degree of $u_\ep$. Then the term in factor of $  \Ce$ in \eqref{g1bis} is $0$, and we still have \eqref{g1bb}. We may then proceed with an unchanged proof of the lower bound with that $N$. Alternatively, we may analyse further   the positive term $\Ce \int_{\pOe} (d\nu_\ep' - \co \indic_{\omega_\ep'} ) $  that has been discarded. }

To the  spread out density $(\pge)_0$  we add
$$(\pge)_1 = \frac1{\el^2}\(\curl \pae - \co \indic_\poe\)^2 - \hal(\curl\pae)^2$$
and call the result $\pge$.
Using the notation \eqref{je0} we may rewrite \eqref{g1bb} as
\begin{equation}\label{g3}
\pfep(\pue,\pae)\ge \int_{\om_\ep'} e_\ep'
+ (\pge)_1 - \int_{\om'_\ep}\zep'\,d\nu_\ep' +o(1).\end{equation}

We now check that $g_\ep'$ satisfies the required
properties. Since $\co\in (0,1]$ we have that $(\pge)_1$ is bounded below by a universal
constant if,  for instance,  $\el < 1/2$, which is true for small
$\ep$ since $\lim_{\ep\to 0}\el = 0$.

\begin{enumerate}
\item Since $(\pge)_0$ and  $(\pge)_1$ are both  bounded below by a constant independent of $\ep'$, so is $\pge$.

\item Item 2 will be  proven in full below. We prove here the case $\lambda_\om<\lambda<+\infty$, which is technically simpler. Using $\fep'= e_\ep' - \hal \plep \nu_\ep'$, rewrite \eqref{g3} as 
\begin{equation}\label{it1}\pfep(\pue,\pae) \ge \int_{\om_\ep'} \xi\, d f_\ep'  + \int_{\om_\ep'}(1-\xi) e_\ep' + \int_{\om_\ep'}(\pge)_1  +o(1),\end{equation}
where $\xi = 2\zep'/\plep$.  Then, from \eqref{erreurrrr}  applied to $-\xi$ and since 
from \eqref{zepprime}, we have $\xi = 1$ on $\poe$, it follows that 
\begin{equation}\label{it1bis}\int_{\om_\ep'} \xi \,d\fep'\ge \int_{\om_\ep'} \xi \,d(\pge)_0 -  \|\nab\xi\|_\infty\int_{{\widehat{\om_\ep'\sm\poe}}}\,d(|\nu_\ep'|+ (\pge)_0^+).\end{equation}
Since again $\xi = 1$ on $\poe$,  we may write 
\begin{equation}\label{it2} \int \xi\,d(\pge)_0 = (\pge)_0(\tom') + \int_{\om_\ep'\sm\tom'}\xi \,d(\pge)_0^+ - \int_{\om_\ep'\sm\tom}\xi \,d(\pge)_0^-.\end{equation}
Let $A = \widehat{\om_\ep'\sm\poe}\cap \{\xi >1/2\}$. Since, from \eqref{zepprime}, $\|\nab\xi\|_\infty\le C\ell_\ep$, we have $\widehat A\subset \tilde A:=\{\xi >1/4\}\sm\tom'$ if $\ep$ is small enough. Using \eqref{controlenu0} with $\eta=1$ we deduce that 
\begin{equation}\label{it3}|\nu_\ep'|\(\widehat{\om_\ep'\sm\poe}\cap \{\xi >1/2\}\)\le C(\pge)_0^+(\tilde A) + C|\om_\ep'|\le C \int_{\om_\ep'\sm\tom'}\xi\,d(\pge)_0^+ + C\he.\end{equation}
The same bound is trivially true for $(\pge)_0^+\(\widehat{\om_\ep'\sm\poe}\cap \{\xi >1/2\}\)$. 

On the other hand, letting $B = \widehat{\om_\ep'\sm\poe}\cap \{\xi \le 1/2\}$, we have $\xi< 2/3$ on  $\widehat B$ if $\ep$ is small enough, therefore using \eqref{controlenu0} we find that 
\begin{equation}\label{it4}|\nu_\ep'|\(\widehat{\om_\ep'\sm\poe}\cap \{\xi \le 1/2\}\)\le C e_\ep(\{\xi<2/3\})\le C\int (1-\xi) e_\ep', \end{equation}
and the same bound is true for $(\pge)_0^+\(\widehat{\om_\ep'\sm\poe}\cap \{\xi >1/2\}\)$, using \eqref{gmoins0}. 
We have thus obtained that the negative terms in the right-hand side of \eqref{it1bis} can be absorbed (since $\|\nab \xi\|_{\infty}\le C \ell_\ep =o(1)$)  in the positive terms of \eqref{it1} and \eqref{it2}. There remains to absorb the negative term of the right-hand side of  \eqref{it2}.

Moreover for any $n>0$ and letting $A_n = \{\xi >1-1/n\}$ we have if $\ep$ is small enough that $\widehat{A_n^c} \subset A_{2n}^c$ and $C/\lep < 1/{4n}$. Thus, splitting into $A_n$ and $A_n^c$, and   using the fact that $(\pge)_0^-\le C$ and \eqref{gmoins0} we have 
\begin{equation}\label{it5} \int_{\om_\ep'\sm\tom}\xi \,d(\pge)_0^-\le \frac C\lep e_\ep'(A_{2n}^c) + (\pge)_0^-(A_n\sm\tom)\le \hal\int (1-\xi) e_\ep' + C|A_n\sm\tom|.\end{equation}
Using the fact that  $\|\nab\xi\|_\infty = o(1)$, and the fact that $(\pge)_1\ge 0$ outside $\poe$ and is bounded below by $-C$ in $\poe\sm\tom'$ we deduce from \eqref{it1}--\eqref{it5} that for any $n>0$
$$\pfep(\pue,\pae) \ge (\pge)_0(\tom') + (\pge)_1(\tom') - C|A_n\sm\tom|+  o(\lep).$$
Then \eqref{intom} follows by dividing by $|\poe|$ and taking the limit first as $\ep\to 0$ and then as $n\to +\infty$, noting that from \eqref{scaled}, \eqref{Cep}, \eqref{mep}, we have 
$$A_n\sm\tom = \{x\in\poe\mid d(x,{\poe}^c)<2\}\cup\left\{x\in\omega_\ep'\mid\me<\frac{\tho(\ell_\ep x)}\he<\me+\frac\plep{2n}\right\}$$
and thus --- using Lemma~\ref{lemdomaine}, \eqref{item3} --- that $\limsup_{\ep\to 0}\frac{|A_n\sm\tom|}{\lep}$ tends to $0$ as $n\to +\infty$.

\item  For notational simplicity, we assume $\pxe = 0$. Since $(\pge)_1$ is bounded below, $\int_{\UR} \pge$ being
  bounded above independently of $\ep$ for any $R>0$ implies
  that the same is true for  $\int_{\UR} (\pge)_0$ hence
   \eqref{Hypcoer} is satisfied and from item~7
   in Theorem~\ref{thcompagnon}   we deduce the convergence of the
   currents (locally weak $L^p$), vorticities ($W^{-1,p}_{\loc}$)  and fields (weak $L^2_{\loc}$) to $j$, $\nu$ and $h$
   satisfying $\curl j = \nu - h$.
   Since we assume \eqref{gl2ee} we have $\div j(u_\ep, A_\ep)=0$. But a direct computation (see Lemma \ref{lemdiv}) gives $\div j_{1,\ep}= \div j(u_\ep, A_\ep)+ \div ((1-|u_\ep|^2)\np \tho)=0$ in $\toe$ since $\nab \tho=0$ in $\toe$. At the blown-up scale this means  that when $d(0 , (\omega_\ep')^c ) \to +\infty$, we have for any $R>0$ and $\ep$ small enough $\div j_\ep' =  0$ in $B_R$. We  deduce  that   $\div j_\ep' \to 0$ strongly in  $W^{-1,p}_{\loc}(\mr^2)$ and thus $\div j=0$ .
   Since $\curl j_\ep'= \mu_\ep'+ \curl A_\ep'$ we also have that $\curl j_\ep'$ is compact in $W^{-1,p}_{\loc}(\mr^2)$ for $p<2$. It follows that $j_\ep'$ is compact in $L^p_{\loc}$ and the convergence of $j_\ep'$ is strong.

  Moreover we have
\begin{equation}\label{gprime1}\limsup_{\ep\to 0} \dashint_{\UR} \(\frac1{\el^2}\(\curl \pae - \co\indic_\poe\)^2 - \hal(\curl\pae)^2\)<+\infty\end{equation}
for every $R>0$, from which we easily deduce that the limit $h$ of
$\{\curl \pae\}_\ep$ is equal to $\cl$, where
$\lambda\in[\com,+\infty]$ is the limit of $\he/\lep$. Indeed under
the hypothesis $d(0,(\poe)^c)\to +\infty$, we have
$\co\indic_\poe(\pxe+\cdot)\to \cl$ locally from \eqref{limitem}. We thus have $\curl j = \nu - m_\lambda$.

\item Again we assume $\pxe = 0$. The hypothesis that $\limsup_{\ep\to 0}
\dashint_{\UR} \pge$ is bounded above independently of $R$ implies as above that
 the same is true for $\limsup_{\ep\to 0}\dashint_{\UR} (\pge)_0$. Then item~8 of Theorem~\ref{thcompagnon} applies and we obtain \eqref{Lpunif}, hence \eqref{lpunif}, and from \eqref{Limlim} we get
 \begin{equation}\label{lwx} \liminf_{\ep\to 0} \int \frac{\chi_\UR}{|\UR|}\,d(\pge)_0\ge \frac{W(j,\chi_\UR)}{|\UR|} + \hal\dashint_\UR {m_\lambda}^2 + \frac\gamma{2\pi}\dashint_\UR m_\lambda +o_R(1),\end{equation}
 where $\lim_{R\to+\infty} o_R(1) = 0$. On the other hand \eqref{gprime1} implies that $\curl\pae\to m_\lambda$ locally strongly in $L^2$, hence
 $$\liminf_{\ep\to 0} \int \frac{\chi_\UR}{|\UR|}\,d(\pge)_1\ge
  -\hal {m_\lambda}^2.$$
 Together with \eqref{lwx}, this proves \eqref{limlim}.

 To prove that $j\in\admissible$ we integrate $\curl j = \nu - \cl$ over $B_{R+t}$ and $B_{R-t}$ to
obtain
$$\pi\cl(R-t)^2 + \int_{\p B_{R-t}} j\cdot \tau = \nu(B_{R-t})\le \nu(B_{R})\le \nu(B_{R+t}) = \pi\cl(R+t)^2 + \int_{\p B_{R+t}} j\cdot \tau.$$
Then, a mean-value argument and  \eqref{lpunif}, with  $p=1$,
allow  to deduce the existence of $t\in[0,\sqrt R]$ such that
$$\int_{\p B_{R-t}\cup \p B_{R-t}} |j|\le C R^{3/2},$$
and  we deduce that $\nu(B_R)\sim \pi\cl R^2$ as $R\to +\infty$, and so
$j\in\admissible$.
\end{enumerate}

\end{proof}
It remains to prove item 2 in all generality using Theorem \ref{thcompagnon}.

%\begin{pro}\label{pourlautre} Let $N$, $\pzep$ be as above, and  $\poe$ be as above,
%then $\pge' $ (defined as in the proof of Proposition \ref{etalement})  satisfies
%$$ \pfep(u',A')\ge \int_{\tilde{\omega_\ep'} } g_\ep'+o(|\omega_\ep'|). $$
%and
%$$\pfep(u',A') \ge  \frac{1}{C_0} \io  (g_\ep')_+ - C |\omega_\ep'|,$$%
%where  $\tilde\poe$ is the set of
%$x$'s which are at distance at least $2$ from $(\poe)^c$, and $C_0>0$ depends only on the constant in \eqref{controlenu0}.
%\end{pro}

\begin{proof}[Proof of item 2 in Proposition \ref{etalement} in the general case.] Recall that
from  its definition (\ref{scaled}), $\pzep$  achieves its maximum $\hal \plep$ on $\poe$, and is equal to $\Ce$ on $\partial\pOe$. Thus  $$\xi = 2\frac\pzep\plep$$ achieves its maximum $1$ on $\poe$ and
 its minimum  $\Ce/\plep$, which from \eqref{limitem} is $o(1)$,  on $\partial\pOe$.

We let
$$ E_1= \{ x\in \pOe, \xi >1-\delta\}, \quad  E_2= \{x\in \pOe, \xi>1-2 \delta\},$$ and recall that
$$\tom'= \{ x\in \poe, \dist (x, (\poe)^c) \ge 2\}.$$

We will need the following lemma.
\begin{lem}\label{lemco} For any $M>0$ and $\ep$ small enough there exist $\delta>0$ such that
\begin{equation}\label{deltacond1}
\delta>\frac M\plep,\quad \widehat E_1\subset  E_2, \quad |\tom'|>M|\widehat E_2\sm\tom'|.
\end{equation}
\end{lem}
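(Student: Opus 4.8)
The statement to be proved is Lemma~\ref{lemco}: for any $M>0$ and $\ep$ small, one can choose $\delta>0$ with $\delta>M/\plep$, with $\widehat E_1\subset E_2$, and with $|\tom'|>M|\widehat E_2\sm\tom'|$. The idea is that $E_1$ and $E_2$ are superlevel sets of $\xi=2\pzep/\plep$, which in the original (unblown-up) coordinates correspond to sublevel sets of $\tho$ just above its minimum $\he\me$; the measure estimates \eqref{item3} (and in the critical regime \eqref{enst2}--\eqref{rellep}) of Proposition~\ref{lemdomaine} say precisely that such thin sublevel sets, as well as the thin boundary layer $\{d(x,\partial\omega_\ep)<\delta\el\}$, have measure $o(|\omega_\ep|)$. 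So the strategy is: first show $\widehat E_1\subset E_2$ for a suitable $\delta$ using the gradient bound on $\pzep$; then show $\widehat E_2\sm\tom'$ is, at the unblown-up scale, contained in a thin sublevel set of $\tho$ together with a thin boundary layer of $\omega_\ep$, hence of measure $o(|\omega_\ep|)=o(|\tom'|)$; finally pick $\delta$ small enough (but still $>M/\plep$, which is compatible since $M/\plep\to 0$) to make the ratio exceed $M$.

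\textbf{Step 1: the inclusion $\widehat E_1\subset E_2$.} From \eqref{zepprime} we have $\|\nab\pzep\|_\infty\le C\sqrt{\plep}$, hence $\|\nab\xi\|_\infty\le 2C/\sqrt{\plep}\to 0$. Therefore, if $x\in\widehat E_1$, i.e.\ $x$ is within distance $1$ of a point $y$ with $\xi(y)>1-\delta$, then $\xi(x)\ge \xi(y)-\|\nab\xi\|_\infty\ge 1-\delta-2C/\sqrt\plep$. So $\widehat E_1\subset E_2$ as soon as $2C/\sqrt\plep<\delta$, which is implied by $\delta>M/\plep$ for $\ep$ small (since $M/\plep\gg 1/\sqrt\plep$... wait, it is $M/\plep\ll 1/\sqrt\plep$): one must instead just require $\delta\ge 4C/\sqrt\plep$ directly, which is a weaker constraint than asked, so one replaces the displayed condition $\delta>M/\plep$ by noting that {\em both} $\delta>M/\plep$ and $\delta\ge 4C/\sqrt\plep$ can be met simultaneously since $M/\plep\to 0$ and $1/\sqrt\plep\to 0$; concretely, fix any $\delta$ in the window $[\,4C/\sqrt\plep,\ \delta_0\,]$ for a small constant $\delta_0$ to be determined in Step~2, which is nonempty for $\ep$ small, and automatically $\delta>M/\plep$ for $\ep$ small.

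\textbf{Step 2: the volume comparison.} Translating back by $x=\el x'$, the set $\widehat E_2$ corresponds to $\{\,\pzep>(1-2\delta)\tfrac12\plep\,\}$ fattened by $\el$, i.e.\ (using $\pzep=\he-\tho+\Ce$ and $\min_\om\tho=\he\me$, so $\max\pzep=\tfrac12\plep$) to a set of the form $\{\tho<\he\me+\delta\plep\}$ fattened by $\el$; and $\tom'$ corresponds to $\{x\mid d(x,\omega_\ep^c)\ge 2\el\}$. Thus $\widehat E_2\sm\tom'$ corresponds (after enlarging the fattening constant harmlessly) to a set contained in
$$\{x\mid d(x,\partial\omega_\ep)<C\el\}\ \cup\ \{x\mid \he\me\le\tho(x)<\he\me+C\delta\plep\}.$$
By \eqref{item3} of Proposition~\ref{lemdomaine} (when $\he/\lep$ stays in a compact subset of $(\lambda_\om,+\infty)$) or by the last two displays of item~4, \eqref{enst2}--\eqref{rellep} (when $\he/\lep\to\lambda_\om$, using $\el\ll L_\ep$ and $\{x\mid d(x,\partial\omega_\ep)\le\delta\el\}\ll|\omega_\ep|$), both pieces have measure $o(|\omega_\ep|)$: the boundary layer because $C\el\to 0$, and the thin sublevel set because its relative measure $\to 0$ as $\delta\to 0$ uniformly in $\ep$ (this is exactly the second limit in \eqref{item3}, resp.\ its analogue via $\{U_Q\le M+\delta\}$ in \eqref{enst2}). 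Hence $\limsup_{\ep\to 0}|\widehat E_2\sm\tom'|/|\omega_\ep|\to 0$ as $\delta\to 0$. Since also $|\tom'|\sim|\omega_\ep|$ (boundary layer negligible, from the same estimates), we get $|\widehat E_2\sm\tom'|/|\tom'|\to 0$ as $\delta\to 0$, uniformly for $\ep$ small. Choose $\delta_0$ so small that this ratio is $<1/M$; then for $\ep$ small and any $\delta\in[4C/\sqrt\plep,\delta_0]$ we have $|\tom'|>M|\widehat E_2\sm\tom'|$, and $\delta>M/\plep$, completing the proof.

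\textbf{Main obstacle.} The only delicate point is organizing the two regimes ($\he/\lep$ bounded away from $\lambda_\om$ versus $\he/\lep\to\lambda_\om$) uniformly, and being careful that the window for $\delta$ — squeezed from below by the gradient bound $\sim 1/\sqrt\plep$ and from above by the small constant $\delta_0$ coming from the volume estimate — is nonempty for $\ep$ small; this is automatic because $\plep\to+\infty$, so $1/\sqrt\plep\to 0$, while $\delta_0$ is a fixed positive constant. The condition $\delta>M/\plep$ is then free since $M/\plep\to 0$.
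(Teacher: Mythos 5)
Your Step 1 is fine, but Step 2 has a genuine gap in the critical regime $\he/\lep\to\lambda_\om$. You claim that $|\widehat E_2\sm\tom'|/|\tom'|\to 0$ as $\delta\to 0$, \emph{uniformly} for small $\ep$, so that a fixed small $\delta_0$ works. This is false when $\lambda=\lambda_\om$. In that regime the coincidence set is vanishingly small, $|\omega_\ep|\sim L_\ep^2\to 0$, and the sublevel set you are trying to control, $\{\he\me\le\tho<\he\me+C\delta\plep\}$, rescales under $x=x_0+L_\ep y$ and the normalization $(\tho-\min\tho)/(\he L_\ep^2)$ to the set $\{U_Q\lesssim C\delta/(\lambda_\om L_\ep^2)\}$; since $U_Q$ grows quadratically at infinity, that set has area of order $\delta/L_\ep^2$, so the unrescaled sublevel set has measure of order $\delta$ — a constant — while $|\omega_\ep|\sim L_\ep^2\to 0$. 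Hence for a fixed $\delta$, no matter how small, the ratio $|\widehat E_2\sm\tom'|/|\tom'|$ blows up as $\ep\to 0$ in this regime. The citation of \eqref{enst2} does not save the argument: \eqref{enst2} describes sublevel sets of $(\tho-\min\tho)$ at the threshold $M\he L_\ep^2$ with $M$ of order one, whereas a fixed $\delta$ corresponds to a threshold parameter $M\sim\delta/L_\ep^2\to+\infty$.

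The paper's proof avoids this precisely by letting $\delta$ shrink with $\ep$ in the critical regime: it takes $\delta=cL_\ep^2$ with a small $c>0$ depending on $M$, so that after the $L_\ep$-blow-up the sublevel set is $\{U_Q\lesssim c\}$, whose area is close to $|E_Q|=1$ when $c$ is small; this makes $|\widehat E_2\sm\tilde\omega|$ a small fraction of $|\omega_\ep|\sim L_\ep^2$. The two regimes $\lambda\in(\lambda_\om,\infty)$ and $\lambda=+\infty$ are then handled as you describe (fixed $\delta$, using \eqref{item3} resp.\ \eqref{volome}), so your error is localized to the borderline regime — but that regime is the hardest one and is the entire reason item~4 of Proposition~\ref{lemdomaine} exists. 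To fix your proof you need, when $\he/\lep\to\lambda_\om$, to replace the window $[4C/\sqrt\plep,\delta_0]$ by the choice $\delta=cL_\ep^2$ and then verify, using \eqref{rellep} and the standing assumption $\he-\lambda_\om\lep\gg\log\lep$, that $cL_\ep^2>M/\plep$ and $cL_\ep^2\ge 4C/\sqrt\plep$ for small $\ep$.
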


\begin{proof} First we treat the case where $\he/\lep\to \lambda\in (\lambda_\om,+\infty]$.
In this case, using \eqref{item3} in  Proposition~\ref{lemdomaine} we find
$$\lim_{\delta\to 0} \frac{\left|\left\{1 - {2\delta\he}/\plep < \xi<1\right\}\right|}{|\poe|} = 0, $$
uniformly with respect to $\ep\le \ep_0$, if $\ep_0$ is small enough. Since $\he/\plep$ has the same limit as $\he/\lep$, it is bounded away from $0$ as $\ep\to 0$. We deduce easily that $\delta$ may be chosen small enough so that
\begin{equation} \label{fll} \frac{\left|\left\{1 - {3\delta\he}/\plep < \xi<1\right\}\right|}{|\poe|} \le \frac 2M,
\end{equation}
for any $\ep\le \ep_0$.

Then we note, since from \eqref{scaled} we have $|\nab\xi|\le \plep^{-\hal}$, that $\widehat E_1\subset E_2$ holds for $\ep$ small enough, as well as $\widehat E_2\subset \{1-3\delta \he/\plep < \xi\}$. It follows, in view of \eqref{fll} and since $\poe = \{\xi = 1\}$, that for $\ep$ small enough
\begin{equation} \label{fllf} \frac{|\widehat E_2\sm\poe|}{|\poe|} \le \frac 2M.
\end{equation}
To conclude we note that $|\poe\sm\tom'| \le |\{d(x,\partial\poe)\le 2\el\}| = o(|\poe|)$, where we have used \eqref{item3} and scaled. Thus if $\ep$ is small enough and using \eqref{fllf} we find $|\tom'|>M|\widehat E_2\sm\tom'|.$

If $\he/\lep\to +\infty$ then we choose $\delta = 1/2$. As above, $\|\nab\xi\|_\infty\to 0$ implies that $\widehat E_1\subset E_2$ for $\ep$ small enough, and of course $\delta>M/\plep$ is satisfied for $\ep$ small enough depending on $M$. Moreover \eqref{volome} implies that $d(\omega_\ep, \om^c)\to 0$ as $\ep\to 0$, and $d(\tom, (\omega_\ep)^c)=2\el\to 0$ as well. Therefore  $|\om\sm\tom| = o(|\tom|)$ and after  scaling  $|\pOe\sm\tom'| = o(|\tom'|)$, and in particular $|\widehat E_2\sm\tom'| = o(|\tom'|)$.

If $\he/\lep\to \lambda_\om$, we choose $\delta = cL_\ep^2$ with $c>0$ independent of $\ep$ to be chosen small enough depending on $M$. From \eqref{enst2} applied with $M = \frac c{2\lambda_\om}$ and $\delta = \frac\eta{2\lambda_\om}$, and rewritten in terms of
$$ \xi(x) = 1 - 2\frac{\tho(\el x) - \min_\om \tho}\plep$$
we deduce that
$$1 - \xi(x)\le c L_\ep^2 \frac\he{\lambda_\om\plep}\implies U_Q\(\frac{\el x - x_0}{L_\ep}\) \le \frac{c+\eta}{2\lambda_\om}.$$
Therefore, since $\delta = cL_\ep^2$ and $\lambda_\om = \lim_\ep \he/\plep$, for any $\eta>0$ and if $\ep$ is small enough then
$$1 - \xi(x)\le 2\delta\implies  U_Q\(\frac{\el x - x_0}{L_\ep}\) \le \frac{c+\eta}{2\lambda_\om}.$$
Since $\el/L_\ep\to 0$ from Proposition~\ref{lemdomaine} this in turn implies that if $\ep$ is small enough and $|y|<1$ then
$$U_Q\(\frac{\el x- x_0}{L_\ep}    +\frac{\el}{L_\ep} y\)<\frac{c+2\eta}{2\lambda_\om}.$$
Using \eqref{enst2} again, this implies that  $\xi(x+y)>1 -  2 (c+3\eta)L_\ep^2$. Choosing $\eta = c/6$ we have
proven that $\widehat{E_2}\subset \{\xi>1-3\delta\}$ if $\ep$ is small
enough. Similarly, we may prove that $\widehat{E_2^c}\subset
\{\xi<1-\delta\}=E_1^c$. We further deduce that, as $\ep\to 0$,
\begin{equation}\label{et1}
|\widehat{E_2}| < \(\frac{L_\ep}{\el}\)^2 \left|\left\{U_Q < \frac{c}{2\lambda_\om}\right\}\right|
 + o\(\frac\ella\el\)^2.\end{equation}
Moreover from  item 4 in Proposition \ref{lemdomaine} we have
$|\tilde\omega| \sim\( \frac\ella\el\)^2.$ With \eqref{et1}, and since $|\{U_Q <
\frac{2c}{\lambda_\om}\}|\to|E_Q|=1$ as $c \to 0$, and
$\tilde\omega\subset \widehat{E_2}$, we deduce that $|\widehat{E_2}\sm\tilde\omega|/|\tilde\omega|$ can be made arbitrarily small by
choosing $c$, and then $\ep$, small enough. This proves
\eqref{deltacond1} and the lemma.
\end{proof}

Returning to our proof, we start from  \eqref{g3}, and we
bound from below
$$\int_{\om_\ep'} e_\ep'  - \zep'\,d\nu_\ep'.$$
 Let $C_0$ be the constant in \eqref{controlenu0}
i.e. such that for any set $E\subset \om_\ep'$
\begin{equation}\label{CC}
|\nu_\ep|(E) \le \frac{C_0}{ \lep} e_\ep(\widehat{E}) \end{equation}
and assume that $C_0>2$.

For notational simplicity we now write $\om$, $e$, $\nu$, $g$, $\om$, $\tom$, $\zeta$ instead of $\pOe$, $e_\ep'$, $\nu_\ep'$, $(\pge)_0$, $\poe$, $\tom'$, $\pzep$ and we let $f = e - \hal\plep\nu$. Since $\xi  = o(1)$ on $\bo$ and $\|\nab\xi\|_\infty = o(1)$ (recall \eqref{limitem}, \eqref{zepprime}), for any  $\eta>0$ and $\ep$ small enough there exists a smooth positive cut-off
function $\chi\le 1$ such that  $|\nab \chi |\le \eta$, such that $\chi = 0$ on $\{\xi(x) \le \frac{1}{8C_0}\}$  and such that  $\chi = 1$ on  $\{\xi(x)\ge \frac{1}{4C_0}\}$.
Moreover, $\{x\mid d(x,\bo)\le 2\}\subset \{\xi\le \frac{1}{8C_0}\}$.

We then note  that since $f=e- \hal \plep \nu $ and $\zeta=\hal \plep \xi$, we  may write
\begin{equation}\label{eqn1}e - \zeta \nu =
(1-\chi) \(e- \hal \xi \plep  \nu\) + \chi \( \xi(f-g)+
\xi g + (1-\xi)e\). \end{equation}
{\it Step 1:} We first study
\begin{multline}\label{418}
\int_\om  \chi \(\xi \,d(f-g) + \xi\,dg+ (1-\xi)e\)
\\ = \int_{\tilde\omega}\chi\xi  \,dg+
\(\int_{(\tilde\omega)^c}\chi\xi \,dg_+ +\int_\om \chi(1-\xi)e\)
 +\(\int_\om
\chi\xi \,d(f-g)- \int_{(\tilde\omega)^c}\chi\xi \,dg_-\).\end{multline}
 The first
parenthesis on the right-hand side contains positive terms, while
the second one contains negative terms. We use
\eqref{gmoins0}, \eqref{controlenu0} to bound from below the
negative terms, and use the positive terms to balance them. From
(\ref{418}) and (\ref{erreurrrr}) (applied with $\eta=1$), and since $\chi= \xi = 1 $ on $\tilde\omega$ by construction and $\chi=0$ on $\{ \dist(x, \bo) \ge 2\}$,
 we have
\begin{equation}\label{inti1i2}
\int_\om  \chi (\xi \,d(f-g) + \xi \,dg+ (1-\xi)e)
\ge\int_{\tilde\omega}  \,dg + \frac{3}{4} \io \chi (1-\xi) e +
 \hal \int_{(\tilde\omega)^c}\chi\xi \,dg_+  + I_1 + I_2 , \end{equation}
where
\begin{eqnarray*}
 I_1 &=& -C \int_\om
  \widehat{\nab(\chi\xi)}\,dg_+
  +\frac14\int_{(\tilde\omega)^c}\chi \xi  \,dg_+ +\frac18\int_\om\chi (1-\xi)e,\\
I_2 &=& -C \int_\om  \widehat{\nab(\chi\xi)}\,d|\nu|-
\int_{(\tilde\omega)^c } \chi \xi \,dg_- +\frac14\int_{(\tilde\omega)^c}\chi \xi
\,dg_+ +\frac18\int_\om \chi(1-\xi)e.
\end{eqnarray*}
We first study $I_1$. Since $\nab (\chi\xi ) = 0$ in $\omega$,
we have
$\{\widehat{\nab(\chi\xi)}\neq 0\}\subset
(\tilde\omega)^c$. In addition $|\nab (\chi \xi)|\le \eta$ if $\ep$ is small enough since $|\chi|\le 1, |\xi|\le 1 , |\nab \chi |\le \eta $ and $|\nab \xi|=o(1)$.
Thus, noting that $\chi=1 $ on $\{\xi \ge \hal\}$,
\begin{multline*}
2  \int_\om  \widehat{\nab(\chi\xi)}\,dg_+\le
\eta \int_{(\tilde\omega)^c \cap \{ \xi\ge \hal\} }\,dg_++
 \eta \int_{(\tilde\omega)^c \cap \{\xi \le \hal\}} \,dg_+
\\ \le \eta\int_{(\tilde\omega)^c \cap  \{ \xi\ge \hal\} }
 \xi \chi \,dg_+
+C \eta\io (1-\xi)  e\end{multline*} where we have used
\eqref{gmoins0}. We thus obtain, choosing $\eta$ small enough,
$$ I_1\ge - \frac{1}{8}\io (1-\chi)(1-\xi) e.$$

We turn to $I_2$ and split the negative contributions over $E_2$ and $E_2^c$.
Using Lemma  \ref{lemco},  choosing $M$ large enough and using  \eqref{gmoins0},
\eqref{controlenu0} and the above, we have
$$\int_{E_2^c} \(\widehat{\nab(\chi\xi)}\,d|\nu|+\chi \xi \,dg_-  \)\le
\frac{C e(\widehat{ E_2^c})}\plep  \le \frac18 \int_{\widehat{E_2^c}} (1-\xi) e,$$
since  $\widehat{E_2^c}\subset E_1^c$ and $\delta >M/\plep$.

On the other hand, using \eqref{controlenu0} with $\eta = 1$
 and the fact that $\nab (\xi\chi) = 0 $ in $\tilde{\omega}$ and $|\nab (\chi\xi)|\le \eta+o(1)$ we have, since $\widehat{E_2\sm\omega}\subset \widehat E_2\sm\tilde\omega$ and by choosing $\eta$ small enough,
 $$\int_{E_2}
\widehat{\nab(\chi\xi)}\,d|\nu|\le C \eta   \int_{\widehat{E_2}   \backslash
\tilde{\omega}} (dg_++1    )\le \frac14  \int_{\tilde{\omega}^c} \chi    \xi \,dg_+ +C\eta
|\widehat{E_2} \sm  \tilde{\omega}| ,
$$
while using the fact that $g$ is bounded below (hence $g_-\le
C$), we have
$$\int_{\tilde{\omega}^c \cap E_2}
\chi \xi \,dg_-
\le C |E_2\sm \tilde{\omega}|.$$ Combining all the above, we deduce
that choosing $\eta $ small enough we have, if $\ep$ is small, that
$$I_2\ge- \frac18 \io (1-\chi)(1-\xi)  e  - C| \widehat E_2\sm\tilde\omega|.$$  From Lemma
\ref{lemco}, it follows that $\forall M>0$ and $\ep$ small enough, $ I_2\ge - \frac{C}{M}|\tilde\omega|- \frac18 \io (1-\chi)(1-\xi)  e    .$

Inserting the bounds for  $I_1 $ and $I_2$  into \eqref{inti1i2} we find
\begin{multline}\label{i1i2}\int_\om  \chi \(\xi \,d(f-g) + \xi \,dg+ (1-\xi)e\)\ge
   g(\tilde\omega) + \frac34 \io \chi (1-\xi) e +
 \hal \int_{(\tilde\omega)^c}\chi\xi \,dg_+
 \\  - \frac{1}{4}\io (1-\chi)(1-\xi) e - \frac{C}{M}|\tilde\omega|.\end{multline}

{\it Step 2:} We examine
$\io (1-\chi)(e- \hal \xi \plep \,d\nu).$
Since $|\xi |\le \frac{1}{4C_0}$ on the support of $1-\chi$ we have there
$$e-  \hal \xi \plep \nu \ge  e - \frac{1}{8 C_0} \plep |\nu|.$$
On the other hand, from item 2 of Theorem \ref{thcompagnon} we have
$g_+\le e+ \hal \plep |\nu|$ hence on the support of $1-\chi$ we have
$$e-  \hal \xi \plep \nu \ge (1- \frac{1}{4C_0}) e+ \frac{g_+}{4C_0}
 - \frac{2}{8C_0} \plep |\nu |$$
and thus  $$\io (1-\chi) (e-\hal \xi \plep \,d\nu) \ge
(1-\frac{1}{4C_0} ) \io (1-\chi) e + \frac{1}{4C_0}\(\io (1-\chi) \(\,dg_+ -\plep \,d|\nu|\)\).$$

{\it Step 3:} Inserting the above and \eqref{i1i2} in \eqref{eqn1} we obtain
   \begin{multline}\label{e4}\io e - \zeta \,d\nu  \ge
  g(\tilde\omega) + \frac34 \io \chi (1-\xi) e+
 \hal \int_{(\tilde\omega)^c}\chi\xi \,dg_+  \\  - \frac{1}{4}\io (1-\chi)(1-\xi) e - \frac{C}{M}|\tilde\omega| +
 (1-\frac{1}{4C_0} ) \io (1-\chi) e \\ + \frac{1}{4C_0}\io (1-\chi) \,dg_+
 - \frac{\plep}{4C_0}\io
  ( 1-\chi) \,d|\nu|.\end{multline}
 Since $0 \le 1-\xi\le 1+\Ce/\plep=1+o(1)$,  and $C_0>2$   we have
  \begin{multline*}- \frac{1}{4}\io (1-\chi)(1-\xi) e + \frac34 \io \chi (1-\xi) e+
 (1-\frac{1}{4C_0} ) \io (1-\chi) e \\
 \ge \( (1+o(1))(1- \frac{1}{4C_0})- \frac14\)\io   (1-\chi)(1-\xi) e   + \frac34 \io \chi (1-\xi) e \ge\hal  \io (1-\xi) e
.\end{multline*}  On the other hand,
by    \eqref{CC} and choice of $\chi$ we have
 $$\frac{\plep}{4C_0}\io (1-\chi)\,d|\nu|\le \frac14 \int_{\widehat{\{\xi<\frac{1}{4C_0}\}}}e.$$
 Since $|\nab \xi|=o(1)$ we know that $  \widehat{\{\xi
 <\frac{1}{4C_0}\}} \subset \{\xi<\frac14\}$ for $\ep$ small enough and thus
 $$\frac{\plep}{4C_0}\int_\om (1-\chi)\,d|\nu|\le  \frac13 \io (1-\xi) e.$$
 We have obtained
 \begin{multline}\label{eeee}- \frac{1}{4}\io (1-\chi)(1-\xi)  e + \frac34 \io \chi (1-\xi) e+
 (1-\frac{1}{4C_0} ) \io (1-\chi) e - \frac{\plep}{4C_0}\io (1-\chi) \,d|\nu|
 \\
 \ge \frac16 \io (1-\xi)e\ge 0 .\end{multline}
For the terms in  \eqref{e4} involving $g$, $g_+$, we use the fact
  that $\xi> \frac{1}{8C_0} $ on the support of $\chi$:
 $$  g(\tilde\omega)   +\frac{1}{4C_0}\io (1-\chi) \,dg_+ +
 \hal \int_{(\tilde\omega)^c}\chi\xi \,dg_+\ge  \frac{1}{16C_0} g_+(\om)- g_-(\tilde{\omega}) .$$
We may also bound this term below by $g(\tilde{\omega})$.
 Combining with \eqref{eeee} and \eqref{e4} we are led either to
\begin{equation}\label{ed1}\io e-\zeta \,d\nu \ge g(\tilde{\omega}) - \frac{C}{M}|\tilde{\omega}|,\end{equation}
where $M$ can be arbitrarily large,  or, using $g_- \le C $  and keeping $\frac16\int(1-\xi)e$,  to
\begin{equation}\label{ed2}\io e- \zeta \,d\nu \ge  \frac{1}{16C_0}
    g_+(\om)  - C|\tilde{\omega}| + \frac16\int_\om(1-\xi)e .\end{equation}

 {\it Step 4: Conclusion.}  We return to \eqref{g3} and  recall that the letter $g$ stood for  $(\pge)_0$. We deduce from \eqref{ed1}
 $$ \pfep(\pue,\pae)\ge (\pge)_0(\tom')  + (\pge)_1(\pOe) - \frac{C}{M}|\tom'|+o(1).$$
Since $(\pge)_1\ge 0 $ on $\pOe\sm\poe$, since $(\pge)_1\ge -C$
on $\poe$ and since  $|\poe\sm \tom' |=o(|\poe|)$,
we have $(\pge)_1(\pOe)\ge (\pge)_1(\tom') + o(\poe).$ Then we deduce  from \eqref{ed1}, letting $\ep \to 0$ and then $M\to  + \infty$, that
$$\pfep(\pue, \pae)\ge \pge(\tom')+ o(|\poe|), $$
proving  \eqref{intom}.

To prove \eqref{jF} we combine \eqref{ed2} with item~6 in Theorem~\ref{thcompagnon} to obtain
$$\pfep(\pue, \pae)\ge \frac1{C_p} \int_{\{d(x,(\pOe)^c)\ge C\}} |j_\ep'|^p - C|\tom'| + \frac16\int_\pOe (1-\xi) e_\ep'.$$
This proves the first inequality in \eqref{jF}, noting that
$(1-\xi)\ge 1/2$ on $\{d(x,(\pOe)^c)\ge C\}$ if $\ep$ is
small enough and that $e_\ep'\ge \hal|j_\ep'|^2$. Finally, from \eqref{ed1},
\begin{equation}\label{fggo}\pfep(\pue, \pae)\ge (\pge)_0(\tom') + (\pge)_1(\pOe) + o(|\poe|),\end{equation}
and it is straightforward to check that
$$ (\pge)_1\ge\begin{cases} \frac1{2\el^2} (\curl\pae - m_\ep)^2 & \text{on $\poe$}\\
\frac1{2\el^2} (\curl\pae)^2 & \text{on $\pOe\sm\poe$.}\end{cases}$$
Therefore
$$(\pge)_1(\pOe)\ge \int_\pOe \frac{(\curl\pae - m_\ep)^2}{2\el^2} - C|\poe|,$$
which together with \eqref{fggo} finishes the proof of  \eqref{jF}.
\end{proof}

\subsection{The case of small applied field}
Here, by small applied field we mean fields $\he$ that satisfy the assumptions of Proposition \ref{etalement}.
We are  ready to  define the appropriate
 space and $\Gamma$-converging functions to apply
  the scheme described in Section \ref{sec:abstract}.
  We  choose $X = \Lp\times \radon_0$,   where $\radon_0$ denotes the set of measures $\mu$ such that $\mu + C$ is a positive locally bounded measure
  on $\mr^2$, $-C$ being the constant bounding from below $g_\ep$ in  item 1 of Proposition \ref{etalement}.    We consider on $\Lp$ the strong topology and on $\radon_0$ that of weak convergence. The  space of positive measures equipped with the weak convergence is metrizable hence the  space $X$ is a Polish space.
  We will typically denote by $\bix$ an element of $X$.

There is a natural  action $\theta$ of $\mr^2$ on $X$ by translations:
$$\theta_\lambda \(j(\cdot),g(\cdot)\) = \(j(\lambda+\cdot),g(\lambda+\cdot)\)$$ which is clearly continuous with respect to the couple $(j,g)$ as well as with respect to $\lambda$.

Denoting as above $\poe$ the rescaled coincidence set, we know
from Lemma~\ref{lemdomaine} that it satisfies \eqref{domaine}. Now consider  $\{(u_\ep,A_\ep)\}_\ep$ satisfying the hypothesis of Proposition~\ref{etalement},  then the proposition provides us with measures $\pge$ defined on $\pOe$. We   also have the rescaled current  $\pje= \curl (iu_\ep', \nab_{A_\ep'}u_\ep') $  where $(u_\ep',A_\ep')$ are as in Proposition \ref{etalement}. We define functions $\{\gfep\}_\ep$ on $X$ as follows.

Having chosen a smooth  positive function $\chi:\mr^2\to\mr$
with  support in the unit ball  and integral equal to $1$, we let
$$\gfep(\bix) = \begin{cases}\D \int\chi(y-x)\,d\pge(y)
& \text{if $\exists x\in\poe$ s.t. $\bix = \(\pje(x+\cdot),\pge(x+\cdot)\)$}\\ +\infty & \text{otherwise.}\end{cases}
$$
Note that since $\pje$   and $\pge$ vanish outside a compact set, there is at most one $x$ such that $\bix = \(\pje(x+\cdot),\pge(x+\cdot)\)$ unless $\bix = 0$, in which case we let $\gfep(\bix) = +\infty$.

The third statement  of Proposition~\ref{etalement} implies that $\{\gfep\}_\ep$ satisfies the requirement of coercivity \eqref{2.1bis}. Indeed assume that  $\{\bix_\ep\}_\ep$ is a sequence in $X$ such that
\begin{equation}\label{hypcoercive}\limsup_{\ep\to 0}\D \int_{B_R} \gfep(\theta_\lambda \bix_\ep) \, d\lambda<+\infty\end{equation}
for every $R>0$, then in particular this integral is finite if
$\ep$ is small enough, which implies that
$\gfep(\theta_\lambda \bix_\ep)<+\infty$ for almost every
$\lambda\in B_R$. Thus there exists $\{x_\ep\}_\ep$ such that
$\bix_\ep = (\pje(x_\ep+\cdot),\pge(x_\ep+\cdot))$ and $\lambda+x_\ep\in\poe$ for almost every $\lambda\in B_R$, when $\ep$ is small enough. In particular the distance of $x_\ep$ to $\mr^2\setminus\poe$ is larger than $R$ for $\ep $ small enough,  and \eqref{hypcoercive} reads
$$\forall R>0, \quad \limsup_{\ep\to 0}\int_{B_R}\int  \chi(y-\lambda-x_\ep)\,d\pge(y)\, d\lambda<+\infty.$$
Integrating first w.r.t. $\lambda$  we find
$$\limsup_{\ep\to 0}\int \chi_R(y-x_\ep)\,d\pge(y) <+\infty,$$
where  $\chi_R = \chi* \indic_{ B_R}$.

Since $\pge$ is bounded from below independently of $\ep$, and
 since $\chi_R$ is a positive function equal to $1$ on $B_{R-1}$,
 this implies that the second part of \eqref{hypcoer} is satisfied.
   Thus, up to  a subsequence,  the currents $\pje(x_\ep+\cdot)$
   converge in $\Lp$.  Going to a further subsequence,
    $\{\pge(x_\ep+\cdot)\}_\ep$ converges in $\radon$.

Finally, $\chi$ was chosen smooth, it is clear that  the $\Gamma$-liminf requirement \eqref{2.1ter} is satisfied if we define the   function $\gf$  on $X$ by
\begin{equation}\label{defgf}
\gf(j,g) = \int\chi\,dg.\end{equation}
Now Theorem~\ref{gamma} applies and combined with Proposition~\ref{etalement}  gives:

\begin{pro}\label{pro4} Assume that $\{(u_\ep,A_\ep)\}_\ep$ satisfy \eqref{gl2ee}
and $$G_\ep(u_\ep, A_\ep) \le \min_{N \in \{\nm, \ns\}} \je^N + C N_0$$ for some constant $C$,
and assume
$\he$ satisfy the hypothesis of Proposition~\ref{etalement}
and \eqref{bornG}.  Let $(u_\ep', A_\ep')$ and $N$
 be as in Proposition \ref{etalement}. Using the notation above,
 and letting $P_\ep$ be the probability measure on
 $\Lp$ which is the push-forward of the
 normalized uniform measure on $\poe$ by the map
 $x\mapsto \pje(x+\cdot)$, we have the following.
\begin{enumerate}
\item    A subsequence of  $\{P_\ep\}_\ep$ weakly converges to a
translation-invariant probability measure $P$ on $\Lp$ such that $P$-a.e.
$j \in\admissible$.
\item  For any family $\{\UR\}_{R>0}$ satisfying \eqref{hypsets}, \eqref{hypsetsbis}, we have
\begin{equation}\label{gli}\liminf_{\ep\to 0}
\frac{\pfep(\pue,\pae)}{|\poe|}\ge \int W_U(j) \,dP(j) +\cl \frac{ \gamma }{2\pi}\end{equation} and
\begin{equation}\label{gli2}
\je(u_\ep, A_\ep) \ge
\je^N+  N    \(  \frac{2\pi}{m_\lambda}  \int W_U(j) \, dP(j)
 + \gamma +o(1)  \). \end{equation}
  \end{enumerate}
\end{pro}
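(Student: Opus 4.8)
The plan is to assemble Proposition~\ref{pro4} by combining the abstract scheme of Theorem~\ref{gamma} with the properties of the displaced density $\pge$ established in Proposition~\ref{etalement}. First I would set up the abstract machinery exactly as described in the paragraphs preceding the statement: take $X=\Lp\times\radon_0$ with the action $\theta$ by translations, define $\gfep$ on $X$ using a fixed smooth cutoff $\chi$ with $\int\chi=1$ and support in $B_1$, and define $F_\ep=\dashint_{\poe}\gfep(\theta_\lambda\,\cdot)\,d\lambda$. The domains $\poe$ satisfy \eqref{domaine} by Proposition~\ref{lemdomaine}, item~5. The coercivity requirement \eqref{2.1bis} follows from item~3 of Proposition~\ref{etalement}, as spelled out in the excerpt: a bound on $\int_{B_R}\gfep(\theta_\lambda\bix_\ep)\,d\lambda$ forces $\bix_\ep=(\pje(x_\ep+\cdot),\pge(x_\ep+\cdot))$ with $x_\ep$ deep inside $\poe$, and after integrating in $\lambda$ one gets $\limsup_\ep\int\chi_R(y-x_\ep)\,d\pge(y)<+\infty$, which is \eqref{hypcoer}; then item~3 gives convergence of the translated currents in $\Lp$ and of $\pge(x_\ep+\cdot)$ in $\radon_0$. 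The $\Gamma$-liminf \eqref{2.1ter} holds trivially with $\gf(j,g)=\int\chi\,dg$ since $\chi$ is smooth and $g\mapsto\int\chi\,dg$ is weakly continuous, modulo the lower semicontinuity which is automatic here.

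Next I would apply Theorem~\ref{gamma}: a subsequence of $P_\ep$ (the push-forward of normalized Lebesgue measure on $\poe$ by $x\mapsto\theta_x(\pje,\pge)$) converges to a $\theta$-invariant probability measure $P$ on $X$, with $\liminf_\ep F_\ep\ge\int\gf^*\,dP=\int\gf^{**}\,dP$ where $\gf^*(j,g)=\lim_{R\to\infty}\dashint_{\UR}\int\chi(y-\lambda)\,dg(y)\,d\lambda$ for any family $\{\UR\}$ satisfying \eqref{hypsets}. By Fubini this equals $\lim_R\frac1{|\UR|}\int\chi*\indic_{\UR}\,dg$. The key identification is that for $P$-a.e.\ $(j,g)$ in the support, $j\in\admissible$ and $\gf^*(j,g)=W_U(j)+\frac{\gamma}{2\pi}m_\lambda$: this is precisely item~4 of Proposition~\ref{etalement} (inequality \eqref{limlim}), combined with item~3 which gives $\curl j=\nu-m_\lambda$, $\div j=0$ and the density bound $\nu(B_R)\sim\pi m_\lambda R^2$ showing $j\in\admissible$. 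One must check the hypothesis \eqref{hypcoerplus} of item~4 holds on the support of $P$: this comes from the fact that $\int W_U(j)\,dP$ being finite (it is bounded below by $\min W$ from Proposition~\ref{pw1}, and bounded above via the energy assumption $G_\ep\le\min_N\je^N+CN_0$) forces $g(\UR)=O(R^2)$ $P$-a.e. The projection of $P$ onto the $\Lp$ factor is then the claimed $P$ on $\Lp$, translation-invariant and concentrated on $\admissible$, proving item~1 of the proposition. For item~2, \eqref{intom} in Proposition~\ref{etalement} gives $\liminf_\ep\frac{\pfep(\pue,\pae)}{|\poe|}\ge\liminf_\ep\frac{\pge(\tom')}{|\poe|}$, and since $|\poe\setminus\tom'|=o(|\poe|)$ and $\pge\ge-C$ this liminf is bounded below by $\liminf_\ep F_\ep(\bix_\ep)$ up to negligible terms (using $\chi*\indic_{\poe}\ge c>0$ on $\tom'$ and $\le 1$); applying the abstract lower bound and the identification of $\gf^*$ yields \eqref{gli}.

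Finally, to get \eqref{gli2} I would combine \eqref{gli} with the energy-splitting and a priori bounds: by Lemma~\ref{lemapriori}, $\je(u_\ep,A_\ep)=\je^N+\pfep(\pue,\pae)+o(N)$ (after rescaling, using \eqref{sanserreur} and the identification of $\pfep$ with the blown-up $\jie$), and since $|\poe|=\frac{2\pi N}{m_\ep\he}$ with $m_\ep\to m_\lambda$, we have $|\poe|\sim\frac{2\pi N}{m_\lambda\he}$, well, more precisely $|\poe|=\frac{2\pi N}{m_\ep\he}\cdot\frac1{\he}$ in blown-up coordinates — I need to track the scaling carefully: $\poe=\sqrt{\he}\,\omega_{\ep,N}$ has area $\he|\omega_{\ep,N}|=\he\cdot\frac{2\pi N}{m_\ep\he}=\frac{2\pi N}{m_\ep}$. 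Thus $\pfep(\pue,\pae)\ge|\poe|(\int W_U(j)\,dP+\frac{\gamma}{2\pi}m_\lambda)+o(|\poe|)=\frac{2\pi N}{m_\ep}(\int W_U(j)\,dP+\frac{\gamma}{2\pi}m_\lambda)+o(N)=N(\frac{2\pi}{m_\lambda}\int W_U(j)\,dP+\gamma)+o(N)$, using $m_\ep\to m_\lambda$ and $W_U$ bounded below. Adding $\je^N$ gives \eqref{gli2}. The main obstacle I anticipate is the bookkeeping around the choice of $N\in\{\nm,\ns\}$ (which was fixed in Proposition~\ref{etalement} via \eqref{choice} so that the discarded term $\Ce\int(d\nu_\ep'-\co\indic_\poe)\ge 0$), together with ensuring the error terms from the mass-displacement (the difference between $\pfep$ and $\pge(\tom')$, and between $\pge(\tom')$ and $F_\ep$) are genuinely $o(N)=o(|\poe|)$ uniformly; this relies delicately on items~3--5 of Proposition~\ref{lemdomaine} controlling the boundary layer of $\omega_\ep$ and on the fact that the convergence in \eqref{item3} is uniform in the relevant range of fields.
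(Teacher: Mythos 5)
Your proof follows essentially the same route as the paper: set up $X=\Lp\times\radon_0$ with the translation action, check coercivity and the $\Gamma$-liminf for the joint push-forward $Q_\ep$ of the pair $(\pje,\pge)$ via items 3 and 4 of Proposition~\ref{etalement}, apply Theorem~\ref{gamma}, identify $\gf^*$ with $W_U+\frac{\gamma}{2\pi}m_\lambda$ via \eqref{limlim}, and recover \eqref{gli2} by inserting \eqref{gli} into \eqref{sanserreur} with $|\poe|=2\pi N/m_\ep\to 2\pi N/m_\lambda$. One small wording point: your verification of \eqref{hypcoerplus} is phrased circularly (it invokes finiteness of $\int W_U\,dP$, which is only available after the very identification the check is meant to enable); the non-circular version is that $\int\gf^*\,dQ\le\liminf_\ep F_\ep<\infty$ follows directly from Theorem~\ref{gamma} together with the a priori energy bound, whence $\gf^*$ is finite $Q$-a.e.\ and, since $\gf\ge -C$, this forces $g(\UR)=O(|\UR|)$ and hence \eqref{hypcoerplus} along the approximating translates.
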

\begin{remark}\label{autrej} We claim that under the hypothesis of this proposition, the limit of $P_\ep$ which is the image of the normalized Lebesgue measure on $\poe$ by the map
$$\vp: x\mapsto j_\ep' (\cdot-x)$$
is unchanged if,  in the definition of $\vp$,  we
replace $\pae(y) = \el(A_\ep(x+\el y) - \np\tho(x+\el y))$ by
 $\el A_\ep(x+\el y)$, i.e. define $P_\ep$ as in Theorem~\ref{th1prime}.

Indeed, denote by $\psi$ the corresponding modification of
 $\vp$ and by $Q_\ep$ the ensuing modification of $P_\ep$.
  From \eqref{domaine},  there exists $\tom'$ such that
  $\tom'+ B_R\subset \poe$ for any $R>0$ if  $\ep$ is small enough,
   and such that $|\tom'|\sim|\poe|$. Since $\tom'\subset\poe$
   and $|\tom'|\sim|\poe|$, replacing $\poe$ by $\tom'$ in the
    definition of either $P_\ep$ or $Q_\ep$ does not change their
     limit. Then, since $\np\tho(x+\el y) = 0$ for any $x\in \tom'$ and
      $y\in B_R$
      and if $\ep$ is small enough, we find that
       $P_\ep$ and $Q_\ep$,  seen as measures on
       $L^p(B_R)$, coincide if $\ep$ is small enough
       depending on $R$. It follows that their limits in
       $\Lp$ are equal, proving the claim.
\end{remark}
%\begin{coro}\label{borneinffinale}
%Assume $\he$  satisfies $\log \he \ll \lep$, then for any $(u_\ep, A_\ep)$ we have
%begin{multline}
% G_\ep(u_\ep, A_\ep) \ge \hal \|\ho- \he\|_{H^1(\om)}^2 + \hal\L \io \muo \\
 %\ge
% \hal \|\ho- \he\|_{H^1(\om)}^2 + \hal\L \io \muo \\
% + (1- 1/2\lambda) \he |\omega_\ep|\( \min_{\admissible} W+ \frac{\gamma}{2\pi} - \frac{1}{4} \log (1- 1/2\lambda) \) + o(\he).\end{multline}
% \end{coro}

\begin{proof}[Proof of Proposition~\ref{pro4}] It is immediate that
 $(u_\ep, A_\ep)$ satisfies the hypotheses of Proposition \ref{etalement}.
Let $N$ be given by Proposition \ref{etalement}.  By Lemma \ref{lemapriori}, we have, for that $N$,
$G_\ep(u_\ep, A_\ep)=\je^N +F_\ep(u_\ep, \ai) + o(N).$ But from the upper bound assumption, $G_\ep (u_\ep, A_\ep) \le \je^N  +C N_0 $ so we deduce
 $$F_\ep (u_\ep, \ai) \le C N_0 \simeq CN =O( \he |\toe|).$$  In blown-up coordinates, this means $\pfep(\pue, \pae) \le C |\tom'|.$

 On the other hand, from  \eqref{intom} in Proposition~\ref{etalement}, we have
$$\pfep(\pue,\pae)\ge \pge(\tom')  - o(|\tom'|),$$
where we recall $\tom' = \{d(x,(\poe)^c)\ge 2\}$.
Also,   since $\int \chi=1$, since $\chi$ has support in $B_1$
and since $\pge$ is bounded below,
$$ \pge(\poe) \ge \int_{\boe}\gfep(\theta_\lambda \pje,\theta_\lambda \pge)\,d\lambda - o(|\poe|),$$
where $\boe= \{d(x,(\poe)^c)\ge 3\}$. Combining these facts, we have
$$F_\ep(\pje,\pge) := \dashint_{\boe}\gfep(\theta_\lambda \pje,\theta_\lambda \pge)\,d\lambda \le C $$ and we now   apply
Theorem~\ref{gamma} to these functionals.  We deduce  that the measures $\{Q_\ep\}_\ep$, where $Q_\ep$ is the image under
$$x\mapsto(\pje(x+\cdot),\pge(x+\cdot))$$
of the uniform normalized  probability measure on $\boe$,
 converge to a translation invariant probability measure $Q$
  on $X$ and
$$\liminf_{\ep\to 0} \frac{\pfep(\pue,\pae)}{|\boe|}\ge \int_X
\( \lim_{R\to +\infty} \dashint_{\UR} \gf(\theta_\lambda \bix)\)
\,dQ(\bix).$$
Moreover,  since from \eqref{domaine} we have $|\poe|\sim|\boe|$, we may replace
$\boe$ with $\poe$ in the definition of $Q_\ep$ and obtain the same limit.
But, writing as above $\chi_{\UR} = \chi *\indic_{\UR}$, we have as
 above from \eqref{defgf}
$$\lim_{R\to +\infty} \dashint_{\UR} \gf(\theta_\lambda\bix)\,
d\lambda= \lim_{R\to +\infty} \frac 1{|\UR|}\int\chi_{\UR}(x)\,dg(x),$$
where $\bix = (j,g)$. Now, if $\gf (\bix)$ is finite and
 $\bix$ is in the support of $Q$, then there exists (see Remark
 \ref{remF})  a sequence $\{x_\ep\}_\ep$ such that
 $(\pje(x_\ep+\cdot),\pge(x_\ep+\cdot))$ converges to $\bix$
 in $X$, with \eqref{hypcoerplus}  satisfied with $\UR$.
  It follows from Proposition \ref{etalement} that $j
  \in\admissible$  and that \eqref{limlim} is  satisfied, thus
\begin{multline*}
\lim_{R\to +\infty} \frac 1{|\UR|}\int\chi_\UR(x)\,dg(x) = \lim_{R\to +\infty} \lim_{\ep\to 0} \frac 1{|\UR|}\int\chi_\UR(x-x_\ep) \,d\pge(x)\\
\ge \limsup_{R \to \infty} \frac{W(j, \chi_{\UR})}{|\UR|}+ \cl\frac{\gamma}{2\pi} = W_U(j) + \cl\frac{\gamma}{2\pi}.\end{multline*}
Letting $P_\ep(j)$ and $P(j)$ denote the marginals of
the measures $Q_\ep$ and $Q$ with respect to the  first  variable,
 we immediately deduce that $P_\ep\to P$ and that \eqref{gli} is satisfied.
Replacing \eqref{gli} in \eqref{sanserreur} we find (\ref{gli2}), since $|\poe| = \he|\toe|= \frac{2\pi N}{ \me} = \frac{2\pi N}{m_\lambda} +o(N)$.
\end{proof}
\begin{remark}
 As in Remark \ref{rem2.1} we can also obtain a result of equipartition of energy of $F_\ep'$ on $\omega_\ep'$.\end{remark}

\subsection{The case of larger applied field}

We prove that the conclusions of Proposition~\ref{pro4} hold for larger fields as well, that is fields which do not satisfy the assumptions of Proposition \ref{etalement}. The technical difficulty is that the applied field is too large to have the energy upper bound that is needed to apply the result of Theorem \ref{thcompagnon}, so we use the strategy of \cite{livre} Chapter 8: average over smaller balls where most of the time the local energy is small enough to apply Theorem \ref{thcompagnon} and the result for small applied fields. Note that in this regime, by item 6 in Proposition \ref{lemdomaine} we have for $N \in \{ \nm, \ns\}$, $\je^N= \je^{N_0} +o(\he) = \hal |\om | \he \plep +o(\he)$.

\begin{pro} \label{pro5}Assume that
$$ \lep^4 \ll \he  \ll 1/\ep^2$$
and that $\je(u_\ep,A_\ep) \le \frac{\he}{2} |\om| \plep+ C \he$.  We also assume that $|u_\ep |\le 1$ and that
$A_\ep$ is a critical point of $\je(u_\ep,\cdot)$.
Then, using the same notation as in Propositions~\ref{etalement} and \ref{pro4}, we have, for any $U$
\begin{equation}\label{gli2big}
\je(u_\ep,A_\ep)\ge \hal  |\om| \he \plep +
 |\om| \he\(\int W_U(j) \, dP(j) + \frac{\gamma}{2\pi}+o(1)\),
 \end{equation}
 and $P$-a.e.  $j \in \ainfty$.
 \end{pro}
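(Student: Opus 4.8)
The plan is to reduce Proposition~\ref{pro5} to the already-established Proposition~\ref{pro4} by the localization/averaging strategy of \cite{livre}, Chapter~8. The obstruction to applying Proposition~\ref{pro4} (or rather Theorem~\ref{thcompagnon}, via Proposition~\ref{etalement}) directly is that in the regime $\he\gg\lep$ we only control $\je(u_\ep,A_\ep)$ up to order $\he$, which does \emph{not} give the pointwise energy bound $G_\ep^0\le\ep^{-\beta}$ needed in \eqref{bornenrj}. But since $\je(u_\ep,A_\ep)\le\hal|\om|\he\plep+C\he$ and, by item~7 of Proposition~\ref{lemdomaine}, $\je^N=\hal|\om|\he\plep+o(\he)$, the splitting formula \eqref{sanserreur} (valid once we fix $N\in\{\nm,\ns\}$ as in Proposition~\ref{etalement}) shows that $F_\ep(u_\ep,A_{1,\ep})\le C\he+o(\he)=O(\he)=O(N)$, i.e. the \emph{remainder} energy has the right size; the problem is purely one of \emph{local} concentration of the free energy part.

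First I would record the reduction: after the blow-up at scale $\el=\he^{-1/2}$, $\pfep(\pue,\pae)\le C|\poe|$, and by item~7 of Proposition~\ref{lemdomaine} the rescaled domain $\poe$ is essentially all of $\pOe$ and satisfies \eqref{domaine}. Now cover $\poe$ by balls $B(x_i,\rho)$ of some fixed large radius $\rho$, with bounded overlap and with the $x_i$ ranging over a lattice so that, averaging over the center, one has $\pfep(\pue,\pae)\gtrsim \dashint_{\poe}\(\hbox{local energy in }B(x,\rho)\)\,dx$ up to boundary errors $o_\rho(1)$ per unit volume. On the balls where the local free energy $\je^0$ restricted to $B(x,\rho)$ exceeds $(\ep')^{-\beta}$, one discards the contribution (it is nonnegative up to the controlled negative part, by Theorem~\ref{thcompagnon} item~2, or one simply uses the a~priori lower bound $g_\ep\ge -C$); a counting argument using the total bound $\pfep\le C|\poe|$ shows these ``bad'' balls occupy a fraction of $\poe$ that is $O(1/(\ep')^{-\beta})\to 0$. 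On the ``good'' balls, the local energy is small enough that Theorem~\ref{thcompagnon} and the mass-displacement machinery of Proposition~\ref{etalement} apply locally, producing a density $\pge$ with $\pge\ge -C$ and satisfying \eqref{intom}, \eqref{jF}, and the coercivity/lower-bound statements of items 3 and 4 of Proposition~\ref{etalement}, \emph{on each good ball}. Here I would also invoke that $A_\ep$ being a critical point of $\je(u_\ep,\cdot)$ gives the second Ginzburg--Landau equation, hence $\div j=0$ after blow-up, exactly as in the proof of item~3 of Proposition~\ref{etalement}.

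Then the argument runs exactly as in the proof of Proposition~\ref{pro4}: take $X=\Lp\times\radon_0$ with the translation action $\theta$, define $\gfep(\bix)=\int\chi(y-x)\,d\pge(y)$ when $\bix=(\pje(x+\cdot),\pge(x+\cdot))$ with $x$ in the union of good balls (and $+\infty$ otherwise), check coercivity \eqref{2.1bis} and $\Gamma$-$\liminf$ \eqref{2.1ter} (with $f(j,g)=\int\chi\,dg$) using Proposition~\ref{etalement} items 3--4 as before, apply Theorem~\ref{gamma} to $F_\ep=\dashint\gfep(\theta_\lambda\cdot)$, and identify the cell problem $f^*$ with $W_U(j)+\frac{\gamma}{2\pi}m_\lambda$ via \eqref{limlim}, noting that here $\he\gg\lep$ forces $\lambda=+\infty$, $m_\lambda=1$, $\cl=1$. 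Since the bad balls contribute a vanishing fraction and their energy density is bounded below by $-C$ (after mass displacement, or trivially since $e_\ep\ge 0$ on them before subtracting $\nu_\ep$ — but one must be a little careful: on bad balls simply use $\pge\ge -C$ from the a~priori construction restricted there, or bound their count so the loss is $o(|\poe|)$), passing to the limit gives
\begin{equation*}
\liminf_{\ep\to 0}\frac{\pfep(\pue,\pae)}{|\poe|}\ge \int W_U(j)\,dP(j)+\frac{\gamma}{2\pi},
\end{equation*}
with $P$ translation-invariant and concentrated on $\ainfty$ (the argument of item~4 of Proposition~\ref{etalement} that $\nu(B_R)\sim\pi R^2$, using \eqref{lpunif}, still applies). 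Inserting this into \eqref{sanserreur} and using $|\poe|=\he|\toe|\sim\he|\om|$ (by \eqref{volome}) together with $\je^N=\hal|\om|\he\plep+o(\he)$ yields \eqref{gli2big}.

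\textbf{Main obstacle.} The delicate point is the localization step: one must choose the radius $\rho$, and the family of shifted balls, so that (i) averaging over centers reconstructs the full energy $\pfep$ up to a per-volume error $o_\rho(1)$ that vanishes after $\rho\to\infty$; (ii) the bad balls, where Theorem~\ref{thcompagnon} cannot be applied, are genuinely a vanishing fraction and their discarded contribution is controlled from below — this requires the a~priori bound $\je(u_\ep,A_\ep)\le\hal|\om|\he\plep+C\he$ to translate into a bound on the \emph{number} of bad balls, and it requires that on a bad ball the energy density used in the lower bound is still $\ge -C$ per unit area (which is where one needs to be careful about whether the mass displacement, or just $e_\ep\ge 0$, is invoked); and (iii) the cutoff function $\zep'$, which is only locally constant, interacts correctly with the localization — but this is already handled by the full proof of item~2 of Proposition~\ref{etalement}, which is essentially local in nature. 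Everything else is a transcription of the proof of Proposition~\ref{pro4}.
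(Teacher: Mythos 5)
Your strategy of localizing at a fixed radius $\rho$ in the blown-up coordinates and then sending $\rho\to\infty$ after $\ep\to 0$ is the natural first thing to try, but it does not close. The obstruction is quantitative: to apply Theorem~\ref{thcompagnon} on a ball $B(x_i,\rho)\subset\pOe$ you need the mass-displacement error estimate \eqref{erreurrrr}, and that estimate carries a boundary term $\int_{\widehat{\p B_\rho}}\widehat\xi\,e_{\ep'}$. On an interior ball, the splitting cutoff $\xi=2\zep'/\plep$ is identically $1$, so it does not vanish on $\p B_\rho$; you therefore have to insert a new cutoff with $\|\nab\chi_\rho\|_\infty=O(1)$ (not $o(1)$), and the boundary energy of each annulus $B_\rho\sm B_{\rho-2}$ is of order $\rho\plep$. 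Summed over a bounded-overlap cover, the aggregate boundary loss is $\sim|\poe|\plep/\rho$, i.e.\ the \emph{per-unit-area} error is $\sim\plep/\rho$. For fixed $\rho$ this diverges as $\ep\to 0$, so the iterated limit $\lim_{\rho\to\infty}\lim_{\ep\to 0}$ does not produce $o(|\poe|)$. (In the moderate-field case this problem is invisible because the cutoff $\xi$ \emph{does} vanish on $\p\pOe$, so there is a single global mass displacement with no interior interfaces.) There is also a second, related gap: you appeal to ``the density $\pge$'' and ``the a priori lower bound $g_\ep\ge -C$'' on bad balls, but no global $\pge$ exists in this regime — Theorem~\ref{thcompagnon} applies only where \eqref{bornenrj} holds, and gluing local densities across ball boundaries is exactly where the uncontrolled errors live.

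The paper resolves both issues by choosing the localization radius $\sigma$ \emph{in the original scale} so that $\sigma^2\he=\lhe=(\log\leep)^4$, i.e.\ $\rho(\ep)=\sigma\sqrt{\he}=(\log\leep)^2\to\infty$ in blown-up coordinates. On each $\sigma$-ball it then applies the \emph{entire} machinery of Proposition~\ref{pro4}, including a fresh obstacle-problem splitting on $B_1$ with boundary value $\lhe$; the associated cutoff $\tilde\zeta'$ vanishes at $\p B_1$, so the boundary term in \eqref{erreurrrr} is genuinely small for the $\sigma$-balls. The ball size is chosen so that $\lhe\le\leep^{-\beta}$ puts the local problem back into the moderate-field regime of Proposition~\ref{etalement}, while still containing $\sim\lhe$ vortices per ball. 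Finally the paper needs a second averaging layer, formalized as a probability measure $Q_\ep$ on the space $\probas$ of probability measures, and it identifies the limit via Lemma~\ref{abstrait} together with \eqref{probprob} ($\int(\int W\,dP)\,dQ=\int W\,d\bar P$). Your sketch does correctly identify the correct conclusion and the use of the second Ginzburg--Landau equation, but the intermediate scale $\sigma$ and the associated obstacle problem on each intermediate ball are the missing idea that makes the boundary errors controllable.
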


\begin{proof}[Step 1: Blow-up]
The proof follows the ideas in Chapter 8 of \cite{livre,erratum}.
We recall  the rescaling formula from there. Define $\tix =  x/\facteur$ and
$$ \lue(\tix) = u_\ep(x),
 \quad  \lAe(\tix) =\facteur A_\ep(x),\quad \lom =
\frac\om\facteur,\quad \leep  = \frac\ep\facteur, \quad\lhe = \facteur^2\he.$$
Then, denoting $B_x^\facteur =
B(x,\facteur)$, we have up to translation
$\je(u_\ep,A_\ep,B_x^\facteur) = \lG(\lue,\lAe, B_\tix^1)$, where $G_\ep (u_\ep, A_\ep, B_x^\facteur)$ denotes the Ginzburg-Landau energy restricted to $B_x^\facteur$, and
$$\lG(\lue,\lAe,B_1)= \frac{1}{2}\int_{B_1} |\nab_{\lAe} \lue|^2 + \frac1{\facteur^2}
\(\curl \lAe -\lhe\)^2+ \frac{\(1-|\lue|^2\)^2}{2{\leep}^2}.$$

As in \cite{livre}, Chap~8,   if $\lep^4\ll\he\ll 1/\ep^2$  we may  choose $\facteur\ll 1$
such that
\begin{equation}\label{deflambda} \lhe = (\log\leep)^4 \qquad |\log \leep|\sim \plep
,\end{equation} so that $\sigma^2 \log^4\frac{1}{\sigma}= \ep^2 \he$.
\end{proof}

\begin{proof}[Step 2: Fubini] We give a formulation of  the energy which follows from  Fubini's theorem:
\begin{equation}\label{fub}\je(u_\ep,A_\ep,\om) =  \int_{x\in\mr^2}\frac{\je(u_\ep,A_\ep,B_x^\facteur\cap\om)}{|B_x^\facteur|}.\end{equation}
Note that if we restrict the  integration to the set $\oms$ of  those $x$'s such that $B_x^\facteur\subset\om$, then we have an inequality.

For $x\in\oms$ we define  $P_\ep^x$ to be the push-forward of
the normalized Lebesgue measure on $B_x^\facteur$ by the map
$x\mapsto \pje(\frac x\el+\cdot)$, where again $j_\ep' =
(iu_\ep', \nab_{A_\ep'} u_\ep') $,
and $u_\ep', A_\ep'$ are
 as in \eqref{uprime}. It  is an element of $\probas$, the set of
  probability measures on $X = \Lp$. On $\probas$ we define the function
$$\gfep(P) =  \begin{cases}\D \frac{\je(u_\ep,A_\ep,B_x^\facteur)}{|B_x^\facteur|} - \frac\he 2 \plep& \text{if $\exists x\in\oms$ s.t. $P = P_\ep^x$}\\ +\infty & \text{otherwise.}\end{cases}$$
We also define $Q_\ep$ to be the push-forward under $x\mapsto P_\ep^x$ of the normalized Lebesgue measure on $\oms$.
It is a probability measure on $\probas$. Now \eqref{fub} becomes, after subtracting $\frac\he 2 |\oms|\plep$,
$$\je(u_\ep,A_\ep,\om) - \frac\he 2|\oms|\plep =|\oms| \int_\probas \gfep(P) \,dQ_\ep(P).$$

Note that since $|\om\sm\oms| = O(\facteur)$, and since $\facteur\plep = o(1)$ --- this easily follows from \eqref{deflambda} --- we deduce from the above that
\begin{equation}\label{bizarre2}
\je(u_\ep,A_\ep,\om) - \frac\he 2|\om|\plep  = o(\he)+|\oms| \int_\probas \gfep(P) \,dQ_\ep(P).\end{equation}
\end{proof}

\begin{proof}[Step 3: $\Gamma$-convergence of $\frac1\he\gfep$]
Assume that  $P_\ep$ is  a probability measure such that
$\gfep(P_\ep)\le C\he$ and that $P_\ep\in\probas$ converges to $P$.
Then since  $\gfep (P_\ep)<\infty$,    there exists for each $\ep$ some $x_\ep\in \oms$ such that $P_\ep = P_\ep^{x_\ep}$ and
\begin{equation}\begin{split}\label{gfepgl} \gfep(P_\ep)
&= \frac{\je(u_\ep,A_\ep,B_{x_\ep}^\facteur)}{|B^\facteur|} - \frac\he 2 \plep\\
&= \frac1{\facteur^2}\(\frac{\lG(\lue,\lAe,B^1_{\tix_\ep})}{|B_1|} - \frac{\facteur^2\he}2 \plep\)\\
&=  \frac1{\facteur^2}\(\frac{\lG(\lue,\lAe,B^1_{\tix_\ep})}{|B_1|} - \frac{\lhe}2 \plep\).
\end{split}\end{equation}
Since $\gfep(P_\ep)\le C\he$ we get $\lG(\lue,\lAe,B^1_{\tix_\ep})\le C (\he\facteur^2 + \he\facteur^2 \plep) = C\lhe(1+\plep)$ and in view of  \eqref{deflambda}, note that $\ep' = \ep\sqrt\he = \tilde\ep\sqrt\lhe$,  we may apply Proposition~\ref{pro4} with  $\ep $ replaced by $\tilde{\ep}$, $\he$ by $\lhe$, etc.
This way we find that, $\tilde{N}$ being given by Proposition
\ref{pro4}, $\tilde P$ is concentrated  on $\ainfty$ and that
\begin{multline}\label{star}
\lG(\lue,\lAe,B_{\tix_\ep}^1) \ge
 \hal \|h_{\tilde{\ep}, \tilde{N}}   - \lhe\|_{H^1(B_{\tix_\ep}^1)}^2 + \pi \tilde{N} \plep \\
 + \lhe |\omega_\leep|
\( \int W_U(j) \, d\tilde P(j)
 + \frac{\gamma}{2\pi}+o(1),  \) \end{multline}
 where $h_{ \tilde{\ep}, \tilde{N}}  $ is the solution of
 the obstacle problem \eqref{obs1}, replacing $\om$ with
 $B_{\tix_\ep}^1$, $\he$ with $\lhe$ and $\ep$ with $\leep$~;
 and $\lmuo = - \Delta \lho + \lho $, $\omega_\leep = \supp(\lmuo)$.
   We can further check, since $\lhe = |\log\leep|^4$,
    that
    from \eqref{volome}, \eqref{lq} in Proposition~\ref{lemdomaine},  we have, as $\leep\to 0$,
 \begin{equation}\label{broutilles}| \omega_{\tilde{\ep}} |=|B_1|+o(1) \qquad     \|  h_{\tilde{\ep}, \tilde{N}}      - \lhe\|_{H^1}^2+
 \pi \tilde{N} \plep = \hal \lhe |B_1| \plep +o(\lhe).\end{equation}

 In \eqref{star},  $\tilde P$ is such that $\tilde P$-a.e.
  $j
  \in\ainfty$, and is
  the limit of the probability measures $\{P_\leep\}_\ep$.
   Using Remark~\ref{autrej}, the measure $\tilde P$ is the limit
    of the image of the normalized Lebesgue measure on
     $\omega_{\tilde{\ep}}$ by $\vp:x\mapsto j(u_x,A_x)$, where
 $$u_x(y) = \lue(x+\el y) = u_\ep(\facteur x+\el y),\quad A_x(y) = \el \lAe(x+\el y) = \el u_\ep(\facteur x+\el y),$$
 while $P_\ep$ is the image of the normalized Lebesgue measure on
 $B_{\tix_\ep}^\facteur$ by the usual blow-up map
 $x\mapsto j_\ep'(x+\cdot)$  which, changing the variable to $\tilde x$, is equal to the image of the normalized Lebesgue measure on $B_{\tix_\ep}^1$ by $\vp$. From  $|\omega_{\tilde{\ep}}|\sim |B_1|$ and $\omega_{\tilde{\ep}}\subset B_{\tix_\ep}^1$, we then deduce that  $\{P_\ep\}_\ep$ and $\{P_\leep\}_\ep$ have the same limit, i.e. that $\tilde P = P$. Then \eqref{star}, \eqref{broutilles} yield
 $$\lG(\lue,\lAe,B_1^{\tix_\ep}) \ge
 \hal\lhe |B_1| \lL +
 \lhe |B_1|
\( \int W_U(j) \, dP(j)
 + \frac{\gamma}{2\pi} \) +o(\lhe), $$
 where $P = \lim_\ep P_\ep$ is concentrated on $\ainfty$. Then, from \eqref{gfepgl},
\begin{equation}\label{gamlim}
\liminf_{\ep\to 0} \frac1\he\gfep(P_\ep)\ge \int W_U(j) \, dP(j)
 + \frac{\gamma}{2\pi}.
 \end{equation}
 \end{proof}

\begin{proof}[Step 4: $\{Q_\ep\}_\ep$ is tight] A consequence
of \eqref{gamlim} and Proposition~\ref{pw1} below is that the
function $\gfep$ is  bounded below independently of $\ep$, thus the results of Section~\ref{sec:abstract} apply.  The measures $Q_\ep$ are regular from their definition hence given $\delta>0$ there exists compact sets $K_\ep$ in $\probas$ such that $Q_\ep(K_\ep)>1-\delta$, and since $\he^{-1} \int \gfep(P)\,dQ_\ep(P) < C$ from \eqref{bizarre2} and the energy upper bound, we can also require that $\he^{-1}\gfep < 1/\delta$ on $K_\ep$.

Now assume $P_\ep\in K_\ep$ for each $\ep>0$. Then $\gfep(P_\ep) /\he$ is bounded independently of $\ep$ and therefore from the previous step and after taking a subsequence, $P_\ep\to P$. This shows that the hypotheses of Lemma~\ref{lesigne} are satisfied and thus that any subsequence of $\{Q_\ep\}_\ep$ has a convergent subsequence. In addition, we  deduce that $Q$-almost every $P$ satisfies that $P$-almost every
$j \in\ainfty$. \end{proof}

\begin{proof}[Step 5: Conclusion] Combining the convergence of $\{Q_\ep\}_\ep$ and \eqref{gamlim}, we deduce with the help  of Lemma~\ref{abstrait} that
\begin{equation}\label{451bis}
\liminf_{\ep\to 0} \frac1\he\int \gfep(P)\,dQ_\ep(P)\ge \int\(
\int W_U(j) \, dP(j)\)\,dQ(P) + \frac{\gamma}{2\pi}.\end{equation}
It remains to show that
\begin{equation}\label{probprob}  \int\( \int W_U(j) \, dP(j)\)\,
dQ(P)=  \int W_U(j)\,d\overline P(j),\end{equation}
where $\overline P = \lim_\ep P_\ep$ and $P_\ep$ is the push-forward of the normalized uniform measure on $\poe$ by the map $x\mapsto \pje(x+\cdot)$ where $\pje $
 is the current  defined from \eqref{uprime}.

But, if $\vp$ is a continuous and bounded function on $X$, by definition of $Q_\ep$ we have
$$\int \vp\,dP_\ep = \int \(\int \vp\,dP\) \,dQ_\ep(P)+o(1),$$ since $|\omega_\ep'|\sim \oms$.
Hence, passing to the limit, we find
$$\int \vp\,d\overline P = \int \(\int \vp\,dP\) \,dQ(P).$$
It is straightforward to check that this equality extends to positive measurable functions, and in particular  to $W$, which was proven to be measurable in  Proposition~\ref{pw1}. This proves \eqref{probprob}.
In addition since  $Q$ almost every $P$  satisfies $j \in \ainfty$,
 and since  $\overline{P} =
\int P\, dQ(P)$, we also have  that $\overline{P}$ almost every $j\in
\ainfty$.
Renaming $\overline{P}$ by  $P$,  and since $|\oms|\sim |\om|$ as $\ep\to 0$, combining \eqref{bizarre2}, \eqref{451bis}   and \eqref{probprob} proves \eqref{gli2big}.\end{proof}

\subsection{Proof of Theorem~\ref{th1prime}}
We  may now  combine
the results of
 Propositions \ref{pro4}, \ref{pro5}, Lemma \ref{lemapriori} and the upper bound of Theorem~\ref{th5} below to prove Theorem~\ref{th1prime}.

Assertion 2 of the Theorem is part of Theorem~\ref{th5} below.

For Assertion 1, in view of Remark~\ref{autrej}, propositions~\ref{pro4} and~\ref{pro5} imply, for a suitable choice of $N\in \{\nm,\ns\}$, the convergence of $P_\ep$ to a translation invariant measure $P$ on $L^p_\loc$ such that \eqref{borneinffinale2} holds.

We next  prove \eqref{mumuo0}.  We have seen in the subsections just above that the upper bound condition implies
 $\fe(u_\ep,\ai)\le C\he|\toe|$ or, blowing-up, $\fe'(\pue,\pae)\le C|\poe|$. We may write $\mu(u_\ep, A_\ep)  - \tmuo = \curl j_{1,\ep} +\alpha+\beta$, where
$$\alpha = \curl\ai - \tmuo,\quad \beta = \mu(u_\ep , A_\ep) - \curl\ai - \curl j_{1,\ep}.$$
From \eqref{jF} we have $\|\alpha\|_{L^2}^2\le C(\fe'+|\poe|)\le C\he|\toe|$, hence $\|\alpha\|_{W^{-1,p}} \le C\sqrt{N}$. The same bound holds for $\beta$ from \eqref{ar2}. Finally, from \eqref{jF}, we have $\int |j'|^p \le C (F_\ep ' + |\omega_\ep'|) \le C N$. Rescaling this relation, we get 
$$\|j\|_{L^p(\om)}\le\he^{\hal-\frac{1}{p}} \|j'\|_{L^p(\om_\ep')} \le C \he^{\hal-\frac{1}{p}} N^{\frac{1}{p}}   \le C N^\hal$$ where we have used the fact that $ N \le \frac{\he|\om|}{2\pi}\le C \he$.   Therefore $\|\curl j_{1,\ep}\|_{W^{-1,p}} \le C\sqrt{N}$, which concludes the proof of \eqref{mumuo0}.

It remains to prove the statement concerning   minimizers $\{(u_\ep,A_\ep)\}_\ep$ of $G_\ep$. In  the case of small applied fields $\he \le \ep^{-\beta}$,  Corollary \ref{coroapb} gives $G_\ep(u_\ep, A_\ep) \le \min_{N\in \{ \nm, \ns\}}   \je^N + C N_0 $
hence Proposition \ref{pro4} applies.  Comparing the lower bound \eqref{gli2}   to the upper bound \eqref{resth5} in Theorem \ref{th5},  we deduce
that $N$ minimizes the right-hand side and that
$$
 \int W_U(j) \, dP(j)  \le \min_{\admissible}  W .$$
Since $P$ is supported on $\admissible$, we obtain that $P$-a.e.
 $j$ minimizes $W_U$ over $\admissible$. Since minimizers of $W_U$ are independent of $U$, we have the result.

In the case of large applied fields  $ \lep^4\ll \frac{1}{\ep^2}$,  Corollary \ref{corogc}
yields $$\min G_\ep \le \frac{\he}{2}|\om|\plep + |\om|\he \( \min_{\ainfty} W + \frac{\gamma}{2\pi} + o(1)\), $$
thus Proposition \ref{pro5} applies and comparing the above to \eqref{gli2big}, we deduce that there is equality and that
$$  \int W_U(j)\, dP(j) \le \min_{\ainfty} W.$$We again deduce
 that $P$-almost every $j$ minimizes
  $W$ over $\ainfty$.
From \eqref{lq} in Proposition \ref{lemdomaine}, we get that
\eqref{borneinffinale2}
holds.

Note that Theorem \ref{th1prime} implies Theorem~\ref{th1} since,
from \eqref{enn0} in Proposition~\ref{lemdomaine}, if $\he = \lambda\lep$ with $\lambda>\lambda_\om$ then $G_\ep^N - G_\ep^{N_0} = O(1)$ as $\ep\to 0$.

\begin{remark}\label{remsec6} If we had chosen from the beginning $N=\bar{N}= \frac{1}{2\pi} \nu_\ep(\Omega_\ep')$  as indicated in footnote in the proof of Proposition \ref{etalement}, the we would obtain  the lower bound \eqref{borneinffinale2} with that $\bar{N}$. Combining with the upper bound of Theorem \ref{th5} we may deduce that $\je^{\bar{N}}= \min_{N\in \mn} \je^N+o(\bar{N})+o(N_0)$. A careful examination  of the variation of $\je^N$ with $N$, based on Lemma \ref{lemvarg} should then allow to obtain that $\bar{N}= \nm $ or $\ns$  up to an error which is quantified by the examination of the growth of $\je^N= \je^{N_0}$ (this is quite delicate, though). We expect this error to be $0$ for small enough applied fields, in particular for $\he < \hci + O(\sqrt{\lep})$.
\end{remark}

\section{Upper bound}\label{bornesup}
In this section, we use the  notation of Section \ref{sec-splitting}, in particular the definitions of $\tho, \toe...$ can be found there.
We make here, and here only, the assumption that $\om$ is convex. This guarantees the smoothness of the solutions to the obstacle problem \eqref{obs32}, see below. We do not believe this is a serious restriction, but the possible presence of cusps in the coincidence set would certainly add technical difficulties to our construction.

We prove the upper bound matching the lower bound of Theorem \ref{th1prime} (recall the definition of $\je^N$ in \eqref{121b}):
\begin{theo}\label{th5} Assume that  $\om$ is convex and that (\ref{range}) holds. Then for any family of integers $\{N\}$ depending on $\ep$ and  satisfying
\begin{equation}\label{rangen}1\ll N\le \frac{|\om|\he}{2\pi}, \quad \text{as $\ep\to 0$}\end{equation} the following holds:
\begin{enumerate}
\item
There exists $(u_\ep,A_\ep)$ such that, as $\ep \to 0$,
\begin{equation}\label{resth5} \je(u_\ep, A_\ep) \le  \je^N +
 N\(\frac{2\pi}{m_\lambda} \min_{\admissible}W +
 \gamma  +o(1)\),\end{equation}
where $\lambda\in[\lambda_\om,+\infty]$ is the limit of $\he/\lep$ as $\ep\to 0$, where $\admissible$ is as  in Definition \ref{defA}, and where $\ep' = \ep\sqrt\he$.
\item Let $1<p<2$ be given.  For any probability $P $ on $\Lp$
which is invariant under the action of translations and concentrated on $\admissible$, there exists $(u_\ep, A_\ep)$  such that, letting $P_\ep$ be the push-forward of the normalized Lebesgue measure on $\toe$ by the map $x \mapsto
\frac{1}{\sqrt{\he}}  j(u_\ep, A_\ep) \( x + \frac{\cdot}{\sqrt{\he}} \) $, we have  as $\ep \to 0$, $P_\ep \to P$ weakly and
\begin{equation}\label{resth5kk} \je(u_\ep, A_\ep) \le \je^N + N \(\frac{2\pi}{m_\lambda} \int W_K(j) \, dP(j)  +\gamma  +o(1)\).\end{equation}
\end{enumerate}
\end{theo}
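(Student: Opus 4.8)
The plan is to construct the test configuration $(u_\ep, A_\ep)$ essentially as in classical vortex-construction arguments (as in Bethuel--Brezis--H\'elein and subsequent works), but using as a blueprint for the vortex locations a well-chosen periodic (or near-periodic) vector field from Part I, and to reverse-engineer the energy splitting formula \eqref{dc} so as to make the remainder $\fe$ as small as $N\bigl(\tfrac{2\pi}{m_\lambda}\min W+\gamma+o(1)\bigr)$. More precisely, recall from Proposition~\ref{pro1} that $\je(u,A)=\je^N+\fe(u,\ai)-\tfrac12\io(1-|u|^2)|\nab\tho|^2$; since the last term is $o(N)$ for configurations of controlled energy (Lemma~\ref{lemapriori}), it suffices to build $(u,\ai)$ with $\fe(u,\ai)\le N\bigl(\tfrac{2\pi}{m_\lambda}\min_{\admissible}W+\gamma+o(1)\bigr)$, and then set $A_\ep=\ai+\np\tho$. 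Blowing up at scale $\el=1/\sqrt{\he}$, this reduces (cf. \eqref{Fep2}) to constructing $(\pue,\pae)$ on $\pOe$ with
\[
\hal\int_{\pOe}|\nab_{\pae}\pue|^2+\tfrac1{\el^2}|\curl\pae-\co\indic_\poe|^2+\tfrac{(1-|\pue|^2)^2}{2(\ep')^2}-\int_{\pOe}\zep'\mu(\pue,\pae)+\Ce\int_{\pOe}(\mu(\pue,\pae)-\co\indic_\poe)
\]
bounded by $|\poe|\bigl(\tfrac{m_\lambda}{2\pi}\cdot\tfrac{2\pi}{m_\lambda}\min W+\ldots\bigr)$, using $|\poe|=\he|\toe|=2\pi N/m_\lambda+o(N)$.

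The concrete steps I would carry out are: (i) For item~1, apply Corollary~\ref{periodisation} to obtain, for a sequence $R\to\infty$, a $K_R$-periodic current $j_R\in\ainfty$ with $W_K(j_R)\to\min_{\ainfty}W$; by scaling \eqref{minalai1} this gives, for each $\ep$, a configuration on a torus of the appropriate density $\co$, tiling a large portion of $\poe$. For item~2, apply Corollary~\ref{mesyoung} to the given translation-invariant $P$ concentrated on $\admissible$: it produces currents $j_R$ on $K_R$, with $P_R\to P$ and $\limsup_R W(j_R,\indic_{K_R})/|K_R|\le\int W_K(j)\,dP(j)$, which after rescaling become the blueprint on $\poe$. (ii) From the blueprint current $j$ (with $\curl j=2\pi\sum\delta_{p}-\co$), recover the vortex set $\Lambda$ and build $\pue$ by placing a standard degree-one vortex profile $u_0(\cdot/\ep')$ (the radial profile from \eqref{defgamma}) at each point $p\in\Lambda$ inside $\poe$, glued to a phase $e^{i\varphi}$ where $\nab^\perp\varphi$ approximates $j$ away from the cores; outside $\poe$ set $|\pue|=1$ and let $\pue$ carry no vorticity. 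Choose $\pae$ so that $\curl\pae=\co\indic_\poe$ exactly (so the middle magnetic term vanishes) and $\mu(\pue,\pae)=2\pi\sum_{p\in\Lambda\cap\poe}\delta_p$. (iii) Estimate each term: the Dirichlet-plus-potential energy of the cores contributes $\pi N\log\tfrac1{\ep'}+N\gamma+o(N)$ by the definition of $\gamma$; the interaction energy $\hal\int|\nab\varphi|^2$ over $\poe$ minus $\pi|\log\eta|\sum\chi(p)$ converges, by construction of $j_R$ and the convergence statements above, to $|\poe|\cdot W_K(j)$ — precisely, to $\tfrac{2\pi N}{m_\lambda}\bigl(\min W\bigr)$ for item~1 or $\tfrac{2\pi N}{m_\lambda}\int W_K\,dP$ for item~2 — using $|\poe|m_\lambda/(2\pi)=N+o(N)$; the term $-\int\zep'\mu=-\hal\plep\sum_{p\in\Lambda\cap\poe}\chi(p)\approx-\pi N\plep$ (since $\zep'=\hal\plep$ on $\poe$ and the points in a negligible boundary layer contribute $o(N)$ by item~3 of Proposition~\ref{lemdomaine}); and the $\Ce$-term is $\Ce\cdot 0=0$ since we arrange $\mu(\pue,\pae)(\poe)=2\pi N=\co|\poe|$ exactly, or at worst $\Ce\cdot O(1)=o(N)$ by \eqref{limitem}. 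Recombining with $\pi N\log\tfrac1{\ep'}-\pi N\plep=0$ (since $\ep'=\ep\sqrt\he$ and $\plep=|\log\ep'|$) gives \eqref{resth5}, \eqref{resth5kk}. (iv) For the convergence $P_\ep\to P$ in item~2, note that $P_\ep$ is the push-forward of uniform measure on $\toe$ by $x\mapsto\el^{-1}j(u_\ep,A_\ep)(x+\el\,\cdot)$; since near most points of $\poe$ the blown-up current equals (a compact perturbation of) a translate of $j_R$, and $P_R\to P$ by Corollary~\ref{mesyoung}, a diagonal argument over the sequences $\{R\}$ and $\{\ep\}$ yields $P_\ep\to P$ weakly; also $j\in\admissible$ forces $\mu(u_\ep,A_\ep)\sim\tmuo$ so that $\pae$ can indeed be chosen consistently with the $o(N)$ error budget.

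The main obstacle I expect is the gluing/matching near $\partial\poe$ and the management of the non-periodic geometry. The blueprint $j_R$ lives on a square torus, but $\poe$ (the blown-up coincidence set) is a large domain of shape governed by the obstacle problem — asymptotically a ball or an ellipse — so one must tile the bulk of $\poe$ by translates of the periodic cell and handle a boundary collar of width $o(\el^{-1})$ relative to $|\poe|^{1/2}$; Proposition~\ref{pw2bis} (uniform $W$-approximation with a good boundary, $j_R\cdot\tau=0$ on $\p K_R$) is tailored precisely to make this collar contribution $o(N)$, and Proposition~\ref{lemdomaine} items~3--5 control the measure of the regions where $\tho$ is close to but not equal to its minimum. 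The second delicate point is keeping \emph{all} error terms $o(N)$ rather than merely $o(\he)$: in the regime $\he-\hci\ll\lep$ one has $N\ll\he$, so the splitting must be done with the refined function $\tho$ (not $\ho$), $\Ce$ must be tracked via \eqref{limitem}, and the core energy must be computed with $\plep=|\log\ep'|$, not $\lep$; this is exactly the reason the statement is phrased with $\je^N$ and $\ep'$. Finally, for large fields $\lep^4\ll\he\ll\ep^{-2}$ covered by \eqref{range} with $\lambda=+\infty$, one should check that $\zep'$, $\Ce$, and $|\poe|\sim|\om|\he$ behave so that \eqref{resth5} still reads $\tfrac12|\om|\he\plep+|\om|\he(\min W+\gamma/(2\pi)+o(1))$, matching the lower bound in Proposition~\ref{pro5}; this is a direct substitution using \eqref{lq}.
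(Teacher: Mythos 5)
Your overall strategy is correct and matches the paper's: split the energy via Proposition~\ref{pro1}, reduce to bounding $\jie(u_\ep,\ai)$, build the test current by tiling the coincidence set with periodic blueprints from Corollary~\ref{periodisation} (item~1) or Corollary~\ref{mesyoung} (item~2), set $\curl\ai=\tmuo$ so the magnetic term vanishes, place cores at the blueprint points, and reconcile the core energy $\pi\log\frac1{\ep'}+\gamma$ with the $-\pi\plep$ coming from $\int\zep'\mu$, picking up the $-\frac14\log m_\lambda$ correction via the scaling formula~\eqref{minalai1}.

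There is however one genuine gap in step~(ii)--(iii). You say you would ``tile the bulk of $\poe$ by translates of the periodic cell and handle a boundary collar,'' asserting that Proposition~\ref{pw2bis} ``is tailored precisely to make this collar contribution $o(N)$.'' That proposition serves a different purpose: it makes $j_R\cdot\tau=0$ on $\partial K_R$ inside the periodic square, allowing the periodization itself; it does not address the region $\hat\omega_\ep = \omega_\ep\sm\tom$ of the coincidence set left uncovered by the tiling by translates of $K_R$. And that region cannot simply be left vortex-free: since the splitting formula demands $\int_\om\mu(u_\ep,\ai)=2\pi N=m_\ep\he|\omega_\ep|$, vortices with the same mean density $m_\ep\he$ must be placed in $\hat\omega_\ep$ as well. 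The paper does this by partitioning $\hat\omega_\ep$ into measurable cells $\mathcal{C}_i$ of area $2\pi\elp^2$, each containing one vortex, with $B(y_i,c\elp)\subset\mathcal{C}_i\subset B(y_i,C\elp)$, and solving a Neumann problem~\eqref{defji} in each to build auxiliary currents $j_i$. The existence of such a partition with uniform constants is precisely Corollary~\ref{convexreg} (a uniform interior-density estimate for $\omega_\ep$ at scale $\elp$), whose proof uses the convexity of $\om$ — this is in fact \emph{the} place convexity is invoked, which explains the hypothesis in the theorem. Your proposal acknowledges that the boundary collar is the hard part and cites the right estimates from Proposition~\ref{lemdomaine} to show $|\hat\omega_\ep|/|\omega_\ep|=o(1)$, but without the cell decomposition and the correction currents you cannot close the count $\#\Lambda=N$ nor bound the cross terms $\int\hej\cdot\tej$ and $\int|\hej|^2$ by $o(N)$.
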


\begin{coro}\label{coroapb} Under the same assumptions, we have
$$\min G_\ep \le \min_{N \in \{\nm, \ns\}} \je^N+ CN_0\le C \he \plep.$$
\end{coro}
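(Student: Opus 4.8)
The plan is to derive Corollary~\ref{coroapb} directly from the first assertion of Theorem~\ref{th5} by specializing the free parameter $N$ appearing there to the two integers $\nm$ and $\ns$ closest to $N_0$. First I would check that these choices are admissible: by Proposition~\ref{lemdomaine}, item~1, we have $N_0\gg 1$ in the whole range \eqref{range}, so both $\nm$ and $\ns$ satisfy $1\ll N$ as $\ep\to 0$; and since $2\pi N_0=\io\muo\le |\om|(\he-\hal\plep)<|\om|\he$ (from \eqref{omep} and \eqref{mep}), we have $N_0^\pm<\frac{|\om|\he}{2\pi}$, so \eqref{rangen} holds for $N\in\{\nm,\ns\}$. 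Hence Theorem~\ref{th5}(1) provides, for each such $N$, a configuration $(u_\ep,A_\ep)$ with $\je(u_\ep,A_\ep)\le \je^N+N(\frac{2\pi}{m_\lambda}\min_{\admissible}W+\gamma+o(1))$.

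Next, since $\min G_\ep$ is a minimum over all configurations, $\min G_\ep\le \je(u_\ep,A_\ep)$ for the configuration produced with any given $N$; optimizing over $N\in\{\nm,\ns\}$ gives
$$\min G_\ep\le \min_{N\in\{\nm,\ns\}}\je^N+ \max_{N\in\{\nm,\ns\}} N\Bigl(\frac{2\pi}{m_\lambda}\min_{\admissible}W+\gamma+o(1)\Bigr).$$
The bracketed constant $\frac{2\pi}{m_\lambda}\min_{\admissible}W+\gamma$ is a fixed real number (finite since $\min_{\admissible}W$ is finite and $m_\lambda$ is bounded away from $0$ by \eqref{126bis}), so the second term is $O(N)=O(N_0)$ because $\nm,\ns\le N_0+1$ and $N_0\gg1$. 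This yields $\min G_\ep\le \min_{N\in\{\nm,\ns\}}\je^N+CN_0$, which is the first inequality claimed.

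For the second inequality $\min_{N\in\{\nm,\ns\}}\je^N+CN_0\le C\he\plep$, I would invoke Proposition~\ref{lemdomaine}, item~6: equation \eqref{enn0} gives $\je^N=\je^{N_0}+O(\he/N_0)\le C\he\plep$ for $N\in\{\nm,\ns\}$. It remains to absorb the $CN_0$ term: since $2\pi N_0=\io\muo\le|\om|\he$ we have $N_0\le C\he$, so $CN_0\le C\he$; and in the regime \eqref{range} we always have $\plep\ge c$ (indeed $\plep=|\log(\ep\sqrt\he)|$ and $\he\ll1/\ep^2$ forces $\ep\sqrt\he\to0$, so $\plep\to+\infty$), whence $CN_0\le C\he\le C\he\plep$. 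Combining, $\min_{N\in\{\nm,\ns\}}\je^N+CN_0\le C\he\plep$.

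The only genuine subtlety — and the step I would be most careful about — is the uniformity of the $o(1)$ in \eqref{resth5} across the two values $N=\nm$ and $N=\ns$ as $\ep\to0$, i.e. that one may legitimately take the maximum over $N\in\{\nm,\ns\}$ inside the estimate; this is immediate since there are only two values and each carries its own $o(1)$, but it is worth stating explicitly. A second point to record is that Theorem~\ref{th5}(1) is applied here with the constant $\min_{\admissible}W=\min_\ainfty W\cdot(\text{scaling, cf. }\eqref{minalai})$ already known to be finite from Theorem~\ref{thw}, so no circularity arises. Everything else is bookkeeping with the estimates of Proposition~\ref{lemdomaine}.
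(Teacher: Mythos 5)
Your proof is correct and follows exactly the route the paper takes: apply Theorem~\ref{th5}(1) with $N=\nm$ and $N=\ns$, observe that the correction term is $O(N_0)$, and use \eqref{enn0} from Proposition~\ref{lemdomaine} for the final bound. The paper states this in one sentence; you have filled in the (routine) admissibility checks and estimates that it leaves implicit, but there is no difference of substance.
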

\begin{proof} To obtain an upper bound, we apply the result above with $N= \nm$ and $N=\ns$ (recall that $N_0$ is not necessarily an integer) and use \eqref{enn0} in Proposition~\ref{lemdomaine}. \end{proof}

\begin{coro}\label{corogc}Under the same assumptions,
if $\lep^4\ll \he \ll \frac{1}{\ep^2} $,  we have
$$\min G_\ep \le  \hal|\om |\he\plep + \he |\om| \(\min_{\ainfty}W+\frac{\gamma}{2\pi} +o(1)\).$$
\end{coro}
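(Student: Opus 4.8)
\textbf{Proof of Corollary \ref{corogc}.}
The plan is to apply item 1 of Theorem \ref{th5} with the particular choice $N\in\{\nm,\ns\}$ (one of the two integers closest to $N_0$), and then convert the resulting estimate, which is expressed in terms of $\je^N$, into one involving the leading order term $\hal|\om|\he\plep$. First I would fix $N$ to be whichever of $\nm$, $\ns$ makes $\je^N$ smaller; by Theorem \ref{th5} there exists a configuration $(u_\ep,A_\ep)$ such that
\begin{equation}\label{cgcstep1}
\je(u_\ep,A_\ep)\le \je^N + N\(\frac{2\pi}{m_\lambda}\min_{\admissible}W + \gamma + o(1)\).
\end{equation}
Since $\min G_\ep\le \je(u_\ep,A_\ep)$, it suffices to estimate the right-hand side of \eqref{cgcstep1}.

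For the term $\je^N$, the regime $\lep^4\ll\he\ll 1/\ep^2$ ensures in particular that we are in the range \eqref{range}, so Proposition \ref{lemdomaine} applies. By \eqref{lq} in that proposition (whose hypothesis $\lep^4\ll\he\ll1/\ep^2$ is exactly what we have assumed), $\je^N = \hal|\om|\he\plep + o(\he)$. For the second term, I would use the relation $2\pi N = m_\ep\he|\toe|$ from \eqref{toen}, together with the facts that $m_\ep\to m_\lambda$ (see \eqref{limitem}) and $|\toe|\to|\om|$ in this regime --- the latter because $|\om\sm\toe|\le C\sqrt{\plep/\he}\to 0$ by \eqref{volome} in Proposition \ref{lemdomaine}, using $\he\gg\lep^4\gg\plep$. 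Hence $N = \frac{m_\lambda}{2\pi}\he|\om| + o(\he) = \frac{m_\lambda}{2\pi}\he|\om|(1+o(1))$, and consequently
\begin{equation}\label{cgcstep2}
N\(\frac{2\pi}{m_\lambda}\min_{\admissible}W + \gamma + o(1)\) = \he|\om|\(\min_{\admissible}W + \frac{m_\lambda}{2\pi}\gamma\) + o(\he).
\end{equation}

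Finally, I would reconcile the notation: in this regime $\lambda=+\infty$, $m_\lambda = m_\infty = 1$, and $\admissible = \ainfty$ (the constant background density equals $1$), so $\min_{\admissible}W = \min_{\ainfty}W$ and the coefficient $\frac{m_\lambda}{2\pi}\gamma$ becomes $\frac{\gamma}{2\pi}$. Combining \eqref{cgcstep1}, \eqref{lq} and \eqref{cgcstep2} then yields
\[
\min G_\ep \le \hal|\om|\he\plep + \he|\om|\(\min_{\ainfty}W + \frac{\gamma}{2\pi} + o(1)\),
\]
which is the claimed bound. The only genuinely delicate point is verifying that the error terms $o(\he)$ from $\je^N$ and from the estimate $N = \frac{m_\lambda}{2\pi}\he|\om|(1+o(1))$ are indeed negligible after being multiplied by the (bounded) factor involving $W$ and $\gamma$; this is where one must be slightly careful that $\min_{\ainfty}W$ is finite (guaranteed by Proposition \ref{pw1}) so that $o(\he)\cdot O(1) = o(\he)$, and everything is absorbed into the final $o(1)$ once divided by $\he|\om|$.
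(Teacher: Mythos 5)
Your proof is correct and follows essentially the same route as the paper: invoke item 1 of Theorem \ref{th5} with $N\in\{\nm,\ns\}$, use \eqref{lq} to identify $\je^N = \hal|\om|\he\plep + o(\he)$, and set $\lambda=+\infty$, $m_\lambda=1$, $\admissible=\ainfty$. The only (small, harmless) stylistic difference is that the paper's one-line proof cites the crude inequality $|\toe|\le|\om|$ to bound the prefactor $N$, whereas you establish the sharper asymptotic $N = \frac{\he|\om|}{2\pi}(1+o(1))$ from $2\pi N = \me\he|\toe|$, $\me\to 1$, and $|\om\sm\toe|\le C\sqrt{\plep/\he}\to 0$; this avoids any implicit reliance on the sign of $2\pi\min_{\ainfty}W+\gamma$ and is, if anything, cleaner.
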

\begin{proof} This follows from  \eqref{resth5} applied with $N = \nm$, \eqref{lq} in Proposition \ref{lemdomaine},
$|\toe|\le |\om|$ and $\lambda= +\infty$.\end{proof}

We now prove Theorem \ref{th5}.

\subsection{Properties of $\toe$} The convexity of $\om$ guarantees
 that the coincidence sets $\{\omega_m\}_m$  for the minimizers
  of \eqref{obs32} are convex (Friedman-Phillips, \cite{fp},
  see also \cite{ka}, \cite{dm}). Then the density criterion of
  Caffarelli \cite{caf1} and regularity improvement of
  Kinderlehrer-Nirenberg \cite{kn} and Isakov \cite{isa} imply
  that it is in fact analytic for any $m$. We state a density estimate which is uniform with respect to $m\in(\homb,1]$. This is the only place where we use the assumption that $\om$ is convex.
\begin{lem} Assume $\om$ is convex (so that \eqref{assumption} is satisfied), and
let $L_m$ be as in Proposition~\ref{obstacleprops}. Then there exists $\a>0$ and $r_0>0$ such that for any $m\in(\homb,1]$,   any $r<r_0$, and any $x\in\omega_m$
$$\frac{|\omega_m\cap B(x,L_m r)|}{|B(x,L_m r)|}\ge \alpha.$$
\end{lem}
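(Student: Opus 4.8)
The statement to prove is a uniform-in-$m$ non-degeneracy (density) estimate for the coincidence sets $\omega_m$ of the obstacle problems \eqref{obs32}, after rescaling by the natural length $L_m$ from Proposition~\ref{obstacleprops}. Here is how I would organize the argument.

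\textbf{Step 1: Rescaling to a normalized problem.}
The plan is first to rescale. For $m$ in a compact subset of $(\homb,1)$ bounded away from $\homb$, the quantity $L_m$ stays bounded above and below, so the estimate is a standard Caffarelli density bound for a fixed analytic free boundary and a compactness argument over that compact range of $m$ suffices. The delicate regime is $m\to\homb$ (equivalently $\he/\lep\to\lambda_\om$), where $L_m\to 0$. There, by Proposition~\ref{obstacleprops} (items describing the blow-up of $H_m$ and $\omega_m$), the rescaled function
$$V_m(y) := \frac{H_m(x_0+L_m y) - m}{L_m^2}$$
converges, as $m\to\homb$, locally uniformly (in $C^{1,\alpha}_{\loc}$) to $U_Q$, the obstacle-problem solution with $\Delta U_Q = \tfrac12\Delta Q\,\indic_{\mr^2\setminus E_Q}$ and coincidence set exactly $E_Q$, an ellipse of area $1$. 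Moreover $V_m\ge 0$ solves a uniformly elliptic obstacle problem of the form $-\Delta V_m = -\,\mathrm{(bounded\ coefficient)}\cdot\indic_{\{V_m>0\}} + L_m^2(\cdots)$; more precisely $-\Delta H_m + H_m = m\indic_{\omega_m}$ gives, in the rescaled variables, $-\Delta V_m = (m - H_m)\indic_{\{V_m>0\}} = -(L_m^2 V_m)\indic_{\{V_m>0\}} - (\text{const})\indic_{\{V_m>0\}}$ away from the coincidence set — up to lower-order $O(L_m^2)$ terms this is the classical obstacle problem with right-hand side uniformly bounded below by a positive constant (since $1-m$ stays of order $1$ near $\homb$ after the correct normalization). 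So $\{V_m = 0\} = (\omega_m - x_0)/L_m$ and we must show that this set has the uniform positive density $\alpha$ at each of its points, at all scales $r<r_0$.

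\textbf{Step 2: Caffarelli's non-degeneracy, made uniform.}
The core is Caffarelli's non-degeneracy theorem for the obstacle problem: if $V\ge 0$ solves $\Delta V = c\,\indic_{\{V>0\}}$ with $c\ge c_0>0$ in $B_{2r}(x)$ and $V(x)=0$ on the free boundary, then $\sup_{B_r(x)}V\ge c_0 r^2/C$, and consequently $|\{V=0\}\cap B_r(x)| \ge \alpha |B_r(x)|$ for a dimensional constant $\alpha$ depending only on $c_0$ and the ellipticity. The point is that the only data entering these constants are: (i) an ellipticity/Lipschitz bound on the coefficients (here trivial, Laplacian plus an $O(L_m^2)$ lower-order term), and (ii) a uniform lower bound $c_0>0$ on the right-hand side on the non-coincidence set. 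Both are uniform in $m$ near $\homb$ because $1-m\to 1-\homb>0$ and $H_m - m$ is $O(L_m^2)$, so the $O(L_m^2 V_m)$ perturbation is absorbed for $L_m$ small. Hence the density estimate holds with a constant $\alpha$ uniform in the regime $m\to\homb$.

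\textbf{Step 3: Gluing the two regimes.}
Finally I would combine: on any compact $K\subset(\homb,1]$ bounded away from $\homb$, Proposition~\ref{obstacleprops} gives $c^{-1}\le L_m\le c$ and $\omega_m$ analytic, so the density estimate holds with some $\alpha_K>0$ and some $r_0(K)$ — here one can even argue by contradiction/compactness, extracting a limiting $m_*\in K$ and a limiting obstacle solution whose coincidence set would violate density, contradicting its analyticity. Near $m=\homb$, Step 2 applies. Taking the minimum of the two constants $\alpha$ and the minimum of the two $r_0$ thresholds yields the claim for all $m\in(\homb,1]$. The main obstacle is ensuring the uniformity of Caffarelli's constant in the degenerating regime — i.e., controlling the $O(L_m^2)$ lower-order and zeroth-order perturbations of the Laplacian uniformly and checking the right-hand side stays bounded below by a fixed positive constant after the $L_m^2$ rescaling; once that bookkeeping is in place, the density estimate is the standard one.
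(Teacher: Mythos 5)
Your argument is essentially correct, but it takes a much heavier route than the paper's. The paper's proof is two lines and leans entirely on the convexity of $\omega_m$ (the Friedman--Phillips result, which is why the hypothesis $\om$ convex is made): since $\omega_m$ is convex and $x\in\omega_m$, the density ratio $d(r)=|\omega_m\cap B(x,L_m r)|/|B(x,L_m r)|$ is a \emph{nonincreasing} function of $r$ (write the intersection in polar coordinates around $x$ and use that rays from $x$ meet $\omega_m$ in an interval starting at $0$). Combined with the bounds from Proposition~\ref{obstacleprops}, namely $\diam\omega_m\le CL_m$ and $|\omega_m|\ge cL_m^2$ uniformly in $m\in(\homb,1]$, one gets $d(C)\ge c/(\pi C^2)$ and hence $d(r)\ge\alpha:=c/(\pi C^2)$ for all $r\le r_0:=C$. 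No regularity theory for free boundaries is needed beyond the qualitative convexity fact. Your approach via rescaled Caffarelli nondegeneracy can be made to work, but it requires more bookkeeping than your sketch provides: (i) the rescaled equation is $\Delta V_m=(m+L_m^2 V_m)\indic_{\{V_m>0\}}$, so the lower bound on the right-hand side is $m\ge\homb>0$, not a quantity of order $1-m$ as you wrote; (ii) the classical density estimate is at \emph{free boundary} points, and you still need to propagate it to arbitrary $x\in\omega_m$ (e.g.\ by doubling a ball to find a free boundary point, which costs a fixed factor in $\alpha$); (iii) you also implicitly need the uniform $C^{1,1}$ bound on $H_m$ (and hence $V_m$) — the paper does cite this from [BK] in its proof of item 3 of Proposition~\ref{obstacleprops}, but it deserves mention; and (iv) the compactness argument for $m$ in a compact subset of $(\homb,1]$ by "contradicting analyticity" is vaguer than it seems, since it again rests on the convexity/regularity input. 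So the conclusion is correct, but the paper's route is both more elementary and the one that makes the role of the convexity assumption most transparent, while yours trades it for standard but heavier free-boundary machinery.
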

\begin{proof} We call $d(r)$ the density ratio above. Since
 $\omega_m$ is convex, $r\mapsto d(r)$ is decreasing. But
  from Proposition~\ref{obstacleprops}, the diameter
   of $\omega_m$ is bounded by $CL_m$ and $|\omega_m|\ge cL_m^2$,
    where $c,C>0$ are independent of $m\in(\homb,1]$. Therefore
    $d(C)\ge c/C^2$. Letting $\a  = c/C^2$ and $r_0 = C$ proves the lemma.
\end{proof}
Now assume the hypothesis of Theorem~\ref{th5} are satisfied.
Then Lemma~\ref{lemrelated} applies and $\tmuo:= - \Delta \tho+ \tho= \me \he   \indic_{\toe}$ where $\me $ satisfies \eqref{rangec} and $\me\he  |\toe|= 2\pi N$. Let
$$\ell_\ep'= \frac{1}{\sqrt{\me \he}}.$$ We have $(\ell_\ep')^{-2}|\toe|\in 2\pi \mn$.

Rescaling the previous lemma we find
\begin{coro}\label{convexreg} There exists $\a>0$ such that for any $R>0$, any $\ep$ small enough depending on $R$, and $x\in\toe$ we have $|\toe\cap B(x,R\ell_\ep')|\ge\a |B(x,R\ell_\ep')|$.
\end{coro}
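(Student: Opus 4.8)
\textbf{Proof plan for Corollary~\ref{convexreg}.} The statement is a pure rescaling of the preceding lemma, so the plan is essentially bookkeeping about scales. First I would recall that by Lemma~\ref{lemrelated} the splitting function $\tho$ equals $\he H_{\me}$ for the appropriate $\me\in(\homb,1]$, so that the coincidence set $\toe$ is exactly $\omega_{\me}$, the coincidence set of the obstacle problem \eqref{mop} associated with the parameter $m=\me$. Thus the preceding (un-rescaled) density lemma applies with $m=\me$: there exist $\a>0$ and $r_0>0$, \emph{independent of $m\in(\homb,1]$}, such that for any $r<r_0$ and any $x\in\omega_{\me}$ one has
\begin{equation}\label{eq:densunscaled}
\frac{|\omega_{\me}\cap B(x,L_{\me}r)|}{|B(x,L_{\me}r)|}\ge \a.
\end{equation}

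Next I would relate the two length scales $L_{\me}$ and $\ell_\ep'=(\me\he)^{-1/2}$. From Proposition~\ref{obstacleprops} we have $|\omega_{\me}|\sim L_{\me}^2$ (more precisely $c L_{\me}^2\le|\omega_{\me}|\le C L_{\me}^2$ with $c,C>0$ depending only on $\om$), while from $\me\he|\toe|=2\pi N$ we get $|\toe|=|\omega_{\me}|=2\pi N\,(\ell_\ep')^2$. Since by hypothesis \eqref{rangen} we have $N\gg 1$, this forces $L_{\me}/\ell_\ep'\to+\infty$ as $\ep\to 0$. Hence, given any fixed $R>0$, for $\ep$ small enough (depending on $R$) we have $R\,\ell_\ep'\le r_0\,L_{\me}$, i.e. $R\,\ell_\ep'=L_{\me}r$ with $r=R\ell_\ep'/L_{\me}<r_0$. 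Applying \eqref{eq:densunscaled} with this choice of $r$ and with $x\in\toe=\omega_{\me}$ gives exactly
\begin{equation}\label{eq:densscaled}
|\toe\cap B(x,R\ell_\ep')|\ge \a\,|B(x,R\ell_\ep')|,
\end{equation}
which is the claim, with the same $\a$ as in the lemma.

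There is no genuine obstacle here; the only point requiring a little care is the uniformity of $\a$ and $r_0$ over $m\in(\homb,1]$ in the un-rescaled lemma (which is where convexity of $\om$, hence convexity of $\omega_m$ and the two-sided bound $cL_m^2\le|\omega_m|\le CL_m^2$, is genuinely used) together with the fact that $\me$ indeed ranges in $(\homb,1]$ by \eqref{rangec}. Once those are in hand, the corollary follows by the elementary scaling comparison $L_{\me}/\ell_\ep'\to\infty$ coming from $N\to\infty$, as above.
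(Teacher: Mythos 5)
Your proof is correct and follows essentially the same route as the paper's: rescale the preceding uniform density lemma (which is where convexity of $\om$ is used) at the scale $L_{\me}$, and observe that $R\ell_\ep'\le r_0 L_{\me}$ once $\ep$ is small enough. The only (minor) divergence is in how $\ell_\ep'/L_{\me}\to 0$ is justified -- you derive it directly from $|\toe|=2\pi N(\ell_\ep')^2$, the two-sided bound $|\toe|\sim L_{\me}^2$, and $N\gg 1$, whereas the paper simply cites Proposition~\ref{lemdomaine}; both are valid, and your version has the small advantage of being self-contained and visibly uniform over all field regimes allowed by \eqref{range}.
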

\begin{proof} From Proposition~\ref{lemdomaine} we have
$\ell_\ep'\ll L_\ep$, where $L_\ep$ is the value of $L_m$
corresponding to $m = \me$. Thus, if $\ep$ is small enough,
 we have $R\ell_\ep'\le r_0L_\ep$, and the previous lemma applies.\end{proof}

\subsection{Definition of the test current}

The construction
follows similar lines as \cite{livre},   Chapters 7 and 10,
but the estimates must be more precise as only an error of $o(1)$
per vortex is allowed. From now on we assume \eqref{assumption},
\eqref{range} and \eqref{rangen}.  We write for simplicity
$\omega_\ep$ instead of $\toe$ and $m_\ep$ instead of $\me$.

Let $R\in 4\pi \mn$ be given.  We tile $\mr^2$ in
 the obvious way by a collection $\{\mathcal{K}_i\}_i$ of squares
  of sidelength $2R\elp$. We let
$$I= \{ i, \mathcal{K}_i \subset \omega_\ep, \dist \(\mathcal{K}_i, \p \omega_\ep\) \ge \ell_\ep' \},\quad \tom = \cup_{i\in I} \mathcal{K}_i\quad
  \oh=\omega_\ep \sm\tom.$$
To prove the first item of the theorem, we
 apply Corollary~\ref{periodisation} to a minimizer of $W$  to find
$j_R$  in $K_R$ such that
$j_R\cdot\tau$ on $\p K_R$ where $K_R = [-R,R]^2$.
To prove the second item, let $P$ concentrated on $\admissible$ be given, and let  us define $\bar{P}$ to be the push-forward of $P$ under the rescaling $j \mapsto \frac{1}{\sqrt{m_\lambda}} j\(\frac{\cdot}{\sqrt{m_\lambda}} \).$ Then $\bar{P}$ is concentrated on $\ainfty$ and from \eqref{minalai1}, we have
\begin{equation}\label{waiai}
\int W_K(j) \, d\bar{P} (j) = \frac{1}{m_\lambda} \int W_K(j) \, dP(j) +\frac14 \log m_\lambda.\end{equation}
We then apply Corollary~\ref{mesyoung} to the probability $\bar{P}$,
it gives again a  $j_R$  in $K_R$ such that
$j_R\cdot\tau=0$ on $\p K_R$. We continue the construction in the same way
in either of these two cases.

 We next extend $j_R$ by periodicity to $\mr^2$ and, denoting
 by $c_0$ the center of $\mathcal{K}_{i_0}$, where $i_0\in I$ is
  arbitrary, we  let
$$\tej(x) = \begin{cases}\D\frac1\elp
j_R\(\frac {x-c_0}\elp\)& \text{ in $\tom$}
\\ 0 & \text{ in $\mr^2\sm\tom$.}\end{cases}$$
In particular, letting $\widetilde\Lambda =
 \(c_0+ \elp \Lambda_R\)\cap\tom$, where $\Lambda_R$ denotes the support of $\nu_R$, and since $\tej \cdot \tau = 0$
on $\p\tom$, we have
$$\curl \tej = 2\pi\sum_{p\in\widetilde\Lambda}\delta_p - m_\ep\he\indic_\tom,\quad\text{in $\mr^2$}.$$

We define a current $\hej$ as follows. First we note that since
 $|\mathcal K_i|, |\omega_\ep|\in 2\pi\elp^2\mn$, we have
  $|\oh|\in 2\pi\elp^2\mn$. Then, using Corollary~\ref{convexreg}
   we may --- we omit the cumbersome details --- find disjoint
   measurable
   sets $\mathcal{C}_1,\dots,\mathcal{C}_n$ and $y_i\in \mathcal{C}_i$
    such that, for some $c,C>0$ independent of $R,\ep$,
\begin{equation}\label{propci} \indic_\oh = \sum_i\indic_{\mathcal{C}_i},\quad
|\mathcal{C}_i| = 2\pi\elp^2,\quad B(y_i,c\elp)\subset \mathcal{C}_i
\subset B(y_i,C\elp).\end{equation}
We let $j_i = -\np f_i$, where
\begin{equation}\label{defji} \begin{cases} - \Delta f_i =
2\pi\delta_{y_i} - m_\ep\he \indic_{\mathcal{C}_i} &
\text{in $B(y_i,C\elp)$}\\ \p_\nu f_i = 0 &
\text{ on $\p B(y_i,C\elp)$},\end{cases}\end{equation}
and then, letting $j_i = 0$ on $\mr^2\sm B(y_i,C\elp)$, we let $\hej = \sum_{i=1}^n j_i$. We have, letting $\widehat\Lambda = \{y_1,\dots,y_n\}$,
$$ \curl\hej = 2\pi\sum_{p\in\widehat\Lambda} \delta_p - m_\ep\he \indic_\oh.$$
Finally we let $j_\ep = \tej +\hej$, $\Lambda = \widetilde\Lambda\cup\widehat\Lambda$. We have
\begin{pro}\label{finition} The current $j_\ep$ satisfies
\begin{equation}\label{curljconstr}  \left\{\begin{array}{ll}
\curl j_\ep= 2\pi \sum_{p \in \Lambda} \delta_p -
\he m_\ep \mathbf{1}_{\omega_\ep}  & \text{in $\mr^2$}\\
j_\ep  =0 & \text{on $\mr^2\sm\om$.} \end{array}\right.\end{equation}
Moreover
\begin{equation} \label{wje} \limsup_{\eta\to 0}\frac1{m_\ep\he
|\omega_\ep|}\(\hal\int_{\om\sm\cup_{p\in\Lambda} B(p,\eta\elp)}
|j_\ep|^2 + \pi\#\Lambda\log\eta\)\le \frac{W(j_R,\indic_{K_R})}{R^2} + o_\ep(1),\end{equation}
where $\lim_{\ep\to 0} o_\ep(1) = 0$.

Finally, there exists $\eta_0>0$ such that for any $\ep$
small enough, any $p\in \Lambda$ and any $q\in [1,+\infty)$
we have
\begin{equation}\label{jelq} \|j_\ep - \np\log|\cdot-p|\|_{L^q(B(p,\eta_0\elp))}\le C_q\elp^{\frac2q-1}.\end{equation}
\end{pro}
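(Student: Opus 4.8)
\textbf{Proof of Proposition~\ref{finition}.} The plan is to verify the three assertions in turn, with the energy estimate \eqref{wje} being the delicate one.

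The identity \eqref{curljconstr} is purely algebraic: by construction $j_\ep = \tej + \hej$, and since $\tej$ is supported in $\tom$ with $\tej\cdot\tau = 0$ on $\p\tom$ while $\hej = \sum_i j_i$ with each $j_i$ supported in $\mathcal{C}_i\subset\oh$ and satisfying $\p_\nu f_i = 0$ on $\p B(y_i,C\elp)\supset\p\mathcal{C}_i$, no spurious curl is created at any interface; adding the two defining relations for $\curl\tej$ and $\curl\hej$ gives the stated formula in all of $\mr^2$, and $j_\ep$ vanishes outside $\tom\cup\oh = \omega_\ep\subset\om$. The local estimate \eqref{jelq} follows from the standard decomposition used throughout: near a point $p\in\widetilde\Lambda$ we have $j_\ep = \tej$, which locally equals $\frac1\elp j_R\((\cdot-c_0)/\elp\)$, and near a point $p = y_i\in\widehat\Lambda$ we have $j_\ep = j_i = -\np f_i$ with $f_i = -\log|\cdot - y_i| + S_i$, $S_i\in C^1$ with bounds uniform in $i$ by scaling; in either case $j_\ep - \np\log|\cdot - p|$ is a smooth (at worst $C^0$) field on a ball of radius $\sim\elp$, and the $L^q$ norm over such a ball scales like $\elp^{2/q}$ times an $O(1)$ constant, but rescaling more carefully (the smooth part has gradient $O(1/\elp)$ in the $\tej$ case after unscaling $j_R$, and $O(1)$ after rescaling back) gives the claimed $\elp^{2/q-1}$; I would take $\eta_0$ small enough that the balls $B(p,\eta_0\elp)$ are disjoint and contained in the respective cells, using \eqref{propci} for the $\widehat\Lambda$ points and the lattice spacing of $\Lambda_R$ for the $\widetilde\Lambda$ points.

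For \eqref{wje}, the main point is to split the integral of $|j_\ep|^2$ over the three regions $\tom$, the union of the $\mathcal{C}_i$ (i.e. $\oh$), and to observe there is no cross term: $\tej$ and $\hej$ have disjoint supports up to a set of interfaces of measure zero, so $\int |j_\ep|^2 = \int_\tom|\tej|^2 + \sum_i\int_{\mathcal{C}_i}|j_i|^2$, and the logarithmic counterterm $\pi\#\Lambda\log\eta$ splits as $\pi\#\widetilde\Lambda\log\eta + \pi\#\widehat\Lambda\log\eta$. On $\tom$, by periodicity $\tom$ is a disjoint union of $|I|$ translated copies of $\elp K_R$ (rescaled from $K_R$), and on each the renormalized self-interaction of $\frac1\elp j_R(\cdot/\elp)$ equals, after the change of variables $x\mapsto x/\elp$ and absorbing the $\log\elp$ shift into the cutoff parameter, exactly $W(j_R,\indic_{K_R})$; hence the $\tom$-contribution to the bracket in \eqref{wje} is $|I|\,W(j_R,\indic_{K_R}) + o(\cdots)$. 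For each $\mathcal{C}_i$, Lemma~\ref{lemrect2} (applied after rescaling $\mathcal{C}_i$, which is comparable to a ball of the right volume $2\pi\elp^2$) — or directly the computation $f_i = -\log|\cdot-y_i|+S_i$ with $S_i\in C^1$ bounded — gives $\limsup_{\eta\to 0}\(\hal\int_{\mathcal{C}_i\sm B(y_i,\eta\elp)}|j_i|^2 + \pi\log\eta\) \le C\elp^2$ plus the scaling term $\pi\log\elp$, whose total over the $n = |\oh|/2\pi\elp^2$ cells is $O(|\oh|)$ in absolute value, i.e. $o(|\omega_\ep|)$ since $|\oh| = |\omega_\ep\sm\tom|$ and, by Proposition~\ref{lemdomaine} item~3 (or item~4) together with Corollary~\ref{convexreg}, $|\oh| = o(|\omega_\ep|)$ as $\ep\to 0$ (the part of $\omega_\ep$ within $O(\elp)$ of $\p\omega_\ep$ is negligible, and the squares $\mathcal{K}_i$ not entirely inside have total measure $o(|\omega_\ep|)$ for $R$ fixed). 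Dividing by $m_\ep\he|\omega_\ep| = (\elp)^{-2}|\omega_\ep|$ and using $|I|\cdot|\mathcal{K}_i| = |\tom|\sim|\omega_\ep|$ so that $|I| = (1+o(1))|\omega_\ep|/(2R\elp)^2$, we get $|I|/(m_\ep\he|\omega_\ep|) = (1+o(1))/(4R^2)\cdot(1/\elp^2)/(\elp^{-2}) $; more precisely $|I|/(m_\ep\he|\omega_\ep|) \to 1/R^2 \cdot \frac{1}{4}\cdot 4 $... I should be careful here: $|\mathcal{K}_i| = 4R^2\elp^2$ and $W(j_R,\indic_{K_R})$ is normalized against $|K_R| = 4R^2$, so the ratio is exactly $W(j_R,\indic_{K_R})/(4R^2)\cdot|\tom|/|\omega_\ep|\cdot 4 = W(j_R,\indic_{K_R})/R^2 + o(1)$ once the factor $|\tom|/|\omega_\ep|\to 1$ is used; combined with the $o(1)$ from the $\oh$ part this is exactly \eqref{wje}.

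The main obstacle is controlling the $\oh$ region uniformly: one must show $|\oh|/|\omega_\ep| \to 0$ as $\ep\to 0$ for each fixed $R$, which requires both that $\omega_\ep$ is ``thick'' enough near its boundary that only an $O(\elp)$-collar is lost (this uses the convexity/analyticity of $\omega_\ep$ via Corollary~\ref{convexreg} and the measure estimates in Proposition~\ref{lemdomaine}), and that the self-energies of the filler cells $\mathcal{C}_i$ are each $O(1)$ after renormalization with a constant independent of $i$ and $\ep$ — this last point is where the uniform shape control $B(y_i,c\elp)\subset\mathcal{C}_i\subset B(y_i,C\elp)$ from \eqref{propci} is essential, since it lets one invoke Lemma~\ref{lemrect2}-type bounds (or elliptic estimates on domains of controlled eccentricity) with uniform constants. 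Everything else is bookkeeping of scaling factors; I would defer the construction of the sets $\mathcal{C}_i$ and $y_i$ satisfying \eqref{propci} to a short separate paragraph, as the text already signals (``we omit the cumbersome details'').
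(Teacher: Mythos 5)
There is a genuine gap in your treatment of \eqref{wje}: you claim that "$\tej$ and $\hej$ have disjoint supports up to a set of interfaces of measure zero," so that no cross term appears, but this is incorrect and contradicts the construction. Each filler current $j_i$ is, by definition \eqref{defji}, equal to $-\np f_i$ with $f_i$ solving a Neumann problem on the \emph{full ball} $B(y_i,C\elp)$, and the paper explicitly sets $j_i = 0$ only \emph{outside} $B(y_i,C\elp)$, not outside $\mathcal{C}_i$. The ball $B(y_i,C\elp)$ has radius larger than the width $c\elp$ guaranteed by \eqref{propci}, so it can and does intersect $\tom$ over a region of positive measure; the resulting overlap is not a null set. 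You could not shrink $\supp j_i$ to $\mathcal{C}_i$ without losing the Neumann condition $\p_\nu f_i = 0$ on a controlled boundary and thereby losing the clean identity for $\curl j_\ep$. Thus $\int |j_\ep|^2 \neq \int_\tom|\tej|^2 + \sum_i\int|j_i|^2$, and a nontrivial cross term $\int \tej\cdot\hej$ must be estimated.

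The paper handles exactly this in \eqref{rect}: the number of $i$'s for which $\int j_i\cdot\tej \ne 0$ is bounded by $C|\supp\tej\cap\supp\hej|/\elp^2$, where $\supp\tej\cap\supp\hej$ is contained in a collar of width $O(\elp)$ around $\p\tom$ inside $\omega_\ep$, hence has measure $o(|\omega_\ep|)$ by Proposition~\ref{lemdomaine}; each individual integral is then bounded by H\"older with exponents $(q,q')$ using the $L^q$-bound \eqref{jilq} for $j_i$ and the local $L^{q'}$-bound \eqref{jtelp} for $\tej$, the $\elp$-powers conveniently cancelling since $\frac2q - 1 + \frac2{q'} - 1 = 0$. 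Without an estimate of this kind, the bound \eqref{wje} does not follow. The rest of your argument (the scaling argument for the $\tom$-contribution via the per-cell identity, and the $O(\#\widehat\Lambda)$ control of the $\oh$-contribution using $|\oh|=o(|\omega_\ep|)$) is in the right spirit and close to the paper's, modulo some confusion in the final bookkeeping of the $R^2$ versus $4R^2$ normalization.
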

\begin{proof} The fact that $\curl j_\ep= 2\pi \sum_{p \in \Lambda}
 \delta_p - \he m_\ep$ is obvious, and $j_\ep = 0$ on $\om^c$
  follows from the definition of $j_\ep$ and the fact that
   $d(\omega_\ep,\om^c)\ge C\elp$
    if $\ep$ is small enough. This is a consequence of the fact that on $\omega_\ep$ we have $h_{\ep,N} = \me \he$ while on $\bo$ we have $h_{\ep,N} = \he$. The difference is $$\Delta = \he(\moep - \me+ 1 - \moep) = \Ce +\hal \plep\approx \hal\plep,$$
    using \eqref{limitem}. It follows using \eqref{nabho} that $d(\omega_\ep,\om^c)\ge \sqrt{\plep/\he} \gg \elp$.

We estimate $\hej$. From \eqref{defji}, $f_i(y) = - \log|y-y_i| + g_i((y-y_i)/\elp)$, where $g_i$ solves
$\Delta g_i(x) = \indic_{\mathcal{C}_i}(y_i+\elp x)$ in $B(0,C)$
 and $\p_\nu g_i = 0$ on $\p B(0,C)$. Since $\indic_{\mathcal{C}_i}
 \in L^\infty$, elliptic regularity implies that
 $\|\nab g_i\|_{L^q}\le C_q$ for every $q\in[1,+\infty)$.
 We easily deduce that
\begin{equation}\label{jilqlog} \|j_i - \np\log|\cdot-y_i|\|_{L^q(B(y_i,C\elp))}\le C_q\elp^{\frac2q-1}.\end{equation}
Since $j_i = 0$ outside $B(y_i,C\elp)$ we deduce
\begin{equation}\label{jilq} \|j_i\|_{L^q(\mr^2\sm B(y_i,c\elp))}\le C_q\elp^{\frac2q-1}\(1+c^{2-q}\),\end{equation}
\begin{equation}\label{jil2} \|j_i\|_{L^2(\mr^2\sm B(y_i,c\elp))}
\le C \(1+\log\frac1c\).\end{equation}
Then we compute estimates for $\tej$.
Since $j_R$ is defined independently of $\ep$, there exists $\eta_0>0$ (depending on $R$) which bounds from below the distances between the points in $\Lambda_R$ and between $\Lambda_R $ and $\p K_R$.   Since $\div j_R=0$  in $K_R$ we have
that $\|j_R - \np\log|\cdot-y|\|_{L^q(B(y,\eta_0))}\le C_q$ for
any $y\in\Lambda_R$ and moreover $j_R\in C^\infty_\loc(K_R\sm\Lambda_R)$. It follows that $\forall y\in\widetilde \Lambda$ and $\forall q\in [1,+\infty)$ we have
\begin{equation}\label{jtelq} \|\tej- \np\log|\cdot-y\|_{L^q(B(y,\eta_0\elp))}\le C_q\elp^{\frac2q-1}.\end{equation}
Moreover, since $j_R$ is uniformly locally bounded in $L^{q'}$, for every $q'\in[1,2)$, we have for every $x\in\mr^2$ and every $M>0$
\begin{equation}\label{jtelp}
 \|\tej\|_{L^{q'}(B(x,M\elp))}\le C_{q'} M^{\frac2{q'}}\elp^{\frac2{q'} - 1}.
 \end{equation}

We are ready to derive estimates for
$j_\ep = \tej +\sum_{i=1}^n j_i$. First we note that from
 \eqref{propci} we have that $|y_i-y_j|$ and $d(y_i,\tom)$
 are bounded below by $c\elp$, if $i\neq j$, therefore since
  $\supp j_i\subset B(y_i,C\elp)$ and $\supp \tej\subset\tom$
  the overlap number of the supports of $\tej$ and the $j_i$'s
  is bounded by a constant $C$ independent of $R$, $\ep$.
   Moreover $\tom$ is included in the complement of
   $\cup_i B(y_i,c\elp)$.

We have $\int_{\mr^2} \hej\cdot\tej = \sum_{i}
\int_{\mr^2} j_i
\cdot  \tej$, but the number of $i$'s for which the integrals above are nonzero
is bounded by $C\frac{|\supp\tej\cap\supp\hej|}{\elp^2}$
since $\supp\tej\cap\supp\hej\subset
\{d(x,\omega_\ep^c)\le C\elp\}$
and using Proposition~\ref{lemdomaine}, item 4.
 Applying H\"older's inequality to each of these nonzero integrals,
and using
 \eqref{jilq}, \eqref{jtelp}, we deduce
\begin{equation}\label{rect} \int_{\mr^2} \hej\cdot\tej \le
C\frac{|\supp\tej\cap\supp\hej|}{\elp^2}\times
C_{q}\elp^{\frac2q - 1} \times C_{q'} \elp^{\frac2{q'} - 1}
\le C m_\ep\he o_\ep(1),\end{equation}
 Note that  still from Proposition~\ref{lemdomaine}, item 4,
  $|\oh| = o_\ep(1)|\omega_\ep|$
and, since $\#\widehat\Lambda  = \elp^{-2} (2\pi)^{-1} |\oh|$
and from \eqref{jilqlog} applied with $q=2$ and \eqref{jil2},
 we deduce
\begin{equation}\label{what} \left|\hal\int_{\mr^2\sm\cup_{p\in
\widehat\Lambda} B(p,\eta\elp)} |\hej|^2 + \pi\#\widehat\Lambda
\log\eta\right| \le C\#\widehat\Lambda\le Cm_\ep\he|\oh| =  o_\ep(1)\he|\omega_\ep|.\end{equation}
Also, since by definition
$$W(j_R,\indic_{K_R}) = \lim_{\eta\to 0} \(\hal
\int_{K_R\sm\cup_{p\in\Lambda_R}B(p,\eta)}|j_R|^2 + \pi\#(\Lambda_R\cap K_R)\log\eta\),$$
we have, multiplying by the number of squares in $\tom$,
 which is $|\tom|\elp^{-2}/R^2 = \he m_\ep|\tom|/R^2$, that
\begin{equation}\label{wtilde} \lim_{\eta\to 0}\frac1{m_\ep\he |\tom|}\(\hal\int_{\mr^2\sm\cup_{p\in\widetilde\Lambda} B(p,\eta\elp)} |\tej|^2 + \pi\#\widetilde\Lambda\log\eta\)  = \frac{W(j_R,\indic_{K_R})}{R^2}, \end{equation}
and the limit is uniform in $\ep$ since, despite the notation, the left-hand side only depends on $R$.
From \eqref{rect}, \eqref{what} and \eqref{wtilde} we deduce
$$ \limsup_{\eta\to 0}\frac1{m_\ep\he|\omega_\ep|}
\(\hal\int_{\mr^2\sm\cup_{p\in\Lambda} B(p,\eta\elp)} |j_\ep|^2 +
\pi\#\Lambda\log\eta\)\le \(\frac{W(j_R,\indic_{K_R})}{R^2}\frac{|\tom|}{|\omega_\ep|}  + o_\ep(1)\),$$
and this holds uniformly in $\ep$. Since $|\oh| = o_\ep(1)|\omega_\ep|$, which implies that $|\tom| = (1 - o_\ep(1))|\omega_\ep|$, we obtain \eqref{wje}. Then \eqref{jelq} follows from \eqref{jilqlog}, \eqref{jtelq}.
\end{proof}

\subsection{Definition of the test-configuration}

We next find $\ai$  such that $\curl \ai=m_\ep\he
\mathbf{1}_{\omega_\ep}= \tmuo$, and set
$$A_\ep= \ai +  \np \tho .$$
 %By definition of $\ho$ (recall $\muo= - \Delta \ho+ \ho$) we have
%\begin{equation}
%\label{hconstr} \left\{ \begin{array}{ll} -\Delta h_\ep+ h_\ep= 2\pi
%\sum_{p \in \Lambda} \delta_{p} & \text{in} \ \om
%\\ h=\he & \text{on} \ \bo.\end{array}\right.\end{equation}
To define  $u_\ep$, we start by
defining its phase $\vp_\ep$ by requiring
\begin{equation} \label{pah}\nabla \vp_\ep=\ai +j_\ep  .\end{equation}
Indeed, denoting by $\Theta $ the phase of
 $\prod_{p \in \Lambda} \frac{z-p}{|z-p|}$,
  we have by \eqref{curljconstr}
    $$\curl (\ai +j_\ep - \nab \Theta)=\tmuo
     +   2\pi \sum_{p \in \Lambda}\delta_p  -\co \he \indic_{\omega_\ep}  - 2\pi \sum_{p \in
\Lambda} \delta_{p} =0,$$
 therefore $\ai+j_\ep - \nab \Theta$
  is the gradient of a function $\psi$, we may then
   let $\vp_\ep= \Theta + \psi$,
    this function is well-defined modulo $2\pi$ in
    $\om \backslash \Lambda  $  and satisfies (\ref{pah}).
  Hence $e^{i \vp_\ep}$ is well-defined in
  $\om \backslash \Lambda$ and $\nab \vp_\ep= \ai + j_\ep$.

Fixing $M>1$, we then define
\begin{eqnarray*}
 &u_\ep(x)= e^{i \vp_\ep(x)}\quad \text{in} \  \om \backslash
\cup_{p \in \Lambda}
  B(p , M \ep) \\
 & u_\ep(x)= \frac{1}{f(M)} f\( \frac{|x-p|}{\ep}\) e^{i
\vp(x)}\
 \text{ in} \ B(p , M\ep),
\end{eqnarray*}
where $f$ is the modulus of the unique radial degree-one vortex
$u_0(r, \theta) =f(r) e^{i \theta}$ (see \cite{bbh,miro,hh}). $f$ is increasing from $0$ to $M$, so that  in particular we deduce $|u_\ep|\le 1$ everywhere.

This construction is possible since, for fixed $R$, the distances between the  points in $\Lambda$ are bounded
below by $\eta_0 \ell_\ep'$  (for some $\eta_0$ possibly smaller than the one used before)  and  $\ell_\ep'= \frac{1}{\sqrt{m_\ep
\he}} \gg \ep$ since we assume $\he \ll \frac{1}{\ep^2}$. The test-configuration $(u_\ep,A_\ep)$ is
now defined and there remains to evaluate its energy and show that it satisfies (\ref{resth5}), respectively \eqref{resth5kk}.

\subsection{Splitting of the energy of $(u_\ep, A_\ep)$}
According to Proposition \ref{pro1}, we have the relation
\begin{equation*}
 \je(u_\ep,A_\ep)=\je^N  +  \jie(u_\ep, \ai) - \io
(1-|u_\ep|^2) |\nab \tho|^2,\end{equation*} where
\begin{multline*}
\jie(u_\ep, \ai)=\hal \io |\nab_{\ai}
u|^2 + (\curl \ai-\tmuo)^2 + \frac{(1-|u_\ep|^2)^2}{2\ep^2}\\
 + \io
(\tho-\he - \Ce)\mu(u_\ep,\ai)+ \Ce \io (\mu(u_\ep, \ai)- \tmuo )  .\end{multline*}

First we observe that
 $\io (1-|u_\ep|^2) |\nab \tho|^2=0$   since $\nab \tho
=0$ in $\omega_\ep$ and $|u_\ep|=1$ outside $\cup_{p\in\Lambda} B(p,M\ep)$, which is included in $\omega_\ep$ if $\ep$ is small enough. Thus
$ \je(u_\ep,A_\ep)=\je^N+  \jie(u_\ep, \ai).$
There  remains to evaluate $\jie(u_\ep, \ai)$. By definition
\begin{multline*}
\mu(u_\ep, \ai) =  \curl (iu_\ep, \nab_{\ai} u_\ep) + \curl \ai=
\curl ( |u_\ep|^2 (\nab \vp_\ep- \ai )) + \tmuo \\= \curl (|u_\ep|^2 j_\ep
)+ \tmuo .\end{multline*}
Since $j_\ep = 0$ on $\om^c$, we
 have $\int_\om \mu(u_\ep,\ai) = \int_\om \tmuo = 2\pi N$.
 Moreover, a direct computation shows that  $\mu(u, A)=0$ where $|u|\equiv 1$ so  $\mu(u_\ep,\ai)$ is supported in $\omega_\ep$,
 where $\tho - \he - \Ce = -\hal\plep$ (see \eqref{Cep}), so
 we deduce
$$ \io (\tho-\he- \Ce) \mu (u_\ep, \ai) +   \Ce  \io (\mu(u_\ep, \ai)- \tmuo )   =
  -  \pi N \plep .$$
On the other hand, by choice of $\vp_\ep$,
$$\io |\nab_{\ai} u_\ep|^2 = \io |\nab |u_\ep||^2 + \io |u_\ep|^2
 |\nab \vp_\ep -
\ai|^2= \io |\nab |u_\ep||^2+ \io |u_\ep|^2 |j_\ep|^2
.$$
Recalling that $\curl \ai - \tmuo=0$, we are thus led to
\begin{equation}
\label{vii}
 \je(u_\ep,A_\ep)
=\je^N - \pi N \plep  + \hal \io |u_\ep|^2 |j_\ep|^2+
 |\nab |u_\ep||^2+
\frac{(1-|u_\ep|^2)^2}{2\ep^2}.\end{equation}
It remains to estimate  the terms on the right-hand side.

\begin{lem} [Energy in  $B(p, M\ep)$]
For every $p \in \Lambda$, we have
 \begin{equation}
\label{nrjint}
\hal\int_{  B(p, M\ep) } |u_\ep|^2 |j_\ep|^2 + |\nab |u_\ep||^2+ \frac{(1-|u_\ep|^2)^2}{2\ep^2}
=  \pi \log M + \gamma +o_{M} (1)+o_\ep(1)
\end{equation}
where $o_M(1)\to 0$ as $M\to \infty$ and $o_\ep(1)\to 0$ as $\ep \to 0$, and $\gamma$ is the constant of \eqref{defgamma}.
\end{lem}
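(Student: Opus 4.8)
The plan is to compute the energy inside a single ball $B(p,M\ep)$ by reducing to the standard energy estimate for a single radial vortex, following the construction in \cite{bbh,miro}. First I would change variables $x = p + \ep y$, so that on $B(0,M)$ the rescaled modulus is exactly $f(|y|)/f(M)$ and the full integrand $\frac12|u_\ep|^2|j_\ep|^2 + |\nab|u_\ep||^2 + \frac{(1-|u_\ep|^2)^2}{2\ep^2}$ becomes, up to the factor $\ep^2$ coming from the area element and $\ep^{-2}$ in the potential term,
\begin{equation*}
\hal\int_{B(0,M)} \frac1{f(M)^2}f(|y|)^2\,\ep^2|j_\ep(p+\ep y)|^2 + |f'(|y|)|^2 + \frac{(1-f(|y|)^2/f(M)^2)^2}{2}\,dy.
\end{equation*}
The key point is that $j_\ep = \nab\vp_\ep - \ai$, and near $p$ we have $\nab\vp_\ep = \np\Theta + \nab\psi$ with $\np\Theta$ being, modulo the other points of $\Lambda$ which are at distance $\ge\eta_0\elp\gg\ep$, equal to $\nab^\perp\log|\cdot - p| + O(\elp^{-1})$ plus a smooth remainder. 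More precisely, estimate \eqref{jelq} from Proposition~\ref{finition} tells us $\|j_\ep - \np\log|\cdot - p|\|_{L^q(B(p,\eta_0\elp))}\le C_q\elp^{2/q-1}$, and $\ai$ is bounded (its curl is $m_\ep\he\indic_{\omega_\ep}$, so $\|\ai\|_{L^\infty}$ is controlled); hence on $B(p,M\ep)$ we have $j_\ep(x) = \frac{(x-p)^\perp}{|x-p|^2} + r_\ep(x)$ with $\ep^2\int_{B(0,M)}|r_\ep(p+\ep\cdot)|^2 = o_\ep(1)$ after rescaling, using $M\ep\ll\elp$.

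Next I would substitute this decomposition into the rescaled integral. The main term is $\hal\int_{B(0,M)} \frac{f(|y|)^2}{f(M)^2}\frac1{|y|^2} + |f'(|y|)|^2 + \frac{(1-f(|y|)^2/f(M)^2)^2}{2}\,dy$, which is precisely (after normalizing $f(M)\to 1$, using $f(M) = 1 - O(M^{-2})$ as $M\to\infty$) the energy on $B(0,M)$ of the standard degree-one vortex profile $u_0 = f(r)e^{i\theta}$. By the very definition \eqref{defgamma} of $\gamma$, this equals $\pi\log M + \gamma + o_M(1)$. The cross term $\int \frac{f^2}{f(M)^2}\frac{(x-p)^\perp}{|x-p|^2}\cdot r_\ep$ is handled by Cauchy--Schwarz: $\frac{(x-p)^\perp}{|x-p|^2}\in L^2(B(p,M\ep))$ with norm $\sim\sqrt{2\pi\log M}$ (bounded for fixed $M$), and $\|r_\ep\|_{L^2(B(p,M\ep))} = o_\ep(1)$ after the appropriate rescaling, so this term is $o_\ep(1)$; similarly the $|r_\ep|^2$ contribution is $o_\ep(1)$. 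The correction from replacing $f(M)$ by $1$ in the denominators is $O(M^{-2})$, absorbed in $o_M(1)$.

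The main obstacle I anticipate is controlling the remainder $r_\ep = j_\ep - \np\log|\cdot - p|$ in $L^2(B(p,M\ep))$ with the right rate, rather than merely in $L^q$ for $q<2$ as stated in \eqref{jelq}. The resolution is that \eqref{jelq} holds for all $q\in[1,\infty)$, so one can take $q$ slightly above $2$ and use H\"older on the small ball $B(p,M\ep)$: $\|r_\ep\|_{L^2(B(p,M\ep))} \le |B(p,M\ep)|^{\frac1{2} - \frac1q}\|r_\ep\|_{L^q(B(p,M\ep))} \le C(M\ep)^{2(\frac12-\frac1q)}\cdot C_q\elp^{\frac2q-1}$; since $\ep\ll\elp$ and $M$ is fixed, this is $o_\ep(1)$. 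A secondary technical point is that $\np\Theta$ differs from $\np\log|\cdot - p|$ by the contributions of the other vortices in $\Lambda$, but each such point lies at distance $\ge\eta_0\elp$, so on $B(p,M\ep)$ their combined contribution is bounded by $C/\elp$ pointwise and hence negligible in $L^2(B(p,M\ep))$; this can be folded into the $r_\ep$ bound. With these estimates in place, summing the main term and the error terms yields \eqref{nrjint}.
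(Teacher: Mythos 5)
Your approach is essentially the same as the paper's: decompose $j_\ep$ near $p$ as $\np\log|\cdot-p|$ plus a remainder $r_\ep$ controlled by \eqref{jelq}, show the remainder contributes $o_\ep(1)$ via H\"older on the small ball $B(p,M\ep)$ (using $\ep\ll\elp$), and identify the leading term with the truncated radial-vortex energy, which equals $\pi\log M+\gamma+o_M(1)$ by \eqref{defgamma}.

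There is, however, a flaw in your handling of the cross term. You assert that $\frac{(x-p)^\perp}{|x-p|^2}\in L^2(B(p,M\ep))$ with norm $\sim\sqrt{2\pi\log M}$, but in two dimensions $\int_{B(0,R)}|x|^{-2}\,dx$ diverges logarithmically at the origin, so this function is not in $L^2$ of the full ball and your Cauchy--Schwarz step, as written, fails. What is actually bounded is the weighted norm $\bigl\||u_\ep|\,\np\log|\cdot-p|\bigr\|_{L^2(B(p,M\ep))}$, finite precisely because $|u_\ep|=f(|x-p|/\ep)/f(M)$ vanishes linearly at $p$ and tames the singularity --- and it is this weighted quantity that gives the $\sqrt{2\pi\log M}$ you quote. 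You did write the cross term with the $f^2/f(M)^2$ factor present, so the idea is there, but you then discarded the weight when invoking the $L^2$ bound. The paper sidesteps the issue differently: it applies H\"older with exponents $q>2$ and $q'<2$, using $|u_\ep|\le 1$ to drop the weight entirely and the fact that $1/|x|\in L^{q'}(B(0,M\ep))$ for $q'<2$, getting $\bigl\|r_\ep\bigr\|_{L^q}\bigl\|1/|x|\bigr\|_{L^{q'}}\le C(\elp)^{2/q-1}\ep^{2/q'-1}=o_\ep(1)$. Either fix works, and the rest of your argument (the rescaled main term, the $f(M)=1-O(M^{-2})$ normalization, the $L^2$ bound on $r_\ep$ by interpolating \eqref{jelq} against the small volume of $B(p,M\ep)$) is sound and matches the paper.
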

\begin{proof} From \eqref{defgamma}, and since $u_0(r,\theta) = f(r) e^{i\theta}$, we have
$$\gamma = \lim_{M\to +\infty}\(\hal\int_0^M \({f'}^2 +\frac{f^2}{r^2} + \frac{(1-f^2)^2}{2}\)2\pi r\,dr - \pi\log M\).$$
On the other hand, from \eqref{jelq} in Proposition~\ref{finition} and since $|u_\ep|\le 1$, we have for any $p\in\Lambda$, by H\"older's inequality
\begin{multline*}\int_{B(p,M\ep)}|u_\ep|^2\left|j_\ep-
\np\log|\cdot-p|\right|^2\le \\|B(p,M\ep)|^{1-\frac2q}\|j_\ep-\np\log|\cdot-p|\|_{L^q(B(p,M\ep))}^2\le \(\frac{M\ep}\elp\)^{2-\frac4q} = o_\ep(1),\end{multline*}
choosing $q>2$ and since $\elp\gg\ep$.
Moreover, choosing $q>2$ and $\frac{1}{q}+\frac{1}{q'}=1$, we have by H\"older again
\begin{multline*}
\left |\int_{B(p, M\ep)}
|u_\ep|^2\np\log|\cdot-p |\cdot(j_\ep-\np\log|\cdot-p|) \right|
\le \\\|j_\ep-\np\log|\cdot-p|\|_{L^q (B(p, M\ep) ) }\|\frac{1}{|x|}
 \|_{L
^{q'}(B(0, \ep))}
 \le C  (\ell_\ep')^{\frac{2}{q}-1} \ep^{\frac{2}{q'}-1}=o_\ep(1)
\end{multline*}
where we used \eqref{jelq} and the fact that $\ell_\ep' \gg \ep$ by \eqref{range}.
Finally, since
$$\int_{B(p,M\ep)} |u_\ep|^2\left|\np\log|\cdot-p|\right|^2
 = \int_{B(p,M\ep)} \frac{f^2(|x|/\ep)}{|x|^2}\,dx,$$
we deduce that for each $p \in \Lambda$,
 $$\int_{B(p, M\ep)} |u_\ep|^2 |j_\ep|^2 = \int_{B(0, M\ep)}
  \frac{f^2(|x|/\ep) }{|x|^2} + o_\ep(1). $$
Following \cite{livre} p. 210,  we deduce that (\ref{nrjint})
holds.\end{proof}

Next, we consider the energy in the annuli $B(p,\ell_\ep' \eta )
\sm B(p, M\ep)$, which  are disjoint when $\eta<\eta_0 $.
\begin{lem}[Energy in the annuli]  For every
$p \in \Lambda$, we have
\begin{equation}  \label{nrjinter}  \hal \int_{B(p, \ell_\ep'
 \eta )\sm B(p, M\ep)}
|u_\ep|^2 |j_\ep |^2 + |\nab |u_\ep||^2+ \frac{(1-|u_\ep|^2)^2}
{2\ep^2}
\le  \pi \log \frac{ \eta\ell_\ep'}{M\ep} +C\eta.
\end{equation} \end{lem}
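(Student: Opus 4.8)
The plan is to estimate the energy of the test configuration $(u_\ep,A_\ep)$ in each annulus $A_p:=B(p,\eta\elp)\sm B(p,M\ep)$ separately, using the explicit form of $u_\ep$ there. On this annulus $|u_\ep|\equiv 1$, so $|\nab|u_\ep||=0$ and the potential term vanishes, and the only contribution is $\hal\int_{A_p}|j_\ep|^2$. Thus it suffices to show
$$\hal\int_{A_p}|j_\ep|^2\le \pi\log\frac{\eta\elp}{M\ep}+C\eta.$$
First I would split $j_\ep=\np\log|\cdot-p|+(j_\ep-\np\log|\cdot-p|)$ on $A_p$, expand the square, and use \eqref{jelq} from Proposition~\ref{finition} (valid on $B(p,\eta_0\elp)$, and we take $\eta<\eta_0$) to control the remainder $r_p:=j_\ep-\np\log|\cdot-p|$. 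The main term $\hal\int_{A_p}|\np\log|\cdot-p||^2=\hal\int_{A_p}\frac{dx}{|x-p|^2}=\pi\log\frac{\eta\elp}{M\ep}$ is exact.

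The cross term $\int_{A_p}\np\log|\cdot-p|\cdot r_p$ and the quadratic term $\hal\int_{A_p}|r_p|^2$ must both be shown to be $O(\eta)$ (or $o_\ep(1)$, which is absorbable into $C\eta$ by first letting $\ep\to 0$, but it is cleaner to get an $\eta$-bound directly; in fact either suffices since the statement only asks for $\le\pi\log(\cdot)+C\eta$). For the quadratic term, from \eqref{jelq} with $q=2$ we have $\|r_p\|_{L^2(B(p,\eta_0\elp))}\le C\elp^{0}=C$, so $\hal\int_{A_p}|r_p|^2\le C$; this is not yet $O(\eta)$, so I would instead use \eqref{jelq} with some $q>2$ together with H\"older on $A_p$: $\int_{A_p}|r_p|^2\le |A_p|^{1-2/q}\|r_p\|_{L^q(B(p,\eta_0\elp))}^2\le (\eta\elp)^{2(1-2/q)}(C_q\elp^{2/q-1})^2=C_q\eta^{2-4/q}$, which is $o_\eta(1)$. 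Actually to land exactly on $C\eta$ one can note $2-4/q\ge 1$ once $q\ge 4$, giving $\le C\eta$ for $\eta<1$. For the cross term, again by H\"older with exponents $q>2$ and $q'<2$: $|\int_{A_p}\np\log|\cdot-p|\cdot r_p|\le \|\np\log|\cdot-p|\|_{L^{q'}(A_p)}\|r_p\|_{L^q(A_p)}$; the first factor is $\|\,|x|^{-1}\|_{L^{q'}(B(0,\eta\elp))}=C(\eta\elp)^{2/q'-1}$ and the second is $\le C_q\elp^{2/q-1}$, so the product is $C_q\eta^{2/q'-1}\elp^{2/q'-1+2/q-1}=C_q\eta^{2/q'-1}$ since $2/q'+2/q=2$; and $2/q'-1\ge \min(\cdot)$, in fact for $q$ close to $2$ this exponent is close to $0^+$, so this is $o_\eta(1)$ and bounded by $C\eta^\a$ for some $\a>0$, hence $\le C\eta$ up to adjusting the constant for $\eta<1$ — or simply keep it as a separate $o_\eta(1)$ term, which is consistent with the claimed bound.

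Combining, $\hal\int_{A_p}|j_\ep|^2\le \pi\log\frac{\eta\elp}{M\ep}+C\eta^{\a}$ for some $\a>0$ and $C$ independent of $p$, $\ep$, which gives \eqref{nrjinter} (possibly after absorbing $C\eta^\a\le C\eta$ for $\eta$ small, or restating the error as $C\eta$). The main obstacle here is purely bookkeeping: one must choose the H\"older exponent $q>2$ carefully and track that the powers of $\elp$ cancel exactly (they do, because $\np\log|\cdot-p|$ scales like $\elp^{-1}$ on a ball of radius $\elp$ and $r_p$ is, by \eqref{jelq}, of the same order in $L^q$), so that the surviving bound depends only on $\eta$ and not on the blow-up scale. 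No genuinely new ideas beyond Proposition~\ref{finition} are needed; the only subtlety is ensuring the estimates are uniform in $p\in\Lambda$, which follows from the uniformity already built into \eqref{jelq}.
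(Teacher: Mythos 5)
Your argument is the same as the paper's: on the annulus $|u_\ep|\equiv 1$, so only $\hal\int_A|j_\ep|^2$ contributes; you split $j_\ep=\np\log|\cdot-p|+r_p$, compute the leading $\pi\log\frac{\eta\elp}{M\ep}$ exactly, and control the remainder via H\"older against \eqref{jelq}, exactly as the paper does.

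One small slip worth flagging. For the quadratic term you correctly observe that $q\ge 4$ gives exponent $2-4/q\ge 1$, so $C_q\eta^{2-4/q}\le C\eta$ for $\eta\le 1$. For the cross term, however, your exponent is $2/q'-1$ with $1/q+1/q'=1$, $q<\infty$, so $q'>1$ and the exponent is strictly less than $1$; hence $\eta^{2/q'-1}>\eta$ for $\eta<1$, and the claim ``$\le C\eta$ up to adjusting the constant for $\eta<1$'' is backwards. (Also, taking $q$ close to $2$ — as you suggest in passing — makes this exponent \emph{worse}, not better; you want $q$ large.) This is harmless: all the downstream argument needs is an error that is $o_\eta(1)$ uniformly in $\ep$ and $p$, which you do obtain, and you rightly note this as an alternative. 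In fact the paper's own proof has the same endpoint issue: it ``chooses $q'=1$'', whose H\"older conjugate is $\infty$, which is not covered by \eqref{jelq}; so the stated bound $C\eta$ is really $C_\delta\eta^{1-\delta}$ for arbitrarily small $\delta>0$, and that suffices.
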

\begin{proof}
Since  $|u_\ep|=1$ on the annulus $A = B(p, \ell_\ep' \eta )\sm B(p, M\ep)$ only the first term in \eqref{nrjinter} needs to be bounded. Using \eqref{jelq} in Proposition~\ref{finition} we have for $q>2$
$$\int_{A } |j_\ep-\np\log|\cdot-p||^2 \le |A|^{1-\frac2q} \|j_\ep- \np\log|\cdot-p|\|_{L^q(A)}^2 \le C_q\eta^{2-\frac4q}.$$
A similar argument yields, for any $q'\in[1,2)$,
$$\left|
 \int_{A } (j_\ep-\np\log|\cdot-p|)\cdot \np\log|\cdot-p| \right|
 \le   C_{q'}\eta^{\frac2{q'}-1}.$$
We deduce, choosing for instance $q=4$, $q'=1$ above,
$$ \hal \int_{A} |j_\ep |^2 \le \hal \int_{A}
|\np\log|\cdot-p| |^2+ C\eta = \pi \log \frac{ \eta\ell_\ep'}{M\ep} +C\eta, $$
proving \eqref{nrjinter}.
\end{proof}
From \eqref{vii},
\eqref{wje}, \eqref{nrjint}, \eqref{nrjinter}, and letting $M\to + \infty$, $\eta\to 0$,
  we deduce, since $m_\ep\to m_\lambda$ as $\ep\to 0$ and since $2\pi N = m_\ep\he|\omega_\ep|$ and $\frac{\ell_\ep'}{\ep}=
  \frac{1}{\sqrt{m_\ep} \ep' }$ that for any $R>0$
\begin{equation}\label{720}
 G_\ep(u_\ep, A_\ep) \le \je^N +   N\( \pi \log \frac{1}{\sqrt{m_\lambda}} + \gamma \)+
2\pi N \frac{W (j_R, \mathbf{1}_{K_R})}{|K_R|} +o_\ep(N).
\end{equation}
We recall our choice of $j_R$. To prove item 1 of the theorem  $j_R$ was the result of applying Corollary~\ref{periodisation} to a minimizer of $W$, hence from \eqref{bsupper} was such that $$\limsup_{R \to \infty} \frac{W (j_R, \mathbf{1}_{K_R})}{|K_R|} \le \min_{\ainfty} W,$$ so that  letting $R\to \infty$ in \eqref{720} we find
$$
 \je(u_\ep,A_\ep)\le \je^N  +  N  \( 2\pi (   \min_{\ainfty} W - \frac14 \log m_\lambda ) +   \gamma    +o(1)\).
$$  In view of  \eqref{minalai}, this proves \eqref{resth5}.

 To prove item 2 of the theorem, we chose $j_R$ given by Corollary \ref{mesyoung} applied to  $\bar{P}$, so that, using \eqref{waiai},
 \begin{equation*}
 \limsup_{R \to \infty} \frac{W(j_R,\mathbf{1}_{K_R})}{|K_R|} \le \int W_K(j) \, d\bar{P}(j) = \frac{1}{m_\lambda} \int W_K(j) \, dP(j) +\frac14 \log m_\lambda.
\end{equation*}
By letting $R \to \infty$ in \eqref{720}, the result \eqref{resth5kk} follows.

To conclude the proof of the theorem, it remains to show that  $P_\ep \to P$, where $P_\ep$ is the push-forward of the normalized Lebesgue measure on $\toe$
by  $x \mapsto
\frac{1}{\sqrt{\he}}  j(u_\ep, A_\ep) \( x + \frac{\cdot}{\sqrt{\he}} \) $.
Equivalently it suffices to show that $\bar{P}_\ep\to \bar{P}$ where $\bar{P}_\ep$ is the push-forward of the
normalized Lebesgue measure on $\toe$
by  $x \mapsto
\frac{1}{\sqrt{m_\lambda \he}}  j(u_\ep, A_\ep) ( x + \frac{\cdot}{\sqrt{m_\lambda \he}} ) $.   Let $\Phi$ be a continuous function on $\Lp$.
By definition
$$\int \Phi(j)\, d\bar{P}_\ep (j) = \dashint_{\toe} \Phi\( \frac{1}{\sqrt{m_\lambda \he}}  j(u_\ep, A_\ep) ( x + \frac{\cdot}{\sqrt{m_\lambda \he}} )\) \, dx.$$
For any $\eta>0$, we also have
$$
\int \Phi(j)\, d\bar{P}_\ep (j) = \dashint_{\{ \dist(x, \p \tilde{\omega_\ep} \ge \eta) \} }
 \Phi\( \frac{1}{\sqrt{m_\lambda \he}}  j(u_\ep, A_\ep) ( x + \frac{\cdot}{\sqrt{m_\lambda \he}} )\) \, dx +o_\eta(1)$$
where $o_\eta(1)\to 0$ as $\eta\to 0$.  On the other hand by definition of $(u_\ep, A_\ep)$,
$j(u_\ep, A_\ep)= j_\ep+ (|u_\ep|^2 -1) j_\ep - |u_\ep|^2 \np \tho.$
But $\tho $ is constant in $\toe$, and $(|u_\ep|^2 - 1) j_\ep \to 0$ in $L^p$ so we deduce that in $\{\dist(x, \p \tilde{\omega_\ep}) \ge \eta\}$ we have $\frac{1}{\sqrt{m_\lambda \he} } j(u_\ep, A_\ep)   ( x + \frac{\cdot}{\sqrt{m_\lambda \he}}) \to j_R $ in $\Lp$, as $\ep \to 0$.
It follows that
$$\int \Phi(j)\, d \bar{P}_\ep (j) = \dashint_{K_R} \Phi(j_R(x _ \cdot) ) \, dx
+o_\ep(1)+ o_\eta(1) = \int\Phi(j) \, dP_R(j) + o_\ep(1)+o_\eta(1),$$
 where we have used the periodicity of $j_R$, and where $P_R$ is as in Corollary \ref{mesyoung}.
But  $P_R\to \bar{P}$ as $R \to \infty$ so letting $\ep\to 0$, $\eta\to 0$,  and $R \to +\infty$, we obtain the desired result.

\appendix
\section{Additional results on the obstacle problem}
 Here we gather a few  results on the obstacle problem
that we need at various places in the paper. Although these results
 may be known to experts, we have not been able to find them in the
 literature, so they may be of independent interest.
 We focus on the particular type of obstacle problem we are
 concerned with, which is an obstacle problem with constant
 obstacle.
More precisely, letting $\om$ be any smooth bounded domain in
$\mr^2$,  for any $m\in
(-\infty,1]$ we denote by $\hm$ the minimizer of
\begin{equation}\label{obs32}
\min_{H-1\in H^1_0(\om)} (1-m)\io |- \Delta H+H|+ \hal \io |\nab H|^2 + |H-1|^2.\end{equation}
By convex duality and the maximum principle (cf. \cite{bre,brs})
it is equivalent to
 the obstacle problem
\begin{equation}
\label{obs33} \min_{\substack{H-1\in H^1_0(\om)\\
 H\ge m  }
}\frac{1 }{2} \io |\nab H|^2+H^2.\end{equation} We  also define the
coincidence set
$$\oma = \{x \in \om  | \hm(x) = m\}.$$ For general references on obstacle problems we
refer for example to \cite{ks}. (\ref{obs33})   is a standard
obstacle problem where the obstacle is constant and equal to $m$,
and we are interested in the properties of the coincidence set
$\oma$ as $m$ varies.
It is known that
$$-\Delta \hm +\hm= m \indic_{\omega_m}.$$

 Note that the regularity of $\oma$ for fixed
$m$ is well-known (see  \cite{caf1,bk} or the survey \cite{monneau}), however this
is not sufficient for our purposes, since we need estimates which
are uniform in $m$. More precisely let us  define $\hom$ to be the
minimizer of the unconstrained problem, i.e. the solution of
$$\left\{\begin{array}{ll}
 -\Delta \hom+\hom=0 & \text{in} \ \om\\

 \hom = 1 & \text{ on}  \  \p\om, \end{array}\right.$$
 and
set $$ \homb = \min_\om\hom.$$
Then the situation is as follows:
\begin{enumerate}
\item If $m<\homb $ then $\oma= \varnothing $ and $\hm= \hom$.
Thus
\begin{equation}\label{lomhom}\homb= 1-\frac{1}{2\lambda_\om}\end{equation}
where $\lambda_\om$ is as in Section \ref{sec12}.
\item If $\homb<m\le 1$ then $\oma\neq \varnothing$. Moreover, as
$m \nearrow 1$, $\oma\to \om$, and as $m \searrow\homb$, $\omega_m$
reduces to the set of points where $\hom$ achieves its minimum
$\homb$. If we assume in addition that $\homb$ is achieved at a
unique point $x_0$ then as $m\searrow\homb$,
 $\omega_m$ is expected to shrink down to $x_0$ in an ellipse shape.
\end{enumerate}
Our task here is to establish more precisely this behaviour, in
particular obtain some uniform estimates of convergence of $\oma$
(after blow-up at a suitable scale) to an ellipse.
We recall that  we make the assumption \eqref{assumption}.
 It is
standard (see \cite{safftotik})
 that there exists an ellipse $E_Q$
of measure $1$ and a nonnegative function $U_Q$ defined in $\mr^2$
such that \begin{equation}\label{EQ} \Delta U_Q = \frac{\Delta
Q}{2}\indic_{\mr^2\sm E_Q},\quad \{U_Q=0\} = E_Q.\end{equation}
One may check that  this $U_Q$ is unique and that 
$$U_Q = \text{Cst}+ \frac{Q}{2} -\frac{\Delta Q }{4\pi} \log * \indic_{E_Q}.$$
This ellipse will be shown to be the limit of the coincidence sets $\oma$
as $m \searrow \homb$, rescaled at a scale $\ella$ which is not
completely obvious to guess since it contains a logarithmic factor,
more precisely
\begin{equation}
\label{valella}
 \ella \sim \sqrt{\frac{\pi (m-\homb)}{\homb|\log
(m - \homb)|}}\quad \text{as $m \searrow \homb$.}\end{equation}
 Our main result is the
following.
\begin{pro}\label{obstacleprops} Let $\hm$ be as above the minimizer of \eqref{obs33}.  The following holds.
\begin{enumerate}

\item  The coincidence set $\oma$ is empty if $m<\homb$ and has positive measure if $m>\homb$.
$\hm$ is increasing with $m$,
the coincidence set $\oma$ as well, and  $m \mapsto m |\omega_m|$
is a continuous, strictly increasing bijection from $[\homb, 1]$ to  $[0, |\om|]$.

\item If $K$ is any compact subset of $(\homb,1)$ then, uniformly with respect to  $m\in K$,
$$\lim_{\delta\to 0} \frac{|\{x\mid d(x,\p\oma) < \delta\}|}{|\oma|} = 0,\quad \lim_{\delta\to 0} \frac{|\{m<\hm<m+ \delta\}|}{|\oma|} = 0 .$$

\item For any $\delta>0$, if $1-m$ is small enough then $x\in\om\sm\oma$ implies that $d(x,\p\om)^2 < (1-m)(2+\delta)$. In particular there exists a  constant $C$ depending only on $\om$  such that
\begin{equation}
\label{estsolob} |\om \backslash \oma|\le C\sqrt{1-m} \qquad {\|\nab
\hm\|}_{L^\infty(\om)} \le C \sqrt{1-m} . \end{equation}

\item Assuming (\ref{assumption}),  there is a length $\ella$ such that\begin{equation}\label{la}
{\ella}^2|\log\ella| \sim 2\pi (m - \homb)/\homb\quad  \text{as}\ m\to\homb\end{equation}
and such that  for any $M,\delta>0$, if $m$ is sufficiently close
to $\homb$ then
\begin{equation}\label{item4}\begin{split}
\{d(x,\omQ^c)>\delta\ella\} \subset\oma\subset \{d(x,\omQ)<\delta\ella\},\\
\left\{\frac{\hm - m}{\ella^2} \le M\right\}\subset \xom+\ella\{U_Q\le M+\delta\},\\
\left\{\frac{\hm - m}{\ella^2} \ge M\right\}\subset \xom+\ella\{U_Q\ge M-\delta\}.\end{split}
\end{equation}
where $\omQ = \xom +\ella E_Q$. In particular $|\oma|\sim \ella^2 $ as $m \to \homb$.
\end{enumerate}
\end{pro}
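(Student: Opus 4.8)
\textbf{Proof strategy for Proposition \ref{obstacleprops}.}

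The plan is to combine standard obstacle-problem regularity with the convex-duality reformulation \eqref{obs32}--\eqref{obs33} and to extract uniform-in-$m$ information from the explicit structure of the limiting problem. I begin with item 1: that $\hm$ is increasing in $m$ follows from the comparison principle applied to \eqref{obs33} (a larger obstacle forces a larger solution), and consequently $\oma = \{\hm = m\}$ increases with $m$ as well. The emptiness statement for $m < \homb$ is immediate since then $\hom \ge \homb > m$ so $\hom$ itself is admissible for \eqref{obs33}, hence $\hm = \hom$ and the constraint is never saturated; for $m > \homb$, $\hom < m$ somewhere, so $\hm \neq \hom$ and $\oma$ has positive measure (the coincidence set of a genuine obstacle problem with smooth data has positive measure wherever the obstacle exceeds the free solution). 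The key monotonicity of $m \mapsto m|\omega_m|$ is read off from $\tmuo = -\Delta \hm + \hm = m\indic_{\oma}$: indeed $m|\oma| = \io(-\Delta\hm + \hm) = \io \hm$ by the boundary condition, and $\io \hm$ is strictly increasing and continuous in $m$ by the monotonicity and continuous dependence of $\hm$; the endpoint values $0$ (at $m = \homb$, since $|\oma| \to 0$) and $|\om|$ (at $m = 1$, since $\hm \to 1$) give the bijection onto $[0,|\om|]$.

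For item 3, the idea is a barrier argument: where $\hm > m$, the function solves $-\Delta\hm + \hm = 0$, and comparing with the one-dimensional profile $m + \tfrac{(1-m)}{2}(d(x,\bo))^2 \cdot(\text{const})$-type supersolution built from the distance to $\bo$ shows that the non-coincidence region is a thin collar of width $O(\sqrt{1-m})$; this yields both $|\om\sm\oma| \le C\sqrt{1-m}$ and, via interior gradient estimates on the harmonic-type equation rescaled to this collar, the Lipschitz bound $\|\nab\hm\|_\infty \le C\sqrt{1-m}$. For item 2, the non-degeneracy of solutions near the free boundary (the standard quadratic growth $\hm - m \ge c\,d(x,\p\oma)^2$ from below near $\p\oma$, together with the $C^{1,1}$ bound from above) forces $|\{m < \hm < m+\delta\}| \to 0$ and $|\{d(x,\p\oma)<\delta\}| \to 0$ relative to $|\oma|$; the point is that these estimates can be made uniform over a compact $K \subset (\homb,1)$ because $|\oma|$ is bounded away from $0$ and $|\om|$ there, and the non-degeneracy constants depend continuously on $m$.

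Item 4 is the substantial part. The strategy is to blow up $\hm$ around the minimum point $x_0$ of $\hom$ at a scale $\ella$ to be determined, write $\hom(x) = \homb + \tfrac12 Q(x-x_0) + o(|x-x_0|^2)$ using \eqref{assumption}, and match the rescaled obstacle problem to the limiting obstacle problem solved by $U_Q$ in \eqref{EQ}. The correct scale $\ella$ with the logarithmic correction \eqref{la} emerges from a solvability/normalization condition: the total mass $m|\oma| \approx \homb \cdot \ella^2 |E_Q| = \homb\ella^2$ must balance the excess $2\pi(m-\homb)$ coming from the Newtonian potential of the coincidence set against the quadratic well — more precisely, writing $\hm - m = \ella^2 U(\tfrac{x-x_0}{\ella}) + \text{lower order}$ and using that the logarithmic kernel $\log * \indic_{E_Q}$ in the formula for $U_Q$ contributes a $\log\ella$ factor upon rescaling. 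I would make this rigorous by a compactness argument: show $\ella^{-2}(\hm - m)(x_0 + \ella\,\cdot)$ is bounded in $C^{1,1}_{\rm loc}$, extract a limit, identify it as $U_Q$ by passing to the limit in the Euler--Lagrange equation and the obstacle constraint (using that the coincidence sets converge by Hausdorff distance, which follows from non-degeneracy), and invoke uniqueness of $U_Q$. The inclusions \eqref{item4} are then quantitative versions of this convergence, obtained from uniform modulus-of-continuity estimates. \textbf{The main obstacle} I anticipate is pinning down the precise logarithmic scaling \eqref{la} and proving the convergence is uniform enough to give the two-sided inclusions for \emph{all} $M > 0$ simultaneously; this requires careful control of the far-field behavior of $U_Q$ (which grows like $Q/2$) matched against the global structure of $\hm$ away from $x_0$, and is where the bulk of the technical work lies.
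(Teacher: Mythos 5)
Your overall plan is in the right spirit, but there is one concrete error and one unresolved gap worth flagging, and items 2 and 4 diverge from the paper's arguments in ways worth comparing.

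\textbf{Error in item 1.} Your key identity $m|\omega_m|=\int_\om(-\Delta h_m+h_m)=\int_\om h_m$ is false: integrating $-\Delta h_m+h_m=m\indic_{\omega_m}$ over $\om$ gives $m|\omega_m|=-\int_{\partial\om}\partial_\nu h_m+\int_\om h_m$, and since $h_m\le 1$ inside with $h_m=1$ on $\partial\om$ the flux term $\int_{\partial\om}\partial_\nu h_m$ is strictly positive, not zero. You cannot conclude monotonicity/continuity of $m\mapsto m|\omega_m|$ this way. The paper instead proves continuity directly: the interior-barrier comparison gives $h_m\le H_{m'}\le h_m+(m'-m)$ for $m\le m'$, hence $H_{m'}\to h_m$ uniformly as $m'\downarrow m$, and then $-\Delta(H_{m'}-h_m)+(H_{m'}-h_m)\ge m\indic_{\omega_{m'}\setminus\omega_m}$ forces $|\omega_{m'}\setminus\omega_m|\to 0$.

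\textbf{Item 2.} You replace the paper's argument (which uses $|\partial\omega_m|=0$ together with the continuity of $m\mapsto|\omega_m|$ and compactness of $K$ to propagate the single-$m$ statement to a uniform one by sandwiching $\omega_{m',\delta}$ between $\omega_{m,\delta}$, $\omega_\beta\setminus\omega_m$, and $\omega_{\beta,\delta}$) with a nondegeneracy estimate $h_m-m\ge c\,d(x,\omega_m)^2$. This can be made to work, since the nondegeneracy constant is essentially $m/4$ and $m\ge\underline{h_0}>0$ on $K$, but you assert rather than prove that the constants are uniform. The paper's route is more elementary: it never needs nondegeneracy, only the measure-zero free boundary (cited from Brezis--Kinderlehrer) and the monotonicity of item 1.

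\textbf{Item 4.} You propose a blow-up/compactness route: rescale, extract a $C^{1,1}_{\rm loc}$ limit, identify it with $U_Q$ by uniqueness. You correctly identify the real difficulty, namely that the scale $L_m$ is not given a priori and the logarithmic factor in \eqref{la} must somehow be extracted, and that the two-sided inclusions must hold uniformly for all $M$. But you do not resolve it, and the compactness route does not by itself produce the relation \eqref{la}; one needs an independent normalization fixing $L_m$. The paper avoids this entirely by a direct construction: it takes the solution $h$ of $-\Delta h+h=m\indic_{\omega_Q}$ with $h=1$ on $\partial\om$, expands it via the Green's function of $\om$ (which splits into $-\tfrac1{2\pi}\log|x-y|$ plus a $C^1$ correction $S_\om$), chooses $L_m$ as the root of the explicit scalar equation
\[
m-\underline{h_0}=-\frac{\underline{h_0}}{2\pi}L_m^2\log L_m+L_m^2\bigl(\underline{h_0}\,S_\om(x_0,x_0)+C_{\om,Q}\bigr),
\]
so that \eqref{la} is immediate, and then manufactures explicit interior and exterior barriers ($h_{\rm out}=\max(h-\delta L_m^2,m)$ and a cutoff modification $h_{\rm int}$) from which the inclusions \eqref{item4} follow by the maximum principle. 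This is much more direct than a compactness argument and produces exactly the uniformity you were worried about; a blow-up proof would require substantial additional work to recover both the scaling law and the quantitative inclusions.

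Item 3 is essentially the paper's argument (distance-function barrier plus the $m$-uniform $C^{1,1}$ estimate and a pigeonhole for the gradient bound); no issue there.
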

The main technique is the construction of  barriers:
\begin{lem} \label{lemp3}
Let $\hm$ be as above. Let $h$ be a continuous function.
\begin{description}
\item[\rm Interior barrier.] If $h\ge \hm$ on $\p\om$, if $h\ge m$ in $\om$ and if $-\Delta h+h\ge 0$ in $\om$, then
$$\text{$h\ge \hm$ in $\om$, in particular $\{h = m\}\subset\oma$.}$$
\item[\rm Exterior barrier.] If $h\le \hm$ on $\p\om$, if $h\ge m$ in $\om$ and if $-\Delta h+h\le m\indic_{\{h=m\}}$ in $\om$, then
$$\text{$h\le \hm$ in $\om$, in particular $\oma \subset \{h = m\}$.}$$
\end{description}
\end{lem}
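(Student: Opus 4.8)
\textbf{Proof plan for Lemma~\ref{lemp3}.} The statement is the classical comparison principle for the obstacle problem \eqref{obs33} specialized to a constant obstacle, and the plan is to prove it directly from the variational characterization (or equivalently from the complementarity system $-\Delta\hm+\hm = m\indic_{\oma}$, $\hm\ge m$, $-\Delta\hm+\hm\ge 0$) by testing against suitable truncations. I would treat the two assertions separately but by the same device: multiply the relevant differential inequality for $h-\hm$ by $(h-\hm)_-$ or $(h-\hm)_+$, integrate over $\om$, and show the resulting energy quantity is $\le 0$, forcing the sign.

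\emph{Interior barrier.} Set $w = \hm - h$. On $\p\om$ we have $w\le 0$, so $w_+ \in H^1_0(\om)$. In the distributional sense $-\Delta w + w = (-\Delta\hm+\hm) - (-\Delta h + h)$. Using $-\Delta\hm+\hm = m\indic_{\oma}\ge 0$ is not quite enough pointwise, so instead I would use the variational inequality: $\hm$ minimizes $\frac12\int|\nab H|^2+H^2$ over $\{H-1\in H^1_0,\ H\ge m\}$, hence for every admissible competitor $\tilde H$,
$$\int \nab\hm\cdot\nab(\tilde H - \hm) + \hm(\tilde H-\hm)\ge 0.$$
Take $\tilde H = \hm - w_+ = \min(\hm,h)$. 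This is admissible: $\tilde H - 1 = (\hm-1) - w_+ \in H^1_0$ since $w_+\in H^1_0$, and $\tilde H\ge m$ because both $\hm\ge m$ and $h\ge m$. Plugging in gives $-\int|\nab w_+|^2 + w_+^2 \ge 0$ only after one also uses that $h$ is a supersolution; more precisely one gets
$$\int \nab\hm\cdot\nab(-w_+) + \hm(-w_+)\ge 0 \iff \int_{\{w>0\}}\nab\hm\cdot\nab w + \hm w \le 0,$$
and then, since $-\Delta h + h\ge 0$ and $w_+\in H^1_0$, $\int\nab h\cdot\nab w_+ + h w_+\ge 0$, i.e. $\int_{\{w>0\}}\nab h\cdot\nab w + hw\ge 0$. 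Subtracting, $\int_{\{w>0\}} |\nab w|^2 + w^2 \le 0$, whence $w_+\equiv 0$, i.e. $\hm\le h$. On $\oma$, if moreover $h = m$ then $m = h\ge\hm\ge m$, so $h=m$ forces $x\in\oma$; conversely $\{h=m\}\subset\oma$ follows since on $\{h=m\}$ we have $\hm\le h = m$ hence $\hm = m$.

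\emph{Exterior barrier.} Now set $w = h - \hm$; on $\p\om$, $w\le 0$, so $w_+\in H^1_0(\om)$. Here I would run the argument the other way: use $-\Delta\hm+\hm\ge 0$ as the supersolution property for $\hm$, namely $\int\nab\hm\cdot\nab\vp + \hm\vp\ge 0$ for all $\vp\ge 0$ in $H^1_0$, applied to $\vp = w_+$; and use the hypothesis $-\Delta h + h\le m\indic_{\{h=m\}}$ tested against $w_+$. On the set $\{w>0\}$ we have $h>\hm\ge m$, so $h\ne m$ there, hence $\indic_{\{h=m\}} = 0$ on $\{w_+>0\}$, giving $\int\nab h\cdot\nab w_+ + h w_+\le 0$. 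Combining, $\int_{\{w>0\}}|\nab w|^2 + w^2 = \int\nab w\cdot\nab w_+ + w w_+ \le 0 - 0 = 0$, so $w_+\equiv 0$ and $h\le\hm$. On $\oma$, $h\le\hm = m$ and $h\ge m$ give $h = m$, so $\oma\subset\{h=m\}$.

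\emph{Main obstacle.} The delicate point is the justification of the integrations by parts and the use of the one-sided differential inequalities for $h$, which is only assumed continuous: one must interpret $-\Delta h + h\ge 0$ (resp.\ $\le m\indic_{\{h=m\}}$) in the distributional sense and check that $w_\pm$ is a legitimate test function, i.e.\ lies in $H^1_0(\om)$; this requires $h\in H^1_\loc$ at least, which I would either add as an implicit hypothesis (barriers are always built smooth enough) or circumvent by a standard mollification/approximation argument. Apart from that, everything is the textbook comparison principle, and the proof is short. I would also remark that the same argument shows $m\mapsto\hm$ is monotone (take $h = H_{m'}$ with $m'\ge m$ as an interior barrier for $\hm$, after noting $H_{m'}\ge m'\ge m$), which is one of the ingredients needed in Proposition~\ref{obstacleprops}.
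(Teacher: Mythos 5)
Your proof is correct. The paper itself devotes only one sentence to this lemma, calling it an ``easy and standard application of the maximum principle,'' so there is no detailed argument to compare against; what you have written is a legitimate filling-in of that sentence. You take the variational route: use the minimality of $\hm$ with the competitor $\min(\hm,h)$ (resp.\ the complementarity relation $-\Delta\hm+\hm=m\indic_{\oma}$) and test the one-sided inequality for $h$ against $w_\pm\in H^1_0$. This is a standard and correct way to obtain the comparison principle for the obstacle problem. A more ``maximum-principle-flavored'' version — which is likely what the authors had in mind — would instead observe that $w=\hm-h$ satisfies $-\Delta w+w\le 0$ on $\om\sm\oma$, that $w\le 0$ on $\oma$ (because $\hm=m\le h$ there) and on $\p\om$, and conclude $w\le 0$ on $\om\sm\oma$ by the weak maximum principle for $-\Delta+\mathrm{Id}$; the two arguments are essentially equivalent, and yours is equally short. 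Your identification of the regularity caveat (you need $h\in H^1_{\rm loc}$ or a mollification to make $w_\pm$ a legitimate test function) is apt and consistent with how the lemma is used: the barriers constructed in the appendix are piecewise $C^2$, so this is not an issue in practice. One small presentational slip: in the last sentence of the interior-barrier paragraph, the clause ``On $\oma$, if moreover $h=m$ then \dots'' is unnecessary and confusingly worded, but the correct justification of $\{h=m\}\subset\oma$ appears immediately afterward, so the substance is fine.
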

Both assertions are easy and standard applications of the maximum
principle.

We now turn to the proof of the proposition.  First  recall that
from a well known result of J. Frehse (\cite{frehse}), $\hm$ is
$C^{1,1}$ and, as a consequence, $-\Delta \hm + \hm =
m\indic_\oma.$

\begin{proof}[Proof of 1)]

If $m<\beta$,
then $\hm + (\beta - m)$ is an interior barrier for $h_\beta$,
thus $\oma\subset\omega_\beta$ (hence $\oma$ is increasing in $m$). If $m< \homb$ then $h_0$ is an
exterior barrier for $\hm$ hence $\oma = \varnothing$. If $m>
\homb$ and $\oma$ is negligible, then $-\Delta \hm + \hm =
m\indic_\oma = 0$ in $\om$ and $\hm = 1$ on $\p\om$  hence $\hm =
\hom$, which is impossible. By Lemma \ref{lemp3} too,
for $m\le m'$ we have $\hm\le H_{m'}\le \hm+(m'-m)$, so $\hm$ is increasing in $m$, and also
 $H_{m'}\to \hm$ uniformly in $\om$ as $m'$ tends to $m$ from above, and then in the distributions sense too. Hence,  using $-\Delta (H_{m'} - \hm) + (H_{m'} - \hm) \ge  m\indic_{\omega_{m'}\sm\oma}$, we find that $|\omega_{m'}\sm\oma|$ tends to zero as $m'\to m$. It follows that  $m \mapsto m |\oma|$ is a continuous strictly increasing function of $m$. For $m=\homb$ it is equal to $0$, while for $m=1$, it is immediate that $\hm= 1$ and $\oma=\om$, so $m \mapsto m |\oma|$ maps $[\homb,1]$ to $[0, |\om|]$ bijectively.
\end{proof}
\begin{proof}[Proof of 2)] Let $\omega_{m,\delta} = \{x\mid d(x,\p\oma) < \delta\}$. Then it is true that $\lim_{\delta\to 0} |\omega_{m,\delta}| = 0$, indeed $\cap_{\delta>0}\omega_{m,\delta} = \p\oma$ and  $|\p\oma| = 0$, see  \cite{bk}. From the previous step  $\lim_{\delta\to 0} |\omega_{m+\delta}\sm\omega_{m-\delta}| = 0$.
Thus for any $\ep>0$ and any $m\in K$ there exists $\beta>m$
such that $|\omega_{\beta}\sm\omega_{m}| <\ep$ and then there
exists $\delta>0$ such that $|\omega_{\beta,\delta}|<\ep$ and
$|\omega_{m, \delta}| <\ep $. Then for any $m'\in[m,\beta]$, it
holds that
$$ \omega_{m',\delta} \subset \omega_{m,\delta}\cup(\omega_{\beta}\sm\omega_{m})\cup \omega_{\beta,\delta}$$
hence $|\omega_{m',\delta}|<3\ep$. Then by the compactness of $K$,
$\lim_{\delta\to 0}|\omega_{m,\delta}|=0$ uniformly in $m\in K$,
which is what we want since from 1) we have that  $\inf_{m\in
K}|\oma|>0$.

Similarly, we have $\cap_{\delta>0}\{m<\hm<m+ \delta\}=\varnothing$  and, since $m'\ge m$ implies $\hm\le H_{m'}\le \hm+(m'-m)$, we have
$$\{m'<H_{m'}<m'+ \delta\}\subset \{m<\hm<m'+ \delta\}.$$
 Thus for any $m\in K$ and $\ep>0$, taking $\delta$ such that $|\{m<\hm<m+ 2\delta\}|<\ep$ we find  $|\{m'<H_{m'}<m'+ \delta\}|<\ep$ for any $m'\in [m,m+\delta]$ and it follows as above that  $\lim_{\delta\to 0}|\{m<\hm<m+ \delta\}|=0$ uniformly in $m\in K$.
\end{proof}
\begin{proof}[Proof of 3)] We let $d(x) = d(x, \bo)$ and
$$ h(x)= \begin{cases} (1-m) \frac{(d-\eta)^2}{\eta^2} + m  & \text{if $d(x)\le \eta$}\\
m & \text{if $d(x)> \eta$.}\end{cases}$$  We claim that if $\eta$
is well-chosen and $ 1-m$ is small enough, the function $h$ is an
interior barrier for $\hm$. Indeed $h(x)=f\circ d$ hence $\Delta h=
f''(d)|\nab d|^2 + f'(d)\Delta d$  with
$$f'(d)= \frac{2(1-m)}{\eta^2}(d-\eta)\indic_{ d \le \eta} ,
\quad  f''(d)= \frac{2(1-m)}{\eta^2}\indic_{ d\le \eta}.$$ In
addition we have $|\nab d|=1$ and $|\Delta d|\le \kappa_\om$, where
$\kappa_\om$ is the maximum of the curvature on $\bo$. Thus $-\Delta
h+h$ is trivially positive on $\{d\ge\eta\}$ while on the set
$\{d<\eta\}$ we have
$$-\Delta h+h\ge -2\frac{1-m}{\eta^2}  -2\frac{1-m}{\eta}\kappa_\om + m.$$
Letting  $\eta^2= (2+\delta)(1-m)$, the right-hand side is positive
for $m$ close enough to $1$ (depending on $\delta$).  Then $h$ is
an interior barrier for $\hm$ and we deduce that $ \{d^2 >
(2+\delta)(1-m)\} \subset \oma$. The first result in
(\ref{estsolob}), i.e.  $|\om \backslash \omega_m|\le C
\sqrt{1-m},$ follows immediately, for some appropriate $C$
depending on $\om$. The second assertion is a consequence of the
first one, together with the estimate
$$\|\hm\|_{C^{1,1}(\om) }\le C,$$
with $C$ independent of $m$ (see \cite{bk}).
Indeed, either  $x\in \omega_m$ and then $\nab \hm(x)=0$, or $x
\notin \omega_m$ and since there exists $y\in \omega_m$ such that
$|x-y|\le C \sqrt{1-m}$, the $C^{1,1}$ estimate implies that
$$|\nab \hm(x)|\le |\nab \hm(y)|+ C\sqrt{1-m} = C\sqrt{1-m}.$$
\end{proof}

\begin{proof}[Proof of 4)] Let $\omQ = \xom + \ella E_Q$, for some $\ella$ to be specified below, and define $h$ to be the solution of $-\Delta h+h = m\indic_{\omQ}$ in $\om$ and $h=1$ on $\p\om$.
Then we may express $h$ as
$$h(x)= m \int G_\om(x,y) \indic_\omQ(y) \, dy + h_0(x),$$
where  $\gom(\cdot,y)$ is the solution of $-\Delta \gom +\gom =
\delta_y$ in $\om$ and $\gom = 0$ on $\p\om$. We further split
$\gom$ as
$$\gom(x,y) = -\frac1{2\pi}\log|x-y| +\som(x,y),$$
where $\som(\cdot,y)$ solves $-\Delta \som +\som =
(2\pi)^{-1}\log|\cdot-y|$ in $\om$, thus is $C^1$ locally in $\om$.

Replacing accordingly in the expression of $h$ and  writing $x
 =
\xom +\ella x'$, we obtain (using the fact that the volume of $E_Q$
is $1$)
$$ h(x) = \homb -\frac {m}{2\pi} \ella^2\log\ella +\ella^2
m\som(\xom,\xom) + \ella^2\(\hal Q(x') -
\frac{m}{2\pi}\log\ast\indic_{E_Q}(x')\) + R(x),$$ where
$$ R(x) =
\(\hom(x) - \homb - \hal\ella^2Q(x')\)+ m \int\( \som(x,y)  -
\som(\xom,\xom)\)\indic_\omQ(y) \, dy.$$
The first term  in $R(x)$ is the remainder of the Taylor expansion of order $2$ of  $\hom$ at $\xom$. It is therefore $O(|x-\xom|^3)$ and its derivatives are $O(|x-\xom|^2)$ since $\hom$ is analytic. Since $|\omQ| = \ella^2$ and since $y\in\omQ\implies |y-\xom|\le C\ella$, the second term in $R(x)$ is clearly $O(\ella^2|x-\xom|)$ and, since   $\som$ is $C^1$, differentiating under the integral sign shows its derivatives are $O(\ella^2)$. Differentiating twice allows to bound its second derivatives by  $\|D^2\som\|_{L^q}\|\indic_\omQ\|_{L^{q'}} $ and using the equation satisfied by $\som$ we have that $\|D^2\som\|_{L^q}$ for every $q<+\infty$, while $\|\indic_\omQ\|_{L^{q'}}=\ella^{\frac2{q'}}$. Thus the second derivatives of the second term are bounded by $C_p \ella^p$, for every $p<2$. To sum up, for any $p<2$ we have
\begin{equation}\label{restebarriere}R(x)=O\(L_m^3+ |x-\xom|^3\),\  \nabla R(x)=O\(L_m^2+ |x-\xom|^2\),\  \nabla^2 R(x) = O\(L_m^p+ |x-\xom|\).\end{equation}
Then we note that $\hal Q - \frac\homb{2\pi}\log\ast\indic_{E_Q}$ is
equal to $U_Q+ C_{\om,Q}$ where $C_{\om, Q}$ is a constant. Indeed,
note that since $Q= D^2 h_0(x_0)$ and  $-\Delta h_0+h_0=0$ we have $\hal\Delta Q= \Delta
h_0(x_0) =h_0(x_0)=\homb$, and the claim follows from (\ref{EQ}).
Thus,
 if we choose for $\ella$ a solution of the following  equation
\begin{equation}\label{edla}
 m-\homb  = -\frac\homb{2\pi} \ella^2\log\ella + \ella^2\(
\homb\som(\xom,\xom)+ C_{\om,Q}\),\end{equation} we have
\begin{equation}\label{avantbarriere}
h(x) = m + \ella^2 U_Q(x') + R'(x),\end{equation} where
$$R'(x)= R(x) + \ella^2(\homb-m) \(
\frac{1}{2\pi}\log \ast \indic_{E_Q}(x')+ \frac{1}{2\pi}\log \ella -
S_\om(x_0,x_0)\) .$$ Since $|\homb - m |\le C \ella^2
\log \ella$, we easily deduce  that $R'$ satisfies the same
properties as $R$, i.e. (\ref{restebarriere}). Returning to
(\ref{edla}),  since the term $\ella^2\log\ella$ dominates, we
deduce that ${\ella}^2|\log\ella| \sim 2\pi (m - \homb)/\homb$ as
$m \to \homb$ and that \eqref{la} and  (\ref{valella}) holds. From
(\ref{avantbarriere})  and (\ref{restebarriere}) for $R'$, we deduce
that for any $M,\delta>0$ and if $m-\homb$ is small enough, then
\begin{equation}\label{hsets} \{U_Q < M-\delta\}\subset
 \left\{x'\mid \frac{h(\xom+\ella x') - m}{\ella^2} <  M\right\}
 \subset \{U_Q < M+\delta\}.\end{equation}
Indeed \eqref{restebarriere}, \eqref{avantbarriere} imply local
uniform convergence of $\frac{h(\xom+\ella x') - m}{\ella^2}$ to
$U_Q$ as $m-\homb$ decreases to $0$, which implies that $\ella$ decreases to $0$ by \eqref{valella}. Thus it suffices to check that $h(\xom+\ella x')\le
m+M\ella^2$ implies a uniform bound for $x'$. This is the case
because the equation satisfied by $h$ implies that
$\|h-\hom\|_\infty\to 0$ as $m\searrow\homb$. Thus $h(x)\le
m+M\ella^2$ implies that $x-\xom$ is small as $m\searrow\homb$.
Then,  $|x-\xom|^3 = o(|x-\xom|^2) = o(\ella^2 + \ella^2 U_Q(x'))$
since $U_Q(x')\sim\hal Q(x')$ as $|x'|\to +\infty$. Therefore $R'(x)
= o(\ella^2 +  \ella^2 U_Q(x'))$ and we deduce from
\eqref{avantbarriere} a bound for $U_Q(x')$, hence for $x'$. Note in particular that the minimum of $h(\xom + \ella x')$ is achieved in a fixed compact set of $\mr^2$, and then \eqref{restebarriere}, \eqref{avantbarriere} yield
\begin{equation}\label{minh} \bh:=\min_\om h = O(\ella^3  +m ).\end{equation}

Next, we  use $h$ to construct an exterior and an interior barrier for
$\hm$. The exterior barrier is defined by $\extb =
\max(h-\delta\ella^2,m)$. Where $\extb = m$ it is obvious that
$-\Delta \extb +\extb = m$. Then
by definition
$\extb(x)\neq m$ implies that
$h(x)-\delta\ella^2>m$ and then
from
\eqref{avantbarriere}--\eqref{restebarriere}  if $m-\homb$ is small
enough, we must have  $x\notin \omQ$ (recall
that $U_Q=0$ in $E_Q$). Therefore where $\extb >m$ we have
$$-\Delta \extb +\extb = -\Delta h + h-\delta\ella^2 = -\delta\ella^2.$$
Therefore $-\Delta \extb +\extb\le m\indic_{\{\extb=m\}}$. The
other properties of exterior barriers are trivially verified by
$\extb$ thus $\extb\le \hm$ and  then, using \eqref{hsets},
\begin{align}\label{exterior}
\oma\subset\{\extb = m\}
 &= \{h \le m+\delta\ella^2\}\subset \{\xom+\ella y\mid U_Q(y)\le 2\delta\}
 \\
\nonumber\{\hm\le m+ M\ella^2\}&\subset\{\extb\le m+
M\ella^2\}\subset \{h\le m+ (M+\delta)\ella^2\} \\
 \nonumber &\qquad\subset
\{\xom+\ella y\mid U_Q(y)\le M+2\delta\}.
\end{align}

For the interior barrier, let $\chi:\mr_+\to\mr_+$ be smooth and
such that $\chi = 1$ in a  neighborhood of $0$ and  $\chi_{|[1,+\infty)}=0$. Then let  $\bh
= \min(m,\min_\om h)$ and
$$\hint(x) = m + (h(x) - \bh)\vp(x),\quad \vp(x) = \chi\(\frac{d(x,\omQ^c)}{\delta\ella}\).$$
We have $-\Delta \hint + \hint= - \Delta h + m+h - \bh  = m- \bh\ge 0$ on $\omQ^c$ and
$\hint = 1+ (m-\bh)\ge 1$ on $\p\om$. Moreover, $\hint\ge m$. It
remains to check that $-\Delta \hint + \hint\ge 0$ in $\omQ$.

Since $U_Q = 0$ on $E_Q$ and using
\eqref{avantbarriere},\eqref{restebarriere}, we have on $\omQ$
$$\Delta \hint  = \vp\Delta h + 2\nabla\vp\cdot\nabla h + (h-\bh)\Delta\vp=  O\(\ella+ \frac{\ella^2}\delta+\frac{\ella^3}{\delta^2}\).
$$
Indeed, since $U_Q = 0$ on $\omQ$, we have
$h-\bh = O(\ella^3)$   and
$\nab(h-\bh) = O(\ella^2)$,   $\Delta(h-\bh) = O(\ella)$ in $\omQ$.
 It
follows that if $m-\homb$ is small enough depending on $\delta$,
then $-\Delta\hint+\hint \ge m/2$ in $\omQ$, finishing the proof
that $\hint$ is an interior barrier for $\hm$. Then $\hint\ge \hm$
in $\om$ and it follows easily using \eqref{hsets} that
\begin{align}\label{interieur}
\{d(x,\omQ^c)>\delta\ella\} &\subset \{\hint = m\}\subset\oma\\
\nonumber\{\hm\ge m+ M\ella^2\}&\subset \{\hint\ge m+
M\ella^2\}=\{h\ge m+ M\ella^2\}\\
\nonumber & \qquad \subset \{\xom+\ella y\mid U_Q(y)\ge M-\delta\}.
\end{align}
The relation $\{\hint\ge m+M\ella^2\} = \{h\ge m+M\ella^2\}$ follows from the fact that  $\hint = h$ outside $\omQ$ and that if $m-\homb$ is small, then both sets are disjoint from $\omQ$: Indeed if $x\in\omQ$ then from \eqref{restebarriere}, \eqref{avantbarriere} we have $h(x) - m = O(\ella^3)$ and $\hint(x) - m = O(\ella^3)$ from \eqref{minh}. Then, \eqref{item4} follows from
\eqref{exterior}, \eqref{interieur}. The fact that $|\oma|\sim \ella^2$ as $m \to \homb$
is an easy consequence.

\end{proof}

\noindent
{\sc Etienne Sandier}\\
Universit\'e Paris-Est,\\
LAMA -- CNRS UMR 8050,\\
61, Avenue du Général de Gaulle, 94010 Créteil. France\\
\& Institut Universitaire de France\\
{\tt sandier@u-pec.fr}\\ \\
{\sc Sylvia Serfaty}\\
UPMC Univ  Paris 06, UMR 7598 Laboratoire Jacques-Louis Lions,\\
 Paris, F-75005 France ;\\
 CNRS, UMR 7598 LJLL, Paris, F-75005 France \\
 \&  Courant Institute, New York University\\
251 Mercer st, NY NY 10012, USA\\
{\tt serfaty@ann.jussieu.fr}

\end{document}